\def\newaliasedtheorem#1[#2]#3{
	\newaliascnt{#1@alt}{#2}
	\newtheorem{#1}[#1@alt]{#3}
	\expandafter\newcommand\csname #1@altname\endcsname{#3}
}
\numberwithin{equation}{section}
\newtheoremstyle{slanted}{\topsep}{\topsep}{\slshape}{}{\bfseries}{.}{.5em}{}
\theoremstyle{plain}
\newtheorem{theorem}{Theorem}[section]
\theoremstyle{definition}
\theoremstyle{remark}
\newcommand{\setN}{\mathbb{N}}
\newcommand{\setQ}{\mathbb{Q}}
\newcommand{\setR}{\mathbb{R}}
\newcommand{\eps}{\varepsilon}
\let\altphi\phi
\let\phi\varphi
\let\varphi\altphi
\let\altphi\undefined
\newcommand{\abs}[1]{\left\lvert#1\right\rvert}
\newcommand{\norm}[1]{\left\lVert#1\right\rVert}
\newcommand{\weakto}{\rightharpoonup}
\newcommand{\Id}{\mathrm{Id}}
\let\div\undefined
\DeclareMathOperator{\div}{div}
\DeclareMathOperator{\Hess}{Hess}
\DeclareMathOperator{\sign}{sign}
\newcommand{\di}{\mathop{}\!\mathrm{d}}
\newcommand{\bs}{{\rm bs}}
\newcommand{\loc}{{\rm loc}}
\newcommand{\res}{\mathop{\hbox{\vrule height 7pt width .5pt depth 0pt
			\vrule height .5pt width 6pt depth 0pt}}\nolimits}
\DeclareMathOperator{\supp}{supp}
\newcommand{\Ch}{{\sf Ch}}
\DeclareMathOperator{\LipConst}{Lip}
\DeclareMathOperator{\Lip}{LIP}
\DeclareMathOperator{\Lipb}{LIP_b}
\DeclareMathOperator{\Lipbs}{LIP_\bs}
\DeclareMathOperator{\Cb}{C_b}
\DeclareMathOperator{\Cbs}{C_\bs}
\DeclareMathOperator{\lip}{lip} 
\DeclareMathOperator{\BV}{BV}
\DeclareMathOperator{\Per}{Per}
\DeclareMathOperator{\Ric}{Ric}
\newcommand{\haus}{\mathscr{H}}
\newcommand{\Leb}{\mathscr{L}}
\newcommand{\cK}{\mathcal{K}}
\newcommand{\ft}{\mathrm{t}}
\newcommand{\XX}{{\boldsymbol{X}}}
\newcommand{\dist}{\mathsf{d}}
\newcommand{\meas}{\mathfrak{m}}
\newcommand{\Test}{{\rm Test}}
\DeclareMathOperator{\CD}{CD}
\DeclareMathOperator{\RCD}{RCD}
\newfont{\tmpf}{cmsy10 scaled 2500}
\DeclareMathOperator{\Tan}{Tan}
\def\XXint#1#2#3{{\setbox0=\hbox{$#1{#2#3}{\int}$ }
		\vcenter{\hbox{$#2#3$ }}\kern-.6\wd0}}
\begin{document}

\title[
Minimal boundaries in $\RCD(K,N)$ spaces
]
{
Weak Laplacian bounds and minimal boundaries in non-smooth spaces with Ricci curvature lower bounds
}

\author{Andrea Mondino}\thanks{Andrea Mondino: Mathematical Institute, University of Oxford, UK, \\email: Andrea.Mondino@maths.ox.ac.uk} 
\author{Daniele Semola}\thanks{Daniele Semola: Mathematical Institute, University of Oxford, UK, \\email: daniele.semola.math@gmail.com\\
Mathematics Subject Classification: 53C23, 49Q05, 58J90.}

\maketitle

\begin{abstract}
The goal of the paper is four-fold. In the setting of spaces with synthetic Ricci curvature lower bounds  (more precisely  $\RCD(K,N)$ metric measure spaces):
\begin{itemize}
\item we develop an intrinsic theory of Laplacian bounds in viscosity sense and in a pointwise, heat flow related sense, showing their equivalence also with Laplacian bounds in distributional sense;
 \item relying on these tools, we establish a PDE principle relating lower Ricci curvature bounds to the preservation of Laplacian lower bounds under the evolution via the $p$-Hopf-Lax semigroup, for general exponents $p\in[1,\infty)$. The principle admits a broad range of applications, going much beyond the topic of the present paper;
\item we prove sharp Laplacian bounds on the distance function from a set (locally) minimizing the perimeter with a flexible technique, not involving any regularity theory; this corresponds to vanishing mean curvature in the smooth setting and encodes also information about the second variation of the area;
\item we initiate a regularity theory for boundaries of sets (locally) minimizing the perimeter, obtaining sharp dimension estimates for their singular sets, quantitative estimates of independent interest even in the smooth setting and topological regularity away from the singular set.
\end{itemize}
The class of $\RCD(K,N)$ metric measure spaces includes as remarkable sub-classes: measured Gromov-Hausdorff limits of smooth manifolds with lower Ricci curvature bounds and  finite dimensional Alexandrov spaces with lower sectional curvature bounds. Most of the results are new also in these frameworks. Moreover, the tools that we develop here have applications to classical questions in Geometric Analysis on smooth, non compact Riemannian manifolds with lower Ricci curvature bounds.
\end{abstract}

\tableofcontents

\section{Introduction}

Minimal surfaces constitute a fascinating research topic across Analysis and Geometry, with strong connections with Topology and Mathematical Physics. Even if the field is extremely rich in results and techniques, arguably two cornerstones in the theory of minimal surfaces in Riemannian manifolds are:
\begin{itemize}
\item the regularity theory, asserting that a minimal surface is smooth away from a small (in the sense of Hausdorff dimension) singular set;
\item the first and second variations formulae, encoding at a differential level the fact that a minimal surface is a stationary point (resp. a local minimum or a min-max critical point) of the area functional.
\end{itemize}
Classically, the regularity theory is established for minimal surfaces in Euclidean ambient spaces, and then transplanted to the smooth curved setting of Riemannian manifolds by using local coordinates or Nash embedding theorem. While on the one hand this procedure gives sharp \textit{qualitative} regularity results (such as the dimension of the singular set),  on the other hand it is not completely satisfactory in terms of \textit{effective estimates}, which usually depend on quantities like the injectivity radius or the full Riemann curvature tensor.

A natural question (raised for instance in Gromov's lectures \cite[pp. 334-335]{Gromov21}) is to which extent one can develop a theory for minimal surfaces if the ambient space is non-smooth.
In the case of 2-dimensional minimal surfaces in (suitable) metric spaces, there has been recent progress by Lytchak-Wenger \cite{LW17, LW18, LW20} who successfully studied the Plateau problem together with geometric applications.

The aim of the present paper is to investigate the higher dimensional case of minimal boundaries in possibly non-smooth finite dimensional ambient spaces, satisfying Ricci curvature lower bounds in a synthetic sense. More precisely, the framework for the ambient space throughout the paper is the one of  $\RCD(K,N)$ metric measure spaces, for finite $N\in [1,\infty)$ and $K\in \setR$ (see \autoref{subsec:generalRCD} for a quick introduction and relevant bibliography). Here $K\in \setR$ plays the role of (synthetic) lower bound on the Ricci curvature and $N\in [1,\infty)$ plays the role of (synthetic) upper bound on the dimension. This class includes measured Gromov-Hausdorff limits of smooth manifolds with  Ricci curvature lower bounds (see \cite{CheegerColding97,CheegerColding2000a,CheegerColding2000b,CheegerNaber13,CheegerJiangNaber21}) and finite dimensional Alexandrov spaces with sectional curvature lower bounds (see \cite{BuragoGromovPerelman92,Petrunin}). Most of our results are new also in these more classical settings.
\medskip

The goal of the paper is four-fold. In the aforementioned setting of (possibly non-smooth)  $\RCD(K,N)$ metric measure spaces:
\begin{itemize}
\item we develop an intrinsic theory of Laplacian bounds in viscosity sense and in a pointwise, heat flow related sense, showing their equivalence also with  Laplacian bounds in distributional and (various) comparison senses;
\item we establish a PDE principle relating lower Ricci curvature bounds to the preservation of Laplacian lower bounds under the evolution via the $p$-Hopf-Lax semigroup, for general exponents $p\in[1,\infty)$;
\item we prove sharp Laplacian bounds on the distance function from a set (locally) minimizing the perimeter; this corresponds to vanishing mean curvature in the smooth setting (i.e. the first variation formula), encoding at the same time information about the \textit{second variation of the area along equidistant sets}. This is achieved with a flexible technique, independent of any regularity theory and applicable to solutions of different variational problems;
\item we initiate a regularity theory for boundaries of sets (locally) minimizing the perimeter, obtaining sharp Hausdorff dimension bounds for the singular set, Minkowski bounds, and topological regularity in a neighbourhood of the regular set (i.e., where the tangent is flat half-space).
\end{itemize}

Besides the deep theoretical interest towards developing Geometric Measure Theory under curvature bounds in a non smooth setting, the tools that we develop here find applications in the study of classical questions in Geometric Analysis on smooth non compact Riemannian manifolds with lower Ricci bounds, see for instance \cite{AntonelliPasqualettoPozzettaSemola22,Ding21c}.
In particular, due to the compactness and stability of $\RCD(K,N)$ spaces and to the stability of minimal boundaries, the aforementioned fourth goal  is a step towards an effective theory of minimal boundaries under lower Ricci curvature bounds, not depending on additional assumptions such as lower bounds on the injectivity radius or full Riemann curvature bounds.

We next illustrate the main results of the paper.

\subsection{Mean curvature bounds and minimal boundaries in a non smooth setting}\label{subsec:overview}

The subject of our study will be sets of finite perimeter that locally minimize the perimeter, according to the following.

\begin{definition}\label{def:locperIntro}
Let $(X,\dist,\meas)$ be an $\RCD(K,N)$ metric measure space and let $\Omega\subset X$ be an open domain. Let $E\subset X$ be a set of locally finite perimeter. We say that $E$ is \textit{locally perimeter minimizing in $\Omega$} if for any $x\in \Omega$ there exists $r_x>0$ such that $E$ minimizes the perimeter among all the perturbations that are compactly supported in $B_{r_x}(x)$, i.e. for any Borel set $F$ such that $E\Delta F\subset B_{r_x}(x)$ it holds
\begin{equation*}
\Per(E,B_{r_x}(x))\le \Per(F,B_{r_x}(x))\, .
\end{equation*}
\end{definition}

The above is a very general condition. For instance, smooth minimal hypersurfaces in Riemannian manifolds are locally boundaries of locally perimeter minimizing sets according to \autoref{def:locperIntro}, even though, in general, they do not minimize the perimeter among arbitrarily compactly supported variations. A simple example in this regard is the equator inside the sphere.

Let us define the comparison function $\ft_{K,N}: I_{K,N}\to \setR$ as
\begin{equation}\label{eq:deftKN}
\begin{split}
\ft_{K,N}(x)&:=
\begin{cases}
- \sqrt{ K(N-1)} \tan\big(\sqrt{\frac{K} {N-1}} x \big)\,& \quad \text{if } K>0\\
\quad 0\, & \quad \text{if } K=0 \\
\sqrt{-K(N-1)} \tanh\big(\sqrt{\frac{-K} {N-1}} x \big)\,& \quad \text{if } K<0 \;, 
\end{cases} 
\\
I_{K,N}&:=
\begin{cases}
\big( - \frac{\pi}{2} \sqrt{\frac{N-1}{K}},  \frac{\pi}{2} \sqrt{\frac{N-1}{K}}  \big)\,& \quad \text{if } K>0\\
\quad \setR \,& \quad \text{if } K\leq 0 \; .
\end{cases} 
\end{split}
\end{equation}

A fundamental property of locally area minimizing hypersurfaces in a smooth Riemannian manifold is that their mean curvature vanishes. Our first main result is the following sharp Laplacian comparison for the distance from a locally perimeter minimizing boundary. It shall be thought as a \textit{global and non smooth} counterpart of the smooth fact that the mean curvature vanishes for sets locally minimizing the perimeter.  

\begin{theorem}[\autoref{thm:meancurvminimal1}]\label{thm:meancurvminimal1Intro}
Let $(X,\dist,\haus^N)$ be an $\RCD(K,N)$ metric measure space. Let $E\subset X$ be a set of locally finite perimeter and assume that it is a local perimeter minimizer. Let $\dist_{\overline{E}}:X\setminus \overline{E}\to[0,\infty)$ be the distance function from $\overline{E}$. Then
\begin{equation}\label{eq:meanglobIntro}
\Delta \dist_{\overline{E}}\le \ft_{K,N} \circ \dist_{\overline{E}}\, \quad\text{on $X\setminus\overline{E}$}\, ,
\end{equation}
where $ \ft_{K,N}$ is defined in \eqref{eq:deftKN}.
\end{theorem}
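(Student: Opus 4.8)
This is the non-smooth, global counterpart of the classical Riccati comparison for the mean curvature of the hypersurfaces equidistant from a minimal boundary: writing $h(r)$ for the Laplacian of $\dist_{\Sigma}$ at distance $r$, Bochner/Riccati together with $\Ric\ge K$ and $\dim\le N$ give $h'\le -\tfrac{h^{2}}{N-1}-K$, while minimality forces $h(0)=0$; comparing with the solution $\ft_{K,N}$ of $\ft'+\tfrac{\ft^{2}}{N-1}+K=0$, $\ft(0)=0$, one gets $h(r)\le\ft_{K,N}(r)$. The plan is to carry out this scheme intrinsically, using the two tools built earlier: the equivalence of the various notions of Laplacian bounds, and the PDE principle for the $p$-Hopf-Lax semigroup.

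By the equivalence theorem it is enough to establish \eqref{eq:meanglobIntro} in the viscosity (equivalently, pointwise heat-flow) sense; this is the formulation that tolerates the lack of regularity of $\partial E$ and of the level sets of $\dist_{\overline E}$. Fix $x_{0}\in X\setminus\overline E$ with $r_{0}:=\dist_{\overline E}(x_{0})>0$, pick $y_{0}\in\partial E$ realizing the distance, and let $\gamma\colon[0,r_{0}]\to X$ be a unit-speed minimizing geodesic from $y_{0}$ to $x_{0}$, so that $\dist_{\overline E}(\gamma(s))=s$ for all $s$. The heart of the argument is a \emph{base estimate}: perimeter minimality yields $\Delta\dist_{\overline E}\le 0$ at the boundary in a suitably weak/viscosity sense, the intrinsic incarnation of the vanishing of the mean curvature (the first variation formula). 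Since $\partial E$ has no a priori regularity this cannot be seen pointwise; instead I would compare $E$ with competitors obtained by filling in thin collars $\{0<\dist_{\overline E}<\eps\}$ inside small balls centered near $y_{0}$ and, via the coarea formula (using $|\nabla\dist_{\overline E}|\equiv 1$) and the very definition of local perimeter minimizer, turn minimality into a differential inequality for $r\mapsto\Per\big(\overline E\cup\{\dist_{\overline E}<r\},B\big)$ whose consequence as $r\downarrow 0$ is the desired weak bound; an alternative is to use the stationarity $\int_{\partial^{*}E}\div^{\partial^{*}E}\!X\,\di\Per=0$ with vector fields $X=\varphi\,\nabla\dist_{\overline E}$ supported near $\partial E$, letting $\supp\varphi$ collapse onto $\partial E$. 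That $\meas=\haus^{N}$ is used here: the perimeter of $E$ is then comparable to $\haus^{N-1}$ on $\partial^{*}E$, which makes the normalization ``$N-1$'' the honest one.

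It remains to propagate the base estimate from $\partial E$ out to $x_{0}$ along $\gamma$, and this is exactly what the $p$-Hopf-Lax PDE principle does: realizing $\dist_{\overline E}$ (equivalently $-\tfrac12\dist_{\overline E}^{2}$) near $\{\dist_{\overline E}\le r\}$ as a Hopf-Lax evolution of its restriction to an inner collar, the principle asserts that the Ricci lower bound $K$ and the dimension bound $N$ force any Laplacian bound to evolve precisely along the ODE $\ft'+\tfrac{\ft^{2}}{N-1}+K=0$ --- this is where the second variation of the area along the equidistant sets, controlled by curvature and dimension, enters --- so that starting from the value $0$ at $r=0$ one reaches $\Delta\dist_{\overline E}(x_{0})\le\ft_{K,N}(r_{0})=\ft_{K,N}(\dist_{\overline E}(x_{0}))$. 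The main obstacle is the base estimate. One must genuinely exploit minimality: comparison with the distance from the single nearest point $y_{0}$, or with any fixed closed set, only yields the non-sharp bound $\Delta\dist_{\overline E}\lesssim (N-1)/r_{0}$ --- what a point, rather than a minimal hypersurface, ``sees'' --- so the improvement to $\ft_{K,N}$ must come from the variational property of $E$. And one must produce this sharp bound at $\partial E$ in a viscosity form feedable into the PDE principle while $\partial E$ is still a completely unstructured set of finite perimeter; carefully estimating the collar and ``lid'' contributions in the competitor, and justifying the limit $r\downarrow 0$ (respectively the collapse of $\supp\varphi$), is the delicate point.
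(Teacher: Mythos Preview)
Your intuition (Riccati comparison along normal geodesics, with ``$h(0)=0$'' coming from minimality) is the right smooth picture, but the two implementation steps you propose both break down in this setting, and the paper's actual route is quite different.

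First, the Hopf-Lax principle does \emph{not} propagate Laplacian bounds along the Riccati ODE. The statement proved in the paper (\autoref{thm:HLlapla}) is that $\Delta f^{c}\le \eta-\min K\dist(x,y)$: only the linear $K$-correction appears, with no $N$-dependence and no $\ft^{2}/(N-1)$ term. So even granting your base estimate, feeding $\dist_{\overline E}$ into the $1$-Hopf-Lax principle gives at best $\Delta\dist_{\overline E}\le -K\dist_{\overline E}$, never the sharp $\ft_{K,N}$. In the paper the sharp $N$-dependent bound comes from a separate tool you do not invoke: the representation of $\boldsymbol\Delta\dist_{\overline E}$ along the transport rays via the localization/needle decomposition of Cavalletti--Mondino. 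Each one-dimensional density $h_{\alpha}$ is $\CD(K,N)$, hence $(\log h_{\alpha})'$ satisfies the genuine Riccati inequality $(\log h_{\alpha})''\le -K-\tfrac{1}{N-1}((\log h_{\alpha})')^{2}$; once one knows $(\log h_{\alpha})'\le 0$ near $\partial E$ (resp.\ $\le -K\dist_{\overline E}$ for general $K$), ODE comparison along each ray upgrades this to $(\log h_{\alpha})'\le \ft_{K,N}$. That is Step~6 of the proof and is where the second-variation information actually enters.

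Second, your base estimate is precisely the missing piece the paper works hardest for, and neither of your proposed routes is available. The alternative via stationarity $\int_{\partial^{*}E}\div^{\partial^{*}E}X\,\di\Per=0$ is the first variation formula, which is explicitly \emph{not} known on $\RCD$ spaces --- the whole point of the Caffarelli--C\'ordoba/Petrunin strategy adopted here is to avoid it. The collar-competitor idea is circular: turning a comparison between $\Per(E)$ and $\Per(E^{r})$ into a Laplacian bound requires a Gauss--Green identity on the strip $\{0<\dist_{\overline E}<r\}$, and that identity needs the very bound you are after (compare the proofs of \autoref{prop:fullrepr} and \autoref{prop:tubecomparison}, both of which \emph{use} \autoref{thm:meancurvminimal1}). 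What the paper actually does is the reverse of your flow: it argues by contradiction at a point $x\in X\setminus\overline E$ near $\partial E$, takes the viscosity test function $\psi$ there with $\Delta\psi>0$, and uses the sup-convolution to transport \emph{this} function toward the footpoint $x_{E}\in\partial E$, producing a $1$-Lipschitz $\phi$ with $\abs{\nabla\phi}=1$ a.e.\ and $\boldsymbol\Delta\phi\ge\eps$ near $x_{E}$. Cutting $E$ along level sets of $\phi$ gives compactly supported inner competitors, and Gauss--Green (applied to $\nabla\phi$, whose Laplacian is known by construction) shows they have strictly smaller perimeter --- contradicting minimality. The Hopf-Lax step is thus used to move the \emph{contradiction}, not to move the desired bound; and it moves it from the interior toward $\partial E$, not outward as you propose.
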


\begin{remark}[How to interpret the Laplacian bounds]\label{rem:LaplacianBouundsDist}
The Laplacian bounds \eqref{eq:meanglobIntro} have to be intended in any of the equivalent ways stated in  \autoref{thm:mainequivlapla}, i.e. either in the viscosity, distributional, heat flow, or comparison senses (see \autoref{sec:LapBoundIntro} later in the introduction for an outline of the various notions). 
\end{remark}

\begin{remark}\label{rem:SharpUBLapIntro}
The upper bound \eqref{eq:meanglobIntro} is sharp already in the class of smooth Riemannian manifolds with Ricci curvature bounded below by $K\in \setR$ and dimension equal to $N\in \setN, N\geq 2$. Indeed, it is easily seen that:
\begin{itemize}
\item Case $K>0$. The distance function from a equatorial hyper-sphere inside the $N$-dimensional sphere of constant sectional curvature $K/(N-1)$ achieves equality in \eqref{eq:meanglobIntro}.
\item Case $K=0$. The distance function from a hyperplane in $\setR^{N}$ is harmonic, and thus  achieves equality in \eqref{eq:meanglobIntro}.
\item Case $K<0$. The distance function from  a horosphere inside the $N$-dimensional hyperbolic space of constant sectional curvature $K/(N-1)$ achieves equality in \eqref{eq:meanglobIntro}.
\end{itemize}
\end{remark}

Encoding mean curvature bounds through the Laplacian of the distance function as in \eqref{eq:meanglobIntro} is equivalent to the classical vanishing mean curvature condition for smooth hypersurfaces on Riemannian manifolds. Moreover, according to \cite{Wu79,Gromov91}, this is the right way to look at mean curvature bounds, having in mind the perspective of global differential geometry. As we shall explain, \eqref{eq:meanglobIntro} also encodes the information about the second variation of the perimeter on equidistant sets from $\overline{E}$ usually obtained with the second variation formula for the perimeter.

Let us mention that some proposals  of weak  notions of mean curvature bounds in the non-smooth setting have been put forward in \cite[Section 5]{Ketterer20} and \cite[Section 5.1]{CavallettiMondinoLorentz} by using localisation (also called needle decomposition) techniques.  Compared to such proposals, the remarkable advantage of the approach via Laplacian comparison \eqref{thm:meancurvminimal1Intro}, and key new point of the present work, is that we establish mean curvature bounds for solutions of variational problems, such as local perimeter minimizers. This makes the new tools very powerful for geometric applications.
\medskip

\autoref{thm:meancurvminimal1Intro} is  new even for Alexandrov spaces with sectional curvature bounded from below and for Ricci limit spaces. The proof is independent of the regularity theory for minimal boundaries and it avoids the first variation formula for the perimeter. Hence it is different from those present in the literature also when read on smooth Riemannian manifolds. Moreover, the technique that we develop here is flexible and can be applied to solutions of more general variational problems as the isoperimetric one, see \cite{AntonelliPasqualettoPozzettaSemola22}.\\
We remark that it is much simpler to prove the sharp Laplacian comparison for minimal boundaries inside Ricci limit spaces that can be obtained as limits of minimizing boundaries in smooth Riemannian manifolds with Ricci curvature uniformly bounded from below, essentially by passing to the limit the analogous statements for smooth manifolds. This assumption, however, would largely restrict the set of applications with respect to \autoref{thm:meancurvminimal1Intro}.

Extensions of some classical theorems in Riemannian Geometry such as Frankel's theorem \cite{Frankel66} about intersecting minimal hypersurfaces on closed manifolds with positive Ricci curvature and Simons' theorem \cite{Simons} about the non-existence of two-sided area-minimizing hypersurfaces on closed manifolds with positive Ricci curvature will follow as corollaries (see \autoref{thm:generalizedFrankel} and  \autoref{cor:SimonsThmRCD}), thus confirming the strength of this approach.\\
Moreover, \autoref{thm:meancurvminimal1Intro} plays a key role in the regularity theory, for instance in establishing Minkowski-type bounds on the singular set (see \autoref{thm:contestsing}).

\subsection{A regularity theory for minimal boundaries on $\RCD$ spaces}\label{ss:RegIntro}

A second main goal of this paper is to initiate the regularity theory of minimal boundaries on $\RCD$ spaces.  
This can be seen as a step towards an effective regularity theory for minimal hypersurfaces under lower Ricci bounds, where by effective we mean only depending on the ambient  Ricci curvature and volume lower bounds  (and independent of extra assumptions such as injectivity radius, or bounds on the full Riemann curvature tensor).

\medskip
Our first result in this direction is an $\eps$-regularity theorem in the spirit of De Giorgi's regularity theory for Euclidean minimal boundaries \cite{DeGiorgi61} and of the volume $\eps$-regularity theorem for manifolds with lower Ricci bounds originally due to Cheeger-Colding \cite{Colding97,CheegerColding97} (see \autoref{thm:epsregcolding} below).

\begin{definition}[$\eps$-regular points]\label{def:epsregularintro}
Let $\eps>0$. If $(X,\dist,\haus^N)$ is an $\RCD(-\eps,N)$ metric measure space and $E\subset X$ is a set of finite perimeter, minimizing the perimeter in $B_2(x)\subset X$, such that:
\begin{itemize}
\item[(i)] the ball $B_2(x)\subset X$ is $\eps$-GH close to the ball $B_2(0)\subset\setR^N$;
\item[(ii)] $E$ is $\eps$-close on $B_2(x)$ in the $L^1$ topology to $\{t<0\}\subset \setR^N$ and $\partial E\cap B_2(x)$ is $\eps$-GH close to $\{t=0\}\cap B_2(0)\subset\setR^N$, where we denoted by $t$ one of the canonical coordinates on $\setR^N$;
\end{itemize}
then we shall say that $E$ is $\eps$-regular at $x$ in $B_2(x)$.

The notion of  $\eps$-regular at $x$  in $B_r(x)$ can be introduced analogously by scaling.
\end{definition}

Notice that, as we prove in \autoref{thm:closurecompactnesstheorem}, $L^1$-convergence of perimeter minimizing open sets automatically self-improves to Hausdorff convergence of their boundaries in this setting.

\begin{theorem}[$\eps$-regularity]\label{thm:epsregularityintro}
Let $N>1$ be fixed. For any $\eps>0$ there exists $\delta=\delta(\eps,N)>0$ such that the following holds.
If $(X,\dist,\haus^N)$ is an $\RCD(-\delta,N)$ metric measure space, $E\subset X$ is perimeter minimizing on $B_4(x)\subset X$ and $E$ is $\delta$-regular in $B_2(x)$ then, for any $y\in \partial E\cap B_1(x)$ and any $0<r<1$, $E$ is $\eps r$-regular in $B_r(y)$.

Moreover, for any $0<\alpha<1$, there exists $\delta=\delta(\alpha,N)>0$ such that, if $x\in \partial E$ and $E$ is $\delta$-regular at $x$ on $B_2(x)$, then $\partial E\cap B_1(x)$ is $C^{\alpha}$-homeomorphic to the ball $B_1(0)\subset \setR^{N-1}$.
\end{theorem}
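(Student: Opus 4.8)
The plan is to treat the two assertions separately: the $\eps$-regularity by a compactness-and-contradiction scheme producing a geometric decay of the flatness under scale reduction, and then the topological regularity by a Reifenberg-type argument, for which the $\eps$-regularity supplies the flatness of $\partial E$ at all points and scales. Throughout, the basic tools are the compactness and stability of $\RCD(-\delta,N)$ spaces under pointed measured Gromov--Hausdorff (pmGH) convergence, the stability of perimeter minimizers together with the Hausdorff convergence of their boundaries (\autoref{thm:closurecompactnesstheorem}), and --- as the key analytic input at small scales --- the sharp Laplacian bound of \autoref{thm:meancurvminimal1Intro}, which, applied to $E$ and to $X\setminus E$, shows that the signed distance from $\partial E$ is \emph{almost harmonic} in a neighbourhood of $\partial E$.

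\emph{Step 1: the $\eps$-regularity.} Since the perimeter minimizing condition is scale invariant and $\RCD(-\delta,N)$ is preserved, with only improved curvature bound, under zooming in, the whole statement reduces to a single decay step: there exist $\eps_0=\eps_0(N)>0$ and $\theta=\theta(N)\in(0,1)$ such that if $(X,\dist,\haus^N)$ is $\RCD(-\eps_0,N)$, $E$ is perimeter minimizing on $B_4(x)$, and $E$ is $\eps$-regular in $B_2(x)$ with $\eps\le\eps_0$, then $E$ is $(\eps/2)$-regular in $B_\theta(y)$ for every $y\in\partial E\cap B_1(x)$. Iterating this step --- re-centering at the point of interest, and interpolating between the scales $\theta^k$ --- yields $\eps r$-regularity at every $y\in\partial E\cap B_1(x)$ and every $0<r<1$, provided the initial $\delta$ is small enough in terms of $\eps$ (the scales bounded below being handled directly by the hypothesis, since $B_r(y)\subset B_2(x)$ whenever $y\in B_1(x)$ and $r<1$). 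To prove the decay step, suppose it fails for a suitably fixed $\theta$: one obtains $\eps_i\downarrow 0$, spaces that are $\RCD(-\eps_i',N)$ with $\eps_i'\downarrow0$, perimeter minimizers $E_i$ on $B_4(x_i)$ that are $\eps_i$-regular in $B_2(x_i)$, and points $y_i\in\partial E_i\cap B_1(x_i)$ at which $E_i$ is not $(\eps_i/2)$-regular in $B_\theta(y_i)$. After rescaling the flatness to unit size, the ambient spaces converge to $\setR^N$ and the normalized boundaries $\partial E_i$ converge to the graph over $\{t=0\}\subset\setR^N$ of a function $u$ solving a linear elliptic equation --- harmonic, the zeroth-order term disappearing as $\eps_i'\downarrow0$, the almost-harmonicity of the signed distance from \autoref{thm:meancurvminimal1Intro} ensuring that the limit is genuinely a solution. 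Elliptic estimates give $\sup_{B_\theta}|u-\ell|\le C\theta^2\sup_{B_1}|u|$ for a suitable affine $\ell$; choosing $\theta$ with $C\theta^2<1/2$ contradicts the failure of the decay step at the $i$-th stage.

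\emph{Step 2: the topological regularity.} Fix $0<\alpha<1$ and a small $\eps=\eps(\alpha,N)>0$, and let $\delta=\delta(\eps,N)$ be as in Step 1. If $E$ is $\delta$-regular at $x$ in $B_2(x)$, then by Step 1, for every $y\in\partial E\cap B_1(x)$ and every $0<r<1$ the rescaled configuration $(B_r(y),E,\partial E)$ is $\eps$-close to the flat model $(B_1(0),\{t<0\},\{t=0\})\subset\setR^N$ in the Gromov--Hausdorff and $L^1$ senses. In other words $\partial E\cap B_1(x)$ is uniformly Reifenberg flat inside $X$, and $X$ itself is uniformly Reifenberg flat towards $\setR^N$ on $B_1(x)$; in particular, by the ambient $\eps$-regularity theorem for $\RCD$ spaces (\autoref{thm:epsregcolding}), $B_1(x)$ is bi-Hölder homeomorphic to $B_1(0)\subset\setR^N$. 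Running the Hölder version of Reifenberg's topological disk theorem simultaneously for the pair $(X,\partial E)$ on $B_1(x)$ --- gluing across scales the $(N-1)$-dimensional affine subspaces approximating $\partial E$ inside the Euclidean balls approximating $X$ --- produces a bi-Hölder homeomorphism of $\partial E\cap B_1(x)$ onto $B_1(0)\subset\setR^{N-1}$ whose Hölder exponent tends to $1$ as $\eps\to0$. Choosing $\eps=\eps(\alpha,N)$ small enough that this exponent exceeds $\alpha$ gives the statement with $\delta=\delta(\alpha,N)$.

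\emph{Main obstacle.} The skeleton is classical; the work lies in running it in a genuinely non-smooth, varying ambient. In Step 1 the subtle point is the blow-up in the decay step: one rescales \emph{both} the set and the ambient, and must control the ambient flatness jointly with the flatness of $E$ so that the limit lives on an honest Euclidean half-space and the limiting PDE is the right one --- this is precisely where the Laplacian comparison of \autoref{thm:meancurvminimal1Intro}, used in place of the classical first-variation/monotonicity input, does the work. In Step 2 the delicate point is running Reifenberg's scheme for $\partial E$ inside a space that is itself only bi-Hölder to Euclidean and changes with the scale, so that the approximating planes, living in different and only approximately isometric balls, must be matched across scales with scale-uniform estimates; organizing this simultaneous, non-smooth Reifenberg construction is the heart of the matter.
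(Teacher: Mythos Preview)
Your Step 1 has a genuine gap, and it is precisely at the point you flag as the ``main obstacle''. You propose a De Giorgi--style improvement of flatness: blow up by the flatness parameter $\eps_i$, and claim that the normalized boundaries converge to the graph of a harmonic function, with \autoref{thm:meancurvminimal1Intro} supplying the PDE. But in a non-smooth ambient this linearization does not go through as written. First, there is no coordinate system in which to define a ``height function'' of $\partial E_i$ over an approximating hyperplane; the GH-approximation gives you a nearby Euclidean ball, but no canonical map identifying points. Second, the object controlled by \autoref{thm:meancurvminimal1Intro} is the signed distance \emph{from} $\partial E$, which vanishes identically on $\partial E$ and so cannot serve as the height over a reference plane; the almost-harmonicity of $\dist^s_E$ does not translate into a linear elliptic equation for the putative graphing function. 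Third, even granting a height function, the compactness needed to pass to a harmonic limit (uniform gradient/oscillation bounds after dividing by $\eps_i$) is not provided by the hypotheses. In short, the De Giorgi excess-decay machinery requires structure that the $\RCD$ setting does not obviously supply.

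The paper's argument is quite different and sidesteps all of this. It does not linearize and does not use \autoref{thm:meancurvminimal1Intro} at all (the paper states explicitly that the $\eps$-regularity subsection is independent of the mean curvature bounds). Instead one argues by contradiction at the \emph{critical} scale $r_n$ (the largest scale at which $\eps$-regularity fails), not at the flatness scale. After rescaling by $r_n$, the ambient converges to $\setR^N$ by \autoref{thm:epsregcolding}, and $E_n$ converges to an entire Euclidean perimeter minimizer $F$ by \autoref{thm:closurecompactnesstheorem}. A perimeter-density estimate (closeness to a half-space forces $\Per(E,B_1)/\omega_{N-1}$ close to $1$) transfers to $F$ at all large scales, so the density at infinity of $F$ is at most $1+c_N$ for the Euclidean density-gap constant $c_N$. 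The classical Euclidean monotonicity formula then forces any blow-down of $F$ to be a cone with density $\le 1+c_N$, hence a half-space by the density gap; monotonicity again gives that $F$ itself is a half-space. This contradicts the failure of $\eps$-regularity at scale $r_n$. The point is that all the hard analysis (monotonicity, density gap) is pushed to the Euclidean limit, where it is classical; no non-smooth first-variation or linearization is needed. Your Step~2 is essentially correct and matches the paper's one-line appeal to the metric Reifenberg theorem.
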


The uniform Reifenberg flatness of minimal boundaries on sequences of smooth manifolds converging in the Lipschitz sense  had been previously considered by Gromov in \cite{Gromov96,Gromov14b}. Here we remove the smoothness assumption, we rely only on the synthetic Ricci curvature lower bounds, and we relax the notion of closeness for the ambient spaces to Gromov-Hausdorff. This has the effect of largely broadening the set of possible applications, thanks to the well known precompactness of spaces with lower Ricci and upper dimension bounds in Gromov-Hausdorff sense and to the well established regularity theory for ambient spaces.\\
The main new idea that we introduce for the proof of \autoref{thm:epsregularityintro} is very robust. The same technique applies to general variational problems in the setting of spaces with lower Ricci bounds, as soon as there are enough stability and an $\eps$-regularity theorem with gap for the analogous problem in the Euclidean setting, see \autoref{rm:densitygapEuclidean} for the precise statement.
\medskip

\autoref{thm:epsregularityintro} 
is the building block to prove that the boundary of a  locally-perimeter-minimizing set is a  topological manifold away from sets of ambient codimention three. A difficulty, which is absent in the Euclidean theory, is that we need to control simultaneously the flatness of the ambient and the flatness of the hypersurface inside it.

\begin{theorem}[\autoref{thm:SummaryOa}]\label{thm:topregawayintro}
Let $(X,\dist,\haus^N)$ be an $\RCD(K,N)$ metric measure space. Let $E\subset X$ be a set of finite perimeter. Assume that $E$ is perimeter minimizing in $B_2(x)\subset X$ and $B_2(x)\cap \partial X=\emptyset$. 
Then, letting $\mathcal{S}^E$ be the set of singular boundary points of $\partial E$, i.e. those points where there exists a blow-up which is not a flat Euclidean half-space, it holds
\begin{equation}\label{hdim}
\dim_H(\mathcal{S}^E\cap B_2(x))\le N-3\, .
\end{equation}
Moreover, for any $0<\alpha<1$ there exists a relatively open set
\begin{equation*}
O_{\alpha}\subset \partial E\cap B_1(x)
\end{equation*}
such that 
\begin{itemize}
\item $\big(\partial E\setminus \mathcal{S}^E\big) \cap B_1(x) \subset O_{\alpha}\,$;  hence, in particular,  $\dim_H\big( (\partial E\setminus O_{\alpha}) \cap B_1(x)\big)\le N-3$;
\item $O_{\alpha}$ is $C^{\alpha}$-biH\"older homeomorphic to an $(N-1)$-dimensional open manifold.
\end{itemize}
\end{theorem}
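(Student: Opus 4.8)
The plan is to combine a blow-up/dimension-reduction argument (to control the singular set) with the $\eps$-regularity result \autoref{thm:epsregularityintro} (to upgrade the regular points to a topological manifold). First I would set up the blow-up analysis: at any point $y\in\partial E\cap B_1(x)$ and scales $r\downarrow 0$, rescale the metric measure structure by $1/r$ and use the compactness/stability of $\RCD$ spaces together with the stability of perimeter minimizers (and the self-improvement of $L^1$-convergence to Hausdorff convergence of boundaries, \autoref{thm:closurecompactnesstheorem}) to extract blow-up limits $(Y,\dist_Y,\haus^N,y_\infty)$ carrying a perimeter-minimizing set $E_\infty$. Since $y\notin\partial X$, the ambient blow-up $Y$ splits as $\setR^k\times Z$ with $Z$ a metric cone, and the cone structure plus monotonicity of a suitable density (the perimeter ratio $\Per(E,B_s(y))/s^{N-1}$, which is monotone by the minimality and the Bishop--Gromov-type inequality) forces $E_\infty$ to be a cone over its boundary; iterating, the top stratum of the singular set is a cone, and dimension reduction in the style of Federer reduces the dimension each time one is not a half-space. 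The key structural input is that a perimeter-minimizing cone in $\setR^1\times Z$ or $\setR^2\times Z$ which is invariant in the Euclidean factors must be a half-space: the $\setR^2$ case uses the sharp Laplacian bound \autoref{thm:meancurvminimal1Intro} (encoding the second variation / stability) to rule out the one-dimensional cone $\{$two half-lines$\}$ exactly as in the smooth Bernstein-type argument, giving the codimension-$3$ bound $\dim_H(\mathcal S^E\cap B_2(x))\le N-3$.

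Next, having \eqref{hdim}, I would produce the open set $O_\alpha$. Fix $0<\alpha<1$ and let $\delta=\delta(\alpha,N)>0$ be as in the second part of \autoref{thm:epsregularityintro}. Define
\begin{equation*}
O_\alpha:=\{\,y\in\partial E\cap B_1(x)\ :\ E\text{ is }\delta\text{-regular at }y\text{ in }B_{s}(y)\text{ for some }0<s<\Dist(y,\partial B_1(x))\,\}.
\end{equation*}
This set is relatively open in $\partial E\cap B_1(x)$: $\delta$-regularity in $B_s(y)$ is an open condition in $y$, because GH-closeness of balls and $L^1$-closeness of $E$ vary continuously under small perturbations of the center and the radius. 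Moreover, by the first conclusion of \autoref{thm:epsregularityintro}, once $E$ is $\delta$-regular in $B_s(y)$ it is $\eps r$-regular in $B_r(z)$ for every $z\in\partial E\cap B_{s/2}(y)$ and every small $r$; in particular every such $z$ is again $\delta$-regular at small scales (taking $\eps\le\delta$ in the first part and adjusting constants), so $O_\alpha$ is automatically \emph{stable under passing to nearby boundary points}, and the second conclusion of \autoref{thm:epsregularityintro} applies at every $z\in O_\alpha$: a neighbourhood of $z$ in $\partial E$ is $C^\alpha$-biHölder homeomorphic to $B_1(0)\subset\setR^{N-1}$. Gluing these local charts (they are compatible because the homeomorphism onto the flat model is canonical up to the Euclidean structure detected in the blow-up) shows $O_\alpha$ is a $C^\alpha$-biHölder $(N-1)$-manifold.

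Finally I must check the containment $(\partial E\setminus\mathcal S^E)\cap B_1(x)\subset O_\alpha$. If $y\in\partial E\cap B_1(x)$ is a regular point, i.e. some blow-up of $(X,E)$ at $y$ is the flat half-space $(\setR^N,\{t<0\})$, then by the stability/convergence package this means: for every $\eta>0$ there is $s>0$ with $B_{2s}(y)$ being $\eta s$-GH close to $B_{2s}(0)\subset\setR^N$ and $E$ being $\eta s$-close on $B_{2s}(y)$ in $L^1$ to $\{t<0\}$, with $\partial E\cap B_{2s}(y)$ being $\eta s$-GH close to $\{t=0\}$ — precisely the $\delta$-regularity hypothesis of \autoref{def:epsregularintro} (after rescaling) once $\eta\le\delta$. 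Hence $y\in O_\alpha$. Combined with \eqref{hdim} this gives $\dim_H((\partial E\setminus O_\alpha)\cap B_1(x))\le N-3$, completing the proof.

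The main obstacle I anticipate is the structural step in the dimension reduction: proving that perimeter-minimizing cones in $\setR^2\times Z$ that are translation-invariant in $\setR^2$ are half-spaces. In the Euclidean theory this is classical (stability rules out the cross/pair-of-half-lines), but here one works in a general cross-section $Z$ which is only an $\RCD$ cone, not a smooth manifold, so the second-variation argument must be run purely through the sharp Laplacian comparison \eqref{eq:meanglobIntro} and its distributional/viscosity interpretation rather than through a classical Jacobi-field computation. Carefully transferring the ``$\dim\le 7$ becomes $\dim\le N-3$'' bookkeeping (the extra $-2$ coming from flatness of the ambient forced by $y\notin\partial X$, which demands controlling the ambient and the hypersurface singular strata simultaneously) is the delicate part; everything else is a stability-and-compactness routine given the tools already developed.
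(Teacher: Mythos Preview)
Your construction of $O_\alpha$ from the $\eps$-regularity theorem is essentially the paper's approach and is fine. The serious gap is in the dimension-reduction half.

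You assert that the perimeter ratio $\Per(E,B_s(y))/s^{N-1}$ is monotone ``by the minimality and the Bishop--Gromov-type inequality'', and use this to conclude that blow-ups of $E$ are cones. This is not known for perimeter minimizers in a general $\RCD(K,N)$ space; the paper explicitly flags that ``it is not clear whether the classical monotonicity formula for minimal surfaces holds in such a general framework''. Bishop--Gromov controls the ambient volume growth, not the perimeter of a minimizer inside it, and no Euclidean-type calibration/first-variation computation is available to derive monotonicity directly. Without monotonicity you cannot conclude that blow-ups of $E$ are cones, and your Federer-style iteration stalls at the first step.

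The paper circumvents this as follows. It never invokes a monotonicity formula in the general ambient. Instead it first runs dimension reduction on the \emph{ambient} singular set: since $\mathcal{S}^E\cap\mathcal{R}(X)$ already has codimension $\ge 8$ by the Euclidean theory (\autoref{thm:sharpdimreg}), one only needs to control $\partial E\cap\mathcal{S}(X)$. Blowing up at a density point of $\partial E\cap\mathcal{S}_\eps(X)$ and iterating (using that the ambient blow-up splits an extra Euclidean factor each time, by the splitting theorem, not by any property of $E$) eventually reduces to an entire perimeter minimizer in $\setR^{N-2}\times C(\mathbb{S}^1_r)$. Only in this very special ambient does the paper invoke a monotonicity formula (\autoref{thm:monocone}, valid for cones over smooth cross-sections times Euclidean factors), and then the codimension-$3$ bound comes from a classification of minimizing cones through the tip of a $2$-dimensional cone (\autoref{thm:2dimrigidity}, due to Morgan), which is an elementary geometric argument, not the Laplacian/second-variation mechanism you propose. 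In particular \autoref{thm:meancurvminimal1Intro} plays no role in the proof of \eqref{hdim}.
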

\medskip

\noindent
Additionally, in \autoref{thm:sharpdimreg} we will prove a sharp dimension estimate 
\begin{equation}
\dim_H\left(\mathcal{S}^E\cap\mathcal{R}(X)\right)\le N-8\,,
\end{equation}
for the intersection of the singular set of the minimal boundary with the regular set $\mathcal{R}(X)$ of the ambient space.

\begin{remark}
The Hausdorff dimension estimate \eqref{hdim} is sharp in this context, as elementary examples illustrate (see \autoref{rem:ExsharpCodim3} and \autoref{rem:sharpThmSum}). It will be obtained through the classical dimension reduction pattern, but several new difficulties arise, due to the non smoothness of the ambient space (for instance it is not clear whether the classical monotonicity formula for minimal surfaces holds in such a general framework).

The $C^{0,\alpha}$ regularity of the manifold $O_{\alpha}$ containing the regular set matches the (currently known) regularity of the regular part $\mathcal{R}(X)$ of the ambient space $X$ (after Cheeger-Colding's metric Reifenberg Theorem  \cite[Appendix 1]{CheegerColding97}  and \cite{KapovitchMondino21}). Higher regularity of  $\partial E\setminus \mathcal{S}^E$ (e.g. contained in a Lipschitz manifold), would require first improving the structure theory of the ambient space.
\end{remark}

In \autoref{thm:contestsing} we will also obtain a Minkowski estimate for the quantitative singular sets of minimal boundaries in this framework, in the spirit of \cite{CheegerNaber13,CheegerNaber13b,NaberValtorta20}. The estimate has independent interest and it is new also for smooth manifolds with lower Ricci curvature and volume bounds (see \autoref{rem:RegNewSmooth}).\footnote{In \cite{Ding21a}, which appeared on the arXiv the day before the appearance of the present paper, Q. Ding has independently proved the first part of \autoref{thm:epsregularityintro} and the Hausdorff dimension estimate \eqref{hdim} under the additional assumption that the minimal boundary is a limit of minimal boundaries along a sequence of smooth manifolds with Ricci curvature and volume of unit balls uniformly bounded from below. These results played a fundamental role in the subsequent proof of the Poincar\'e inequality for minimal graphs over smooth manifolds with nonnegative Ricci curvature and Euclidean volume growth and of generalized versions of Bernstein's theorem in \cite{Ding21c}.}

\subsection{Outline of the strategy to establish the Laplacian bounds of \autoref{thm:meancurvminimal1Intro}}\label{ss:OutlineMCIntro}

On smooth Riemannian manifolds, minimal surfaces are critical points of the area functional. A key technical tool for this definition is the first variation formula.\\ 
For the sake of this presentation, let us focus on sets of finite perimeter in Euclidean ambient spaces. Given any such set $E\subset\setR^n$ and any smooth vector field $\XX$ with compact support in $B_r(x)\subset \setR^n$, we can consider the induced flow of diffeomorphisms $(\Phi_t)_{t\in(-\eps,\eps)}$ such that $\Phi_{0}=\Id$. Then
\begin{equation}\label{eq:firstvar}
\left.\frac{\di}{\di t}\right|_{t=0}\Per(\Phi_t(E)\cap B_r(x))=\int_{\mathcal{F}E\cap B_r(x)}\div_E\XX\di\Per_E\, ,
\end{equation} 
where $\div_E$ denotes the tangential divergence, $\mathcal{F}E$ denotes the so-called reduced boundary of the set of finite perimeter $E$ and $\Per_E$ its perimeter measure. When $E$ is an open set with smooth boundary, $\mathcal{F}E$ coincides with the topological boundary and $\Per_E$ is the surface measure, see \cite{Giusti84,Maggi12}.\\
If $E$ is locally perimeter minimizing, then a deep regularity result originally due to De Giorgi \cite{DeGiorgi61}  and refined by Federer (after work of Simons)  is that $\mathcal{F}E$ is smooth  and $\partial E\setminus \mathcal{F}E $ has ambient codimension $8$; moreover, \eqref{eq:firstvar} implies that  the  classical mean curvature vanishes on $\mathcal{F}E$.
\medskip

It is often advocated that Ricci curvature governs the distortion of volumes on a smooth Riemannian manifold. Indeed, it enters into the variation formula for the area element of the equidistant sets from a given smooth hypersurface, see \cite{HeintzeKarcher78,Gromov91}. If we consider a smooth minimal hypersurface, the first derivative of the area element of equidistant surfaces vanishes at $t=0$, moreover  the Ricci curvature in normal direction and the second fundamental form enter into the expression for the second derivative.\\
There are two main drawbacks of this approach: it only looks at the infinitesimal geometry near to the hypersurface and it requires smoothness, while usually minimal hypersurfaces are built through variational methods and global regularity is not guaranteed.
\medskip

Focusing on the first issue, it is possible to switch from an infinitesimal to a global perspective. If $\Sigma\subset M$ is a smooth minimal hypersurface inside a smooth Riemannian manifold with non-negative Ricci curvature, then the distance function $\dist_{\Sigma}$ is superharmonic on $M\setminus\Sigma$, see \cite{Wu79} and \autoref{subsec:othernotions}. 
This is a remarkable observation for the sake of developing an analogous theory on metric measure spaces, since it avoids the necessity of giving a meaning to the mean curvature of a hypersurface.
\medskip

Let us recall a classical argument  \cite{Gromov07} to deal with the aforementioned regularity issue in the setting of smooth Riemannian manifolds that was key  in the proof of the L\'evy-Gromov isoperimetric inequality.
The fundamental observation is that in order to bound the Laplacian of the distance function, minimality (in the stricter sense of local area minimizing) was only needed at footpoints of minimizing geodesics on the hypersurface itself. In various situations, deep regularity theorems (\cite{DeGiorgi61,Almgren76}) guarantee that minimal hypersurfaces are smooth in a neighbourhood of these points and the classical arguments can then be applied.

Given our current knowledge of $\RCD$ spaces, there is little hope that such an approach could prove \autoref{thm:meancurvminimal1Intro}: there is no first variation formula as \eqref{thm:meancurvminimal1Intro} available at the moment and, even more dramatically, the classical regularity theorems do not make sense in this non-smooth setting. The L\'evy-Gromov isoperimetric inequality has been generalized to the present framework in \cite{CavallettiMondino17}, avoiding the analysis of the mean curvature of isoperimetric sets (see also \cite{Klartag17}, dealing with smooth Riemannian manifolds). However, a sharper understanding of mean curvature bounds for solutions of variational problems is definitely needed for more refined developments.
\medskip

In \cite{CaffarelliCordoba93}, a different proof of the vanishing of the mean curvature for local minimizers of the perimeter functional was obtained in the Euclidean setting. It does not rely on the regularity theory for area minimizers nor on the first variation formula, rather, it follows the pattern of viscosity theory in partial differential equations. The possibility of following a similar pattern to prove the L\'evy-Gromov isoperimetric inequality on Alexandrov spaces was pointed out later in the research announcement \cite{Petrunin03}, together with the key remark that the sup-convolution could act as a counterpart of the more classical slicing with quadratic polynomials of the viscosity theory.
\medskip

Below, we outline the strategy that we will follow, inspired by \cite{CaffarelliCordoba93} and \cite{Petrunin03}, neglecting some of the regularity issues. 
\medskip

Consider a locally area minimizing hypersurface $\Sigma\subset\setR^n$, and assume that it is the boundary of a smooth domain $D$, locally minimizing the surface measure among all compactly supported perturbations.

Let $\dist_{\Sigma}: X\to [0,\infty)$ be the distance function from $\Sigma$, defined by:
\begin{equation*}
\dist_{\Sigma}(x):=\inf\{ \dist(x,y) \, :\, y\in \Sigma\}.
\end{equation*} 
We wish to prove that $\Delta\dist_{\Sigma}\le 0$  in the viscous sense on $\setR^n\setminus\Sigma$.
Let us suppose that this is not the case. Then there exist $x\in\setR^n\setminus\Sigma$ and a smooth function $\phi:U_x\to\setR$ such that
\begin{equation}\label{eq:lap>eps}
\Delta \phi(x)\ge \eps>0\, ,\quad \phi(x)=\dist_{\Sigma}(x)\, ,\quad \phi\le \dist_{\Sigma}\, .
\end{equation} 
Let us extend $\phi$ to a globally defined function $\hat{\phi}:\setR^n\to\setR$ such that $\hat{\phi}\le\dist_{\Sigma}$. Then we introduce $\tilde{\phi}:\setR^n\to\setR$ by
\begin{equation*}
\tilde{\phi}(y):=\max_{z\in\setR^n}\{\hat{\phi}(z)-\dist(z,y)\}\, .
\end{equation*}
The properties of $\tilde{\phi}$ that will be relevant for our purposes are the following:
\begin{itemize}
\item[(i)] $\tilde{\phi}$ is a $1$-Lipschitz map;
\item[(ii)] $\tilde{\phi}\le\dist_{\Sigma}$; 
\item[(iii)] let us denote by $x_{\Sigma}$ one of the footpoints of $x$ on $\Sigma$. Then $\tilde{\phi}=\dist_{\Sigma}$ along the minimal geodesic connecting $x$ to $x_{\Sigma}$;
\item[(iv)] suppose for simplicity that $x_{\Sigma}$ is the unique footpoint of $x$ on $\Sigma$. Then $\tilde{\phi}<\dist_{\Sigma}$ outside from the minimal geodesic connecting $x$ to $x_{\Sigma}$. Moreover, there is a neighbourhood $U_{x_{\Sigma}}$ of $x_{\Sigma}$ such that the maximum defining $\tilde{\phi}$ is achieved at points in a neighbourhood $U_x$ of $x$ for any $y\in U_{x_{\Sigma}}$;
\item[(v)] as a first consequence of (iv), $\abs{\nabla \tilde\phi}=1$ almost everywhere in $U_{x_{\Sigma}}$;
\item[(vi)] as a second consequence of (iv),
\begin{equation}\label{eq:laplatildeeps}
\Delta\tilde{\phi}\ge \eps'>0\, ,
\end{equation}
in the sense of distributions on $U_{x_{\Sigma}}$.
\end{itemize}
Property (vi) above is a consequence of the completely non trivial fact that the transform mapping $\phi$ into $\tilde{\phi}$ preserves, in a suitable sense, Laplacian lower bounds. We shall focus more in detail later on this fact.
\medskip

Let us see how to combine the ingredients above to reach a contradiction with the assumption that $\Sigma$ is a locally area minimizing surface.

Suppose that $\tilde{\phi}$ is also smooth in a neighbourhood of $x_{\Sigma}$ and let us cut the original surface $\Sigma$ along the level sets of $\tilde{\phi}$. By (ii), (iii) and (iv) above we obtain a family of compactly supported perturbations $\Sigma_t$, $t\in[0,\delta)$ of $\Sigma=\Sigma_0$ in this way. We claim that, for some $t\in[0,\eps)$, $\Sigma_t$ has area smaller than $\Sigma$. 

Let $\Omega_t$ be the region bounded between $\Sigma$ and $\Sigma_t$. The boundary $\partial\Omega_t$ is made of two components, one along $\Sigma$, denoted by $\Sigma_{old}$, and one along $\Sigma_t$, denoted by $\Sigma_{new}$. Then we can compute: 
\begin{align*}
0<\,  \int_{\Omega_t}\Delta\tilde{\phi}=\, &- \int_{\Sigma_{old}}\nabla\tilde{\phi}\cdot\nu_{\Sigma_{old}}\di \haus^{n-1}+\int_{\Sigma_{new}}\nabla\tilde{\phi}\cdot\nu_{\Sigma_{new}}\di \haus^{n-1}\\
=\, & -\int_{\Sigma_{old}}\nabla\tilde{\phi}\cdot\nu_{\Sigma_{old}}\di \haus^{n-1}-\haus^{n-1}(\Sigma_{new})\\
\le\,  & \haus^{n-1}(\Sigma_{old})-\haus^{n-1}(\Sigma_{new})\, .
\end{align*}
Above, the first inequality follows from (vi), the first identity follows from the Gauss-Green formula, the second one from the fact that $\Sigma_{new}$ is along the level hypersurface of $\tilde{\phi}$ therefore (taking into account also (v)) we have $-\nu_{\Sigma_{new}}=\nabla \tilde{\phi}$. The last inequality follows from (i), which guarantees in turn that 
\begin{equation*}
\abs{\nabla\tilde{\phi}\cdot\nu_{\Sigma_{old}}}\le 1\, .
\end{equation*}
Hence
\begin{equation*}
 \haus^{n-1}(\Sigma_{old})-\haus^{n-1}(\Sigma_{new})>0\, , 
 \end{equation*}
 contradicting the local minimality of $\Sigma$.
\medskip

Let us now comment on the main steps in the formal argument above.
\medskip
\begin{itemize}
\item We will deal with sets of finite perimeter: their boundaries provide a weak notion of codimension one hypersurface suitable for compactness and stability arguments. The Euclidean theory was developed in the 50's and later partially generalized to metric measure spaces in \cite{Am01,Am02}.
In the framework of $\RCD$ spaces they are quite well understood after \cite{AmbrosioBrueSemola19,BruePasqualettoSemola19,BruePasqualettoSemola21}.\\ 
This class is very natural to consider. Indeed, we recall that the classical regularity theory for area minimizing surfaces in codimension one was built on top of the regularity theory for minimal boundaries. 
\item In order to exploit the variational structure of the problem in the contradiction argument we rely on the \textit{viscous} perspective, while for the sake of applying the Gauss-Green theorem it is important to understand Laplacian bounds in the \textit{sense of distributions}. To this aim, we are going to develop a theory of Laplacian bounds in viscous sense on $\RCD(K,N)$ spaces and prove the equivalence with other weak notions of Laplacian bounds, including the distributional one. This part will be used in some of the geometric applications but it is also of independent analytical interest.
\item Conclusion (vi) above is a consequence of a completely non trivial statement about the preservation of Laplacian bounds via sup-convolution in the Euclidean setting. As we shall see, this statement holds, in a suitable sense, also for $\RCD$ spaces and it turns that it characterizes lower Ricci curvature bounds, at least on smooth Riemannian manifolds. 
\end{itemize}

\subsection{Weak notions of Laplacian bounds}\label{sec:LapBoundIntro}
Notions of superharmonicity for non smooth functions and, more in general, a weak theory of bounds for the Laplacian on smooth Riemannian manifolds have been fundamental in the Geometric Analysis of manifolds with lower curvature bounds. In \cite{Calabi58} a global version of the Laplacian comparison theorem was formulated in the sense of barriers; such a barrier formulation played a role also in the proof of the splitting theorem in \cite{CheegerGromoll71}. 
Then a viscous notion of Laplacian bounds was considered in \cite{Wu79} and its equivalence with other notions, such as the distributional one, was studied in \cite{GreeneWu79}. Since then, these different perspectives have played key roles in the theory. We refer for instance to \cite{Andrews15} for a survey of some recent applications of the viscous perspective.
\medskip

In more recent years, some of these weak notions of Laplacian have been necessary for the developments of an analysis on metric (measure) spaces.\\ 
In the first approaches \cite{KinnunenMartio02,Shanmugalingam02} the perspective was variational. This was made possible by the presence of a good notion of modulus of the gradient on metric measure spaces (see \cite{Cheeger99,HeinonenKoskela98}). More recently, on the one hand the point of view of gradient flows came into play in \cite{AmbrosioGigliSavare14a}, also in connection with the heat flow. On the other hand, in \cite{Gigli15} a distributional approach to the Laplacian on metric measure spaces was put forward.
\medskip

All of the theories above were dealing with quite general metric measure spaces. We aim to show that the further regularity of $\RCD(K,N)$ spaces allows to partially fill the gap with the classical Riemannian theory.\\ 
The first contribution in this regard is a theory of viscous bounds for the Laplacian.

\begin{definition}[Viscous bound for the Laplacian]\label{def:viscosityintro}
Let $(X,\dist,\meas)$ be an $\RCD(K,N)$ metric measure space and let $\Omega\subset X$ be an open and bounded domain. Let $f:\Omega\to\setR$ be locally Lipschitz and $\eta\in\Cb(\Omega)$. We say that $\Delta f\le \eta$ in the viscous sense in $\Omega$ if the following holds. For any open domain $\Omega'\Subset\Omega$ and for any test function $\phi:\Omega'\to\setR$ such that 
\begin{itemize}
\item[(i)] $\phi\in D(\Delta, \Omega')$ and $\Delta\phi$ is continuous on $\Omega'$;
\item[(ii)] for some $x\in \Omega'$ it holds 
$\phi(x)=f(x)$ and $\phi(y)\le f(y)$ for any $y\in\Omega'$, $y\neq x$;
\end{itemize}
it holds
\begin{equation*}
\Delta \phi(x)\le \eta(x)\, .
\end{equation*} 
\end{definition}

The starting point for the viscosity theory of PDEs is the observation that a smooth function at a minimum point has vanishing gradient and non-negative Hessian. By tracing the Hessian, it has also non-negative Laplacian (since also the gradient is vanishing, this principle holds true in the weighted Riemannian setting as well).\\ 
For evident reasons, this is a delicate point on metric measure spaces. The first issue is singling out a class of sufficiently smooth functions that is rich enough to make definitions non trivial. The second is that there is no pointwise notion of Hessian available in this setting. Nevertheless we are able to prove the equivalence between viscosity bounds on the Laplacian and distributional bounds.

\begin{theorem}\label{thm:viscoimpldistriintro}
Let $(X,\dist,\haus^N)$ be an $\RCD(K,N)$ metric measure space. Let $\Omega\subset X$ be an open and bounded domain, $f:\Omega\to\setR$ be a Lipschitz function and $\eta:\Omega\to\setR$ be continuous. Then $\Delta f\le \eta$ in the sense of distributions if and only if $\Delta f\le \eta$ in the viscous sense.
\end{theorem}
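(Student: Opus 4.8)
The plan is to prove the two implications separately, with the direction ``distributional $\Rightarrow$ viscous'' being the routine one and ``viscous $\Rightarrow$ distributional'' being the substantive half.

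For the easy direction, suppose $\Delta f \le \eta$ in the sense of distributions on $\Omega$, and let $\phi$ be a test function as in Definition \ref{def:viscosityintro}, touching $f$ from below at $x \in \Omega'$. The natural move is to argue by contradiction: if $\Delta\phi(x) > \eta(x)$, then by continuity of $\Delta\phi$ and $\eta$ there is a small ball $B_r(x) \Subset \Omega'$ on which $\Delta\phi > \Delta(\text{something dominating }\eta)$ strictly, so $f - \phi$ is a nonnegative function, vanishing at $x$, which is a distributional supersolution of an elliptic equation with a strictly negative right-hand side near $x$; the strong maximum principle (available on $\RCD(K,N)$ spaces, e.g.\ via the elliptic regularity/Harnack machinery) then forces $f - \phi \equiv 0$ near $x$, contradicting $\Delta\phi(x) > \eta(x) \ge \Delta f$ read distributionally. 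One has to be slightly careful that $f-\phi$ need only be continuous, not smooth, but the distributional inequality $\Delta(f-\phi) \le \eta - \Delta\phi < 0$ near $x$ together with $f - \phi \ge 0 = (f-\phi)(x)$ is exactly the hypothesis of the strong minimum principle.

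For the hard direction, assume $\Delta f \le \eta$ in the viscous sense; I would show $\int_\Omega f\,\Delta\psi\,\di\meas \le \int_\Omega \eta\,\psi\,\di\meas$ for all nonnegative $\psi \in \Test(\Omega)$ (or a suitable core of test functions), arguing again by contradiction. The standard viscosity-theory device is \emph{inf-convolution / doubling of variables}: regularize $f$ by the Hopf--Lax-type inf-convolution $f_\varepsilon(x) := \inf_{y}\{f(y) + \tfrac{1}{2\varepsilon}\dist(x,y)^2\}$, which is semiconcave (hence twice differentiable in a weak sense, and in particular lies in $D(\Delta)$ locally with continuous Laplacian at points of differentiability) and converges to $f$ uniformly on compacta. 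At a point $x_\varepsilon$ where $f_\varepsilon$ is touched from below by the paraboloid realizing the infimum — equivalently, where $f$ is touched from below at the corresponding $y_\varepsilon$ — one can use the function $\phi(\cdot) = f(y_\varepsilon) + \tfrac1{2\varepsilon}\dist(y_\varepsilon,\cdot)^2 + \text{const}$ (smoothed if necessary so that it lies in $D(\Delta)$ with continuous Laplacian, using that squared distance functions are controlled on $\RCD(K,N)$ spaces via the Laplacian comparison for $\dist^2$) as an admissible competitor in Definition \ref{def:viscosityintro}, obtaining $\Delta\phi(x_\varepsilon) \le \eta(y_\varepsilon)$. This gives a pointwise-a.e.\ Laplacian bound $\Delta f_\varepsilon \le \eta_\varepsilon$ for a right-hand side $\eta_\varepsilon \to \eta$, which by the Gauss--Green/integration-by-parts formula on $\RCD$ spaces translates into the distributional inequality $\int f_\varepsilon \Delta\psi \le \int \eta_\varepsilon \psi$; passing to the limit $\varepsilon \to 0$ using uniform convergence $f_\varepsilon \to f$ and $\eta_\varepsilon \to \eta$ yields the claim.

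The main obstacle is the regularity bookkeeping in the hard direction: one must make sure that the inf-convolution $f_\varepsilon$, or the paraboloid touching functions, genuinely qualify as test functions in the sense of Definition \ref{def:viscosityintro} — i.e.\ they belong to $D(\Delta, \Omega')$ with \emph{continuous} Laplacian — and that the almost-everywhere pointwise Laplacian bound on $f_\varepsilon$ can be upgraded to a bona fide distributional bound, which requires knowing that $f_\varepsilon$ (being semiconcave, hence $\BV$-gradient, locally Lipschitz) has a measure Laplacian whose absolutely continuous part controls it and whose singular part has the right sign. On a smooth manifold this is the classical fact that a semiconcave function has a distributional Laplacian which is a measure $\le$ its a.e.-defined pointwise Laplacian plus a nonpositive singular part; on $\RCD(K,N)$ spaces the analogous statement should follow from the theory of $\BV$ vector fields and the divergence/Gauss--Green calculus developed in the references cited in the paper, but checking it carefully — especially the continuity of $\Delta\phi$ for the competitor $\phi$ built from $\dist^2$, which on a general $\RCD$ space is only known to have a measure-valued Laplacian with bounds — is the delicate point. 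A cleaner route, which I would pursue in parallel, is to avoid inf-convolution altogether and instead use the heat-flow regularization $h_t f$: heat-flow and viscosity characterizations of Laplacian bounds are expected to coincide (this is asserted elsewhere in the paper), and $h_t f$ is automatically in the domain of the Laplacian with the required smoothness, so one reduces to an a.e.\ comparison $\Delta h_t f \le h_t \eta$ obtained from the viscous hypothesis via the parabolic maximum principle, then lets $t \to 0$.
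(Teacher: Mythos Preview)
Your argument for the easy direction (distributional $\Rightarrow$ viscous) is correct, and in fact cleaner than the paper's: you apply the strong minimum principle for superharmonic functions directly to $g=f-\phi$, whereas the paper perturbs $g$ by a power of the Green-type distance $b_x$ to create a \emph{strict} minimum and then invokes the Omori--Yau--Jensen maximum principle of Zhang--Zhu (\autoref{thm:maximumprincipleOYJ}) to find a sequence $x_n\to x$ with $\Delta^{ac}g(x_n)\ge -1/n$. Both work; yours is shorter.

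The hard direction (viscous $\Rightarrow$ distributional), however, has a genuine gap that you already put your finger on but do not resolve. Your inf-convolution approach wants to test the viscous hypothesis with the touching paraboloid $\phi(\cdot)=f(y_\eps)+\tfrac{1}{2\eps}\dist(y_\eps,\cdot)^2+c$. On an $\RCD(K,N)$ space the squared distance $\dist(y,\cdot)^2$ has only a \emph{measure-valued} Laplacian (from Laplacian comparison), never a continuous one unless the space is essentially smooth; hence $\phi\notin D(\Delta,\Omega')$ with continuous $\Delta\phi$, and it is simply not an admissible competitor in \autoref{def:viscosityintro}. This is not a regularity bookkeeping issue that can be smoothed away: the whole difficulty of a viscous theory on $\RCD$ spaces is precisely that the classical ``quadratic polynomial'' tests are unavailable. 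Your fallback via heat flow is circular as stated, since deducing $\Delta P_t f\le P_t\eta$ from the viscous hypothesis alone would itself require an implication of the type you are trying to prove.

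The paper circumvents this obstacle by a different route entirely. It never regularizes $f$. Instead it first proves a weak maximum principle for viscous supersolutions (\autoref{lemma:maxprincvisco}): if $\Delta f\le 0$ viscously on $\Omega$ then $\min_\Omega f=\min_{\partial\Omega}f$. The proof perturbs by $\eps v$ where $v$ solves $\Delta v=1$ on a slightly larger domain; crucially $v$ \emph{does} have continuous Laplacian (equal to the continuous function $1$), so $\eps v$ is an admissible viscous test function, and one obtains a contradiction at an interior minimum of $f-\eps v$. With this maximum principle in hand, the paper shows that a viscous supersolution of $\Delta f=\eta$ is a \emph{classical} supersolution in the sense of comparison with Dirichlet solutions (\autoref{def:classicalsupersolution}), and then invokes the already-known equivalence of classical supersolutions with distributional bounds via superminimizers (\autoref{thm:supersolsupermin}). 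The moral is that, on $\RCD$ spaces, the correct replacement for the missing parabolas as viscous competitors is solutions of Poisson equations with continuous right-hand side.
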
  

The key difficulty discussed above will be circumvented relying on a powerful maximum principle obtained in \cite{ZhangZhu16}, reminiscent of the Omori-Yau and Jensen's maximum principles.\\
To prove that - at a minimum point of a sufficiently regular function - the Laplacian is non-negative, we will build a family of auxiliary functions playing the role of the distance squared in the Euclidean setting, i.e. sufficiently regular, with a strict minimum at a prescribed point and with non-negative Laplacian. This construction, of independent interest, is based on the study of the local Green function of the Laplacian on domains.
\medskip

As we already remarked, the connection between the heat flow and the distributional Laplacian is classical, see for instance \cite{AmbrosioGigliSavare14a,GrygoryanHu14,Gigli15}. Another contribution of the paper will be the proposal and the analysis of a new approach to Laplacian bounds, based on the \textit{pointwise} short time behaviour of the heat flow.\\
For a smooth function $f$ on a (compact and possibly weighted) Riemannian manifold,
\begin{equation}\label{eq:pointheatintro}
P_tf(x)=f(x)+t\Delta f(x)+o(t^2)\, ,  \quad\text{as $t\to 0$}\, .
\end{equation} 
Then we propose the following.
\begin{definition}\label{def:heatlaplboundsintro}
Let $(X,\dist,\meas)$ be an $\RCD(K,N)$ metric measure space and let $\Omega\subset X$ be an open and bounded domain. Let $f:\Omega\to\setR$ be a Lipschitz function and let $\eta\in\Cb(\Omega)$. We say that $\Delta f\le \eta$ on $\Omega$ in the \textit{heat flow sense} if the following holds. For any $\Omega'\Subset\Omega$ and any function $\tilde{f}:X\to\setR$ extending $f$ from $\Omega'$ to $X$ and with polynomial growth, we have
\begin{equation*}
\limsup_{t\downarrow 0}\frac{P_t\tilde{f}(x)-\tilde{f}(x)}{t}\le \eta(x)\, ,\quad \text{for any $x\in\Omega'$}\, .
\end{equation*}
\end{definition}
Building on the top of \autoref{thm:viscoimpldistriintro} we shall prove that also the notion in \autoref{def:heatlaplboundsintro} is an equivalent characterization of Laplacian bounds, see \autoref{prop:heatimplvisco} and \autoref{prop:distrimplheat}.\\ 
Besides its own theoretical interest, this perspective will be the key to understand the interplay between the Hopf-Lax semigroup and the preservation of Laplacian bounds under lower Ricci curvature bounds, as discussed below.

\subsection{Hopf-Lax semigroup and lower Ricci curvature bounds}\label{ss:HLIntro}
The Hopf-Lax semigroup is a fundamental tool in the viscosity theory of Partial Differential Equations, in Optimal Transport and in Geometric Analysis. In this paper we establish a new principle about the stability of Laplacian bounds through the Hopf-Lax semigroup under (possibly synthetic) lower Ricci curvature bounds.
\medskip

Let $1\le p<\infty$ and let $(X,\dist)$ be a metric space. Let us consider $f:X\to\setR\cup\{\pm\infty\}$ not identically $+\infty$ and let the evolution via the $p$-Hopf-Lax semigroup, for $0<t<\infty$ be defined by
\begin{equation}\label{eq:HLdefintro}
\mathcal{Q}^p_tf(x):=\inf_{y\in X}\left(f(y)+\frac{\dist(x,y)^p}{p\, t^{p-1}}\right)\, .
\end{equation}
Notice that when $p=1$ there is a simpler expression for the Hopf-Lax semigroup, actually independent of $t$, namely:
\begin{equation*}
f^c(x):=\mathcal{Q}^1_tf(x)=\mathcal{Q}^1 f(x)=\inf_{y\in X}\big(f(y)+\dist(x,y)\big)\, .
\end{equation*}

The role of the $2$-Hopf-Lax semigroup (commonly known also as \textit{inf-convolution}) as a non linear regularization tool was put forward in \cite{LasryLions86}. 
The connection of the $2$-Hopf Lax semigroup with the viscous theory was made clear later in \cite{CrandallIshiiLions92} where the \textit{magic property} of this non linear convolution (see Lemma A.5 therein) is that viscosity supersolutions are mapped into viscosity supersolutions by $\mathcal{Q}_t^2$. All these properties, in this generality, are usually proved relying on the Hilbert space structure of the Euclidean space.
\medskip

The $2$-Hopf-Lax semigroup was then used in \cite{Cabre98} in the analysis of elliptic operators in non-divergent form on Riemannian manifolds with non-negative sectional curvature, later extended to lower Ricci curvature bounds in \cite{Kim04,WangZhang13}. The Hopf-Lax semigroup also played a key role in the characterization of lower Ricci bounds for smooth Riemannian manifolds in terms of optimal transport  \cite{OttoVillani00, Cordetal01, SturmVonRenesse05} which paved the way to the synthetic theory of  Lott-Sturm-Villani  $\CD(K,N)$ spaces \cite{Sturm06a,Sturm06b,LottVillani}.
\medskip

A subsequent breakthrough came in \cite{Kuwada10} with a new connection between the Hopf-Lax semigroup (for general exponents $p$) and lower bounds on the Ricci curvature. On a smooth Riemannian manifold $(M,g)$ with Riemannian distance $\dist$ the following conditions are equivalent:
\begin{itemize}
\item[(i)] $\Ric\ge K$, for some $K\in\setR$;
\item[(ii)] let $1\le p<\infty$ be fixed. For any non-negative Lipschitz function with bounded support $f:M\to\setR$ it holds
\begin{equation}\label{eq:Kuwadaintro}
P_s\left(\mathcal{Q}^p_1 f\right)(x)-P_sf(y)\le \frac{e^{-pKs}}{p}\dist(x,y)^p\, ,
\end{equation}
for any $x,y\in X$ and for any $s\ge 0$,
where we denoted by $P_s$ the heat flow at time $s$.
\end{itemize}
The robustness of condition (ii) (notice that it involves only objects that do have a meaning in the setting of metric measure spaces) and of the proof of the equivalence, opened the way to several developments in the smooth and in the non-smooth theory of lower Ricci curvature bounds, see for instance \cite{AmbrosioGigliSavare14,AmbrosioGigliSavare15,Bakryetal15}. In particular, (ii) is a synthetic condition, valid also in the framework of $\RCD(K,\infty)$ metric measure spaces.
\medskip

A striking consequence of the Kuwada duality \eqref{eq:Kuwadaintro} which is explored in this paper is that the Hopf-Lax semigroup maps superharmonic functions into superharmonic functions on spaces with non-negative Ricci curvature, in synthetic sense, for any $1\le p<\infty$. More in general, it preserves (up to errors depending on the lower Ricci curvature bound) Laplacian upper bounds.\\
Indeed, suppose that $(M,g)$ is a compact manifold with non-negative Ricci curvature and that $f:M\to\setR$ is a smooth function. Let $x,y\in M$ be such that
\begin{equation}\label{eq:eq0}
\mathcal{Q}^p_1 f(x)-f(y)=\frac{1}{p}\dist(x,y)^p\, .
\end{equation}
Then, assuming for the sake of this presentation that $\mathcal{Q}^p_1 f$ is smooth at $x$, we can take the right derivatives at time $s=0$ in \eqref{eq:Kuwadaintro}, taking into account \eqref{eq:eq0} to obtain
\begin{equation*}
\Delta \mathcal{Q}^p_1 f(x)\le \Delta f(y)\, .
\end{equation*}

Focusing on the case $p=1$, the theory of Laplacian bounds for non-smooth functions allows to remove the (un-natural, even on smooth manifolds) regularity assumptions and prove the following.

\begin{theorem}\label{thm:HLlaplaintro}
Let $(X,\dist,\haus^N)$ be an $\RCD(K,N)$ metric measure space. Let $f:X\to\setR$ be a locally Lipschitz function. Let $\Omega,\Omega'\subset X$ be open domains and $\eta\in\setR$. Then the following holds. Assume that $f^c$ is finite and that, for any $x\in\Omega'$ the infimum defining $f^c(x)$ is attained at some $y\in \Omega$. Assume moreover that
\begin{equation}\label{eq:lapassumption}
\Delta f\le \eta \quad\text{on $\Omega$}\,.
\end{equation}
Then
\begin{equation*}
\Delta f^c\le \eta - \min_{x\in\Omega', y\in\Omega} K \dist(x,y) \quad\text{on $\Omega'$},
\end{equation*}
where the Laplacian bounds have to be intended in any of the equivalent senses discussed in \autoref{sec:LapBoundIntro} (see also \autoref{thm:mainequivlapla}).
\end{theorem}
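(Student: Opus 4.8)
The plan is to read Kuwada's duality \eqref{eq:Kuwadaintro} — which holds in the synthetic setting, as an $\RCD(K,N)$ space is in particular $\RCD(K,\infty)$ — at first order in time, and to combine it with the heat flow characterization of Laplacian bounds (\autoref{def:heatlaplboundsintro}) and its equivalence with the distributional one (\autoref{thm:mainequivlapla}). Since the asserted inequality is pointwise and, for every pair $(x,y)\in\Omega'\times\Omega$, one has $K\dist(x,y)\ge \min_{x'\in\Omega',\,y'\in\Omega}K\dist(x',y')$, it suffices to fix $x_0\in\Omega'$, to choose $y_0\in\Omega$ realizing the infimum in the definition of $f^c(x_0)$, so that
\begin{equation*}
f^c(x_0)=f(y_0)+\dist(x_0,y_0)\,,
\end{equation*}
and to prove that $\Delta f^c(x_0)\le\eta-K\dist(x_0,y_0)$. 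We argue with the heat flow notion; since $f^c$ is globally $1$-Lipschitz, hence of at most linear growth and thus admissible for the heat semigroup, and since the short-time asymptotics of $P_t$ at a point depend only on the behaviour of the function near that point (the property making \autoref{def:heatlaplboundsintro} well posed), it is enough to prove
\begin{equation*}
\limsup_{t\downarrow 0}\frac{P_t f^c(x_0)-f^c(x_0)}{t}\ \le\ \eta-K\dist(x_0,y_0)\,.
\end{equation*}

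Because $f$ is only locally Lipschitz, I first replace it by a bounded Lipschitz function $\hat f$ that coincides with $f$ on a fixed open neighbourhood $\Omega_1\Subset\Omega$ of $y_0$ and that, after truncating $f$ far from $x_0$ and $y_0$, satisfies $\hat f^{\,c}=f^c$ on a neighbourhood of $x_0$; constructing such an $\hat f$ uses the finiteness of $f^c$, which forces $f$ to decay at most linearly at infinity and thus confines the minimizers of $f^c(x)$ for $x$ near $x_0$ to a bounded set. By locality of the heat flow again we have $P_t f^c(x_0)=P_t\hat f^{\,c}(x_0)+o(t)$, while $f^c(x_0)=\hat f^{\,c}(x_0)=\hat f(y_0)+\dist(x_0,y_0)$; so it suffices to estimate $\limsup_{t\downarrow 0}t^{-1}\big(P_t\hat f^{\,c}(x_0)-\hat f^{\,c}(x_0)\big)$. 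Now Kuwada's duality \eqref{eq:Kuwadaintro} with $p=1$ — in which case $\mathcal{Q}^1_1 g=g^c$ — applied to $\hat f$ at the pair $(x_0,y_0)$ and at time $t$, reads
\begin{equation*}
P_t\hat f^{\,c}(x_0)\ \le\ P_t\hat f(y_0)+e^{-Kt}\,\dist(x_0,y_0)\,.
\end{equation*}
Subtracting the identity $\hat f^{\,c}(x_0)=\hat f(y_0)+\dist(x_0,y_0)$, dividing by $t>0$ and passing to the $\limsup$ as $t\downarrow 0$, the first term on the right-hand side contributes at most $\eta$ — this is precisely \autoref{def:heatlaplboundsintro} applied to the hypothesis \eqref{eq:lapassumption} with the admissible (bounded, hence polynomially growing) extension $\hat f$ of the restriction of $f$ to $\Omega_1$ — while the second contributes $\big(\lim_{t\downarrow 0}\tfrac{e^{-Kt}-1}{t}\big)\dist(x_0,y_0)=-K\dist(x_0,y_0)$. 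Therefore $\Delta f^c(x_0)\le\eta-K\dist(x_0,y_0)\le\eta-\min_{x'\in\Omega',\,y'\in\Omega}K\dist(x',y')$ in the heat flow sense, hence, by \autoref{thm:mainequivlapla}, in each of the equivalent senses.

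I expect the only genuine difficulty to be the technical reduction of the previous paragraph, namely producing a bounded $\hat f$ with $\hat f^{\,c}=f^c$ near $x_0$: one must ensure that the truncation performed far away creates no spurious competitor in the infimum defining $f^c$ near $x_0$, which is where the finiteness of $f^c$ (equivalently, the at-most-linear lower decay of $f$) is used to keep the relevant minimizers in a compact region. Everything else is already available: Kuwada's inequality \eqref{eq:Kuwadaintro} in the $\RCD(K,N)$ framework, the heat flow notion of Laplacian bounds together with its equivalence with the distributional and viscous ones (\autoref{def:heatlaplboundsintro}, \autoref{thm:mainequivlapla}), and the order preservation and locality of the short-time asymptotics of the heat semigroup. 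The conceptual point — and the reason the statement becomes nearly immediate once the right notion of Laplacian bound is in hand — is that Kuwada's estimate, differentiated at $t=0$ at a contact point of the inf-convolution, says precisely that a Laplacian upper bound propagates from $f$ to $f^c$ with the error $-K\dist$.
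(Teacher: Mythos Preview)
Your overall strategy is the paper's: the result is \autoref{thm:HLlapla}, proved by combining the $p=1$ Kuwada inequality (\autoref{thm:kuwadapolgrowth}) with the heat-flow notion of Laplacian bounds (\autoref{prop:HLprese hsense}) and the equivalence \autoref{thm:mainequivlapla}. The difference lies only in how the growth/regularity of $f$ is handled, and here your reduction has a gap while the paper's route is both simpler and cleaner.

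The gap is in the sentence ``the finiteness of $f^c$ \ldots\ confines the minimizers of $f^c(x)$ for $x$ near $x_0$ to a bounded set.'' Finiteness of $f^c$ only gives $f(z)\ge f^c(x_0)-\dist(x_0,z)$, and this linear lower decay does \emph{not} confine minimizers: take $f(z)=1-\dist(x_0,z)$ on a geodesic space, where every point is a minimizer for $f^c(x_0)$. So your construction of $\hat f$ with $\hat f^{\,c}=f^c$ on a full neighbourhood of $x_0$ is not justified, and hence the appeal to locality for $P_tf^c(x_0)=P_t\hat f^{\,c}(x_0)+o(t)$ is not available.

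The fix is to replace locality by monotonicity. You only need $\hat f^{\,c}\ge f^c$ everywhere together with $\hat f^{\,c}(x_0)=f^c(x_0)$; then $P_tf^c(x_0)-f^c(x_0)\le P_t\hat f^{\,c}(x_0)-\hat f^{\,c}(x_0)$ by order preservation of $P_t$, and your computation goes through. This weaker requirement is easy to arrange: take $\hat f=f$ on a bounded neighbourhood $\Omega_1\Subset\Omega$ of $y_0$ and $\hat f\ge f^c$ globally (possible since $f^c$ is $1$-Lipschitz), so that $\hat f$ has at most linear growth; then $\hat f^{\,c}\ge (f^c)^c=f^c$ and $\hat f^{\,c}(x_0)\le \hat f(y_0)+\dist(x_0,y_0)=f^c(x_0)$.

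The paper avoids the $\hat f$ detour altogether. In \autoref{thm:kuwadapolgrowth} it observes that for $p=1$ the Kuwada inequality reduces to $P_sf^c(x)-P_sf(y)\le e^{-Ks}\dist(x,y)$; since $f^c\le f$ it suffices to prove $P_sf^c(x)-P_sf^c(y)\le e^{-Ks}\dist(x,y)$, and this is just the Lipschitz contraction $\LipConst(P_sf^c)\le e^{-Ks}$ for the $1$-Lipschitz function $f^c$. Differentiating at $s=0$ (\autoref{prop:HLprese hsense}) gives $\Delta^h f^c(x_0)\le \Delta^h f(y_0)-K\dist(x_0,y_0)$, and \autoref{thm:mainequivlapla} closes the loop. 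Note that the body statement \autoref{thm:HLlapla} carries the extra hypothesis that $f$ has polynomial growth, precisely so that $P_sf$ is well defined; without it one is forced into an extension argument like yours.
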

Similar results can be obtained for general exponents $p\in[1,\infty)$, covering in particular the case $p=2$ that was classically considered in the viscosity theory, as we recalled above. 
\medskip

We are not aware of any reference for the above stability of Laplacian bounds with respect to the Hopf-Lax semigroup for general exponents $p\in[1,\infty)$, even in the setting of smooth Riemannian manifolds.
The property is stated in the unpublished \cite{Petrunin00} for Alexandrov spaces with lower sectional curvature bounds, where a sketch of the proof is also presented. The only other references we are aware of are \cite{ZhangZhu12}, dealing only with the case $p=2$ on Alexandrov spaces with lower Ricci curvature bounds and relying on the existence of a parallel transport between tangent cones along minimizing geodesics and on the second variation formula for the arc length from \cite{Petrunin98}, and the more recent \cite{ZhangZhongZhu19}, dealing with $1<p<\infty$ on smooth Riemannian manifolds. Also in this case, our proof is completely different and more robust, as it avoids completely the use of parallel transport along geodesics.\\
Let us also mention that the property in \autoref{thm:HLlaplaintro} is equivalent to a lower Ricci curvature bound, at least on smooth Riemannian manifolds (see  \autoref{thm:HLsmoothmanifolds}). The range of the applications of this PDE principle is expected to be broad. For instance, it plays a key role in the solution of the well known open question about Lipschitz continuity of harmonic maps from $\RCD(K,N)$ to $\mathrm{CAT}(0)$ spaces by the authors in \cite{MondinoSemola22} (see also the subsequent \cite{Gigli22}). 

Finally, we also mention that some of the results of the present work (namely: the equivalence of Laplacian bounds, \autoref{thm:mainequivlapla},  and the  Laplacian bounds on the distance function from locally perimeter minimizers, \autoref{thm:meancurvminimal1Intro}) have been subsequently extended \cite{GigliMondinoSemola23} to $\RCD(K,N)$ spaces endowed with a general reference measure $\mathfrak{m}$ (i.e. not necessarily the $N$-dimensional Hausdorff measure $\haus^N$).

\subsection*{Organization of the paper}
The paper is organised as follows:
\begin{itemize}
\item In \autoref{Sec:prelim}, we collect some background results about $\RCD(K,N)$ metric measure spaces that will be needed in the subsequent developments. Let us mention that this preliminary section already contains some original result about the pointwise short time behaviour of the heat flow and about local Green functions of the Laplacian. In particular, the properties of the local Green functions are  employed in the construction of a local \textit{Green distance} with good properties, which is of independent interest.
\item  In \autoref{sec:lap} we consider some new equivalences between different notions of Laplacian and bounds for the Laplacian on an $\RCD(K,N)$ metric measure space $(X,\dist,\haus^N)$, as outlined in \autoref{sec:LapBoundIntro}. 
\item \autoref{sec:HopfLax}  is dedicated to analyze the interplay between the Hopf-Lax semigroups (associated to exponents $1\le p<\infty$),  Ricci curvature lower bounds and Laplacian upper bounds, as sketched in \autoref{ss:HLIntro}.
\item \autoref{sec:meancurv} is devoted to the study of mean curvature bounds for boundaries of locally perimeter minimizing sets of finite perimeter, in the framework of $\RCD(K,N)$ metric measure spaces $(X,\dist,\haus^N)$.
Mean curvature bounds will be encoded into Laplacian bounds for distance functions, as outlined  in \autoref{subsec:overview} and \autoref{ss:OutlineMCIntro}.
\item Finally, \autoref{sec:regularity} is dedicated to the partial regularity theory for minimal boundaries on non collapsed $\RCD$ spaces, as sketched in \autoref{ss:RegIntro}.
\end{itemize}

\section*{Acknowledgements}
The authors are  supported by the European Research Council (ERC), under the European Union Horizon 2020 research and innovation programme, via the ERC Starting Grant  “CURVATURE”, grant agreement No. 802689.\\
The second author is grateful to Gioacchino Antonelli and Giovanni Comi for useful comments on a preliminary version of this note. The authors are grateful to the anonymous reviewers for their careful reading and comments.

\section{Preliminaries}\label{Sec:prelim}
In this preliminary section we collect some background results about $\RCD(K,N)$ metric measure spaces that will be needed in the subsequent developments of the paper. This section already contains some original result of independent interest, as detailed below.

In \autoref{sec:SlopeChEnergy} we fix some notation and quickly recall the definition and basic properties of the Cheeger energy.
In \autoref{subsec:generalRCD} we briefly introduce $\RCD(K,N)$ spaces and recall some of their fundamental properties, together with some useful terminology. In \autoref{subsec:noncollapsed} we focus on the regularity properties of those $\RCD(K,N)$ metric measure spaces where the reference measure $\meas$ is the $N$-dimensional Hausdorff measure $\haus^N$. We dedicate \autoref{subsec:setsoffiniteperimeter} to the background material about sets of finite perimeter.
In \autoref{subsec:heat} we focus on the Laplacian, the heat flow and the heat kernel. After recalling the basic notions and properties, we present some new results about the pointwise short time behaviour of the heat flow. Then in \autoref{subsec:Poisson} we recall some existence and regularity results about the Poisson equation and in \autoref{subsec:Green} we present a new analysis of the local Green function of the Laplacian in this framework. The properties of the local Green function are finally employed in the construction of a local \textit{Green distance} with good properties, which is of independent interest.

\subsection{Slope, Cheeger energy and weak upper gradient}\label{sec:SlopeChEnergy}
Throughout the paper, $(X,\dist,\meas)$ will be a metric measure space, i.e. $(X,\dist)$ is a complete and separable metric space endowed with a non-negative Borel measure which is finite on bounded sets.
\\Given $f:X\to \setR$, we denote with $\lip f $ the slope of $f$ defined as
\begin{equation*}
\lip f (x_{0}):=\limsup_{x\to x_{0}}  \frac{|f(x)-f(x_{0})|}{\dist(x, x_{0})} \; \text{ if $x_{0}$ is not isolated}\, 
\end{equation*}
and $\lip f(x_{0})=0$ otherwise.

We denote with $\Lip(X)$ (resp. $\Lipb (X),  \Lipbs(X)$) the space of Lipschitz functions on $(X, \dist)$ (resp. bounded Lipschitz functions, and Lipschitz functions with bounded support).
For $f\in \Lip(X)$, let $\LipConst(f)$ denote the Lipschitz constant of $f$.  Clearly, $\lip f \leq \LipConst (f)$ on all $X$.

The Cheeger energy (introduced in \cite{Cheeger99} and further studied in \cite{AmbrosioGigliSavare14a}) is defined as the $L^{2}$-lower semicontinuous envelope of the functional $f \mapsto \frac{1}{2} \int_{X} (\lip f)^2 \, \di \meas$, i.e.:
\begin{equation*}
\Ch(f):=\inf \left\{ \liminf_{n\to \infty}  \frac{1}{2} \int_{X} (\lip f_n)^2\, \di \meas \, :\,   f_n\in \Lip(X), \; f_{n}\to f \text{ in }L^{2}(X,\meas) \right \}\, .
\end{equation*}
If $\Ch(f)<\infty$ it was proved in \cite{Cheeger99,AmbrosioGigliSavare14a} that the set
$$
G(f):= \left\{g \in L^{2}(X,\meas) \, :\,   \exists \, f_{n} \in \Lip(X), \, f_n\to f, \, \lip f_n \rightharpoonup h\geq g  \text{ in } L^{2}(X,\meas) \right\}
$$
is closed and convex, therefore it
admits a unique element of minimal norm called  \textit{minimal weak upper gradient} and denoted by $|\nabla f|$.   The Cheeger energy can be then  represented by integration as
$$\Ch(f):=\frac{1}{2} \int_{X} |\nabla f|^{2} \di \meas\, . $$
It is not difficult to see that $\Ch$ is a $2$-homogeneous, lower semi-continuous, convex functional on $L^{2}(X,\meas)$, whose proper domain 
${\rm Dom}(\Ch):=\{f \in L^{2}(X,\meas)\,:\, \Ch(f)<\infty\}$ is a dense linear subspace of $L^{2}(X,\meas)$. It then admits an $L^{2}$-gradient flow which is a continuous semigroup of contractions $(P_{t})_{t\geq 0}$ in $L^{2}(X,\meas)$, whose continuous trajectories $t \mapsto P_{t} f$, for $f \in L^{2}(X,\meas)$,  are locally Lipschitz  curves from $(0,\infty)$ with values into  $L^{2}(X,\meas)$. 

Throughout the paper, we will assume that $\Ch: {\rm Dom}(\Ch)\to \setR$ satisfies the parallelogram identity (i.e. it is a quadratic form) or, equivalently, that $P_t:  L^{2}(X,\meas) \to  L^{2}(X,\meas)$ is a linear operator for every $t\geq 0$. This, in turn, is equivalent to require that ${\rm Dom}(\Ch)$ endowed with the norm $\|f\|_{H^{1,2}}^2:= \|f\|_{L^2}+ 2 \Ch(f)$ is a Hilbert space (in general it is only a Banach space) that will be denoted by $H^{1,2}(X, \dist,\meas)$, see \cite{AmbrosioGigliSavare14, Gigli15}.

\subsection{General properties of $\RCD(K,N)$ spaces}\label{subsec:generalRCD}

The main subject of our investigation will be the so-called $\RCD(K,N)$ metric measure spaces $(X,\dist,\meas)$, i.e. infinitesimally Hilbertian metric measure spaces with Ricci curvature bounded from below and dimension bounded from above, in synthetic sense.\\
The Riemannian Curvature Dimension condition $\RCD(K,\infty)$ was introduced in \cite{AmbrosioGigliSavare14} (see also the subsequent \cite{AmbrosioGigliMondinoRajala15}) coupling the Curvature Dimension condition $\CD(K,\infty)$, previously developed in \cite{Sturm06a,Sturm06b} and independently in \cite{LottVillani}, with the assumption that the heat semigroup $(P_{t})_{t\geq 0}$  is linear  in $L^{2}(X,\meas)$.
The  finite dimensional refinements subsequently led to the notions of $\RCD(K,N)$ and $\RCD^*(K, N)$ spaces, corresponding to $\CD(K, N)$ (resp. $\CD^*(K, N)$, see \cite{BacherSturm10}) coupled with  linear heat flow. The class $\RCD(K,N)$ was proposed in \cite{Gigli15}. The (a priori more general) $\RCD^*(K,N)$ condition was thoroughly analysed in \cite{ErbarKuwadaSturm15} and (subsequently and independently) in \cite{AmbrosioMondinoSavare15} (see also \cite{CavallettiMilman16} for the equivalence betweeen $\RCD^*$ and $\RCD$ in the case of finite reference measure).
\medskip

We avoid giving a detailed introduction to this notion, addressing the reader to the survey \cite{Ambrosio19} and references therein for the relevant background. Below we recall some of the main properties that will be relevant for our purposes.
\medskip

Note that, if $(X,\dist,\meas)$ is an $\RCD(K,N)$ m.m.s., then so is $(\supp\, \meas,\dist,\meas)$, hence in the following we will always tacitly assume $\supp\, \meas = X$.
\medskip

Any $\RCD(K,N)$ m.m.s.\ $(X,\dist,\meas)$ satisfies the Bishop-Gromov inequality:
\begin{equation}\label{eq:BishopGromovInequality}
\frac{\meas(B_R(x))}{v_{K,N}(R)}\le\frac{\meas(B_r(x))}{v_{K,N}(r)}
\quad \text{for any $0<r<R$ and $x\in X$}\, ,
\end{equation}
where $v_{K,N}(r)$ is the volume of the ball with radius $r$ in the model space with dimension $N$ and Ricci curvature $K$. 
In particular $(X,\dist,\meas)$ is locally uniformly doubling. Furthermore, it was proved in \cite{Rajala12} that it satisfies a local Poincar\'{e} inequality. Therefore $\RCD(K,N)$ spaces fit in the framework of PI spaces.
\medskip


We assume the reader to be familiar with the notion of (pointed measured) Gromov-Hausdorff convergence (pmGH-convergence for short), referring to \cite[Chapter 27]{Villani09} and \cite{GigliMondinoSavare15} for an overview on the subject. 

\begin{definition}\label{def: mpGH convergence}
	A sequence $\set{(X_i, \dist_i, \meas_i, x_i)}_{i\in \setN}$ of pointed m.m.s. is said to converge in the pmGH topology to $(Y,\varrho,\mu, y)$ if there exist a complete separable metric space $(Z, \dist_Z)$ and isometric embeddings 
	\begin{align*}
	 &\Psi_i : (\supp \meas_i, \dist_i)\to (Z,\dist_Z)
	 \qquad
	 \forall i\in \setN\, ,\\
	&\Psi: (\supp\mu, \varrho)\to (Z,\dist_Z)\, ,
	\end{align*}
	such that for every $\eps>0$ and $R>0$ there exists $i_0$ such that for every $i>i_0$
	\begin{equation*}
		\Psi(B^Y_R(y))\subset [\Psi_i(B^{X_i}_R(x_i))]_{\eps}\, ,
		\qquad
		\Psi_i(B^{X_i}_R(x_i))\subset [\Psi(B^Y_R(y))]_{\eps}\, ,
	\end{equation*}
	where $[A]_{\eps}:=\set{z\in Z\ : \dist_Z(z,A)<\eps}$ for every $A\subset Z$. Moreover $(\Psi_i)_{\#} \meas_i\weakto \Psi_{\#} \mu$, where the convergence is understood in duality with $\Cbs(Z)$.
\end{definition}

In the case of a sequence of uniformly locally doubling metric measure spaces $(X_i,\dist_i,\meas_i,x_i)$ (as in the case of $\RCD(K,N)$ spaces), the pointed measured Gromov-Hausdorff convergence to $(Y,\varrho,\mu,y)$ can be equivalently characterized asking for the existence of a proper metric space $(Z,\dist_Z)$ such that all the metric spaces $(X_i,\dist_i)$ are isometrically embedded into $(Z,\dist_Z)$, $x_i\to y$ and $\meas_i\weakto\mu$ in duality with $\Cbs(Z)$. 
Notice also that the pmGH convergence is metrizable, and therefore it makes sense to say that two pointed metric measure spaces are $\eps$-close in this sense. Analogous remarks hold for the Gromov-Hausdorff distance between metric spaces.
\medskip

	A fundamental property of $\RCD(K,N)$ spaces, that will be used several times in this paper, is the stability w.r.t.\ pmGH-convergence, meaning that a pmGH-limit of a sequence of (pointed) $\RCD(K_n,N_n)$ spaces for some $K_n\to K$ and $N_n\to N$ is an $\RCD(K,N)$ m.m.s..  
\medskip


Given a m.m.s.\ $(X,\dist,\meas)$, $x\in X$ and $r\in(0,1)$, we consider the rescaled and normalized pointed m.m.s.\ $(X,r^{-1}\dist,\meas_r^{x},x)$, where
\begin{equation*}
C(x,r):= \left( \int_{B_r(x)} \left(1-\frac{\dist(x,y)}{r}\right) \di \meas(y)\right)\quad  \meas_r^x=C(x,r)^{-1}\meas\, .
\end{equation*}
\begin{definition}[Tangent cone]
	We say that a pointed m.m.s.\ $(Y,\dist_Y,\eta,y)$ is tangent to $(X,\dist,\meas)$ at $x$ if there exists a sequence $r_i\downarrow 0$ such that $(X,r_i^{-1}\dist,\meas_{r_i}^x,x)\rightarrow(Y,\dist_Y,\eta,y)$ in the pmGH-topology. The collection of all the tangent spaces of $(X,\dist,\meas)$ at $x$ is denoted by $\Tan_x(X,\dist,\meas)$.
\end{definition}

A compactness argument, which is due to Gromov, together with the rescaling and stability properties of the $\RCD(K,N)$ condition, yields that $\Tan_x(X,\dist,\meas)$ is non-empty for every $x\in X$ and its elements are all $\RCD(0,N)$ pointed m.m.\ spaces.\\
Let us recall below the notion of $k$-regular point and $k$-regular set. 

\begin{definition}\label{def:regular point}
	Given any natural $1\le k\le N$, we say that $x\in X$ is a $k$-regular point if
	\begin{equation*}
		\Tan_x(X,\dist,\meas)=\left\lbrace (\setR^k,\dist_{eucl},c_k\Leb^k,0)  \right\rbrace\, .
	\end{equation*}
	We shall denote by $\mathcal{R}_k$ the set of $k$-regular points in $X$.
\end{definition}

Combing the results in \cite{MondinoNaber19} with \cite{KellMondino18,DePhlippisMarcheseRindler17,GigliPasqualetto16a} and \cite{BrueSemola18}, we have a good understanding of the rectifiable structure of $\RCD(K,N)$ metric measure spaces.

\begin{theorem}[Rectifiable structure]
	Let $(X,\dist,\meas)$ be an $\RCD(K,N)$ m.m.s.\ with $K\in\setR$ and $N\ge 1$. Then there exists a natural number $1\le n\le N$, called essential dimension of $X$, such that $\meas(X\setminus \mathcal{R}_n)=0$. Moreover $\mathcal{R}_n$ is $(\meas,n)$-rectifiable and $\meas$ is representable as $\theta\haus^n\res {\mathcal{R}_n}$ for some non-negative density $\theta\in L^1_{\rm loc}(X,\haus^n\res\mathcal{R}_n)$.
\end{theorem}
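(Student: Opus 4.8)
The plan is to assemble the statement out of three ingredients: \emph{a.e.\ regularity} (that $\meas$-a.e.\ point of $X$ is $k$-regular for \emph{some} $1\le k\le N$, with each $\mathcal{R}_k$ being $(\meas,k)$-rectifiable); \emph{constancy of the dimension} (that a single value of $k$ carries all the mass); and the \emph{density representation} $\meas\res\mathcal{R}_n=\theta\,\haus^n\res\mathcal{R}_n$. The first and third are essentially local and rest on Euclidean rigidity together with geometric measure theory, while the second is the genuinely global — and, I expect, hardest — point.

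First I would establish the a.e.\ regularity following \cite{MondinoNaber19}. Since tangent cones exist at every point and are $\RCD(0,N)$ spaces, a blow-up argument based on $\delta$-splitting maps and the almost splitting theorem shows that at $\meas$-a.e.\ $x$ some element of $\Tan_x(X,\dist,\meas)$ splits off a Euclidean factor $\setR^{k(x)}$ of maximal dimension; using that, $\meas$-a.e., a tangent of a tangent is a tangent, together with Euclidean rigidity and the fact that $\mathcal{R}_k$ has $\meas$-density $1$ at its points, one upgrades this to the \emph{unique} Euclidean tangent, so $(\setR^{k(x)},\dist_{eucl},c_{k(x)}\Leb^{k(x)},0)\in\Tan_x(X,\dist,\meas)$, i.e.\ $x\in\mathcal{R}_{k(x)}$. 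The same splitting maps furnish, on sets of positive measure, bi-Lipschitz charts into $\setR^{k}$ (via a metric Reifenberg / harmonic-coordinate argument), so each $\mathcal{R}_k$ is $(\meas,k)$-rectifiable and $\meas\big(X\setminus\bigcup_{k=1}^N\mathcal{R}_k\big)=0$.

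Next, and this is the main obstacle, I would invoke the constancy-of-dimension theorem of \cite{BrueSemola18} to see that $\meas(\mathcal{R}_k)>0$ for exactly one value $k=:n$. Note that the $\mathcal{R}_k$ are pairwise disjoint, so it suffices to rule out two strata of different dimension simultaneously carrying positive mass. The argument detects the integer $k(x)$ through the short-time on-diagonal behaviour of the heat kernel, $p_t(x,x)\sim c(x)\,t^{-k(x)/2}$ as $t\downarrow 0$ at points of $\mathcal{R}_{k(x)}$, and this exponent is then propagated using the quantitative almost-splitting estimates and the locally uniform doubling property coming from Bishop--Gromov; this excludes coexistence of distinct strata of positive measure. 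Setting $n$ equal to that common value gives $\meas(X\setminus\mathcal{R}_n)=0$, and $n$ is by definition the essential dimension.

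Finally, for the density representation: on the $n$-rectifiable set $\mathcal{R}_n$ the first step gives that at $\meas$-a.e.\ point the unique tangent metric measure space is $(\setR^n,\dist_{eucl},c_n\Leb^n,0)$, so the tangent \emph{measures} of $\meas$ are $\meas$-a.e.\ multiples of $\Leb^n$; by the De Philippis--Marchese--Rindler structure theorem \cite{DePhlippisMarcheseRindler17} together with the metric rectifiability results \cite{KellMondino18,GigliPasqualetto16a}, such a measure is absolutely continuous with respect to $\haus^n\res\mathcal{R}_n$, say $\meas\res\mathcal{R}_n=\theta\,\haus^n\res\mathcal{R}_n$ with $\theta\ge 0$. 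Then $\theta\in L^1_{\rm loc}(X,\haus^n\res\mathcal{R}_n)$ simply because $\meas$ is finite on bounded sets, since $\int_{B_R(x)}\theta\,\di\haus^n=\meas(B_R(x)\cap\mathcal{R}_n)<\infty$ for every ball $B_R(x)$. Combining the three steps proves the theorem.
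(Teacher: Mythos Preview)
Your assembly of the three ingredients is correct and mirrors exactly what the paper does: the theorem is stated there without proof, attributed to the combination of \cite{MondinoNaber19} (a.e.\ regularity and rectifiability of the strata), \cite{BrueSemola18} (constancy of the dimension), and \cite{KellMondino18,DePhlippisMarcheseRindler17,GigliPasqualetto16a} (the density representation $\meas=\theta\,\haus^n\res\mathcal{R}_n$). So at the level of strategy you and the paper agree.

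One correction worth flagging: your description of the \cite{BrueSemola18} argument is off. The constancy of the dimension in that paper is \emph{not} obtained via short-time heat-kernel on-diagonal asymptotics $p_t(x,x)\sim c\,t^{-k/2}$; rather, as the title indicates, it goes through the regularity theory of Regular Lagrangian Flows associated to Sobolev vector fields on $\RCD$ spaces. Roughly, one shows that the flow of a suitable vector field maps sets of positive measure in $\mathcal{R}_k$ to sets of positive measure in $\mathcal{R}_k$, and uses this dynamical propagation (together with connectedness-type arguments) to rule out coexistence of strata of different dimensions. The heat-kernel heuristic you sketch is plausible-sounding but is not the mechanism actually used, and making it rigorous would require uniform control on the asymptotics that is not obviously available. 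If you want to keep your proof self-contained at the level of a sketch, simply cite \cite{BrueSemola18} for the constancy step without attempting to summarise the method.
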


Recall that $X$ is said to be $(\meas,n)$-rectifiable if there exists a family $\left\lbrace A_i \right\rbrace_{i\in\setN}$ of Borel subsets of $X$ such that each $A_i$ is bi-Lipschitz to a Borel subset of $\setR^n$ and $\meas(X\setminus \cup_{i\in\setN}A_i)=0$.

\subsection{Non collapsed spaces}\label{subsec:noncollapsed}

We will mainly focus on the so called \textit{noncollapsed} $\RCD(K,N)$ metric measure spaces, i.e. those spaces for which the reference measure is the $N$-dimensional Hausdorff measure $\haus^N$.\\ 
As it happens for noncollapsed Ricci limits, whose regularity is much better than that of collapsed limits (see \cite{CheegerColding97,CheegerColding2000a,CheegerColding2000b}), noncollapsed $\RCD$ spaces are more regular than general $\RCD$ spaces. Their properties have been investigated throughout in \cite{Kitabeppu17,DePhilippisGigli18,KapovitchMondino21,AntonelliBrueSemola19,BrueNaberSemola20}. 
\medskip

Below we state a fundamental $\eps$-regularity result for non collapsed spaces. 
For smooth manifolds and their limits it was proved in \cite{Colding97,CheegerColding97}, building on a variant of the classical Reifenberg theorem valid for metric spaces (see also the earlier \cite{Anderson90}). We refer to \cite{DePhilippisGigli18,KapovitchMondino21} for the generalization to $\RCD$ spaces and the present form. 

\begin{theorem}[$\eps$-regularity]\label{thm:epsregcolding}
Let $1\le N<\infty$ be a fixed natural number. Then, for any $0<\eps<1/5$ there exists $\delta=\delta(\eps,N)>0$ such that for any $\RCD(-\delta(N-1),N)$ space $(X,\dist,\haus^N)$, if 
\begin{equation*}
\dist_{GH}(B_{2}(x),B_2(0^N))<\delta\, ,
\end{equation*}
then:
\begin{itemize}
\item[i)] $\abs{\haus^N(B_1(x))-\haus^N(B_1(0^N))}<\eps$;
\item[ii)] for any $y\in B_1(x)$ and for any $0<r<1/2$ it holds
\begin{equation*}
\dist_{GH}(B_r(y),B_r(0^N))<\eps r\, ;
\end{equation*}
\item[iii)] $B_1(x)$ is $C^{1-\eps}$-biH\"older homeomorphic to the Euclidean ball $B_1(0^N)$.
\end{itemize}
\end{theorem}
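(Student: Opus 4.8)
\textbf{Proof proposal for \autoref{thm:epsregcolding}.}
This is a known result; the plan is to reduce it to the metric Reifenberg theorem of Cheeger--Colding, using the structure theory of $\RCD$ spaces recalled above to verify its hypotheses. The key point is that $\RCD(-\delta(N-1),N)$ spaces that are GH-close to a Euclidean ball are, at all smaller scales, uniformly close to $\setR^N$; once this is established, the metric Reifenberg theorem gives the bi-Hölder homeomorphism, and the volume pinching is obtained from volume convergence together with Bishop--Gromov.

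\emph{Step 1 (contradiction setup).} Fix $N$ and $0<\eps<1/5$. Suppose no such $\delta$ exists. Then there is a sequence $(X_i,\dist_i,\haus^N,x_i)$ of $\RCD(-(N-1)/i,N)$ spaces with $\dist_{GH}(B_2(x_i),B_2(0^N))<1/i$ but for which one of i), ii), iii) fails at scale $1$. By stability of the $\RCD$ condition under pmGH-convergence and the compactness theorem of Gromov, after passing to a subsequence $(X_i,\dist_i,\haus^N,x_i)\to(Y,\dist_Y,\mu,y)$ in the pmGH-topology for some $\RCD(0,N)$ space $(Y,\dist_Y,\mu,y)$; but the GH-closeness to $B_2(0^N)$ forces $(Y,\dist_Y)$ to be isometric to $\setR^N$ on $B_2(y)$. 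Since $\RCD(0,N)$ spaces which are metric balls in $\setR^N$ have reference measure a constant multiple of $\Leb^N$ (the density $\theta$ is harmonic and the cone structure forces it to be constant), in fact $(Y,\dist_Y,\mu,y)=(\setR^N,\dist_{eucl},c\,\Leb^N,0)$ on $B_2(0)$, and by the volume convergence theorem for noncollapsed limits $c=1$.

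\emph{Step 2 (uniform closeness at all small scales).} The crucial quantitative input is that the above convergence, combined with Bishop--Gromov volume monotonicity \eqref{eq:BishopGromovInequality} and volume rigidity, upgrades to: for every $\eps'>0$ there is $i_0$ and $r_0>0$ such that for all $i\ge i_0$, all $y\in B_{3/2}(x_i)$ and all $0<r<r_0$, one has $\dist_{GH}(B_r(y),B_r(0^N))<\eps' r$. Indeed, if this failed one could extract a further blow-up sequence $(X_i,r_i^{-1}\dist_i,\haus^N,y_i)$ with $r_i\to 0$ whose limit is an $\RCD(0,N)$ tangent cone that is not $\setR^N$; but every tangent cone of $Y=\setR^N$ is $\setR^N$, and one checks that the pinched almost-Euclidean hypothesis propagates to the blow-up, giving a contradiction via volume almost-rigidity (the volume ratio is almost maximal, hence the tangent is a metric cone with almost-maximal volume, hence $\setR^N$).

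\emph{Step 3 (conclusion).} Item ii) is now exactly the statement proved in Step 2 (with $\eps'=\eps$), contradicting the assumed failure. Item i): $\haus^N(B_1(x_i))\to\haus^N(B_1(0^N))$ by the volume convergence theorem for noncollapsed pmGH-convergence, again a contradiction. Item iii): the uniform scale-invariant GH-closeness from Step 2 is precisely the hypothesis of the Cheeger--Colding metric Reifenberg theorem \cite[Appendix 1]{CheegerColding97} (in the form stated for $\RCD$ spaces in \cite{DePhilippisGigli18,KapovitchMondino21}), whose conclusion is that $B_1(x_i)$ is $C^{1-\eps}$-bi-Hölder homeomorphic to $B_1(0^N)$ for $i$ large, contradiction. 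This exhausts all cases and proves the theorem.

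\emph{Main obstacle.} The delicate point is Step 2: passing from a single GH-closeness estimate at scale $2$ to the uniform, scale-invariant Reifenberg-type closeness at all small scales and all nearby centers. This is where the full force of volume almost-rigidity for $\RCD(K,N)$ spaces (cone structure of tangents, non-branching, the stability of the essential dimension, and the volume convergence theorem) is needed, and it is exactly the content that makes the metric Reifenberg machinery applicable. The remaining items then follow essentially formally, which is why in the literature this theorem is quoted rather than reproved.
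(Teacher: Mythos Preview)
The paper does not prove \autoref{thm:epsregcolding}; it is stated as a background result with references to \cite{Colding97,CheegerColding97} for the Ricci limit case and to \cite{DePhilippisGigli18,KapovitchMondino21} for the $\RCD$ generalization. Your sketch follows exactly the standard strategy of those references --- contradiction/compactness, volume convergence plus Bishop--Gromov and volume almost-rigidity to propagate the GH-closeness to all scales, then Cheeger--Colding's metric Reifenberg theorem for the bi-H\"older homeomorphism --- so there is nothing to compare and your outline is the expected one.
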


Another key regularity property of noncollapsed $\RCD$ spaces is that all their tangents are metric cones, see \cite{DePhilippisGigli18}. This is a consequence of the so-called \textit{volume cone implies metric cone} property, originally proved in \cite{CheegerColding96} for limits of smooth manifolds and later extended to $\RCD$ spaces in \cite{DePhilippisGigli16}.\\
Building on the top of this, one can introduce a natural stratification of the singular set of an $\RCD(K,N)$ metric measure space $(X,\dist,\haus^N)$, i.e. the set $\mathcal{S}:=X\setminus\mathcal{R}=X\setminus\mathcal{R}_N$, based on the maximal number of Euclidean factors in any tangent cone.

\begin{definition}\label{def:singularstrata}
Let $(X,\dist,\haus^N)$ be an $\RCD(K,N)$ metric measure space. Then for any $0\le k\le N$ we let
\begin{equation*}
\mathcal{S}_k:=\{x\in X\, : \quad\text{no tangent cone at $x$ splits a factor $\setR^{k+1}$}\}\, .
\end{equation*}
\end{definition}
A classical dimension reduction argument then allows to get the Hausdorff dimension bounds
\begin{equation}\label{eq:dimHbound}
\dim_H\mathcal{S}_k\le k\, ,
\end{equation} 
for any $0\le k\le N-1$.
\medskip

When combined with the $\eps$-regularity \autoref{thm:epsregcolding}, together with its counterpart for points in the top dimensional singular stratum obtained in \cite{BrueNaberSemola20} (see \autoref{thm:boundepsreg}), the Hausdorff dimension bound \eqref{eq:dimHbound} allows to understand the topological regularity of non collapsed $\RCD$ spaces away from sets of codimension two.

\begin{theorem}[Topological structure of non collapsed spaces]\label{thm:topRCDnc}
Let $(X,\dist,\haus^N)$ be an $\RCD(K,N)$ metric measure space for some $K\in\setR$ and $1\le N<\infty$. Then, for any $0<\alpha<1$ there exists a decomposition
\begin{equation*}
X=\partial X\cup O_{\alpha}\cup \mathcal{S}_{\alpha}\, ,
\end{equation*}
where $\partial X=\overline{\mathcal{S}^{N-1}\setminus\mathcal{S}^{N-2}}$ is the boundary of $(X,\dist,\haus^N)$, $O_{\alpha}$ is an open neighbourhood of the regular set $\mathcal{R}$ that is $C^{\alpha}$-biH\"older to a smooth $N$-dimensional Riemannian manifold and $\dim_H\mathcal{S}_{\alpha}\le N-2$.

Moreover, for any $0<\alpha<1$ there exists an open neighbourhood $V_{\alpha}$ of $\mathcal{S}^{N-1}\setminus \mathcal{S}^{N-2}$ inside $\partial X$ such that $V_{\alpha}$ is $C^{\alpha}$-biH\"older to a smooth $(N-1)$-dimensional Riemannian manifold.
\end{theorem}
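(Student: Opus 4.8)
The statement is essentially a repackaging of the non-collapsed structure theory, so the plan is to combine the two $\eps$-regularity theorems \autoref{thm:epsregcolding} and \autoref{thm:boundepsreg} with the stratification bounds \eqref{eq:dimHbound} and the chart-gluing (metric Reifenberg) machinery of \cite{CheegerColding97,KapovitchMondino21,BrueNaberSemola20}. First I would set $\partial X:=\overline{\mathcal{S}^{N-1}\setminus\mathcal{S}^{N-2}}$ and record two structural facts. Since any tangent of a non-collapsed $\RCD(K,N)$ space is an $\RCD(0,N)$ metric cone \cite{DePhilippisGigli18}, and an $\RCD(0,N)$ space splitting a factor $\setR^N$ must itself be $\setR^N$, we get $\mathcal{S}=\mathcal{S}^{N-1}$ and $X=\mathcal{R}\cup(\mathcal{S}^{N-1}\setminus\mathcal{S}^{N-2})\cup\mathcal{S}^{N-2}$. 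Moreover, at a point $x\in\mathcal{S}^{N-1}\setminus\mathcal{S}^{N-2}$ some tangent splits $\setR^{N-1}$; writing it as $\setR^{N-1}\times Z$ with $Z$ a one-dimensional $\RCD(0,1)$ cone forces $Z=[0,\infty)$ (the alternative $Z=\setR$ would split $\setR^N$), so every such tangent is the half-space $\setR^{N-1}\times[0,\infty)$, and in fact by \cite{BrueNaberSemola20} the tangent set there is exactly $\{\setR^{N-1}\times[0,\infty)\}$. Combining the first fact with the dimension reduction behind \eqref{eq:dimHbound} gives $\partial X\setminus(\mathcal{S}^{N-1}\setminus\mathcal{S}^{N-2})\subseteq\mathcal{S}^{N-2}$, hence $\dim_H\partial X\le N-1$.

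Next I would build $O_\alpha$. Fix $\eps\in(0,1-\alpha)$ and let $\delta=\delta(\eps,N)$ be as in \autoref{thm:epsregcolding}. For each regular point $x\in\mathcal{R}$, uniqueness of the tangent ($=\setR^N$) together with the precompactness of $\RCD(K,N)$ spaces yields $r_x>0$ such that the rescaled ball $(B_{2r_x}(x),r_x^{-1}\dist)$ is $\delta$-Gromov--Hausdorff close to $B_2(0^N)$; rescaling part (iii) of \autoref{thm:epsregcolding} then gives that $B_{r_x}(x)$ is $C^\alpha$-biH\"older to $B_{r_x}(0^N)$. Let $O_\alpha$ be the union of all such balls: it is open, contains $\mathcal{R}$, and part (ii) of \autoref{thm:epsregcolding} controls the geometry of nearby balls so that the charts overlap in a controlled way. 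That $O_\alpha$ then carries a smooth $N$-dimensional Riemannian manifold structure $C^\alpha$-biH\"older to $X$ is precisely the metric Reifenberg theorem of \cite{CheegerColding97} and its refinement in \cite{KapovitchMondino21}, which I would cite directly. Finally I set $\mathcal{S}_\alpha:=X\setminus(\partial X\cup O_\alpha)$, so that $X=\partial X\cup O_\alpha\cup\mathcal{S}_\alpha$; since $X\setminus\partial X\subseteq\mathcal{R}\cup\mathcal{S}^{N-2}$ and $\mathcal{R}\subseteq O_\alpha$, it follows that $\mathcal{S}_\alpha\subseteq\mathcal{S}^{N-2}$, whence $\dim_H\mathcal{S}_\alpha\le N-2$ by \eqref{eq:dimHbound}. (One may additionally note $O_\alpha\cap\partial X=\emptyset$, since a biH\"older homeomorphism cannot match an interior Euclidean point to a half-space point, but this is not needed for the stated union.)

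The second assertion follows by running the same scheme on the boundary, using the half-space $\eps$-regularity \autoref{thm:boundepsreg} in place of \autoref{thm:epsregcolding}. By the second structural fact above, every $x\in\mathcal{S}^{N-1}\setminus\mathcal{S}^{N-2}$ has, at all sufficiently small scales, a ball Gromov--Hausdorff close to the model half-ball; so for $\delta'=\delta'(\eps,N)$ as in \autoref{thm:boundepsreg} there is $r_x>0$ with $B_{r_x}(x)$ being $C^\alpha$-biH\"older to a Euclidean half-ball in such a way that $\partial X\cap B_{r_x}(x)$ corresponds to an $(N-1)$-dimensional Euclidean ball. Then $V_\alpha:=\bigcup_x\big(\partial X\cap B_{r_x}(x)\big)$ is a relatively open subset of $\partial X$ containing $\mathcal{S}^{N-1}\setminus\mathcal{S}^{N-2}$, and the boundary version of the chart-gluing theorem (again \cite{BrueNaberSemola20}, cf.\ \cite{KapovitchMondino21}) equips it with a smooth $(N-1)$-dimensional Riemannian manifold structure $C^\alpha$-biH\"older to $V_\alpha$.

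The main obstacle is the passage from the purely metric $\eps$-regularity statements---which only provide local biH\"older homeomorphisms onto Euclidean (resp.\ half-Euclidean) balls, with no a priori compatibility between charts---to a genuine smooth differentiable structure carrying a Riemannian metric: this requires the delicate harmonic-coordinate and metric-Reifenberg estimates of \cite{CheegerColding97,KapovitchMondino21,BrueNaberSemola20} controlling the transition maps, and is the technical core that I would cite rather than reprove. A secondary point needing care is the uniqueness of tangents at boundary points, which is what guarantees that points of $\mathcal{S}^{N-1}\setminus\mathcal{S}^{N-2}$ genuinely have half-ball neighbourhoods; this too is supplied by \cite{BrueNaberSemola20}.
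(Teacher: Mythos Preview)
Your proposal is correct and matches the paper's own treatment: the paper does not give a formal proof of this theorem but presents it as a background result, indicating in the preceding paragraph that it follows by combining the $\eps$-regularity \autoref{thm:epsregcolding}, the boundary $\eps$-regularity \autoref{thm:boundepsreg} from \cite{BrueNaberSemola20}, and the Hausdorff dimension bound \eqref{eq:dimHbound}; you have unpacked precisely these ingredients and cited the same metric Reifenberg machinery (\cite{CheegerColding97,KapovitchMondino21,BrueNaberSemola20}) for the chart-gluing step.
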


Further estimates for singular sets on non collapsed $\RCD$ spaces will be recalled later in the note.

\subsection{Sets of finite perimeter}\label{subsec:setsoffiniteperimeter}

This subsection is aimed at introducing some classical and most recent results about sets of finite perimeter in the framework of $\RCD(K,N)$ metric measure spaces. 

\subsubsection{Introduction and basic properties}
We recall the definition of function of bounded variation in the present setting.

\begin{definition}[Function of bounded variation]\label{def:bvfunction}
We say that a function $f\in L^1(X,\meas)$ has bounded variation (and we write $f\in\BV(X,\dist,\meas)$) if there exist
locally Lipschitz functions $f_i$ converging to $f$ in $L^1(X,\meas)$ such that
\begin{equation*}
\limsup_{i\to\infty}\int_X \lip f_i\di \meas<\infty\, .
\end{equation*}  
By localizing this construction one can define 
\begin{equation*}
\abs{Df}(A):=\inf\left\lbrace \liminf_{i\to\infty}\int_A \lip f_i \di \meas: f_i\in\Lip_{\loc}(A),\quad f_i\to f \text{ in } L^1(A,\meas)\right\rbrace  
\end{equation*}
for any open set $A\subset X$. In \cite{AmbDiM14} (see also \cite{MirandaJr} for the case of locally compact spaces) it is proven that this set function 
is the restriction to open sets of a finite Borel measure that we call \textit{total variation of $f$} and still denote $\abs{Df}$.
\end{definition}

Dropping the global integrability condition on $f=\chi_E$, let us recall now the analogous definition of a set of finite perimeter 
in a metric measure space (see again \cite{Am02,MirandaJr,AmbDiM14}).

\begin{definition}[Perimeter and sets of finite perimeter]\label{def:setoffiniteperimeter}
Given a Borel set $E\subset X$ and an open set $A$, the perimeter $\Per(E,A)$ is defined in the following way:
\begin{equation*}
\Per(E,A):=\inf\left\lbrace \liminf_{n\to\infty}\int_A \lip u_n \di\meas: u_n\in\Lip_{\loc}(A),\quad u_n\to\chi_E\quad \text{in } L^1_{\loc}(A,\meas)\right\rbrace\, .
\end{equation*}
We say that $E$ has finite perimeter if $\Per(E,X)<\infty$. In that case it can be proved that the set function $A\mapsto\Per(E,A)$ is the restriction to open sets of a finite Borel measure $\Per(E,\cdot)$ defined by
\begin{equation*}
\Per(E,B):=\inf\left\lbrace \Per(E,A): B\subset A,\text{ } A \text{ open}\right\rbrace\, .
\end{equation*}
\end{definition}

Let us remark for the sake of clarity that $E\subset X$ with finite $\meas$-measure is a set of finite perimeter if and only if $\chi_E\in\BV(X,\dist,\meas)$ and that $\Per(E,\cdot)=\abs{D\chi_E}(\cdot)$. In the following we will say that $E\subset X$ is a set of locally finite perimeter if $\chi_E$ is a function of locally bounded variation, that is to say $\eta\chi_E\in\BV(X,\dist,\meas)$ for any $\eta\in \Lipbs(X,\dist)$. In the sequel we shall adopt both the notations $\abs{D \chi_E}$ and $\Per_E$ to denote the perimeter measure of a set with finite perimeter $E$.
\medskip

We will usually assume that a set of finite perimeter $E\subset X$ is normalized in the following sense (see \cite[Proposition 12.19]{Maggi12} for an analogous classical result in the Euclidean space and the proof of \cite[Theorem 4.2]{Kinnunenetal13} for the present setting): up to modification on an $\meas$-negligible set of $E$, it holds that $\meas(E\cap B_r(x))>0$ for any $x\in E$ and $r>0$ and $\meas(B_r(x)\setminus E)>0$ for any $x\in X\setminus E$ and $r>0$.\\
This implies in particular that, for any $x\in\partial E$ (where we denoted by $\partial E$ the topological boundary of $E$), it holds
\begin{equation}\label{eq:normsetsfin}
\meas(B_r(x)\cap E)>0\, \quad\text{and }\, \meas(B_r(x)\setminus E)>0\, ,\quad\text{for any $r>0$}\, .
\end{equation}

\begin{definition}
We  adopt the terminology \textit{measure theoretic interior} to indicate
\begin{equation*}
\mathrm{Int} (E):=\Big\{x\in X\, :\, \lim_{r\to 0}\frac{\meas(E\cap B_r(x))}{\meas(B_r(x))}=1\Big\}\, ,
\end{equation*} 
i.e. the set of point of density $1$ of $\chi_{E}$. Note that, by Lebesgue differentiation theorem, $\meas(E\Delta \mathrm{Int}(E))=0$.
\end{definition}

When considering the lower and upper approximate limits of the indicator function $\chi_E$ of $E$, i.e.
\begin{equation*}
\chi_E^{\vee}(x):=\inf\Big\{t\in\setR\, :\, \lim_{r\to 0}\frac{\meas(\{\chi_E<t\}\cap B_r(x))}{\meas(B_r(x))}=0 \Big\}\, 
\end{equation*}
and 
\begin{equation}\label{eq:pointwedgedef}
\chi_E^{\wedge}(x):=\sup\Big\{t\in\setR\, :\, \lim_{r\to 0}\frac{\meas(\{\chi_E>t\}\cap B_r(x))}{\meas(B_r(x))}=0 \Big\}\, , 
\end{equation}
it is easy to verify that 
\begin{equation*}
\chi_E^{\vee}(x)=1\, ,\quad\text{on $X\setminus \mathrm{Int}(E^c)$}\, \quad\text{and }\quad \chi_E^{\vee}(x)=0\, \quad\text{otherwise}\, ,
\end{equation*}
while 
\begin{equation}\label{eq:pointwedgeprop}
\chi_E^{\wedge}(x)=1\, ,\quad\text{on $ \mathrm{Int}(E)$}\, \quad\text{and }\quad \chi_E^{\wedge}(x)=0\, \quad\text{otherwise}\, .
\end{equation}

Following \cite{Am01,Am02} we recall the notion of essential boundary of a set of finite perimeter.

\begin{definition}[Essential boundary]
Let $(X,\dist,\meas)$ be an $\RCD(K,N)$ metric measure space and let $E\subset X$ be a set of locally finite perimeter. Then we introduce the \textit{essential boundary} $\partial ^*E$ as 
\begin{equation}\label{eq:defEssBoundary}
\partial^*E:= \Big\{x\in X\, :\, \lim_{r\to 0}\frac{\meas(B_r(x)\cap E)}{\meas(B_r(x))}\neq 0\, \quad\text{and }\quad\lim_{r\to 0}\frac{\meas(B_r(x)\setminus E)}{\meas(B_r(x))}\neq 0 \Big\}\, .
\end{equation}

\end{definition}

The following coarea formula for functions of bounded variation on metric measure spaces is taken from 
\cite[Proposition 4.2]{MirandaJr}, dealing with locally compact spaces and its proof works in the more general setting of metric measure spaces.

\begin{theorem}[Coarea formula]\label{thm:coarea}
Let $v\in\BV(X,\dist,\meas)$.
Then, $\{v>r\}$ has finite perimeter for $\Leb^1$-a.e. $r\in\setR$. Moreover, for any Borel function $f:X\to[0,+\infty]$, it holds 
\begin{equation}\label{eq:coarea}
\int_X f\di\abs{Dv}=\int_{-\infty}^{+\infty}\left(\int_X f\di\Per(\{v>r\},\cdot)\right)\di r\,  .
\end{equation}
\end{theorem}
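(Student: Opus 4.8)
The final statement to prove is the coarea formula (Theorem~\ref{thm:coarea}), which asserts that for $v\in\BV(X,\dist,\meas)$ the superlevel sets $\{v>r\}$ have finite perimeter for a.e.\ $r$, and that the total variation measure disintegrates as $\int_X f\di\abs{Dv}=\int_{\setR}\big(\int_X f\di\Per(\{v>r\},\cdot)\big)\di r$ for every nonnegative Borel $f$.

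\medskip

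\textbf{Plan.} The strategy is the standard two-inequality approach, proving first the ``easy'' direction (subadditivity of the total variation under the layer-cake decomposition) and then the ``hard'' direction via approximation by Lipschitz functions. First I would reduce to proving the two inequalities
\begin{equation*}
\int_{\setR}\Per(\{v>r\},A)\di r\le \abs{Dv}(A)\quad\text{and}\quad \abs{Dv}(A)\le \int_{\setR}\Per(\{v>r\},A)\di r
\end{equation*}
for every open set $A\subset X$; once both hold (together with the measurability of $r\mapsto\Per(\{v>r\},A)$), the identity \eqref{eq:coarea} for general nonnegative Borel $f$ follows by a routine monotone-class / monotone-convergence argument, approximating $f$ by simple functions and using that both sides define Borel measures agreeing on open sets. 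The finiteness of $\Per(\{v>r\},X)$ for a.e.\ $r$ is then immediate from integrability of $r\mapsto\Per(\{v>r\},X)$ against the first inequality.

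\medskip

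\textbf{The ``$\le\abs{Dv}$'' inequality.} I would approach this by the definition of total variation via relaxation: take locally Lipschitz $v_i\to v$ in $L^1(A,\meas)$ with $\int_A\lip v_i\di\meas\to\abs{Dv}(A)$. For a Lipschitz function the classical coarea formula (or a direct slicing argument using $\lip$ and the structure of PI spaces) gives $\int_{\setR}\Per(\{v_i>r\},A)\di r\le\int_A\lip v_i\di\meas$, where here the superlevel sets of a Lipschitz function are handled directly. Then passing $i\to\infty$: since $v_i\to v$ in $L^1$, along a subsequence $\chi_{\{v_i>r\}}\to\chi_{\{v>r\}}$ in $L^1_{\loc}$ for a.e.\ $r$ (by Fubini in the $(x,r)$ variables, using $\int_{\setR}\norm{\chi_{\{v_i>r\}}-\chi_{\{v>r\}}}_{L^1(A)}\di r=\norm{v_i-v}_{L^1(A)}\to0$), and then lower semicontinuity of the perimeter under $L^1_{\loc}$-convergence together with Fatou's lemma yields $\int_{\setR}\Per(\{v>r\},A)\di r\le\liminf_i\int_{\setR}\Per(\{v_i>r\},A)\di r\le\abs{Dv}(A)$. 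This also produces the measurability of $r\mapsto\Per(\{v>r\},A)$ as a liminf of measurable functions.

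\medskip

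\textbf{The ``$\ge\abs{Dv}$'' inequality and the main obstacle.} This is the genuinely delicate direction. The clean route is to write $v=\int_{\setR}(\chi_{\{v>r\}}-\chi_{\{r<0\}})\di r$ (valid pointwise), approximate: for each $r$ with $\{v>r\}$ of finite perimeter, choose locally Lipschitz $u^r_i\to\chi_{\{v>r\}}$ in $L^1_{\loc}(A)$ with $\int_A\lip u^r_i\di\meas\to\Per(\{v>r\},A)$, and then ``reassemble'' an approximation of $v$ by integrating in $r$ — but this requires care in choosing the approximating sequences measurably in $r$ and controlling the error, so that $w_i:=\int_{\setR}(u^r_i-\chi_{\{r<0\}})\di r$ is locally Lipschitz, converges to $v$ in $L^1_{\loc}(A)$, and satisfies $\liminf_i\int_A\lip w_i\di\meas\le\int_{\setR}\Per(\{v>r\},A)\di r$ by subadditivity of $\lip$ under integral superposition and Fubini. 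I expect the main obstacle to be precisely this measurable-selection-plus-superposition step: making the approximations depend measurably on $r$ and justifying the interchange $\lip\big(\int_r u^r_i\di r\big)\le\int_r\lip u^r_i\di r$ together with Fubini--Tonelli to pass the total-variation estimate through the $r$-integral. An alternative, and perhaps more robust in the PI-space setting, is to cite the known validity of the coarea formula in PI spaces (the statement attributes it to \cite{MirandaJr}, whose proof for locally compact spaces extends verbatim) — indeed since the excerpt explicitly states ``its proof works in the more general setting of metric measure spaces,'' the paper is invoking this and I would simply record that $\RCD(K,N)$ spaces are PI spaces (noted earlier) and that the cited argument applies without change, the only structural inputs being the relaxed definition of $\BV$, lower semicontinuity of perimeter, and the $L^1$-Fubini identity above.
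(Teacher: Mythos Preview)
The paper does not actually prove this theorem: it is stated as background and attributed to \cite[Proposition~4.2]{MirandaJr}, with the remark that the proof there (for locally compact spaces) works verbatim in the general metric measure setting. You correctly identify this at the end of your proposal, so in that sense your approach matches the paper's exactly --- cite the known result and observe that the ambient assumptions suffice.

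Your additional sketch of the two-inequality argument is the standard route (and is essentially what one finds in \cite{MirandaJr}), so it is correct and not a genuinely different approach. One small comment: in the ``$\ge$'' direction you flag the measurable-selection issue as the main obstacle, but in practice this is usually bypassed by first proving the inequality for bounded $v$, truncating the layer-cake integral to a finite interval, and using a Riemann-sum approximation $\sum_j(\chi_{\{v>r_j\}}-\chi_{\{r_j<0\}})\Delta r$ rather than a true integral superposition --- this avoids any measurable selection and makes the subadditivity of $\lip$ trivial (finite sums). So the obstacle you anticipate is real for the argument as you wrote it, but has a standard workaround.
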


Let us recall that if $(X,\dist, \meas)$ verifies doubling and Poincar\'e inequalities then a local, relative isoperimetric inequality holds, see for instance \cite[Theorem 3.3]{KorteLahti14}.  More precisely:  there exists constants  $\lambda>1, C>0, r_{0}>0$, depending only on the doubling and Poincar\'e constants, such that
\begin{equation}\label{eq:isop}
\min\{\meas(E\cap B_r(x)),\meas(B_r(x)\setminus E)\}\le C r \;\Per (E, B_{\lambda r}(x))\, ,
\end{equation}
for all  $x \in X$, $r\in (0,r_{0})$.


\subsubsection{Convergence and stability for sets of finite perimeter and functions of bounded variation}

Before introducing tangents for sets of finite perimeter over $\RCD$ spaces, let us recall some terminology about convergence and stability for $\BV$ functions along converging sequences of metric measure spaces. The discussion below is borrowed from \cite{AmbrosioBrueSemola19}, the main references being \cite{GigliMondinoSavare15,AmbrosioHonda} and \cite{AmbrosioHonda18}, to which we address the reader for details and relevant background.

 Let $(X_i, \dist_{i}, \meas_i, \bar x_{i})$ be a sequence of pointed metric measure spaces converging in pointed-measured-Gromov-Hausdorff sense  (or, more generally, in pointed measured Gromov sense) to $(Y, \varrho, \mu, y)$. 

\begin{definition}\label{def:BV convergence}
We say that a sequence $(f_i)\subset L^1(X_i,\meas_i)$ converges $L^1$-strongly to $f\in L^1(Y,\mu)$ if 
\begin{equation*}
   \sigma\circ f_i\meas_i\weakto \sigma\circ f\mu
   \qquad
   \text{and} 
   \qquad
   \int_{X_i}|f_i|\di \meas_i\to \int_Y |f|\di \mu\, ,
\end{equation*}
where $\sigma(z):=\sign(z)\sqrt{|z|}$ and the weak convergence is understood in duality with $\Cbs(Z)$.

We say that $f_i\in \BV(X_i,\meas_i)$ converge in energy in $\BV$ to $f\in \BV(Y,\mu)$ if $f_i$ converge $L^1$-strongly to $f$ and
\begin{equation*}
	\lim_{i\to\infty} |Df_i|(X_i)= |Df|(Y)\, .
\end{equation*}
\end{definition}

\begin{definition}\label{def:L1 convergence of sets}
	We say that a sequence of Borel sets $E_i\subset X_i$ such that $\meas_i(E_i)<\infty$ for any $i\in\setN$ converges in $L^1$-strong to a Borel set $F\subset Y$ with $\mu(F)<\infty$ if $\chi_{E_i}\meas_i\weakto \chi_F\mu$ in duality with $\Cbs(Z)$ and $\meas_i(E_i)\to \mu(F)$.
	
	We also say that a sequence of Borel sets $E_i\subset X_i$ converges in $L^1_{\loc}$ to a Borel set $F\subset Y$ if $E_i\cap B_R(x_i)\to F\cap B_R(y)$ in $L^1$-strong for any $R>0$.
\end{definition}

\subsubsection{De Giorgi's Theorem and integration by parts formulae}

Let us recall the definition of tangent to a set of finite perimeter from \cite{AmbrosioBrueSemola19}.

\begin{definition}[Tangents to a set of finite perimeter]\label{def:tan}
	Let $(X,\dist,\meas)$ be an $\RCD(K,N)$ m.m.s., fix $x\in X$ and let $E\subset X$ be a set of locally finite perimeter. 
	We denote by $\Tan_x(X,\dist,\meas,E)$ the collection of quintuples $(Y,\varrho,\mu,y, F)$ satisfying the following two properties:
	\begin{itemize}
		\item[(a)] $(Y,\varrho,\mu,y)\in\Tan_x(X,\dist,\meas)$ and $r_i\downarrow 0$ are such that the rescaled spaces $(X,r_i^{-1}\dist,\meas_x^{r_i},x)$ converge to $(Y,\varrho,\mu,y)$ in the pointed measured Gromov-Hausdorff topology;
		\item[(b)] $F$ is a set of locally finite perimeter in $Y$ with $\mu(F)>0$ and, if $r_i$ are as in (a), then the sequence $f_i=\chi_E$ converges in $L^1_\loc$ to $\chi_F$ according to \autoref{def:L1 convergence of sets}.
	\end{itemize}
\end{definition}

It is clear that the following locality property of tangents holds: if
\begin{equation}\label{eq:localitytangents}
\meas\bigl(A\cap (E\Delta F)\bigr)=0\, ,
\end{equation}
then 
\begin{equation}
\Tan_x(X,\dist,\meas,E)=\Tan_x(X,\dist,\meas,F)\qquad\forall x\in A\, ,
\end{equation}
whenever $E,\,F$ are sets of locally finite perimeter and $A\subset X$ is open.
\medskip

In \cite{BruePasqualettoSemola19,BruePasqualettoSemola21}, essential uniqueness of tangents and rectifiability of the reduced boundary were obtained for sets of finite perimeter on $\RCD(K,N)$ metric measure spaces.

\begin{theorem}[Uniqueness of tangents]\label{th:uniqueness}
	Let $(X,\dist,\meas)$ be an $\RCD(K,N)$ m.m.s. with essential dimension $1\le n\le N$ and let $E\subset X$ be a set of finite perimeter. Then, for $\abs{D\chi_E}$-a.e.\ $x\in X$ it holds 
	\begin{equation*}
	\Tan_x(X,\dist,\meas,E)=\left\lbrace (\setR^n,\dist_{eucl},c_n\Leb^n,0^n,\left\lbrace x_n>0\right\rbrace )\right\rbrace\, .
	\end{equation*}
\end{theorem}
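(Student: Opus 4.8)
The statement is the $\RCD$ analogue of De~Giorgi's blow-up theorem for sets of finite perimeter, so the plan is to perform a blow-up analysis at $\abs{D\chi_E}$-almost every point and identify the limit configuration. I would organize it in two stages: first, produce at $\abs{D\chi_E}$-a.e.\ $x$ \emph{some} tangent which is a Euclidean half-space; then upgrade this to \emph{every} tangent, simultaneously identifying the dimension as the essential dimension $n$, via an iterated-tangent argument together with a rigidity step.

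For the first stage the key preliminary facts are density estimates for the perimeter measure. Combining the relative isoperimetric inequality \eqref{eq:isop}, the Bishop--Gromov inequality \eqref{eq:BishopGromovInequality}, the asymptotic doubling of $\meas$ and the lower semicontinuity of the total variation, one shows that at $\abs{D\chi_E}$-a.e.\ $x$ the quantities $\abs{D\chi_E}(B_r(x))/r^{n-1}$ stay bounded away from $0$ and $\infty$ as $r\downarrow 0$, and that $\chi_E$ has density at $x$ bounded away from $0$ and $1$. Next, applying the Besicovitch differentiation theorem to the vector-valued measure $D\chi_E$ (which is available through the Gauss--Green / integration-by-parts calculus for $\BV$ functions on $\RCD$ spaces) yields at $\abs{D\chi_E}$-a.e.\ $x$ a well-defined ``unit normal''. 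Finally, compactness of sets of locally finite perimeter in PI spaces produces, along a blow-up sequence $r_i\downarrow 0$, a tangent $(Y,\varrho,\mu,y,F)$ with $F$ of locally finite perimeter, $0<\mu(F\cap B_1(y))<\mu(B_1(y))$, and with constant generalized normal (the limiting vector). The upper density bound forces $Y$ to have the maximal dimension $n$: were the dimension of $Y$ strictly smaller, the rescaled perimeters would vanish in the limit, contradicting that $F$ is a proper nonempty set of finite perimeter. Hence $Y=(\setR^n,\dist_{eucl},c_n\Leb^n,0^n)$, and then $F\subset\setR^n$ has constant normal, so by the Euclidean constancy lemma it is a half-space $\{x_n>0\}$ up to rotation.

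For the second stage I would use that a tangent of a tangent is again a tangent of $(X,\dist,\meas,E)$ at $x$ (a diagonal argument based on \autoref{def:tan}). This reduces matters to a rigidity statement: knowing that \emph{some} tangent is $(\setR^n,\dist_{eucl},c_n\Leb^n,0^n,\{x_n>0\})$ and that at $\abs{D\chi_E}$-a.e.\ $x$ every tangent carries the same $(n-1)$-dimensional perimeter density, a Federer-type dimension reduction — exploiting the rigidity in the $\RCD(0,N)$ splitting theorem and the volume-cone-implies-metric-cone property — forces every tangent to be exactly the half-space $\{x_n>0\}\subset\setR^n$. Feeding this back gives the assertion.

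The main obstacle is precisely this last rigidity: controlling the \emph{dimension} of the blow-ups (ruling out lower-dimensional tangents of the perimeter) and showing that the splitting coming from the constant normal must produce a genuinely $n$-dimensional Euclidean half-space rather than a half-space over a non-Euclidean cross-section. Making this rigorous requires sharp two-sided density estimates for $\abs{D\chi_E}$, the full Gauss--Green / normal-trace machinery for $\BV$ functions on $\RCD$ spaces, and the stability of all these ingredients under pmGH blow-up, together with care that no perimeter is lost in the limit so that the limit set stays proper and nonempty.
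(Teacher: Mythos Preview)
The paper does not prove this theorem; it is quoted from \cite{AmbrosioBrueSemola19,BruePasqualettoSemola19} as background in \autoref{subsec:setsoffiniteperimeter}. So there is no in-paper argument to compare against, only the cited literature.

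Your sketch has a genuine gap at the very first step. You propose to apply ``the Besicovitch differentiation theorem to the vector-valued measure $D\chi_E$'' and thereby extract a unit normal at $\abs{D\chi_E}$-a.e.\ point, which you then pass to the blow-up and invoke a constant-normal lemma. But on a metric measure space there is no a priori vector-valued measure $D\chi_E$: all one has is the scalar total variation $\abs{D\chi_E}$. The construction of the unit normal $\nu_E$ as an element of the tangent module $L^2_E(TX)$ (cf.\ \autoref{thm:GGRCDsmooth}) is one of the principal \emph{outputs} of \cite{BruePasqualettoSemola19}, obtained only after the blow-up picture is in place; it cannot serve as an input to the blow-up analysis. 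As written, the argument is circular.

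The route actually taken in \cite{AmbrosioBrueSemola19} avoids any pre-existing normal. One regularises $\chi_E$ via the heat flow and studies the behaviour of $\nabla P_t\chi_E$. The key analytic ingredient is the rigidity case of the $1$-Bakry--\'Emery contraction estimate \eqref{eq:selfimpr}: on an $\RCD(0,N)$ space, saturation of $\abs{\nabla P_tu}\le P_t\abs{\nabla u}$ forces a splitting $X=\setR\times Y$. At $\abs{D\chi_E}$-a.e.\ $x$ the blow-ups of $P_t\chi_E$ saturate this inequality, producing a tangent of the form $\setR\times Z$ with $F=\{t<0\}$; an iterated-tangent argument then yields $Z=\setR^{n-1}$. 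The upgrade from ``some'' to ``every'' tangent in \cite{BruePasqualettoSemola19} uses the Gauss--Green calculus developed there together with the iterated-tangent property you mention. So your Stage~2 intuition is in the right direction, but the mechanism that produces the initial splitting is heat-flow rigidity, not differentiation of a vector measure; and your dimension argument (``lower-dimensional tangent would make perimeters vanish'') is not how the ambient tangent is pinned down either---that comes from the structure theory showing $\abs{D\chi_E}$ is concentrated on $\mathcal{R}_n$.
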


We next introduce a notion of reduced boundary, in analogy with the Euclidean theory.

\begin{definition}
Let $(X,\dist,\meas)$ be an $\RCD(K,N)$ metric measure space with essential dimension equal to $n\in \setN$, and let $E\subset X$ be a set of locally finite perimeter. 
We set 
\begin{equation*}
\mathcal{F}E:=\left\lbrace x\in X\;:\;\Tan_x(X,\dist,\meas,E)=\left\lbrace (\setR^n,\dist_{eucl},c_n\Leb^n,0^n,\left\lbrace x_n>0\right\rbrace )\right\rbrace \right\rbrace \, .
\end{equation*}
\end{definition}

\begin{remark}
Let us point out, for the sake of clarity, that the reduced boundary in the above sense does not fully coincide with the reduced boundary in the classical Euclidean sense. Indeed the definition of reduced boundary point in the $\RCD$ framework does not prevent, when read in the Euclidean context, the possibility that different half-spaces arise as blow-ups when rescaling along different sequences of radii converging to $0$.
\end{remark}

\begin{theorem}[Rectifiability]\label{thm:rectifiability}
	Let $(X,\dist,\meas)$ be an $\RCD(K,N)$ m.m.s.  with essential dimension $1\le n\le N$ and let $E\subset X$ be a set of locally finite perimeter. Then 
	the reduced boundary $\mathcal{F}E$ is $\big(\abs{D\chi_E},(n-1)\big)$-rectifiable.
\end{theorem}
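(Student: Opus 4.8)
The plan is to upgrade the uniqueness of tangents of \autoref{th:uniqueness} to a bi-Lipschitz parametrization of $\mathcal{F}E$ around $\abs{D\chi_E}$-almost every point, and then conclude by a covering argument combined with the Euclidean theory of sets of finite perimeter. First I would observe, using \autoref{th:uniqueness}, that $\abs{D\chi_E}$ is concentrated on $\partial^*E$ and, up to an $\abs{D\chi_E}$-null set, on $\mathcal{F}E$, where the tangent of $(X,\dist,\meas,E)$ is the single Euclidean half-space $(\setR^n,\dist_{eucl},c_n\Leb^n,0^n,\{x_n>0\})$; in particular every such point lies in $\mathcal{R}_n$. Next I would record that $\abs{D\chi_E}$ is asymptotically doubling on $\mathcal{F}E$: from the Bishop--Gromov inequality \eqref{eq:BishopGromovInequality} (which controls $\meas\approx\haus^n$ from above and below on small balls around $n$-regular points), the relative isoperimetric inequality \eqref{eq:isop}, and the convergence in energy in $\BV$ of $\chi_E$ along the blow-up sequence at $x$, one gets two-sided Ahlfors-type bounds $c\,r^{n-1}\le\abs{D\chi_E}(B_r(x))\le C\,r^{n-1}$ for all small $r$, so that a Vitali-type covering lemma for $\abs{D\chi_E}$ is available later.

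\textbf{The adapted chart (the heart of the matter).} Fix $x\in\mathcal{F}E$. Since the rescalings of $X$ at $x$ are $\RCD(r^2K,N)$ with $r^2K\to0$ and converge to $\setR^n$, for every $\delta>0$ and all small $r$ the ball $B_r(x)$ is $\delta r$-Gromov--Hausdorff close to $B_r(0^n)$ and carries a tuple $u=(u_1,\dots,u_n)\colon B_r(x)\to\setR^n$ of almost splitting functions (harmonic, with $L^2$-small Hessian, $\delta$-close to the Euclidean coordinates) realizing this closeness; this is the quantitative splitting theory in the $\RCD(K,N)$ setting. Because the unique tangent of $E$ at $x$ is $\{x_n>0\}$, the direction $u_n$ can be chosen so that its level sets are asymptotically the equidistant hyperplanes of the half-space, and then, setting $\hat u:=(u_1,\dots,u_{n-1})$, the crucial claim is that $\hat u$ restricts to a bi-Lipschitz homeomorphism of $\mathcal{F}E\cap B_{r/2}(x)$ onto its image. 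The only nonobvious estimate is the lower bound $\dist(p,q)\le C\,\abs{\hat u(p)-\hat u(q)}$ for $p,q\in\mathcal{F}E$ near $x$: if it failed, I would rescale and pass to the limit using the compactness and stability of $\RCD(K,N)$ spaces and of sets of finite perimeter (\autoref{def:L1 convergence of sets} and the attached stability theory), producing two distinct boundary points of the limit half-space lying on a common line normal to $\{x_n=0\}$, a contradiction. It is the Euclidean De Giorgi $\eps$-regularity theorem (flatness of $\partial^*$ of an almost half-space) that forces the blow-up of $\mathcal{F}E$ itself to be a single hyperplane and thereby powers this argument; note that \autoref{thm:epsregcolding} gives ambient bi-H\"older charts at these scales, but here one genuinely needs bi-Lipschitz splitting maps, so as not to destroy the $\BV$ and rectifiable structures.

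\textbf{Covering, rectifiability, and absolute continuity.} The balls $\{B_{r_x/2}(x):x\in\mathcal{F}E\}$ form a fine cover; by the Vitali covering lemma for the asymptotically doubling measure $\abs{D\chi_E}$ I extract a countable subfamily covering $\mathcal{F}E$ up to an $\abs{D\chi_E}$-null set, on each ball of which $\mathcal{F}E$ is the $\hat u$-preimage of a subset of $\setR^{n-1}$, hence $(n-1)$-rectifiable. For the absolute continuity $\abs{D\chi_E}\res\mathcal{F}E\ll\haus^{n-1}$, I would blow up the perimeter measure: at $\abs{D\chi_E}$-a.e.\ $x\in\mathcal{F}E$ it converges, in duality with $\Cbs$, to a constant multiple of $\haus^{n-1}$ restricted to $\{x_n=0\}$ (again by the energy convergence in $\BV$), whence $\lim_{r\to0}\abs{D\chi_E}(B_r(x))/r^{n-1}\in(0,\infty)$, and a standard differentiation argument on the rectifiable set $\mathcal{F}E$ yields $\abs{D\chi_E}\res\mathcal{F}E\ll\haus^{n-1}$ (in fact $=\omega\,\haus^{n-1}\res\mathcal{F}E$ for a suitable density $\omega$). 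Together, these two facts are exactly $\big(\abs{D\chi_E},(n-1)\big)$-rectifiability.

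\textbf{Main obstacle.} The difficulty is entirely in the middle step: constructing the splitting tuple $u$ \emph{at every} point of $\mathcal{F}E$ (statements that hold only $\haus^n$-a.e.\ are useless for the lower-dimensional measure $\abs{D\chi_E}$) and, above all, proving that its first $n-1$ components parametrize $\mathcal{F}E$ bi-Lipschitzly. This requires combining the half-space structure of the tangent, the Euclidean flatness of $\partial^*$ for almost-half-spaces, and the stability of perimeters under pmGH convergence into a single quantitative blow-up argument; everything else is a soft covering/differentiation bookkeeping.
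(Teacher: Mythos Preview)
The paper does not prove this theorem: it is stated in the preliminaries as a result from \cite{BruePasqualettoSemola19,BruePasqualettoSemola21}, so there is no ``paper's own proof'' to compare against. Your sketch is, however, broadly aligned with the strategy actually used in those references: one exploits the uniqueness of tangents from \autoref{th:uniqueness} to know that the blow-up is a Euclidean half-space at $\abs{D\chi_E}$-a.e.\ point, builds $\delta$-splitting maps $u=(u_1,\dots,u_n)$ on small balls (harmonic functions approximating Euclidean coordinates, coming from the quantitative splitting theory on $\RCD$ spaces), and then argues that a suitable subset of the components furnishes bi-Lipschitz charts for $\mathcal{F}E$, after which a covering argument finishes.

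That said, two points in your write-up are off. First, the sentence invoking ``the Euclidean De Giorgi $\eps$-regularity theorem'' is misplaced: De Giorgi's $\eps$-regularity concerns \emph{perimeter minimizers}, and here $E$ is an arbitrary set of finite perimeter; what you need (and what you already have from \autoref{th:uniqueness}) is simply that the tangent is the half-space, so nothing further is required to identify the blow-up of $\mathcal{F}E$ with a hyperplane. Second, the bi-Lipschitz step is the genuine technical core and your contradiction-by-blow-up argument, while morally right, glosses over the real difficulty: one must show that $\hat u$ is bi-Lipschitz on a \emph{large} (in $\abs{D\chi_E}$-measure) subset of $\mathcal{F}E\cap B_{r/2}(x)$, not on all of it, and this requires quantitative control on the Hessian of the splitting maps together with a maximal-function/good-set selection so that on the good set the gradients of the $u_i$ are almost orthonormal \emph{pointwise}, not merely in $L^2$. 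Your compactness argument would at best give the lower bound at a single pair of points, not uniformly. The references above handle this via careful propagation-of-regularity estimates for $\delta$-splitting maps; you should flag that this is where the real work lies rather than presenting it as a soft blow-up.
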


When specialized to the non-collapsed case,
where the essential dimension $n=N$ (cf. with the discussion before \autoref{def:singularstrata}), \autoref{thm:rectifiability} turns into:
\begin{corollary}\label{cor:Noncollapsed}
Let $(X,\dist,\haus^N)$ be a $\RCD(K,N)$ m.m.s.\ and $E\subset X$ a set of locally finite perimeter. Then $\mathcal{F}E$ is $\left(\abs{D\chi_E},N-1\right)$-rectifiable (equivalently, $\left(\mathcal{H}^{N-1},N-1\right)$-rectifiable). Furthermore
\begin{equation*}
\abs{D\chi_E}=\mathcal{H}^{N-1}\res\mathcal{F}E.
\end{equation*}
\end{corollary}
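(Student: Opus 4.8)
The plan is to deduce the statement from \autoref{thm:rectifiability} and \autoref{th:uniqueness}, together with the regularity theory of noncollapsed spaces and a blow-up argument. First, for a noncollapsed $\RCD(K,N)$ space the essential dimension equals $N$: indeed, were it some $n<N$, then $\haus^N=\meas$ would be concentrated on the $n$-rectifiable set $\mathcal{R}_n$, which is $\sigma$-finite with respect to $\haus^n$ and hence $\haus^N$-negligible, forcing $\haus^N(X)=0$ (see also \cite{DePhilippisGigli18} and the discussion preceding \autoref{def:singularstrata}). Applying \autoref{thm:rectifiability} with $n=N$ then gives at once that $\mathcal{F}E$ is $\big(\abs{D\chi_E},N-1\big)$-rectifiable. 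The equivalence with $(\haus^{N-1},N-1)$-rectifiability, and the local finiteness of $\haus^{N-1}\res\mathcal{F}E$, will be automatic consequences of the representation formula, since the two measures turn out to coincide.

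The core of the argument is the identity $\abs{D\chi_E}=\haus^{N-1}\res\mathcal{F}E$, which I would establish by differentiation of measures. By \autoref{th:uniqueness}, for $\abs{D\chi_E}$-a.e.\ $x$ the unique element of $\Tan_x(X,\dist,\haus^N,E)$ is $(\setR^N,\dist_{eucl},c_N\Leb^N,0^N,\{x_N>0\})$; in particular such an $x$ is an $N$-regular point of $X$, so by the noncollapsed structure theory \cite{DePhilippisGigli18} the volume density $\vartheta_X(x):=\lim_{r\downarrow0}\haus^N(B_r(x))/(\omega_N r^N)$ equals $1$, and the normalization defining $\meas_x^r$ is trivialized in the blow-up. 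Fix such an $x$. Along the rescalings $(X,r^{-1}\dist,\meas_x^r,x)$ converging to $(\setR^N,\dist_{eucl},c_N\Leb^N,0^N)$ as $r\downarrow0$, the set $E$ converges in $L^1_{\loc}$ to $\{x_N>0\}$, and the perimeter measures converge — \emph{not merely in the lower semicontinuous sense} — to $\abs{D\chi_{\{x_N>0\}}}=\haus^{N-1}\res\{x_N=0\}$, by the convergence theory for sets of finite perimeter on $\RCD$ spaces underlying \autoref{thm:rectifiability} (cf.\ \cite{AmbrosioBrueSemola19,BruePasqualettoSemola21}). Evaluating on balls and using $\vartheta_X(x)=1$ yields
\[
\lim_{r\downarrow0}\frac{\abs{D\chi_E}(B_r(x))}{\omega_{N-1}\,r^{N-1}}=1\qquad\text{for }\abs{D\chi_E}\text{-a.e.\ }x\,.
\]
Since $\abs{D\chi_E}$ is concentrated on $\mathcal{F}E$ (again by \autoref{th:uniqueness}, which supplies Euclidean half-space tangents $\abs{D\chi_E}$-a.e.) and is asymptotically doubling, while $\haus^{N-1}\res\mathcal{F}E$ has the same $(N-1)$-density $\omega_{N-1}$ at $\haus^{N-1}$-a.e.\ point of the $(N-1)$-rectifiable set $\mathcal{F}E$, a standard measure-differentiation argument in PI spaces upgrades this pointwise density identity to the global equality $\abs{D\chi_E}=\haus^{N-1}\res\mathcal{F}E$, whence both assertions follow.

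The main obstacle is the passage from lower semicontinuity to genuine continuity of the perimeter measures under the blow-up convergence: lower semicontinuity alone gives only the one-sided bound $\liminf_{r\downarrow0}\abs{D\chi_E}(B_r(x))/(\omega_{N-1}r^{N-1})\ge1$, and to pin the density down to exactly $1$ one must rule out any loss of perimeter in the limit. This is precisely the delicate point in the theory of sets of finite perimeter on $\RCD$ spaces; once it is granted (as it is in the references on which \autoref{thm:rectifiability} rests), the density computation and the differentiation of measures are routine geometric measure theory in the metric setting.
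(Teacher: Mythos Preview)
Your proposal is correct and follows essentially the same route the paper takes: the paper treats the corollary as an immediate specialization of \autoref{thm:rectifiability} to essential dimension $n=N$, with the representation formula $\abs{D\chi_E}=\haus^{N-1}\res\mathcal{F}E$ imported from the cited references \cite{AmbrosioBrueSemola19,BruePasqualettoSemola19,BruePasqualettoSemola21}. Your expanded sketch --- computing the $(N-1)$-density of $\abs{D\chi_E}$ via blow-up to the Euclidean half-space, using $\vartheta_X(x)=1$ at regular points of a noncollapsed space, and then invoking measure differentiation on the $(N-1)$-rectifiable set $\mathcal{F}E$ --- is precisely the argument carried out in those references, and you correctly flag the convergence (rather than mere lower semicontinuity) of the perimeter measures under blow-up as the substantive input.
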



In \cite{BruePasqualettoSemola19} the following Gauss-Green integration by parts formula for sets of finite perimeter and Sobolev vector fields has been proved. We refer to \cite{Gigli18} for the notion of Sobolev vector fields in $H^{1,2}_C(TX)$ and to \cite{BruePasqualettoSemola19} for the notion of restriction of the tangent module over the boundary of a set of finite perimeter $L^2_E(TX)$.

\begin{theorem}[Theorem 2.4 in \cite{BruePasqualettoSemola19}]\label{thm:GGRCDsmooth}
Let $(X,\dist,\meas)$ be an $\RCD(K,N)$ metric measure space and let $E\subset X$ be a set with finite perimeter and finite measure. Then there exists a unique vector field $\nu_E\in L^2_E(TX)$ such that $\abs{\nu_E}=1$ holds $\Per$-a.e. and
\begin{equation*}
\int _E\div v\di\meas=-\int<\mathrm{tr}_E v,\nu_E>\di\Per_E\, , 
\end{equation*}
for any $v\in H^{1,2}_C(TX)\cap D(\div )$ such that $\abs{v}\in L^{\infty}(\meas)$.
\end{theorem}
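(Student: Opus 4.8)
The plan is to obtain the formula in three stages: first produce the unit normal $\nu_E$ as the polar vector of the $\RCD$-differential of $\chi_E$; then construct the trace operator $\mathrm{tr}_E$ on $H^{1,2}_C(TX)$ together with a trace inequality; finally establish the identity on a dense class of vector fields and pass to the limit.

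\textbf{Step 1: the normal.} Since $E$ has finite perimeter, $\chi_E\in\BV(X,\dist,\meas)$ and the total variation $|D\chi_E|=\Per_E$ is a finite Borel measure. Using Gigli's tangent-module calculus together with the rectifiability of the reduced boundary (\autoref{thm:rectifiability}, and \autoref{cor:Noncollapsed} in the non-collapsed case) one obtains a polar decomposition $D\chi_E=\nu_E\,\Per_E$, where $\nu_E$ is a section of the restricted tangent module $L^2_E(TX)$ with $|\nu_E|=1$ holding $\Per_E$-a.e.; concretely, on each Borel piece on which $\mathcal{F}E$ is bi-Lipschitz to a subset of $\setR^{n-1}$ (here $n$ is the essential dimension of $X$) one reads off the approximate tangent hyperplane and a measurable choice of unit normal. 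Uniqueness of $\nu_E$ is immediate, since two such fields must coincide $\Per_E$-a.e.\ by testing against a generating set of the tangent module.

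\textbf{Step 2: the trace operator.} One must give meaning to $\mathrm{tr}_E v\in L^2_E(TX)$ for $v\in H^{1,2}_C(TX)$ with $|v|\in L^\infty(\meas)$, together with a trace inequality of the form
\[
\int |\mathrm{tr}_E v|^2\di\Per_E\le C\Big(\int_U |v|^2\di\meas+\int_U |\nabla v|^2\di\meas\Big)
\]
on a neighbourhood $U$ of the boundary. This is the technical core: cover $\mathcal{F}E$ by balls adapted to the doubling/Poincar\'e structure, on each such ball use the rectifiable structure of the ambient space to reduce to a quantitative Euclidean-type trace estimate, and sum using the Ahlfors regularity of $\Per_E$ and the coarea formula (\autoref{thm:coarea}) to slice $|v|^2$ across equidistant sets. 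The resulting operator $\mathrm{tr}_E$ is linear, continuous from (the relevant subspace of) $H^{1,2}_C(TX)$ into $L^2_E(TX)$, and is compatible with the $\meas$-a.e.\ definition of $v$, in the sense that it records the Lebesgue values of $v$ along $\mathcal{F}E$.

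\textbf{Step 3: the identity on a dense class and conclusion.} For $v$ a test vector field (a finite sum $\sum_i g_i\nabla f_i$ with $f_i,g_i\in\Test(X)$) one proves $\int_E\div v\di\meas=-\int\langle\mathrm{tr}_E v,\nu_E\rangle\di\Per_E$ directly: approximate $\chi_E$ in $L^1$ by Lipschitz functions $u_k$ with $\int\lip u_k\di\meas\to\Per_E(X)$ and (after truncation) $0\le u_k\le 1$, use $\int u_k\div v\di\meas=-\int\langle\nabla u_k,v\rangle\di\meas$ (the definition of $\div$), let $k\to\infty$ on the left by dominated convergence, and identify the limit on the right from the weak-$*$ convergence $\nabla u_k\,\meas\weakto\nu_E\Per_E$ of vector measures paired against the continuous representatives of $f_i,g_i$ along $\mathcal{F}E$, which by construction is $\mathrm{tr}_E v$; alternatively this identification follows from the Leibniz and chain rules of the metric $\BV$ calculus. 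Finally, a general $v\in H^{1,2}_C(TX)\cap D(\div)$ with $|v|\in L^\infty$ is approximated by test vector fields in the appropriate topology (controlling also $\div v$ in $L^2$): the left-hand side passes to the limit since $\div$ is continuous, and the right-hand side passes to the limit by the trace inequality of Step 2.

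\textbf{Main obstacle.} The delicate point is Step 2: there is no pointwise notion of vector field on an $\RCD$ space, so building $\mathrm{tr}_E v\in L^2_E(TX)$ and proving the trace inequality genuinely requires intertwining the rectifiable structure of $\mathcal{F}E$, the coarea formula, and quantitative covering arguments. Once the trace operator and its continuity are in place, Steps 1 and 3 are comparatively routine consequences of the $\BV$ calculus and a density argument.
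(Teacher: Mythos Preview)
This theorem is not proved in the present paper: it is quoted verbatim as Theorem 2.4 of \cite{BruePasqualettoSemola19} and used as a black box. There is therefore no proof in the paper to compare your proposal against.

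That said, a brief comment on your sketch. Your three-step outline (polar decomposition of $D\chi_E$, construction of a trace operator, density argument) is the right overall architecture, but Step~1 and Step~2 as you describe them understate the difficulty. In the $\RCD$ setting there is no pointwise tangent space, so ``reading off the approximate tangent hyperplane'' on rectifiable pieces does not by itself produce an element of the abstract module $L^2_E(TX)$; one first has to \emph{build} that module (the tangent module over the boundary) and show it is compatible with the ambient $L^2(TX)$ via a restriction map. This construction, carried out in \cite{BruePasqualettoSemola19}, uses the blow-up analysis of \cite{AmbrosioBrueSemola19} and the rectifiable structure in an essential way, and the trace $\mathrm{tr}_E$ is defined simultaneously with the module rather than via a coarea-and-covering trace inequality as you suggest. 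Your Step~2 strategy (Euclidean-type trace estimates glued by covering) is the kind of argument that works when one has charts; here the substitute is the tangent-module machinery of \cite{Gigli18, DebinGigliPas21} specialized to the perimeter measure. Step~3 is broadly correct in spirit.
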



For the sake of notation we shall denote 
\begin{equation}\label{eq:defGGMeas}
\mu_E:=\nu_E\cdot \Per_E,  \quad \text{the \textit{Gauss-Green measure}}.
\end{equation}
Notice that, by our choice of signs, $\nu_E$ corresponds to the \textit{inward-pointing} unit normal vector for a domain with smooth boundary in a smooth Riemannian manifold.
\medskip

Let us also recall a mild regularity result for sets of finite perimeter which follows again from \cite{BruePasqualettoSemola19} and has been proved in \cite[Proposition 4.2]{BruePasqualettoSemola21} (even for general $\RCD(K,N)$ metric measure spaces $(X,\dist,\meas)$). It can be considered as a counterpart tailored for this framework of the Euclidean Federer type characterization of sets of finite perimeter.

\begin{proposition}\label{prop:federertype}
Let $(X,\dist,\haus^N)$ be an $\RCD(K,N)$ metric measure space for some $N\ge 1$ and let $E\subset X$ be a set of locally finite perimeter. Then the following hold:
\begin{itemize}
\item[i)] for $\haus^{N-1}$-a.e. $x\in X$ it holds
\begin{equation*}
\lim_{r\downarrow 0}\frac{\haus^N(B_r(x)\cap E)}{\haus^N(B_r(x))}\in \Big\{0,\frac{1}{2},1 \Big\}\, .
\end{equation*}
Moreover, up to an $\haus^{N-1}$-negligible set it holds
\begin{equation*}
\mathcal{F}E=\Big\{x\in E\, :\, \lim_{r\downarrow 0}\frac{\haus^N(B_r(x)\cap E)}{\haus^N(B_r(x))}=\frac{1}{2} \Big\}\, .
\end{equation*}
\item[ii)] For $\haus^{N-1}$-a.e. $x\in X$ it holds
\begin{equation}\label{eq:densflow}
\lim_{t\downarrow 0}P_t\chi_E(x)\in \Big\{0,\frac{1}{2},1 \Big\}\, .
\end{equation}
Moreover, up to an $\haus^{N-1}$-negligible set it holds
\begin{equation*}
\mathcal{F}E=\Big\{x\in E\, :\, \lim_{t\downarrow 0}P_t\chi_E(x)=\frac{1}{2} \Big\}\, .
\end{equation*}
\end{itemize}
\end{proposition}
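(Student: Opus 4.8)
The plan is to assemble the statement from the structural facts recalled above: the uniqueness of blow-ups of sets of finite perimeter (\autoref{th:uniqueness}), the identity $\abs{D\chi_E}=\haus^{N-1}\res\mathcal{F}E$ valid in the non-collapsed case (\autoref{cor:Noncollapsed}), the relative isoperimetric inequality \eqref{eq:isop}, and---for part (ii)---the two-sided Gaussian bounds for the heat kernel on $\RCD(K,N)$ spaces. As all the assertions are local, I may assume $E$ has finite perimeter, so that $\Per_E=\abs{D\chi_E}=\haus^{N-1}\res\mathcal{F}E$ is a finite Borel measure. First I would check that the density equals $1/2$ at every point of $\mathcal{F}E$: fixing $x\in\mathcal{F}E$ and $r_i\downarrow 0$, by definition the rescaled spaces $(X,r_i^{-1}\dist,\meas_x^{r_i},x)$ converge to $(\setR^N,\dist_{eucl},c_N\Leb^N,0)$ with $\chi_E\to\chi_{\{x_N>0\}}$ in $L^1_\loc$; since the ratio $\haus^N(B_s(x)\cap E)/\haus^N(B_s(x))$ is unaffected by the renormalisation $C(x,r_i)^{-1}$ and $\Leb^N(\partial B_1)=0$, passing to the limit with $s=1$ gives $\haus^N(B_{r_i}(x)\cap E)/\haus^N(B_{r_i}(x))\to \Leb^N(B_1\cap\{x_N>0\})/\Leb^N(B_1)=1/2$, the non-collapsing hypothesis being exactly what keeps the limit measure full-dimensional. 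As $r_i$ is arbitrary, the limit as $r\downarrow 0$ exists and equals $1/2$ on all of $\mathcal{F}E$.

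Next I would show that $\Sigma:=\partial^* E\setminus\mathcal{F}E$ is $\haus^{N-1}$-negligible. Given $x\in\Sigma$ there are $k\in\setN$ and $r_i\downarrow 0$ with $\min\{\haus^N(B_{r_i}(x)\cap E),\haus^N(B_{r_i}(x)\setminus E)\}\ge k^{-1}\haus^N(B_{r_i}(x))$; moreover the $N$-density $\vartheta_N(x)=\lim_{r\downarrow 0}\haus^N(B_r(x))/(\omega_N r^N)$ exists and is strictly positive on non-collapsed spaces (see \cite{DePhilippisGigli18}), so $\haus^N(B_{r_i}(x))\ge m(x)^{-1}r_i^N$ for $i$ large. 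Feeding this into \eqref{eq:isop} yields $\Per_E(B_{\lambda r_i}(x))\ge (Ck)^{-1}\haus^N(B_{r_i}(x))/r_i\ge c(k,m(x),\lambda)\,(\lambda r_i)^{N-1}$, hence on the Borel set $\Sigma_{k,m}$ of points of $\Sigma$ with fixed parameters one has $\limsup_{\rho\downarrow 0}\Per_E(B_\rho(x))/\rho^{N-1}\ge c(k,m,\lambda)>0$ everywhere. Since $\Per_E(\Sigma_{k,m})=\haus^{N-1}(\mathcal{F}E\cap\Sigma_{k,m})=0$, outer regularity of $\Per_E$ gives open sets $U\supseteq\Sigma_{k,m}$ of arbitrarily small $\Per_E$-measure, and a Vitali $5r$-covering argument---refined by one further stratification of $\Sigma_{k,m}$ so that the enlarged covering balls remain inside $U$---gives $\haus^{N-1}(\Sigma_{k,m})\le C(k,m,\lambda)\,\Per_E(U)$, whence $\haus^{N-1}(\Sigma_{k,m})=0$; summing over $k,m$ completes this step. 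Writing $X=\mathcal{F}E\cup\Sigma\cup(X\setminus\partial^* E)$---with density $0$ or $1$ on the last set by definition of $\partial^* E$, density $1/2$ on $\mathcal{F}E$, and $\Sigma$ negligible---both assertions of (i) follow, and in particular $\mathcal{F}E$ and the set of points of density $1/2$ differ by an $\haus^{N-1}$-null set.

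For (ii) I would prove that $\lim_{t\downarrow 0}P_t\chi_E(x)=\alpha$ at \emph{every} $x$ where $\alpha:=\lim_{r\downarrow 0}\haus^N(B_r(x)\cap E)/\haus^N(B_r(x))$ exists; by (i) this covers $\haus^{N-1}$-a.e.\ $x$, with $\alpha\in\{0,1/2,1\}$ and $\alpha=1/2$ exactly on $\mathcal{F}E$ up to $\haus^{N-1}$-null sets, which gives both parts of (ii). To obtain the identity, decompose $P_t\chi_E(x)=\int_X p_t(x,y)\chi_E(y)\di\haus^N(y)$ over the dyadic annuli $A_j=B_{2^j\sqrt t}(x)\setminus B_{2^{j-1}\sqrt t}(x)$: the Gaussian bounds for $p_t$ together with local doubling give $\sum_j\int_{A_j}p_t(x,\cdot)\di\haus^N=1$ (stochastic completeness) and $\int_{A_j}p_t(x,\cdot)\di\haus^N\le Ce^{-c4^j}$ uniformly for $t\le t_0$, while $\haus^N(A_j\cap E)/\haus^N(A_j)\to\alpha$ as $t\downarrow 0$ for each fixed $j$ (again using the existence and positivity of $\vartheta_N(x)$, so that $\haus^N(B_{2^j\sqrt t}(x))/\haus^N(B_{2^{j-1}\sqrt t}(x))\to 2^N$). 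Splitting the series into finitely many leading terms (controlled by this pointwise limit) and a geometric tail (controlled uniformly) then gives $P_t\chi_E(x)\to\alpha$.

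The conceptual weight is really carried by \autoref{th:uniqueness}; within the present argument the delicate points are the measure-theoretic bookkeeping in the second step---a bare $5r$-covering does not rule out $\Per_E$ concentrating on $\overline{\Sigma_{k,m}}\cap\mathcal{F}E$, which is why one passes to shrinking open neighbourhoods of small perimeter---and, in the third step, making the passage from the Gaussian bounds to the exact limit $\alpha$ uniform in $t$. I expect these to be the only genuinely nontrivial points; the rest is an assembly of the cited results.
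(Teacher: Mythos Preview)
The paper does not prove this proposition; it is quoted from \cite[Proposition~4.2]{BruePasqualettoSemola21}. Your argument for (i) is correct and follows the standard pattern: density $\tfrac12$ on $\mathcal{F}E$ via the half-space blow-up, then $\haus^{N-1}(\partial^*E\setminus\mathcal{F}E)=0$ by combining the relative isoperimetric inequality with $\Per_E=\haus^{N-1}\res\mathcal{F}E$ and a Vitali covering (you correctly flag the need to pass to open neighbourhoods of small $\Per_E$-measure first).

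Your argument for (ii) has a gap. From ``$\haus^N(A_j\cap E)/\haus^N(A_j)\to\alpha$'' you want to conclude that the head $\sum_{j<J}\int_{A_j}p_t\chi_E$ is close to $\alpha\sum_{j<J}\int_{A_j}p_t$, but this tacitly treats $p_t(x,\cdot)$ as constant on each annulus. The two-sided Gaussian bounds \eqref{eq:kernelestimate} only sandwich $p_t(x,y)$ between two \emph{different} radial profiles (with rates $1/3$ and $1/5$), whose ratio on $A_j$ is a fixed constant strictly bigger than $1$: knowing that $E$ occupies an $\alpha$-fraction of the \emph{volume} of $A_j$ does not prevent $E\cap A_j$ from sitting where $p_t$ is systematically larger or smaller, so $\int_{A_j}p_t\chi_E$ need not be close to $\alpha\int_{A_j}p_t$. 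The fix is to separate cases. For $\alpha\in\{0,1\}$ one genuinely has $\fint_{B_r}|\chi_E-\alpha|\to 0$, and \autoref{lemma:lebpoint} applies directly. For $\alpha=\tfrac12$ you are, by (i), $\haus^{N-1}$-a.e.\ on $\mathcal{F}E$; there the blow-up is $(\setR^N,\{x_N>0\})$, and stability of the heat flow under pmGH convergence (e.g.\ \cite{AmbrosioHonda18}) gives $P^X_{r^2}\chi_E(x)\to P^{\setR^N}_1\chi_{\{x_N>0\}}(0)=\tfrac12$ as $r\downarrow 0$---equivalently, heat-kernel convergence at these ambient regular points is exactly the missing ingredient that makes your dyadic-annuli argument rigorous.
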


\begin{definition}\label{def:precrep}
Given a set of finite perimeter $E\subset X$ and any $0\le t\le 1$, we set
\begin{equation*}
E^{(t)}:=\Big\{x\in X: \lim_{r\downarrow 0}\frac{\haus^N(B_r(x)\cap E)}{\haus^N(B_r(x))}=t \Big\}\, .
\end{equation*}
\end{definition}
A consequence of \autoref{prop:federertype} above is that, up to an $\haus^{N-1}$-negligible set,
\begin{equation*}
X=E^{(1)}\cup E^{(1/2)}\cup E^{(0)}\, .
\end{equation*}

\begin{definition}
In the following we shall adopt the notation $M\sim N $ to indicate that two Borel sets coincide up to $\haus^{N-1}$ negligible sets, i.e. $\haus^{N-1}(M\Delta N)=0$.
\end{definition}

It follows from the discussion above that, for any Borel set $M\subset X$,
\begin{equation*}
M\sim (M\cap E^{(1)})\cup (M\cap E^{(0)})\cup (M\cap E^{(1/2)})\, .
\end{equation*}

In order to ease the notation, given a set of finite perimeter $E\subset X$ and $x\in X$ we shall denote by
\begin{equation*}
\theta(E,x):=\lim_{r\to 0}\frac{\haus^N(E\cap B_r(x))}{\haus^N(B_r(x))}\, ,
\end{equation*}
whenever the limit exists.\\
It follows again from the discussion above that $\theta(E,x)$ is well defined and belongs to $\{0,1/2,1\}$ for $\haus^{N-1}$-a.e. $x\in X$.

\begin{remark}
Analogous statements hold changing $\lim_{r\to 0}\haus^N(B_r(x)\cap E)/\haus^N(B_r(x))$ with $\lim_{t\to 0}P_t\chi_E$, see \cite[Remark 4.5]{BruePasqualettoSemola21}.
\end{remark}

\subsubsection{Gauss Green formulae for essentially bounded divergence measure vector fields}

In order to make rigorous the formal argument described in \autoref{subsec:overview}, we need to consider vector fields that are bounded and have measure valued divergence, but do not belong to $H^{1,2}_C(TX)$ in general. 

\begin{definition}\label{def:essbounddivmeas}
Let $(X,\dist,\meas)$ be an $\RCD(K,N)$ metric measure space. We say that a vector field $V\in L^{\infty}(TX)$ is an \textit{essentially bounded divergence measure vector field} if its distributional divergence is a finite Radon measure, that is if $\div V$ is a finite Radon measure such that, for any Lipschitz function with compact support $g:X\to\setR$, it holds
\begin{equation*}
\int_X g \di \div V=-\int_X \nabla g\cdot V\di\meas\, .
\end{equation*}
We shall denote the class of these vector fields by $\mathcal{DM}^{\infty}(X)$ and sometimes, to ease the notation, we will abbreviate $\int g\di\div V$ with $\int g\div V$.
\end{definition}

We recall a useful regularity result, whose proof can be found in the proof of \cite[Theorem 7.4]{BrueNaberSemola20}.

\begin{lemma}\label{cor:divac}
Let $(X,\dist,\haus^N)$ be an $\RCD(K,N)$ metric measure space. Let $V\in\mathcal{DM}^{\infty}(X)$. Then $\div V\ll \haus^{N-1}$.
\end{lemma}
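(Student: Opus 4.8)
The plan is to show that $\div V$ charges no set of Hausdorff dimension strictly less than $N-1$, which by a standard measure-theoretic argument is equivalent to $\div V\ll\haus^{N-1}$. The natural strategy is to test the defining identity $\int_X g\di\div V=-\int_X\nabla g\cdot V\di\meas$ against a well-chosen family of cutoff functions concentrating on a putative "bad" set, and to control the right-hand side using only the $L^\infty$ bound on $V$ together with the Bishop--Gromov volume growth \eqref{eq:BishopGromovInequality}. Concretely, suppose $\haus^{N-1}(B)=0$ for a Borel set $B$; I want to conclude $|\div V|(B)=0$. Since $|\div V|$ is a finite Radon measure, it suffices to do this for $B$ bounded, and then, by outer regularity, to cover $B$ efficiently by balls: for every $\eps>0$ there are countably many balls $B_{r_i}(x_i)$ with $B\subset\bigcup_i B_{r_i}(x_i)$ and $\sum_i r_i^{N-1}<\eps$ (this is where $\haus^{N-1}(B)=0$ enters), with all radii small.

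For each ball $B_{r_i}(x_i)$ I would take a Lipschitz cutoff $g_i$ with $g_i\equiv 1$ on $B_{r_i}(x_i)$, $\supp g_i\subset B_{2r_i}(x_i)$, $0\le g_i\le 1$ and $\LipConst(g_i)\le C/r_i$. Plugging $g_i$ into the Gauss--Green identity and using $|V|\le\|V\|_{L^\infty}$ gives
\begin{equation*}
\Big|\int_X g_i\di\div V\Big|=\Big|\int_X\nabla g_i\cdot V\di\meas\Big|\le \|V\|_{L^\infty}\,\frac{C}{r_i}\,\meas\big(B_{2r_i}(x_i)\big)\,.
\end{equation*}
Now the key input is the (locally uniform) Ahlfors regularity coming from the non-collapsed assumption $\meas=\haus^N$ together with Bishop--Gromov: on a fixed bounded region and for small radii one has $\haus^N(B_{2r_i}(x_i))\le C' r_i^{N}$ (with $C'$ depending only on $K$, $N$ and the region). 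Hence each term is bounded by $C'' r_i^{N-1}$, and since $|\div V|(B_{r_i}(x_i))\le\int_X g_i\di|\div V|$ cannot be directly compared this way, I instead pass to the total variation: summing, $\sum_i|\div V|\big(B_{r_i}(x_i)\big)$ must be controlled. The clean way is to apply the identity to $g=\sum_i g_i$ truncated appropriately, or better, to note $|\div V|$ is a measure and use that the $g_i$ can be chosen with bounded overlap (Besicovitch/Vitali covering in the doubling space), so that $\sum_i|\div V|(B_{r_i}(x_i))\le C\sum_i r_i^{N-1}<C\eps$. Letting $\eps\to0$ yields $|\div V|(B)=0$.

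The main obstacle is making the passage from the scalar estimate on $\int g_i\di\div V$ to an estimate on the \emph{variation measure} $|\div V|$ fully rigorous: $\int g_i\di\div V$ involves cancellation, whereas $|\div V|(B_{r_i}(x_i))$ does not. The fix is to exploit that $\div V$, being a finite signed (or vector-valued) Radon measure, has a Hahn--Jordan/polar decomposition; one then runs the cutoff argument separately against $g_i$ times the sign of the decomposition — or, more robustly, one first establishes the Ahlfors-type upper bound $|\div V|(B_r(x))\le C r^{N-1}$ for all small balls in a fixed region by testing with a single cutoff adapted to the density of $|\div V|$, and \emph{then} deduces absolute continuity with respect to $\haus^{N-1}$ from this density bound via the standard differentiation theorem for Radon measures in doubling spaces. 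This density bound is exactly the assertion one extracts from the cited proof of \cite[Theorem 7.4]{BrueNaberSemola20}, so the remaining work is essentially bookkeeping with covering lemmas and Bishop--Gromov.
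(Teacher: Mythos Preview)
Your overall architecture is correct and is precisely the standard route: cover an $\haus^{N-1}$--null set by small balls, test $\int g\,\di\div V=-\int\nabla g\cdot V\,\di\meas$ against Lipschitz cutoffs, and use the local Ahlfors upper regularity $\haus^N(B_{2r_i}(x_i))\le C r_i^N$ coming from $\meas=\haus^N$ plus Bishop--Gromov. The paper itself gives no argument at all---it simply cites the proof of \cite[Theorem~7.4]{BrueNaberSemola20}---so there is no alternative strategy to compare against; your sketch is already more explicit than what the paper records.

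There is, however, one genuine imprecision at the point you yourself flag. Your first proposed fix, ``run the cutoff argument against $g_i$ times the sign of the decomposition,'' does not work as written: the sign function $\sigma$ in the polar decomposition $\div V=\sigma\,|\div V|$ is merely Borel, so $g_i\sigma$ is not Lipschitz and you lose the identity $\int g_i\sigma\,\di\div V=-\int\nabla(g_i\sigma)\cdot V\,\di\meas$ (equivalently, you lose any control on $|\nabla(g_i\sigma)|$). The clean fix is a small extra step you did not write: let $\mu^{\pm}$ be the Hahn parts with Hahn set $P$, reduce to $B\subset P$ so that $\mu^-(B)=0$, and use \emph{outer regularity of $\mu^-$} to find an open $W\supset B$ with $\mu^-(W)<\eps$. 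Only then take the covering balls inside $W$ and set $g:=\min\bigl(1,\sum_i g_i\bigr)$, which is genuinely Lipschitz with $\int|\nabla g|\,\di\haus^N\le \sum_i C r_i^{-1}\haus^N(B_{2r_i}(x_i))\le C'\sum_i r_i^{N-1}<C'\eps$. Then
\[
\mu^+(B)\le\int g\,\di\mu^+=\int g\,\di\div V+\int g\,\di\mu^-\le \|V\|_{L^\infty}\int|\nabla g|\,\di\haus^N+\mu^-(W)\le C''\eps\,,
\]
and you are done. Your second proposed fix, the uniform density bound $|\div V|(B_r(x))\le Cr^{N-1}$, is \emph{not} what the single-cutoff estimate gives (that only bounds the signed quantity $|\div V(B_r(x))|$), and asserting it requires the same total--variation step you are trying to avoid; better to go through the Hahn/outer--regularity argument above, which is entirely elementary.
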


Notice that the divergence measure of a vector field in this class might have singular parts with respect to the reference measure. In particular, it might charge the boundary of a set of finite perimeter and it becomes relevant to choose wether in the Gauss-Green formula we integrate the divergence of the vector field only over the interior of the set of finite perimeter or over its closure.\\
As a second issue, contrary to smooth vector fields (and to $H^{1,2}_C$-vector fields in the $\RCD$ framework) essentially bounded divergence measure vector fields do not have pointwise-a.e. defined representatives over boundaries of sets of finite perimeter.\\

It turns that, despite not being able to pointwise define the vector field over the reduced boundary of a set of finite perimeter, it is possible to define interior and exterior normal traces, possibly different, playing the role of the term $V\cdot\nu_E$ in the Gauss-Green formula.
\medskip

Given an essentially bounded divergence measure vector field $V\in \mathcal{DM}^{\infty}(X)$ and a set of finite perimeter $E\subset X$, it is proved in \cite[Section 6.5]{BuffaComiMiranda19} and \cite[Section 5]{BruePasqualettoSemola21} that there exist measures $D\chi_E(\chi_E V)$ and $D\chi_E(\chi_{E^c}V)$ such that
\begin{equation*}
\nabla P_t\chi_E\cdot (\chi_EV)\weakto D\chi_E(\chi_E V) \quad\text{and}\quad \nabla P_t\chi_E\cdot (\chi_{E^c}V)\weakto D\chi_E(\chi_{E^c} V)\, ,
\end{equation*}
as $t\to 0$.\\
Moreover, $D\chi_E(\chi_E V)$ and $D\chi_E(\chi_{E^c}X)$ are both absolutely continuous w.r.t. $\abs{D\chi_E}$. 
Therefore we are entitled to consider their densities, $\left(V\cdot\nu_E\right)_{\mathrm{int}}$ and $\left(V\cdot\nu_E\right)_{\mathrm{ext}}$, defined by
\begin{equation*}
2D\chi_E(\chi_E V)=\left(V\cdot\nu_E\right)_{\mathrm{int}}\abs{D\chi_E} \quad\text{and}\quad 2D\chi_E(\chi_{E^c} V)=\left(V\cdot\nu_E\right)_{\mathrm{ext}}\abs{D\chi_E}.
\end{equation*}

Below we report a Gauss-Green integration by parts for essentially bounded divergence measure vector fields and sets of finite perimeter on $\RCD(K,N)$ spaces. It is the outcome of \cite[Theorem 6.20]{BuffaComiMiranda19}, where the integration by parts formula has been obtained with non sharp bounds for the normal traces, and of \cite[Theorem 5.2]{BruePasqualettoSemola21}, where these bounds have been sharpened.

\begin{theorem}\label{thm:GaussGreenRCDBCM}
Let $(X,\dist,\meas)$ be an $\RCD(K,N)$ metric measure space. Let $E\subset X$ be a set of finite perimeter and let $V\in\mathcal{DM}^{\infty}(X)$. Then for any function $f\in \Lip_c(X)$ it holds
  \begin{align*}
 \int_{E^{(1)}}f \div V+\int_E\nabla f\cdot V\di\meas & =-\int_{\mathcal{F}E} f\left(V\cdot\nu_E\right)_{\mathrm{int}}\di\Per\,  ,\\
 \int_{E^{(1)}\cup\mathcal{F}E} f \div V+\int_E\nabla f\cdot V\di\meas & =-\int_{\mathcal{F}E} f\left(V\cdot\nu_E\right)_{\mathrm{ext}}\di\Per\, .
\end{align*}
Moreover 
\begin{align}
\norm{\left(V\cdot\nu_E\right)_{\mathrm{int}}}_{L^{\infty}(\mathcal{F}E,\Per)}& \le \norm{V}_{L^{\infty}(E,\meas)} \, , \label{eq:inftyboundtraceintsharp}\\
\norm{\left(V\cdot\nu_E\right)_{\mathrm{ext}}}_{L^{\infty}(\mathcal{F}E,\Per)}& \le \norm{V}_{L^{\infty}(X\setminus E,\meas)}\, . \label{eq:inftyboundtraceextsharp}
\end{align}
\end{theorem}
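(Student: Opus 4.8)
The plan is to deduce both integration-by-parts identities from the distributional definition of $\div V$ via a heat-flow regularization of $\chi_E$, and then to extract the sharp $L^{\infty}$ bounds from a localized version of the same regularization. Fix $f\in\Lip_c(X)$ and test the identity $\int_X g\di\div V=-\int_X\nabla g\cdot V\di\meas$ with $g=f\,P_t\chi_E$, which is admissible since $P_t\chi_E$ is bounded and Lipschitz for $t>0$ and $f$ has compact support. Using the Leibniz rule $\nabla(fP_t\chi_E)=P_t\chi_E\,\nabla f+f\,\nabla P_t\chi_E$ this reads
\[
-\int_X f\,P_t\chi_E\di\div V=\int_X P_t\chi_E\,\nabla f\cdot V\di\meas+\int_X f\,\nabla P_t\chi_E\cdot V\di\meas\, .
\]

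Now I let $t\downarrow0$ and handle the three terms. For the left-hand side I use that $P_t\chi_E(x)\to\theta(E,x)$ for $\haus^{N-1}$-a.e.\ $x$ (\autoref{prop:federertype}) together with $\div V\ll\haus^{N-1}$ (\autoref{cor:divac}) and $|P_t\chi_E|\le1$, so dominated convergence and the density trichotomy give $\int f P_t\chi_E\di\div V\to\int_{E^{(1)}}f\di\div V+\tfrac12\int_{\mathcal{F}E}f\di\div V$. For the first term on the right I use $P_t\chi_E\to\chi_E$ in $L^1_{\loc}$ and boundedness to get $\int P_t\chi_E\,\nabla f\cdot V\di\meas\to\int_E\nabla f\cdot V\di\meas$. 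For the last term I split $V=\chi_EV+\chi_{E^c}V$ and invoke the weak-$*$ convergences $\nabla P_t\chi_E\cdot(\chi_EV)\meas\weakstarto D\chi_E(\chi_EV)=\tfrac12(V\cdot\nu_E)_{\mathrm{int}}\Per_E$ and $\nabla P_t\chi_E\cdot(\chi_{E^c}V)\meas\weakstarto D\chi_E(\chi_{E^c}V)=\tfrac12(V\cdot\nu_E)_{\mathrm{ext}}\Per_E$ recalled right before the statement. Collecting the limits yields the symmetric identity
\[
\int_{E^{(1)}}f\div V+\tfrac12\int_{\mathcal{F}E}f\div V+\int_E\nabla f\cdot V\di\meas=-\tfrac12\int_{\mathcal{F}E}f\big[(V\cdot\nu_E)_{\mathrm{int}}+(V\cdot\nu_E)_{\mathrm{ext}}\big]\di\Per\, .
\]

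To upgrade this averaged identity to the two one-sided identities in the statement, the extra ingredient is the jump relation $\div V\res\mathcal{F}E=\big[(V\cdot\nu_E)_{\mathrm{int}}-(V\cdot\nu_E)_{\mathrm{ext}}\big]\Per_E$, equivalently the Leibniz-type formula $\div(\chi_EV)=\div V\res E^{(1)}+(V\cdot\nu_E)_{\mathrm{int}}\Per_E$. This is localized in nature: both $\div V$ and $\div(\chi_EV)$ are absolutely continuous with respect to $\haus^{N-1}$ and, by the Federer-type \autoref{prop:federertype}, concentrated on $E^{(1)}\cup E^{(0)}\cup\mathcal{F}E$; moreover $\chi_EV$ coincides with $V$ on the topological interior of $E$ and with $0$ on its exterior, so the only genuine freedom is the $\mathcal{F}E$-part, which is pinned down by the very construction of the traces as blow-up limits of $\nabla P_t\chi_E\cdot V$. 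Inserting this relation into the symmetric identity and splitting $\tfrac12\int_{\mathcal{F}E}f\div V$ accordingly yields, by elementary algebra, both displayed formulas.

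Finally, for the sharp bounds I would revisit the last term at the level of moduli: from $(V\cdot\nu_E)_{\mathrm{int}}\Per_E=2\lim_{t\downarrow0}\nabla P_t\chi_E\cdot(\chi_EV)\meas$ one gets, for every nonnegative $g\in\Cb(X)$,
\[
\int g\,(V\cdot\nu_E)_{\mathrm{int}}\di\Per\le 2\,\|V\|_{L^{\infty}(E,\meas)}\,\limsup_{t\downarrow0}\int g\,\chi_E\,|\nabla P_t\chi_E|\di\meas\, ,
\]
so \eqref{eq:inftyboundtraceintsharp} follows (and analogously \eqref{eq:inftyboundtraceextsharp}) once one knows the localized convergence $\chi_E\,|\nabla P_t\chi_E|\,\meas\weakstarto\tfrac12\Per_E$. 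I expect this last point to be the main obstacle: whereas the total statement $|\nabla P_t\chi_E|\meas\weakstarto\Per_E$ is available, the equidistribution of the gradient mass across $\partial E$ requires the fine structure of the reduced boundary — that $\haus^{N-1}$-a.e.\ point of $\mathcal{F}E$ has density $\tfrac12$ and admits a half-space blow-up (\autoref{th:uniqueness}) — together with the Gaussian short-time asymptotics of the heat kernel, so that after rescaling $P_t\chi_E$ resembles the one-dimensional Gaussian profile of the half-space, whose gradient is symmetric about the hyperplane. This is precisely the refinement carried out in \cite{BuffaComiMiranda19,BruePasqualettoSemola21}, and everything above is then a reorganization of those results.
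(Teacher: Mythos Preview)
The paper does not give its own proof of this theorem: it is stated in the preliminaries as a quotation, explicitly attributed to \cite[Theorem~6.20]{BuffaComiMiranda19} for the integration-by-parts identities (with non-sharp trace bounds) and to \cite[Theorem~5.2]{BruePasqualettoSemola21} for the sharpening \eqref{eq:inftyboundtraceintsharp}--\eqref{eq:inftyboundtraceextsharp}. So there is nothing in the paper to compare your proposal against; your final sentence already acknowledges that what you wrote is a reorganization of those cited proofs.

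That said, two remarks on your sketch. First, you invoke \autoref{cor:divac} ($\div V\ll\haus^{N-1}$) and \autoref{prop:federertype} to pass to the limit on the $\div V$ side, but both are stated in the paper only for noncollapsed spaces $(X,\dist,\haus^N)$, whereas the theorem is for general $(X,\dist,\meas)$. In the general case the correct replacement is absolute continuity of $\div V$ with respect to capacity, together with the heat-flow pointwise limits \eqref{eq:densflow}; this is how \cite{BuffaComiMiranda19} proceeds. Second, your derivation of the two separate identities from the averaged one rests on the jump relation $\div V\res\mathcal{F}E=[(V\cdot\nu_E)_{\mathrm{int}}-(V\cdot\nu_E)_{\mathrm{ext}}]\Per_E$, which you justify only heuristically. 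This relation is in fact \emph{equivalent} to the two one-sided identities (subtract them), so it cannot be obtained from the averaged identity alone; in the references it is established directly, essentially by computing $\div(\chi_E V)$ via the same heat-flow regularization and identifying its $\mathcal{F}E$-part with $(V\cdot\nu_E)_{\mathrm{int}}\Per_E$. Your identification of the key technical point for the sharp bounds --- the one-sided convergence $\chi_E|\nabla P_t\chi_E|\,\meas\weakstarto\tfrac12\Per_E$ --- is correct, and is exactly the content of \cite[Theorem~5.2]{BruePasqualettoSemola21}.
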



\subsubsection{Operations with sets of finite perimeter}

In order to build competitors for variational problems, we will rely on the following characterization theorem for the perimeter and the Gauss-Green measure of intersections, union and differences of sets of finite perimeter, that has been obtained in \cite[Theorem 4.11]{BruePasqualettoSemola21}. We refer to \cite[Theorem 16.3]{Maggi12} for the analogous statement for sets of finite perimeter on $\setR^n$. 

 Recall the definitions of essential boundary $\partial^* E$ given in \eqref{eq:defEssBoundary} and of Gauss-Green measure $\mu_E$ given in \eqref{eq:defGGMeas} for a set of finite perimeter $E\subset X$. We refer also to \cite[Definition 4.9]{BruePasqualettoSemola21} for the introduction of the set of coincidence $\{\nu_E=\nu_F \}$ of the unit normals to two sets of finite perimeter $E$ and $F$.

\begin{theorem}\label{thm:cutandpaste}
Let $(X,\dist,\haus^N)$ be an $\RCD(K,N)$ metric measure space and let $E,F\subset X$ be sets of finite perimeter. 
Let us set
\begin{equation*}
\{\nu_E=\nu_F\}:=\{x\in \partial^*E\cap\partial^*F\, :\, \nu_E=\nu_F\}
\end{equation*}
and 
\begin{equation*}
\{\nu_E=-\nu_F\}:=\{x\in \partial^*E\cap\partial^*F\, :\, \nu_E=-\nu_F\}\, .
\end{equation*}
Then $E\cap F$, $E\cup F$ and $E\setminus F$ are sets of finite perimeter; moreover the following hold:
\begin{align}
\mu_{E\cap F}&=\mu_E\res F^{(1)}+\mu_F\res E^{(1)}+\nu_E\haus^{N-1}\res\{\nu_E=\nu_F\}\, , \label{eq:GGEcapF} \\
\mu_{E\cup F}&=\mu_E\res F^{(0)}+\mu_F\res E^{(0)}+\nu_E\haus^{N-1}\res\{\nu_E=\nu_F\}\, , \ \label{eq:GGEcupF} \\
\mu_{E\setminus F}&=\mu_E\res F^{(0)}-\mu_F\res E^{(1)}+\nu_E\haus^{N-1}\res\{\nu_E=-\nu_F\}\, . \label{eq:GGEminF}
\end{align}
\end{theorem}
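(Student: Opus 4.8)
The plan is to follow the scheme of the Euclidean statement \cite[Theorem 16.3]{Maggi12}, replacing the blow-up computations in coordinate charts by the intrinsic tangents to a set of finite perimeter (\autoref{def:tan}) and their essential uniqueness (\autoref{th:uniqueness}); the full argument is carried out in \cite[Theorem 4.11]{BruePasqualettoSemola21}. First I would record that $E\cap F$, $E\cup F$ and $E\setminus F$ have finite perimeter: since $\chi_{E\cap F}=\chi_E\wedge\chi_F$ and $\chi_{E\cup F}=\chi_E\vee\chi_F$, this follows from the submodularity of the perimeter, $\Per(E\cap F,\cdot)+\Per(E\cup F,\cdot)\le\Per(E,\cdot)+\Per(F,\cdot)$, together with $\Per(F^c,\cdot)=\Per(F,\cdot)$ and $E\setminus F=E\cap F^c$. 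It then suffices to prove \eqref{eq:GGEcapF}; the identities \eqref{eq:GGEcupF} and \eqref{eq:GGEminF} follow by applying \eqref{eq:GGEcapF} to $E^c\cap F^c$, respectively $E\cap F^c$, and using $\mu_{G^c}=-\mu_G$ (the inward normal flips sign under complementation), $(G^c)^{(1)}=G^{(0)}$, $\{\nu_{E^c}=\nu_{F^c}\}=\{\nu_E=\nu_F\}$ and $\{\nu_E=\nu_{F^c}\}=\{\nu_E=-\nu_F\}$.

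For \eqref{eq:GGEcapF} I would argue pointwise $\haus^{N-1}$-almost everywhere. By \autoref{prop:federertype} applied to $E$, $F$ and $E\cap F$, together with \autoref{cor:Noncollapsed}, for $\haus^{N-1}$-a.e.\ $x$ the densities $\theta(E,x),\theta(F,x),\theta(E\cap F,x)$ exist and lie in $\{0,\tfrac12,1\}$, one has $\mathcal{F}G\sim G^{(1/2)}$ and $\mu_G=\nu_G\,\haus^{N-1}\res\mathcal{F}G$ for $G\in\{E,F,E\cap F\}$. Since $\theta(E\cap F,x)\le\min(\theta(E,x),\theta(F,x))$ and $\theta(E,x)=\theta(F,x)=1$ forces $\theta(E\cap F,x)=1$, a point $x\in\mathcal{F}(E\cap F)$ must lie, up to $\haus^{N-1}$-null sets, in exactly one of the three pairwise disjoint classes $\mathcal{F}F\cap E^{(1)}$, $\mathcal{F}E\cap F^{(1)}$, $\mathcal{F}E\cap\mathcal{F}F$. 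To identify the normal on each class I would run a simultaneous blow-up: at $x\in\mathcal{F}E\cap\mathcal{F}F$, \autoref{th:uniqueness} guarantees that along \emph{every} sequence of radii $r_i\downarrow0$ the ambient rescalings converge to $\setR^N$ while $\chi_E$ and $\chi_F$ converge in $L^1_\loc$ to the indicators of half-spaces $H_E$, $H_F$ with normals $\nu_E(x)$, $\nu_F(x)$; since $\chi_{E\cap F}=\chi_E\chi_F$ and the factors are bounded by $1$, also $\chi_{E\cap F}\to\chi_{H_E\cap H_F}$. An intersection of two half-spaces of $\setR^N$ agrees, modulo $\haus^N$-null sets, with a half-space precisely when the two coincide, and with the empty set precisely when they are opposite; hence $\nu_E(x)=\nu_F(x)$ yields $x\in\mathcal{F}(E\cap F)$ with $\nu_{E\cap F}(x)=\nu_E(x)$, the antipodal case yields $\theta(E\cap F,x)=0$, and the transverse case is incompatible with $x\in\mathcal{F}(E\cap F)$. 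The same blow-up at $x\in\mathcal{F}E\cap F^{(1)}$, where $\chi_F\to1$, gives $x\in\mathcal{F}(E\cap F)$ and $\nu_{E\cap F}(x)=\nu_E(x)$, and symmetrically on $\mathcal{F}F\cap E^{(1)}$. Collecting the three contributions, $\mathcal{F}(E\cap F)\sim(\mathcal{F}F\cap E^{(1)})\cup(\mathcal{F}E\cap F^{(1)})\cup\{\nu_E=\nu_F\}$ with $\nu_{E\cap F}$ equal to $\nu_F$, $\nu_E$, $\nu_E$ on the three pieces, which multiplied by $\haus^{N-1}$ is exactly \eqref{eq:GGEcapF}.

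The hard part is the last blow-up step: one must carry the nonlinear operation $\cap$ to the limit of a single rescaling that resolves $E$ and $F$ \emph{at the same time} inside one ambient tangent cone. This is precisely where the intrinsic essential uniqueness of tangents to sets of finite perimeter (\autoref{th:uniqueness}) is indispensable — it forces one and the same sequence of radii to work for both $E$ and $F$, and thus substitutes for the classical Euclidean fact that two $\haus^{N-1}$-rectifiable hypersurfaces share the same approximate tangent plane at $\haus^{N-1}$-a.e.\ common point. A second, more technical point is the bookkeeping needed to see that the three classes above genuinely exhaust $\mathcal{F}(E\cap F)$ up to $\haus^{N-1}$-null sets, which is where one uses \autoref{prop:federertype} simultaneously for $E$, $F$ and $E\cap F$; for the complete details I would refer to \cite[Theorem 4.11]{BruePasqualettoSemola21}.
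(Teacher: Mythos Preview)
The paper does not prove \autoref{thm:cutandpaste}; it is quoted as a result from \cite[Theorem~4.11]{BruePasqualettoSemola21}, with \cite[Theorem~16.3]{Maggi12} cited as the Euclidean analogue. Your proposal is a faithful sketch of precisely that argument and cites the same source, so there is nothing to compare against a ``paper's own proof''; your outline is correct and matches the reference the paper defers to.
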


\begin{remark}
Let us clarify the meaning of \eqref{eq:GGEcapF}, \eqref{eq:GGEcupF} and \eqref{eq:GGEminF}. With this notation, \eqref{eq:GGEcapF} means that (and analogously for the others)  for any vector field $v\in H^{1,2}_C(TX)\cap D(\div )$ such that $\abs{v}\in L^{\infty}(\meas)$,
\begin{align*}
\int _{E\cap F}\div v\di\meas=&-\int_{F^{(1)}}<\mathrm{tr}_E v,\nu_E>\di\Per_E-\int_{E^{(1)}} <\mathrm{tr}_F v,\nu_F>\di\Per_F\\
&-\int_{E^{(1/2)}\cap F^{(1/2)}} <\mathrm{tr}_E v,\nu_E>\di\Per_E\, .
\end{align*}
\end{remark}

\begin{corollary}\label{cor:GGmeasureincluded}
Let $(X,\dist,\haus^N)$ be an $\RCD(K,N)$ metric measure space and let $E\subset F\subset X$ be sets of finite perimeter. Then $\nu_E=\nu_F$ on $\partial ^*E\cap\partial ^*F$ and 
\begin{equation*}
\mu_E=\mu_E\res F^{(1)}+\nu_F\haus^{N-1}\res\left(\partial^*E\cap\partial ^*F\right)\, .
\end{equation*}

\end{corollary}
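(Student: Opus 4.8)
The plan is to deduce the statement directly from \autoref{thm:cutandpaste} by exploiting the hypothesis $E\subset F$. Since $E\subset F$, we have $E\cap F = E$ up to $\haus^N$-negligible sets, so by the locality of the perimeter and of the Gauss-Green measure, $\mu_{E\cap F} = \mu_E$. Hence, reading \eqref{eq:GGEcapF}, it suffices to identify the three terms on the right-hand side: $\mu_E\res F^{(1)}$, $\mu_F\res E^{(1)}$, and $\nu_E\haus^{N-1}\res\{\nu_E=\nu_F\}$. The first step is to observe that, because $E\subset F$, we have the density-point containments $E^{(1)}\subset F^{(1)}$ and $F^{(0)}\subset E^{(0)}$ up to $\haus^{N-1}$-negligible sets (a point of density $1$ for $E$ is a fortiori a point of density $1$ for the larger set $F$, and symmetrically for the complements). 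This already forces $\mu_F\res E^{(1)}$ to be concentrated on $\mathcal{F}F\cap E^{(1)}$; but by \autoref{prop:federertype}(i), $\haus^{N-1}$-a.e.\ point of $\mathcal{F}F$ has $F$-density $1/2$, which is incompatible with being in $E^{(1)}\subset F^{(1)}$. Therefore $\mu_F\res E^{(1)}=0$, and the middle term drops out.

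Next I would handle the coincidence-of-normals claim. We need that $\nu_E=\nu_F$ on $\partial^*E\cap\partial^*F$, up to $\haus^{N-1}$-negligible sets. The key point here is that on $\partial^*E\cap\partial^*F$ the blow-ups of $\chi_E$ and $\chi_F$ both converge (in $L^1_{\loc}$, along $\abs{D\chi_E}$-a.e.\ point and $\abs{D\chi_F}$-a.e.\ point respectively) to indicator functions of half-spaces, by \autoref{th:uniqueness} (in the non-collapsed case $n=N$). Since $E\subset F$, the blow-up half-space of $E$ is contained in that of $F$ at any common reduced boundary point; two half-spaces in $\setR^N$ with $\{x_N>0\}$-type structure, one contained in the other and both of full measure fraction $1/2$, must coincide, hence their inward normals agree. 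This is exactly the content of the set $\{\nu_E=\nu_F\}$ from \cite[Definition 4.9]{BruePasqualettoSemola21}, so we get $\partial^*E\cap\partial^*F\sim\{\nu_E=\nu_F\}$ and the last term of \eqref{eq:GGEcapF} becomes $\nu_F\haus^{N-1}\res\left(\partial^*E\cap\partial^*F\right)$, using that $\nu_E=\nu_F$ there and that $\haus^{N-1}\res\partial^*E\sim\abs{D\chi_E}\sim\Per_E$ by \autoref{cor:Noncollapsed} and \autoref{prop:federertype}(i). Collecting the surviving terms yields precisely
\begin{equation*}
\mu_E=\mu_E\res F^{(1)}+\nu_F\haus^{N-1}\res\left(\partial^*E\cap\partial^*F\right)\, .
\end{equation*}

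The main obstacle I anticipate is the bookkeeping at the measure-theoretic level: ensuring that all the set identities ($E^{(1)}\subset F^{(1)}$, $\mathcal{F}E\cap E^{(1)}\sim\emptyset$, $\partial^*E\cap\partial^*F\sim\{\nu_E=\nu_F\}$, etc.) hold up to the correct ($\haus^{N-1}$-, not $\haus^N$-) negligible sets, and that the Gauss-Green measure $\mu_E$ is genuinely concentrated on $\mathcal{F}E$ so that its restriction to $F^{(1)}$ versus to $F^{(1)}\cup\mathcal{F}F$ makes no difference here. All of this is available from the structure theory recalled above (\autoref{prop:federertype}, \autoref{cor:Noncollapsed}, \autoref{th:uniqueness}), so the argument is essentially a careful combination of \autoref{thm:cutandpaste} with the containment $E\subset F$; no genuinely new analytical input is needed.
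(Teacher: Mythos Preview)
Your proposal is correct and follows exactly the approach the paper intends: the corollary is stated without proof immediately after \autoref{thm:cutandpaste}, and your derivation via $E\cap F=E$ in \eqref{eq:GGEcapF}, together with the observations $E^{(1)}\subset F^{(1)}$ (killing $\mu_F\res E^{(1)}$) and $\{\nu_E=\nu_F\}\sim\partial^*E\cap\partial^*F$ (from the blow-up containment $H_E\subset H_F$ forcing equality of half-spaces), is precisely the intended argument.
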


We wish to understand to which extent the cut and paste operations for sets of finite perimeter are well behaved under the weaker regularity assumptions of \autoref{thm:GaussGreenRCDBCM}. This is the content of \cite[Proposition 5.4]{BruePasqualettoSemola21} that we report below. 

\begin{proposition}\label{prop:cutandpasteweaker}
Let $(X,\dist,\meas)$ be an $\RCD(K,N)$ metric measure space. Let $E,F\subset X$ be sets of (locally) finite perimeter and let $V\in\mathcal{DM}^{\infty}(X)$. Then the following hold:
\begin{align*}
\left(V\cdot \nu_{E\cap F}\right)_{int}&=\left(V\cdot \nu_{E}\right)_{\mathrm{int}}\, , \quad\text{$\Per$-a.e. on $F^{(1)}$}\, ,\\
\left(V\cdot \nu_{E\cap F}\right)_{int}&=\left(V\cdot \nu_{F}\right)_{\mathrm{int}}\, , \quad\text{$\Per$-a.e. on $E^{(1)}$}\, , \\
\left(V\cdot \nu_{E\cap F}\right)_{int}&=\left(V\cdot \nu_{E}\right)_{\mathrm{int}}\, , \quad\text{$\Per$-a.e. on $E^{(1/2)}\cap F^{(1/2)}$}\, .
\end{align*}
Analogous conclusions hold for the exterior normal traces and for the interior and exterior normal traces on $E\cup F$ and on $E\setminus F$.
\end{proposition}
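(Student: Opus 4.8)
The interior normal traces $\left(V\cdot\nu_E\right)_{\mathrm{int}}$ are, by definition, the densities (with respect to $\abs{D\chi_E}$) of the vector measures $D\chi_E(\chi_E V)$, which are produced as the weak-$*$ limits $\nabla P_t\chi_E\cdot(\chi_E V)\,\meas\weakstarto D\chi_E(\chi_E V)$ as $t\downarrow 0$. The plan is thus to reduce each claimed pointwise equality to an equality of such measures restricted to the relevant Borel set. For the first identity, \autoref{thm:cutandpaste} together with the fine structure of \autoref{prop:federertype} shows that, up to negligible sets for the codimension-one Hausdorff measure, $\mathcal{F}(E\cap F)\cap F^{(1)}=\mathcal{F}E\cap F^{(1)}$ with $\nu_{E\cap F}=\nu_E$ there (the other pieces $\mathcal{F}F\cap E^{(1)}$ and $\{\nu_E=\nu_F\}$ appearing in \eqref{eq:GGEcapF} being disjoint from $F^{(1)}$, since both $E$ and $F$ have density $1/2$ on $\{\nu_E=\nu_F\}$). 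Hence it is enough to prove
\[
D\chi_{E\cap F}(\chi_{E\cap F}V)\res F^{(1)}=D\chi_E(\chi_E V)\res F^{(1)},
\]
and then read off the densities against the common measure $\abs{D\chi_E}\res F^{(1)}$. The identities on $E^{(1)}$ and on $E^{(1/2)}\cap F^{(1/2)}$ are handled in the same way, and the statements for $E\cup F$ and $E\setminus F$ follow from \eqref{eq:GGEcupF} and \eqref{eq:GGEminF}.

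To establish the displayed measure identity I would use the elementary splitting $\chi_E=\chi_{E\cap F}+\chi_{E\setminus F}$, which by linearity of $P_t$ and of the gradient gives, for all $t>0$,
\[
\nabla P_t\chi_E\cdot(\chi_E V)-\nabla P_t\chi_{E\cap F}\cdot(\chi_{E\cap F}V)=\nabla P_t\chi_{E\setminus F}\cdot(\chi_{E\cap F}V)+\nabla P_t\chi_E\cdot(\chi_{E\setminus F}V).
\]
Letting $t\downarrow 0$, the left-hand side tends weakly-$*$ to $D\chi_E(\chi_E V)-D\chi_{E\cap F}(\chi_{E\cap F}V)$, while, by the construction of the normal traces recalled before \autoref{thm:GaussGreenRCDBCM} (applied to the sets $E\setminus F$ and $E$), the two terms on the right converge to vector measures that are absolutely continuous with respect to $\abs{D\chi_{E\setminus F}}$ and $\abs{D\chi_E}$, respectively. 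The point is that both limits give zero mass to $F^{(1)}$: one has $F^{(1)}\subset(E\setminus F)^{(0)}$, whence $\abs{D\chi_{E\setminus F}}(F^{(1)})=0$; and on $\mathcal{F}E\cap F^{(1)}$ the averaged representative of the scalar factor $\chi_{E\setminus F}$ vanishes (there $F$ has density $1$, so $E\setminus F$ has density $0$), forcing the $F^{(1)}$-part of $D\chi_E(\chi_{E\setminus F}V)$ to be zero. Restricting the displayed identity to $F^{(1)}$ then yields precisely $D\chi_{E\cap F}(\chi_{E\cap F}V)\res F^{(1)}=D\chi_E(\chi_E V)\res F^{(1)}$.

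The step I expect to be the main obstacle is making rigorous the localisation of these weak-$*$ limits onto the Borel, non-open sets $F^{(1)}$, $E^{(1)}$, $E^{(1/2)}\cap F^{(1/2)}$, since weak-$*$ convergence of measures does not commute with restriction to such sets. One circumvents this by exploiting that every measure in play is absolutely continuous with respect to a perimeter measure (hence concentrated on a rectifiable reduced boundary), testing against $\Lip_c(X)$ functions and then using the a.e.\ disjointness relations provided by \autoref{thm:cutandpaste} and \autoref{prop:federertype} to pinpoint which portion of each limit measure sits in $F^{(1)}$; the absolute continuity $\div V\ll\haus^{N-1}$ of \autoref{cor:divac} ensures that the set identities above, valid up to $\haus^{N-1}$-negligible sets, also hold up to $\div V$-negligible sets. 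As a consistency check and an alternative organisation, one can first prove the statement for $V\in H^{1,2}_C(TX)\cap D(\div)$ with $\abs{V}\in L^\infty(\meas)$ — where the trace is pointwise defined by \autoref{thm:GGRCDsmooth} and the claim is an immediate restriction of \autoref{thm:cutandpaste} together with the locality of the pointwise normal $\nu_E$ — and then pass to general $V\in\mathcal{DM}^\infty(X)$ via the approximation defining the traces, the genuinely delicate part remaining the same localisation issue.
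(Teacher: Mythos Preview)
The paper does not prove this proposition; it simply cites \cite[Proposition~5.4]{BruePasqualettoSemola21}. So there is no in-paper argument to compare against, and your proposal should be assessed on its own merits.

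Your overall strategy is sound and the algebraic decomposition is correct, but the main line of argument has a genuine gap. The construction recalled before \autoref{thm:GaussGreenRCDBCM} only guarantees the existence of the weak limits $D\chi_G(\chi_G V)$ and $D\chi_G(\chi_{G^c}V)$, together with their absolute continuity with respect to $\abs{D\chi_G}$. Your two ``mixed'' terms $\nabla P_t\chi_{E\setminus F}\cdot(\chi_{E\cap F}V)$ and $\nabla P_t\chi_E\cdot(\chi_{E\setminus F}V)$ are not of this form: $\chi_{E\cap F}$ is neither $\chi_{E\setminus F}$ nor $\chi_{(E\setminus F)^c}$, and $\chi_{E\setminus F}$ is neither $\chi_E$ nor $\chi_{E^c}$. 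You therefore need a more general pairing $D\chi_G(hV)$ for $h$ the indicator of a set of finite perimeter (or $h\in L^\infty$), with absolute continuity w.r.t.\ $\abs{D\chi_G}$ \emph{and} a formula for its density involving the precise representative of $h$. Your claim that ``the averaged representative of $\chi_{E\setminus F}$ vanishes on $\mathcal{F}E\cap F^{(1)}$, forcing the $F^{(1)}$-part of $D\chi_E(\chi_{E\setminus F}V)$ to be zero'' is precisely such a Leibniz-type identity, and it is not available from what the paper states. Without it, the restriction argument does not close.

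The alternative route you sketch at the end --- prove the statement first for $V\in H^{1,2}_C(TX)\cap D(\div)$ with $\abs{V}\in L^\infty$, where it follows immediately from \autoref{thm:cutandpaste} and the locality of the pointwise normal, and then approximate a general $V\in\mathcal{DM}^\infty(X)$ --- is the more robust one and is likely closer to the argument in \cite{BruePasqualettoSemola21}. The delicate point there is stability of the interior/exterior normal traces under the approximation (e.g.\ $V\mapsto P_tV$), which again needs input from the cited references rather than from the paper itself.
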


Another technical result which is needed for the strategy we overviewed in \autoref{subsec:overview} is a rigorous version, within our framework, of the fact that the outward-pointing unit normal to a sub-level set of a distance function is the gradient of the distance function itself. We refer to \cite[Proposition 6.1]{BruePasqualettoSemola21} for its proof.

\begin{proposition}\label{prop:leveld}
Let $(X,\dist,\meas)$ be an $\RCD(K,N)$ metric measure space. Let $\Omega\Subset\Omega'\subset X$ be open domains and let $\phi:\Omega'\to\setR$ be a $1$-Lipschitz function such that 
\begin{itemize}
\item[i)] $\abs{\nabla \phi}=1$,\; $\meas$-a.e. on $\Omega'$;
\item[ii)] $\phi$ has measure valued Laplacian on $\Omega'$ with $\meas$-essentially bounded negative (or positive) part.
\end{itemize}
Then, for $\Leb^1$-a.e. $t$ such that $\{\phi=t\}\cap \Omega\neq\emptyset$, it holds that $\{\phi<t\}$ is a set of locally finite perimeter in $\Omega$; moreover, the following holds: 
\begin{equation*}
\left(\nabla\phi\cdot\nu_{\{\phi<t\}}\right)_{\mathrm{int}}=\left(\nabla\phi\cdot\nu_{\{\phi<t\}}\right)_{\mathrm{ext}}=-1\, \quad \Per_{\{\phi<t\}}\text{-a.e. on  $\Omega$} \, .
\end{equation*} 
\end{proposition}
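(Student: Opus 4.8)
The statement to prove is \autoref{prop:leveld}: a $1$-Lipschitz function $\phi$ on $\Omega'$ with $|\nabla\phi|=1$ a.e.\ and measure-valued Laplacian with essentially bounded negative (or positive) part has the property that, for a.e.\ level $t$, the sublevel set $\{\phi<t\}$ has locally finite perimeter in $\Omega$ and both interior and exterior normal traces of $\nabla\phi$ on its reduced boundary equal $-1$. First I would observe that $\nabla\phi$, viewed as a vector field, lies in $\mathcal{DM}^\infty(X)$ locally: it is bounded by hypothesis (i), and its distributional divergence is (a local version of) $\Delta\phi$, which by hypothesis (ii) is a Radon measure (finite on compact subsets of $\Omega'$). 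So the machinery of \autoref{thm:GaussGreenRCDBCM} and \autoref{prop:cutandpasteweaker} applies to $V:=\nabla\phi$ and to the sets $\{\phi<t\}$. The fact that $\{\phi<t\}$ has locally finite perimeter for a.e.\ $t$ is immediate from the coarea formula \autoref{thm:coarea} applied to the BV (in fact Lipschitz, hence locally BV) function $\phi$ on $\Omega$.

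\textbf{Key steps.} The heart of the argument is computing the normal traces. The sharp trace bounds \eqref{eq:inftyboundtraceintsharp}--\eqref{eq:inftyboundtraceextsharp} give, applied to $V=\nabla\phi$ and any $E=\{\phi<t\}$,
\[
\norm{\left(\nabla\phi\cdot\nu_E\right)_{\mathrm{int}}}_{L^\infty(\mathcal{F}E,\Per)}\le \norm{\nabla\phi}_{L^\infty(E)}=1,\qquad
\norm{\left(\nabla\phi\cdot\nu_E\right)_{\mathrm{ext}}}_{L^\infty(\mathcal{F}E,\Per)}\le 1,
\]
using $|\nabla\phi|=1$ a.e. So it suffices to show both traces are $\le -1$ (equivalently $=-1$) on a set of full $\Per_E$-measure; since $\nu_E$ points \emph{into} $E=\{\phi<t\}$ and $\phi$ increases in the direction $\nabla\phi$, one expects $\nabla\phi\cdot\nu_E=-1$. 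To extract this I would test the Gauss-Green identities of \autoref{thm:GaussGreenRCDBCM} against $f\in\Lip_c(\Omega)$ and integrate over $t$ using the coarea formula: for the family $E_t=\{\phi<t\}$, the term $\int_{E_t^{(1)}}f\,\Delta\phi$ integrated in $t$, combined with $\int_{E_t}\nabla f\cdot\nabla\phi\,d\meas$ integrated in $t$ (which by coarea and integration by parts in $\phi$ reconstructs a divergence-type term), should be compared with $\int_{\mathcal{F}E_t}f\,(\nabla\phi\cdot\nu_{E_t})_{\mathrm{int}}\,d\Per$ integrated in $t$; since by coarea $\int_{\mathbb{R}}\Per_{E_t}\,dt$ sees $\abs{D\phi}=\meas\llcorner\{|\nabla\phi|=1\}$ with density $1$, matching the quantitative total mass forces the trace to equal $-1$ $\Per_{E_t}$-a.e.\ for a.e.\ $t$. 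Concretely: the sign hypothesis (ii) (bounded negative part of $\Delta\phi$) guarantees that in the contradiction one does not lose mass to a singular part of $\Delta\phi$ of the wrong sign, and the "$\le 1$" bound together with "average $=-1$" (from coarea) pins the value. One can also argue more directly by localization: on the good set where a blow-up is flat, $\phi$ blows up to a linear function $\langle v,\cdot\rangle$ with $|v|=1$ and $E_t$ to the half-space $\{\langle v,\cdot\rangle<0\}$, whose inner normal is $-v$, giving $\nabla\phi\cdot\nu_{E_t}=-1$; the almost-everywhere rectifiability of $\mathcal{F}E_t$ (\autoref{thm:rectifiability}, \autoref{cor:Noncollapsed}) plus the stability of normal traces under blow-up then transfers this to $\Per_{E_t}$-a.e.\ point.

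\textbf{Main obstacle.} The delicate point is the interplay between the coarea slicing and the singular part of $\Delta\phi$. For a general $1$-Lipschitz $\phi$ with $|\nabla\phi|=1$ the measure $\Delta\phi$ may have a singular part that, a priori, could be carried by (or interact with) the boundaries $\mathcal{F}E_t$ for a positively-sized set of $t$, and then the interior and exterior traces genuinely differ. Hypothesis (ii)—bounded negativity (or positivity) of $\Delta\phi$—is exactly what rules this out: it forces the singular part to have a sign, and combined with the coarea identity $\int \Per_{E_t}\,dt$ having density-one bulk part, it prevents concentration of $\Delta\phi$ on too many level sets; for a.e.\ $t$ the level set $\{\phi=t\}$ is then "$\Delta\phi$-null" in the relevant sense, so interior and exterior traces coincide and equal $-1$. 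Making this quantitative—i.e.\ showing the exceptional set of $t$ is $\Leb^1$-null—is the technical crux, and it is where I would spend most of the effort, using that a Radon measure charges at most countably many level sets of a Lipschitz function with nonvanishing gradient with more than a prescribed mass, together with Fubini/coarea. Everything else (membership in $\mathcal{DM}^\infty$, applicability of \autoref{thm:GaussGreenRCDBCM}, the sharp trace bounds) is a direct citation of the results recalled above.
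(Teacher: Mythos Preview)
The paper does not give its own proof of this proposition: it simply refers to \cite[Proposition~6.1]{BruePasqualettoSemola21}. Your outline is a correct strategy and almost certainly close to what that reference does. The pieces ``$\nabla\phi\in\mathcal{DM}^\infty_{\loc}$'', ``finite perimeter for a.e.\ $t$ via coarea'', ``$|(\nabla\phi\cdot\nu_{E_t})_{\mathrm{int/ext}}|\le 1$ from \eqref{eq:inftyboundtraceintsharp}--\eqref{eq:inftyboundtraceextsharp}'', and ``interior and exterior traces agree for a.e.\ $t$ since a Radon measure charges at most countably many level sets'' are exactly right. Your integration-in-$t$ argument for the value $-1$ can be made precise: for $f\in\Lip_c(\Omega)$, $f\ge 0$, with $\supp f\subset\{a<\phi<b\}$, Fubini plus the identity $\int\nabla(\phi f)\cdot\nabla\phi\,d\meas=-\int\phi f\,d\boldsymbol{\Delta}\phi$ and $|\nabla\phi|^2=1$ give
\[
\int_a^b\!\!\Big(\int_{E_t^{(1)}}\! f\,d\boldsymbol{\Delta}\phi+\int_{E_t}\!\nabla f\cdot\nabla\phi\,d\meas\Big)dt=\int f\,d\meas=\int_a^b\!\!\int f\,d\Per_{E_t}\,dt,
\]
the last equality by coarea. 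Combining with Gauss--Green and the pointwise bound $-(\nabla\phi\cdot\nu_{E_t})_{\mathrm{int}}\le 1$ forces equality $\Per_{E_t}$-a.e.\ for a.e.\ $t$. So the ``matching total mass'' heuristic you describe is a genuine proof once written out.

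One small comment: in your discussion of the ``main obstacle'' you attribute the need for hypothesis (ii) (bounded negative part of $\boldsymbol{\Delta}\phi$) to controlling the singular part on level sets. In the argument above, all that is actually used is that $\boldsymbol{\Delta}\phi$ is a locally finite signed Radon measure (so Fubini applies to its positive and negative parts and the localized vector field $\chi\nabla\phi$ lies in $\mathcal{DM}^\infty$). The additional one-sided bound is likely a convenience in the cited reference or reflects the precise hypotheses under which their Gauss--Green machinery is stated; it does not appear to be essential for the conclusion via this route.
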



\subsubsection{Some regularity results for quasi-minimizers}

Let us recall the definition of quasi-minimal set of finite perimeter in this framework.

\begin{definition}[Quasi-minimality]\label{def:quasimin}
Let $(X,\dist,\meas)$ be a metric measure space verifying the  doubling and Poincar\'e inequalities . Let $E\subset X$ be a Borel set with finite perimeter and $\Omega\subset X$ be an open set. Given any $\kappa \ge 1$ we say that $E$ is a $\kappa$-quasi-minimal set if for any $U\Subset \Omega$ and for all Borel sets $F,G\subset U$ it holds 
\begin{equation*}
\Per(E,U)\le \kappa \Per\left((E\cup F)\setminus G, U\right)\, .
\end{equation*}
\end{definition}

In the Euclidean setting, or on smooth Riemannian manifolds, quasi-minimality is a property shared by minimizers of many variational problems: the Plateau problem, the prescribed mean curvature problem, Cheeger sets and isoperimetric sets, among others. We refer to \cite[Chapter 21]{Maggi12} for a throughout discussion and references. This is indeed a general principle that holds also on $\RCD(K,N)$ metric measure spaces $(X,\dist,\haus^N)$: 
\begin{itemize}
\item perimeter minimizers are quasi minimizers as it directly follows from the definition; 
\item with minor modifications to the classical Euclidean proof it is possible to argue that solutions of the prescribed mean curvature problem are quasi minimizers under suitable assumptions; 
\item in \cite[Theorem 3.4]{AntonelliPasqualettoPozzetta21} it has been recently shown that isoperimetric sets are quasi minimizers.
\end{itemize}
\smallskip

A stronger notion involves a function in place of the constant $\kappa$, whose behaviour forces the set to be more and more almost minimizing inside smaller and smaller balls.
\begin{definition}\label{def:omegamin}
Let $(X,\dist,\meas)$ be a metric measure space verifying the  doubling and Poincar\'e inequalities. 
Given an increasing function $\omega:[0,\infty)\to[0,\infty]$ such that $\omega(0)=0$, we say that a set of finite perimeter $E\subset X$ is an $\omega$-minimizer if, for any $x\in X$ and $r>0$, for any $F\subset X$ such that $E\Delta F\Subset B_r(x)$, it holds
\begin{equation*}
\Per(E,B_r(x))\le (1+\omega(r))\Per(F,B_r(x))\, .
\end{equation*}

\end{definition}

\begin{remark}\label{rm:equivquasimin}
An equivalent reformulation of the quasi-minimality condition above is that $E$ is a $\kappa$-quasi-minimal set if for any $U\Subset \Omega$ and for all Borel sets $F\subset X$ such that $E\Delta F\Subset U$ it holds 
\begin{equation}\label{eq:qmin}
\Per(E,U)\le \kappa \Per\left(F, U\right)\, .
\end{equation}
Notice that $\kappa$-quasi-minimality for $\kappa=1$ corresponds to minimality, while it is a weaker notion for $\kappa>1$.
\end{remark}

\begin{remark}
We will sometimes work with the weaker assumption that \eqref{eq:qmin} holds for competitors $F$ such that $E\Delta F$ is supported in $B_r(x)$, where $r>0$ is fixed. This corresponds to a localized version of the quasi-minimality condition, which has the same consequences at the level of regularity.
\end{remark}

One of the main results in \cite{Kinnunenetal13} is the following theorem, asserting that a quasi-minimal set of finite perimeter, up to modification on a negligible set as in \eqref{eq:normsetsfin}, has measure theoretic boundary coinciding with the topological boundary. This is a generalization of the Euclidean result 
in \cite{DeGiorgi61}.

\begin{theorem}[Theorem 4.2 of \cite{Kinnunenetal13}]\label{thm:regqmin}
Let $E\subset X$ be a quasi-minimal set in $\Omega$. Then, up to modifying $E$ on a $\meas$-negligible set, there exists $\gamma_0>0$ such that, for any $x\in \partial E\cap \Omega$, we have
\begin{equation}\label{eq:measboundqmin}
\frac{\meas(E\cap B_r(x))}{\meas(B_r(x))}\ge \gamma_0\, ,\quad \frac{\meas(B_r(x)\setminus E)}{\meas(B_r(x))}\ge \gamma_0\, ,
\end{equation}
for any $r>0$ such that $B_{2r}(x)\subset \Omega$. The density constant $\gamma_0$ depends only on the quasi-minimality constant $\kappa$, the doubling constant and the Poincar\'e constant.
\end{theorem}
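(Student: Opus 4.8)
The plan is to run De Giorgi's classical density argument for minimal boundaries, with the Euclidean isoperimetric inequality replaced by the relative isoperimetric inequality \eqref{eq:isop} of PI spaces and the coarea formula \autoref{thm:coarea} used to turn perimeter bounds into derivatives of volumes. \emph{Normalization and reduction.} First I would replace $E$, on an $\meas$-negligible set, by a representative satisfying \eqref{eq:normsetsfin}: this is the ``up to modification'' clause, and it guarantees $\meas(E\cap B_\rho(x))>0$ and $\meas(B_\rho(x)\setminus E)>0$ for every $x\in\partial E$ and every $\rho>0$. Since $\Per(E^c,\cdot)=\Per(E,\cdot)$ and $A\Delta B=A^c\Delta B^c$, the reformulation in \autoref{rm:equivquasimin} shows that $E^c$ is again $\kappa$-quasi-minimal in $\Omega$; hence the second estimate in \eqref{eq:measboundqmin} follows from the first applied to $E^c$, and it suffices to produce $\gamma_0>0$, depending only on $\kappa$ and on the doubling and Poincar\'e constants, with $\meas(E\cap B_r(x_0))\ge\gamma_0\,\meas(B_r(x_0))$ for $x_0\in\partial E\cap\Omega$ and $B_{2r}(x_0)\subset\Omega$, $r$ below the Poincar\'e scale (the range of large $r$ then following from doubling and the monotonicity in $\rho$ below).

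\emph{Perimeter control from the competitor.} Fix $x_0\in\partial E\cap\Omega$ and set $V(\rho):=\meas(E\cap B_\rho(x_0))$, which is non-decreasing, hence $\Leb^1$-a.e.\ differentiable, and is strictly positive for all $\rho>0$ by the previous step. By the coarea formula applied to $\dist(x_0,\cdot)$ one has $V(\rho)=\int_0^\rho\Per(B_s(x_0),E)\,\di s$, so $V'(\rho)=\Per(B_\rho(x_0),E)$ for a.e.\ $\rho$, and $\abs{D\chi_E}(\partial B_\rho(x_0))=0$ for all but countably many $\rho$. For such a good radius $\rho<r$, I would test the quasi-minimality \eqref{eq:qmin} of $E$ against the competitor $F:=E\setminus B_\rho(x_0)$ on $B_{\rho'}(x_0)$ with $\rho<\rho'\le r$: as $E\Delta F=E\cap B_\rho(x_0)$ has compact closure inside $B_{\rho'}(x_0)\Subset\Omega$, one gets $\Per(E,B_{\rho'}(x_0))\le\kappa\,\Per(E\setminus B_\rho(x_0),B_{\rho'}(x_0))$, and letting $\rho'\downarrow\rho$ (so that the contribution on the shrinking annulus vanishes) leaves $\Per(E,B_\rho(x_0))\le\kappa\,V'(\rho)$. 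Adding the perimeter that $E\cap B_\rho(x_0)$ picks up along the sphere $\partial B_\rho(x_0)$, which is again $\le V'(\rho)$ by the same coarea identity, gives
\begin{equation*}
\Per(E\cap B_\rho(x_0),X)\le(\kappa+1)\,V'(\rho)\qquad\text{for }\Leb^1\text{-a.e.\ }\rho\in(0,r).
\end{equation*}

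\emph{Isoperimetric inequality and integration.} The relative isoperimetric inequality \eqref{eq:isop} self-improves, by Maz'ya-type truncation in PI spaces, to a Sobolev--Poincar\'e inequality with exponent $Q/(Q-1)$, $Q$ the doubling dimension; applied to $E\cap B_\rho(x_0)\subset B_\rho(x_0)$ in the regime $V(\rho)\le\tfrac12\meas(B_\rho(x_0))$ it gives
\begin{equation*}
V(\rho)^{1-1/Q}\le\frac{C\rho}{\meas(B_\rho(x_0))^{1/Q}}\,\Per(E\cap B_\rho(x_0),X)\le\frac{C(\kappa+1)\rho}{\meas(B_\rho(x_0))^{1/Q}}\,V'(\rho),
\end{equation*}
with $C$ depending only on the doubling and Poincar\'e constants. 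Thus $\frac{\di}{\di\rho}(V(\rho)^{1/Q})\ge c\,\meas(B_\rho(x_0))^{1/Q}/\rho$ for a.e.\ such $\rho$, with $c=c(\kappa,Q,C)>0$; integrating from $0$ to $r$ (using $V(0^+)\ge0$) and invoking the reverse doubling bound $\meas(B_\rho(x_0))\ge c_R(\rho/r)^{Q_0}\meas(B_r(x_0))$, valid in connected PI spaces, I obtain $V(r)\ge\gamma_0\,\meas(B_r(x_0))$. If instead $V(\rho)\ge\tfrac12\meas(B_\rho(x_0))$ at some scale, I take the largest such $\rho_1\le r$: when $\rho_1\ge r/2$ the claim follows directly from doubling, and when $\rho_1<r/2$ I integrate the differential inequality on $(\rho_1,r)$ and again use reverse doubling (note $(\rho_1/r)^{Q_0/Q}<2^{-Q_0/Q}<1$) to close the estimate. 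This yields the lower volume-density bound for $E$, and that for $E^c$ follows as in the reduction step.

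\emph{Main obstacle.} The delicate point is the isoperimetric step: unlike in $\setR^n$ there is no scale-invariant isoperimetric inequality valid for arbitrary sets, and $\RCD(K,N)$ spaces need not be uniformly Ahlfors regular, so one must use the \emph{relative} inequality \eqref{eq:isop}, pass to its self-improved Sobolev--Poincar\'e form to gain the exponent $Q/(Q-1)>1$ (without which the integration does not return a genuine volume lower bound), and carefully carry the normalization $\meas(B_\rho(x_0))$ and the reverse-doubling exponent through the integration; this is exactly where $\gamma_0$ acquires its dependence on the doubling and Poincar\'e constants. By comparison, the competitor-plus-coarea estimate of the second step is routine, the only care being the selection of the full-measure set of ``good'' radii on which the slicing is well-behaved and the annular contributions vanish.
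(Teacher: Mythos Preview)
The paper does not give its own proof of this statement: it is quoted verbatim as Theorem~4.2 of \cite{Kinnunenetal13} and used as a black box. Your outline is the standard De~Giorgi density argument adapted to PI spaces (competitor $E\setminus B_\rho$, coarea to identify $V'(\rho)=\Per(B_\rho,E)$, the self-improved relative isoperimetric inequality with exponent $Q/(Q-1)$, then integration of the resulting differential inequality), and this is precisely the strategy carried out in the cited reference; so your approach is correct and matches the source.
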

 
Given the measure bounds \eqref{eq:measboundqmin}, perimeter bounds follow from the isoperimetric inequality \eqref{eq:isop}.

\begin{corollary}[Lemma 5.1 of \cite{Kinnunenetal13}]\label{cor:perest}
Let $E\subset X$ be a quasi-minimal set in $\Omega$. Then there exist $r_0>0$ and $C>0$ such that for any $x\in\partial E\cap \Omega$ and $0<r<r_0$, it holds
\begin{equation}\label{eq:perboundqmin}
C^{-1}\frac{\meas(B_r(x))}{r}\le \Per(E,B_r(x))\le C\frac{\meas(B_r(x))}{r}\, ,
\end{equation}
whenever $B_{2r}(x)\subset \Omega$. The constants $C>0$ and $r_{0}>0$ depend only on the quasi-minimality constant $\kappa$, the doubling constant and the Poincar\'e constant.
\end{corollary}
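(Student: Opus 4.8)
The plan is to prove the two inequalities separately: the lower bound is essentially immediate from the relative isoperimetric inequality \eqref{eq:isop} and the density estimates of \autoref{thm:regqmin}, whereas the upper bound requires exploiting the variational character of $E$ through a comparison argument followed by an absorption iteration.

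\emph{Lower bound.} Let $\lambda_0>1$, $C_0>0$, $r_0'>0$ be the constants in \eqref{eq:isop}. For $x\in\partial E\cap\Omega$ and $0<s<r_0'$ with $B_{2s}(x)\subset\Omega$, \autoref{thm:regqmin} gives $\min\{\meas(E\cap B_s(x)),\meas(B_s(x)\setminus E)\}\ge\gamma_0\,\meas(B_s(x))$, so \eqref{eq:isop} yields
\begin{equation*}
\gamma_0\,\meas(B_s(x))\le C_0\,s\,\Per(E,B_{\lambda_0 s}(x))\, .
\end{equation*}
Taking $s=r/\lambda_0$ and using the local doubling property (uniform for $r<r_0$ with $r_0$ depending only on $K,N$) to pass from $\meas(B_{r/\lambda_0}(x))$ to $\meas(B_r(x))$ gives $\Per(E,B_r(x))\ge C^{-1}\meas(B_r(x))/r$.

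\emph{Upper bound.} Fix $x$ with $B_{2r}(x)\subset\Omega$. For a.e.\ pair $0<\rho<\rho'<2r$, test quasi-minimality \eqref{eq:qmin} on $U=B_{\rho'}(x)$ against the competitor $F=E\setminus\overline{B_\rho(x)}$, admissible since $E\Delta F\subset\overline{B_\rho(x)}\Subset B_{\rho'}(x)$ by properness. The cut-and-paste formula \autoref{thm:cutandpaste}, applied with $G=X\setminus\overline{B_\rho(x)}$, bounds $\Per(F,B_{\rho'}(x))$ by $\Per(E,B_{\rho'}(x)\setminus\overline{B_\rho(x)})$ plus a constant multiple of $\Per(B_\rho(x),X)$ (the boundary newly created along $\partial B_\rho(x)$); using that $\Per(E,\cdot)$ is a measure and that $\Per(E,\{\dist(x,\cdot)=\rho\})=0$ for all but countably many $\rho$, one rearranges this into the recursion
\begin{equation*}
\phi(\rho)\le\Big(1-\tfrac1\kappa\Big)\phi(\rho')+C_1\,\Per(B_\rho(x),X)\, ,\qquad\phi(\rho):=\Per\big(E,\overline{B_\rho(x)}\big)\, .
\end{equation*}
By the coarea formula \autoref{thm:coarea} applied to the $1$-Lipschitz function $\dist(x,\cdot)$ (whose slope is $\le1$), one has $\int_0^{2r}\Per(B_\rho(x),X)\,\di\rho\le\meas(B_{2r}(x))$. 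One then runs the classical absorption iteration along radii $r_k=r(2-\lambda^k)\uparrow2r$ for a fixed $\lambda\in\big(1-\tfrac1\kappa,\,1\big)$: in each annulus $(r_k,r_{k+1})$ pick $\sigma_k$ in the good set with $\Per(B_{\sigma_k}(x),X)\lesssim\meas(B_{2r}(x))/(r_{k+1}-r_k)$, iterate the recursion, and sum the resulting geometric series (convergent precisely because $\lambda>1-\tfrac1\kappa$), together with $\big(1-\tfrac1\kappa\big)^n\phi(\sigma_n)\to0$, to obtain $\Per(E,B_r(x))\le\phi(\sigma_0)\le C'\meas(B_{2r}(x))/r$; a final use of doubling replaces $\meas(B_{2r}(x))$ by $\meas(B_r(x))$.

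\emph{Main obstacle.} Everything is routine except the upper bound, where the term $\Per(E,B_{\rho'}(x)\setminus\overline{B_\rho(x)})$ appears with the factor $\kappa\ge1$ and must be absorbed; a naive dyadic choice of radii only converges for $\kappa<2$, and the role of the geometric increments $r_{k+1}-r_k\propto\lambda^k$ with $\lambda$ close to $1$ is exactly to make the scheme work for every $\kappa$. The argument is standard in the Euclidean and smooth setting (De Giorgi--Giusti; indeed this statement is \cite[Lemma 5.1]{Kinnunenetal13}), and the only genuine care needed here is to invoke the metric coarea formula and the structure theory of sets of finite perimeter on $\RCD$ spaces in place of their classical analogues, keeping track of the $\meas$-negligible spheres $\partial B_\rho(x)$ and of the distinction between open and closed balls.
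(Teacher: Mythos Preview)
Your proof is correct and follows the standard route. The paper does not supply its own argument here: it simply records the one-line observation preceding the statement (``Given the measure bounds \eqref{eq:measboundqmin}, perimeter bounds follow from the isoperimetric inequality \eqref{eq:isop}''), which is exactly your lower-bound step, and otherwise defers to \cite[Lemma~5.1]{Kinnunenetal13}. Your competitor-plus-absorption argument for the upper bound is essentially the proof in \cite{Kinnunenetal13}; in particular your derivation of the recursion $\phi(\rho)\le(1-\tfrac1\kappa)\phi(\rho')+\Per(B_\rho(x),X)$ and the geometric choice of radii $r_k=r(2-\lambda^k)$ with $\lambda\in(1-\tfrac1\kappa,1)$ are exactly the right ingredients.

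One minor remark: you cite \autoref{thm:cutandpaste} to pass from $\Per(E\setminus\overline{B_\rho(x)},B_{\rho'}(x))$ to $\Per(E,B_{\rho'}(x)\setminus\overline{B_\rho(x)})+\Per(B_\rho(x),X)$, but that theorem is stated in the paper only for $(X,\dist,\haus^N)$, whereas the corollary is formulated in the general PI setting. The inequality you need follows already from locality of the perimeter (on the open sets $B_\rho(x)$ and $B_{\rho'}(x)\setminus\overline{B_\rho(x)}$ the set $E\setminus\overline{B_\rho(x)}$ coincides with $\emptyset$ and $E$ respectively) together with the elementary metric-BV subadditivity $\Per(A\cap B,\cdot)+\Per(A\cup B,\cdot)\le\Per(A,\cdot)+\Per(B,\cdot)$ applied on thin annuli around $\partial B_\rho(x)$; this is the level of generality at which \cite{Kinnunenetal13} works.
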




The main outcome of \autoref{thm:regqmin}, together with \cite{Am01,Am02} and \cite{AmbrosioBrueSemola19}, is that, in the framework of noncollapsed $\RCD(K,N)$ metric measure spaces, the reduced boundary of a quasi-minimal set of finite perimeter is closed.

\begin{corollary}
Let $(X,\dist,\haus^N)$ be an $\RCD(K,N)$ metric measure space. Let $E\subset X$ be a set of finite perimeter and $\Omega\subset X$ be an open set such that $E$ is quasi-minimal in $\Omega$. Then, up to a modification of $E$ on an $\haus^N$-negligible set, it holds that:
\begin{itemize}
\item[(i)] the perimeter measure $\Per$ coincides with $\haus^{N-1}\res\partial E$ on $\Omega$ (up to a normalization constant);
\item[(ii)] $\partial E$ is $\haus^{N-1}$-rectifiable and $\haus^{N-1}\res\partial E$ is a locally Ahlfors regular measure.
\end{itemize} 
\end{corollary}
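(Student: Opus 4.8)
The plan is to deduce both assertions from the Federer-type description of the reduced boundary (\autoref{prop:federertype} and \autoref{cor:Noncollapsed}) together with the density and perimeter estimates available for quasi-minimizers (\autoref{thm:regqmin} and \autoref{cor:perest}). First I would replace $E$ by its normalized representative, so that \eqref{eq:normsetsfin} holds on $\partial E$; this changes neither $\Per(E,\cdot)$ nor $\mathcal{F}E$, and by \autoref{thm:regqmin} it provides a constant $\gamma_0\in(0,1/2)$ such that every point of $\partial E\cap\Omega$ satisfies the two-sided density bound \eqref{eq:measboundqmin} at all small scales. Two inclusions are then immediate and purely formal: $\partial^*E\subseteq\partial E$ (a complement point of the topological boundary is interior to $E$ or to $E^c$, hence has density $1$ or $0$), and therefore $\mathcal{F}E\cap\Omega\subseteq\partial^*E\cap\Omega\subseteq\partial E\cap\Omega$, since a reduced boundary point has density $\tfrac12$.

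The core of \eqref{hdim}-type part (i) is the reverse containment up to $\haus^{N-1}$-negligible sets, namely $\haus^{N-1}\big((\partial E\cap\Omega)\setminus\mathcal{F}E\big)=0$. I would argue as follows: by \autoref{prop:federertype}, for $\haus^{N-1}$-a.e.\ $x\in X$ the density $\theta(E,x)$ exists and lies in $\{0,\tfrac12,1\}$, and $\{\theta(E,\cdot)=\tfrac12\}$ coincides with $\mathcal{F}E$ up to an $\haus^{N-1}$-null set; on the other hand \eqref{eq:measboundqmin} forces $\gamma_0\le\theta(E,x)\le1-\gamma_0$ at every $x\in\partial E\cap\Omega$ where the limit exists. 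Since $\gamma_0<\tfrac12$, the only value compatible with both constraints is $\tfrac12$, so $\haus^{N-1}$-a.e.\ point of $\partial E\cap\Omega$ lies in $\mathcal{F}E$. This yields $\haus^{N-1}\res(\partial E\cap\Omega)=\haus^{N-1}\res(\mathcal{F}E\cap\Omega)$, and combining with \autoref{cor:Noncollapsed} (which globally identifies $\Per(E,\cdot)$ with $\haus^{N-1}\res\mathcal{F}E$, up to the normalization constant) gives exactly (i); in particular $\haus^{N-1}\res(\partial E\cap\Omega)$ is a locally finite Borel measure, being (a multiple of) $\Per(E,\cdot)$ restricted to $\Omega$.

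For part (ii), the $\haus^{N-1}$-rectifiability of $\partial E\cap\Omega$ is then automatic: it agrees $\haus^{N-1}$-a.e.\ with $\mathcal{F}E$, which is $\big(\haus^{N-1},N-1\big)$-rectifiable by \autoref{cor:Noncollapsed}, and rectifiability passes to sets coinciding $\haus^{N-1}$-a.e. For the local Ahlfors regularity, fix a compact $K\subset\Omega$ and let $x\in\partial E\cap K$, $0<r<r_0$ with $B_{2r}(x)\subset\Omega$. Using (i) and the perimeter estimate \eqref{eq:perboundqmin} of \autoref{cor:perest} one gets
\begin{equation*}
\haus^{N-1}(\partial E\cap B_r(x))\;\asymp\;\frac{\haus^N(B_r(x))}{r}\,,
\end{equation*}
with constants depending only on $\kappa$, the doubling and Poincaré constants and $K$. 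It then remains to observe $\haus^N(B_r(x))\asymp r^N$ on $K$ at small scales: the upper bound is the Bishop–Gromov inequality \eqref{eq:BishopGromovInequality} compared with the model volume, while the lower bound follows again from \eqref{eq:BishopGromovInequality} once one knows $\inf_{y\in K}\haus^N(B_{r_0}(y))>0$, which holds by lower semicontinuity of $y\mapsto\haus^N(B_{r_0}(y))$ and compactness of $K$. Hence $\haus^{N-1}(\partial E\cap B_r(x))\asymp r^{N-1}$, i.e.\ $\haus^{N-1}\res\partial E$ is locally Ahlfors $(N-1)$-regular.

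The step I expect to be the delicate one is the measure-theoretic identity $\haus^{N-1}\big((\partial E\cap\Omega)\setminus\mathcal{F}E\big)=0$: it genuinely requires pairing the \emph{pointwise-everywhere} density lower bounds for quasi-minimizers from \autoref{thm:regqmin} with the \emph{almost-everywhere} trichotomy $\theta(E,\cdot)\in\{0,\tfrac12,1\}$ from \autoref{prop:federertype}, the whole point being that the density estimates forbid the values $0$ and $1$ along $\partial E$, so that $\haus^{N-1}$-a.e.\ topological boundary point is forced to be a reduced boundary point. Everything else is bookkeeping built on top of the structure theory already recalled in this section.
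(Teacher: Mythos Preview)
Your argument is correct and follows the same route as the paper's proof, only spelled out in far greater detail: the paper simply asserts that \autoref{thm:regqmin} identifies $\partial E$ with the reduced boundary and then invokes \autoref{thm:rectifiability} and \autoref{cor:Noncollapsed}, whereas you make explicit the intermediate step (via \autoref{prop:federertype}) that the density bounds \eqref{eq:measboundqmin} force $\theta(E,x)=\tfrac12$ for $\haus^{N-1}$-a.e.\ $x\in\partial E\cap\Omega$, and you also write out the Ahlfors regularity argument from \autoref{cor:perest} and Bishop--Gromov that the paper leaves implicit. One cosmetic remark: the clause ``since $\gamma_0<\tfrac12$'' is not the operative point---what matters is $\gamma_0>0$, which excludes the values $0$ and $1$ from $[\gamma_0,1-\gamma_0]$.
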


\begin{proof}
The identification of the reduced boundary with the topological boundary follows from \autoref{thm:regqmin}.\\
Rectifiability of the reduced boundary (and hence of the topological boundary) and identification of the perimeter measure with the $(N-1)$-Hausdorff measure are then consequences of \autoref{thm:rectifiability} and \autoref{cor:Noncollapsed}.
\end{proof}


A classical consequence of the local Ahlfors regularity of the perimeter for quasi-minimal sets is a measure estimate for the tubular neighbourhood of their boundaries. Given a subset $U\subset X$ and $r>0$, 
 we adopt the notation that $U^{r}:=\{ x\in X \,:\, \dist(x,U)<r \}$ denotes the $r$-enlargement of $U$.

\begin{lemma}\label{lemma:tubneighbounds}
There exist constants $C_{\kappa,K,N}>0$ and $0<r_0= r_0(\kappa,K,N)<1$ with the following property.
Let $(X,\dist,\meas)$ be an $\RCD(K,N)$ metric measure space and let $E\subset X$ be a set of finite perimeter. Assume that $E \cap B_{2}(x)$ is $\kappa$-quasi-minimal in $B_{2}(x)$. Then, for any open subset $\Omega\subset B_{1}(x)$ it holds
\begin{equation*}
\meas\left(\{x\in X\, :\, \dist(x,\partial E \cap \Omega )\le r\}\right)\le C_{\kappa,K,N} \, r\, \Per(E, B_{2}(x)) \, 
\end{equation*}
for every $r\in (0, r_{0})$.\\
In particular, if $E \cap \Omega$ is locally perimeter minimizing in $B_{2}(x)$, then the dependence on $\kappa$ in the constant $C_{\kappa,K,N}>0$ can be dropped.
\end{lemma}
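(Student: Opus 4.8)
The plan is to combine the Ahlfors-type regularity of the perimeter of quasi-minimizers (\autoref{cor:perest}) with a Vitali covering argument and the local doubling property of $\meas$ coming from the Bishop--Gromov inequality \eqref{eq:BishopGromovInequality}.

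If $\partial E\cap\Omega=\emptyset$ the left-hand side is empty and there is nothing to prove, so assume $\partial E\cap\Omega\neq\emptyset$. First I would apply the Vitali ($5r$-)covering lemma to the family $\{B_{r/5}(y)\}_{y\in\partial E\cap\Omega}$, obtaining countably many points $x_i\in\partial E\cap\Omega$ such that the balls $B_{r/5}(x_i)$ are pairwise disjoint while $\partial E\cap\Omega\subset\bigcup_i B_{r}(x_i)$. An elementary triangle-inequality estimate then gives
\[
\{y\in X:\dist(y,\partial E\cap\Omega)\le r\}\subset\bigcup_i B_{3r}(x_i)\, ,
\]
since any $y$ with $\dist(y,\partial E\cap\Omega)\le r$ is at distance less than $3r$ from one of the centres $x_i$.

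Next, fix $r_0=r_0(\kappa,K,N)\in(0,1)$ smaller than the threshold in \autoref{cor:perest} (and small enough, when $K>0$, to also control the model volume ratios below). For $r<r_0$ and $x_i\in B_1(x)$, the Bishop--Gromov inequality yields $\meas(B_{3r}(x_i))\le C_{K,N}\,\meas(B_{r/5}(x_i))$. Moreover each $x_i$ lies in $\partial E\cap\Omega\subset\partial E\cap B_1(x)$ and $B_{2r/5}(x_i)\subset B_2(x)$ (as $2r/5<1$), so since $E\cap B_2(x)$ is $\kappa$-quasi-minimal in $B_2(x)$, \autoref{cor:perest} applies at the centre $x_i$ and scale $r/5$, giving
\[
\meas(B_{r/5}(x_i))\le C_{\kappa,K,N}\,\frac{r}{5}\,\Per(E,B_{r/5}(x_i))\, .
\]
Summing over $i$, using that the balls $B_{r/5}(x_i)$ are pairwise disjoint and all contained in $B_{1+r/5}(x)\subset B_2(x)$ and that $\Per(E,\cdot)$ is a Borel measure on $B_2(x)$, we obtain $\sum_i\Per(E,B_{r/5}(x_i))\le\Per(E,B_2(x))$. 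Chaining the three displays above produces
\[
\meas\big(\{y:\dist(y,\partial E\cap\Omega)\le r\}\big)\le C_{\kappa,K,N}\,r\,\Per(E,B_2(x))\, ,
\]
which is the claim.

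For the last assertion, a set that is locally perimeter minimizing in $B_2(x)$ is in particular $1$-quasi-minimal there (\autoref{rm:equivquasimin}), so \autoref{cor:perest} — and hence the final constant — can be used with $\kappa=1$, i.e. depending only on $K$ and $N$. The only point requiring care is bookkeeping: all the auxiliary balls $B_{r/5}(x_i)$, $B_{2r/5}(x_i)$, $B_{3r}(x_i)$ must stay inside $B_2(x)$, so that both \autoref{cor:perest} and the uniform doubling constant are available; this is exactly why the statement is restricted to $\Omega\subset B_1(x)$ and $r\in(0,r_0)$ with $r_0<1$. There is no substantial difficulty beyond this.
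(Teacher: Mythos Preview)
Your proof is correct and follows essentially the same route as the paper: Vitali covering with centres on $\partial E\cap\Omega$, Bishop--Gromov doubling to pass from the big balls to the small ones, the lower perimeter density estimate from \autoref{cor:perest} to turn $\meas(B_{r/5}(x_i))$ into $r\,\Per(E,B_{r/5}(x_i))$, and disjointness to sum. The only cosmetic difference is that the paper covers the tubular neighbourhood directly with balls of variable radii $5r_i$, whereas you first cover $\partial E\cap\Omega$ with balls of fixed radius and then enlarge via the triangle inequality; both give the same estimate.
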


\begin{proof}
By \autoref{cor:perest}, there exist $r_0=r_{0}(\kappa, K,N)>0$ and $C=C_{\kappa, K,N}>0$ such that, for any $x\in\partial E \cap \Omega$ and for any $r\in (0, r_0)$ it holds
\begin{equation}\label{eq:contrqmin}
C^{-1}\frac{\meas(B_r(x))}{r}\le \Per(E,B_r(x))\le C\frac{\meas(B_r(x))}{r}\, .
\end{equation}
We wish to estimate the volume of the tubular neighbourhood of $\partial E \cap \Omega$.\\ 
Let $r<r_0/5$ be fixed and let us consider, thanks to Vitali's covering lemma, a covering of $\{x\in X\, :\, \dist(x,\partial E \cap \Omega )\le r\}$ with balls  $B_{5r_i}(x_i)$ such that $x_i\in\partial E \cap \Omega$, $r_i<r<r_0/5$ and $\{B_{r_i}(x_i)\}$ is a disjoint family of subsets of $B_2(x)$.
Relying on \eqref{eq:contrqmin} and the disjointedness of the family $\{B_{r_i}(x_i)\}$ we can estimate
\begin{align*}
\meas\left(\{x\in X\, :\, \dist(x,\partial E \cap \Omega)\le r\}\right)\le&\,  \meas\left(\bigcup_{i}B_{5r_i}(x_i)\right)\le\,  \sum_{i}\meas\left(B_{5r_i}(x_i)\right)\\
\le &\, C_{K,N}\sum_{i}\meas(B_{r_i}(x_i))\\
\le &\, C_{\kappa,K,N}\sum_{i}\Per(E,B_{r_i}(x_i))r_i\\
\le &\,  C_{\kappa,K,N}r\Per(E, B_2(x))\, .
\end{align*}
\end{proof}

In the Euclidean setting a well known fact is that, when dealing with a family of sets of finite perimeter that are uniformly quasi-minimizing, the usual $L^1_{\loc}$ convergence up to subsequence guaranteed for uniformly bounded $\BV$ functions can be improved. We refer for instance to \cite[Section 21.5]{Maggi12} and references therein for the treatment of this topic on $\setR^n$.\\ 
This principle has already played a role in the proof of De Giorgi's theorem for sets of finite perimeter on $\RCD(K,N)$ spaces in \cite{AmbrosioBrueSemola19}. Below we present a slight enforcement of \cite[Proposition 3.9]{AmbrosioBrueSemola19}, allowing for more general quasi-minimality conditions and dealing with the Hausdorff convergence of the topological/measure theoretic boundaries.

\begin{theorem}\label{thm:closurecompactnesstheorem}
	Let $(X_i,\dist_i,\meas_i,x_i)$ be $\RCD(K,N)$ m.m. spaces converging in the pmGH topology to $(Y,\varrho,\mu,y)$ and let $(Z,\dist_Z)$ be realizing the convergence. For any $i\in\setN$, let $\omega_i:[0,\infty):\to [0,\infty)$ be a modulus of continuity and let $E_i\subset X_i$ be sets of finite perimeter satisfying the following 
	$\omega_i$-minimality condition: there exists $R_i>0$ such that
	\begin{equation*}
	\abs{D\chi_{E_{i}}}(B_{r}(z_i))\le (1+\omega_{i}(r)) \abs{D\chi_{E'}}(B_{r}(z_i))
		\end{equation*}
	for any $E'\subset X_i$ such that  $E_i\Delta E'\Subset B_{r}(z_i)\subset X_i$, for some $r<R_i$.

	Assume that, as $i\to\infty$, $E_i\to F$ in $L^1_{\loc}$ for some set $F\subset Y$ of locally finite perimeter, and $\omega_i\to \omega$ pointwise, where $\omega:[0,\infty)\to[0,\infty)$ is a modulus of continuity and $R_i\to \infty$. 
	Then:
	\begin{itemize}
		\item[(i)] $F$ is an entire $\omega$-minimizer of the perimeter (relative to $(Y,\varrho,\mu)$), namely
		\begin{equation}\label{eq:limitomegamin}
		\abs{D\chi_F}(B_r(z))\leq (1+\omega(r))\abs{D\chi_{F'}}(B_r(z)) 
		\end{equation}
		whenever $F\Delta F'\Subset B_r(z)\Subset Y$ and $r>0$;
		\item[(ii)] $|D\chi_{E_i}|\to|D\chi_F|$ in duality with $C_{\mathrm{bs}}(Z)$ as $i\to\infty$;
		\item[(iii)] $\partial E_i\to \partial F$ in the Kuratowski sense as $i\to\infty$.
	\end{itemize}
\end{theorem}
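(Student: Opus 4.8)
The plan is to follow the scheme of \cite[Proposition 3.9]{AmbrosioBrueSemola19}, upgrading it so as to accommodate the varying moduli $\omega_i$ and to extract, in addition, the Kuratowski convergence of the boundaries. Three tools are used throughout: lower semicontinuity of the perimeter along the $L^1_{\loc}$-convergence of the $E_i$ (in its localized form $\int\phi\,\abs{D\chi_F}\le\liminf_i\int\phi\,\abs{D\chi_{E_i}}$ for $0\le\phi\in\Cbs(Z)$; see \cite{AmbrosioHonda,AmbrosioHonda18,GigliMondinoSavare15}); the possibility of approximating \emph{in energy} a set of finite perimeter on $(Y,\varrho,\mu)$ by sets of finite perimeter $F_i\subset X_i$, i.e.\ $F_i\to F'$ $L^1$-strongly with $\abs{D\chi_{F_i}}\rightharpoonup\abs{D\chi_{F'}}$ in duality with $\Cbs(Z)$ (again from the same references, combined with the coarea formula \autoref{thm:coarea}); and the uniform density and Ahlfors-regularity bounds of \autoref{thm:regqmin} and \autoref{cor:perest}, which apply with constants independent of $i$ because for $i$ large $\omega_i(r)\le\omega(r)+1$ forces the $E_i$ to be uniformly quasi-minimal at all scales below a fixed $\rho_0>0$.

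For (i): fix a competitor $F'$ with $F\Delta F'\Subset B_{r_1}(z)$, $r_1<r$, and pick energy-approximating sets $F_i'\subset X_i$ for $F'$. By Fubini applied to the distance from $z_i$, choose a radius $\rho\in(r_1,r)$ --- a continuity radius for all the measures involved --- along whose sphere the traces of $\chi_{F_i'}$ and $\chi_{E_i}$ are both $L^1$-close to that of $\chi_F$, hence close to each other; then
\[E_i':=\bigl(F_i'\cap B_\rho(z_i)\bigr)\cup\bigl(E_i\setminus B_\rho(z_i)\bigr)\]
satisfies $E_i\Delta E_i'\Subset B_{\rho'}(z_i)$ for a suitable $\rho'\in(\rho,r)$ (with $\rho'<R_i$ for $i$ large), and $\abs{D\chi_{E_i'}}(B_{\rho'}(z_i))\le\abs{D\chi_{F_i'}}(B_\rho(z_i))+\abs{D\chi_{E_i}}\bigl(\overline{B_{\rho'}(z_i)}\setminus B_\rho(z_i)\bigr)+o(1)$, the $o(1)$ being the negligible ``seam'' along $\partial B_\rho(z_i)$. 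Feeding $E_i'$ into the $\omega_i$-minimality of $E_i$ on $B_{\rho'}(z_i)$, using lower semicontinuity on the left, $\abs{D\chi_{F_i'}}\rightharpoonup\abs{D\chi_{F'}}$ on the right, and $\omega_i\to\omega$, and finally letting the error terms vanish while exploiting $F=F'$ outside $B_{r_1}(z)$ and adjusting $\rho,\rho'$, yields \eqref{eq:limitomegamin}.

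For (ii): by \autoref{cor:perest} and uniform local doubling the measures $\abs{D\chi_{E_i}}$ are uniformly bounded on bounded sets of $Z$, so up to a subsequence $\abs{D\chi_{E_i}}\rightharpoonup\sigma$ in duality with $\Cbs(Z)$, and lower semicontinuity gives $\abs{D\chi_F}\le\sigma$. For the reverse inequality one runs the competitor construction of (i) at a \emph{small} ball $B_\varrho(q_i)$ centered near an arbitrary $q\in Y$, with $F_i'$ now an energy-approximation of $F$ itself; the $\omega_i$-minimality of $E_i$ and a passage to the limit give $\sigma(\overline{B_\varrho(q)})\le(1+\omega(\varrho+))\,\abs{D\chi_F}(\overline{B_\varrho(q)})$ for all small $\varrho$. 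Writing $\sigma=\abs{D\chi_F}+\nu$ with $\nu\ge 0$, this reads $\nu(\overline{B_\varrho(q)})\le\omega(\varrho+)\,\abs{D\chi_F}(\overline{B_\varrho(q)})$; applying the Lebesgue--Besicovitch differentiation theorem (valid since $Z$ is locally doubling) forces first $\nu\ll\abs{D\chi_F}$ (the singular part of $\nu$ would have infinite upper density relative to $\abs{D\chi_F}$ at $\nu$-a.e.\ of its points) and then $d\nu/d\abs{D\chi_F}\le\lim_{\varrho\to 0}\omega(\varrho+)=0$ a.e., so $\nu=0$. Since the limit is independent of the subsequence, $\abs{D\chi_{E_i}}\rightharpoonup\abs{D\chi_F}$.

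For (iii), and the main obstacle: if $p_i\in\partial E_i$ converges (along a subsequence) to $p$, then $p\in Y$ and \autoref{thm:regqmin} (with uniform constants) gives $\gamma_0>0$ with $\meas_i(E_i\cap B_\varrho(p_i))\ge\gamma_0\meas_i(B_\varrho(p_i))$ and $\meas_i(B_\varrho(p_i)\setminus E_i)\ge\gamma_0\meas_i(B_\varrho(p_i))$ for small $\varrho$; passing to the limit along continuity radii (using $\chi_{E_i}\meas_i\rightharpoonup\chi_F\mu$ and $\meas_i\rightharpoonup\mu$) shows $\mu(F\cap B_\varrho(p))>0$ and $\mu(B_\varrho(p)\setminus F)>0$ for every $\varrho$, i.e.\ $p\in\partial F$ ($F$ being normalized as in \eqref{eq:normsetsfin}). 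Conversely, if some $p\in\partial F$ had $\dist_Z(p,\partial E_i)\ge\delta>0$ along a subsequence, then --- metric balls in $\RCD$ spaces being connected and the $E_i$ normalized --- a ball $B_{\delta/2}(p_i')$ with $p_i'\in X_i$, $p_i'\to p$, would lie entirely inside $E_i$ or entirely inside its complement, and the limit along a continuity radius $\delta'<\delta/2$ would force $\mu(B_{\delta'}(p)\setminus F)=0$ or $\mu(B_{\delta'}(p)\cap F)=0$, contradicting $p\in\partial F$; hence $\partial E_i\to\partial F$ in the Kuratowski sense. The hard part is really Step (i): transferring a set of finite perimeter from $Y$ to the varying ambients $X_i$ with convergence of perimeters, gluing it to $E_i$ across a well-chosen sphere with a negligible seam, and slicing via the coarea formula to keep a genuine set of finite perimeter with controlled perimeter and symmetric difference --- the remaining steps being the measure-differentiation argument of (ii) and the density bookkeeping of (iii).
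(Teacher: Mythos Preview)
Your proof is correct and follows essentially the same approach as the paper's: competitor construction by gluing energy-approximating sets $F_i'$ to $E_i$ across a well-chosen sphere, control of the seam term for a.e.\ radius, and a Lebesgue--Besicovitch differentiation argument to pass from $\sigma\le(1+\omega(\varrho))\abs{D\chi_F}$ to $\sigma=\abs{D\chi_F}$. The only organizational difference is that the paper proves (i) and (ii) simultaneously by first establishing the single claim $\nu(B_s(\bar y))\le(1+\omega(r))\abs{D\chi_{F'}}(B_s(\bar y))$ for the weak limit $\nu$ of the perimeters, then specializing to $F'=F$ to obtain (ii), and substituting back to get (i); in your version, when proving (i) you should be aware that the annular error term $\omega_i(\rho')\abs{D\chi_{E_i}}(B_{\rho'}\setminus\overline{B_\rho})$ is controlled only via a weak subsequential limit of $\abs{D\chi_{E_i}}$ (not yet known to equal $\abs{D\chi_F}$), so implicitly you are already working along such a subsequence---which is exactly what the paper makes explicit.
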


\begin{proof}
The statement is classical in the Euclidean setting, see for instance \cite{AmbrosioPaolini99}, and the adaptation to the present framework requires only minor adjustments. Therefore some details will be omitted. 
We will adapt  the arguments in the proof of \cite[Proposition 3.9]{AmbrosioBrueSemola19} to deal with the present setting.

The strategy is to consider a weak limit measure of the sequence of locally uniformly bounded perimeter measures $\abs{D\chi_{E_i}}$. Let us call it $\nu$. Then we show simultaneously that $\nu=\abs{D\chi_F}$ and that $F$ verifies the $\omega$-minimality condition \eqref{eq:limitomegamin}.

The inequality $\abs{D\chi_F}\le \nu$ follows from localizing the lower-semicontinuity of the perimeter \cite[Proposition 3.6]{AmbrosioBrueSemola19}, and does not require the $\omega$-minimality condition. It remains to check that $\nu\le \abs{D\chi_F}$.  Below we report part of the proof in \cite{AmbrosioBrueSemola19} and indicate where changes are needed.
%

	Let us fix $\bar y\in Y$ and let $F'\subset Y$ be a set of locally finite perimeter satisfying $F\Delta F'\Subset B_r(\bar{y})$.  
	Let $\bar{x}_i\in X_i$ converging to $\bar y$ in $Z$ and $R>0$ be such that the following properties hold true:
	\begin{equation}\label{eq:z20}
	\sup_{i\in \setN} \abs{D\chi_{B_R(x_i)}}(X_i)<\infty
	\qquad
	\text{and}
	\qquad
	B_r(\bar{x}_i)\Subset B_R(x_i)\qquad \forall i\in \setN\, .
	\end{equation}

	Using \cite[Proposition 3.8]{AmbrosioBrueSemola19}
	we can find a sequence of sets of finite 
	perimeter $E'_i\subset X_i$ converging to $F\cap B_R(y)$ in $\BV$ energy
	(notice that $F\cap B_R(y)$ is a set of finite perimeter thanks to \eqref{eq:z20}).

We claim that, for any set of finite perimeter $F'\subset Y$ such that $F\Delta F'\Subset B_r(\bar y)$, 
	\begin{equation}\label{eq:z18}
	\nu(B_s(\bar y))\le (1+\omega(r))\abs{D\chi_{F'}}(B_s(\bar y))\, ,
	\end{equation}
	for $\Leb^1$-a.e. $s\in (r',r)$, for some $0<r'<r$.\\
	Let us illustrate how to use \eqref{eq:z18} to conclude the proof. 
	
	If we apply \eqref{eq:z18} with $F'=F$ we get that 
\begin{equation*}
\nu(B_s(\bar y))\le (1+\omega(r))\abs{D\chi_F}(B_s(\bar y))\, ,\ \ 
\end{equation*}	
for $\Leb^1$-a.e. $s\in (r',r)$, for some $0<r'<r$.\\
Hence, letting $s\uparrow r$, we obtain
\begin{equation}\label{eq:slighimpr}
\nu(B_r(\bar y))\le (1+\omega(r))\abs{D\chi_F}(B_r(\bar y))\, .
\end{equation}
In particular $\nu\ll\abs{D\chi_F}$, which is an asymptotically doubling measure. Hence, noticing that by \eqref{eq:slighimpr} and the continuity at $0$ of $\omega$,
\begin{equation*}
\limsup_{r\downarrow 0}\frac{\nu(B_r(\bar{y}))}{\abs{D\chi_F}(B_{r}(\bar y))}\le \limsup_{r\downarrow 0}(1+\omega(r))=1\, ,
\end{equation*}
we can apply the differentiation theorem to infer that $\nu\le \abs{D\chi_F}$.
This proves (ii). 
\\Substituting back in \eqref{eq:z18}, we obtain that
	\begin{equation*}
	\abs{D\chi_F}(B_s(\bar y))\le (1+\omega(r))\abs{D\chi_{F'}}(B_s(\bar y))\, ,
	\end{equation*}
	for $\Leb^1$-a.e. $s\in (r',r)$, for some $0<r'<r$, 
and (i) follows by letting $s\uparrow r$.
\medskip

	Let us prove \eqref{eq:z18}. We first fix $0<r'<r$ such that $F\Delta F'\subset B_{r'}(y)$. Then
	we fix a parameter $s\in (r',r)$ with $\nu(\partial B_s(\bar y))=0$, $\abs{D\chi_{F'}}(\partial B_s(\bar y))=0$ and set
	\begin{equation*}
	\tilde{E}_i^s:=\left(E_i'\cap B_s(\overline{x}_i)\right)\cup \left(E_i\setminus B_s(\overline{x}_i)\right)\, .
	\end{equation*}
We also choose $s<s'<r$ such that $\nu(\partial B_{s'}(\bar y))=0$.

	From now on, up to the end of the proof, we are going to adopt the notation $\Per(G,A)$ to denote $\abs{D\chi_G}(A)$ whenever $G$ has finite perimeter and $A$ is a Borel set, to avoid multiple subscripts.
	
	Using the locality of the perimeter and the $\omega_i$-minimality of $E_i$ (notice that $R_i\ge r$ for $i$ big enough), we get
	\begin{align}
\nonumber	\Per(E_i, \overline{B}_s(\bar{x}_i)) =\,  & \Per(E_i, B_{s'}(\bar{x_i}))-\Per(E_i,B_{s'}(\bar{x_i})\setminus\overline{B}_s(\bar{x}_i))\\
\nonumber	\le\,  &  (1+\omega_i(r)) \Per(\tilde{E}^s_i, B_{s'}(\bar{x}_i))-\Per(E_i,B_{s'}(\bar{x}_i)\setminus\overline{B}_s(\bar{x}_i))\\
\nonumber 	=\, &(1+\omega_i(r)) \Per(\tilde{E}^s_i, B_s(\bar{x}_i))+(1+\omega_i(r))\Per(\tilde{E}^s_i, \partial B_s(\bar{x}_i))\\
\nonumber	&+(1+\omega_i(r)) \Per(\tilde{E}^s_i,B_{s'}(\bar{x}_i)\setminus\overline{B}_s(\bar{x}_i))\\
&-\Per(E_i,B_{s'}(\bar{x}_i)\setminus\overline{B}_s(\bar{x}_i))\\
\nonumber	=\, &(1+\omega_i(r)) \Per(E_i', B_s(\bar{x}_i))+(1+\omega_i(r))\Per(\tilde{E}^s_i, \partial B_s(\bar{x}_i))\\
	 &+\omega_i(r)\Per(E_i,B_{s'}(\bar{x}_i)\setminus\overline{B}_s(\bar{x}_i))\, .\label{al:deco}
	\end{align}
	Taking the limit as $i\to \infty$, arguing as in the last part of the proof of \cite[Proposition 3.9]{AmbrosioBrueSemola19} it is possible to prove that	
	\begin{equation}\label{eq:z19}
	\liminf_{i\to \infty}\Per(\tilde{E}_i^s, \partial B_s(\bar{x}_i))=0\, ,
	\qquad
	\text{for a.e.}\ s\in (r',r).
	\end{equation}
	
	Thanks to our choice of $s$, it holds that
	$\Per(E_i, \overline{B}_s(\bar{x}_i))\to \nu(B_s(\bar y))$ and moreover $(1+\omega_i(r)) \Per(E_i', B_s(\bar{x}_i))\to (1+\omega(r))\Per(F',B_s(\bar y))$, since $\chi_{E_i'}\to \chi_{F'\cap B_R(y)}$ in $\BV$ energy
	and therefore \cite[Corollary 3.7]{AmbrosioBrueSemola19}
	applies.  Combining these last observations with \eqref{eq:z19} and \eqref{al:deco} we obtain that
\begin{equation*}
\nu(B_s(\bar{y}))\le (1+\omega(r))\abs{D\chi_{F'}}(B_s(\bar y))+\omega(r)\nu(B_{s'}(\bar y)\setminus\overline{B}_s(\bar y))\, .
\end{equation*}	
Letting then $s\uparrow s'$ we infer
\begin{equation*}
\nu(B_{s'}(\bar y))\le (1+\omega(r))\abs{D\chi_{F'}}(B_{s'}(\bar y))\, ,
\end{equation*}
which is equivalent to \eqref{eq:z18} up to changing $s'$ into $s$.


\medskip
In order to prove (iii), it is enough to observe that all the $E_i$'s and the limit set of finite perimeter $F$ verify uniform upper and lower density estimates, thanks to $\omega_i$-minimality, convergence of $\omega_i$ to $\omega$, \autoref{thm:regqmin} and \autoref{cor:perest}.\\
By (ii) and the lower density estimate for $\abs{D\chi_F}$, any point in $\partial F$ can be approximated by points in $\partial E_i$. On the other hand, limit points of sequences $x_i\in\partial E_i$ do belong to $\partial F$ due to the uniform density estimates at $x_i$ and weak convergence of $\abs{D\chi_{E_i}}$ again. We refer to \cite[Section 7]{BrueNaberSemola20} for an analogous statement in the case of boundaries of noncollapsed $\RCD(K,N)$ spaces.
\end{proof}

\subsection{Laplacian, heat equation and heat kernel}\label{subsec:heat}

Unless otherwise stated from now on we assume that $(X,\dist,\meas)$ is an $\RCD(K,N)$ metric measure space for some $K\in\setR$ and $1\le N<\infty$.
\medskip

In the first part of this subsection we collect some basic notation and results about the Laplacian, the heat flow and the heat kernel, together with some terminology about first and second order differential calculus on $\RCD$ spaces. The basic references for this part are \cite{AmbrosioGigliSavare14,Gigli15,Gigli18}. The second part contains some new technical results about the pointwise short time behaviour of the heat flow.
	\begin{definition}\label{def:laplacian}
		The Laplacian $\Delta:D(\Delta)\to L^2(X,\meas)$ is a densely defined linear operator whose domain consists of all functions $f\in H^{1,2}(X,\dist,\meas)$ satisfying 
		\begin{equation*}
		\int hg\di\meas=-\int \nabla h\cdot\nabla f\di\meas \quad\text{for any $h\in H^{1,2}(X,\dist,\meas)$}
		\end{equation*}
		for some $g\in L^2(X,\meas)$. The unique $g$ with this property is denoted by $\Delta f$.
			\end{definition}
	As consequence of the infinitesimal hilbertianity, it is easily checked that $\Delta$ is an (unbounded) linear operator. More generally, we say that $f\in H^{1,2}_{\loc}(X,\dist,\meas)$ is in the domain of the measure valued Laplacian, and we write $f\in D(\boldsymbol{\Delta})$, if there exists a Radon measure $\mu$ on $X$ such that, for every $\psi\in\Lip_c(X)$, it holds
	\begin{equation*}
	\int\psi\di\mu=-\int\nabla f\cdot\nabla \psi\di\meas\, .
	\end{equation*} 
	In this case we write $\boldsymbol{\Delta}f:=\mu$. If moreover $\boldsymbol{\Delta}f\ll\meas$ with $L^{2}_{\loc}$ density we denote by $\Delta f$ the unique function in $L^{2}_{\loc}(X,\meas)$ such that $\boldsymbol{\Delta}f=\Delta f\, \meas$ and we write $f\in D_{\loc}(\Delta)$.
	
Notice that the definition makes sense even under the assumption that $f\in H^{1,p}_{\loc}(X,\dist,\meas)$ for some $1\le p<\infty$, and we will rely on this observation later.
\medskip

We shall also consider the Laplacian on open sets, imposing Dirichlet boundary conditions. Let us first introduce the local Sobolev space with Dirichlet boundary conditions.

\begin{definition}
Let $(X,\dist,\meas)$ be an $\RCD(K,N)$ metric measure space and let $\Omega\subset X$ be an open and bounded domain. Then we let $H^{1,2}_{0}(\Omega)$ be the $H^{1,2}(X,\dist,\meas)$ closure of $\Lip_c(\Omega,\dist)$.
\end{definition}

 We  also introduce the local Sobolev space (i.e. without imposing Dirichlet boundary conditions).

\begin{definition}
Let $(X,\dist,\meas)$ be an $\RCD(K,N)$ metric measure space and let $\Omega\subset X$ be an open and bounded domain. We say that a function $f\in L^2(\Omega,\meas)$ belongs to the local Sobolev space $H^{1,2}(\Omega,\dist,\meas)$ if 
\begin{itemize}
\item[i)] $f\phi\in H^{1,2}(X,\dist,\meas)$ for any $\phi\in\Lip_c(\Omega,\dist)$;
\item[ii)] $\abs{\nabla f}\in L^2(X,\meas)$.
\end{itemize}
Above we intend that $f\phi$ is set to be $0$ outside from $\Omega$. Notice that $\abs{\nabla f}$ is well defined on any $\Omega'\subset \Omega$ (and hence on $\Omega$) as $\abs{\nabla (f\phi)}$ for some $\phi\in \Lip_c(\Omega)$ such that $\phi\equiv 1$ on $\Omega'$.
\end{definition}

\begin{definition}
Let $f\in H^{1,2}(\Omega)$. We say that $f\in D(\Delta,\Omega)$ if there exists a function $h\in L^2(\Omega,\meas)$ such that
\begin{equation*}
\int_{\Omega}gh\di\meas=-\int_{\Omega}\nabla g\cdot\nabla f\di\meas\, ,\quad\text{for any $g\in H^{1,2}_0(\Omega,\dist,\meas)$}\, .
\end{equation*}
\end{definition}

We refer to \cite{Gigli18} for  the basic terminology and results about tangent and cotangent modules on metric measure spaces and for the interpretation of vector fields as elements of the tangent modules. The notations $L^2(TX)$, $L^2_{\loc}(TX)$ and $L^{\infty}(TX)$ will be adopted to indicate the spaces of $L^2$, $L^2_{\loc}$ and bounded vector fields, respectively. 

\begin{definition}
Let $V\in L^2(TX)$ be a vector field. We say that $V$ belongs to the domain of the divergence (and write $v\in D(\div)$) if there exists a function $f\in L^2(\meas)$ such that
\begin{equation*}
\int_X V\cdot\nabla g\di\meas=-\int_X fg\di\meas\, ,\quad\text{for any $g\in H^{1,2}(X)$}\, .
\end{equation*}
Under these assumptions, the function $f$ is uniquely determined and we shall denote $f=\div(V)$.
\end{definition}

We refer again the reader to \cite{Gigli18} for the introduction of more regular classes of vector fields, such as the class $H^{1,2}_C(TX)$ that will be relevant later in the paper.
\medskip
	
 The heat flow $P_t$, previously defined in \autoref{sec:SlopeChEnergy}  as  the $L^2(X,\meas)$-gradient flow of $\Ch$, can be equivalently  characterised by the following property:  for any $u\in L^2(X,\meas)$, the curve $t\mapsto P_tu\in L^2(X,\meas)$ is locally absolutely continuous in $(0,+\infty)$ and satisfies
	\begin{equation*}
	\frac{\di}{\di t}P_tu=\Delta P_tu \quad\text{for $\Leb^1$-a.e. $t\in(0,+\infty)$}\, .
	\end{equation*}  
	
	Under our assumptions the heat flow provides a linear, continuous and self-adjoint contraction semigroup in $L^2(X,\meas)$. Moreover $P_t$ extends to a linear, continuous and mass preserving operator, still denoted by $P_t$, in all the $L^p$ spaces for $1\le p<+\infty$.  
\medskip
	
	It has been proved in \cite{AmbrosioGigliSavare14,AmbrosioGigliMondinoRajala15} that, on $\RCD(K,\infty)$ metric measure spaces, the dual heat semigroup $\bar{P}_t:\mathcal{P}_2(X)\to\mathcal{P}_2(X)$ of $P_t$, defined by	
	\begin{equation*}
	\int_X f \di \bar{P}_t \mu := \int_X P_t f \di \mu\qquad\quad \forall \mu\in \mathcal{P}_2(X),\quad \forall f\in \Lipb(X)\, ,
	\end{equation*}
	is $K$-contractive (w.r.t. the $W_2$-distance) and, for $t>0$, maps probability measures into probability measures absolutely continuous w.r.t. $\meas$. Then, for any $t>0$, we can introduce the so called \textit{heat kernel} $p_t:X\times X\to[0,+\infty)$ by
	\begin{equation*}
	p_t(x,\cdot)\meas:=\bar{P}_t\delta_x\, .
	\end{equation*}  

A key property of the heat kernel follows, namely the so-called stochastic completeness: for any $x\in X$ and for any $t>0$ it holds
\begin{equation}\label{eq:stochcompl}
\int_X p_t(x,y)\di\meas(y)=1\, .
\end{equation}
\begin{remark}	
	From now on, for any $f\in L^{\infty}(X,\meas)$ we will denote by $P_tf$ the representative pointwise everywhere defined by
	\begin{equation*}
	P_tf(x)=\int_X f(y)p_t(x,y)\di\meas(y)\, .
	\end{equation*}
\end{remark}
Let us recall a few regularizing properties of the heat flow on $\RCD(K,N)$ spaces (which hold true more generally for any $\RCD(K,\infty)$ m.m.s.) referring again to \cite{AmbrosioGigliSavare14,AmbrosioGigliMondinoRajala15} for a more detailed discussion and the proofs of these results.   
	
	First we have the \textit{Bakry-\'Emery contraction} estimate:
	\begin{equation}\label{eq:BakryEmery}
	\abs{\nabla P_t f}^2\le e^{-2Kt}P_t\abs{\nabla f}^2\quad \text{$\meas$-a.e.,}
	\end{equation}
	for any $t>0$ and for any $f\in H^{1,2}(X,\dist,\meas)$.\\ 
	Later on it was proved in \cite{Savare14} that the Bakry-\'Emery contraction estimates extends to the full range of exponents $p\in [1,\infty)$, i.e.
	\begin{equation}\label{eq:selfimpr}
	\abs{\nabla P_tf}^p\le e^{-pKt}P_t\abs{\nabla f}^p\, ,\quad\text{$\meas$-a.e.}\, ,
	\end{equation}
	for any $t>0$, for any function $f\in H^{1,p}(X,\dist,\meas)$ if $p>1$ and for any function $f\in \BV(X,\dist,\meas)$ if $p=1$. 
\medskip
	
	Another non trivial regularity property is the so-called \textit{$L^{\infty}$-- $\Lip$ regularization} of the heat flow:  for any $f\in L^{\infty}(X,\meas)$, we have $P_tf\in\Lip(X)$ with
	\begin{equation}\label{eq:linftylipregularization}
	\sqrt{2I_{2K}(t)}\LipConst(P_tf)\le\norm{f}_{L^\infty}\, ,\quad\text{for any $t>0$}\, ,
	\end{equation}
	where $I_L(t):=\int_0^te^{Lr}\di r$.
\medskip
	
We also have the so-called \textit{Sobolev to Lipschitz property}: any $f\in H^{1,2}(X,\dist,\meas)$ with $\abs{\nabla f}\in L^{\infty}(X,\meas)$ admits a Lipschitz representative $\bar{f}$ such that $\LipConst \bar{f}\le \norm{\nabla f}_{\infty}$.
\begin{definition}	
	We introduce the space of ``test'' functions $\Test(X,\dist,\meas)$ by 
	\begin{align}\label{eq:test}
\nonumber	\Test(X,\dist,\meas):=&\{f\in D(\Delta)\cap L^{\infty}(X,\meas): \abs{\nabla f}\in L^{\infty}(X)\\
	&\quad\text{and}\quad\Delta f\in H^{1,2}(X,\dist,\meas) \}\, .
	\end{align}
	and the subspace $\Test^{\infty}(X,\dist,\meas)$ by 
	\begin{align}\label{eq:testinfty}
\nonumber	\Test^{\infty}(X,\dist,\meas):=&\{f\in D(\Delta)\cap \Lip_b(X)\\
	&\quad\text{and}\quad\Delta f\in L^{\infty}\cap H^{1,2}(X,\dist,\meas) \}\, .
	\end{align}	 
\end{definition}

\begin{remark}
	We remark that, for any $g\in L^{2}\cap L^{\infty}(X,\meas)$, it holds that $P_tg\in \Test(X,\dist,\meas)$ for any $t>0$, thanks to \eqref{eq:BakryEmery}, \eqref{eq:linftylipregularization}, the fact that $P_t$ maps $L^2(X,\meas)$ into $D(\Delta)$ and the commutation $\Delta P_t f= P_t\Delta f$, which holds true for any $f\in D(\Delta)$.
\end{remark}
On $\RCD(K,N)$ metric measure spaces it is possible to build \textit{regular} cut-off functions, see \cite[Lemma 3.1]{MondinoNaber19} (the Test regularity was not required in \cite{MondinoNaber19} but can be obtained with a similar construction, see also \cite[Lemma 6.7]{AmbrosioMondinoSavare14} and \cite{Gigli18}).

\begin{lemma}\label{lemma:cutoff}
Let $(X,\dist,\meas)$ be an $\RCD(K,N)$ metric measure space. Then, for any $R>0$ there exists a constant $C=C(K,N,R)>0$ such that, for any $x\in X$ and for any $0<r<R$, there exists a function $\phi_r:X\to[0,\infty)$ such that the following properties hold:
\begin{itemize}
\item[i)] $\phi_r\equiv 1$ on $B_r(x)$ and $\phi_r\equiv 0$ outside from $B_{2r}(x)$;
\item[ii)] $\phi_r$ is Lipschitz and belongs to $D(\Delta)$, moreover
\begin{equation*}
r^2\abs{\Delta \phi_r}+r\abs{\nabla \phi_r}\le C(K,N,R)\, .
\end{equation*}
\item[iii)] $\phi_r\in \Test(X,\dist,\meas)$.
\end{itemize}
\end{lemma}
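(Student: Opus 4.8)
The plan is to produce $\phi_{r}$ by mollifying a naive Lipschitz cutoff with the heat semigroup for a time comparable to $r^{2}$, and then composing the result with a fixed smooth one-variable function to restore the boundary values exactly. Fix the radial Lipschitz function $\psi:=\zeta(\dist(x,\plchldr))$, where $\zeta\colon[0,\infty)\to[0,1]$ equals $1$ on $[0,5r/4]$, equals $0$ on $[7r/4,\infty)$ and is affine in between; thus $\psi\in L^{2}\cap L^{\infty}$, $0\le\psi\le 1$, $\supp\psi\subset B_{7r/4}(x)$ and $\abs{\nabla\psi}\le 2/r$ $\meas$-a.e. Set $T:=\eps r^{2}$ and $\phi:=P_{T}\psi$, where $\eps=\eps(K,N)\in(0,1)$ is chosen below. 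As recalled above, $\phi\in\Test(X,\dist,\meas)$ for every $T>0$, and $0\le\phi\le 1$ by positivity of $P_{T}$ and \eqref{eq:stochcompl}. The Bakry-\'Emery estimate \eqref{eq:BakryEmery} gives $\abs{\nabla\phi}^{2}\le e^{2\abs{K}T}P_{T}(\abs{\nabla\psi}^{2})\le 4e^{2\abs{K}R^{2}}/r^{2}$ $\meas$-a.e., i.e.\ $\abs{\nabla\phi}\le C_{1}(K,N,R)/r$. For the Laplacian I use analyticity of the heat semigroup in $L^{2}$, $\norm{\Delta P_{T/2}\psi}_{L^{2}}\le (2/T)\norm{\psi}_{L^{2}}\le (2/T)\,\meas(B_{7r/4}(x))^{1/2}$, together with the commutation $\Delta\phi=P_{T/2}(\Delta P_{T/2}\psi)$ and $\norm{p_{T/2}(y,\plchldr)}_{L^{2}}^{2}=p_{T}(y,y)$, so that $\abs{\Delta\phi(y)}\le p_{T}(y,y)^{1/2}\norm{\Delta P_{T/2}\psi}_{L^{2}}$; inserting the on-diagonal heat-kernel bound $p_{T}(y,y)\le C_{K,N}/\meas(B_{\sqrt T}(y))$ and comparing $\meas(B_{\sqrt T}(y))$ with $\meas(B_{7r/4}(x))$ through the Bishop-Gromov inequality \eqref{eq:BishopGromovInequality} (using $r<R$) yields $\abs{\Delta\phi}\le C_{2}(K,N,R)/r^{2}$ on $B_{2r}(x)$, which is all we shall need.

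Next I record that $\phi$ is close to $1$ on $B_{r}(x)$ and close to $0$ off $B_{2r}(x)$. If $y\in B_{r}(x)$ then $B_{r/4}(y)\subset B_{5r/4}(x)$, so $\psi\equiv 1$ on $B_{r/4}(y)$, hence $\phi(y)\ge\bar P_{T}\delta_{y}(B_{r/4}(y))\ge 1-C_{K,N}e^{-c_{K,N}/(16\eps)}$ by the standard exponential escape bound for the dual heat semigroup (recall $\bar P_{T}\delta_{y}$ is a probability measure for $T>0$). Symmetrically, if $y\in X\setminus B_{2r}(x)$ then $\psi\equiv 0$ on $B_{r/4}(y)$, so $\phi(y)\le C_{K,N}e^{-c_{K,N}/(16\eps)}$. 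Choosing $\eps=\eps(K,N)\in(0,1)$ so small that $C_{K,N}e^{-c_{K,N}/(16\eps)}\le 1/4$, we obtain $\phi\ge 3/4$ on $B_{r}(x)$ and $\phi\le 1/4$ on $X\setminus B_{2r}(x)$; moreover $T=\eps r^{2}<\eps R^{2}$, which is the source of the $R$-dependence of $C_{1},C_{2}$.

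Finally, set $\phi_{r}:=\sigma\circ\phi$, where $\sigma\in C^{\infty}([0,1];[0,1])$ is fixed with $\sigma\equiv 0$ on $[0,1/4]$, $\sigma\equiv 1$ on $[3/4,1]$, $\abs{\sigma'}\le 4$ and $\abs{\sigma''}\le C_{0}$. Then $\phi_{r}\equiv 1$ on $B_{r}(x)$ and $\phi_{r}\equiv 0$ on $X\setminus B_{2r}(x)$ (in particular $\supp\phi_{r}\subset\overline{B_{2r}(x)}$), which is (i). Since $\phi\in\Test$, the chain rule for the Laplacian gives $\phi_{r}\in D(\Delta)$ with $\Delta\phi_{r}=\sigma'(\phi)\Delta\phi+\sigma''(\phi)\abs{\nabla\phi}^{2}$; as $\sigma',\sigma''$ vanish outside $[1/4,3/4]$ and $\abs{\nabla\phi}=0$ $\meas$-a.e.\ on $\{\phi=1/4\}\cup\{\phi=3/4\}$, it follows that $\Delta\phi_{r}=0$ $\meas$-a.e.\ outside $B_{2r}(x)\setminus B_{r}(x)$, so the bounds of the first paragraph give $r\abs{\nabla\phi_{r}}\le 4C_{1}$ and $r^{2}\abs{\Delta\phi_{r}}\le 4C_{2}+C_{0}C_{1}^{2}$, which is (ii) with $C=C(K,N,R)$. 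For (iii): $\phi_{r}$ is bounded and Lipschitz, and $\Delta\phi_{r}\in H^{1,2}(X,\dist,\meas)$ because $\Delta\phi\in H^{1,2}$ (as $\phi\in\Test$), $\abs{\nabla\phi}^{2}\in H^{1,2}$ (standard for test functions), $\sigma'(\phi)$ and $\sigma''(\phi)$ are bounded Lipschitz, and all of these functions are supported in the bounded set $\overline{B_{2r}(x)}$, so their products remain in $H^{1,2}$; hence $\phi_{r}\in\Test(X,\dist,\meas)$.

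The step I expect to be the main obstacle is the bound $\abs{\Delta\phi}\le C(K,N,R)/r^{2}$ on $B_{2r}(x)$ with the correct $r^{-2}$ scaling: the $L^{2}$ analyticity estimate only ``sees'' the time $T=\eps r^{2}$, and promoting it to a locally uniform $L^{\infty}$ bound forces one to combine it with the on-diagonal heat-kernel upper bound (to pass from $L^{2}$ to $L^{\infty}$) and the Bishop-Gromov volume comparison (to absorb $\meas(B_{\sqrt T}(y))$ against $\meas(B_{7r/4}(x))$). The other ingredients --- Bakry-\'Emery for the gradient, the exponential escape estimate, and the level-set/chain-rule calculus for $\sigma\circ\phi$ --- are routine.
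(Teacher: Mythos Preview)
The paper does not give its own proof of this lemma, referring instead to \cite[Lemma~3.1]{MondinoNaber19} for (i)--(ii) and to \cite[Lemma~6.7]{AmbrosioMondinoSavare14}, \cite{Gigli18} for the additional $\Test$ regularity (iii). Your argument is precisely the construction carried out in those references---heat-flow mollification $P_{\eps r^{2}}\psi$ of a naive Lipschitz cutoff followed by composition with a fixed one-variable $\sigma$---and the estimates you record (Bakry--\'Emery for the gradient, $L^{2}$-analyticity plus on-diagonal heat-kernel/Bishop--Gromov for the Laplacian, Gaussian tail for the plateau values, and $\abs{\nabla\phi}^{2}\in H^{1,2}$ for test functions to close (iii)) are exactly the ingredients used there; the proof is correct.
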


\medskip

Since $\RCD(K,N)$ spaces are locally doubling and they satisfy a local Poincaré inequality (see \cite{VonRenesse08, Rajala12}), the general theory of Dirichlet forms guarantees that we can find a locally H\"older continuous heat kernel $p$ on $X\times X\times(0,+\infty)$, see \cite{Sturm96}.
\medskip
	
	Moreover in \cite{JangLiZhang} the following finer properties of the heat kernel over $\RCD(K,N)$ spaces, have been proved: there exist constants $C_1=C_1(K,N)>1$ and $c=c(K,N)\ge0$ such that
	\begin{align}\label{eq:kernelestimate}
\nonumber	\frac{1}{C_1\meas(B_{\sqrt{t}}(x))}\exp\left\lbrace -\frac{\dist^2(x,y)}{3t}-ct\right\rbrace&\le p_t(x,y)\\
	&\le \frac{C_1}{\meas(B_{\sqrt{t}}(x))}\exp\left\lbrace-\frac{\dist^2(x,y)}{5t}+ct \right\rbrace  
	\end{align}
	for any $x,y\in X$ and for any $t>0$. Moreover it holds
	\begin{equation}\label{eq:gradientestimatekernel}
	\abs{\nabla p_t(x,\cdot)}(y)\le \frac{C_1}{\sqrt{t}\meas(B_{\sqrt{t}}(x))}\exp\left\lbrace -\frac{\dist^2(x,y)}{5t}+ct\right\rbrace \quad\text{for $\meas$-a.e. $y\in X$},
	\end{equation}
	for any $t>0$ and for any $x\in X$.\\
	We remark that in \eqref{eq:kernelestimate} and \eqref{eq:gradientestimatekernel} above one can take $c=0$ whenever $(X,\dist,\meas)$ is an $\RCD(0,N)$ m.m.s..
\medskip

It is also possible to combine the upper bound for the heat kernel in \eqref{eq:kernelestimate} with the general theory of the heat kernels (see again \cite{Sturm96}) to infer that
\begin{equation*}
\abs{\frac{\di}{\di t}p_t(x,y)}=\abs{\Delta_xp_t(x,y)}\le \frac{C}{t\meas(B_{\sqrt{t}}(x))}\exp\left\lbrace -\frac{\dist^2(x,y)}{5t}+ct\right\rbrace\, ,
\end{equation*}
for all $t>0$ and $\meas\otimes\meas$-a.e. $(x,y)\in X\times X$.
\medskip

We will deal several times with the heat flow for initial data with polynomial growth, i.e. for those functions $f:X\to\setR$ such that for some $n\in\setN$, some constant $C>0$ and $x\in X$ it holds 
\begin{equation}\label{eq:polgrowth}
\abs{f(y)}\le C\dist(x,y)^n+ C\, , \quad\text{for any $y\in X$}\, .
\end{equation}
In this case the evolution via heat flow can be pointwise defined by
\begin{equation}\label{eq:pointdefheat}
P_tf(x):=\int_Xp_t(x,y)f(y)\di\meas(y)\, ,
\end{equation}
for any $x\in X$ and for any $t>0$.\\
Observe that the integral in \eqref{eq:pointdefheat} is absolutely convergent thanks to the upper heat kernel estimate in \eqref{eq:kernelestimate}, the Bishop-Gromov inequality \eqref{eq:BishopGromovInequality} and the polynomial growth assumption \eqref{eq:polgrowth}.  
\medskip

Whenever $f:X\to\setR$ has polynomial growth, it belongs to the domain of the Laplacian locally and has Laplacian with polynomial growth, it is possible to verify that $P_tf$ belongs to the domain of the Laplacian locally and 
\begin{equation}\label{eq:pointlaplac}
\Delta P_tf(x)=\int_X\Delta p_t(x,y)f(y)\di\meas(y)=\int_Xp_t(x,y)\Delta f(y)\di\meas(y)\, ,
\end{equation}
for any $x\in X$ and for any $t>0$. Then one can easily argue that
\begin{equation*}
\frac{\di}{\di t}P_tf(x)=\Delta P_tf(x)\, ,\quad\text{for a.e. $t>0$ and every $x\in X$}\, .
\end{equation*}

\medskip

Among the consequences of the Gaussian bounds there is the fact that the heat kernel is strictly positive. It follows that, whenever $f\in L^1_{\loc}(X,\meas)$ has polynomial growth and $f\ge 0$, then $P_tf$ is strictly positive at any point and any positive time unless $f\equiv 0$. Below we wish to show that, nevertheless, the action of the heat flow is still \textit{local}, to some extent.

\begin{lemma}\label{lemma:sufflocal}
Let $(X,\dist,\meas)$ be an $\RCD(K,N)$ metric measure space for some $K\in\setR$ and $1\le N<\infty$. Let $f\in L^1_{\loc}(X,\meas)$ be a function with polynomial growth and assume that there exist $x_{0}\in X$ and $r_{0}>0$ such that $f\equiv 0$ on $B_{r_{0}}(x_{0})$.
Then, for any $n\in\setN$, 
\begin{equation*}
P_tf(x_{0})=o(t^n)\, ,\quad\text{as $t\downarrow 0$}\, .
\end{equation*}
\end{lemma}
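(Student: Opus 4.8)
The plan is to read the estimate off directly from the pointwise representation \eqref{eq:pointdefheat} of $P_t f(x_0)$ together with the Gaussian upper bound for the heat kernel in \eqref{eq:kernelestimate}, using crucially that $f$ vanishes on $B_{r_0}(x_0)$, so that the region of integration is bounded away from $x_0$.

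First I would observe that, since $f\equiv 0$ on $B_{r_0}(x_0)$, \eqref{eq:pointdefheat} gives $P_t f(x_0)=\int_{\{\dist(x_0,\cdot)\ge r_0\}} p_t(x_0,y)f(y)\di\meas(y)$. On that region $\dist^2(x_0,y)\ge r_0^2$, so splitting $-\dist^2(x_0,y)/(5t)\le -r_0^2/(10t)-\dist^2(x_0,y)/(10t)$ in the upper bound of \eqref{eq:kernelestimate} yields
\begin{equation*}
\abs{P_t f(x_0)}\le C_1 e^{ct}\, e^{-\frac{r_0^2}{10t}}\,\frac{1}{\meas(B_{\sqrt t}(x_0))}\int_X e^{-\frac{\dist^2(x_0,y)}{10t}}\abs{f(y)}\di\meas(y).
\end{equation*}

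The next step is to show that the quantity $\frac{1}{\meas(B_{\sqrt t}(x_0))}\int_X e^{-\dist^2(x_0,y)/(10t)}\abs{f(y)}\di\meas(y)$ stays bounded as $t\downarrow 0$ by a constant depending only on $K$, $N$ and the polynomial growth data of $f$ from \eqref{eq:polgrowth}. To this end I would use the bound $\abs{f(y)}\le C(1+\dist(x_0,y)^m)$ together with a dyadic decomposition into shells $A_k=B_{2^{k+1}\sqrt t}(x_0)\setminus B_{2^k\sqrt t}(x_0)$, $k\ge 0$ (plus the inner ball $B_{\sqrt t}(x_0)$, whose contribution is trivially $\le C(1+t^{m/2})$). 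On $A_k$ the Gaussian weight is at most $e^{-4^k/10}$, the factor $1+\dist(x_0,y)^m$ is at most $(1+t^{m/2})2^{(k+1)m}$, and by Bishop--Gromov \eqref{eq:BishopGromovInequality} one has $\meas(A_k)\le \meas(B_{2^{k+1}\sqrt t}(x_0))\le \frac{v_{K,N}(2^{k+1}\sqrt t)}{v_{K,N}(\sqrt t)}\meas(B_{\sqrt t}(x_0))$, where the ratio $v_{K,N}(2^{k+1}\sqrt t)/v_{K,N}(\sqrt t)$ grows at most polynomially in $2^{k+1}$ (if $K\ge 0$) or exponentially in $2^{k+1}$ (if $K<0$), uniformly for $t\in(0,1]$. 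Hence the super-exponentially small factor $e^{-4^k/10}$ dominates everything and the series over $k$ converges to a finite constant uniform in $t\in(0,1]$.

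Combining the two steps gives $\abs{P_t f(x_0)}\le C' e^{-r_0^2/(10t)}$ for $t\in(0,1]$, with $C'$ depending only on $K$, $N$, $r_0$ and $f$; and since $t^{-n}e^{-r_0^2/(10t)}\to 0$ as $t\downarrow 0$ for every $n\in\setN$, the claim $P_tf(x_0)=o(t^n)$ follows for all $n$. I do not expect a genuine obstacle: the only mildly delicate point is the shell estimate in the second step, which must absorb the at-most-exponential growth of $v_{K,N}$ when $K<0$, but this is harmless because it is dominated by the Gaussian factor $e^{-4^k/10}$ (equivalently, by $e^{-\dist^2(x_0,y)/(10t)}$).
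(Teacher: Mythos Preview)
Your proof is correct and takes a genuinely different route from the paper. The paper first reduces to $f\ge 0$, then rewrites $P_tf(x_0)$ via the coarea formula as a radial integral involving $g(r)=\int_{\partial B_r(x_0)} f\,\di\Per_r$, integrates by parts to pass to $h(r)=\int_{B_r(x_0)} f\,\di\meas$, changes variables $s=r/\sqrt{5t}$, and applies dominated convergence to $t^{-m}\phi_t(s)$ using that $\fint_{B_r(x_0)}|f|\le C_n r^n$ for every $n$ (a consequence of the vanishing on $B_{r_0}(x_0)$ combined with polynomial growth). Your argument instead exploits the elementary splitting $e^{-\dist^2/(5t)}\le e^{-r_0^2/(10t)}e^{-\dist^2/(10t)}$ on $\{\dist\ge r_0\}$ to peel off the factor $e^{-r_0^2/(10t)}$ up front, and then bounds the remaining weighted integral uniformly in $t\in(0,1]$ by a dyadic shell decomposition; the super-exponential factor $e^{-4^k/10}$ indeed dominates the at-most-exponential growth of $v_{K,N}(2^{k+1}\sqrt t)/v_{K,N}(\sqrt t)$ when $K<0$, so that step is fine.

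Your approach is shorter and yields the stronger quantitative bound $|P_tf(x_0)|\le C'e^{-r_0^2/(10t)}$. The paper's approach, while more involved, is set up to be recycled verbatim in the proof of the next lemma (\autoref{lemma:lebpoint}), where one only assumes that $x_0$ is a Lebesgue point of $f$; there your exponent-splitting trick is unavailable since $f$ need not vanish near $x_0$, and the coarea/integration-by-parts/dominated-convergence template is what carries over.
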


\begin{proof}
Observe that, since $p_t(x,\cdot)$ is a probability measure for any $x\in X$ and for any $t\ge 0$ (see \eqref{eq:stochcompl}), by Jensen's inequality it holds
\begin{equation*}
\abs{P_tf(x)}\le P_t\abs{f}(x)\, ,\quad\text{for any $t\ge 0$ and for any $x\in X$}\, .
\end{equation*}
Therefore we can assume without loss of generality that $f\ge 0$.
\medskip

Using the coarea formula and abbreviating by $\Per_r$ the perimeter measure of the ball $B_r(x)$, we can compute
\begin{equation}\label{eq:firstwr}
P_tf(x)=\int_Xp_t(x,y)f(y)\di\meas(y)=\int_0^{\infty}\int_{\partial B_r(x)}f(y)p_t(x,y)\di\Per_r(y)\di r\, .
\end{equation}

Using the upper bound for the heat kernel in \eqref{eq:kernelestimate} we estimate
\begin{align}\label{eq:secwr}
\nonumber \int_0^{\infty}&\int_{\partial B_r(x)}f(y)p_t(x,y)\di\Per_r(y)\di r\\
&\le \frac{Ce^{ct}}{\meas(B_{\sqrt{t}}(x))}\int_0^{\infty}e^{-\frac{r^2}{5t}}\int_{\partial B_{r}(x)}f(y)\di\Per_r(y)\di r\, .
\end{align}
Let us set now
\begin{equation*}
g(r):=\int_{\partial B_r(x_{0})}f(y)\di\Per_r(y)\, 
\end{equation*}
and 
\begin{equation*}
h(r):=\int_{B_r(x_{0})}f(y)\di\meas(y)\, .
\end{equation*}
By the coarea formula,
\begin{equation*}
h(r)=\int_0^rg(s)\di s\, ,\quad\text{for any $r>0$}\, ,
\end{equation*}
hence $r\mapsto h(r)$ is an absolutely continuous monotone map and 
\begin{equation}\label{eq:dervol}
h'(r)=g(r)\, ,\quad\text{for a.e. $r>0$}\, .
\end{equation}
Moreover, by the polynomial growth assumption and since $f\equiv 0$ on $B_{r_{0}}(x_{0})$, we know that, for any $n\ge n_0$ (where $n_0$ is the order in the polynomial growth assumption), there exists a constant $C=C(n)>0$ such that
\begin{equation}\label{eq:unipolgrowth}
\fint_{B_r(x_{0})}f(y)\di\meas(y)\le Cr^n\, ,\quad\text{for any $r>0$}\, .
\end{equation}
When read in terms of the function $h$, this can be rephrased by
\begin{equation*}
h(r)\le Cr^n\meas(B_r(x_{0}))\, ,\quad\text{for any $r>0$}.
\end{equation*}
With the above introduced notation, \eqref{eq:firstwr} and \eqref{eq:secwr} can be rephrased as
\begin{equation*}
P_tf(x_{0})\le \frac{Ce^{ct}}{\meas(B_{\sqrt{t}}(x_{0}))}\int_0^{\infty}e^{-\frac{r^2}{5t}}g(r)\di r\, .
\end{equation*}
Changing variables in the integral by setting $s:=r/\sqrt{5t}$ and integrating by parts, taking into account \eqref{eq:dervol} and the polynomial growth of $f$ and the Bishop-Gromov inequality \eqref{eq:BishopGromovInequality} to prove vanishing of the boundary terms, we obtain

\begin{align}
\nonumber P_tf(x_{0})\le\, & \frac{Ce^{ct}\sqrt{t}}{\meas(B_{\sqrt{t}}(x_{0}))}\int_0^{\infty}e^{-s^2}g(\sqrt{5t}s)\di s\\
\nonumber \le\,  &\frac{Ce^{ct}\sqrt{t}}{\meas(B_{\sqrt{t}}(x_{0}))}\frac{1}{\sqrt{5t}}\int_0^{\infty}se^{-s^2}h(\sqrt{5t}s)\di s\\
\nonumber \le\,  & \frac{C e^{ct}}{\meas(B_{\sqrt{t}}(x_{0}))}\int_0^{\infty}se^{-s^2}h(\sqrt{5t}s)\di s\\
=\, &C e^{ct}\int_0^{\infty}se^{-s^2}\frac{h(\sqrt{5t}s)}{\meas(B_{\sqrt{t}}(x_{0}))}\di s\, .\label{eq:boundheatconv}
\end{align}
Let us set, for any $0<t<1$ and for any $s>0$,  
\begin{equation*}
\phi_t(s):=\frac{h(\sqrt{5t}s)}{\meas(B_{\sqrt{t}}(x_{0}))}\, .
\end{equation*}
We wish to bound $\phi_s(t)$ in a sufficiently uniform way (w.r.t $t\in(0,1)$) in order to apply Fatou's Lemma and prove that $P_tf(x)=o(t^m)$, for any $m\in\setN$ as $t\downarrow 0$.\\
To this aim, fix $m\in \setN$ and let $n\in \setN,$  with $n>m$. 
We split $(0,\infty)$ into two intervals.
\\ If $s\in (0,1/\sqrt{5})$, then, for any $t\in(0,1)$, we can bound
\begin{equation}\label{eq:unidom1}
\phi_t(s)\le \frac{h(\sqrt{t})}{\meas(B_{\sqrt{t}}(x))}=\fint_{B_{\sqrt{t}}(x)}f(y)\di\meas(y)\le C t^{n/2}\, , 
\end{equation}
where we used \eqref{eq:unipolgrowth} for the last inequality. If  instead $s>1/\sqrt{5}$,  we can bound
\begin{align}
\nonumber \phi_t(s)=\, &\frac{h(\sqrt{5t}s)}{\meas(B_{\sqrt{t}}(x_{0}))}\le \frac{\meas(B_{\sqrt{5t}s}(x_{0}))}{\meas(B_{\sqrt{t}}(x_{0}))}\fint_{B_{\sqrt{5t}s}(x_{0})}f(y)\di\meas(y)\\ 
\nonumber \le\,  &  \frac{v_{K,N}(\sqrt{5t}s) }{v_{K,N} (\sqrt{t} )}   \fint_{B_{\sqrt{5t}s}(x_{0})}f(y)\di\meas(y)\\
\le\,  &C \frac{v_{K,N}(\sqrt{5t}s) }{v_{K,N} (\sqrt{t})} \left(\sqrt{5t}s\right)^{n}\, ,\label{eq:unidom2}
\end{align}
where we used the Bishop-Gromov inequality \eqref{eq:BishopGromovInequality} and the last bound follows  from \eqref{eq:unipolgrowth}.
\\From \eqref{eq:unidom2} we infer that for every  $t\in (0,1)$ it holds
\begin{equation}\label{eq:estConvDompsi}
0\leq t^{-m} \phi_{t}(s)\leq \psi_{K,N,n,m}(s) \quad \text{ with } \quad \int_0^{\infty} \psi_{K,N,n,m}(s) \,s\,   e^{-s^2} \di s<\infty.
\end{equation}
Moreover, since $f\equiv 0$ on $B_r(x)$, it holds
\begin{equation}\label{eq:pointconv1}
t^{-m} \phi_t(s) \to 0\, , \quad\text{as $t\downarrow 0$, for any $s>0$}\, .
\end{equation}
Now observe that \eqref{eq:boundheatconv} can be rewritten as
\begin{equation*}
t^{-m}P_tf(x_{0})\le Ce^{ct}\int_0^{\infty} t^{-m}\phi_t(s) \, se^{-s^2}\di s\, .
\end{equation*}
Thanks to the domination  \eqref{eq:estConvDompsi} and to the pointwise convergence \eqref{eq:pointconv1} we can apply the  Dominated Convergence Theorem and get
\begin{equation*}
\lim_{t\downarrow 0}t^{-m}P_tf(x_{0})=0\, .
\end{equation*}
Since $m\in \setN$ was arbitrary, the claim follows.
\end{proof}
The next lemma is an instance of the fact that the heat flow acts as an averaging operator on smaller and smaller scales as time goes to $0$, even though being non local.

\begin{lemma}\label{lemma:lebpoint}
Let $(X,\dist,\meas)$ be an $\RCD(K,N)$ metric measure space for some $K\in\setR$ and $1\le N<\infty$. Let us assume that $f\in L^1_{\loc}(X,\meas)$ has polynomial growth and let $x\in X$ be such that
\begin{equation}\label{eq:assLp}
\lim_{r\downarrow 0} \frac{1}{\meas(B_r(x))}\int_{B_r(x)}\abs{f(y)-f(x)}\di\meas=0\, . 
\end{equation}
Then 
\begin{equation}\label{eq:pointconvheat}
\lim_{t\downarrow 0}P_tf(x)=f(x)\, .
\end{equation}
\end{lemma}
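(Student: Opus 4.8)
The plan is to reduce the statement to the computation already carried out in the proof of \autoref{lemma:sufflocal}, replacing the hypothesis ``$f\equiv 0$ near $x$'' used there by the Lebesgue-point condition \eqref{eq:assLp}. First, since $f(x)$ is a constant, stochastic completeness \eqref{eq:stochcompl} gives $P_tf(x)-f(x)=P_t\big(f-f(x)\big)(x)$, and Jensen's inequality (exactly as at the beginning of the proof of \autoref{lemma:sufflocal}) yields $\big|P_t f(x)-f(x)\big|\le P_t\big|f-f(x)\big|(x)$. Setting $g:=|f-f(x)|\ge 0$, which still has polynomial growth and, by \eqref{eq:assLp}, satisfies $\meas(B_r(x))^{-1}\int_{B_r(x)}g\,\di\meas\to 0$ as $r\downarrow 0$, it therefore suffices to prove that $P_t g(x)\to 0$ as $t\downarrow 0$.

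Next, I would repeat verbatim the estimates in the proof of \autoref{lemma:sufflocal}: using the coarea formula, the Gaussian upper bound \eqref{eq:kernelestimate}, an integration by parts in the radial variable, and the change of variables $s=r/\sqrt{5t}$, one arrives (cf. \eqref{eq:boundheatconv}) at
\begin{equation*}
P_t g(x)\le C\,e^{ct}\int_0^{\infty}s\,e^{-s^2}\,\phi_t(s)\,\di s\,,\qquad 0<t<1\,,
\end{equation*}
where $h(r):=\int_{B_r(x)}g\,\di\meas$ and $\phi_t(s):=h(\sqrt{5t}\,s)/\meas(B_{\sqrt t}(x))$. The uniform domination obtained there, namely \eqref{eq:unidom1}--\eqref{eq:estConvDompsi}, uses only the polynomial growth of $g$ and the Bishop--Gromov inequality \eqref{eq:BishopGromovInequality}, hence applies unchanged (with the exponent ``$m=0$''): there exist $n\in\setN$ and a function $\psi=\psi_{K,N,n}$ with $\int_0^{\infty}\psi(s)\,s\,e^{-s^2}\,\di s<\infty$ such that $0\le\phi_t(s)\le\psi(s)$ for all $t\in(0,1)$ and $s>0$.

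The only genuinely new point, and the place where \eqref{eq:assLp} enters, is the pointwise convergence $\phi_t(s)\to 0$ as $t\downarrow 0$ for each fixed $s>0$. I would factor
\begin{equation*}
\phi_t(s)=\frac{\meas(B_{\sqrt{5t}\,s}(x))}{\meas(B_{\sqrt t}(x))}\cdot\frac{1}{\meas(B_{\sqrt{5t}\,s}(x))}\int_{B_{\sqrt{5t}\,s}(x)}g\,\di\meas\,.
\end{equation*}
For fixed $s$ and $t\downarrow 0$ the radius $\sqrt{5t}\,s$ tends to $0$, so the second factor tends to $0$ by \eqref{eq:assLp} applied to $g$; the first factor stays bounded, being $\le 1$ when $s\le 1/\sqrt5$ and, when $s>1/\sqrt5$, at most $v_{K,N}(\sqrt{5t}\,s)/v_{K,N}(\sqrt t)$ by \eqref{eq:BishopGromovInequality}, which stays bounded (indeed converges, to $(5s^2)^{N/2}$) as $t\downarrow 0$ since $v_{K,N}(r)/r^N$ has a finite positive limit as $r\downarrow 0$. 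Thus $\phi_t(s)\to 0$ pointwise, and the Dominated Convergence Theorem together with $e^{ct}\to 1$ gives $P_t g(x)\to 0$, whence $P_t f(x)\to f(x)$. The only mildly delicate step is the uniform-in-$t$ domination of $\phi_t$ when $K<0$, where $v_{K,N}$ grows exponentially and one must check integrability against $s\,e^{-s^2}$ after multiplying by the polynomial factor coming from the growth of $g$; but this is precisely the estimate \eqref{eq:unidom1}--\eqref{eq:estConvDompsi} already available, so no new argument is needed there, and the bulk of the remaining work is just bookkeeping.
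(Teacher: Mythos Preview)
Your argument is correct and follows the same line as the paper's: reduce via stochastic completeness and Jensen to $P_tg(x)\to 0$ for $g=\lvert f-f(x)\rvert$, then feed $g$ into the integral estimate \eqref{eq:boundheatconv} and apply dominated convergence. The one methodological difference is that the paper first invokes \autoref{lemma:sufflocal} to multiply by a compactly supported cut-off, after which $\fint_{B_r(x)}g\le C$ uniformly in $r>0$ and the domination is immediate; you skip this step and work directly with the polynomial-growth bound $\fint_{B_r(x)}g\le C(1+r^{n_0})$.

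Your claim that \eqref{eq:unidom1}--\eqref{eq:estConvDompsi} ``apply unchanged'' is therefore slightly loose: those estimates rely on \eqref{eq:unipolgrowth}, i.e.\ $\fint_{B_r}f\le Cr^n$ for \emph{arbitrary} $n$, which genuinely needs the vanishing near $x$. With only polynomial growth the analogue of \eqref{eq:unidom2} becomes $\phi_t(s)\le C\,\tfrac{v_{K,N}(\sqrt{5t}\,s)}{v_{K,N}(\sqrt t)}\big(1+(\sqrt{5t}\,s)^{n_0}\big)$, and one must check separately that this is dominated uniformly in $t\in(0,1)$. It is: splitting into $\sqrt{5t}\,s\le 1$ (both $v_{K,N}$ arguments small, ratio $\lesssim s^N$) and $\sqrt{5t}\,s>1$ (forcing $\sqrt t\gtrsim 1/s$, hence ratio $\lesssim s^N v_{K,N}(\sqrt 5\,s)$) yields a dominant $\psi(s)\lesssim(1+s^{N+n_0})e^{cs}$, which is integrable against $se^{-s^2}$. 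So your route works; the paper's cut-off just makes this bookkeeping unnecessary.
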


\begin{proof}
We start by observing that, for any $t>0$, 
\begin{equation*}
P_tf(x)-f(x)=\int_Xp_t(x,y)(f(y)-f(x))\di\meas(y)\, ,
\end{equation*}
thanks to the stochastic completeness \eqref{eq:stochcompl}.\\
Therefore, in order to prove \eqref{eq:pointconvheat}, using Jensen's inequality it is sufficient to prove that
\begin{equation*}
\int_Xp_t(x,y)\abs{f(y)-f(x)}\di\meas(y)\to 0\, , \quad\text{as $t\downarrow 0$}\, .
\end{equation*} 
Thanks to \autoref{lemma:sufflocal}, we can assume without loss of generality that $f$ has compact support, up to multiplying with a compactly supported continuous cut-off function.\\
Under this assumption, \eqref{eq:assLp} can be rephrased by saying that
\begin{equation}\label{eq:proof2unif}
\fint_{B_r(x)}\abs{f(y)-f(x)}\di\meas(y)\le C<\infty\, ,\quad\text{for any $r>0$}
\end{equation}
and 
\begin{equation}\label{eq:proof2pointconv}
\fint_{B_r(x)}\abs{f(y)-f(x)}\di\meas(y)\to 0\, ,\quad\text{as $r\downarrow 0$}\, .
\end{equation}
Setting 
\begin{equation*}
h(r):=\int_{B_r(x)}\abs{f(y)-f(x)}\di\meas(y)
\end{equation*}
and arguing as in the proof of \autoref{lemma:sufflocal}, we can bound
\begin{equation*}
\int_Xp_t(x,y)\abs{f(y)-f(x)}\di\meas(y)\le C e^{ct}\int_0^{\infty}se^{-s^2}\frac{h(\sqrt{5t}s)}{\meas(B_{\sqrt{t}}(x))}\di s\, .
\end{equation*}
Relying on \eqref{eq:proof2unif} to get the uniform bounds and on \eqref{eq:proof2pointconv} to get the pointwise convergence to $0$ of the integrands as $t\downarrow 0$, we can argue as in \autoref{lemma:sufflocal} and prove that
\begin{equation*}
\int_0^{\infty}se^{-s^2}\frac{h(\sqrt{5t}s)}{\meas(B_{\sqrt{t}}(x))}\di s\to 0\, ,\quad\text{as $t\downarrow 0$}\, ,
\end{equation*}
hence \eqref{eq:pointconvheat} holds.
\end{proof}
\begin{remark}\label{rm:weakeninglebpoint}
In \autoref{lemma:lebpoint} above we can weaken the assumption by requiring only that
\begin{equation*}
\lim_{r\downarrow 0}\fint_{B_r(x)}f(y)\di\meas(y)=c\, .
\end{equation*}
In that case, the very same proof shows that 
\begin{equation*}
\lim_{t\to 0}P_tf(x)=c\, .
\end{equation*}
\end{remark}


\begin{lemma}\label{lemma:pointheat}
Let $(X,\dist,\meas)$ be an $\RCD(K,N)$ metric measure space for some $K\in\setR$ and $1\le N<\infty$. Let $f\in L^1_{\loc}(X,\meas)$ be a function with polynomial growth. Moreover, let us assume that: 
\begin{itemize}
\item[i)] There exists $B_r(x)\subset X$ such that $f\in D(\Delta,B_{2r}(x))$; 
\item[ii)] $\Delta f$ is $\meas$-essentially bounded on $B_r(x)$; 
\item[iii)] $x$ is a Lebesgue point for $\Delta f$, i.e.
\begin{equation*}
\lim_{r\to 0}\fint_{B_r(x)}\abs{\Delta f(y)-\Delta f(x)}\di\meas(y)=0\, .
\end{equation*} 
\end{itemize}
Then 
\begin{equation}\label{eq:heattolapla}
\lim_{t\downarrow 0}\frac{P_tf(x)-f(x)}{t}=\Delta f(x)\, .
\end{equation}
\end{lemma}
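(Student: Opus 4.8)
The plan is to represent the difference quotient $(P_t f(x) - f(x))/t$ as an integral against the heat kernel of the quantity $\Delta f$, integrated in time, and then pass to the limit using the previous two lemmas (\autoref{lemma:sufflocal} and \autoref{lemma:lebpoint}). Concretely, since $f$ has polynomial growth and is in $D(\Delta, B_{2r}(x))$, I would first localize: pick a test cut-off function $\phi = \phi_r$ as in \autoref{lemma:cutoff} with $\phi \equiv 1$ on $B_r(x)$ and supported in $B_{2r}(x)$, and write $f = \phi f + (1-\phi) f =: f_1 + f_2$. The term $f_2$ vanishes on $B_r(x)$, so by \autoref{lemma:sufflocal} we have $P_t f_2(x) = o(t^n)$ for every $n$, hence $(P_t f_2(x) - f_2(x))/t = (P_t f_2(x))/t \to 0$ as $t \downarrow 0$ (using $f_2(x) = 0$). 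It thus suffices to treat $f_1 = \phi f$, which is a compactly supported function, lies in $H^{1,2}(X,\dist,\meas)$, and whose measure-valued Laplacian restricted to $B_r(x)$ agrees with $\Delta f \cdot \meas$; moreover $\Delta f_1$ is $\meas$-essentially bounded near $x$ and $x$ is a Lebesgue point of $\Delta f_1$ with value $\Delta f(x)$ (since $\Delta f_1 = \Delta f$ on $B_r(x)$ and $\Delta(\phi f)$ differs from $\phi \Delta f$ only by terms supported away from $B_r(x)$, by the Leibniz rule for the Laplacian).

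Next, for the compactly supported function $f_1$ one has the classical identity
\begin{equation*}
P_t f_1(x) - f_1(x) = \int_0^t \Delta P_s f_1(x)\, \di s = \int_0^t P_s (\Delta f_1)(x)\, \di s\, ,
\end{equation*}
valid pointwise by \eqref{eq:pointlaplac} and the discussion preceding it (here one uses that $f_1 \in D(\Delta)$ with $\Delta f_1 \in L^2$, and that $s \mapsto P_s f_1$ is a locally absolutely continuous $L^2$-curve; the pointwise everywhere statement at $x$ follows from the heat kernel representation and Fubini, the time integral of the Gaussian bound being finite). Dividing by $t$,
\begin{equation*}
\frac{P_t f_1(x) - f_1(x)}{t} = \frac{1}{t}\int_0^t P_s(\Delta f_1)(x)\, \di s\, .
\end{equation*}
Since $\Delta f_1 \in L^1_{\loc}$ has polynomial growth, is $\meas$-essentially bounded near $x$, and $x$ is a Lebesgue point for it with value $\Delta f(x)$, \autoref{lemma:lebpoint} (see also \autoref{rm:weakeninglebpoint}) gives $P_s(\Delta f_1)(x) \to \Delta f(x)$ as $s \downarrow 0$. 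Hence the Cesàro-type average $\frac{1}{t}\int_0^t P_s(\Delta f_1)(x)\, \di s$ converges to $\Delta f(x)$ as $t \downarrow 0$. Combining with the $o(t^n)$ contribution of $f_2$ yields \eqref{eq:heattolapla}.

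The main obstacle I expect is justifying the pointwise-everywhere (as opposed to $\meas$-a.e.) validity of the identity $P_t f_1(x) - f_1(x) = \int_0^t P_s(\Delta f_1)(x)\,\di s$ at the specific point $x$, together with the interchange of the time integral and the spatial integral against $p_s(x,\cdot)$. This needs the heat kernel bounds \eqref{eq:kernelestimate} to ensure $\int_0^t \int_X p_s(x,y) |\Delta f_1(y)|\, \di\meas(y)\, \di s < \infty$ (finite because $\Delta f_1 \in L^\infty \cap$ polynomial growth and the time-integrated Gaussian is integrable), after which Fubini applies and the identity reduces to the already-established pointwise heat equation $\frac{\di}{\di t} P_t f_1(x) = \Delta P_t f_1(x) = P_t(\Delta f_1)(x)$ integrated from $0$ to $t$, using continuity of $s \mapsto P_s(\Delta f_1)(x)$ at $0$ from \autoref{lemma:lebpoint} to handle the lower endpoint. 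A secondary, purely bookkeeping point is checking that the Leibniz rule error terms $f \Delta \phi + 2 \nabla f \cdot \nabla \phi$ in $\Delta f_1 - \phi \Delta f$ are supported in $B_{2r}(x) \setminus B_r(x)$ and hence do not affect the Lebesgue point property of $\Delta f_1$ at $x$.
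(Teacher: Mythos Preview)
Your proposal is correct and follows essentially the same route as the paper's proof: localize via a cut-off (with \autoref{lemma:sufflocal} handling the piece vanishing near $x$), use the pointwise integral identity $P_t f_1(x)-f_1(x)=\int_0^t P_s(\Delta f_1)(x)\,\di s$ coming from \eqref{eq:pointlaplac}, and then apply \autoref{lemma:lebpoint} to $\Delta f_1$ at $x$ to conclude. The paper is somewhat terser about the reduction step and the pointwise justification, but the structure and the key ingredients are identical.
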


\begin{proof}
Thanks to \autoref{lemma:sufflocal}, up to multiplying $f$ with a cut-off function with good estimates from \autoref{lemma:cutoff}, we can assume that $f\in D(\Delta)$ and $\Delta f\in L^{\infty}(X,\meas)$. 
\\ 
Thanks to \eqref{eq:pointlaplac},  we can consider the pointwise defined versions of $P_tf$ and $P_t\Delta f$, and compute:
\begin{align}\label{eq:gaptocompute}
\nonumber \frac{P_tf(x)-f(x)}{t}&=\frac{1}{t}\int_0^t\frac{\di }{\di s}P_sf(x)\di s\\
&=\frac{1}{t}\int_0^t\Delta P_sf(x)\di s=\frac{1}{t}\int_0^tP_s\Delta f(x)\di s\, .
\end{align}
Observe that, in particular, $P_t\Delta f$ is continuous for any $t>0$ thanks to the $L^{\infty}$-$\Lip$ regularization property of the heat flow.   Thanks to \eqref{eq:gaptocompute}, in order to get \eqref{eq:heattolapla} it is sufficient to prove that
\begin{equation}\label{eq:conclrephrased}
P_t\Delta f(x)\to \Delta f(x)\, ,\quad\text{as $t\downarrow 0$}\, .
\end{equation}
In order to obtain \eqref{eq:conclrephrased}, it is now sufficient to apply \autoref{lemma:lebpoint} with $\Delta f$ in place of $f$.
\end{proof}

\begin{remark}
The technical lemmas above essentially provide a counterpart, tailored for the non smooth $\RCD(K,N)$ framework, of the classical fact that if one evolves a smooth initial datum $f$ through the heat flow on a Riemannian manifold, then $P_tf$ converges to $f$ smoothly as $t\to 0$. Moreover, local smoothness yields local smooth convergence.
\end{remark}

\subsection{The Poisson equation}\label{subsec:Poisson}

Let us collect here some existence and comparison results for the Poisson equation with Dirichlet boundary conditions on $\RCD(K,N)$ metric measure spaces. Some of them are valid in the much more general framework of metric measure spaces verifying doubling and Poincar\'e inequalities, but for the present formulation we rely on the $\RCD(K,N)$ structure.
\medskip

We will often rely on the following regularity result for the Poisson equation on $\RCD(K,N)$ spaces, which is in turn a corollary of \cite[Theorem 1.2]{Jiang12}.

\begin{theorem}\label{thm:lipregPoisson}
Let $(X,\dist,\meas)$ be an $\RCD(K,N)$ metric measure space for some $K\in\setR$ and $1\le N<\infty$. Let $\Omega\subset X$ be an open domain and let $f\in D(\Delta,\Omega)$ be such that $\Delta f$ is  continuous on $\Omega$.  

Then $f$ has a locally Lipschitz representative on $\Omega$.
\end{theorem}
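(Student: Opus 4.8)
The statement is local, so the plan is to reduce it to the following: for every $x_0\in\Omega$ and every $R>0$ with $B_{4R}(x_0)\Subset\Omega$, produce a Lipschitz representative of $f$ on $B_R(x_0)$; since two such local representatives agree $\meas$-a.e.\ on overlaps and are continuous, while $\supp\meas=X$ forces nonempty open sets to have positive measure, they must agree everywhere on the overlap, and hence they patch together to a locally Lipschitz representative on all of $\Omega$. Fix then $x_0,R$ as above and set $g:=\Delta f$, which by hypothesis is continuous on $\overline{B_{2R}(x_0)}$, hence bounded there, say $\|g\|_{L^\infty(B_{2R}(x_0))}=M<\infty$; moreover $f\in H^{1,2}(B_{2R}(x_0))\subset L^2(B_{2R}(x_0))$. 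The first point to check is that $f\restr B_{2R}(x_0)$ is an interior weak solution of the Poisson equation $\Delta f=g$: this is immediate, because $\Lip_c(B_{2R}(x_0))\subset\Lip_c(\Omega)\subset H^{1,2}_0(\Omega)$, so the defining identity of $f\in D(\Delta,\Omega)$ tested against such functions gives exactly $\int\nabla\varphi\cdot\nabla f\,\di\meas=-\int\varphi g\,\di\meas$ for all $\varphi\in\Lip_c(B_{2R}(x_0))$, with right-hand side in $L^q$ for every $q<\infty$.

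The main route is then to invoke \cite[Theorem 1.2]{Jiang12}. Its structural hypotheses hold on $B_{2R}(x_0)$: $\RCD(K,N)$ spaces are locally uniformly doubling (by Bishop--Gromov \eqref{eq:BishopGromovInequality}), support a local $(1,2)$-Poincar\'e inequality \cite{Rajala12}, and carry Gaussian two-sided heat kernel bounds together with the gradient estimate \eqref{eq:kernelestimate}--\eqref{eq:gradientestimatekernel}. That theorem then yields a Lipschitz representative of $f$ on $B_R(x_0)$, with a quantitative bound of the form $\LipConst\big(f\restr B_R(x_0)\big)\le C\big(R^{-1}\fint_{B_{2R}(x_0)}|f|\,\di\meas+RM\big)$ for some $C=C(K,N,R)$. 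Patching over $\Omega$ as described above finishes the argument.

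Alternatively one can give a self-contained proof, which I would spell out as follows. First, De Giorgi--Nash--Moser theory in PI spaces gives that $f$ is locally bounded on $B_{2R}(x_0)$. Choosing a cutoff $\chi$ from \autoref{lemma:cutoff} with $\chi\equiv1$ on $B_R(x_0)$, $\supp\chi\subset B_{2R}(x_0)$ and bounded Laplacian, one has $\tilde f:=\chi f\in H^{1,2}(X)\cap L^\infty(X)$ and $\Delta\tilde f=\underbrace{\chi g}_{=:g_1\in L^\infty}+\underbrace{f\Delta\chi+2\nabla\chi\cdot\nabla f}_{=:g_2\in L^2,\ g_2\equiv0\text{ on }B_R(x_0)}$. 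From $\tfrac{\di}{\di s}P_s\tilde f=P_s\Delta\tilde f$ one gets, pointwise on $B_{R/2}(x_0)$,
\[
\tilde f=P_t\tilde f-\int_0^t P_sg_1\,\di s-\int_0^t P_sg_2\,\di s ,
\]
where $P_t\tilde f$ is Lipschitz by $L^\infty$--$\Lip$ regularization \eqref{eq:linftylipregularization}; $P_sg_1$ is Lipschitz with $\LipConst(P_sg_1)\lesssim M\,s^{-1/2}$ by \eqref{eq:linftylipregularization}, integrable near $s=0$; and since $g_2$ vanishes on $B_R(x_0)$, the off-diagonal kernel bound \eqref{eq:gradientestimatekernel} combined with \eqref{eq:BishopGromovInequality} gives, on $B_{R/2}(x_0)$, $\LipConst\big(P_sg_2\restr B_{R/2}(x_0)\big)\le C\,s^{-(N+1)/2}e^{-R^2/(20s)}$, again integrable near $0$ since the exponential beats the polynomial. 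Hence $f=\tilde f$ has a Lipschitz representative on $B_{R/2}(x_0)$, and patching concludes.

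The only genuinely nontrivial ingredient is the Lipschitz bound for the Poisson problem with bounded datum: either as a black box via \cite[Theorem 1.2]{Jiang12}, or, in the self-contained route, via the interplay of heat-flow smoothing \eqref{eq:linftylipregularization} with the off-diagonal bounds \eqref{eq:kernelestimate}--\eqref{eq:gradientestimatekernel} that makes the cutoff commutator terms harmless (together with the preliminary De Giorgi--Nash--Moser local boundedness). Everything else — the localization, the passage from the Dirichlet-type weak formulation of $D(\Delta,\Omega)$ to an interior weak solution, and the gluing of local Lipschitz representatives — is routine, so I do not expect any real obstacle there.
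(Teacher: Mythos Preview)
Your proposal is correct and your first route is exactly what the paper does: the paper gives no proof at all and simply declares the theorem to be ``a corollary of \cite[Theorem 1.2]{Jiang12}''. Your additional self-contained heat-flow argument is a genuine bonus beyond what the paper provides; it is sound and gives an independent derivation (modulo the preliminary local boundedness step) that the paper does not attempt.
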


From now on, when dealing with solutions of the Poisson problem $\Delta f=\eta$ for some continuous function $\eta$, we will always assume that $f$ is the continuous representative given by \autoref{thm:lipregPoisson} above.


\begin{theorem}\label{thm:existenceanccomparison}
Let $(X,\dist,\meas)$ be an $\RCD(K,N)$ metric measure space for some $K\in\setR$ and $1\le N<\infty$. Let $\Omega\subset X$ be an open and bounded domain. Then the following hold:
\begin{itemize}
\item[(i)] (Strong maximum principle) Assume that $\Delta f=0$ on $\Omega$ and that $f$ has a maximum point at $x_0\in\Omega$. Then $f$ is constant on the connected component of $\Omega$ containing $x_0$.
\item[(ii)] (Existence for the Dirichlet problem) Assume that $\meas(X\setminus \Omega)>0$ and that $g\in H^{1,2}(X)$ and let $\eta:\Omega\to\setR$ be continuous and bounded. Then there exists a unique solution $f$ of the Poisson problem with Dirichlet boundary conditions
\begin{equation*}
\Delta f=\eta\, \quad\text{on $\Omega$}\, ,\quad f-g\in H^{1,2}_0(\Omega)\, .
\end{equation*} 
\item[(iii)] (Comparison principle) Assume that, under the same assumptions above, $\boldsymbol{\Delta} g\le \eta$ on $\Omega$, then $g\ge f$ on $\Omega$. 
\end{itemize}

\begin{proof}
i) (resp. iii))  follows by combining \cite[Theorem 8.13]{BjornBjorn11} (resp. \cite[Theorem 9.39]{BjornBjorn11}) with the PDE characterization of sub-harmonic functions obtained in \cite{GigliMondino13}.
\\  ii) follows from the solvability of the Poisson equation with null boundary conditions proved in \cite[Corollary 1.2]{BiroliMosco95}, combined with the existence of harmonic functions with Dirichlet boundary conditions (see for instance \cite[Theorem 10.12]{BjornBjorn11}). Alternatively, one can argue as in the proof of \cite[Theorem 10.12]{BjornBjorn11} and minimize the functional $J_{\eta}(u):=\int_{\Omega} |\nabla u|^{2} \di\meas- \int_{\Omega} \eta \, u \, \di \meas$ instead of $J_{0}(u):=\int_{\Omega} |\nabla u|^{2} \di\meas$, among the functions $u\in H^{1,2}(\Omega)$ such that  $u-g\in H^{1,2}_0(\Omega)$. 
\end{proof}

\end{theorem}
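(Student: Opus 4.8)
The plan is to deduce all three statements from linear potential theory on PI spaces, using as the essential bridge the equivalence proved in \cite{GigliMondino13} between the measure-valued Laplacian of \autoref{def:laplacian} and the variational notion of (super/sub)solution: $\boldsymbol{\Delta}f\ge 0$ on $\Omega$ if and only if $f$ is subharmonic on $\Omega$ in the potential-theoretic sense, and symmetrically for $\boldsymbol{\Delta}f\le 0$ and superharmonicity. Since every $\RCD(K,N)$ space is locally doubling and supports a local Poincar\'e inequality, the axiomatic framework of \cite{BjornBjorn11} is available and I would quote it freely. I expect this dictionary to be the only genuinely delicate point; once it is in place, each of (i)--(iii) reduces to a standard argument.

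For \textbf{(ii)} I would use the direct method of the calculus of variations. Since $\Omega$ is bounded and $\meas(X\setminus\Omega)>0$, the Poincar\'e inequality holds on $H^{1,2}_0(\Omega)$, so the functional
\[
J_\eta(u):=\tfrac12\int_\Omega|\nabla u|^2\di\meas+\int_\Omega\eta\,u\di\meas
\]
is coercive on the closed convex affine set $g+H^{1,2}_0(\Omega)$ (here $\eta$ bounded and $\Omega$ bounded make the linear term finite and $L^2$-continuous), while the quadratic part is convex and $L^2$-lower semicontinuous, hence $J_\eta$ is weakly lower semicontinuous on $H^{1,2}(X)$. A minimising sequence is then bounded in $H^{1,2}(X)$; a weak subsequential limit $f$ remains in $g+H^{1,2}_0(\Omega)$ and is a minimiser by lower semicontinuity, and its Euler--Lagrange equation, tested against $\phi\in\Lip_c(\Omega)$, is exactly $\Delta f=\eta$ on $\Omega$. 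Uniqueness is immediate: the difference of two solutions lies in $H^{1,2}_0(\Omega)$ and is harmonic, so its Dirichlet energy vanishes and Poincar\'e makes it $0$. (Equivalently, one may solve $\Delta w=\eta$ with null boundary data by \cite[Corollary 1.2]{BiroliMosco95} and add the harmonic extension of $g$ from \cite[Theorem 10.12]{BjornBjorn11}.) By \autoref{thm:lipregPoisson}, when $\eta$ is continuous the solution has a locally Lipschitz representative, which is the one we fix.

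Both \textbf{(i)} and \textbf{(iii)} then follow from maximum principles. For (i), $\Delta f=0$ makes $f$ simultaneously sub- and super-harmonic on $\Omega$, hence harmonic in the potential-theoretic sense, and the strong maximum principle \cite[Theorem 8.13]{BjornBjorn11} forces $f$ to be locally constant near an interior maximum point; the set where $f$ attains its componentwise maximum is then open and, by continuity, closed, so $f$ is constant on the connected component of $\Omega$ containing $x_0$. For (iii), let $f$ be the solution from (ii) and put $v:=f-g\in H^{1,2}_0(\Omega)$; by linearity of the measure-valued Laplacian $\boldsymbol{\Delta}v=\eta\,\meas-\boldsymbol{\Delta}g\ge 0$, so $v$ is subharmonic on $\Omega$ with zero boundary values, and the comparison principle for subsolutions with nonpositive boundary data \cite[Theorem 9.39]{BjornBjorn11} (again read through \cite{GigliMondino13}) gives $v\le 0$ $\meas$-a.e.\ on $\Omega$, hence everywhere by continuity, i.e.\ $g\ge f$.
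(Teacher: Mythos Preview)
Your proposal is correct and follows essentially the same route as the paper: both use the dictionary from \cite{GigliMondino13} to translate the measure-valued Laplacian into the potential-theoretic notion of super/subharmonicity, and then invoke the same results from \cite{BjornBjorn11} (Theorems~8.13 and~9.39) for (i) and (iii), while for (ii) both give the direct-method minimisation of $J_\eta$ on $g+H^{1,2}_0(\Omega)$ and the alternative via \cite{BiroliMosco95} plus the harmonic extension from \cite[Theorem~10.12]{BjornBjorn11}. Your write-up is slightly more explicit (coercivity from Poincar\'e, the open-and-closed argument for (i), the reduction of (iii) to a subharmonic function with zero boundary data), but the substance is identical.
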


\subsection{The Green function of a domain and applications}\label{subsec:Green}
Here we deal with some relevant estimates for the Green function of the Laplacian on a domain of an $\RCD(K,N)$ metric measure space $(X,\dist,\haus^N)$. We assume that $N\ge 3$, for the sake of this discussion. The arguments can be adapted to deal with the case $N=2$, as it is classical in geometric analysis when dealing with Green's functions. 
\medskip

A classical way (cf. for instance with \cite[Lemma 5.15]{JiangNaber21} and \cite{Grygorian09}) to construct a positive Green's function for the Laplacian with Dirichlet boundary condition (and estimate it) on a smooth domain of a Riemannian manifold is given by the following procedure. 

Let $p_t:X\times X\to[0,\infty)$ denote the global heat kernel of the Riemannian manifold. Fix a time parameter $T>0$ and consider 
\begin{equation*}
G^T(x,y):=\int_0^Tp_t(x,y)\di t\, .
\end{equation*}
This is formally a solution of $\Delta_xG^T(\cdot,y)=-\delta_y+p_T(\cdot,y)$. Indeed, we can compute
\begin{align*}
\Delta_xG^T(\cdot,y)=&\Delta_x\int_0^Tp_t(\cdot,y)\di t=\int_0^T\Delta_xp_t(\cdot,y)\di t\\
=&-\int_0^T\frac{\di}{\di t}p_t(\cdot,y)\di t=p_T(\cdot,y)-\delta_y\, .
\end{align*}
Then we solve the Dirichlet boundary value problem 
\begin{equation*}
\Delta f=p_T(\cdot,y)
\end{equation*}
with boundary condition
\begin{equation*}
f=G^T(\cdot,y)\, ,\;\;\;\text{on $\partial \Omega$}\, ,
\end{equation*}
and subtract the solution $f$ to $G^T(\cdot,y)$. In this way we obtain, for $y\in\Omega$ fixed, a solution for the problem
\begin{equation*}
\Delta_x G(\cdot,y)=-\delta_y\, ,\;\;\; G(\cdot,y)=0\,\;\;\;\text{on $\partial\Omega$}\, .
\end{equation*} 
Good properties such as regularity away from the pole and strict positivity can be proven by regularization and exploiting harmonicity outside from the pole, once suitable integrability is established.
\medskip

We wish to prove that the construction above can be carried over even in the non smooth framework. This will require some slight adjustments to the construction of global Green functions on $\RCD(K,N)$ metric measure spaces verifying suitable volume growth assumptions performed in \cite{BrueSemola18} following one of the classical Riemannian strategies.
\medskip

Notice that, as it is classical in the study of Green functions of the Laplacian, the case of dimension $2$ would require a separate treatment, that we omit here since it does not involve really different ideas.

\begin{proposition}\label{prop:Greenexistence}
Let $(X,\dist,\haus^N)$ be an $\RCD(K,N)$ metric measure space and let $\Omega\subset X$ be an open domain such that $\haus^N(X\setminus \Omega)>0$. Assume that $N\ge 3$. Then, for any $x\in\Omega$ there a exists a positive \textit{Green's function} of the Laplacian on $\Omega$ with pole at $x$, i.e. a function $G_x:\Omega\to (0,\infty]$ such that
\begin{equation*}
\boldsymbol{\Delta}G_x(\cdot)=-\delta _x\, , 
\end{equation*} 
i.e. $G_x$ is locally Lipschitz away from $x$, $\abs{\nabla G_x}\in L^1_{\loc}(\Omega)$ and
\begin{equation*}
\int_{\Omega}\nabla G_x\cdot \nabla \phi\di\meas=\phi(x)\, ,
\end{equation*}
for any function $\phi\in \Lip_c(\Omega)$.
In particular, $G_x$ is harmonic away from the pole $x$.
\end{proposition}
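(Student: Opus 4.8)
The plan is to make rigorous the heuristic recalled above, with the short-time analysis of the heat flow (\autoref{subsec:heat}) and the Poisson solvability results (\autoref{subsec:Poisson}) as the main inputs. Fix the pole $x\in\Omega$ and a parameter $T>0$, and set $G^{T}(\cdot,x):=\int_{0}^{T}p_{t}(\cdot,x)\di t$. First I would record the basic properties of $G^{T}(\cdot,x)$. By Fubini and stochastic completeness \eqref{eq:stochcompl}, $\int_{X}G^{T}(z,x)\di\haus^{N}(z)=T$, so $G^{T}(\cdot,x)\in L^{1}(X,\haus^{N})$. Away from $x$, the Gaussian upper bound in \eqref{eq:kernelestimate} together with the Bishop--Gromov lower bound $\haus^{N}(B_{\sqrt t}(z))\gtrsim t^{N/2}$ (for small $t$, by \eqref{eq:BishopGromovInequality}) makes the integrand decay faster than any power of $t$, so $G^{T}(\cdot,x)$ is locally bounded on $\Omega\setminus\{x\}$, while $\int_{0}^{T}p_{t}(x,x)\di t=+\infty$ since $N\ge 2$. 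Using \eqref{eq:gradientestimatekernel} and the estimate $\int_{X}\haus^{N}(B_{\sqrt t}(z))^{-1}e^{-\dist(z,x)^{2}/5t}\di\haus^{N}(z)\le C$ (proved by the coarea-and-covering argument of \autoref{lemma:sufflocal}), integration of $\di t/\sqrt t$ gives $\abs{\nabla G^{T}(\cdot,x)}\in L^{1}(B_{\rho}(x))$, hence $\abs{\nabla G^{T}(\cdot,x)}\in L^{1}_{\loc}(\Omega)$; approximating $G^{T}(\cdot,x)$ by $\int_{\eps}^{T}p_{t}(\cdot,x)\di t\in H^{1,2}(X)$ as $\eps\downarrow 0$ identifies the weak gradient with $\int_{0}^{T}\nabla p_{t}(\cdot,x)\di t$, so $G^{T}(\cdot,x)\in H^{1,1}_{\loc}(\Omega)$ and its measure-valued Laplacian is well defined.

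Next I would compute that Laplacian. For $\psi\in\Lip_c(X)$, Fubini (justified by the bounds just listed) together with $\Delta_{z}p_{t}(z,x)=\frac{\di}{\di t}p_{t}(z,x)$ gives
\begin{equation*}
-\int\nabla G^{T}(\cdot,x)\cdot\nabla\psi\di\haus^{N}=\int_{0}^{T}\Big(\int\psi\,\tfrac{\di}{\di t}p_{t}(\cdot,x)\di\haus^{N}\Big)\di t=P_{T}\psi(x)-\lim_{t\downarrow 0}P_{t}\psi(x),
\end{equation*}
and $\lim_{t\downarrow 0}P_{t}\psi(x)=\psi(x)$ by strong continuity of the heat semigroup on $\Cb$. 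Hence $\boldsymbol{\Delta}G^{T}(\cdot,x)=p_{T}(\cdot,x)\,\haus^{N}-\delta_{x}$. In particular, on the open set $X\setminus\{x\}$ the function $G^{T}(\cdot,x)$ solves $\Delta G^{T}(\cdot,x)=p_{T}(\cdot,x)$, which is continuous there, so \autoref{thm:lipregPoisson} upgrades $G^{T}(\cdot,x)$ to a locally Lipschitz function on $\Omega\setminus\{x\}$.

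It remains to cancel the bulk term $p_{T}(\cdot,x)\,\haus^{N}$. Assuming (as we may, intersecting with a large ball if $\Omega$ is unbounded) that $\Omega$ is bounded, $p_{T}(\cdot,x)$ is continuous and bounded on $\Omega$ and $\haus^{N}(X\setminus\Omega)>0$, so \autoref{thm:existenceanccomparison}(ii), applied with boundary datum $g\equiv 0$, yields $w\in H^{1,2}_0(\Omega)$ with $\Delta w=p_{T}(\cdot,x)$ on $\Omega$, locally Lipschitz by \autoref{thm:lipregPoisson}, and $w\le 0$ on $\Omega$ by the comparison principle \autoref{thm:existenceanccomparison}(iii) (comparing with the subsolution $0$). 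Set $G_{x}:=G^{T}(\cdot,x)-w$. Then $G_{x}$ is locally Lipschitz on $\Omega\setminus\{x\}$, satisfies $\abs{\nabla G_{x}}\in L^{1}_{\loc}(\Omega)$, and is harmonic on $\Omega\setminus\{x\}$ since there $\Delta G_{x}=p_{T}(\cdot,x)-p_{T}(\cdot,x)=0$; for $\phi\in\Lip_c(\Omega)$ one combines the identity for $G^{T}(\cdot,x)$ with $\int_{\Omega}\nabla w\cdot\nabla\phi\di\haus^{N}=-\int_{\Omega}\phi\,p_{T}(\cdot,x)\di\haus^{N}$ (valid since $\phi\in H^{1,2}_0(\Omega)$) to get $\int_{\Omega}\nabla G_{x}\cdot\nabla\phi\di\haus^{N}=\phi(x)$, i.e. $\boldsymbol{\Delta}G_{x}=-\delta_{x}$. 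Finally $G_{x}>0$ on $\Omega$ because $G^{T}(\cdot,x)>0$ everywhere (strict positivity of the heat kernel) and $w\le 0$, and $G_{x}(x)=+\infty$. If one additionally wants $G_{x}$ to vanish on $\partial\Omega$, it suffices to solve the same Dirichlet problem with boundary datum $\chi\,G^{T}(\cdot,x)$ in place of $0$, where $\chi$ is a Lipschitz cut-off equal to $1$ near $\partial\Omega$ and vanishing near $x$ (so that $\chi\,G^{T}(\cdot,x)\in H^{1,2}(X)$).

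The step I expect to be the main obstacle is the computation of $\boldsymbol{\Delta}G^{T}(\cdot,x)$ and the estimates underpinning it: justifying the exchange of the $\di t$-integral with weak differentiation and with the $\di\haus^{N}$-integral, controlling the short-time behaviour $p_{t}(\cdot,x)\,\haus^{N}\weakto\delta_{x}$, and — crucially — handling the region near the pole, where for $N\ge 3$ the function $G^{T}(\cdot,x)$ leaves $H^{1,2}_{\loc}$ and one must argue in $H^{1,1}_{\loc}$, relying on the sharp heat-kernel and gradient bounds \eqref{eq:kernelestimate}, \eqref{eq:gradientestimatekernel} and on volume comparison \eqref{eq:BishopGromovInequality}. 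The remaining ingredients (solvability and comparison for the Poisson problem, Lipschitz regularity of its solutions) are direct quotations of the results recalled in \autoref{subsec:Poisson}.
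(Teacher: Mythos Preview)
Your proof is correct and follows essentially the same route as the paper: build $G^{T}(\cdot,x)=\int_{0}^{T}p_{t}(\cdot,x)\di t$, use the Gaussian bounds \eqref{eq:kernelestimate}, \eqref{eq:gradientestimatekernel} to get the $L^{1}_{\loc}$ gradient estimate and the distributional identity $\boldsymbol{\Delta}G^{T}=-\delta_{x}+p_{T}(\cdot,x)$, then subtract a solution of the Poisson problem for $p_{T}(\cdot,x)$ on $\Omega$ obtained via \autoref{thm:existenceanccomparison}. The only noteworthy difference is in the auxiliary Dirichlet problem: the paper imposes boundary datum $G^{T,\eps}_{x}=\int_{\eps}^{T}p_{t}(x,\cdot)\di t$ (so that the corrected function $G^{\eps}_{x}$ lies in $H^{1,2}_{0}(\Omega)$, paving the way to the Green function with homogeneous Dirichlet condition in the subsequent Remark), and deduces positivity via $G_{x}=G^{\eps_{0}}_{x}+\int_{0}^{\eps_{0}}p_{t}(x,\cdot)\di t>0$; your choice of zero boundary datum makes the positivity argument ($G^{T}>0$, $w\le 0$) more direct, at the cost of not immediately producing the Dirichlet Green function.
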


\begin{proof}
Let us fix $x\in \Omega\subset X$. For  $T>0$ sufficiently small,  we set
\begin{equation*}
G^T_x(y)=G^T(x,y):=\int_0^Tp_t(x,y)\di t\, 
\end{equation*}
and, for any $0<\eps<T$ we also set
\begin{equation*}
G^{T,\eps}_x(y):=\int_{\eps}^Tp_{t}(x,y)\di t\, .
\end{equation*}
Let us consider $G^T_x$ as a function of $y$. Then, relying on \eqref{eq:kernelestimate}, the smallness of $T>0$, and the local Ahlfors regularity of $(X,\dist,\haus^N)$, we can estimate 
\begin{align}
\nonumber G^T_x(y)=&\int_0^Tp_t(x,y)\di t\le \int _0^T\frac{C_1}{\meas(B_{\sqrt{t}}(x))}\exp\left\lbrace-\frac{\dist^2(x,y)}{5t}+ct \right\rbrace \di t\\
\nonumber \le & C\int_0^T\frac{e^{-\frac{\dist^2(x,y)}{5t}}}{\meas(B_{\sqrt{t}}(x))}\di t\le C\int_0^T\frac{e^{-\frac{\dist^2(x,y)}{5t}}}{t^{N/2}}\di t\\ 
\le &C\dist(x,y)^{2-N}\, .\label{eq:estaboveGT}
\end{align}
In an analogous way, relying on the lower Gaussian heat kernel bound \eqref{eq:kernelestimate}, we obtain
\begin{equation}\label{eq:estbelowGT}
G^T_x(y)\ge C'\dist(x,y)^{2-N}\, ,\quad\text{for any $y\in X$, $y\neq x$}\, ,
\end{equation}
for some constant $C'=C'_{x,T}>0$. 

Using the gradient bound for the heat kernel \eqref{eq:gradientestimatekernel} it is also possible to prove that $G^T_x$ is locally Lipschitz away from $x$ with the bound
\begin{equation}\label{eq:gradGT}
\abs{\nabla G^T_x(y)}\le C\dist(x,y)^{1-N}\, , \quad\text{for a.e. $y\in X$}\, .
\end{equation}
It follows in particular that $G^T_x\in L^1_{\loc}(X,\meas)$ and $\abs{\nabla G^T_x}\in L^1_{\loc}(X,\meas)$.
\medskip

Arguing as in the proof of \cite[Lemma 2.5]{BrueSemola18} it is then possible to prove that, for any function $\phi\in \Lip_c(X,\dist)$, it holds
\begin{equation*}
\int_X\nabla G^T_x(y)\cdot\nabla \phi(y)\di\meas(y)=\phi(x)-\int_Xp_T(x,y)\phi(y)\di\meas(y)\, , 
\end{equation*}
which is the distributional formulation of $\boldsymbol\Delta G_x^T=-\delta_x+p_T(x,\cdot)$.
\\Let us also notice (cf. again with \cite{BrueSemola18}) that $G^{T,\eps}_x$ is a regularized version of $G^T_x$. Indeed, it is possible to show that $G^{T,\eps}_x\in \Test_{\loc}(X,\dist,\haus^N)$ for any $0<\eps<T$ and 
\begin{equation*}
\Delta G^{T,\eps}_x(\cdot)=-p_{\eps}(x,\cdot)+p_T(x,\cdot)\, .
\end{equation*}
Now let us notice that $p_T(x,\cdot)\in \Test_{\loc}(X,\dist,\haus^N)$ as it follows from the regularization properties of the heat flow and the semigroup law.\\ 
Using \autoref{thm:existenceanccomparison} (ii), for any $\eps>0$ we can consider a solution $g^{\eps}$ of the Dirichlet problem
\begin{equation}\label{eq:poissonaux}
\Delta g^{\eps}=p_T(x,\cdot)\, \quad\text{on $\Omega$}\, , \quad g^{\eps}-G^{T,\eps}_x\in H^{1,2}_0(\Omega)\, .
\end{equation} 
Setting $G^{\eps}_x:=G^{T,\eps}_x-g^{\eps}$, it holds
\begin{equation*}
\Delta G^{\eps}_{x}=-p_{\eps}(x,\cdot)\, ,
\end{equation*}
and $G^{\eps}_x\in H^{1,2}_0(\Omega)$. Moreover, by the comparison principle \autoref{thm:existenceanccomparison} (iii), we get that $G^{\eps}_x\ge 0$ on $\Omega$. 
\medskip

Now we can fix $0<\eps_0<T$ and set $G_x:=G^T_{x}-g^{\eps_0}$. Observe that
\begin{equation*}
G_{x}:=G^{T}_{x}-g^{\eps_{0}}=G^{T,\eps_{0}}-g^{\eps_{0}}+\int_{0}^{\eps_{0}}p_{t}(x,\cdot) \di t > G^{\eps_{0}}_{x}\geq 0 \quad \text{on }\Omega\, .
\end{equation*}
Notice that \autoref{thm:lipregPoisson} applied to the Poisson problem \eqref{eq:poissonaux} yields that $g^{\eps}$ is a locally Lipschitz function. Hence $G_x$ is locally Lipschitz away from the pole $x$ and $\abs{\nabla G_x}\in L^{1}_{\loc}(\Omega)$. Moreover, by the very construction of $g^{\eps}$, it holds that
\begin{equation*}
\boldsymbol{\Delta}G_x=-\delta_x\, .
\end{equation*} 
\end{proof}

\begin{remark}
With an additional limiting argument (basically setting $\eps_0=0$ in the proof above) it is possible to obtain the Green function of the Laplacian on $\Omega$ with pole at $x$ and homogeneous Dirichlet boundary conditions.
\end{remark}

\begin{proposition}\label{prop:Gestimates}
Let $(X,\dist,\haus^N)$ be an $\RCD(K,N)$ metric measure space for some $N\ge 3$ and let $\Omega\subset X$ be an open domain such that $\haus^N(X\setminus \Omega)>0$. Let $x\in\Omega$ and consider the positive Green function of the Laplacian with Dirichlet boundary conditions on $\Omega$ and pole at $x$, constructed in \autoref{prop:Greenexistence}.\\ 
Then the following estimates hold: there exist constants $c_x,C_x>0$ such that
\begin{equation*}
\frac{c_x}{\dist^{N-2}(x,y)}\le G_x(y)\le \frac{C_x}{\dist^{N-2}(x,y)}\, ,
\end{equation*}
for every $y\in B_r(x)$ such that $y\neq x$ (where $r>0$ is such that $B_r(x)\subset \Omega$),
and
\begin{equation*}
\abs{\nabla G_x(y)}\le \frac{C_x}{\dist^{N-1}(x,y)}\, ,
\end{equation*}
for a.e. $y\in B_r(x)$.
\end{proposition}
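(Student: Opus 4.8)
The plan is to derive the pointwise bounds on $G_x$ and $\nabla G_x$ near the pole directly from the corresponding bounds already established for the auxiliary function $G^T_x(y)=\int_0^T p_t(x,y)\di t$ in the proof of \autoref{prop:Greenexistence}, together with the local Lipschitz regularity of the correction term $g^{\eps_0}$. Recall from that proof that $G_x=G^T_x-g^{\eps_0}$, where $g^{\eps_0}$ solves the Dirichlet problem \eqref{eq:poissonaux} with datum $p_T(x,\cdot)$, hence $g^{\eps_0}$ is locally Lipschitz on $\Omega$ by \autoref{thm:lipregPoisson}; in particular $g^{\eps_0}$ and $\abs{\nabla g^{\eps_0}}$ are bounded on $\overline{B_r(x)}$ for $r$ with $B_{2r}(x)\Subset\Omega$. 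The estimates \eqref{eq:estaboveGT}, \eqref{eq:estbelowGT}, \eqref{eq:gradGT} give, for $y\in B_r(x)$, $y\neq x$,
\begin{equation*}
C'_{x,T}\,\dist(x,y)^{2-N}\le G^T_x(y)\le C_{K,N}\,\dist(x,y)^{2-N}\, ,\qquad \abs{\nabla G^T_x(y)}\le C_{K,N}\,\dist(x,y)^{1-N}\, .
\end{equation*}

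For the upper bound on $G_x$: since $g^{\eps_0}\ge 0$ by the comparison principle (indeed $G^{\eps_0}_x\ge 0$ and $g^{\eps_0}=G^{T,\eps_0}_x-G^{\eps_0}_x\le G^{T,\eps_0}_x\le G^T_x$, but more simply one argues $\Delta g^{\eps_0}=p_T(x,\cdot)\ge 0$ with zero-order boundary data bounded below, so $g^{\eps_0}\ge -\,C$; actually since $p_T(x,\cdot)\ge 0$ the maximum principle gives $g^{\eps_0}\le \max_{\partial\Omega}G^{T,\eps_0}_x$, a finite constant, and a lower bound $g^{\eps_0}\ge \min_{\partial\Omega} G^{T,\eps_0}_x$ need not hold — instead use that $g^{\eps_0}$ is simply bounded on $\overline{B_r(x)}$ by local Lipschitz regularity). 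Thus $G_x(y)=G^T_x(y)-g^{\eps_0}(y)\le C_{K,N}\dist(x,y)^{2-N}+\norm{g^{\eps_0}}_{L^\infty(B_r(x))}$, and since $\dist(x,y)^{2-N}\to\infty$ as $y\to x$ (here $N\ge 3$), by shrinking $r$ we absorb the bounded term into the leading singularity to get $G_x(y)\le C_x\dist(x,y)^{2-N}$ on $B_r(x)$. For the lower bound, $G_x(y)=G^T_x(y)-g^{\eps_0}(y)\ge C'_{x,T}\dist(x,y)^{2-N}-\norm{g^{\eps_0}}_{L^\infty(B_r(x))}$, and again shrinking $r$ so that $C'_{x,T}\dist(x,y)^{2-N}\ge 2\norm{g^{\eps_0}}_{L^\infty(B_r(x))}$ on $B_r(x)$ yields $G_x(y)\ge c_x\dist(x,y)^{2-N}$ with $c_x=C'_{x,T}/2$. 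For the gradient estimate, $\abs{\nabla G_x(y)}\le\abs{\nabla G^T_x(y)}+\abs{\nabla g^{\eps_0}(y)}\le C_{K,N}\dist(x,y)^{1-N}+\LipConst(g^{\eps_0}|_{B_r(x)})$, and since $\dist(x,y)^{1-N}\to\infty$ as $y\to x$, shrinking $r$ once more absorbs the bounded Lipschitz term to give $\abs{\nabla G_x(y)}\le C_x\dist(x,y)^{1-N}$ a.e.\ on $B_r(x)$.

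The main (mild) obstacle is bookkeeping the dependence of constants and ensuring the radius $r$ can be chosen uniformly so that all three absorption steps hold simultaneously: one picks $r$ small enough that $B_{2r}(x)\Subset\Omega$ (so $g^{\eps_0}$ is Lipschitz with finite norm on $\overline{B_r(x)}$) and small enough that $\dist(x,y)^{2-N}$ and $\dist(x,y)^{1-N}$ dominate the respective bounded correction terms on $B_r(x)$; all constants $c_x,C_x$ then depend on $x$ through $T$, the volume of balls around $x$, and $\norm{g^{\eps_0}}_{C^{0,1}(B_r(x))}$. One should also note that $G_x$ is indeed finite and positive away from $x$ — finiteness away from $x$ follows from \eqref{eq:estaboveGT} and boundedness of $g^{\eps_0}$, and positivity on all of $\Omega$ was already shown in \autoref{prop:Greenexistence} ($G_x>G^{\eps_0}_x\ge 0$) — so the stated two-sided bound on $B_r(x)$ is consistent. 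No genuinely new analytic input beyond the heat-kernel Gaussian estimates \eqref{eq:kernelestimate}, \eqref{eq:gradientestimatekernel}, the local Ahlfors regularity, and \autoref{thm:lipregPoisson} is required.
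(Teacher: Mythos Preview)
Your proposal is correct and follows essentially the same route as the paper: decompose $G_x=G^T_x-g^{\eps_0}$, invoke the pointwise and gradient bounds \eqref{eq:estaboveGT}, \eqref{eq:estbelowGT}, \eqref{eq:gradGT} for $G^T_x$, and absorb the locally Lipschitz correction $g^{\eps_0}$ (via \autoref{thm:lipregPoisson}) into the singular leading terms by shrinking $r$. The paper's proof is simply the one-sentence summary of exactly this argument; your parenthetical detour about sign considerations for $g^{\eps_0}$ is unnecessary but harmless, since, as you correctly conclude, only local boundedness of $g^{\eps_0}$ and $\abs{\nabla g^{\eps_0}}$ is needed.
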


\begin{proof}
The sought estimates follow from the estimates for the function $G^T_x$ and its gradient (see \eqref{eq:estaboveGT}, \eqref{eq:estbelowGT} and \eqref{eq:gradGT}) combined with the local uniform Lipschitz estimate for the solution of the Dirichlet problem $g^{\eps}$ considered in the proof of \autoref{prop:Greenexistence}, that follow in turn from \autoref{thm:lipregPoisson}.
\end{proof}

Our next step is to use the local Green function in order to build a replacement of the distance function with better regularity properties.\\ 
On the Euclidean space of dimension $N\ge 3$, the Green function of the Laplacian is a negative power of the distance function. On a general Riemannian manifold this is not the case of course, but still a suitable power of the Green function of the Laplacian is comparable to the distance function (under suitable curvature and volume growth assumptions). Moreover, the Green function solves an equation, which makes it sometimes more suitable for the applications. We refer to \cite{Colding12,JiangNaber21,BrueSemola18} for previous instances of this idea in Geometric Analysis.

\begin{proposition}[The Green distance]\label{prop:Gdistance}
Let $(X,\dist,\haus^N)$ be an $\RCD(K,N)$ space for some $N\ge 3$. Let $\Omega\subset X$ be an open and bounded domain with $\meas(X\setminus\Omega)>0$ and $x\in\Omega$. Let us suppose, up to scaling, that $B_1(x)\subset \Omega$ and let $G_x$ be the positive Green function of the Laplacian with pole at $x$ and Dirichlet boundary conditions, constructed  in \autoref{prop:Greenexistence}.\\
Then, setting
\begin{equation*}
b_x(y):=G^{-\frac{1}{N-2}}_x(y)\, ,
\end{equation*}
the following hold:
\begin{itemize}
\item[(i)] there exist constants $c_x,C_x>0$ such that 
\begin{equation}\label{eq:twosidedbx}
c_x\dist(x,y)\le b_x(y)\le C_x\dist(x,y)\, \quad\text{for any $y\in B_1(x)$}\, ;
\end{equation}
\item[(ii)] there exists $C_x>0$ such that
\begin{equation}\label{eq:gradbx}
\abs{\nabla b_x(y)}\le C_x\, \quad\text{for a.e. $y\in B_1(x)$}\, ;
\end{equation}
\item[(iii)] $b_x^2\in D(\Delta, B_1(x))$ and 
\begin{equation}\label{eq:laplab}
\Delta b_x^2=2N\abs{\nabla b_x}^2\, ;
\end{equation}
\end{itemize}
\end{proposition}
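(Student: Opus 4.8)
The plan is to read off parts (i) and (ii) from the pointwise estimates of \autoref{prop:Gestimates}, and then to prove (iii) by first verifying the identity on $B_1(x)\setminus\{x\}$ via the harmonicity of $G_x$, and finally removing the pole $x$ by a cut-off argument that rules out an atom of the measure-valued Laplacian at $x$.

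For (i): since $B_1(x)\subset\Omega$, \autoref{prop:Gestimates} (with $r=1$) gives $c_x\dist^{-(N-2)}(x,y)\le G_x(y)\le C_x\dist^{-(N-2)}(x,y)$ for $y\in B_1(x)\setminus\{x\}$; raising to the exponent $-\tfrac{1}{N-2}<0$ reverses the inequalities and yields \eqref{eq:twosidedbx} (with $b_x(x)=0$). For (ii): on $B_1(x)\setminus\{x\}$ the function $G_x$ is locally Lipschitz and strictly positive, so by the chain rule $\nabla b_x=-\tfrac{1}{N-2}G_x^{-\frac{N-1}{N-2}}\nabla G_x$ $\meas$-a.e.; combining the lower bound on $G_x$ with $\abs{\nabla G_x(y)}\le C_x\dist^{-(N-1)}(x,y)$ from \autoref{prop:Gestimates}, the powers of $\dist(x,\cdot)$ cancel and $\abs{\nabla b_x}\le C_x$ a.e. on $B_1(x)$. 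A routine cut-off/approximation argument near $x$ (using $b_x\le C_x\dist(x,\cdot)$, a cut-off $\eta_\eps$ with $\eta_\eps\equiv 0$ on $B_\eps(x)$, $\eta_\eps\equiv 1$ off $B_{2\eps}(x)$, $\abs{\nabla\eta_\eps}\le C/\eps$, and $\meas(B_{2\eps}(x))\to 0$) shows $b_x,b_x^2\in H^{1,2}(B_1(x))$; being bounded, continuous and with $\meas$-essentially bounded gradient, they are Lipschitz on $B_1(x)$ by the Sobolev-to-Lipschitz property.

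For (iii), put $\alpha:=-\tfrac{2}{N-2}$, so $b_x^2=G_x^{\alpha}$, $\alpha-1=-\tfrac{N}{N-2}$, $\alpha(\alpha-1)=\tfrac{2N}{(N-2)^2}$, and $\abs{\nabla b_x}^2=\tfrac{1}{(N-2)^2}G_x^{\alpha-2}\abs{\nabla G_x}^2$. Since $G_x$ is harmonic on $\Omega\setminus\{x\}$, for every $\psi\in\Lip_c(B_1(x)\setminus\{x\})$ the function $\alpha G_x^{\alpha-1}\psi$ is Lipschitz with compact support away from $x$ (as $G_x$ is Lipschitz and bounded away from $0$ and $\infty$ on $\supp\psi$); testing harmonicity against it and expanding $\nabla(\alpha G_x^{\alpha-1}\psi)=\alpha(\alpha-1)G_x^{\alpha-2}\psi\,\nabla G_x+\alpha G_x^{\alpha-1}\nabla\psi$, together with $\nabla b_x^2=\alpha G_x^{\alpha-1}\nabla G_x$, gives
\[
-\int \nabla b_x^2\cdot\nabla\psi\,\di\meas=\int \alpha(\alpha-1)G_x^{\alpha-2}\abs{\nabla G_x}^2\,\psi\,\di\meas=\int 2N\abs{\nabla b_x}^2\,\psi\,\di\meas .
\]
Hence $\boldsymbol{\Delta}b_x^2=2N\abs{\nabla b_x}^2\meas$ on $B_1(x)\setminus\{x\}$. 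To upgrade this to all of $B_1(x)$, fix $\phi\in\Lip_c(B_1(x))$ and apply the identity with $\psi=\eta_\eps\phi$: expanding the left-hand side, the term $-\int\eta_\eps\,\nabla b_x^2\cdot\nabla\phi$ converges to $-\int\nabla b_x^2\cdot\nabla\phi$ and the right-hand side to $\int 2N\abs{\nabla b_x}^2\phi$ by dominated convergence (both $\nabla b_x^2$ and $\abs{\nabla b_x}^2$ are in $L^\infty(B_1(x))$), while the error term satisfies $\bigl|\int\phi\,\nabla b_x^2\cdot\nabla\eta_\eps\di\meas\bigr|\le\|\phi\|_\infty\,(C/\eps)\,\|\nabla b_x^2\|_{L^\infty(B_{2\eps}(x))}\,\meas(B_{2\eps}(x))$, and $\|\nabla b_x^2\|_{L^\infty(B_{2\eps}(x))}=\|2b_x\nabla b_x\|_{L^\infty(B_{2\eps}(x))}\le 2C_x^2(2\eps)$ by (i)--(ii), so the error is $\le C\,\meas(B_{2\eps}(x))\to 0$. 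Thus $-\int_{B_1(x)}\nabla b_x^2\cdot\nabla\phi\,\di\meas=\int_{B_1(x)}2N\abs{\nabla b_x}^2\phi\,\di\meas$ for all $\phi\in\Lip_c(B_1(x))$; since $2N\abs{\nabla b_x}^2\in L^\infty(B_1(x))\subset L^2(B_1(x),\meas)$, $b_x^2\in H^{1,2}(B_1(x))$, and $\Lip_c(B_1(x))$ is dense in $H^{1,2}_0(B_1(x))$, this is exactly $b_x^2\in D(\Delta,B_1(x))$ with $\Delta b_x^2=2N\abs{\nabla b_x}^2$, i.e. \eqref{eq:laplab}.

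The main obstacle is precisely the last step: in contrast with the smooth case, one cannot appeal to regularity near the pole and must check by hand that the measure $\boldsymbol{\Delta}b_x^2$ has no atom at $x$. This is where the sharp near-pole decay $\abs{\nabla b_x^2}\le C_x\dist(x,\cdot)$ — obtained by combining the two-sided bound (i) with the gradient bound (ii), both ultimately resting on the Gaussian heat kernel estimates — is used; the geometry of the ambient space enters only through the harmless fact $\meas(B_{2\eps}(x))\to 0$.
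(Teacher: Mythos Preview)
Your proof is correct and follows essentially the same route as the paper: parts (i)--(ii) are read off from \autoref{prop:Gestimates}, and for (iii) one first establishes \eqref{eq:laplab} on $B_1(x)\setminus\{x\}$ via the chain rule and harmonicity of $G_x$, then removes the pole by a radial cut-off argument showing the cross term vanishes in the limit. The only (harmless) difference is in that last estimate: you exploit the sharper decay $\abs{\nabla b_x^2}\le C\dist(x,\cdot)$ near the pole so that the error is $\le C\,\meas(B_{2\eps}(x))\to 0$, whereas the paper uses only the boundedness of $\abs{\nabla b_x^2}$ together with the local Ahlfors regularity $\haus^N(B_{2\eps}(x))\le C\eps^N$ to get an error $\le C\eps^{N-1}\to 0$.
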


\begin{proof}

The estimates in items (i) and (ii) directly follows from the estimates for the Green function $G_x$ of \autoref{prop:Gestimates}.
\medskip

In order to prove \eqref{eq:laplab} we argue in two steps. First we prove that $b_x^2\in D(\Delta, B_1(x)\setminus\{x\})$ and that \eqref{eq:laplab} holds on $B_1(x)\setminus \{x\}$, then we verify that $b_x^2$ is globally in the domain of the Laplacian on $B_1(x)$ and that the pole gives no singular contribution.

Let us point out that $G_x$ is harmonic outside from the pole $x$. Given this remark, it can be easily verified via the chain rule for the gradient and the Leibniz formula for the Laplacian that $b_x^2\in D(\Delta, B_1(x)\setminus\{x\})$ and that \eqref{eq:laplab} holds on $B_1(x)\setminus \{x\}$.
\medskip

To conclude, we need to verify that $b_x^2$ belongs locally to the domain of the Laplacian. This conclusion will be achieved through a standard cutting-off and limiting procedure.

We wish to prove that 
\begin{equation}\label{eq:intbyparts}
\int_{B_1(x)}\nabla b_x^2\cdot\nabla \phi\, \di\haus^N=-2N\int_{B_1(x)}\phi\abs{\nabla b_x}^2\di\haus^N\, , 
\end{equation}
for any Lipschitz function $\phi$ with compact support in $B_1(x)$.\\
We already argued that $b_x^2\in D(\Delta, B_1(x)\setminus \{x\})$, hence \eqref{eq:intbyparts} holds true as soon as $\phi$ has compact support in $B_1(x)\setminus \{x\}$.  

Let us consider then radial Lipschitz cut-off functions $\eta_{\eps}$, for $0<\eps<1$ such that $\eta_{\eps}\equiv 1$ on $B_1(x)\setminus B_{2\eps}(x)$, $\eta_{\eps}\equiv 0$ on $B_{\eps}(x)$ and $\abs{\nabla \eta_{\eps}}\le C/\eps$. Then we can apply \eqref{eq:intbyparts} to $\phi_{\eps}:=\phi\eta_{\eps}$ for any $\eps>0$ and get
\begin{align}
\label{eq:f1}\int_{B_1(x)}&\eta_{\eps}\nabla b_x^2\cdot\nabla \phi\, \di\haus^N+\int_{B_1(x)}\phi\nabla \eta_{\eps}\cdot\nabla b_x^2\, \di\haus^N\\ 
\nonumber=&\int_{B_1(x)}\nabla b_x^2\cdot\nabla \phi_{\eps}\di\haus^N \\
=&-2N\int_{B_1(x)}\phi\eta_{\eps}\abs{\nabla b_x}^{2}\di\haus^N\, .  \nonumber
\end{align}
The last term above converges to 
\begin{equation*}
-2N\int_{B_1(x)}\phi\abs{\nabla b_x}^{2}\, \di\haus^N
\end{equation*}
as $\eps\to 0$ by the dominated convergence theorem. By the same reason, also the first term in the left hand side of \eqref{eq:f1} converges to 
\begin{equation*}
\int_{B_1(x)}\nabla b_x^2\cdot\nabla \phi\, \di\haus^N\, ,
\end{equation*}
as $\eps\to 0$. Hence to complete the proof of \eqref{eq:intbyparts}, it remains to prove that the second term in the left hand side of \eqref{eq:f1} converges to $0$ as $\eps\to 0$. To this aim, it is sufficient to observe that
\begin{align*}
\abs{\int_{B_1(x)}\phi\nabla \eta_{\eps}\cdot\nabla b_x^2\, \di\haus^N}&=\int_{B_{2\eps}(x)\setminus B_{\eps}(x)}\abs{\phi}\abs{\nabla \eta_{\eps}}\abs{\nabla b_x^2}\di\haus^N\\
&\le\frac{C \max_{B_1(x)}\abs{\phi}}{\eps}\haus^N(B_{2\eps}(x)\setminus B_{\eps}(x))\, ,
\end{align*}
which is easily seen to converge to $0$ as $\eps\to 0$.
\end{proof}

\begin{remark}\label{rm:goodfunction}
The main use of the Green function for the purposes of the present paper will be the possibility (guaranteed by the construction of the function $b_x$ above) of considering locally a sufficiently regular function $f:B_r(x)\to \setR$ with the following properties:
\begin{itemize}
\item[i)] it is non-negative;
\item[ii)] it vanishes only at $x$ and is strictly positive in a neighbourhood of $x$;
\item[iii)] also its gradient is vanishing at $x$, at least in a weak sense;
\item[iv)] its Laplacian is non-negative, in a weak sense.
\end{itemize}
This function plays the role of a power of the distance function in the development of a viscous theory of bounds for the Laplacian on $\RCD$ metric measure spaces.

In the Euclidean setting, by considering powers of the distance function it is possible to work with smooth functions whose derivatives are vanishing at any given order. In the synthetic framework this is of course too much to ask.
\end{remark}

\section{The Laplacian on $\RCD(K,N)$ spaces}\label{sec:lap}

We are going to consider some new equivalences between different notions of Laplacian and bounds for the Laplacian on an $\RCD(K,N)$ metric measure space $(X,\dist,\haus^N)$. We will be guided by the equivalences that hold in the Euclidean setting and on smooth Riemannian manifolds. In particular we shall address bounds on the Laplacian:
\begin{itemize}
\item in the sense of distributions; 
\item in the viscous sense; 
\item in the sense of sub/super minimizers of Dirichlet type energies; 
\item in the sense of comparison with solutions of the Dirichlet problem; 
\item in the sense of pointwise behaviour of the heat flow.
\end{itemize}
Some of the equivalences had already appeared in the literature, even under less restrictive assumptions on the metric measure spaces. The main contribution here will be in the direction of the viscous theory, in which case the only previous treatment we are aware of is \cite{ZhangZhu12}, dealing with Alexandrov spaces (and inspired in turn by the unpublished \cite{Petrunin00}), and of the pointwise behaviour of the heat flow, a notion that seems to be new also in the smooth setting.
\medskip

We are going to restrict the analysis to locally Lipschitz functions, in order to avoid technicalities and since this class will be sufficiently rich for the sake of the applications in later sections of the paper. We remark that likely more general functions could be considered.

\subsection{Notions of Laplacian bounds}


We start with distributional Laplacian bounds, borrowing the definition from \cite{Gigli15}.
\begin{definition}\label{def:distributions}
Let $(X,\dist,\meas)$ be an $\RCD(K,N)$ metric measure space and let $\Omega\subset X$ be an open domain. Let $f:\Omega\to\setR$ be a locally Lipschitz function and $\eta\in\Cb(\Omega)$. Then we say that $\boldsymbol{\Delta}f\le \eta$ in the sense of distributions if the following holds. For any non-negative function $\phi\in\Lip_c(\Omega)$,
\begin{equation*}
-\int_{\Omega}\nabla f\cdot\nabla \phi\di\meas\le \int_{\Omega}\phi\eta\di\meas\, .
\end{equation*}
\end{definition}

The following is a classical result, relying on the fact that a distribution with a sign is represented by a measure, in great generality. We refer to \cite{GigliMondino13,Gigli15} for a proof.

\begin{proposition}\label{prop:distrmeasure}
Let $(X,\dist,\meas)$ be an $\RCD(K,N)$ metric measure space and let $\Omega\subset X$ be an open domain. Let moreover $f:\Omega\to\setR$ be a locally Lipschitz function and $\eta\in\Cb(\Omega)$. Then $\boldsymbol{\Delta}f\le\eta$ in the sense of distributions if and only if there exists a locally finite measure $\mu$ on $\Omega$ such that
\begin{equation}\label{eq:laplmeas}
-\int_{\Omega}\nabla f\cdot\nabla \phi\di\meas=\int_{\Omega}\phi\di\mu\, ,
\end{equation}
for any $\phi\in\Lip_c(\Omega)$.  Moreover, under these assumption $\mu\le \eta\meas$, $\mu$ is uniquely determined by \eqref{eq:laplmeas} and we shall denote it by $\boldsymbol{\Delta}f$.
\end{proposition}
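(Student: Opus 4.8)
The plan is to derive the measure $\boldsymbol{\Delta}f$ from the distributional inequality by a Riesz-type representation argument, exploiting that a \emph{signed} distribution which is bounded above by a measure differs from that measure by a \emph{positive} functional.

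The easy implication is immediate: if a locally finite $\mu\le\eta\meas$ satisfies \eqref{eq:laplmeas}, then testing against a non-negative $\phi\in\Lip_c(\Omega)$ gives $-\int_\Omega\nabla f\cdot\nabla\phi\di\meas=\int_\Omega\phi\di\mu\le\int_\Omega\phi\,\eta\di\meas$, which is exactly $\boldsymbol{\Delta}f\le\eta$ in the sense of \autoref{def:distributions}.

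For the converse I would start by introducing the linear functional $T(\phi):=\int_\Omega\phi\,\eta\di\meas+\int_\Omega\nabla f\cdot\nabla\phi\di\meas$ on $\Lip_c(\Omega)$, which is well defined and finite because $f$ is locally Lipschitz (so $\abs{\nabla f}\in L^\infty_{\loc}$), $\meas$ is finite on bounded sets, and $\phi$ has bounded support; by \autoref{def:distributions}, $T(\phi)\ge 0$ whenever $\phi\ge 0$. The key step — and, I expect, the only genuinely delicate one — is to extend $T$ to a positive linear functional on $C_c(\Omega)$. Given $\psi\in C_c(\Omega)$ with $\supp\psi\Subset\Omega$, I would fix a Lipschitz cutoff $\chi$ with $0\le\chi\le 1$, compact support in $\Omega$, and $\chi\equiv 1$ on a compact neighbourhood of $\supp\psi$, and approximate $\psi$ uniformly by functions $\psi_j\in\Lip_c(\Omega)$ supported in the set where $\chi\equiv 1$ (McShane/inf-convolution regularization). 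Positivity of $T$ then yields $\abs{T(\psi_j)-T(\psi_\ell)}\le\norm{\psi_j-\psi_\ell}_{L^\infty}\,T(\chi)$, so $(T(\psi_j))_j$ is Cauchy; one sets $\tilde T(\psi):=\lim_j T(\psi_j)$ and checks it is independent of the approximating sequence, linear, and positive on $C_c(\Omega)$. By the Riesz representation theorem there is then a unique positive Radon measure $\lambda$ on $\Omega$ with $\tilde T(\psi)=\int_\Omega\psi\di\lambda$, and it is locally finite since $\lambda(K)\le\int_\Omega\chi\di\lambda=T(\chi)<\infty$ for every compact $K\subset\Omega$.

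To conclude I would set $\mu:=\eta\meas-\lambda$: since $\abs{\eta}\le\norm{\eta}_{L^\infty}$ and $\meas$ is finite on bounded sets, $\eta\meas$ is a locally finite signed measure, hence so is $\mu$. For $\phi\in\Lip_c(\Omega)$ one has $\int_\Omega\phi\di\mu=\int_\Omega\phi\,\eta\di\meas-T(\phi)=-\int_\Omega\nabla f\cdot\nabla\phi\di\meas$, which is \eqref{eq:laplmeas}, and $\mu\le\eta\meas$ because $\lambda\ge 0$. Finally, uniqueness follows since any two locally finite measures satisfying \eqref{eq:laplmeas} agree when integrated against every $\phi\in\Lip_c(\Omega)$, a space dense in $C_c(\Omega)$ in the locally uniform topology, so the two measures coincide.
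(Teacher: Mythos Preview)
Your argument is correct and is precisely the standard Riesz-type representation argument the paper has in mind: the paper does not actually prove this proposition but states it as ``a classical result, relying on the fact that a distribution with a sign is represented by a measure'' and refers to \cite{GigliMondino13,Gigli15}. Your write-up is the natural unpacking of that classical fact in this setting, so there is nothing to compare.
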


Given a function $\eta\in\Cb(\Omega)$, we introduce the energy
\begin{equation*}
E_{\eta}:\Lip(\Omega)\to\setR\, ,
\end{equation*}
by 
\begin{equation}\label{eq:energyeta}
E_{\eta}(v):=\frac{1}{2}\int_{\Omega}\abs{\nabla v}^2\di\meas+\int_{\Omega}v\eta\di\meas\, .
\end{equation}

\begin{definition}\label{def:subsupermin}
Let $(X,\dist,\meas)$ be an $\RCD(K,N)$ metric measure space and let $\Omega\subset X$ be an open domain. Let $f:\Omega\to\setR$ be a locally Lipschitz function and $\eta\in\Cb(\Omega)$. Let us consider the energy functional $E_{\eta}:\Lip(\Omega)\to\setR$ defined above. Then we say that $f$ is a superminimizer of $E_{\eta}$ on $\Omega$ if 
\begin{equation*}
E_{\eta}(f+\phi)\ge E_{\eta}(f)\, , \quad\text{for any non-negative function $\phi\in\Lip_c(\Omega)$}\, .
\end{equation*}

\end{definition}

The following result comparing superminimizers with functions having Laplacian bounded from above in the sense of distributions will be of some relevance for our purposes. A version of this statement tailored for more general ambient spaces (but restricted to the case of subharmonic/superharmonic functions) appears for instance in \cite[Theorem 4.1, Corollary 4.4]{GigliMondino13}. The extension to more general upper/lower bounds for the Laplacian requires just slight modifications to the original argument, that we omit for brevity.

\begin{proposition}\label{prop:compdistrsbmin}
Let $(X,\dist,\meas)$ be an $\RCD(K,N)$ metric measure space and let $\Omega\subset X$ be an open domain. Let $f:\Omega\to\setR$ be a locally Lipschitz function and $\eta\in\Cb(\Omega)$. Then $\boldsymbol{\Delta}f\le \eta$ in the sense of distributions if and only if $f$ is a superminimizer of the energy $E_{\eta}$ on $\Omega$ according to \autoref{def:subsupermin}.
\end{proposition}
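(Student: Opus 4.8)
The plan is to prove both implications by a direct first-variation / integration-by-parts argument, exactly parallel to the classical Euclidean case, using only the distributional characterization of Laplacian bounds (\autoref{def:distributions}) together with the measure representation from \autoref{prop:distrmeasure}.

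For the ``only if'' direction, suppose $\boldsymbol{\Delta}f\le\eta$ in the sense of distributions, so by \autoref{prop:distrmeasure} there is a locally finite measure $\mu=\boldsymbol{\Delta}f$ on $\Omega$ with $\mu\le\eta\meas$ and $-\int_\Omega\nabla f\cdot\nabla\phi\di\meas=\int_\Omega\phi\di\mu$ for all $\phi\in\Lip_c(\Omega)$. Fix a non-negative $\phi\in\Lip_c(\Omega)$; I would compute $E_\eta(f+\phi)-E_\eta(f)$ by expanding the square:
\begin{equation*}
E_\eta(f+\phi)-E_\eta(f)=\int_\Omega\nabla f\cdot\nabla\phi\di\meas+\frac12\int_\Omega\abs{\nabla\phi}^2\di\meas+\int_\Omega\phi\eta\di\meas\, .
\end{equation*}
The first and third terms combine, via the measure identity and $\mu\le\eta\meas$, to $\int_\Omega\phi\eta\di\meas-\int_\Omega\phi\di\mu=\int_\Omega\phi\,\di(\eta\meas-\mu)\ge0$ since $\phi\ge0$ and $\eta\meas-\mu$ is a non-negative measure; the middle term $\tfrac12\int_\Omega\abs{\nabla\phi}^2\di\meas\ge0$ is harmless. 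Hence $E_\eta(f+\phi)\ge E_\eta(f)$, i.e.\ $f$ is a superminimizer. (One should note $\nabla f\cdot\nabla\phi$ is integrable because $f$ is locally Lipschitz and $\phi$ has compact support, so all integrals make sense; this is where local Lipschitzness of $f$ is used.)

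For the ``if'' direction, suppose $f$ is a superminimizer. For non-negative $\phi\in\Lip_c(\Omega)$ and $t>0$, applying the inequality $E_\eta(f+t\phi)\ge E_\eta(f)$ and dividing by $t$ gives, after the same expansion, $\int_\Omega\nabla f\cdot\nabla\phi\di\meas+\tfrac{t}{2}\int_\Omega\abs{\nabla\phi}^2\di\meas+\int_\Omega\phi\eta\di\meas\ge0$; letting $t\downarrow0$ yields $-\int_\Omega\nabla f\cdot\nabla\phi\di\meas\le\int_\Omega\phi\eta\di\meas$, which is precisely $\boldsymbol{\Delta}f\le\eta$ in the sense of distributions. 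I do not expect any serious obstacle here: the only points needing a word of care are the integrability of the gradient terms (handled by local Lipschitzness plus compact support) and the fact that the argument only requires variations $\phi\ge0$, matching the one-sided nature of both definitions; the symmetric two-sided statement ($\boldsymbol{\Delta}f=\eta$ iff $f$ is a critical point) would follow by applying the one-sided result to $\pm\phi$. The reference to \cite[Theorem 4.1, Corollary 4.4]{GigliMondino13} covers the subharmonic case $\eta\equiv0$ verbatim, and the computation above is exactly the ``slight modification'' alluded to in the statement.
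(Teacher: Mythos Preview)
Your proposal is correct and is precisely the ``slight modification'' of the argument in \cite{GigliMondino13} that the paper alludes to (the paper itself omits the proof). The only structural point worth making explicit is that the expansion $\abs{\nabla(f+\phi)}^2=\abs{\nabla f}^2+2\nabla f\cdot\nabla\phi+\abs{\nabla\phi}^2$ $\meas$-a.e.\ relies on infinitesimal Hilbertianity, which is part of the $\RCD(K,N)$ assumption; with that in hand, both directions are exactly the first-variation computation you wrote.
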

Various definitions of sub/superharmonic functions on metric measure spaces in the sense of comparison with Dirichlet boundary value problems have appeared in the last twenty years. Here we choose a slight modification of \cite[Definition 14.8]{BjornBjorn11} tailored to the purpose of studying locally Lipschitz functions (and general Laplacian bounds).

\begin{definition}\label{def:classicalsupersolution}
Let $(X,\dist,\meas)$ be an $\RCD(K,N)$ metric measure space and let $\Omega\subset X$ be an open domain. Let $f:\Omega\to\setR$ be a locally Lipschitz function and $\eta\in\Cb(\Omega)$. We say that $f$ is a \textit{classical supersolution} of $\Delta f=\eta$ if the following holds: for any open domain $\Omega'\Subset\Omega$ and for any function $g\in C(\overline{\Omega'})$ such that $\Delta g=\eta$ in $\Omega'$ and $g\le f$ on $\partial\Omega'$ it holds $g\le f$ on $\Omega'$.
\end{definition}

\begin{remark}\label{rm:classimplclass}
If $f\in D(\Delta,\Omega)$ and $\Delta f=\eta$ on $\Omega$, then it is a classical supersolution of $\Delta f=\eta$ according to \autoref{def:classicalsupersolution} above.\\ 
Indeed, for any test function $g$ as in the definition above, $\Delta f=\Delta g=\eta$ on $\Omega'$ and $g$ is continuous on $\Omega'$ by assumption. Moreover $f$ is continuous on $\Omega'$, since it is locally Lipschitz on $\Omega$ by \autoref{thm:lipregPoisson}. Therefore, letting $h:=f-g$, $h$ is harmonic and continuous on $\Omega'$ and $h\ge 0$ on $\partial \Omega'$.\\ 
We claim that $h\ge 0$ on $\Omega'$. Suppose that this is not the case, then $h$ admits a strictly negative minimum in the interior of $\Omega'$. Therefore it is constant and strictly negative in the connected component of $\Omega'$ where this minimum is achieved by \autoref{thm:existenceanccomparison} (iii). This yields a contradiction since $h\ge 0$ on $\partial\Omega'$.
\end{remark}

\begin{remark}
By \autoref{rm:classimplclass} and thanks to the linearity of the Laplacian on $\RCD(K,N)$ spaces, the extension of the results in \cite{BjornBjorn11} from the case of sub/supersolutions of the equation $\Delta f =0$ to the general Poisson problem $\Delta f=\eta$ is harmless. Indeed we can always subtract off a solution of the Poisson problem and reduce to the harmonic case.
\end{remark}

\begin{remark}
In \cite[Chapter 11]{BjornBjorn11} it is proved that \autoref{def:classicalsupersolution} is equivalent to another (a priori stronger, since we test with more functions) definition of supersolution of the problem $\Delta f =\eta$.\\ 
The outcome is that $f$ (verifying the usual assumptions) is a classical supersolution of $\Delta f=\eta$ on $\Omega$ if and only if for any $\Omega'\Subset\Omega$ and for any $g\in\Lip(\partial\Omega')$ such that $g\le f$ on $\partial\Omega'$, it holds that $H_{\eta}g\le f$ on $\Omega'$. Here $H_{\eta}g$ is the solution of the Poisson problem with Dirichlet boundary conditions 
\begin{equation*}
\Delta H_{\eta} g=\eta\, ,\quad H_{\eta}g-\tilde g\in H^{1,2}_0(\Omega')\, ,
\end{equation*}
with $\tilde{g}$ any global extension of $g$.
\end{remark}

Let us quote a fundamental result connecting (classical) supersolutions of the equation $\Delta f=\eta$ with superminimizers. Under our assumptions, it is a direct corollary of \cite[Theorem 9.24]{BjornBjorn11} (see also \cite{KinnunenMartio02}), where equivalence of supersolutions with superminimizers of the energy is addressed, and \autoref{prop:compdistrsbmin}, that gives the equivalence between the superminimizing property and bounds for the Laplacian in the sense of distributions.

\begin{theorem}\label{thm:supersolsupermin}
Let $(X,\dist,\meas)$ be an $\RCD(K,N)$ metric measure space and let $\Omega\subset X$ be an open and bounded domain. Let $f:\Omega\to\setR$ be locally Lipschitz and bounded and let $\eta\in \Cb(\Omega)$. Then $f$ is a classical supersolution of $\Delta f=\eta$ in the sense of \autoref{def:classicalsupersolution} if and only if $\boldsymbol{\Delta} f\le\eta$ in the sense of \autoref{def:distributions}.
\end{theorem}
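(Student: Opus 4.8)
The plan is to reduce \autoref{thm:supersolsupermin} to the two ingredients that the paragraph preceding it advertises: the equivalence between classical supersolutions and superminimizers coming from \cite[Theorem 9.24]{BjornBjorn11}, and the equivalence between superminimizers and distributional Laplacian bounds established in \autoref{prop:compdistrsbmin}. Concretely, the chain of equivalences is
\begin{equation*}
\boldsymbol{\Delta} f \le \eta \text{ in the sense of \autoref{def:distributions}}
\iff
f \text{ is a superminimizer of } E_\eta
\iff
f \text{ is a classical supersolution of } \Delta f = \eta,
\end{equation*}
where the first biconditional is exactly \autoref{prop:compdistrsbmin} and the second is what needs to be extracted from \cite{BjornBjorn11}.

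First I would recall that \cite[Theorem 9.24]{BjornBjorn11} (together with \cite{KinnunenMartio02}) asserts that, on a PI space, a function is a superminimizer of the Dirichlet energy $J_0(u) = \int |\nabla u|^2 \di\meas$ if and only if it is a supersolution of $\Delta f = 0$ in the comparison (Perron/Wiener/Brelot) sense. Since $\RCD(K,N)$ spaces are PI spaces (as recalled in \autoref{subsec:generalRCD}), this applies here. The only gap is that \cite{BjornBjorn11} deals with the homogeneous equation $\Delta f = 0$, whereas we need the inhomogeneous $\Delta f = \eta$. This is dealt with by the linearization remark already made in the excerpt (the remark following \autoref{rm:classimplclass}): by \autoref{thm:existenceanccomparison}(ii) there exists a solution $w \in D(\Delta,\Omega')$ of the Poisson problem $\Delta w = \eta$ on any $\Omega' \Subset \Omega$ with prescribed Hilbert-Sobolev boundary data, and by \autoref{thm:lipregPoisson} such $w$ is locally Lipschitz; then $f$ is a classical supersolution of $\Delta f = \eta$ on $\Omega$ if and only if $f - w$ is a classical supersolution of $\Delta(f-w) = 0$, and likewise $f$ is a superminimizer of $E_\eta$ if and only if $f-w$ is a superminimizer of $E_0 = \tfrac12 J_0$, because $E_\eta(u) - E_\eta(v) = \tfrac12 J_0(u) - \tfrac12 J_0(v) + \int (u-v)\Delta w\,\di\meas$ and integration by parts turns the last term into $-\int \nabla(u-v)\cdot\nabla w\,\di\meas$, so the energy difference is invariant (up to the sign conventions in \eqref{eq:energyeta}) under the shift $u \mapsto u - w$. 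Here one uses that the relevant competitors differ from $f$ by a compactly supported Lipschitz function, so all integrations by parts are justified. Thus the inhomogeneous statement follows from the homogeneous one. Note also that \autoref{def:classicalsupersolution} is a \emph{local} condition (it quantifies over $\Omega'\Subset\Omega$), which is why it suffices to solve the Poisson problem only on relatively compact subdomains where \autoref{thm:existenceanccomparison}(ii) applies; the hypothesis $\meas(X\setminus\Omega)>0$ in the theorem, combined with $\Omega$ bounded, guarantees enough room.

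Combining the two equivalences gives the statement. The boundedness hypothesis on $f$ and on $\Omega$ enters to ensure that the comparison machinery of \cite{BjornBjorn11} (built for bounded domains) is directly applicable and that $f \in H^{1,2}(\Omega)$, so that the energy $E_\eta(f)$ is finite and the notion of superminimizer is non-vacuous. The main (really the only) subtlety I expect is the bookkeeping around the reduction from the inhomogeneous to the homogeneous equation: one must check that the comparison-sense supersolution property and the superminimizing property transform in exactly the same way under subtracting a fixed Poisson solution, and that the class of test perturbations ($0 \le \phi \in \Lip_c(\Omega')$, or Lipschitz boundary data dominated by $f$) is preserved. All of this is routine given \autoref{rm:classimplclass} and the linearity of $\Delta$ on $\RCD(K,N)$ spaces, so the proof is short: invoke \cite[Theorem 9.24]{BjornBjorn11} plus \cite{KinnunenMartio02} for the homogeneous case, pass to the inhomogeneous case by the linearization argument sketched in the preceding remark, and chain with \autoref{prop:compdistrsbmin}.
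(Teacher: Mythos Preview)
Your proposal is correct and follows essentially the same route as the paper: the paper does not give a formal proof but states that the result is a direct corollary of \cite[Theorem~9.24]{BjornBjorn11} (and \cite{KinnunenMartio02}) for the equivalence of classical supersolutions with superminimizers, combined with \autoref{prop:compdistrsbmin} for the equivalence of superminimizers with distributional bounds. You have simply spelled out in more detail the linearization step (subtracting a local Poisson solution to reduce to the homogeneous case) that the paper relegates to the remark following \autoref{rm:classimplclass}.
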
  
Next we propose a definition of sub/supersolutions of the equation $\Delta f=\eta$ in the \textit{viscous} sense tailored to the setting of $\RCD(K,N)$ metric measure spaces. 
\medskip

The viscous theory for the Laplacian allows for several simplifications with respect to the general viscosity theory of PDEs in the Euclidean case. 

When considering general smooth Riemannian manifolds, there are intrinsic definitions of Laplacian bounds in the viscosity sense, see for instance \cite{Wu79} and the more recent \cite{Mantegazzaetal14}, that require essentially no modification with respect to the classical Euclidean notion.\\ 
In the non smooth framework, the development of a viscous theory of Laplacian bounds presents some further challenges, the first one being the necessity to single out the right class of \textit{smooth tests} to use as comparison functions.

\begin{definition}[Viscous bounds for the Laplacian]\label{def:viscosity}
Let $(X,\dist,\meas)$ be an $\RCD(K,N)$ metric measure space and let $\Omega\subset X$ be an open and bounded domain. Let $f:\Omega\to\setR$ be locally Lipschitz and $\eta\in\Cb(\Omega)$. We say that $\Delta f\le \eta$ in the \textit{viscous sense} in $\Omega$ if the following holds. For any $\Omega'\Subset\Omega$ and for any test function $\phi:\Omega'\to\setR$ such that 
\begin{itemize}
\item[(i)] $\phi\in D(\Delta, \Omega')$ and $\Delta\phi$ is continuous on $\Omega'$;
\item[(ii)] for some $x\in \Omega'$ it holds 
$\phi(x)=f(x)$ and $\phi(y)\le f(y)$ for any $y\in\Omega'$, $y\neq x$;
\end{itemize}
it holds
\begin{equation*}
\Delta \phi(x)\le \eta(x)\, .
\end{equation*} 
\end{definition}

\begin{remark}
In the classical definitions of viscosity supersolutions for PDEs on the Euclidean space or on Riemannian manifolds, test functions are required to be $C^2$. Therefore the notion considered above is a priori stronger than the classical one on smooth Riemannian manifolds, since it is well known that there are functions with continuous Laplacian that are not $C^2$. Nevertheless, it follows from the equivalence \autoref{thm:viscoimpldistri} that this notion is equivalent to the classical one.
\end{remark}
We introduce yet another definition of supersolution of the equation $\Delta f=\eta$ based on the pointwise behaviour of the heat flow.

\begin{definition}[Supersolution in the heat flow sense]\label{def:heatlaplbounds}
Let $(X,\dist,\meas)$ be an $\RCD(K,N)$ metric measure space and let $\Omega\subset X$ be an open and bounded domain. Let $f:\Omega\to\setR$ be a Lipschitz function and let $\eta\in\Cb(\Omega)$. We say that $\Delta f\le \eta$ on $\Omega$ in the \textit{heat flow sense} if the following holds. For any $\Omega'\Subset\Omega$ and any function $\tilde{f}:X\to\setR$ extending $f$ from $\Omega'$ to $X$ and with polynomial growth, we have
\begin{equation*}
\limsup_{t\downarrow 0}\frac{P_t\tilde{f}(x)-\tilde{f}(x)}{t}\le \eta(x)\, ,\quad \text{for any $x\in\Omega'$}\, .
\end{equation*}
\end{definition}

\begin{remark}
\autoref{def:heatlaplbounds} is independent of the choice of the global extension with polynomial growth of the function $f$ to $X$, therefore it is well-posed. This is a consequence of \autoref{lemma:sufflocal}, applied to the difference of two global extensions of $f$ with polynomial growth. 
\end{remark}

\begin{remark}
The role of the heat flow in the treatment of weak notions of Laplacian bounds on smooth Riemannian manifolds can be traced back at least to \cite{GreeneWu79}, where the original idea is attributed to Malliavin. Notions of Laplacian and Laplacian bounds related to the asymptotic behaviour of the heat flow appear also in \cite[Section 4]{Gigli15} and \cite{GrygoryanHu14}. The novelty of \autoref{def:heatlaplbounds} is  the absence of integrations against test functions and that the bound is required to hold  \textit{pointwise}.
\end{remark}

\begin{remark}
We can consider counterparts of all the notions in the case of lower bounds for the Laplacian of the type $\Delta f\ge \eta$. The only difference being that all the signs in the inequalities need to be switched.
\end{remark}
%
\begin{remark}
Since we chose to adopt the same notation $\Delta f\le \eta$ for most of the notions of Laplacian bounds that we have introduced, we shall usually clarify in which sense the bound has to be intended, whenever there is risk of confusion.
\end{remark}


\subsection{The main equivalence results}

The aim of this subsection is to establish the equivalence of the upper bounds for the Laplacian in the viscous sense and in the sense of distributions. This will allow also to prove equivalence with the less classical notion of Laplacian bounds through pointwise behaviour of the heat flow that we have introduced in \autoref{def:heatlaplbounds}.
\medskip

 We will mostly consider the case of an $\RCD(K,N)$ metric measure space $(X,\dist,\haus^N)$ and limit our analysis to functions that are locally Lipschitz continuous. We shall give the proofs under the additional assumption that $N\ge 3$. The case $N=1$ is elementary, due to the classification of non collapsed $\RCD(K,1)$ metric measure spaces, see \cite{KitabeppuLakzian16}. The case $N=2$ could be treated with arguments analogous to those considered here, with the slight modifications due to the different behaviour of the Green function. Notice also that the theory of non collapsed $\RCD(K,2)$ metric measure spaces is very well understood, thanks to \cite{LytchakStadler18}, where it is shown that they are Alexandrov spaces with curvature bounded from below.

\begin{remark}
Let us remark that the case of general $\RCD(K,N)$ metric measure spaces $(X,\dist,\meas)$ could be handled with similar arguments, after imposing some mild lower bounds on the measure growth of balls, necessary in order to have a good definition of local Green's functions.
\end{remark}

A fundamental tool in order to establish the equivalence between viscous and distributional bounds will be the following maximum principle, which follows from \cite{ZhangZhu16}. It is reminescent of the Omori-Yau maximum principle 
and of Jensen's maximum principle in the viscous theory of PDEs. 
\medskip

Below, given a measure $\boldsymbol{\mu}$ on an $\RCD(K,N)$ metric measure space $(X,\dist,\meas)$ we shall denote by $\boldsymbol{\mu}^{ac}$ its absolutely continuous part w.r.t. $\meas$ and by $\mu^{ac}$ its density, i.e. $\boldsymbol{\mu}^{ac}=\mu^{ac}\, \meas$.

\begin{theorem}\label{thm:maximumprincipleOYJ}
Let $(X,\dist,\meas)$ be an $\RCD(K,N)$ metric measure space. Let $\Omega\subset X$ be an open and bounded domain. Let $f\in \Lip(\Omega)$ be such that $\boldsymbol{\Delta}f$ is a signed Radon measure with non-negative singular part. Suppose that $f$ achieves one of its locally strict maxima in $\Omega$. Then there exists a sequence of points that are approximate continuity points of $\Delta^{ac}f$ and such that
\begin{equation*}
f(x_n)\ge \sup f_{\Omega} -\frac{1}{n}\, , \;\;\;\;\; \Delta^{ac}f(x_n)\le \frac{1}{n}\, .
\end{equation*}
In particular, if $\bar{x}\in\Omega$ is a strict maximum point of $f$ in $\Omega$, then there exists a sequence $(x_n)$ of approximate continuity points for $\Delta^{ac}f$ such that
\begin{equation*}
x_n\to \bar{x}\, ,\;\;\;\;\Delta^{ac}f(x_n)\to 0\, .
\end{equation*}
More strongly, for any $\eps>0$ it holds that
\begin{equation*}
\meas\left(\set{x\in B_{\eps}(\bar{x})\, : \;\; \Delta^{ac}f(x)\le \eps}\right)>0\, .
\end{equation*}

\end{theorem}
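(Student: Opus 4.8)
The plan is to reduce the statement to the main result of \cite{ZhangZhu16}, which provides an Omori--Yau/Jensen-type maximum principle for functions with measure-valued Laplacian on $\RCD(K,N)$ spaces, and then to upgrade the measure-zero-set conclusion to the quantitative statement $\meas(\{x\in B_\eps(\bar x): \Delta^{ac}f(x)\le \eps\})>0$. First I would recall the precise form of the maximum principle from \cite{ZhangZhu16}: if $f\in\Lip(\Omega)$ has $\boldsymbol{\Delta}f$ a signed Radon measure whose singular part (w.r.t.\ $\meas$) is non-negative, and $f$ attains a local strict maximum, then one can find points $x_n$ that are approximate-continuity points of the density $\Delta^{ac}f$ with $f(x_n)\to \sup_\Omega f$ and $\limsup_n \Delta^{ac}f(x_n)\le 0$. (The non-negativity of the singular part is what allows one to discard the singular contribution to the Laplacian near the maximum; this is the analogue of the classical fact that the distributional Laplacian of a function attaining an interior max cannot be too positive on average.)

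Next I would deduce the two intermediate assertions. For the sequence statement, fix a strict maximum point $\bar x\in\Omega$; strictness means $f(x)<f(\bar x)$ for $x\ne\bar x$ in a neighbourhood, so the approximating points $x_n$ given by the cited result must converge to $\bar x$ (any limit point $y$ of $(x_n)$ satisfies $f(y)=\sup_\Omega f=f(\bar x)$, forcing $y=\bar x$ by strictness and continuity of $f$). Along this sequence $\Delta^{ac}f(x_n)\to 0$ from the bound $\Delta^{ac}f(x_n)\le 1/n$. The first displayed conclusion ($f(x_n)\ge \sup_\Omega f-1/n$, $\Delta^{ac}f(x_n)\le 1/n$) is literally the content of \cite{ZhangZhu16} once one passes to a subsequence to make the convergence quantitative with rate $1/n$.

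For the final, strongest assertion, suppose for contradiction that there is $\eps>0$ with $\meas(\{x\in B_\eps(\bar x): \Delta^{ac}f(x)\le \eps\})=0$, i.e.\ $\Delta^{ac}f>\eps$ $\meas$-a.e.\ on $B_\eps(\bar x)$. Combined with the non-negativity of the singular part of $\boldsymbol{\Delta}f$, this gives $\boldsymbol{\Delta}f\ge \eps\,\meas\measrestr B_\eps(\bar x)$, so in particular $\boldsymbol{\Delta}(f - \tfrac{\eps}{2N}\, b_{\bar x}^2)\ge \eps\,\meas - \tfrac{\eps}{2N}\cdot 2N\abs{\nabla b_{\bar x}}^2\,\meas$ on $B_\eps(\bar x)$ after rescaling, using the Green-distance function $b_{\bar x}$ from \autoref{prop:Gdistance} with $\Delta b_{\bar x}^2 = 2N\abs{\nabla b_{\bar x}}^2$; more simply one subtracts off a solution of $\Delta w = \eps$ on $B_\eps(\bar x)$ (which exists and is Lipschitz by \autoref{thm:existenceanccomparison} and \autoref{thm:lipregPoisson}) to reduce to $\boldsymbol{\Delta}(f-w)\ge 0$ with non-negative singular part. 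Then $f-w$ is a subsolution, and applying the strong maximum principle / comparison (\autoref{thm:existenceanccomparison}(i),(iii)) on a small ball where $\bar x$ is still the strict max of $f$, while $w$ can be chosen with $\osc w$ arbitrarily small by shrinking $\eps$, one finds that $\bar x$ remains an interior strict maximum of a subharmonic function — which is impossible unless $f-w$ is locally constant, contradicting strictness. Alternatively, and more cleanly, I would simply apply the already-established sequence statement: the points $x_n\to\bar x$ have $\Delta^{ac}f(x_n)\to 0<\eps$ and are approximate-continuity points of $\Delta^{ac}f$, so for large $n$ a whole set of positive $\meas$-measure near $x_n$ (hence inside $B_\eps(\bar x)$) has $\Delta^{ac}f\le\eps$, contradicting the assumed null set.

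\textbf{Main obstacle.} The genuinely hard input — the existence of the approximate-continuity points $x_n$ with almost-non-positive Laplacian density near a strict maximum — is exactly \cite{ZhangZhu16} and I would invoke it as a black box; the only real work on our side is (a) checking the hypotheses there match ours (Lipschitz regularity, measure-valued Laplacian, non-negative singular part) and (b) the bookkeeping to pass from "exists a sequence" to "positive measure of a sublevel set," which is routine once one notes that an approximate-continuity point of $\Delta^{ac}f$ with value below $\eps$ forces $\Delta^{ac}f<\eps$ on a set of positive density. I expect essentially no technical difficulty beyond carefully quoting the cited maximum principle.
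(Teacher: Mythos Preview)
Your proposal is correct and takes essentially the same approach as the paper: the paper's own proof simply cites \cite[Theorem 1.3]{ZhangZhu16} as a black box for the core maximum-principle statement and observes that convergence $x_n\to\bar x$ is forced by strictness of the maximum. Your additional argument for the positive-measure statement via approximate continuity is clean and correct (and more explicit than the paper, which folds this into the citation); the first alternative you sketch via a subsolution comparison is unnecessary given the second.
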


\begin{proof}
The proof follows from the more general statement of \cite[Theorem 1.3]{ZhangZhu16}. 
The conclusion that the points $x_n$ can be chosen to be converging to $\bar{x}$ follows from the fact that $\bar{x}$ is assumed to be the unique strict maximum in a neighbourhood of $\bar{x}$ in $\Omega$, i.e., there exists a neighbourhood $U_x\ni x$ such that $f(y)<f(x)$ for any $y\in U_x$ with $y\neq x$.
\end{proof}

\begin{remark}\label{rm:reversed}
A dual statement holds when dealing with functions whose distributional Laplacian is a signed Radon measure with non-positive singular part and local minima instead of local maxima.
\end{remark}

One of the steps towards a viscosity theory is the comparison between classical bounds for the Laplacian and bounds in the viscous sense for \textit{sufficiently smooth} functions. 


\begin{proposition}[Classical \textit{vs} viscous for functions with continuous Laplacian]\label{prop:smoothcomp}
Let $(X,\dist,\haus^N)$ be an $\RCD(K,N)$ metric measure space.
Let us consider a function $f\in D(\Delta, \Omega)$ and assume that $\Delta f$ has a continuous representative. Let $\eta:\Omega\to\setR$ be a continuous function. Then $\Delta f\le \eta$ pointwise if and only if $\Delta f\le \eta$ in the viscous sense on $\Omega$.
\end{proposition}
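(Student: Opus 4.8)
The plan is to prove the two directions separately, with the easy one essentially free and the substantive one relying on \autoref{thm:maximumprincipleOYJ}.

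\textbf{The direction ``viscous $\Rightarrow$ pointwise''.} Suppose $\Delta f \le \eta$ in the viscous sense on $\Omega$, and suppose for contradiction that $\Delta f(x_0) > \eta(x_0)$ at some $x_0 \in \Omega$ (here $\Delta f$ denotes the continuous representative). Since $f$ itself is a function with $\phi := f \in D(\Delta,\Omega')$ and continuous Laplacian, it is a legitimate test function for \emph{itself}: it satisfies $\phi(x_0) = f(x_0)$ and $\phi \le f$ everywhere (with equality, which is allowed since condition (ii) only requires $\phi(y) \le f(y)$). Hence by the definition of viscous bound, $\Delta \phi(x_0) = \Delta f(x_0) \le \eta(x_0)$, a contradiction. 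This direction is immediate and does not even use the $\RCD$ structure beyond \autoref{thm:lipregPoisson}.

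\textbf{The direction ``pointwise $\Rightarrow$ viscous''.} Suppose $\Delta f \le \eta$ pointwise on $\Omega$. Let $\Omega' \Subset \Omega$ and let $\phi$ be an admissible test function touching $f$ from below at $x \in \Omega'$, with $\phi \in D(\Delta,\Omega')$ and $\Delta \phi$ continuous. We must show $\Delta\phi(x) \le \eta(x)$. Consider $g := f - \phi$ on a small ball $B_r(x) \Subset \Omega'$. Then $g \ge 0$, $g(x) = 0$, so $x$ is a \emph{minimum} point of $g$. By hypothesis $\boldsymbol\Delta f \le \eta\,\meas$ and $\Delta\phi$ is continuous, so $\boldsymbol\Delta g = \boldsymbol\Delta f - \Delta\phi\,\meas$ is a signed Radon measure on $B_r(x)$ whose singular part equals the singular part of $\boldsymbol\Delta f$, which is non-positive (since $\boldsymbol\Delta f \le \eta\meas$). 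Thus \autoref{thm:maximumprincipleOYJ} (in its dual form, see \autoref{rm:reversed}, applied to minima and to $g$ with non-positive singular part) applies, provided $x$ is a \emph{strict} minimum; one perturbs $g$ to $\tilde g := g + \epsilon\, b_x^2$ using the Green distance from \autoref{prop:Gdistance}, which is non-negative, vanishes only at $x$, has bounded gradient and satisfies $\Delta b_x^2 = 2N|\nabla b_x|^2 \ge 0$. Then $\tilde g$ has a strict minimum at $x$ and $\boldsymbol\Delta \tilde g = \boldsymbol\Delta g + \epsilon\, 2N|\nabla b_x|^2\,\meas$ still has non-positive singular part. The maximum principle then produces approximate continuity points $x_n \to x$ of $\Delta^{ac}\tilde g$ with $\Delta^{ac}\tilde g(x_n) \to 0$ from above (i.e. $\liminf_n \Delta^{ac}\tilde g(x_n) \ge 0$). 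At such points, $\Delta^{ac} f(x_n) - \Delta\phi(x_n) + 2N\epsilon|\nabla b_x|^2(x_n) = \Delta^{ac}\tilde g(x_n)$, and since $\Delta^{ac} f \le \eta$ a.e. (hence at approximate continuity points), we get $\eta(x_n) - \Delta\phi(x_n) + 2N\epsilon\,|\nabla b_x|^2(x_n) \ge \Delta^{ac}\tilde g(x_n)$. Letting $n \to \infty$, using continuity of $\eta$ and of $\Delta\phi$ and the uniform bound $|\nabla b_x| \le C_x$, we obtain $\eta(x) - \Delta\phi(x) + 2N\epsilon C_x^2 \ge 0$. Finally letting $\epsilon \downarrow 0$ yields $\Delta\phi(x) \le \eta(x)$, as desired.

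\textbf{Main obstacle.} The delicate point is the correct bookkeeping of singular parts of the measure-valued Laplacian: one needs that subtracting a continuous-Laplacian function and adding $\epsilon b_x^2$ preserves the sign condition on the singular part required by \autoref{thm:maximumprincipleOYJ}, and that the conclusion of the maximum principle can be read at approximate continuity points of $\Delta^{ac}$ where simultaneously $\Delta^{ac} f \le \eta$ holds (which is true $\meas$-a.e., hence at $\meas$-a.e. approximate continuity point, and the maximum principle gives a \emph{positive measure} of such points in every ball $B_\epsilon(x)$, so one can select points where all the relevant a.e. inequalities hold). A secondary technical issue is the case $N \le 2$, handled separately by invoking the structure theory of low-dimensional non-collapsed spaces (\cite{KitabeppuLakzian16}, \cite{LytchakStadler18}) or by adapting the Green-function estimates; I would relegate this to a remark, as the excerpt already announces. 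Apart from these points, the argument is a fairly direct transplantation of the Euclidean Jensen-type comparison, with $b_x^2$ playing the role of $\epsilon|x - x_0|^2$.
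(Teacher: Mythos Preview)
Your proof is correct and follows essentially the same route as the paper: both directions are handled identically, with the substantive one perturbing $g=f-\phi$ by a power of the Green distance $b_x$ to force a strict minimum and then invoking \autoref{thm:maximumprincipleOYJ} (via \autoref{rm:reversed}). The only cosmetic difference is that the paper uses $b_x^4$ rather than $\epsilon b_x^2$, which is slightly cleaner since $\Delta b_x^4 = 4(N+2)b_x^2|\nabla b_x|^2 \to 0$ along $x_n\to x$ by \eqref{eq:twosidedbx}, eliminating the need for the final $\epsilon\downarrow 0$ step.
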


\begin{proof}
Let us suppose that $\Delta f\le \eta$ in the viscous sense. We wish to prove that $\Delta f\le \eta$ pointwise. To this aim, it is enough to observe that we can take $f$ as a test function in the definition of Laplacian bound in the viscous sense. This directly yields that $\Delta f\le \eta$ pointwise.
\medskip

Let us prove conversely that if $\Delta f\le \eta$ pointwise, then $\Delta f\le \eta$ in the viscous sense. To this aim,  fix $x\in\Omega$ and $\Omega'\Subset \Omega$. Let $\phi:\Omega'\to\setR$ be such that $\phi\le f$ on $\Omega'$, $\phi(x)=f(x)$ and $\phi$ has continuous Laplacian on $\Omega'$. We wish to prove that $\Delta\phi(x)\le \eta(x)$.  Set $\psi:= f-\phi$. 
\\Without loss of generality we can assume $\Omega'$ to be small enough in order for the Green type distance $b_x$ to be well defined with good properties on $\Omega'$, as in \autoref{prop:Gdistance}. Set $\tilde{\psi}:=\psi+b_x^4$. Then $\tilde{\psi}$ has a strict local minimum at $x$. Observe also that $\tilde{\psi}$ is locally Lipschitz. Hence, by \autoref{thm:maximumprincipleOYJ} (see also \autoref{rm:reversed}), we can find a sequence of points $(x_n)$ converging to $x$ and such that
\begin{equation}\label{eq:laplpsitilde}
\liminf_{n\to\infty} \Delta \tilde{\psi}(x_n)\ge  0\, .
\end{equation}
By the properties of the auxiliary function $b_x$, we infer that 
\begin{equation}\label{eq:laplpsi}
\liminf_{n\to\infty} \Delta\psi(x_n)\ge 0\, .
\end{equation} 
Indeed
\begin{equation}\label{eq:laplb4}
\Delta b_x^4=\Delta (b^2_x)^2=2\abs{\nabla b^2_x}^2+4Nb_x^2\abs{\nabla b_x}^2=4(N+2)b_x^2\abs{\nabla b_x}^2\, ,
\end{equation}
where we rely on the identity $\Delta b_x^2=2N\abs{\nabla b_x}^2$ obtained in \autoref{prop:Gdistance}. Then \eqref{eq:laplpsi} follows from \eqref{eq:laplpsitilde}, via \eqref{eq:laplb4} and relying on the two sided estimates \eqref{eq:twosidedbx} for $b_x$ and on the gradient estimate \eqref{eq:gradbx}.\\
Hence 
\begin{equation*}
\liminf _{n\to\infty}(\Delta f(x_n)-\Delta\phi (x_n))\ge 0\, .
\end{equation*}
Since $\Delta f\le \eta$ pointwise and $\eta$ is continuous, we infer 
\begin{equation*}
\limsup_{n\to\infty}\Delta\phi(x_n)\le\liminf_{n\to\infty}\eta(x_n)=\eta(x)\, .
\end{equation*} 
Hence $\Delta\phi(x)\le \eta(x)$ and we can conclude that $\Delta f\le \eta$ in the viscous sense, as claimed.

\end{proof}


\begin{remark}\label{rm:shiftingto0}
An easy consequence of the existence for solutions to the Dirichlet problem \autoref{thm:existenceanccomparison} and of the linearity of the Laplacian is now the following: given a continuous function $\eta$ and a function $u$ with continuous Laplacian, it holds that $\Delta u\le \eta$ in the viscous sense if and only if, denoting by $v_{\eta}$ a local solution of $\Delta v_{\eta}=\eta$, it holds that $\Delta (u-v_{\eta})\le 0$ in the viscous sense. 
\end{remark}



\begin{proposition}\label{prop:distrimplvisc}
Let $(X,\dist,\haus^N)$ be an $\RCD(K,N)$ metric measure space. Assume that $f:\Omega\to\setR$ is a locally Lipschitz function and that $\eta:\Omega\to\setR$ is a continuous function. If $\boldsymbol{\Delta} f\le\eta$ in the sense of distributions,  then $\Delta f\le \eta$ in the viscous sense.
\end{proposition}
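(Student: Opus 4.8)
The plan is to argue by contradiction, perturbing the touching test function and then invoking the Omori--Yau/Jensen-type maximum principle of \autoref{thm:maximumprincipleOYJ}. First I would translate the hypothesis into measure language: by \autoref{prop:distrmeasure}, $\boldsymbol{\Delta}f\le\eta$ in the sense of distributions means that $\boldsymbol{\Delta}f$ is a locally finite signed measure on $\Omega$ with $\boldsymbol{\Delta}f\le\eta\meas$; since $\eta\meas\ll\meas$ this forces the $\meas$-singular part $(\boldsymbol{\Delta}f)^{s}$ to be non-positive and the absolutely continuous density to satisfy $\Delta^{ac}f\le\eta$ $\meas$-a.e. Then I fix $\Omega'\Subset\Omega$, a point $x\in\Omega'$ and a test function $\phi$ as in \autoref{def:viscosity}, so that $\phi\in D(\Delta,\Omega')$ with $\Delta\phi$ continuous, $\phi(x)=f(x)$ and $\phi\le f$ on $\Omega'$; by \autoref{thm:lipregPoisson} the function $\phi$ is locally Lipschitz, hence so is $g_0:=f-\phi$, which is $\ge 0$ on $\Omega'$ and vanishes at $x$. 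I assume for contradiction that $\Delta\phi(x)>\eta(x)$.

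Using continuity of $\Delta\phi-\eta$, I would choose $\delta>0$ and a small ball $B_r(x)\Subset\Omega'$ on which $\Delta\phi-\eta\ge 2\delta$. On this ball I bring in the Green distance $b_x$ of \autoref{prop:Gdistance}: by \eqref{eq:laplb4} one has $b_x^4\in D(\Delta,B_r(x))$ with $\Delta b_x^4=4(N+2)\,b_x^2\abs{\nabla b_x}^2$, an honest bounded function with no singular contribution even at the pole, and by the two-sided estimate \eqref{eq:twosidedbx} together with the gradient bound \eqref{eq:gradbx} it decays like $\dist(x,\cdot)^2$ near $x$; after a further shrinking of $r$ I may thus assume $\Delta b_x^4\le\delta$ on $B_r(x)$. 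I then set $g:=f-\phi+b_x^4$ on $B_r(x)$. This $g$ is locally Lipschitz, it is $\ge 0$ and vanishes only at $x$, so $x$ is a \emph{strict} local minimum of $g$; its distributional Laplacian $\boldsymbol{\Delta}g=\boldsymbol{\Delta}f-(\Delta\phi)\meas+(\Delta b_x^4)\meas$ is a finite signed Radon measure whose singular part is exactly $(\boldsymbol{\Delta}f)^{s}\le 0$; and its absolutely continuous density satisfies $\Delta^{ac}g=\Delta^{ac}f-\Delta\phi+\Delta b_x^4\le\eta-\Delta\phi+\delta\le-\delta$ $\meas$-a.e. on $B_r(x)$.

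Finally I would apply the dual form of \autoref{thm:maximumprincipleOYJ} (see \autoref{rm:reversed}) to $g$ (equivalently \autoref{thm:maximumprincipleOYJ} to $-g$): since $g$ has a strict minimum at $x$ and $\boldsymbol{\Delta}g$ is a signed Radon measure with non-positive singular part, there is a sequence of approximate continuity points $x_n\to x$ of $\Delta^{ac}g$ with $\Delta^{ac}g(x_n)\to 0$. But $\Delta^{ac}g\le-\delta$ $\meas$-a.e. on $B_r(x)$, so at any approximate continuity point lying in $B_r(x)$ the value of $\Delta^{ac}g$ is $\le-\delta$ (the exceptional set is $\meas$-null, hence has density zero); as $x_n\in B_r(x)$ for $n$ large this contradicts $\Delta^{ac}g(x_n)\to 0$. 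Therefore $\Delta\phi(x)\le\eta(x)$, and since $\Omega'$, $x$ and $\phi$ were arbitrary, $\Delta f\le\eta$ in the viscous sense.

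The only genuinely delicate points, I expect, are the bookkeeping of the singular part and the transfer of the density bound: one must make sure the perturbation $b_x^4$ (whose Laplacian is in $L^\infty$ with no singular part, by \autoref{prop:Gdistance}(iii) and the Leibniz rule) neither introduces a singular part into $\boldsymbol{\Delta}g$ nor, being small near $x$, spoils the strict negativity of $\Delta^{ac}g$, and that the a.e.\ bound $\Delta^{ac}g\le-\delta$ really does pass to the approximate continuity points manufactured by the maximum principle. This is precisely why one uses $b_x^4$ rather than $b_x^2$: the fourth power is strictly positive off $x$, making the minimum strict, while its Laplacian vanishes quadratically at the pole and thus stays below $\delta$; $b_x^2$ would also make the minimum strict but its Laplacian $2N\abs{\nabla b_x}^2$ need not be small. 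Everything else (locality of the measure, Lipschitz regularity on a slightly smaller closed ball, etc.) is routine.
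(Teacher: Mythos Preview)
Your proof is correct and follows essentially the same strategy as the paper: perturb the touching function with $b_x^4$ to make the minimum strict, then apply the Omori--Yau/Jensen maximum principle (\autoref{thm:maximumprincipleOYJ}, \autoref{rm:reversed}) to produce approximate continuity points of $\Delta^{ac}$ near $x$ where the Laplacian is almost nonnegative, and compare with the a.e.\ bound $\Delta^{ac}f\le\eta$. The only differences are cosmetic: you phrase it as a contradiction and shrink the ball in advance so that $\Delta b_x^4\le\delta$, whereas the paper argues directly, using that $\Delta b_x^4(x_n)\to 0$ along the sequence $x_n\to x$ (via \eqref{eq:twosidedbx}, \eqref{eq:gradbx}) and then concluding $\Delta\phi(x)\le\eta(x)$ from continuity of $\Delta\phi$ and $\eta$.
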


\begin{proof}
If $\boldsymbol{\Delta} f\le \eta$ in the sense of distributions, then $\boldsymbol{\Delta}f$ is a signed Radon measure whose singular part is non-positive. Moreover, for any Lebesgue point $x\in\Omega$ of $\boldsymbol{\Delta}^{\mathrm{ac}}f$, it holds
\begin{equation*}
\Delta^{\mathrm{ac}}f(x)\le \eta (x)\, .
\end{equation*}
This is a direct consequence of the observation that $\boldsymbol{\Delta} ^{\mathrm{ac}}f\le \eta$ and of the very definition of Lebesgue point.

The proof now follows from the same argument used in the proof of \autoref{prop:smoothcomp} with the only adjustment that we have to consider Lebesgue points $(x_n)$ of the absolutely continuous part of the Laplacian in place of general points and $\Delta^{\mathrm{ac}}$ in place of the pointwise defined Laplacian $\Delta$.
\end{proof}



\begin{lemma}[Maximum principle for viscosity sub/super solutions]\label{lemma:maxprincvisco}
Let $(X,\dist,\meas)$ be an $\RCD(K,N)$ metric measure space for some $K\in\setR$ and $1\le N<\infty$. Let $\Omega\subset X$ be an open and bounded domain such that there exists $\Omega\Subset\tilde{\Omega}$ with $\meas(X\setminus\tilde{\Omega})>0$.
Let moreover $f:\Omega\to\setR$ be a Lipschitz function such that $\Delta f\le 0$ in the viscous sense. Then
\begin{equation*}
\min_{x\in\Omega} f(x)=\min_{x\in\partial \Omega} f(x)\, .
\end{equation*}
\end{lemma}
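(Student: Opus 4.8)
The plan is to argue by contradiction following the standard pattern for the maximum principle in viscosity theory, exploiting the fact that adding a suitable subsolution with a sign lets us convert an interior minimum into a strict one. Suppose, for contradiction, that $\min_{\Omega} f < \min_{\partial\Omega} f$, so that $f$ attains its minimum over $\overline{\Omega}$ at some interior point and the minimum is strictly less than the boundary minimum. Let $m := \min_{\partial\Omega} f$ and let $\delta := m - \min_{\Omega} f > 0$. Since $\overline{\Omega}$ is compact, $f$ is bounded on $\Omega$; fix a point $\bar x\in\Omega$ with $f(\bar x) = \min_{\Omega} f$.

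First I would introduce a perturbation that makes the interior minimum strict without affecting the boundary inequality. Since $\Omega\Subset\tilde\Omega$ with $\meas(X\setminus\tilde\Omega)>0$, we may (after rescaling) assume $B_1(z)\subset\tilde\Omega$ for any $z\in\overline\Omega$, and the local Green distance $b_z$ of \autoref{prop:Gdistance} is available on balls around points of $\overline\Omega$. Actually it is cleaner to use a single auxiliary function: pick $z\in X$ with a sufficiently large ball $B_R(z)\supset\overline\Omega$ contained in a domain on whose complement $\meas$ is positive, and consider $w := -G_z$, the negative of the Green function of the Laplacian on that larger domain with pole at $z$; then $w$ is harmonic on $\overline\Omega$ (the pole lies outside), so it has $\Delta w = 0$ pointwise, hence also $\Delta w\le 0$ in the viscous sense. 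Better still, and what I would actually do: take $g := -b_z^2$ with $z$ outside $\overline\Omega$, so by \eqref{eq:laplab} one has $\Delta g = -2N|\nabla g^{1/2}|^2\le 0$ pointwise on a neighbourhood of $\overline\Omega$, and $g$ is $C$-Lipschitz there with $|g|$ bounded; moreover $g$ is strictly concave enough that $f_\epsilon := f + \epsilon g$ still has its minimum over $\overline\Omega$ strictly in the interior for $\epsilon>0$ small, since $\epsilon \cdot \mathrm{osc}_{\overline\Omega}(g)$ can be made smaller than $\delta$. Wait — subtracting a function does not by itself create a \emph{strict} interior minimum; the right move is instead to add $\epsilon b_{\bar x}^4$ centred at the would-be minimum, exactly as in the proof of \autoref{prop:smoothcomp}. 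Concretely, set $\tilde f := f + \epsilon\, b_{\bar x}^4$ on a neighbourhood $\Omega'$ of $\bar x$ with $\Omega'\Subset\Omega$; then $\tilde f$ has a strict local minimum at $\bar x$, and we apply the Omori–Yau–Jensen maximum principle \autoref{thm:maximumprincipleOYJ} (in the form of \autoref{rm:reversed}, for minima) together with \eqref{eq:laplb4} and the estimates \eqref{eq:twosidedbx}, \eqref{eq:gradbx} to produce a sequence $x_n\to\bar x$ of approximate continuity points with $\liminf_n \Delta^{ac} f(x_n)\ge 0$.

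The cleanest route, however, is to pass through the already-established equivalence: by \autoref{prop:distrmeasure} type reasoning in reverse, it suffices to show that viscous subsolution $\Rightarrow$ distributional subsolution and then invoke \autoref{thm:supersolsupermin} together with the classical comparison principle \autoref{thm:existenceanccomparison}(iii) (applied with $\eta=0$) and the existence of harmonic functions with prescribed continuous boundary data, which immediately gives $\min_\Omega f = \min_{\partial\Omega} f$. But since this section is building \emph{towards} that equivalence, I would give the self-contained argument: assume $\min_\Omega f < \min_{\partial\Omega} f$, perturb $f$ at an interior minimum point $\bar x$ by $+\epsilon b_{\bar x}^4$ to get a strict interior minimum of the locally Lipschitz function $\tilde\psi$, apply \autoref{thm:maximumprincipleOYJ}/\autoref{rm:reversed} to get points $x_n\to\bar x$ with $\liminf_n\Delta^{ac}\tilde\psi(x_n)\ge 0$, and observe that at such points the viscosity hypothesis $\Delta f\le 0$ is violated: indeed, testing $f$ from below near $x_n$ by a smooth function with Laplacian close to $\Delta^{ac}\tilde\psi(x_n) - \Delta(\epsilon b_{\bar x}^4)(x_n) > -\epsilon' $ — here one constructs the test function by solving the Poisson problem \autoref{thm:existenceanccomparison}(ii) with the right-hand side and shifting down slightly so it touches $f$ from below at $x_n$ — forces $\Delta(\text{test})(x_n)\le 0$, contradiction once $\epsilon,\epsilon'$ are small. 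Letting $\epsilon\downarrow 0$ closes the argument.

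\textbf{Main obstacle.} The delicate point is the construction, at the points $x_n$ furnished by the maximum principle, of an \emph{admissible smooth test function} (in the sense of \autoref{def:viscosity}: in $D(\Delta,\Omega')$ with continuous Laplacian) that touches $f$ from below exactly at $x_n$ and whose Laplacian there is controlled from below by a quantity that contradicts $\Delta f\le 0$ in the viscous sense. Since $x_n$ is only an approximate continuity point of $\Delta^{ac} f$ and $f$ is merely Lipschitz, one cannot use $f$ itself; the resolution is to combine the Green-distance bump $b_{x_n}^4$ (strictly positive Laplacian, vanishing with vanishing gradient at $x_n$) with a solution of a Poisson problem with continuous data, using \autoref{thm:lipregPoisson} for the regularity of the latter. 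This is essentially the mechanism already deployed in \autoref{prop:smoothcomp} and \autoref{prop:distrimplvisc}, so the expectation is that the proof reduces, after the contradiction setup, to citing those two results (or their proofs) plus \autoref{thm:existenceanccomparison}.
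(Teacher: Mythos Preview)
Your proposal has a genuine gap. The self-contained argument you settle on is to perturb $f$ to $\tilde\psi:=f+\eps\, b_{\bar x}^4$ and then invoke the Omori--Yau--Jensen maximum principle \autoref{thm:maximumprincipleOYJ} (via \autoref{rm:reversed}) at the strict interior minimum. But \autoref{thm:maximumprincipleOYJ} requires that $\tilde\psi$ (hence $f$) already have a \emph{measure-valued} Laplacian with a sign on the singular part. At this stage you only know $\Delta f\le 0$ in the \emph{viscous} sense; the very purpose of this lemma is to be a step towards ``viscous $\Rightarrow$ distributional'', so you cannot assume $f\in D(\boldsymbol{\Delta})$. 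The subsequent step---building, at the points $x_n$, an admissible test function touching $f$ from below with strictly positive Laplacian---is also only sketched and would need an independent argument, but the circularity above is the blocking issue.

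The paper's proof avoids \autoref{thm:maximumprincipleOYJ} entirely and is much more direct. Using the assumption $\Omega\Subset\tilde\Omega$ with $\meas(X\setminus\tilde\Omega)>0$, one solves $\Delta v=1$ on a slightly larger domain via \autoref{thm:existenceanccomparison}(ii); the solution $v$ is locally Lipschitz by \autoref{thm:lipregPoisson} and can be shifted to be positive on $\overline\Omega$. If $\min_\Omega f<\min_{\partial\Omega}f$, then for $\eps>0$ small the function $f_\eps:=f-\eps v$ still has an interior minimum at some $\bar x\in\Omega$ (since the oscillation of $\eps v$ on $\overline\Omega$ is $<\delta$). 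Setting $c:=f_\eps(\bar x)$, the function $\phi:=\eps v+c$ satisfies $\phi\le f$ near $\bar x$ with equality at $\bar x$, and $\phi$ has continuous Laplacian $\eps>0$. Plugging $\phi$ into \autoref{def:viscosity} gives $\eps=\Delta\phi(\bar x)\le 0$, a contradiction. The key point you missed is that the perturbation itself can serve as the viscosity test function; no appeal to $\Delta^{ac}f$ or to the measure-valued Laplacian of $f$ is needed.
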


\begin{proof}
Let us suppose by contradiction that 
\begin{equation*}
\min_{x\in\Omega} f(x)<\min_{x\in\partial \Omega} f(x)\, .
\end{equation*}
Then the minimum in the left hand side is attained at an interior point $x_0\in\Omega$. In particular
\begin{equation}\label{eq:intmin}
\min_{x\in\partial \Omega}f(x)>f(x_0)\, .
\end{equation}
Consider a solution of the Poisson problem $\Delta v=1$ on $\Omega'$ such that $v\ge 0$ on $\Omega$ and 
\begin{equation*}
M:=\max_{\partial \Omega} v\ge \min_{\partial \Omega}v=:m>0\, .
\end{equation*} 
This function can be obtained with an additive perturbation from any solution of $\Delta f=1$ on $\Omega'$, by the local Lipschitz regularity \autoref{thm:lipregPoisson}.
\\We claim that, for $\eps>0$ sufficiently small, also
\begin{equation*}
f_{\eps}(x):=f(x)-\eps v(x)
\end{equation*}
attains a local minimum at an interior point in $\Omega$.\\
Let us suppose by contradiction that this is not the case. Then, for any $\eps>0$, the global minimum  of  $f_{\eps}$ on $\bar{\Omega}$ is attained  on $\partial \Omega$. In particular there exists $x_{\eps}\in \partial \Omega$ such that
\begin{equation*}
f(x_{\eps})-\eps M\le f(x_{\eps})-\eps v(x_{\eps})=f_{\eps}(x_\eps)\le f_{\eps}(x_{0})\le f(x_0)\, .
\end{equation*}
Hence
\begin{equation*}
\min_{x\in\partial\Omega}f(x)-f(x_0)\le f(x_{\eps})-f(x_0)\le M\eps\, ,\quad\text{for any $\eps>0$}\, ,
\end{equation*}
which yields a contradiction with \eqref{eq:intmin} a soon as $\eps$ is sufficiently small.
\medskip

Let now $\eps>0$ be small enough to get that $f_{\eps}=f-\eps v$ has a local minimum $c\in \setR$ at $\bar{x}\in\Omega$. Note that, by assumption, the function $g:=f-c$ has $\Delta g\leq 0$ in the viscous sense. Using $\eps v$ as a test function in the definition of the bound $\Delta g\leq 0$ in viscous sense,  we infer 
\begin{equation*}
\Delta (\eps v)(\bar{x})\le 0\, ,
\end{equation*}
a contradiction since $\Delta v=1$ on $\Omega$.
\end{proof}


\begin{theorem}\label{thm:viscoimpldistri}
Let $(X,\dist,\haus^N)$ be an $\RCD(K,N)$ metric measure space. Let $\Omega\subset X$ be an open and bounded domain, $f:\Omega\to\setR$ be a Lipschitz function and $\eta:\Omega\to\setR$ be continuous. Then $\boldsymbol{\Delta} f\le \eta$ in the sense of distributions if and only if $\Delta f\le \eta$ in the viscous sense.
\end{theorem}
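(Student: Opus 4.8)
The implication ``distributional $\Rightarrow$ viscous'' has already been established in \autoref{prop:distrimplvisc}, so the plan is to prove the converse: assume $\Delta f\le\eta$ in the viscous sense and deduce $\boldsymbol{\Delta}f\le\eta$ in the sense of distributions. By \autoref{rm:shiftingto0} (applied after first reducing to a slightly smaller bounded $\Omega'\Subset\Omega$ with $\meas(X\setminus\tilde\Omega)>0$ for some $\Omega'\Subset\tilde\Omega$, which is possible since $X$ is unbounded or at least $\meas(X\setminus\Omega)>0$ can be arranged by localizing) one can subtract a local solution $v_\eta$ of $\Delta v_\eta=\eta$ — which exists and is locally Lipschitz by \autoref{thm:existenceanccomparison}(ii) and \autoref{thm:lipregPoisson} — and replace $f$ by $f-v_\eta$. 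Thus it suffices to treat the case $\eta\equiv 0$: if $\Delta f\le 0$ in the viscous sense on a bounded domain $\Omega$, then $\boldsymbol{\Delta}f\le 0$ in the sense of distributions, i.e.\ $f$ is superharmonic in the distributional sense.

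The core step is to identify viscosity supersolutions with \emph{classical supersolutions} in the sense of \autoref{def:classicalsupersolution}, and then invoke \autoref{thm:supersolsupermin} (classical supersolution $\Leftrightarrow$ distributional bound $\boldsymbol{\Delta}f\le\eta$, here with $\eta=0$). Concretely, fix $\Omega''\Subset\Omega$ and let $g\in C(\overline{\Omega''})$ be harmonic in $\Omega''$ with $g\le f$ on $\partial\Omega''$; I must show $g\le f$ on $\Omega''$. Consider $h:=g-f$; then $h\le 0$ on $\partial\Omega''$ and, in the viscous sense, $\Delta h=\Delta g-\Delta f\ge 0$, i.e.\ $-h$ is a viscosity subsolution — equivalently $h$ is a viscosity supersolution of $\Delta h\le 0$ (the sum of a function with continuous Laplacian and a viscosity supersolution is again a viscosity supersolution, since one can absorb the harmonic part into the test function, cf.\ the argument in \autoref{prop:smoothcomp} and \autoref{rm:shiftingto0}). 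Now \autoref{lemma:maxprincvisco}, the maximum principle for viscosity supersolutions, gives $\min_{\Omega''}h=\min_{\partial\Omega''}h\ge -\sup|f-g|$... more precisely it gives $\min_{\overline{\Omega''}}h=\min_{\partial\Omega''}h$, and since $h\le 0$ on $\partial\Omega''$ we conclude $h\le 0$ on $\Omega''$, i.e.\ $g\le f$. Hence $f$ is a classical supersolution of $\Delta f=0$, and \autoref{thm:supersolsupermin} yields $\boldsymbol{\Delta}f\le 0$ in the distributional sense. Undoing the reduction (adding back $v_\eta$) gives $\boldsymbol{\Delta}f\le\eta$ distributionally on $\Omega$, as desired; a routine exhaustion argument passes from $\Omega''\Subset\Omega$ to all of $\Omega$.

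The main obstacle is making sure \autoref{lemma:maxprincvisco} applies to $h=g-f$: its hypothesis requires $h$ to be Lipschitz with $\Delta h\le 0$ in the viscous sense on a bounded domain admitting $\Omega''\Subset\tilde\Omega$ with $\meas(X\setminus\tilde\Omega)>0$. The function $g$ is only \emph{continuous} on $\overline{\Omega''}$, not Lipschitz, and harmonic only in the open set — but by interior regularity (\autoref{thm:lipregPoisson}) $g$ is locally Lipschitz in $\Omega''$, so on each $\Omega'''\Subset\Omega''$ the function $h$ is Lipschitz and one runs the maximum principle on $\Omega'''$, then lets $\Omega'''\uparrow\Omega''$ using continuity of $g$ and $f$ up to (or near) the boundary together with the boundary inequality $g\le f$ on $\partial\Omega''$; a small $\eps$-fattening of the sublevel argument as in \autoref{lemma:maxprincvisco} handles the boundary comparison. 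The measure condition $\meas(X\setminus\tilde\Omega)>0$ is harmless since $\Omega$ is bounded and one only needs it on relatively compact subdomains. The other point to check is that viscosity supersolutions are stable under subtracting a function with continuous Laplacian — this is exactly the observation underlying \autoref{prop:smoothcomp} and \autoref{rm:shiftingto0}: if $\phi$ is an admissible test for $h$ at $x$ (i.e.\ $\phi\in D(\Delta,\Omega')$, $\Delta\phi$ continuous, $\phi\le h$ with equality at $x$), then $\phi+g$ is an admissible test for $f$ at $x$ (using that $g$ has continuous Laplacian), so the viscous bound on $f$ transfers to $h$. Assembling these pieces gives the equivalence.
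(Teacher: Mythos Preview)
Your approach is the paper's: show that $f$ is a classical supersolution in the sense of \autoref{def:classicalsupersolution} via the comparison principle \autoref{lemma:maxprincvisco}, then conclude by \autoref{thm:supersolsupermin}. One correction to your bookkeeping: with $h:=g-f$ you have $\Delta h\ge 0$ viscously, so $h$ is a \emph{subsolution} (not a supersolution) and you must apply a maximum principle to it, or---as the paper does---simply set $h:=f-g$ so that $\Delta h\le 0$ and \autoref{lemma:maxprincvisco} applies directly to give $\min_{\Omega'}h=\min_{\partial\Omega'}h\ge 0$; your preliminary reduction to $\eta\equiv 0$ and the worry about Lipschitz regularity of $g$ are also unnecessary, since the paper handles general $\eta$ by taking $g$ with $\Delta g=\eta$ on $\Omega'$, and the proof of \autoref{lemma:maxprincvisco} only uses continuity and the viscous test-function definition.
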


\begin{proof}
We already proved in \autoref{prop:distrimplvisc} that distributional bounds on the Laplacian imply viscous bounds, so we are left to prove the converse implication.
\medskip

We claim that if $\Delta f\le \eta$ in the viscous sense, then $f$ is a classical supersolution to $\Delta f=\eta$ in the sense of \autoref{def:classicalsupersolution}. This is a consequence of \autoref{lemma:maxprincvisco}. Indeed, let us consider any open subdomain $\Omega'\Subset\Omega$ and any function $g\in C(\overline{\Omega'})$ such that $\Delta g=\eta$ on $\Omega'$ and $g\le f$ on $\partial\Omega'$.\\
Observe that $h:=f-g$ is continuous on $\overline{\Omega'}$ and verifies $\Delta h\le 0$ in the viscous sense on $\Omega'$, since $\Delta f\le \eta$ in the viscous sense and $\Delta g=\eta$. Therefore we can apply \autoref{rm:shiftingto0} and infer, by \autoref{lemma:maxprincvisco}, that
\begin{equation*}
\min_{x\in\Omega'} h(x)=\min_{x\in\partial \overline{\Omega'}} h(x)\ge 0\, .
\end{equation*}
It follows that $f\ge g$ on $\Omega'$, hence $f$ is a classical supersolution of $\Delta f=\eta$. 

The validity of the bound $\boldsymbol{\Delta} f\le \eta$ in the sense of distributions follows then from \autoref{thm:supersolsupermin}.
\end{proof}


The following is a counterpart, tailored to our purposes and under simplified assumptions, of the classical fact that the infimum of a family of viscosity supersolutions to a given equation is still a supersolution. Notice that the viscous approach fits particularly well with the stability issue for Laplacian bounds under infima. This property seems to be known to experts but we are not aware of any reference.

\begin{proposition}\label{prop:stabinf}
Let $(X,\dist,\haus^N)$ be an $\RCD(K,N)$ metric measure space. Let $\Omega\subset X$ be an open domain and let $f:\Omega\to\setR$ be continuous. Let $\mathcal{F}$ be a family of uniformly Lipschitz functions $u:\Omega\to\setR$ such that 
\begin{equation*}
\Delta u\le f\quad \text{in the viscous sense on $\Omega$}.
\end{equation*}
Let $v:\Omega\to\setR\cup\{-\infty\}$ be defined by
\begin{equation*}
v(x):=\inf\{u(x)\, :\, u\in\mathcal{F}\}\, .
\end{equation*}
Assume there exists a point $x_0\in\Omega$ such that $v(x_0)>-\infty$.  Then
\begin{equation*}
\Delta v\le f  \quad\text{in the viscous sense on $\Omega$}.
\end{equation*}
\end{proposition}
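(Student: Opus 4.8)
The plan is to reduce the statement to the equivalent \emph{distributional} formulation via \autoref{thm:viscoimpldistri}, but the delicate point is that the infimum $v$ of an infinite family of Lipschitz functions need not have a distributional Laplacian bound available \emph{a priori} — one does not know yet that $\boldsymbol{\Delta}v$ is even a measure. So instead I would work directly with the viscous definition (\autoref{def:viscosity}), which is local and does not require any global measure-theoretic structure. First I would record the elementary preliminary facts: since $\mathcal{F}$ consists of uniformly Lipschitz functions and $v(x_0)>-\infty$ at one point, $v$ is finite everywhere (by the uniform Lipschitz bound, $|u(x)-u(x_0)|\le L\dist(x,x_0)$ for all $u\in\mathcal F$, so $v(x)\ge v(x_0)-L\dist(x,x_0)>-\infty$), and $v$ is itself $L$-Lipschitz on $\Omega$ as an infimum of a uniformly Lipschitz family. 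Thus $v$ is an admissible candidate for the left-hand side of a viscous bound.

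Next I would carry out the standard viscosity-theory argument for stability under infima, adapted to this setting. Fix $\Omega'\Subset\Omega$ and a test function $\phi:\Omega'\to\setR$ with $\phi\in D(\Delta,\Omega')$, $\Delta\phi$ continuous, $\phi(x)=v(x)$ for some $x\in\Omega'$ and $\phi\le v$ on $\Omega'$ with strict inequality away from $x$; the goal is $\Delta\phi(x)\le f(x)$. The classical device is to perturb $\phi$ so that its contact point with $v$ becomes \emph{strict and interior}, and then to transfer this contact point to one of the members $u\in\mathcal F$. Concretely, using the Green-type auxiliary function $b_x$ of \autoref{prop:Gdistance} (well-defined on a small ball around $x$ after scaling), replace $\phi$ by $\phi_\epsilon:=\phi-\epsilon\,b_x^4$: by \eqref{eq:laplb4}, $\Delta(b_x^4)=4(N+2)b_x^2|\nabla b_x|^2$ is continuous, so $\phi_\epsilon$ still has continuous Laplacian with $\Delta\phi_\epsilon(x)=\Delta\phi(x)$ (since $b_x(x)=0$), and $\phi_\epsilon\le v$ with a \emph{strict} maximum of $\phi_\epsilon-v$ at $x$ on some ball $B_\rho(x)\Subset\Omega'$. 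Now choose a minimizing sequence $u_k\in\mathcal F$ with $u_k(x)\to v(x)$. Each $u_k-\phi_\epsilon$ attains a minimum on $\overline{B_\rho(x)}$ at some point $y_k$; since $u_k-\phi_\epsilon\ge v-\phi_\epsilon\ge 0$ with equality forced only near $x$ in the limit, and since $(u_k-\phi_\epsilon)(x)=u_k(x)-\phi(x)\to 0$ while on $\partial B_\rho(x)$ one has $u_k-\phi_\epsilon\ge v-\phi_\epsilon\ge c_\rho>0$ for some positive $c_\rho$ (strictness of the max), the minimum point $y_k$ lies in the interior of $B_\rho(x)$ for $k$ large, and a subsequence $y_k\to y_\infty$ with $y_\infty=x$ (again by strictness and uniform Lipschitz equicontinuity of the family, which prevents the contact point from escaping). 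At $y_k$, the function $\phi_\epsilon + (u_k(y_k)-\phi_\epsilon(y_k))$ touches $u_k$ from below, so it is a legitimate viscous test function for $u_k$ at $y_k$; since $\Delta u_k\le f$ in the viscous sense, we get $\Delta\phi_\epsilon(y_k)\le f(y_k)$. Letting $k\to\infty$ and using continuity of $\Delta\phi_\epsilon$ and of $f$ gives $\Delta\phi_\epsilon(x)\le f(x)$, i.e.\ $\Delta\phi(x)\le f(x)$; letting $\epsilon\downarrow 0$ is then unnecessary since this already holds. Actually one may keep $\epsilon$ fixed throughout; the role of the $b_x^4$ perturbation is solely to guarantee strictness so the contact point of $u_k$ with the test function converges to $x$ rather than wandering off.

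The step I expect to be the main obstacle is the \emph{localization of the contact points} $y_k$: showing both that $y_k$ lies in the open ball for $k$ large (so that it is a genuine interior touching point and $u_k-\phi_\epsilon$ can play the role required in \autoref{def:viscosity}) and that $y_k\to x$. This is where the strict inequality $\phi<v$ away from $x$ (inherited, after the $b_x^4$ perturbation, as a quantitative gap on the sphere $\partial B_\rho(x)$) together with the uniform Lipschitz bound on $\mathcal F$ and the convergence $u_k(x)\to v(x)$ must be combined carefully; the uniform Lipschitz bound is precisely what upgrades pointwise control of $u_k$ at $x$ to local control near $x$ and rules out concentration of the minimum on the boundary. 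Once this compactness/localization is in place, the rest is a routine passage to the limit using continuity of $\Delta\phi_\epsilon$ and $f$. A small technical remark worth including: if instead one prefers to avoid repeating the perturbation argument, one can note that by \autoref{thm:viscoimpldistri} each $u\in\mathcal F$ is a classical supersolution of $\Delta u=f$, hence (by the comparison characterization) so is any finite infimum; but passing to infinite infima still requires the above direct viscous argument, or equivalently a lower-semicontinuity/Harnack-type argument, so working directly in the viscous sense as above is the cleanest route.
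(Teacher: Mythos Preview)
Your overall architecture matches the paper's exactly: same $b_x^4$ perturbation to force a strict touching point, same approximating sequence $u_k\in\mathcal F$ with $u_k(x)\to v(x)$, same localization argument showing the minima $y_k$ of $u_k-\phi_\epsilon$ lie in the interior and converge to $x$. The preliminary remarks (finiteness and Lipschitz regularity of $v$) and the compactness step for $y_k\to x$ are fine.

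There is, however, a genuine technical gap in your final step. You assert that $\Delta(b_x^4)=4(N+2)b_x^2|\nabla b_x|^2$ is continuous, so that $\phi_\epsilon=\phi-\epsilon b_x^4$ is an admissible viscous test function in the sense of \autoref{def:viscosity} at $y_k$. But in an $\RCD(K,N)$ space there is no reason for $|\nabla b_x|^2$ to be continuous: $G_x$ is harmonic away from $x$, hence locally Lipschitz by \autoref{thm:lipregPoisson}, but its gradient is only an $L^\infty$ object. So $\Delta\phi_\epsilon$ is merely bounded, not continuous, and \autoref{def:viscosity} does not directly apply to $\phi_\epsilon$ as a test function for $u_k$. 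Your passage to the limit $\Delta\phi_\epsilon(y_k)\le f(y_k)\Rightarrow \Delta\phi(x)\le f(x)$ therefore lacks justification for the first inequality.

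The paper avoids this by arguing by contradiction and routing through the distributional side, where no continuity is needed for linearity: assuming $\Delta\phi(x)>f(x)$, hence $\Delta\phi>f+\eps$ on a neighborhood $U_x$, one gets from \autoref{thm:viscoimpldistri} that $\boldsymbol{\Delta}u_h\le f$ as measures, so $\boldsymbol{\Delta}(u_h-\phi_{\eps'})\le -\eps''$ distributionally on $U_x$ (here the subtraction of $\Delta\phi_{\eps'}$ is legitimate because it is an $L^\infty$ function, continuity irrelevant). By \autoref{prop:distrimplvisc} this yields $\Delta\tilde u_h\le -\eps''$ in the viscous sense, hence $\tilde u_h$ is a classical supersolution of $\Delta w=0$; but $\tilde u_h$ has an interior minimum at $y_h$, so the strong maximum principle forces it to be constant, contradicting the strict superharmonicity. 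Your direct approach can be repaired by inserting exactly this detour: instead of testing $u_k$ with $\phi_\epsilon$, pass to the distributional bound for $u_k-\phi_\epsilon$, then invoke the strong maximum principle.
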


\begin{proof}
Let us preliminarily point out that, if $v(x_0)>-\infty$, then $v:\Omega\to\setR$ and, by the uniform Lipschitz assumption on the family $\mathcal{F}$, $v$ is Lipschitz on $\Omega$.
\medskip

We wish to verify that $\Delta v\le f$ in the viscous sense. To this aim, let $\Omega'\Subset\Omega$, $x\in\Omega'$ and $\phi:\Omega'\to\setR$ be such that $\phi\le v$ on $\Omega'$, $\phi(x)=v(x)$ and $\phi$ has continuous Laplacian on $\Omega'$.\\
Let us suppose by contradiction that $\Delta\phi(x)>f(x)$. Then there exist $\eps>0$ and a neighbourhood $U_x\ni x$ such that $\Delta\phi>f+\eps$ on $U_x$, by continuity of $\Delta\phi$ and $f$. 
\\
Let $b_x$ be the Green-type distance of \autoref{prop:Gdistance}, and recall the expression \eqref{eq:laplb4} of $\Delta b_{x}^{4}$. Using  the two sided estimates \eqref{eq:twosidedbx} for $b_x$ and the gradient estimate \eqref{eq:gradbx}, we can find $\eps'>0$ small enough such that, setting $\phi_{\eps'}:=\phi-\eps' b_x^4$,  it holds $\Delta\phi_{\eps'}>f+\eps''$ on $U_x$, for some $\eps''>0$.\\
Observe that $v-\phi_{\eps'}$ is non-negative and, thanks to the perturbation, it has a strict minimum at $x$. Let us consider now $u_h\in\mathcal{F}$ such that 
\begin{equation*}
v(x)=\lim_{h\to\infty}u_h(x)\, .
\end{equation*}
Let $\tilde{u}_h:=u_h-\phi_{\eps'}$. Let $y_h\in \overline{U_x}$ be a minimum point of $\tilde{u}_h$ on $\overline{U_x}$. Then it is easy to prove that $y_h\to x$ as $h\to\infty$, since $v-\phi_{\eps'}$ has its unique minimum on $\overline{U_x}$ at $x$. 

It is now sufficient to observe that $\Delta u_h\le f$ in the viscous sense and use that $\Delta \phi_{\eps'}>f+\eps''$ in the viscous and a.e. sense. Hence
\begin{equation}\label{eq:Deltauh<eps''}
\Delta \tilde{u}_h<-\eps''\, \quad\text{in the viscous sense on $U_x$}.
\end{equation}
From the proof of \autoref{thm:viscoimpldistri}, we infer that  $\tilde{u}_h$ is a classical supersolution of $\Delta w=0$, i.e. it is superharmonic in classical sense.
 Since $\tilde{u}_h$  is achieving its minimum at an interior point of $U_x$, by strong maximum principle for superharmonic functions (see for instance  \cite[Theorem 8.13]{BjornBjorn11}),  it is constant on $U_{x}$. But then  $\Delta \tilde{u}_h=0$ on $U_x$, contradicting \eqref{eq:Deltauh<eps''}.
\end{proof}

The last part of this subsection is dedicated to the relationship between \autoref{def:heatlaplbounds} and the other notions of Laplacian bounds that we have introduced and investigated so far.
\medskip

For a sufficiently smooth function $f$ on the Euclidean space or on a Riemannian manifold, the Laplacian $\Delta f(x)$ determines the first non trivial term in the asymptotic expansion of the average of $f$ on balls centred at $x$:
\begin{equation}\label{eq:asball}
\fint_{B_r(x)}f(y)\di\haus^n(y)=f(x)+C_n\Delta f(x)r^2+o(r^2)\, ,\quad\text{as $r\to 0$}\, ,
\end{equation} 
where $C_n$ is a constant depending only on the ambient dimension. A classical result is the fact that a continuous function $u:\Omega\to\setR$ on a Euclidean domain is harmonic (in the classical sense) if and only if 
\begin{equation*}
\lim_{r\to 0}\fint_{B_r(x)}(u(y)-u(x))\di\Leb^n(y)=0\, ,\quad\text{for any $x\in\Omega$}\, .
\end{equation*}

Although being a really powerful tool, at first sight, the asymptotic expansion above seems to require smoothness of the ambient space for its validity. Moreover, it is easy to check that it fails in general on smooth weighted Riemannian manifolds.

There have been recent attempts of understanding the connections between this approach through asymptotic mean values and the distributional notion of Laplacian on metric measure spaces. Let us mention in particular \cite[Section 4]{ZhangZhu12} where, relying on some ideas originally due to \cite{Petrunin00,Petrunin03}, it is shown that the asymptotic of the average on balls determines the Laplacian of a semiconcave function at sufficiently regular points on Alexandrov spaces. 
\medskip

Here we consider an alternative approach: instead of looking at the behaviour of averages on balls, we look at the pointwise behaviour of the heat flow. Basically, we consider weighted averages instead of averages, the weight being given by the heat kernel.\\ 
As we shall see, this turns to be a more intrinsic approach (the infinitesimal generator of the heat semigroup is the Laplacian) and allows for a counterpart of \eqref{eq:asball} better suited for the non smooth framework. 

\begin{proposition}\label{prop:heatimplvisco}
Let $(X,\dist,\meas)$ be an $\RCD(K,N)$ metric measure space.  Let $\Omega\subset X$ be an open subset,  $f:\Omega\to\setR$ be Lipschitz and  $\eta:\Omega\to\setR$ be continuous. Assume that for some global extension $\tilde{f}:X\to\setR$ of $f$ with polynomial growth,  it holds
\begin{equation}\label{eq:assheat}
\limsup_{t\downarrow 0}\frac{P_t\tilde{f}(x)-\tilde{f}(x)}{t}\le\eta (x)\,, \quad \text{ for any } x\in \Omega.
\end{equation}
Then $\Delta f\le \eta$ on $\Omega$ in the viscous sense.
\end{proposition}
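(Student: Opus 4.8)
The plan is to argue by contradiction, exactly paralleling the structure of the proof of \autoref{prop:smoothcomp} and \autoref{prop:distrimplvisc}, but now using the pointwise heat-flow hypothesis \eqref{eq:assheat} in place of a pointwise or distributional Laplacian bound. Suppose $\Delta f \le \eta$ fails in the viscous sense. Then there are $\Omega' \Subset \Omega$, a point $x \in \Omega'$, and a test function $\phi : \Omega' \to \setR$ with $\phi \in D(\Delta, \Omega')$, $\Delta\phi$ continuous, $\phi(x) = f(x)$, $\phi(y) \le f(y)$ for $y \neq x$, and $\Delta\phi(x) > \eta(x)$. By continuity of $\Delta\phi$ and $\eta$, there is a neighbourhood $U_x \ni x$ on which $\Delta\phi > \eta + \eps$ for some $\eps>0$.

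The key step is to correct $\phi$ by the Green-type auxiliary function $b_x$ from \autoref{prop:Gdistance}, so as to produce a test function that touches $f$ from below at $x$ with a \emph{strict} local minimum of $f - \phi$, while only improving its Laplacian. Concretely, using \eqref{eq:laplb4}, \eqref{eq:twosidedbx} and \eqref{eq:gradbx}, choose $\eps' > 0$ small enough that $\phi_{\eps'} := \phi - \eps' b_x^4$ still satisfies $\Delta\phi_{\eps'} > \eta + \eps''$ on $U_x$ for some $\eps'' > 0$, while now $f - \phi_{\eps'}$ has a strict local minimum at $x$ (value $0$). Next I would relate the heat flow of $\phi_{\eps'}$ to its Laplacian: since $\phi_{\eps'} \in D(\Delta, U_x)$ with continuous (hence locally bounded) Laplacian, \autoref{lemma:pointheat} applies at the point $x$ (continuity of $\Delta\phi_{\eps'}$ makes $x$ a Lebesgue point), giving
\begin{equation*}
\lim_{t\downarrow 0}\frac{P_t\widetilde{\phi_{\eps'}}(x) - \widetilde{\phi_{\eps'}}(x)}{t} = \Delta\phi_{\eps'}(x) > \eta(x) + \eps''\, ,
\end{equation*}
for any global extension $\widetilde{\phi_{\eps'}}$ of $\phi_{\eps'}$ with polynomial growth (the choice being irrelevant by \autoref{lemma:sufflocal}).

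The final step is to compare this with the heat flow of $f$ at $x$. Since $\phi_{\eps'} \le f$ on $U_x$ with equality at $x$, and both functions agree at $x$, one expects $P_t$ applied to a global extension of $f$ to dominate $P_t$ applied to a global extension of $\phi_{\eps'}$, \emph{up to errors that are $o(t)$} coming from the regions outside $U_x$ where the two extensions differ. This is where the locality of the heat flow (\autoref{lemma:sufflocal}) is essential: choosing the two global extensions to coincide on $U_x$ (extending $f$ from a slightly smaller $\Omega'' \Subset U_x$ as in \autoref{def:heatlaplbounds}, and similarly for $\phi_{\eps'}$), the difference of the two extensions vanishes near $x$, so $P_t(\widetilde{f} - \widetilde{\phi_{\eps'}})(x) = o(t^n)$ for all $n$; meanwhile, on the region where they agree, positivity of the heat kernel and $\phi_{\eps'} \le f$ give $P_t\widetilde{\phi_{\eps'}}(x) \le P_t\widetilde{f}(x) + o(t)$. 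Combining, $\limsup_{t\downarrow 0} t^{-1}(P_t\widetilde{f}(x) - \widetilde{f}(x)) \ge \limsup_{t\downarrow 0} t^{-1}(P_t\widetilde{\phi_{\eps'}}(x) - \widetilde{\phi_{\eps'}}(x)) = \Delta\phi_{\eps'}(x) > \eta(x)$, contradicting \eqref{eq:assheat}.

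The main obstacle I anticipate is making rigorous the comparison $P_t\widetilde{\phi_{\eps'}}(x) \le P_t\widetilde{f}(x) + o(t)$: the inequality $\phi_{\eps'} \le f$ only holds on $U_x$, and a priori the chosen polynomial-growth extensions need not be ordered globally. The clean way around this is to pick both extensions so that they agree on a fixed neighbourhood of $x$ and, outside it, are both supported/bounded in a way that \autoref{lemma:sufflocal} controls the discrepancy; one must check that $b_x^4$ (hence $\phi_{\eps'}$) indeed extends with polynomial growth and lies in $D(\Delta, U_x)$ with continuous Laplacian, which follows from \autoref{prop:Gdistance} together with the Leibniz rule. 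Once the extensions are arranged to coincide near $x$ and to be bounded away from it, the two heat-flow quantities at $x$ differ by $o(t)$ by the locality lemma, and the argument closes.
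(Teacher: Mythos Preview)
Your argument can be made to work, but it is more complicated than needed and one sentence is wrong as written. The paper's proof is direct rather than by contradiction, does not perturb by $b_x^4$, and handles the extension/comparison issue (which you correctly identify as the crux) with a single clean trick: given the test function $\phi$, extend it via a good cut-off (\autoref{lemma:cutoff}) to a global $\hat\phi$ that equals $\phi$ near $x$, and then set $\tilde\phi := \min\{\tilde f,\hat\phi\}$. This forces $\tilde\phi\le\tilde f$ \emph{globally}, so the comparison principle for the heat flow gives $P_t\tilde\phi(x)\le P_t\tilde f(x)$ with no error term; and since $\phi\le f$ near $x$ with $\hat\phi=\phi$ there, $\tilde\phi$ still coincides with $\phi$ in a neighbourhood of $x$, so \autoref{lemma:pointheat} applies and yields $\Delta\phi(x)=\lim_{t\downarrow 0}t^{-1}(P_t\tilde\phi(x)-\tilde\phi(x))\le \eta(x)$ immediately.

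Your statement ``the difference of the two extensions vanishes near $x$, so $P_t(\tilde f-\widetilde{\phi_{\eps'}})(x)=o(t^n)$'' is false as written: the difference vanishes only \emph{at} $x$, not on a ball, so \autoref{lemma:sufflocal} does not apply to it directly. What you presumably intend is to split $\tilde f-\widetilde{\phi_{\eps'}}$ into its restriction to $U_x$ (nonnegative, hence its heat flow at $x$ is nonnegative) and to $X\setminus U_x$ (vanishing on a ball around $x$, hence $o(t^n)$ by \autoref{lemma:sufflocal}); this does work, but it is precisely the bookkeeping the $\min$ trick avoids. The $b_x^4$ perturbation is likewise unnecessary: no strict minimum is needed, since the argument is not by contradiction.
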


\begin{proof}
We need to verify that, for any open subdomain $\Omega'\Subset\Omega$ and for any function $\phi:\Omega'\to\setR$ with continuous Laplacian on $\Omega'$ satisfying $\phi\le f$ on $\Omega'$ and $\phi(x)=f(x)$ for some $x\in \Omega'$, the following estimate holds:
\begin{equation*}
\Delta \phi(x)\le \eta(x)\, .
\end{equation*}

Let us first assume that $\phi$ extends to a global function $\tilde{\phi}:X\to\setR$ with polynomial growth and such that $\tilde{\phi}\le \tilde{f}$. Then
\begin{equation*}
\Delta\phi(x)=\Delta\tilde{\phi}(x)=\lim_{t\downarrow 0}\frac{P_t\tilde{\phi}(x)-\tilde{\phi}(x)}{t}\le \limsup_{t\downarrow 0}\frac{P_t  \tilde{f}(x)-  \tilde{f}(x)}{t}\le \eta(x)\, ,
\end{equation*}
where the first equality follows from the locality of the Laplacian, the second one from \autoref{lemma:pointheat}, the first inequality follows from the comparison principle for the heat flow and the last one from \eqref{eq:assheat}.

To complete the proof, we need to extend locally defined test functions for the Laplacian bound in viscous sense to globally defined functions, keeping the comparison.\\ 
To this aim, observe that we can extend any test function for the Laplacian bound in viscous sense to a global function $\hat{\phi}$ by multiplying it with a cut-off function with good estimates which is constantly $1$ on a small ball centred at $x$, see \autoref{lemma:cutoff}. In this way, we loose the comparison with $f$ but we obtain a globally defined function which coincides with $\phi$ in a neighbourhood of $x$. Then, setting $\tilde{\phi}:=\min\{\tilde{f}, \hat{\phi}\}$, we can easily verify that $\tilde{\phi}$ has polynomial growth, and $\tilde{\phi}\le \tilde{f}$ globally. Moreover, since $\tilde{\phi}\equiv \phi$ in a neighbourhood of $x$, still $\tilde{\phi}(x)=f(x)$ and $\tilde{\phi}$ has continuous Laplacian in a neighbourhood of $x$.
\end{proof}

\begin{proposition}\label{prop:distrimplheat}
Let $(X,\dist,\meas)$ be an $\RCD(K,N)$ metric measure space. Let $\Omega\subset X$ be an open domain and let $f:\Omega\to\setR$ be a locally Lipschitz function. Let $\eta\in\Cb(\Omega)$ and assume that 
\begin{equation*}
\boldsymbol{\Delta} f\le \eta\,, \quad\text{in the distributional sense on $\Omega$.}
\end{equation*}
 Then, for any $\Omega'\Subset \Omega$ and for any function $\tilde{f}:X\to\setR$ with polynomial growth and such that $\tilde{f}\equiv f$ on $\Omega'$, it holds
\begin{equation*}
\limsup_{t\downarrow 0}\frac{P_t\tilde{f}(x)-\tilde{f}(x)}{t}\le \eta(x)\, , \quad\text{for any $x\in\Omega'$}\, .
\end{equation*}

\end{proposition}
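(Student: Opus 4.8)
The goal is to upgrade a distributional Laplacian bound to the pointwise heat-flow bound of \autoref{def:heatlaplbounds}. The plan is to reduce to a \emph{local} statement via the localization lemmas of the preliminary section, and then to exploit the monotonicity of $t \mapsto P_t g$ built from the semigroup property together with the distributional inequality. First I would reduce to the case $\eta = 0$: fix $\Omega' \Subset \Omega'' \Subset \Omega$, solve the Poisson problem $\Delta v = \eta$ on $\Omega''$ with some boundary data (using \autoref{thm:existenceanccomparison}(ii)), and note that $v$ is locally Lipschitz by \autoref{thm:lipregPoisson}. Then $h := f - v$ satisfies $\boldsymbol{\Delta} h \le 0$ in the distributional sense on $\Omega''$, i.e. $h$ is a superminimizer / classical supersolution of $\Delta h = 0$ on $\Omega''$ by \autoref{prop:compdistrsbmin} and \autoref{thm:supersolsupermin}. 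Since $v \in D(\Delta, \Omega'')$ with $\Delta v = \eta$ continuous, \autoref{lemma:pointheat} applied to any polynomial-growth extension of $v$ gives $\lim_{t\downarrow 0}(P_t\tilde v(x) - \tilde v(x))/t = \eta(x)$ for $x \in \Omega'$, so it suffices to prove $\limsup_{t\downarrow 0}(P_t\tilde h(x) - \tilde h(x))/t \le 0$ whenever $\tilde h$ is a polynomial-growth extension of $h|_{\Omega'}$.

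The heart of the argument is the following: for a function superharmonic (in the distributional sense) on a ball $B_{2r}(x) \Subset \Omega''$, the map $t \mapsto P_t^{B_{2r}(x)} h(x)$ is nonincreasing near $t=0$, where $P^{B}$ denotes the heat semigroup with Dirichlet boundary conditions on the ball. More precisely, for such a Dirichlet heat flow, $\frac{d}{dt} P_t^B h = \Delta^B P_t^B h$, and since $P_t^B h$ inherits (weakly) the sign of the Laplacian for $t$ small — via the comparison principle \autoref{thm:existenceanccomparison}(iii) and the fact that superminimizers are preserved under the Dirichlet heat flow — one gets $P_t^B h \le h$ pointwise, hence $\limsup_{t\downarrow 0}(P_t^B h(x) - h(x))/t \le 0$. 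The final step is to compare the global flow $P_t \tilde h$ with the Dirichlet flow $P_t^{B_{2r}(x)} h$: their difference, at the center $x$ and for short time, is $o(t)$ as $t \downarrow 0$. This last comparison follows the same pattern as \autoref{lemma:sufflocal}: the error $P_t\tilde h(x) - P_t^{B_{2r}(x)}h(x)$ is controlled by the mass that the global heat kernel $p_t(x,\cdot)$ places outside $B_{2r}(x)$ together with boundary effects, and the Gaussian upper bound \eqref{eq:kernelestimate} makes this $o(t^n)$ for every $n$; the polynomial growth of $\tilde h$ handles the far field exactly as in the proof of \autoref{lemma:sufflocal}.

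Assembling: for $x \in \Omega'$, choose $r>0$ with $B_{2r}(x) \Subset \Omega''$; then
\begin{equation*}
\limsup_{t\downarrow 0}\frac{P_t\tilde h(x) - \tilde h(x)}{t} \le \limsup_{t\downarrow 0}\frac{P_t^{B_{2r}(x)}h(x) - h(x)}{t} + \limsup_{t\downarrow 0}\frac{P_t\tilde h(x) - P_t^{B_{2r}(x)}h(x)}{t} \le 0 + 0,
\end{equation*}
and adding back $\tilde v$ gives $\limsup_{t\downarrow 0}(P_t\tilde f(x) - \tilde f(x))/t \le \eta(x)$, as desired.

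\textbf{Main obstacle.} The delicate point is justifying that the Dirichlet heat flow preserves the superharmonicity inequality in the appropriate weak sense — i.e.\ that $P_t^{B} h \le h$ pointwise for $h$ a distributional supersolution that is merely locally Lipschitz (not a priori in $D(\Delta, B)$). One clean way around this is to avoid the Dirichlet flow entirely and argue directly: approximate $h$ from above by $h + \epsilon\, b_x^2$ using the Green distance of \autoref{prop:Gdistance} (whose Laplacian is a controlled positive multiple of $|\nabla b_x|^2$), reducing to strictly superharmonic test situations, and then use \autoref{thm:maximumprincipleOYJ} to extract points $x_n \to x$ with $\Delta^{ac} h(x_n) \to$ the right limit — mirroring the scheme already used in \autoref{prop:smoothcomp} and \autoref{prop:distrimplvisc}. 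Alternatively, since \autoref{prop:heatimplvisco} gives (heat flow $\Rightarrow$ viscous) and \autoref{thm:viscoimpldistri} gives (viscous $\Leftrightarrow$ distributional), one could instead prove the contrapositive-free statement by a soft stability argument; but the direct local comparison above is more transparent and I would expect the authors to proceed through a monotonicity or maximum-principle route of exactly this kind. In any case, the growth/tail estimates are routine given \autoref{lemma:sufflocal} and \autoref{lemma:lebpoint}, so the real content is the short-time sign control of the heat flow, which I would handle by the Green-distance perturbation trick.
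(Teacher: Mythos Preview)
Your reduction to the case $\eta \equiv 0$ by subtracting a local solution of the Poisson problem is exactly what the paper does in its Step~3, and your use of \autoref{lemma:pointheat} for the subtracted solution matches Step~2. The divergence is entirely in how you handle the superharmonic case.

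The paper does \emph{not} use the Dirichlet heat flow at all. Instead, it multiplies $f$ by a good cut-off $\phi$ (from \autoref{lemma:cutoff}), so that $\boldsymbol{\Delta}(f\phi) = \phi\boldsymbol{\Delta}f + \psi$, where $\psi := 2\nabla\phi\cdot\nabla f + f\Delta\phi$ is bounded and supported on the annulus $B_{2r}(x)\setminus B_r(x)$, while $\phi\boldsymbol{\Delta}f \le 0$. A short duality computation against nonnegative test functions then gives directly
\[
P_t(f\phi)(x) - (f\phi)(x) \;\le\; \int_0^t P_s\psi(x)\,\di s\, ,
\]
and since $\psi$ vanishes near $x$, \autoref{lemma:sufflocal} makes the right-hand side $o(t)$. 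This sidesteps entirely the question of whether the Dirichlet flow preserves superharmonicity.

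Your route via $P_t^B h \le h$ has a real obstruction that your proposed fixes do not address. The Dirichlet semigroup loses mass at the boundary, so even for a constant function $P_t^B c < c$; there is no automatic monotonicity coming from a sign condition on the Laplacian alone, and disentangling the boundary leakage from the interior sign is nontrivial in this generality. More importantly, your Green-distance perturbation is pointed the wrong way: adding $\eps\, b_x^2$ \emph{increases} the Laplacian (since $\Delta b_x^2 = 2N|\nabla b_x|^2 \ge 0$), so $h + \eps\, b_x^2$ is \emph{less} superharmonic, not more. That perturbation belongs to the viscosity comparison arguments (\autoref{prop:smoothcomp}, \autoref{prop:distrimplvisc}), where one needs a strict extremum; it does not yield a heat-flow monotonicity statement. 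So while your global architecture is correct, the core of Step~1 should be replaced by the cut-off plus duality argument above.
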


\begin{proof}
We divide the proof into three steps: first we deal with the case of superharmonic functions. Then we deal with the case of solutions of Poisson equations with continuous right hand sides. To conclude we combine the previous two steps to treat the general case.
\medskip

\textbf{Step 1.} Let us assume that $\eta\equiv 0$ on $\Omega$. Thanks to \autoref{lemma:cutoff} we can choose a good cut-off function $\phi:X\to\setR$ supported on $B_{2r}(x)$ and such that $\phi\equiv 1$ on $B_r(x)$. Computing the Laplacian of $f\phi$ by the standard calculus rules, we infer that 
\begin{equation}\label{eq:claim1eq2}
\boldsymbol{\Delta}(f\phi)=\phi \boldsymbol{\Delta}f+2\nabla \phi\cdot\nabla f+f\Delta\phi\, .
\end{equation}
By \autoref{lemma:sufflocal}, it is sufficient to prove that
\begin{equation*}
\limsup_{t\downarrow 0}\frac{P_t(f\phi)(x)-(f\phi)(x)}{t}\le 0\, .
\end{equation*}
Moreover, setting $\boldsymbol{\mu}:=\boldsymbol{\Delta}(f\phi)$, we have that $\boldsymbol{\mu}$ is the sum of a bounded function $\psi:=2\nabla \phi\cdot\nabla f+f\Delta\phi$ supported on $B_{2r}(x)\setminus B_r(x)$ and a non-positive measure $\boldsymbol{\nu}:=\phi \boldsymbol{\Delta}f$. We claim that 
\begin{equation}\label{eq:cla}
P_t(f\phi)(x)-(f\phi)(x)\le \int_0^tP_s\psi(x)\di s\, .
\end{equation}
In order to establish the claim we borrow the argument from the proof of \cite[Lemma 3.2]{Gigli22}. We set
\begin{equation}
\tilde{\Test}^{\infty}:=\{\eta\in L^1(X)\cap \Test^{\infty}(X)\, :\, \abs{\nabla \eta}, \Delta \eta\in L^1(X) \}\, ,
\end{equation} 
and let $\tilde{\Test}^{\infty}_+$ be the cone of nonnegative functions in $\tilde{\Test}^{\infty}$. We recall that for any nonnegative function $\eta\in L^1\cap L^{\infty}$ there exists a sequence $\eta_n\in \tilde{\Test}^{\infty}_+$ such that $\eta_n$ are uniformly bounded in $L^{\infty}$ and converge to $\eta$ in $L^1$. Hence, in order to prove \eqref{eq:cla} it is sufficient to show that
\begin{equation}
\int_X \eta\left(P_t(f\phi)-f\phi\right)\di\meas \le \int_X \eta\left(\int_0^t P_s\psi \di s\right)\di\meas\, ,
\end{equation}
for any $\eta \in \tilde{\Test}^{\infty}_+$. To this aim we can compute
\begin{align*}
\int_X \eta\left(P_t(f\phi)-f\phi\right)\di\meas=&\int_Xf\phi\left(P_t\eta-\eta\right)\di\meas\\
=&\int_X\int_0^tf\phi\Delta P_s\eta\di s\di\meas\\
\le &\int_X\int_0^t \psi P_s\eta\di s\di\meas\\
=&\int_X\eta \left(\int_0^tP_s\psi\di s\right)\di\meas\, .
\end{align*}
\smallskip

Since $\psi$ is bounded and supported on $B_{2r}(x)\setminus B_r(x)$, by \autoref{lemma:sufflocal} we infer:
\begin{equation*}
\limsup_{t\downarrow 0}\frac{P_t(f\phi)(x)-(f\phi)(x)}{t}\le \limsup_{t\downarrow 0}\frac{\int_0^tP_s\psi(x)\di s}{t}= 0\, ,
\end{equation*}
which proves \eqref{eq:claim1eq2}.
\medskip

\textbf{Step 2.} By \autoref{lemma:pointheat}, if $g:X\to\setR$ has polynomial growth and, for some $r>0$ and $x\in X$, $g$ belongs to the domain of the Laplacian on $B_r(x)$ and it has bounded and continuous Laplacian $\Delta g=\eta$ therein, then
\begin{equation*}
\lim_{t\downarrow 0}\frac{P_tg(x)-g(x)}{t}=\eta(x)\, ,\quad\text{for any $x\in B_r(x)$}\, .
\end{equation*}

\medskip

\textbf{Step 3.} Let us combine the outcomes of the previous two steps to prove the statement.
\\Let us consider a ball $B_{2r}(x)\Subset\Omega'$ and let $\phi:B_{2r}(x)\to\setR$  be a solution (see \autoref{thm:existenceanccomparison}) of
\begin{equation*}
\Delta \phi=\eta\, ,\quad\text{on $B_{2r}(x)$}\, .
\end{equation*} 
Observe that $f-\phi$ is Lipschitz on $B_r(x)$ and 
\begin{equation*}
\boldsymbol{\Delta}(f-\phi)\le 0\, ,\quad\text{on $B_{r}(x)$}\, . 
\end{equation*}
From Step 1, we infer that  for any extension $\tilde{f}_{\phi}:X\to\setR$ of $f-\phi$ with polynomial growth it holds
\begin{equation}\label{eq:condfphi}
\limsup_{t\downarrow 0}\frac{P_t\tilde{f}_{\phi}(x)-\tilde{f}_{\phi}(x)}{t}\le 0\, .
\end{equation}
Moreover, we can consider an extension $\tilde{\phi}:X\to\setR$ of $\phi$ and observe that, by Step 2,  
\begin{equation} \label{eq:condphi}
\lim_{t\downarrow 0}\frac{P_t\tilde{\phi}(x)-\tilde{\phi}(x)}{t}=\eta(x)\, .
\end{equation}
It is straightforward to check that, for any extension $\tilde{f}:X\to\setR$ of $f$, $\tilde{f}-\tilde{\phi}$ is an extension of $f-\phi$. Hence, applying \eqref{eq:condfphi} to $\tilde{f}_{\phi}:=\tilde{f}-\tilde{\phi}$ and then \eqref{eq:condphi}, we get
\begin{equation*}
\limsup_{t\downarrow 0}\frac{P_t\tilde{f}(x)-\tilde{f}(x)}{t}\le \eta(x)\, ,
\end{equation*}
as we claimed. 
\end{proof}


We collect the main equivalence results for Laplacian bounds in a single statement below. Many of the equivalences are proved without the restriction that $\meas=\haus^N$ and we expect all of them to hold in general. We do not pursue the most general statements as they will not be needed in the sequel of the paper.

\begin{theorem}[Equivalent notions of Laplacian bounds]\label{thm:mainequivlapla}
Let $(X,\dist,\haus^N)$ be an $\RCD(K,N)$ metric measure space. Let $\Omega\subset X$ be an open domain, $\eta\in\Cb(\Omega)$ and $f:\Omega\to\setR$ be a locally Lipschitz function. Then the following are equivalent:
\begin{itemize}
\item[(i)] $\boldsymbol{\Delta}f\le \eta$ in the sense of distributions on $\Omega$, as in \autoref{def:distributions};
\item[(ii)] $f$ is a superminimizer of the energy $E_{\eta}$, as in \autoref{def:subsupermin};
\item[(iii)] $f$ is a classical supersolution of $\Delta f=\eta$ in the sense of \autoref{def:classicalsupersolution};
\item[(iv)] $f$ satisfies $\Delta f\le \eta$ in the viscous sense as in \autoref{def:viscosity};
\item[(v)] $f$ is a supersolution of $\Delta f\le \eta$ in the heat flow sense as in \autoref{def:heatlaplbounds}.
\end{itemize}
\end{theorem}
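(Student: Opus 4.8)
The plan is to assemble the statement from the building blocks established earlier in this section, arranging them into a cycle of implications that passes through the distributional bound (i) as the central hub. Since all five notions are \emph{local} — each is phrased in terms of sub-domains $\Omega'\Subset\Omega$ (for (iii), (iv), (v)) or in terms of test functions with compact support (for (i), (ii)) — I would first reduce to the case of an open \emph{bounded} domain on which $f$ is globally Lipschitz and bounded, which is the setting in which \autoref{thm:supersolsupermin} and \autoref{thm:viscoimpldistri} are stated. Concretely, each of (i)--(v) holds on $\Omega$ if and only if it holds on every $\Omega'\Subset\Omega$, and on such an $\Omega'$ the restriction of $f$ is Lipschitz and bounded; hence it suffices to prove the equivalence under the standing assumptions of the cited results.

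It then remains to close the loop. The equivalence (i) $\Leftrightarrow$ (ii) is exactly \autoref{prop:compdistrsbmin}. The equivalence (i) $\Leftrightarrow$ (iii) is \autoref{thm:supersolsupermin} (using also \autoref{rm:classimplclass}, together with the solvability of the Poisson problem from \autoref{thm:existenceanccomparison}, to make sense of the comparison with classical solutions). For the viscous notion, \autoref{prop:distrimplvisc} gives (i) $\Rightarrow$ (iv) while \autoref{thm:viscoimpldistri} gives the converse (iv) $\Rightarrow$ (i), so that (i) $\Leftrightarrow$ (iv). Finally, for the heat-flow notion, \autoref{prop:distrimplheat} gives (i) $\Rightarrow$ (v) and \autoref{prop:heatimplvisco} gives (v) $\Rightarrow$ (iv), which has just been shown to imply (i). Thus all five statements are mutually equivalent.

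The only genuine obstacle — already absorbed into the cited propositions — is the passage (iv) $\Rightarrow$ (i): upgrading a bound tested only against the restrictive class of ``smooth'' comparison functions to a bound against \emph{all} compactly supported Lipschitz test functions. This is handled inside \autoref{thm:viscoimpldistri} by first showing that a viscous supersolution is a classical supersolution (via the maximum principle \autoref{lemma:maxprincvisco}, whose proof in turn exploits the local Lipschitz regularity \autoref{thm:lipregPoisson} for solutions of the Poisson problem and the auxiliary Green-type distance $b_x$ of \autoref{prop:Gdistance}), and then invoking \autoref{thm:supersolsupermin}. I would also note that the dimensional restriction $N\ge 3$ entering the auxiliary constructions is lifted as explained at the beginning of the subsection, the cases $N=1,2$ being covered by the classification results cited there, and that the hypothesis $\meas=\haus^N$ is only used for those steps relying on the local Green function. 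Collecting these observations yields the theorem with no further computation.
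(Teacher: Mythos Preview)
Your proposal is correct and is exactly how the paper assembles the theorem: it is a summary statement collecting the individual equivalences already proved in \autoref{prop:compdistrsbmin}, \autoref{thm:supersolsupermin}, \autoref{prop:distrimplvisc}, \autoref{thm:viscoimpldistri}, \autoref{prop:heatimplvisco} and \autoref{prop:distrimplheat}, with the distributional bound (i) serving as the hub. The paper does not write out a separate proof beyond the discussion following the statement; your explicit cycle of implications and the localization remark reducing to bounded $\Omega'\Subset\Omega$ faithfully record the intended argument.
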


While the equivalences between (i), (ii) and (iii) are well established within the theory of metric measure spaces that are doubling and verify a Poincar\'e inequality, our proofs of the equivalence between (iv), (v) and the previous ones heavily rely on the $\RCD(K,N)$ assumption. Indeed, the Omori-Yau-Jensen type maximum principle \autoref{thm:maximumprincipleOYJ}, the existence of a nice auxiliary function with the properties detailed in \autoref{rm:goodfunction} and the Gaussian heat kernel bounds played a fundamental role in all of the arguments above.
\medskip

%
%

\section{Ricci curvature bounds, Hopf-Lax semigroups and Laplacian bounds}\label{sec:HopfLax}

This section is dedicated to analyse the interplay between the Hopf-Lax semigroups (associated to exponents $1\le p<\infty$),  Ricci curvature lower bounds and Laplacian upper bounds.\\ 
Let us introduce some notation and terminology. 
\medskip

Let $1\le p<\infty$. We shall consider the evolution via the $p$-Hopf-Lax semigroup on a general metric space $(X,\dist)$. Let us consider $f:X\to\setR\cup\{\pm\infty\}$, not identically $+\infty$, and let the evolution via $p$-Hopf-Lax semigroup be defined by
\begin{equation}\label{eq:HLdef}
\mathcal{Q}^p_tf(x):=\inf_{y\in X}\left(f(y)+\frac{\dist(x,y)^p}{pt^{p-1}}\right)\, .
\end{equation}
Observe that, in the case $p=1$, the expression for the Hopf-Lax semigroup is actually independent of $t$:
\begin{equation*}
\mathcal{Q}^1_tf(x)=\mathcal{Q}^1 f(x)=\inf_{y\in X}\left(f(y)+\dist(x,y)\right)\, .
\end{equation*}
The key result of this section will be that the Hopf-Lax semigroup preserves upper bounds on the Laplacian on $\RCD$ spaces, when suitably interpreted, for any exponent $1\le p<\infty$. This observation appears to be new for general exponents $p$, even for smooth Riemannian manifolds. The only previous references we are aware of are \cite{ZhangZhu12}, dealing with the case $p=2$ on Alexandrov spaces with lower Ricci curvature bounds, (the result had been previously announced in the unpublished \cite{Petrunin00}, where a strategy on Alexandrov spaces was also indicated) and the more recent \cite{ZhangZhongZhu19}, where exponents $1<p<\infty$ on smooth Riemannian manifolds are considered. Even in this case, our proof seems more robust and it is based on a completely different idea, relying on the connection between the heat flow and lower Ricci curvature bounds instead of the second variation formula. 
\medskip

In the Euclidean setting, the inf-convolution preserves the property of being a supersolution of the Laplace equation, $\Delta u=0$. Classical proofs of this fact, that allow for extensions to more general PDEs, are based on the affine invariance of the Euclidean space.
 
In \autoref{subsec:smooth} we generalize this statement to Riemannian manifolds with lower Ricci curvature bounds. The proof introduces a different approach based on the characterization of the Laplacian of smooth functions through asymptotics of averages on balls. To avoid technicalities we will consider only smooth functions, though it is worth pointing out that the Hopf-Lax semigroup does not preserve smoothness, even in the Euclidean setting.

The extension to non smooth $\RCD(K,N)$ spaces, that we shall address in \autoref{subsec:nonsmooth}, requires two further ideas: a weak theory of Laplacian bounds in the non smooth context, that we have at our disposal after \autoref{sec:lap}, and a new intrinsic way to connect the Laplacian to the Hopf-Lax semigroup under the $\RCD$ condition. This connection will be achieved exploiting a powerful duality formula, originally due to Kuwada \cite{Kuwada10}, that we review in \autoref{subsec:kuwada}.

\subsection{Smooth Riemannian manifolds}\label{subsec:smooth}
For the sake of motivation, let us present a characterization of lower Ricci bounds for smooth Riemannian manifolds involving the interplay between the Hopf-Lax semigroup and Laplacian bounds.
\medskip

Let $(M^n,g)$ be a smooth Riemannian manifold and, given a sufficiently smooth function $f:M\to\setR$, let us set
\begin{equation*}
\sigma_rf(x):=\fint_{\partial B_r(x)}f(y)\di\haus^{n-1}(y)=\int f(y)\di\sigma_{x,r}(y)\, ,
\end{equation*}
where we denoted by $\haus^{n-1}$ the surface measure of $\partial B_r(x)$ and notice that, by its very definition, $\sigma_{x,r}:= \haus^{n-1}(\partial B_r(x))^{-1}  \,  \haus^{n-1} \res \partial B_r(x) $ is a probability measure.
\medskip

Let us recall (see for instance the proof of \cite[Theorem 1.5]{SturmVonRenesse05}) the following classical fact: for any $x\in U_x\subset M$ and any function $f\in C^3(U_x)$, it holds
\begin{equation}\label{eq:asymptoticlapla}
\sigma_rf(x)=f(x)+\frac{r^2}{2n}\Delta f(x)+o(r^2)\, ,\quad\text{as $r\downarrow 0$}\, .
\end{equation}
We will denote by $f^c$ the dual of a function $f$, with respect to the optimal transport duality induced by cost equal to distance, i.e.
\begin{equation*}
f^c(y):=\inf_{z\in M}\{f(z)+\dist(y,z)\}\, , \quad \text{for any $y\in M$}.
\end{equation*}

\begin{theorem}\label{thm:HLsmoothmanifolds}
Let $(M^n, g)$ be a smooth closed Riemannian manifold and let $K\in \setR$. The following conditions are equivalent:
\begin{itemize}
\item[(i)] $\Ric\ge K$ on $M$;
\item[(ii)] for any function $f:M\to\setR$ and for any $x,y\in M$ such that
\begin{equation*}
f^c(x)-f(y)=\dist(x,y)\, ,
\end{equation*} 
if $f$ is smooth in a neighbourhood of $y$ and $f^c$ is smooth in a neighbourhood of $x$, then
\begin{equation}\label{eq:HL1smooth}
\Delta f^c(x)\le \Delta f(y)-K\dist(x,y)\, ,
\end{equation}
\end{itemize}

\end{theorem}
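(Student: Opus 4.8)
The plan is to prove the two implications separately, in both directions exploiting the asymptotic expansion \eqref{eq:asymptoticlapla}, which relates $\Delta$ to averages on geodesic spheres. For $(i)\Rightarrow(ii)$ I would argue as follows. Given $x,y$ with $\dist(x,y)=L$ and the infimum defining $f^c(x)$ attained at $y$, fix a unit-speed minimizing geodesic $\gamma\colon[0,L]\to M$ from $y$ to $x$. For a unit vector $v\in T_xM$ let $V$ be the parallel field along $\gamma$ with $V(L)=v$, write $V=a\dot\gamma+V^\perp$ with $a=\langle v,\dot\gamma(L)\rangle$ constant, and set $x'=\exp_x(rv)$, $y'=\exp_y(rV(0))$. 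Since $c_r(t):=\exp_{\gamma(t)}(rV(t))$ is a curve from $y'$ to $x'$, one has $f^c(x')\le f(y')+\dist(x',y')\le f(y')+\mathrm{length}(c_r)$, and the second variation formula for arc length along the geodesic $\gamma$ gives $\mathrm{length}(c_r)=L-\tfrac{r^2}{2}\int_0^L\langle R(V^\perp,\dot\gamma)\dot\gamma,V^\perp\rangle\,\di t+o(r^2)$, the first-order term $\langle V(L),\dot\gamma(L)\rangle-\langle V(0),\dot\gamma(0)\rangle$ vanishing because parallel transport preserves $\langle\,\cdot\,,\dot\gamma\rangle$. Smoothness of $f$ near $y$ yields $f(y')=f(y)+r\langle\nabla f(y),V(0)\rangle+\tfrac{r^2}{2}\Hess f(y)(V(0),V(0))+o(r^2)$.

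Next I would average the resulting upper bound for $f^c(x')$ over $v\in S^{n-1}_x$, which is the same as averaging over $x'\in\partial B_r(x)$ up to a Jacobian factor $1+O(r^2)$ that is immaterial after dividing by $r^2$. Using that $V(0)=P_{x\to y}v$ runs isometrically over $S^{n-1}_y$, that the average of a linear form over a sphere vanishes, that $\fint_{S^{n-1}_y}\Hess f(y)(w,w)\,\di w=\tfrac1n\Delta f(y)$, and the identity $\fint_{S^{n-1}_x}\big(\int_0^L\langle R(V^\perp(t),\dot\gamma(t))\dot\gamma(t),V^\perp(t)\rangle\,\di t\big)\di v=\tfrac1n\int_0^L\Ric(\dot\gamma(t),\dot\gamma(t))\,\di t$ (which follows from $\frac{\int_0^\pi\sin^n\theta\,\di\theta}{\int_0^\pi\sin^{n-2}\theta\,\di\theta}=\tfrac{n-1}{n}$ together with $\tr_{\dot\gamma^\perp}R(\,\cdot\,,\dot\gamma)\dot\gamma=\Ric(\dot\gamma,\dot\gamma)$), I would obtain $\sigma_rf^c(x)\le f(y)+L+\tfrac{r^2}{2n}\big(\Delta f(y)-\int_0^L\Ric(\dot\gamma,\dot\gamma)\,\di t\big)+o(r^2)$. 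Comparing with $\sigma_rf^c(x)=f^c(x)+\tfrac{r^2}{2n}\Delta f^c(x)+o(r^2)$ from \eqref{eq:asymptoticlapla} and using $f^c(x)=f(y)+L$, letting $r\downarrow0$ gives the sharper bound $\Delta f^c(x)\le\Delta f(y)-\int_0^L\Ric(\dot\gamma,\dot\gamma)\,\di t$, hence \eqref{eq:HL1smooth} as soon as $\Ric\ge K$.

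For $(ii)\Rightarrow(i)$ I would argue by contradiction: assume $\Ric_p(v,v)<K$ for some unit $v\in T_pM$. Put $\gamma(t)=\exp_p(tv)$, fix a small $s_0>0$, let $q=\gamma(-s_0)$, and take $\Sigma=\exp_q\big(\{w\in\dot\gamma(-s_0)^\perp:|w|<\eps_0\}\big)$, a small totally geodesic hypersurface disk normal to $\gamma$ at $q$. The idea is to use $f:=\dist(\,\cdot\,,\overline\Sigma)$ as test function: by the triangle inequality $f^c=f$ on $M$; for $\eps_0$, and then $s_0,L$, small enough $f$ is smooth near $y:=\gamma(0)$ and near $x:=\gamma(L)$ with $|\nabla f|\equiv1$ there, and $f^c(x)-f(y)=(L+s_0)-s_0=L=\dist(x,y)$, so $(ii)$ applies and gives $\Delta f(x)-\Delta f(y)\le-KL$. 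On the other hand Bochner's identity for $f$ (with $|\nabla f|\equiv1$, so $\nabla f=\dot\gamma$ along $\gamma$) yields the exact equality $\tfrac{\di}{\di t}\Delta f(\gamma(t))=-|\Hess f(\gamma(t))|^2-\Ric(\dot\gamma(t),\dot\gamma(t))$; integrating over $[0,L]$ and inserting $(ii)$ gives $\int_0^L\big(|\Hess f(\gamma(t))|^2+\Ric(\dot\gamma(t),\dot\gamma(t))\big)\di t\ge KL$, so dividing by $L$ and letting $L\downarrow0$ yields $|\Hess f(y)|^2+\Ric(v,v)\ge K$. Finally, since $\Sigma$ is totally geodesic at $q$, the Riccati equation for the shape operators of the hypersurfaces equidistant from $\Sigma$ gives $\Hess f(y)=O(s_0)$; letting $s_0\downarrow0$ then forces $\Ric_p(v,v)\ge K$, a contradiction.

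I expect the main obstacles to be twofold. On the analytic side, one must make every remainder in the expansion of $f^c$ near $x$ uniform in $v$ (and in $t\in[0,L]$) so that the spherical averaging is legitimate, and similarly control the Jacobian of $\exp_x$; this is routine but needs care. On the geometric side, the genuinely non-obvious step is the choice of test function in $(ii)\Rightarrow(i)$: one needs an $f$ for which the inequality in $(ii)$ is asymptotically sharp, and the distance to a geodesic hypersurface placed a small distance $s_0$ behind $y$ along $\gamma$ does exactly this, because it is its own $c$-transform, stays smooth near both $y$ and $x$, and has Hessian $O(s_0)$ at $y$, so the Ricci term survives in the limit $s_0\to0$ while the Hessian correction drops out. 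Identifying this competitor is the crux of the converse implication.
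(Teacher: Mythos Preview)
Your argument is correct in both directions, but the route for $(i)\Rightarrow(ii)$ is genuinely different from the paper's. The paper does not compute anything along the geodesic: it invokes the Sturm--Von Renesse characterisation $W_1(\sigma_{x,r},\sigma_{y,r})\le(1-\tfrac{K}{2n}r^2+o(r^2))\dist(x,y)$, combines it with Kantorovich--Rubinstein duality (using only that $f^c(z)-f(w)\le\dist(z,w)$ for all $z,w$) to get $\sigma_rf^c(x)-\sigma_rf(y)\le(1-\tfrac{K}{2n}r^2+o(r^2))\dist(x,y)$, and then reads off the Laplacian inequality from \eqref{eq:asymptoticlapla}. Your approach via parallel transport and the second variation of arc length is more hands-on and self-contained, and it yields the sharper conclusion $\Delta f^c(x)\le\Delta f(y)-\int_0^L\Ric(\dot\gamma,\dot\gamma)\,\di t$ along the connecting geodesic; the paper's proof, by contrast, packages all the geometry into the $W_1$-contraction and is closer in spirit to the non-smooth generalisation pursued later via Kuwada's duality.

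For $(ii)\Rightarrow(i)$ the two arguments are essentially the same idea with a minor technical difference in execution. The paper places the hypersurface $\Sigma$ through the test point itself (with vanishing second fundamental form there), uses the \emph{signed} distance $\dist^\pm_\Sigma$, and reads off $\Ric_x(v,v)\ge K$ directly from the identity $\tfrac{\di}{\di t}\big|_{t=0}\Delta\dist^\pm_\Sigma(\gamma(t))=-\Ric_x(v,v)$ together with the one-sided derivative bound coming from $(ii)$; no auxiliary parameter $s_0$ is needed. Your version with the hypersurface placed at $\gamma(-s_0)$, the unsigned distance, Bochner integrated over $[0,L]$, and the double limit $L\downarrow0$, $s_0\downarrow0$ reaches the same conclusion but is slightly more roundabout: placing $\Sigma$ at the point and using the signed distance lets you take the derivative at $t=0$ once and be done.
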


\begin{proof}
Let us start proving the implication from (i) to (ii). 

By \cite[Theorem 1.5]{SturmVonRenesse05}, if $(M^n,g)$ is a smooth Riemannian manifold such that $\Ric\ge K$, then for any couple of points $x,y\in M$,
\begin{equation}\label{eq:boundW1}
W_1(\sigma_{x,r},\sigma_{y,r})\le \left(1-\frac{K}{2n}r^2+o(r^2)\right)\dist(x,y)\, ,\quad\text{as $r\downarrow 0$}\, ,
\end{equation}
where we denoted by $W_1$ the Wasserstein distance associated to the exponent $p=1$. 

Then we can apply the classical Kantorovich-Rubinstein duality to infer that
\begin{equation}\label{eq:asymsigma}
\sigma_rf^c(x)-\sigma_rf(y)\le  \left(1-\frac{K}{2n}r^2+o(r^2)\right)\dist(x,y)\, ,\quad\text{as $r\downarrow 0$}\, .
\end{equation}
Indeed
\begin{equation*}
\sigma_rf^c(x)=\int f^c(z)\di\sigma_{x,r}(z)\, ,
\end{equation*}
\begin{equation*}
\sigma_rf(y)=\int f(z)\di\sigma_{y,r}(z)\, 
\end{equation*}
and 
\begin{equation*}
f^c(x)-f(y)\le \dist(x,y)\, ,\quad\text{for any $x,y\in M$}\, .
\end{equation*}
Therefore
\begin{equation*}
\sigma_rf^c(x)-\sigma_rf(y)\le W_1(\sigma_{x,r},\sigma_{y,r})\, 
\end{equation*}
and we can apply \eqref{eq:boundW1} to get \eqref{eq:asymsigma}.
\\Taking into account \eqref{eq:asymptoticlapla}, the assumption $f^c(x)-f(y)=\dist(x,y)$ and the fact that $x$ and $y$ are smooth points for $f^c$ and $f$ respectively, starting from \eqref{eq:asymsigma} we can easily infer that
\begin{equation*}
\Delta f^c(x)\le \Delta f(y)- K\dist(x,y)\, ,
\end{equation*}
as we claimed.
\medskip

Let us prove the converse implication. As for the classical implications between different characterizations of lower Ricci bounds in \cite{SturmVonRenesse05}, we wish to apply \eqref{eq:HL1smooth} to suitably chosen functions $f$ in order to control from below the Ricci curvature at any point and in any direction. 

To this aim, let us choose $x\in M$ and a tangent vector $v\in T_xM$. Let us assume without loss of generality that $\abs{v}_x=1$. Then we can find, via a standard construction, a smooth hypersurface $\Sigma_{x,v}\subset B_r(x)$ for $r>0$ small enough, such that $x\in \Sigma_{x,v}$, the tangent hyperplane to $\Sigma_{x,v}$ is the orthogonal to $v$ in $T_xM$ and the second fundamental form of the hypersurface is vanishing at $x$.\\
It is a standard fact in Riemannian geometry that the signed distance function $\dist^{\pm}_{\Sigma}$ from $\Sigma_{x,v}$ is a smooth $1$-Lipschitz function in a neighbourhood of $x$. Moreover, for some $\eps>0$ sufficiently small, we can consider a unit speed geodesic $\gamma:(-\eps,\eps)\to M$ such that $\gamma(0)=x$, $\gamma'(0)=v$ and 
\begin{equation*}
\dist^{\pm}_{\Sigma_{x,v}}(\gamma(t))=t\, ,\quad\text{for any $t\in(-\eps,\eps)$}\, .
\end{equation*} 
The following is a well known identity in Riemannian geometry (observe that $\Hess \dist^{\pm}_{\Sigma}=0$ at $x$ due to the vanishing of the second fundamental form of $\Sigma_{x,v}$ at $x$):
\begin{equation}\label{eq:identityRic}
\left.\frac{\di}{\di t}\right|_{{t=0}}\Delta \dist^{\pm}_{\Sigma_{x,r}}(\gamma(t))=-\Ric_x(v,v)\, .
\end{equation}
Now, applying \eqref{eq:HL1smooth} to $f=f^c=\dist^{\pm}_{\Sigma_{x,r}}$ at the points $\gamma(0)$ and $\gamma(t)$, we obtain that
\begin{equation*}
\Delta  \dist^{\pm}_{\Sigma_{x,r}}(\gamma(t))\le \Delta  \dist^{\pm}_{\Sigma_{x,r}}(\gamma(0))-Kt\, ,\quad\text{for any $t\in(0,\eps)$.}\, 
\end{equation*}
Thus, we infer that
\begin{equation*}
\left.\frac{\di}{\di t}\right|_{{t=0}}\Delta \dist^{\pm}_{\Sigma_{x,r}}(\gamma(t))\le -K\, ,
\end{equation*}
which proves that $\Ric_x(v,v)\ge K$, thanks to \eqref{eq:identityRic}.
\end{proof}

\begin{remark}
For brevity, we discussed only the case $p=1$, however it is possible to consider variants of \autoref{thm:HLsmoothmanifolds} above dealing with the Hopf-Lax semigroups associated to any exponent $1\le p<\infty$. 
\end{remark}

\begin{remark}
Smoothness of the test function $f$ in condition (ii) in \autoref{thm:HLsmoothmanifolds} above is an assumption which can be relaxed, if we understand the Laplacian bounds in a more general sense. This will be the key to formulate a counterpart of this results on general $\RCD(K,N)$ metric measure spaces and it will be a key for the applications later in the paper.

Moreover, as the forthcoming discussion will clarify, also the compactness of the manifold is a completely unnecessary assumption.
\end{remark}

\subsection{Kuwada's lemma}\label{subsec:kuwada}

We recall here a fundamental result highlighting the interplay between lower Ricci curvature bounds, contractivity estimates for the heat-flow and the Hopf-Lax semigroup. The original formulation on smooth Riemannian manifolds is due to Kuwada \cite{Kuwada10}. Later on, due to its particular robustness, it has been extended to $\RCD(K,\infty)$ metric measure spaces in \cite[Lemma 3.4]{AmbrosioGigliSavare15} in the case of exponent $p=2$.

\begin{theorem}[Kuwada duality]\label{thm:KuwadaK=0}
Let $(X,\dist,\meas)$ be an $\RCD(K,\infty)$ metric measure space and $f\in\Lipb(X)$ be non-negative and with bounded support. 
Then, for any $t\ge 0$, $\mathcal{Q}_t^2f$ is Lipschitz, non-negative, with bounded support and it holds
\begin{equation}\label{eq:kuwadaK=0}
P_s\left(\mathcal{Q}^2_1 f\right)(x)-P_sf(y)\le \frac{e^{-2Ks}}{2}\dist(x,y)^2\,,
\end{equation}
for any $x,y\in X$ and for any $s\ge 0$.
\end{theorem}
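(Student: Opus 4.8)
\textbf{Proof plan for Kuwada duality (Theorem \ref{thm:KuwadaK=0}).}

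The plan is to establish \eqref{eq:kuwadaK=0} by a now-standard duality argument, differentiating in the time parameter of a suitable interpolation between $P_s(\mathcal{Q}^2_1 f)(x)$ and $P_s f(y)$. First I would record the preliminary regularity properties: since $f$ is bounded, non-negative and with bounded support, the map $(x,t)\mapsto \mathcal{Q}^2_t f(x)$ inherits the classical Hopf--Lax properties on the geodesic (length) space underlying $(X,\dist)$ — in particular $\mathcal{Q}^2_t f$ is Lipschitz for each $t>0$, non-negative, with bounded support, and the sub-solution property of the Hamilton--Jacobi equation $\frac{\di}{\di t}\mathcal{Q}^2_t f + \tfrac12(\lip \mathcal{Q}^2_t f)^2 \le 0$ holds $\Leb^1$-a.e.\ in $t$ at every point (this is the metric Hopf--Lax theory, valid on any length space and recalled in \cite{AmbrosioGigliSavare14a,AmbrosioGigliSavare15}). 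Combined with the Sobolev-to-Lipschitz property and the fact that $\lip g = |\nabla g|$ $\meas$-a.e.\ for Lipschitz $g$ on $\RCD$ spaces, one gets $\frac{\di}{\di t}\mathcal{Q}^2_t f + \tfrac12 |\nabla \mathcal{Q}^2_t f|^2 \le 0$ $\meas$-a.e.

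Next, fix $x,y\in X$ and $s\ge 0$, and connect them by a constant-speed geodesic $\gamma\colon[0,1]\to X$ with $\gamma_0=y$, $\gamma_1=x$ and $\dist(\gamma_r,\gamma_{r'})=|r-r'|\dist(x,y)$. I would then consider the interpolating function
\begin{equation*}
\Phi(r):= P_{s}\big(\mathcal{Q}^2_r f\big)(\gamma_r)\,, \qquad r\in[0,1]\,,
\end{equation*}
so that $\Phi(0)=P_s f(y)$ and $\Phi(1)=P_s(\mathcal{Q}^2_1 f)(x)$; the goal is the bound $\Phi(1)-\Phi(0)\le \frac{e^{-2Ks}}{2}\dist(x,y)^2$. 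To this end I would show $\Phi$ is locally absolutely continuous on $(0,1)$ and estimate $\Phi'$ for a.e.\ $r$. Writing $g_r:=\mathcal{Q}^2_r f$, one splits the derivative into the contribution from the moving base point and from the evolving function:
\begin{equation*}
\Phi'(r) = \big\langle \nabla P_s g_r(\gamma_r),\, \dot\gamma_r\big\rangle + P_s\!\Big(\tfrac{\di}{\di r} g_r\Big)(\gamma_r)\,.
\end{equation*}
For the first term use $|\langle \nabla P_s g_r(\gamma_r),\dot\gamma_r\rangle| \le |\nabla P_s g_r|(\gamma_r)\,\dist(x,y)$ together with the sharp Bakry--Émery gradient contraction in the form $|\nabla P_s g_r|^2 \le e^{-2Ks} P_s(|\nabla g_r|^2)$ from \eqref{eq:BakryEmery}; for the second term use the Hopf--Lax sub-solution inequality $\tfrac{\di}{\di r} g_r \le -\tfrac12 |\nabla g_r|^2$ and positivity of $P_s$. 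Combining and applying Young's inequality $ab \le \tfrac12 a^2 + \tfrac12 b^2$ with the weight $e^{-2Ks}$ distributed appropriately, the two terms telescope: one gets $\Phi'(r) \le \tfrac{e^{-2Ks}}{2}\dist(x,y)^2$ for a.e.\ $r\in(0,1)$, and integrating over $[0,1]$ yields \eqref{eq:kuwadaK=0}.

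The main obstacle I anticipate is the justification of the chain rule / absolute continuity for $\Phi$: one must legitimately differentiate $r\mapsto P_s(g_r)(\gamma_r)$ where \emph{both} the function $g_r$ (only a.e.-differentiable in $r$, as a Hopf--Lax evolution) and the evaluation point $\gamma_r$ vary, and one must make sense of the pairing $\langle \nabla P_s g_r(\gamma_r),\dot\gamma_r\rangle$ pointwise along the geodesic despite $\nabla P_s g_r$ being an $L^2$ object a priori — here the $L^\infty$--$\Lip$ regularization \eqref{eq:linftylipregularization} and the Bakry--Émery estimate \eqref{eq:BakryEmery}, which give that $P_s g_r$ is genuinely Lipschitz with quantitative gradient bounds, are what make this rigorous. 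The cleanest route, following \cite[Lemma 3.4]{AmbrosioGigliSavare15}, is to avoid pointwise differentiation of $\Phi$ altogether: instead bound $\Phi(1)-\Phi(0)$ by a Riemann-sum/telescoping argument over a partition $0=r_0<\dots<r_N=1$, estimating each increment $\Phi(r_{j})-\Phi(r_{j-1})$ using the semigroup property, the Kantorovich--Rubinstein-type duality $P_s g - P_s h \le $ (cost) when $g-h\le$ (cost), and the contraction $W_2(\bar P_s\delta_{\gamma_{r_j}},\bar P_s\delta_{\gamma_{r_{j-1}}})\le e^{-Ks}\dist(\gamma_{r_j},\gamma_{r_{j-1}})$ of the dual heat semigroup; then one passes to the limit $N\to\infty$. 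Either way, the substantive analytic inputs are exactly the sharp gradient estimate and the metric Hopf--Lax theory, and the non-negativity/bounded-support preservation of $\mathcal{Q}^2_t$ is elementary once the Hopf--Lax regularity is in place.
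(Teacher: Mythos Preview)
The paper does not give a proof of this theorem: it is stated as a known result and attributed to \cite{Kuwada10} (smooth case) and \cite[Lemma~3.4]{AmbrosioGigliSavare15} ($\RCD(K,\infty)$ case). Your proposal is therefore not being compared against anything in the paper itself; it is a correct reconstruction of the standard argument, and indeed you cite \cite[Lemma~3.4]{AmbrosioGigliSavare15} yourself.

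That said, your presentation is more complicated than necessary. Your ``alternative'' route via the $W_2$-contraction of the dual heat semigroup is the right idea, but you wrap it in an unnecessary geodesic interpolation and Riemann sum. The direct argument is a one-liner: since $\mathcal{Q}^2_1 f(x')-f(y')\le \tfrac12\dist(x',y')^2$ for all $x',y'$, integrating against any coupling $\pi$ of $\bar P_s\delta_x$ and $\bar P_s\delta_y$ gives
\[
P_s(\mathcal{Q}^2_1 f)(x)-P_s f(y)\;\le\;\tfrac12\!\int \dist^2\,\di\pi\,,
\]
and taking the infimum over $\pi$ together with the $W_2$-contraction $W_2(\bar P_s\delta_x,\bar P_s\delta_y)\le e^{-Ks}\dist(x,y)$ (valid on $\RCD(K,\infty)$ spaces by \cite{AmbrosioGigliSavare14}) yields \eqref{eq:kuwadaK=0} immediately. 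No geodesic, no partition, no differentiation of $\Phi$ is needed. Your first approach via differentiating $\Phi(r)=P_s(\mathcal{Q}^2_r f)(\gamma_r)$ is also correct in spirit and the estimate you sketch does close (Bakry--\'Emery plus Hopf--Lax subsolution plus Young), but the technicalities you flag---pointwise differentiability of $\Phi$, joint regularity in $r$---are genuine and are precisely what the Wasserstein-contraction route sidesteps.
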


Thanks to the self-improvement of the Bakry-\'Emery gradient contraction estimate for the heat flow obtained on $\RCD(K,\infty)$ spaces in \cite{Savare14} (see \eqref{eq:selfimpr}), \autoref{thm:KuwadaK=0} can then be generalized to arbitrary exponents $p$, along the original lines of \cite{Kuwada10}. Since it can be proved with the very same strategy of the case $p=2$ we omit the proof.

\begin{theorem}\label{thm:kuwadap}
Let $(X,\dist,\meas)$ be an $\RCD(K,\infty)$ metric measure space and $f\in\Lipb(X)$ be non-negative and with bounded support. Let $1\le p<\infty$.
Then, for any $t\ge 0$, $\mathcal{Q}_t^pf$ is Lipschitz, non-negative, with bounded support and it holds
\begin{equation}\label{eq:kuwadaK=0}
P_s\left(\mathcal{Q}^p_1 f\right)(x)-P_sf(y)\le \frac{e^{-pKs}}{p}\dist(x,y)^p\,,
\end{equation}
for any $x,y\in X$ and for any $s\ge 0$
\end{theorem}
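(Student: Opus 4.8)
The plan is to reduce the case of a general exponent $p\in[1,\infty)$ to the case $p=2$ treated in \autoref{thm:KuwadaK=0}, by running the argument of Kuwada \cite{Kuwada10} with the self-improved Bakry--\'Emery estimate \eqref{eq:selfimpr} in place of \eqref{eq:BakryEmery}. First I would record the preliminary regularity statements: for $f\in\Lipb(X)$ nonnegative with bounded support, one checks directly from the definition \eqref{eq:HLdef} that $\mathcal{Q}^p_tf$ is nonnegative, bounded, with bounded support, and Lipschitz (the Lipschitz bound follows from the standard Hopf--Lax estimates on the metric space $(X,\dist)$, using that $f$ is Lipschitz and bounded); in particular $\mathcal{Q}^p_1f\in\Lipb(X)$ so that $P_s(\mathcal{Q}^p_1f)$ is well-defined and, by the $L^\infty$--$\Lip$ regularization \eqref{eq:linftylipregularization}, again Lipschitz.

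The heart of the argument is the dual/duality identity. Following Kuwada, fix $x,y\in X$, let $\gamma:[0,1]\to X$ be a (near-)geodesic from $y$ to $x$ (or a sufficiently good almost-geodesic, since $\RCD$ spaces are geodesic this is an actual geodesic), and consider the function
\begin{equation*}
s\mapsto \Phi(s):=P_s\bigl(\mathcal{Q}^p_{1-s}f\bigr)(\gamma(s))\, ,\qquad s\in[0,1]\, .
\end{equation*}
The idea is to differentiate $\Phi$ in $s$: the term coming from the Hopf--Lax evolution contributes, by the Hamilton--Jacobi subsolution property of the Hopf--Lax semigroup (namely $\frac{\di}{\di s}\mathcal{Q}^p_sf + \frac1q|\nabla \mathcal{Q}^p_sf|^q\le 0$ a.e., with $q=p/(p-1)$ the conjugate exponent, valid in the metric measure setting by \cite{AmbrosioGigliSavare14a}), and the term coming from moving along $\gamma$ is controlled by $|\nabla P_s(\mathcal{Q}^p_{1-s}f)|$ times $|\dot\gamma(s)|=\dist(x,y)$. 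Applying the $L^p$-Bakry--\'Emery estimate \eqref{eq:selfimpr} to bound $|\nabla P_s g|^q$ in terms of $e^{-qKs}P_s(|\nabla g|^q)$ (this is where $q$, hence $p$, enters, and this is exactly the point that in Kuwada's original proof used only $p=q=2$ but now is available for all $p$ thanks to \cite{Savare14}), together with Young's inequality $ab\le \frac1p a^p + \frac1q b^q$, one obtains a differential inequality of the form $\Phi'(s)\le c(s)\dist(x,y)^p$ with $\int_0^1 c(s)\,\di s$ computable explicitly. Integrating from $s=0$ to $s=1$ and then rescaling time (or directly: applying the computation to the rescaled problem) yields the stated bound $P_s(\mathcal{Q}^p_1f)(x)-P_sf(y)\le \frac{e^{-pKs}}{p}\dist(x,y)^p$; here one also uses $\mathcal{Q}^p_0f=f$ and the $1$-parameter semigroup structure to handle the extra parameter $s\ge 0$ in the statement exactly as in the $p=2$ case.

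The main obstacle I anticipate is the rigorous justification of the differentiation of $\Phi$ and of the chain-rule/a.e. manipulations: one needs that $\mathcal{Q}^p_{1-s}f$ has the requisite Sobolev regularity in $(s,x)$ so that the Hamilton--Jacobi subsolution inequality can be combined with the (a.e. in $s$) differentiability of $s\mapsto P_sg$ with values in $L^2$, and one needs to control the behaviour along the curve $\gamma$. All of this is handled in \cite{AmbrosioGigliSavare15} for $p=2$ and in \cite{Kuwada10} for general $p$ on smooth manifolds; the claim in the statement is precisely that the combination of these two ingredients (the metric-measure-space Hopf--Lax/Hamilton--Jacobi theory plus the $L^p$ self-improved gradient contraction) makes the argument go through verbatim on $\RCD(K,\infty)$ spaces, which is why the paper states it without proof. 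I would therefore present the proof as: (1) regularity of $\mathcal{Q}^p_tf$; (2) the Hamilton--Jacobi subsolution property; (3) the $L^p$-Bakry--\'Emery estimate \eqref{eq:selfimpr}; (4) Young's inequality and the differential-inequality computation along $\Phi$; (5) integration and the semigroup bookkeeping in $s$ — pointing to \cite{Kuwada10,AmbrosioGigliSavare15,Savare14} for the steps that are identical to the literature.
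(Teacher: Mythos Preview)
Your proposal is correct and matches the paper's indicated approach: the paper omits the proof, stating only that it follows the very same strategy as the $p=2$ case (\autoref{thm:KuwadaK=0}) once one has Savar\'e's self-improved $L^p$ Bakry--\'Emery contraction \eqref{eq:selfimpr} in hand, and your outline (Hopf--Lax regularity, Hamilton--Jacobi subsolution, $L^p$ gradient contraction, Young's inequality, interpolation along a geodesic) is exactly Kuwada's argument with that substitution. The only minor point to watch is the precise form of the interpolation function $\Phi$ and the bookkeeping between the Hopf--Lax time and the heat-flow time $s$, but this is handled exactly as in \cite{AmbrosioGigliSavare15,Kuwada10}.
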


For our purposes it will be relevant to apply Kuwada's duality under milder assumptions on the function $f$. This is possible under the $\RCD(K,N)$ condition for finite $N$, thanks to the Gaussian estimates for the heat kernel, that, as we already pointed out (see \eqref{eq:polgrowth} and the discussion following it), enlarge the class of functions to which the heat flow can be applied. We focus for simplicity on the case $p=1$, which is the relevant one for our purposes.

\begin{theorem}\label{thm:kuwadapolgrowth}
Let $(X,\dist,\meas)$ be an $\RCD(K,N)$ metric measure space for some $K\in\setR$ and $1\le N<\infty$. Let $f:X\to\setR$ be a locally Lipschitz function with polynomial growth. Let us assume that there exists $x_0\in X$ such that $\mathcal{Q}^1_1f(x_0)\in\setR$. Then
\begin{equation}\label{eq:dualityp1}
P_s\left(\mathcal{Q}^1_1 f\right)(x)-P_sf(y)\le e^{-Ks}\dist(x,y)\ ,
\end{equation}  
 for any $x,y\in X$ and for any $s\ge 0$.
\end{theorem}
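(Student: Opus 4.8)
The statement \autoref{thm:kuwadapolgrowth} is a relaxation of \autoref{thm:kuwadap} (in the case $p=1$) from bounded Lipschitz functions with bounded support to locally Lipschitz functions with polynomial growth, under the stronger hypothesis of a finite-dimensional lower Ricci bound $\RCD(K,N)$. The natural strategy is a truncation/approximation argument: approximate $f$ by a sequence of bounded Lipschitz functions $f_k$ for which \eqref{eq:kuwadaK=0} already holds, and pass to the limit in all three terms appearing in \eqref{eq:dualityp1}, using the Gaussian heat kernel bounds \eqref{eq:kernelestimate} together with the Bishop--Gromov inequality \eqref{eq:BishopGromovInequality} to control the tails.

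\emph{Step 1: reduction to non-negative functions with controlled growth.} First I would observe that the hypothesis $\mathcal{Q}^1_1 f(x_0)=\inf_{y}(f(y)+\dist(x_0,y))\in\setR$ forces $f$ to be bounded below on every ball by an affine function of the distance, hence $f(y)\ge f(x_0)-\dist(x_0,y)\ge -C(1+\dist(x_0,y))$; combined with the polynomial growth upper bound, $|f|$ has polynomial growth, so $P_sf$ is pointwise well-defined by \eqref{eq:pointdefheat}, and moreover $\mathcal{Q}^1_1 f(x)\in\setR$ for \emph{every} $x$ (with $\mathcal{Q}^1_1 f$ itself $1$-Lipschitz and of linear growth, since $|\mathcal{Q}^1_1 f(x)-\mathcal{Q}^1_1 f(x')|\le\dist(x,x')$ directly from the definition). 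Up to adding a large constant and a multiple of $\dist(x_0,\cdot)$ — which is harmless since both the Hopf-Lax semigroup and the heat flow behave affinely under such modifications once one checks the interplay of $\mathcal{Q}^1_1$ with additive constants — one may assume $f\ge 0$.

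\emph{Step 2: truncation.} For $k\in\setN$ let $f_k:=\min\{f,k\}\cdot\chi_k$ where $\chi_k$ is a cutoff equal to $1$ on $B_k(x_0)$, supported in $B_{2k}(x_0)$; more simply, set $f_k:=\min\{f^+_k,k\}$ where $f^+_k$ agrees with $f$ on $B_k(x_0)$ and is extended to a bounded Lipschitz function with bounded support (such an extension exists because $f$ is locally Lipschitz). Then $f_k\in\Lipb(X)$ has bounded support and is non-negative, so \autoref{thm:kuwadap} gives
\begin{equation*}
P_s\bigl(\mathcal{Q}^1_1 f_k\bigr)(x)-P_s f_k(y)\le e^{-Ks}\dist(x,y)\,,\qquad\forall x,y\in X,\ s\ge 0.
\end{equation*}
The crux is then to justify $\lim_{k\to\infty}P_s f_k(y)=P_s f(y)$ and $\lim_{k\to\infty}P_s(\mathcal{Q}^1_1 f_k)(x)=P_s(\mathcal{Q}^1_1 f)(x)$. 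For the first, since $0\le f_k\nearrow f$ on any fixed ball and $f_k\le$ (an extension with polynomial growth plus a $k$-dependent but locally controlled error), monotone/dominated convergence against the heat kernel applies: the dominating function is $p_s(y,\cdot)\,(f+1)$, integrable by \eqref{eq:kernelestimate}, \eqref{eq:BishopGromovInequality} and polynomial growth of $f$, exactly as in the absolute-convergence remark following \eqref{eq:polgrowth}. For the second term one needs $\mathcal{Q}^1_1 f_k\to\mathcal{Q}^1_1 f$ pointwise with a uniform (in $k$) polynomial — in fact linear — growth bound, so that dominated convergence again gives convergence of $P_s(\mathcal{Q}^1_1 f_k)$.

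\emph{Step 3: the main obstacle — pointwise convergence of $\mathcal{Q}^1_1 f_k$.} This is where care is needed, since $\mathcal{Q}^1_1 f_k(x)=\inf_y(f_k(y)+\dist(x,y))$ is an infimum over the \emph{whole} space, and on the support cutoff region $f_k$ is set to $0$, which could a priori drag the infimum down. The key point: because $f$ is bounded below by $f(x_0)-\dist(x_0,\cdot)$, for $x$ fixed the infimum defining $\mathcal{Q}^1_1 f(x)$ is effectively attained (or approached) on a bounded set $B_{R(x)}(x_0)$ — the competitor $y=x$ gives $\mathcal{Q}^1_1 f(x)\le f(x)$, while for $\dist(x_0,y)$ large, $f_k(y)+\dist(x,y)\ge -\text{(linear in }\dist(x_0,y))+\dist(x,y)$ is eventually larger than $f(x)$. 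Hence for $k$ large (so that $B_k(x_0)\supset B_{R(x)}(x_0)$ and $f_k=f$ there, truncation at height $k$ being inactive on this bounded set for $k$ larger than $\sup_{B_{R(x)}(x_0)}f$), one has $\mathcal{Q}^1_1 f_k(x)=\mathcal{Q}^1_1 f(x)$ \emph{exactly}, not merely in the limit. One must also record the uniform growth bound $|\mathcal{Q}^1_1 f_k(x)|\le \max\{f_k(x),\ C(1+\dist(x_0,x))\}\le C'(1+\dist(x_0,x))$ valid for all $k$, which supplies the domination. Granting Steps 1--3, passing $k\to\infty$ in the displayed inequality yields \eqref{eq:dualityp1}, and undoing the normalization of Step 1 gives the theorem in full generality.
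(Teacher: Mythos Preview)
Your truncation strategy has a genuine gap in Step~1. You propose to reduce to $f\ge 0$ by adding a constant and a multiple of $\dist(x_0,\cdot)$, calling this ``harmless''. Constants are indeed harmless, but the $1$-Hopf--Lax transform does \emph{not} commute with adding $\alpha\,\dist(x_0,\cdot)$: in general $\mathcal{Q}^1_1\bigl(f+\alpha\,\dist(x_0,\cdot)\bigr)(x)\ne \mathcal{Q}^1_1 f(x)+\alpha\,\dist(x_0,x)$ (on $\setR$ with $x_0=0$, $f(z)=z^2$, $\alpha=1$, evaluated at $x=10$, the two sides are $10$ and $19.75$). Since the only lower bound supplied by the hypothesis is $f(y)\ge \mathcal{Q}^1_1 f(x_0)-\dist(x_0,y)$ (not $f(x_0)-\dist(x_0,y)$ as you wrote), which tends to $-\infty$, a constant shift alone cannot give $f\ge 0$. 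And without $f\ge 0$ your Step~3 collapses: your claim that competitors for $\mathcal{Q}^1_1 f_k(x)$ are confined to a bounded set relies on $f_k(y)+\dist(x,y)\to\infty$ as $y\to\infty$, but with the available lower bound the slope in $\dist(x_0,y)$ is exactly $-1$, so $f_k(y)+\dist(x,y)$ is only bounded below by a constant, not coercive. The infimum can genuinely escape to infinity (e.g.\ $f(y)=-y+1/(1+y)$ on $[0,\infty)$).

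The paper's proof avoids truncation entirely and is a few lines. The two observations are: (i)~$f^c:=\mathcal{Q}^1_1 f$ is $1$-Lipschitz (an infimum of $1$-Lipschitz functions of $x$, finite at $x_0$ hence everywhere), and (ii)~$f^c\le f$ (take $y=x$ in the infimum). From (ii) and positivity of the heat kernel, $P_s f^c\le P_s f$, so it suffices to prove $P_s f^c(x)-P_s f^c(y)\le e^{-Ks}\dist(x,y)$; that is, one may replace $f$ by the $1$-Lipschitz function $f^c$. But for any $1$-Lipschitz $g$ one has $\mathcal{Q}^1_1 g=g$, and the inequality becomes the Lipschitz contraction $\LipConst(P_s g)\le e^{-Ks}\LipConst(g)$, which is \cite[Theorem~6.1(iv)]{AmbrosioGigliSavare14}. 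No approximation, no tail estimates, and \autoref{thm:kuwadap} is not even invoked.
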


\begin{proof}
Let us set $f^c:=\mathcal{Q}^1_1 f$, in order to ease the notation.\\
Observe that, if there exists $x_0\in X$ such that $f^c(x_0)\in \setR$, then $f^c$ is a $1$-Lipschitz function. Moreover, since for any function $f$ as above, it holds that $f^c\le f$, it is sufficient to prove \eqref{eq:dualityp1} for $1$-Lipschitz functions. Indeed, if the statement holds for $1$-Lipschitz functions, then
\begin{equation*}
\left(P_sf^c\right)(x)-\left(P_sf^c\right)(y)\le e^{-Ks}\dist(x,y)\, ,
\end{equation*} 
for any $x,y\in X$ and for any $s\ge 0$.\\
Hence, since $f^c\le f$ and therefore $P_sf^c\le P_sf$, we obtain
\begin{equation*}
\left(P_sf^c\right)(x)-P_sf(y)\le e^{-Ks}\dist(x,y)\;\;\;\text{ for any $x,y\in X$ and for any $s\ge 0$}\, ,
\end{equation*}
as we wished.
Now, given any $1$-Lipschitz function $f:X\to\setR$,  observe that $f^c=f$. Using \cite[Theorem 6.1 (iv)]{AmbrosioGigliSavare14}, we can estimate
\begin{equation*}
\LipConst (P_sf)\le e^{-Ks}P_s \big( \LipConst(f) \big) \le  e^{-Ks} \, .
\end{equation*}
Hence
\begin{equation*}
\abs{P_sf(x)-P_sf(y)}\le e^{-Ks}\dist(x,y)\, ,\quad\text{for any $x,y\in X$ and for any $s\ge 0$}\, .
\end{equation*}
\end{proof}

\begin{remark}
Note that, if $f:X \to \setR$ is 1-Lipschitz, then one can reinforce the estimate   \eqref{eq:dualityp1} by putting the modulus in the left hand side.
\end{remark}


\subsection{Hopf-Lax semigroup and Laplacian bounds: the non smooth framework}\label{subsec:nonsmooth}
Let us consider an $\RCD(K,N)$ metric measure space $(X,\dist,\meas)$. Recall the definition of the $p$-Hopf-Lax semigroup \eqref{eq:HLdef}.\\ 
In order to motivate the next developments, let us start with some formal computations, neglecting the regularity issues.
\medskip

To this aim let $x\in X$ and suppose that there exists $y\in X$ such that 
\begin{equation}\label{eq:HLattained}
\mathcal{Q}^p_1f(x)=f(y)+\frac{\dist(x,y)^p}{p}\, ,
\end{equation}
i.e., $y$ is a point where the infimum defining the $p$-Hopf-Lax semigroup for $t=1$ at $x$ is attained.\\
Observe that, for $x$ and $y$ as above, equality holds at time $s=0$ in \eqref{eq:kuwadaK=0}. Hence, by taking the right derivative,
\begin{equation}\label{eq:laplafromkuwada}
\limsup_{s\downarrow 0}\frac{P_s\left(\mathcal{Q}^p_1 f\right)(x)-\mathcal{Q}^p_1 f(x)}{s}\le \limsup_{s\downarrow 0}\frac{P_sf(y)-f(y)}{s} - K \dist(x,y)^{p}\, .
\end{equation}
If $f$ is \textit{regular} at $y$, the first term in the right hand side of \eqref{eq:laplafromkuwada} is the value $\Delta f(y)$. Hence \eqref{eq:laplafromkuwada} can be turned into
\begin{equation*}
\limsup_{s\downarrow 0}\frac{P_s\left(\mathcal{Q}^p_1 f\right)(x)-\mathcal{Q}^p_1 f(x)}{s}\le \Delta f(y) - K \dist(x,y)^{p}\, ,
\end{equation*}
where we recall that $x$ and $y$ are such that \eqref{eq:HLattained} holds. If also $\mathcal{Q}^p_1f$ happens to be \textit{regular} near to $x$, then
\begin{equation*}
\Delta \mathcal{Q}^p_1f(x)\le\Delta f(y) - K \dist(x,y)^{p}\, .
\end{equation*}
As we shall see, the viscous theory of Laplacian bounds allows to let the heuristic above become rigorous. 
\medskip

In order to ease the notation, we shall indicate
\begin{equation*}
\Delta ^hf(x):=\limsup_{t\downarrow 0}\frac{P_tf(x)-f(x)}{t}\, ,
\end{equation*}
whenever $f:X\to\setR$ is a locally Lipschitz function with polynomial growth.

\begin{proposition}\label{prop:HLprese hsense}
Let $(X,\dist,\meas)$ be an $\RCD(K,N)$ metric measure space for some $K\in\setR$ and $1\le N<\infty$. Let $f:X\to\setR$ be a locally Lipschitz function with polynomial growth. Let us assume that there exists $x_0\in X$ such that $f^c(x_0):=\mathcal{Q}^1_1f(x_0)\in\setR$. If $x,y\in X$ verify
\begin{equation*}
f^c(x)-f(y)=\dist(x,y)\, ,
\end{equation*}  
then
\begin{equation*}
\Delta^h f^c(x)\le \Delta^h f(y)-K\dist(x,y)\, .
\end{equation*}
\end{proposition}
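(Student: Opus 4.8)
The plan is to make rigorous the heuristic computation preceding the statement, using the Kuwada duality \autoref{thm:kuwadapolgrowth} together with the pointwise short-time behaviour of the heat flow. First I would record that, since $f^c(x_0)\in\setR$ for some $x_0$, the function $f^c=\mathcal{Q}^1_1 f$ is everywhere finite and $1$-Lipschitz, hence in particular it has (at most linear, so) polynomial growth and $\Delta^h f^c$ is well defined; similarly $f$ has polynomial growth by assumption. The hypothesis $f^c(x)-f(y)=\dist(x,y)$ says precisely that equality holds at $s=0$ in the Kuwada inequality
\begin{equation*}
P_s f^c(x)-P_s f(y)\le e^{-Ks}\dist(x,y)\,,\qquad s\ge 0\,,
\end{equation*}
which is exactly \eqref{eq:dualityp1} specialized to $p=1$. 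Subtracting the value at $s=0$ and dividing by $s>0$ gives
\begin{equation*}
\frac{P_s f^c(x)-f^c(x)}{s}-\frac{P_s f(y)-f(y)}{s}\le \frac{e^{-Ks}-1}{s}\,\dist(x,y)\,.
\end{equation*}

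Next I would take $\limsup_{s\downarrow 0}$ of both sides. The right-hand side converges to $-K\dist(x,y)$. For the left-hand side one uses the elementary inequality $\limsup(a_s-b_s)\ge \limsup a_s-\limsup b_s$ whenever $\limsup b_s$ is finite; here $b_s=(P_s f(y)-f(y))/s$, and its $\limsup$ as $s\downarrow 0$ equals $\Delta^h f(y)$, which is finite (it is bounded above by the right-hand side of the displayed inequality plus $\Delta^h f^c(x)$-type terms, but more simply: we may restrict attention to the case $\Delta^h f(y)<\infty$, since otherwise the asserted inequality $\Delta^h f^c(x)\le \Delta^h f(y)-K\dist(x,y)=+\infty$ is trivial). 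Hence
\begin{equation*}
\Delta^h f^c(x)-\Delta^h f(y)=\limsup_{s\downarrow 0}\frac{P_s f^c(x)-f^c(x)}{s}-\Delta^h f(y)\le \limsup_{s\downarrow 0}\left(\frac{P_s f^c(x)-f^c(x)}{s}-\frac{P_s f(y)-f(y)}{s}\right)\le -K\dist(x,y)\,,
\end{equation*}
which is the claim.

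The only point that deserves care — and which I expect to be the main (mild) obstacle — is the legitimacy of applying \autoref{thm:kuwadapolgrowth} and of taking pointwise-defined heat flow evaluations: one must check that $f^c$ and $f$ are indeed in the class of functions with polynomial growth for which $P_s$ acts pointwise via the heat kernel as in \eqref{eq:pointdefheat}, so that the quantities $P_s f^c(x)$, $P_s f(y)$ are genuine real numbers and the inequality \eqref{eq:dualityp1} holds at the specific points $x,y$ rather than merely $\meas$-a.e. This is guaranteed by the Gaussian bounds \eqref{eq:kernelestimate} together with Bishop--Gromov \eqref{eq:BishopGromovInequality}, exactly as in the discussion around \eqref{eq:polgrowth}, and \autoref{thm:kuwadapolgrowth} is already stated under precisely the hypothesis $\mathcal{Q}^1_1 f(x_0)\in\setR$ that we are assuming. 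A final remark worth inserting: one can phrase the conclusion also for general exponents $p\in[1,\infty)$ by starting from \autoref{thm:kuwadap} instead, obtaining $\Delta^h\mathcal{Q}^p_1 f(x)\le \Delta^h f(y)-K\dist(x,y)^p$ whenever $\mathcal{Q}^p_1 f(x)-f(y)=\dist(x,y)^p/p$; the argument is verbatim the same, since again equality at $s=0$ in \eqref{eq:kuwadaK=0} propagates to the derivative bound, and $(e^{-pKs}-1)/(ps)\to -K\dist(x,y)^p$.
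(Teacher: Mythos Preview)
Your proposal is correct and follows essentially the same route as the paper: apply the Kuwada duality \autoref{thm:kuwadapolgrowth}, note that the assumption gives equality at $s=0$, subtract and divide by $s$, and pass to the $\limsup$. The paper handles the $\limsup$ slightly more directly by writing $\frac{P_sf^c(x)-f^c(x)}{s}\le \frac{P_sf(y)-f(y)}{s}+\frac{e^{-Ks}-1}{s}\dist(x,y)$ and using subadditivity of $\limsup$ (the second term has a genuine limit), which avoids your case distinction on $\Delta^h f(y)=+\infty$; but your treatment is equally valid.
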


\begin{proof}
The conclusion follows from \autoref{thm:kuwadapolgrowth}, relying on the very definition of $\Delta^h$ through the formal argument presented above.
Indeed, under the assumption of the statement, by \autoref{thm:kuwadapolgrowth} we have:
\begin{equation}\label{kuw}
P_sf^c(x)-P_sf(y)\le e^{-Ks}\dist(x,y)\,, \quad \text{ for any $x,y\in X$ and for any $s\ge 0$}\, .
\end{equation}
Moreover, by assumption, equality holds in \eqref{kuw} at time $s=0$. Hence, by taking the right derivative at both sides, we infer that
\begin{align*}
\Delta ^hf^c(x)&=\limsup_{s\downarrow 0}\frac{P_sf^c(x)-f^c(x)}{s}\\
&\le \limsup_{s\downarrow 0}\frac{P_sf(y)-f(y)}{s}+\lim_{s\downarrow 0}\frac{e^{-Ks}-1}{s}\dist(x,y)\\
&=\Delta ^hf(y)-K\dist(x,y)\, .
\end{align*}
\end{proof}

Thanks to the equivalences for Laplacian bounds over noncollapsed $\RCD(K,N)$ metric measure spaces (see \autoref{thm:mainequivlapla}), we obtain the following.

\begin{theorem}\label{thm:HLlapla}
Let $(X,\dist,\haus^N)$ be an $\RCD(K,N)$ metric measure space for some $K\in\setR$ and $1\le N<\infty$. Let $f:X\to\setR$ be a locally Lipschitz function with polynomial growth. Let $\Omega,\Omega'\subset X$ be open domains and $\eta\in\setR$. Then the following holds. Assume that $f^c$ is finite and that, for any $x\in\Omega'$ the infimum defining $f^c(x)$ is attained at some $y\in \Omega$. Assume moreover that
\begin{equation}\label{eq:lapassumption}
\Delta f\le \eta \quad\text{on $\Omega$}\,.
\end{equation}
Then
\begin{equation*}
\Delta f^c\le \eta -\min_{x\in\Omega', y\in\Omega} K \dist(x,y) \quad\text{on $\Omega'$}
\end{equation*}
 where the Laplacian bounds have to be intended in any of the equivalent senses of \autoref{thm:mainequivlapla}.
\end{theorem}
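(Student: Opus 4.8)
\textbf{Proof plan for \autoref{thm:HLlapla}.}
The plan is to combine the pointwise heat-flow characterization of Laplacian bounds (the equivalence (i)$\Leftrightarrow$(v) in \autoref{thm:mainequivlapla}) with the pointwise Kuwada-type inequality for $\Delta^h$ established in \autoref{prop:HLprese hsense}. First I would observe that, since $\Delta f\le\eta$ on $\Omega$ (in any, hence all, of the equivalent senses), \autoref{prop:distrimplheat} together with \autoref{prop:heatimplvisco} and \autoref{thm:viscoimpldistri} gives that for every $y\in\Omega$ and every extension $\tilde f$ of $f$ with polynomial growth one has $\Delta^h \tilde f(y)=\limsup_{t\downarrow0}(P_t\tilde f(y)-\tilde f(y))/t\le\eta$. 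Here one must be a little careful: \autoref{def:heatlaplbounds} and \autoref{prop:distrimplheat} are stated for extensions that agree with $f$ only on a subdomain $\Omega'\Subset\Omega$; but by \autoref{lemma:sufflocal} the value of $\Delta^h$ at $y$ depends only on the germ of $\tilde f$ near $y$, so as long as $\tilde f\equiv f$ on a small ball around $y\in\Omega$ we get $\Delta^h\tilde f(y)\le\eta$. In our situation $f$ itself is already globally defined with polynomial growth, so we may simply take $\tilde f=f$ and conclude $\Delta^h f(y)\le\eta$ for every $y\in\Omega$.

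Next I would invoke \autoref{prop:HLprese hsense}: for any $x\in\Omega'$, pick a point $y=y(x)\in\Omega$ realizing the infimum defining $f^c(x)$, i.e. $f^c(x)-f(y)=\dist(x,y)$ (such $y$ exists by hypothesis, and it lies in $\Omega$). Then \autoref{prop:HLprese hsense} yields
\begin{equation*}
\Delta^h f^c(x)\le \Delta^h f(y)-K\dist(x,y)\le \eta - K\dist(x,y)\le \eta - \min_{x'\in\Omega',\,y'\in\Omega}K\dist(x',y')\, ,
\end{equation*}
where the last inequality uses that $-K\dist(x,y)\ge -\max(K\dist(x,y),0)\ge \min_{x',y'}(-K\dist(x',y'))$ — more simply, $-Kt$ as a function of $t$ is monotone, so its value at $t=\dist(x,y)$ is at least the minimum of $-K\dist(x',y')$ over $x'\in\Omega'$, $y'\in\Omega$. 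Note $f^c$ is globally $1$-Lipschitz (it is finite at one point by assumption), hence locally Lipschitz with polynomial — indeed linear — growth, so $\Delta^h f^c$ is well defined at every point of $\Omega'$.

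Finally, setting $\eta':=\eta-\min_{x'\in\Omega',\,y'\in\Omega}K\dist(x',y')\in\setR$, the previous display says precisely that $f^c$ satisfies $\Delta f^c\le\eta'$ on $\Omega'$ in the heat flow sense of \autoref{def:heatlaplbounds} (the required independence of the chosen polynomial-growth extension is again \autoref{lemma:sufflocal}, but as above $f^c$ is already global). Applying the equivalence (v)$\Rightarrow$(i)--(iv) of \autoref{thm:mainequivlapla} on the domain $\Omega'$ — via \autoref{prop:heatimplvisco} to pass to the viscous bound and then \autoref{thm:viscoimpldistri} to reach the distributional bound — we conclude $\Delta f^c\le\eta'$ on $\Omega'$ in all of the equivalent senses, which is the assertion. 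The only genuinely delicate point is the bookkeeping around global versus local extensions and the applicability of \autoref{prop:HLprese hsense} at every $x\in\Omega'$ (which in turn rests on \autoref{thm:kuwadapolgrowth}, and on the existence of an interior optimal $y\in\Omega$, which is exactly the standing hypothesis); the curvature correction term and the final translation between notions are then immediate from the results already in place.
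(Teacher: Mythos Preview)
Your proposal is correct and follows essentially the same route as the paper: use \autoref{thm:mainequivlapla} to convert the distributional bound $\Delta f\le\eta$ on $\Omega$ into the heat-flow bound $\Delta^h f(y)\le\eta$, apply \autoref{prop:HLprese hsense} at each $x\in\Omega'$ with an optimal $y\in\Omega$, and then invoke \autoref{thm:mainequivlapla} once more on $\Omega'$. The extra bookkeeping you spell out (polynomial growth of $f^c$, independence from the extension via \autoref{lemma:sufflocal}) is appropriate but the argument is the same.
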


\begin{proof}
The statement follows from \autoref{prop:HLprese hsense} and \autoref{thm:mainequivlapla}. Indeed, by \eqref{eq:lapassumption} and \autoref{thm:mainequivlapla}, we have
\begin{equation*}
\Delta ^hf(y)\le \eta\, \quad\text{for any $y\in \Omega$}\, .
\end{equation*}
Hence, by \autoref{prop:HLprese hsense},
\begin{equation*}
\Delta ^hf^c(x)\le \eta-K\dist(x,y)\, ,
\end{equation*}
where $y\in\Omega$ is such that $f^c(x)-f(y)=\dist(x,y)$. 
\\The conclusion follows applying \autoref{thm:mainequivlapla} again to $f^c$ on $\Omega'$.
\end{proof}

Specializing to the case of non-negative Ricci curvature $K=0$, we get a cleaner statement.

\begin{corollary}\label{cor:HLdistr}
Let $(X,\dist,\haus^N)$ be an $\RCD(0,N)$ metric measure space. Let $f:X\to\setR$ be a locally Lipschitz function with polynomial growth. Let $\Omega,\Omega'\subset X$ be open domains and $\eta\in\setR$. Assume that $f^c$ is finite and that, for any $x\in\Omega'$ the infimum defining $f^c(x)$ is attained at some $y\in \Omega$. Assume moreover that
\begin{equation*}
\Delta f\le \eta\, \quad\text{on $\Omega$}\,.
\end{equation*}
Then
\begin{equation*}
\Delta f^c\le \eta\, \quad\text{on $\Omega'$}\, ,
\end{equation*}
where the Laplacian bound can be intended in any of the equivalent senses of \autoref{thm:mainequivlapla}.
\end{corollary}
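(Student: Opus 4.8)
The plan is to deduce the statement directly from \autoref{thm:HLlapla}, of which it is simply the specialization to $K=0$. The hypotheses coincide verbatim: $(X,\dist,\haus^N)$ being $\RCD(0,N)$ means it is $\RCD(K,N)$ with $K=0$; $f$ is locally Lipschitz with polynomial growth; $\Omega,\Omega'$ are open domains; $f^c$ is finite; for each $x\in\Omega'$ the infimum defining $f^c(x)$ is attained at some $y\in\Omega$; and $\Delta f\le\eta$ on $\Omega$. Applying \autoref{thm:HLlapla} then yields
\[
\Delta f^c\le \eta-\min_{x\in\Omega',\,y\in\Omega}K\,\dist(x,y)\quad\text{on }\Omega',
\]
and since $K=0$ the correction term $\min_{x\in\Omega',y\in\Omega}K\dist(x,y)$ vanishes identically, which gives $\Delta f^c\le\eta$ on $\Omega'$ in any of the equivalent senses of \autoref{thm:mainequivlapla}.

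Alternatively, I would re-run the argument from scratch, which in the case $K=0$ is particularly transparent. By \autoref{thm:kuwadapolgrowth} with $K=0$ one has $P_sf^c(x)-P_sf(y)\le\dist(x,y)$ for all $x,y\in X$ and $s\ge0$, with equality at $s=0$ whenever $f^c(x)-f(y)=\dist(x,y)$; taking right derivatives at $s=0$ gives $\Delta^h f^c(x)\le\Delta^h f(y)$, i.e. the conclusion of \autoref{prop:HLprese hsense} with no curvature term. Since $\Delta f\le\eta$ on $\Omega$ translates via \autoref{thm:mainequivlapla} into $\Delta^h f(y)\le\eta$ for every $y\in\Omega$, and every $x\in\Omega'$ admits such a matching $y\in\Omega$, one obtains $\Delta^h f^c(x)\le\eta$ for all $x\in\Omega'$; a final application of \autoref{thm:mainequivlapla} to $f^c$ on $\Omega'$ upgrades this to $\Delta f^c\le\eta$ on $\Omega'$.

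There is essentially no obstacle here: the only point to verify is that the minimum appearing in \autoref{thm:HLlapla} is identically zero when $K=0$, which is immediate. All the real content is carried by \autoref{thm:HLlapla}, and ultimately by Kuwada's duality \autoref{thm:kuwadapolgrowth} combined with the equivalence of Laplacian bounds of \autoref{thm:mainequivlapla}.
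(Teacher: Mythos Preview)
Your proposal is correct and matches the paper's approach exactly: the corollary is stated immediately after \autoref{thm:HLlapla} as its specialization to $K=0$, with no separate proof given, and your observation that the correction term $\min_{x\in\Omega',y\in\Omega}K\dist(x,y)$ vanishes when $K=0$ is precisely the content. Your alternative direct argument also faithfully reproduces the proof of \autoref{thm:HLlapla} itself in the simplified setting.
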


\begin{remark}
For brevity, we discussed only the case $p=1$, however it is possible to obtain counterparts of all the results above dealing with the Hopf-Lax semigroup associated to an arbitrary exponent $1\le p<\infty$.
\end{remark}

\section{Mean curvature bounds for minimal boundaries}\label{sec:meancurv}

This section is dedicated to the study of mean curvature bounds for boundaries of locally perimeter minimizing sets of finite perimeter, in the framework of $\RCD(K,N)$ metric measure spaces $(X,\dist,\haus^N)$.

Mean curvature bounds will be encoded into Laplacian bounds for distance functions. As it is well known, this is equivalent to the classical information about the vanishing mean curvature condition in the smooth setting, see \autoref{thm:smoothminimalimpldistsubh}. At the same time, this perspective allows for a meaningful formulation and analysis in our non smooth framework: switching to global Laplacian bounds, avoids the necessity of considering second order objects (like the mean curvature, the Laplacian of the distance, the Hessian of a function) on a prescribed codimension one hypersurface.
This is key, indeed, in our non-smooth framework, as second order objects are usually well defined $\meas$-a.e. and thus it can be quite tricky to work with them on a codimesion one hypersurface.

As we shall see, this way of formulating mean curvature bounds is also fine enough to allow for several extensions of classical results in Riemannian geometry to the synthetic framework. Here we focus on the beginning of a regularity theory, see \autoref{sec:regularity}, and on some direct geometric applications, see for instance \autoref{thm:generalizedFrankel} for a generalized version of the Frankel property. The extension to different notions of minimal hypersurfaces and their geometric applications are left to future investigation.\\
We mention that the Laplacian bounds on the distance function, in addition to encoding the vanishing of the mean curvature (i.e. a ``first variation-type'' information), also encode ``second variation-type'' information. Moreover, such ``second variation-type'' information is encoded not only at an infinitesimal level, but at a \textit{finite level}; see for example \autoref{prop:tubecomparison} where the case of equidistant surfaces is treated.
\medskip

Our treatment is inspired by \cite{CaffarelliCordoba93}, where a new approach to mean curvature bounds for perimeter minimizing sets was proposed by Caffarelli and Cordoba. Their strategy partially avoids the first variation formula (that was a fundamental tool in the previous approach due to De Giorgi \cite{DeGiorgi61}) and is inspired by the viscosity theory in PDEs, instead. Later on, the possibility of relying on this approach on non smooth spaces was suggested by Petrunin in \cite{Petrunin03}, with a sketch of proof of the L\'evy-Gromov isoperimetric inequality on Alexandrov spaces along similar lines.
\medskip


\subsection{Minimal boundaries and the Laplacian of the distance function}\label{subsec:mean}

The subject of our study will be sets of finite perimeter that locally minimize the perimeter, according to the following.

\begin{definition}\label{def:locper}
Let $(X,\dist,\meas)$ be an $\RCD(K,N)$ metric measure space and let $\Omega\subset X$ be an open domain. Let $E\subset X$ be a set of locally finite perimeter. We say that $E$ is \textit{locally perimeter minimizing in $\Omega$} if for any $x\in \Omega$ there exists $r_x>0$ such that $E$ minimizes the perimeter among all the perturbations that are compactly supported in $B_{r_x}(x)$, i.e., for any Borel set $F$ such that $E\Delta F\subset B_{r_x}(x)$ it holds
\begin{equation*}
\Per(E,B_{r_x}(x))\le \Per(F,B_{r_x}(x))\, .
\end{equation*}
\end{definition}

Let us notice that the above is a very general condition. For instance, smooth minimal hypersurfaces in Riemannian manifolds are locally boundaries of locally perimeter minimizing sets according to \autoref{def:locper}, even though, in general, they do not minimize the perimeter among arbitrarily compactly supported variations (a simple example in this regard is the equator inside the sphere).

\begin{theorem}\label{thm:meancurvminimal1}
Let $(X,\dist,\haus^N)$ be an $\RCD(K,N)$ metric measure space. Let $E\subset X$ be a set of locally finite perimeter and assume that it is a local perimeter minimizer. Let $\dist_{\overline{E}}:X\setminus \overline{E}\to[0,\infty)$ be the distance function from $\overline{E}$. Then
\begin{equation}\label{eq:meanglob}
\Delta \dist_{\overline{E}}\le \ft_{K,N} \circ \dist_{\overline{E}}\, \quad\text{on $X\setminus\overline{E}$}\, ,
\end{equation}
where $ \ft_{K,N}$ is defined in \eqref{eq:deftKN}.
If $\Omega\subset X$ is an open domain and $E\subset X$ is locally perimeter minimizing in $\Omega$, then setting
\begin{equation}\label{eq:defK}
{\mathcal K}:=\{x\in X\, :\, \exists\,  y\in \Omega\cap \partial E\, :\, \dist_{\overline{E}}(x)=\dist(x,y)\}\, ,
\end{equation}
it holds
\begin{equation}\label{eq:meangen}
\Delta \dist_{\overline{E}}\le \ft_{K,N}\circ \dist_{\overline{E}}\,\, \quad\text{on any open subset $\Omega'\Subset \left(X\setminus\overline{E}\right)\cap {\mathcal K}$}\, .
\end{equation}
\end{theorem}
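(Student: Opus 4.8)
\textbf{Proof strategy for \autoref{thm:meancurvminimal1}.}

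The plan is to follow the viscous contradiction scheme outlined in \autoref{subsec:overview}, made rigorous by the equivalence \autoref{thm:mainequivlapla} between viscous and distributional Laplacian bounds, and by the stability-under-infima result \autoref{prop:stabinf}. I will first prove \eqref{eq:meangen} on $\Omega'\Subset(X\setminus\overline E)\cap\mathcal K$; the global statement \eqref{eq:meanglob} follows by taking $\Omega=X$ and noting that then $\mathcal K=X\setminus\overline E$ (every footpoint on $\partial E$ lies in $\Omega$). By \autoref{thm:mainequivlapla} it suffices to prove $\Delta\dist_{\overline E}\le \ft_{K,N}\circ\dist_{\overline E}$ in the viscous sense at every point $x_0\in\Omega'$. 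Suppose not: there exist $x_0\in\Omega'$, a domain $\Omega''\Subset(X\setminus\overline E)\cap\mathcal K$ with $x_0\in\Omega''$, a test function $\phi\in D(\Delta,\Omega'')$ with $\Delta\phi$ continuous, $\phi\le\dist_{\overline E}$ on $\Omega''$, $\phi(x_0)=\dist_{\overline E}(x_0)=:d_0$, and $\Delta\phi(x_0)>\ft_{K,N}(d_0)+2\varepsilon$ for some $\varepsilon>0$.

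The key geometric step is to pass from $\phi$ to the sup-convolution $\psi:=(\text{extension of }\phi)^{\#}$, where $g^{\#}(z):=\sup_w\{g(w)-\dist(w,z)\}$ (this is $-\mathcal Q^1(-g)$, so \autoref{thm:HLlapla}/\autoref{prop:HLprese hsense} applies to $-\phi$). Extend $\phi$ to $\hat\phi:X\to\setR$ with $\hat\phi\le\dist_{\overline E}$ globally and $\hat\phi$ of polynomial growth; then $\psi$ is $1$-Lipschitz, $\psi\le\dist_{\overline E}$ (since $\dist_{\overline E}$ itself is a fixed point of $g\mapsto g^{\#}$ modulo the right one-sided inequality), $\psi=\dist_{\overline E}$ along a minimizing geodesic from $x_0$ to its footpoint $x_\partial\in\partial E\cap\Omega$ (here I use $x_0\in\mathcal K$), and — crucially — in a neighbourhood $U$ of $x_\partial$ the sup defining $\psi(z)$ is attained at points near $x_0$ where $\Delta\phi>\ft_{K,N}(d_0)+2\varepsilon$. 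Applying \autoref{thm:HLlapla} (Hopf-Lax preserves Laplacian upper bounds up to a $K\dist$ error, here with $\dist$ bounded since everything happens in a fixed compact set) gives $\Delta\psi\le -\ft_{K,N}(d_0)-2\varepsilon + C_K$ near $x_0$; combined with the Laplacian comparison for the distance function (available on $\RCD(K,N)$ spaces, $\Delta\dist_{\overline E}\le\ft_{K,N}\circ\dist_{\overline E}$ away from $\overline E$ in the \emph{outer} region) one arranges, after adjusting constants via the explicit monotonicity/concavity of $\ft_{K,N}$ along the geodesic, that $\psi$ has measure-valued Laplacian with $\boldsymbol\Delta\psi\le -\varepsilon\,\haus^N$ on $U$ and $|\nabla\psi|=1$ a.e. on $U$. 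Then $\psi$ is a $1$-Lipschitz function with $|\nabla\psi|=1$ a.e.\ and negative essentially bounded Laplacian, so \autoref{prop:leveld} applies: for a.e.\ $t$ with $\{\psi=t\}\cap U\ne\emptyset$, the sublevel $\{\psi<t\}$ has locally finite perimeter with interior and exterior normal traces of $\nabla\psi$ equal to $-1$.

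Now run the cut-and-paste argument. For small $t>0$ let $E_t:=E\cup(\{\psi<t_\partial+t\}\cap U)$ where $t_\partial=\psi(x_\partial)$ (or the symmetric difference confined to a small ball around $x_\partial$); by \autoref{thm:cutandpaste} and \autoref{prop:cutandpasteweaker}, $E_t$ is a compactly supported perturbation of $E$. Writing $\Omega_t$ for the region between $\partial E$ and the new piece of boundary, and applying the Gauss--Green formula \autoref{thm:GaussGreenRCDBCM} to the essentially bounded divergence-measure vector field $V=\nabla\psi$ (which lies in $\mathcal{DM}^\infty$ by \autoref{cor:divac} since $\boldsymbol\Delta\psi$ is a finite measure), one gets
\begin{equation*}
0<\varepsilon\,\haus^N(\Omega_t)\le -\boldsymbol\Delta\psi(\Omega_t)= \int_{\partial^*E\cap\Omega_t^{(1)}}(\nabla\psi\cdot\nu_E)_{\mathrm{ext}}\,d\Per_E - \int_{\{\psi=t_\partial+t\}}(\nabla\psi\cdot\nu)_{\mathrm{int}}\,d\Per\le \Per(E,U')-\Per(E_t,U')\, ,
\end{equation*}
using $|(\nabla\psi\cdot\nu_E)_{\mathrm{ext}}|\le\|\nabla\psi\|_\infty=1$ from \eqref{eq:inftyboundtraceextsharp} on the old part, and $(\nabla\psi\cdot\nu)_{\mathrm{int}}=-1$ on the level-set part from \autoref{prop:leveld}. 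This contradicts local perimeter minimality of $E$, proving the viscous bound and hence \eqref{eq:meangen}; \eqref{eq:meanglob} is the special case.

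\textbf{Main obstacle.} The delicate point is item (iv)/(v) of the sketch: controlling \emph{where} the sup defining $\psi$ is attained, so that the Hopf-Lax transfer of Laplacian bounds from the neighbourhood of $x_0$ (where we know $\Delta\phi$ is large) lands exactly in the neighbourhood $U$ of the footpoint $x_\partial$ where we want to cut, together with showing $|\nabla\psi|=1$ a.e.\ there. This requires handling the possibility of multiple footpoints and the fact that, unlike the smooth case, one cannot argue by ODE analysis along geodesics; instead one localizes using that the maximum in $g\mapsto g^{\#}$ is a closed condition and that $\dist_{\overline E}$ is itself $1$-Lipschitz, then combines the viscous Laplacian comparison for $\dist_{\overline E}$ (which holds in the outward region on $\RCD(K,N)$ spaces) with \autoref{thm:HLlapla}. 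A secondary technical nuisance is verifying the extension $\hat\phi$ can be taken with polynomial growth while respecting $\hat\phi\le\dist_{\overline E}$ and that $\psi$ retains a measure-valued Laplacian of the right sign on $U$ — this uses \autoref{prop:distrimplheat}, \autoref{prop:heatimplvisco} and the fact that the error term $K\dist$ in \autoref{thm:HLlapla} is bounded on the compact set under consideration and can be absorbed by shrinking neighbourhoods and using continuity of $\ft_{K,N}$.
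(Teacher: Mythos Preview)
Your overall architecture (viscous contradiction, sup-convolution, Gauss--Green cut-and-paste) matches the paper's, but there are two genuine gaps that the paper handles with an ingredient you are missing.

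\textbf{Gap 1: missing globalization via needle decomposition.} You attempt to prove the viscous bound at an arbitrary point $x_0\in\Omega'$, with footpoint $x_\partial$ at distance $d_0$. But the perturbation $E_t$ you build is supported in a ball of radius comparable to $d_0$ around $x_\partial$ (this is unavoidable: the region where $\psi$ is close to $\dist_{\overline E}$ is a tube around the geodesic from $x_0$ to $x_\partial$, and the cut $\{\psi<t_\partial+t\}\cap U$ has diameter controlled by $d_0$, not by $t$). Local perimeter minimality only gives you competitors in balls of some fixed small radius $r_{x_\partial}$, so if $d_0$ is large the contradiction fails. The paper resolves this in its Step~2: using the Cavalletti--Mondino representation $(\boldsymbol\Delta\dist_{\overline E})^{\mathrm{reg}}=(\log h_\alpha)'\,\haus^N$ along the ray decomposition, together with the concavity of $\log h_\alpha$ (from the $\CD(K,N)$ condition on each ray), one shows that if the bound holds in a neighbourhood of $\partial E$ then it propagates to all of $X\setminus\overline E$. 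This reduces the viscosity argument to points $x$ \emph{arbitrarily close} to $\partial E$, making the perturbation small. You cannot skip this step.

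\textbf{Gap 2: circular handling of $K\ne 0$ and the sharp $\ft_{K,N}$.} You write that you will ``combine with the Laplacian comparison $\Delta\dist_{\overline E}\le\ft_{K,N}\circ\dist_{\overline E}$ available on $\RCD(K,N)$ spaces''---but this is exactly the theorem you are proving; there is no such a priori comparison for the distance from a set (only from a point, and that gives a different bound). Relatedly, the Hopf--Lax error in \autoref{thm:HLlapla} is $-K\dist(x_0,x_\partial)=-Kd_0$, which for $K\ne 0$ and $d_0$ not small does not match $\ft_{K,N}(d_0)$. The paper's route (Step~6) is two-staged: first the viscosity/cut-and-paste argument near $\partial E$ yields the dimension-free bound $(\log h_\alpha)'(0^+)\le 0$ along each ray; then the one-dimensional differential inequality $(\log h_\alpha)''\le -K-\tfrac{1}{N-1}((\log h_\alpha)')^2$ (which is the $\CD(K,N)$ condition on the ray) is compared with the ODE solved by $\ft_{K,N}$ to upgrade to $(\log h_\alpha)'\le \ft_{K,N}\circ\dist_{\overline E}$. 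Your ``adjusting constants via monotonicity/concavity of $\ft_{K,N}$'' does not substitute for this bootstrap.

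A minor point: your sign on $\boldsymbol\Delta\psi$ is flipped. The sup-convolution of a function with $\Delta\phi\ge c$ transfers a \emph{lower} bound, so you should get $\boldsymbol\Delta\psi\ge\varepsilon$ near $x_\partial$ (as in the paper), not $\le-\varepsilon$; the Gauss--Green inequality then reads $\int_F\boldsymbol\Delta\psi>0$ and yields $\Per(E,\cdot)>\Per(E_t,\cdot)$ directly.
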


As observed in \autoref{rem:SharpUBLapIntro}, the upper bound \eqref{eq:meanglob} is sharp already in the class of smooth Riemannian manifolds with Ricci curvature bounded below by $K\in \setR$ and dimension equal to $N\in \setN, N\geq 2$.

\begin{remark}[How to interpret the Laplacian bounds]\label{rem:LaplacianBouundsDist}
The Laplacian bounds \eqref{eq:meanglob} and \eqref{eq:meangen} have to be intended in any of the equivalent ways stated in \autoref{thm:mainequivlapla}. However let us mention that, if suitably interpreted, the Laplacian bounds \eqref{eq:meangen} hold more generally on the whole (possibly non-open, but measurable) set $\left(X\setminus\overline{E}\right)\cap {\mathcal K}$. Indeed, from  the general representation theorem for the Laplacian of $\dist_{\overline{E}}$ obtained in  \cite[Corollary 4.16]{CavallettiMondino20}, we know  that $\Delta \dist_{\overline{E}}$ is a Radon functional, meaning that its positive and negative parts $\left(\Delta \dist_{\overline{E}}\right)^{\pm}$ are Radon measures. Thus it makes sense to consider the restrictions $\left(\Delta \dist_{\overline{E}}\right)^{\pm} \res \left(X\setminus\overline{E}\right)\cap {\mathcal K}$, and set
$$
\Delta \dist_{\overline{E}}\res \left(X\setminus\overline{E}\right)\cap {\mathcal K}:=  \left(\Delta \dist_{\overline{E}}\right)^{+} \res \left(X\setminus\overline{E}\right)\cap {\mathcal K} - \left(\Delta \dist_{\overline{E}}\right)^{-} \res \left(X\setminus\overline{E}\right)\cap {\mathcal K}\, .
$$
The same arguments used below to show \eqref{eq:meangen}, actually show the stronger claim that 
\begin{align}
\left(\Delta \dist_{\overline{E}}\right)^{+} &\res \left(X\setminus\overline{E}\right)\cap {\mathcal K}\leq  \ft_{K,N}^{+}\circ \dist_{\overline{E}} \; \meas  \res \left(X\setminus\overline{E}\right)\cap {\mathcal K} \\
-\left(\Delta \dist_{\overline{E}}\right)^{-} &\res \left(X\setminus\overline{E}\right)\cap {\mathcal K}\leq - \ft_{K,N}^{-}\circ \dist_{\overline{E}} \; \meas  \res \left(X\setminus\overline{E}\right)\cap {\mathcal K}.
\end{align}
In this sense, the bound  \eqref{eq:meangen} holds on the whole set $\left(X\setminus\overline{E}\right)\cap {\mathcal K}$.
\end{remark}

\begin{proof}[Proof of \autoref{thm:meancurvminimal1}]
The proof follows the outline in \autoref{subsec:overview}. We shall focus on the case $K=0$, assuming that $E$ is bounded and locally perimeter minimizing in $X$. Minor adjustments that are required to cover the more general situation will be mentioned at the end of the proof.
\medskip

Let us recall the general strategy. Set $f:=\dist_{\overline{E}}$, we rely on the equivalence between Laplacian bounds in distributional and viscous sense and prove by contradiction that $\Delta f\le 0$ in viscous sense. If this is not the case, we find a function with strictly positive Laplacian supporting $f$ from below. Then we apply the Hopf-Lax semigroup to obtain a $1$-Lipschitz function $\phi$ which has still positive Laplacian and touches the distance to the boundary of $E$ at a footpoint $x_{E}$ of a minimizing geodesic. Then,  cutting along the level sets of $\phi$, we build inner perturbations of $E$, compactly supported in a small ball centred at the footpoint $x_{E}$. The strictly positive Laplacian assumption on $\phi$ yields that these perturbations decrease the perimeter, a contradiction.
\medskip

\textbf{Step 1.} Mild regularity properties of $E$.

Since $E$ is locally a quasi-minimizer of the perimeter (see \autoref{def:quasimin}), \autoref{thm:regqmin} and \autoref{cor:perest} apply. We assume $E$ to be normalized according to \eqref{eq:normsetsfin}. Hence, the essential boundary of $E$ is closed and it coincides with the topological boundary $\partial E$. Moreover, $E$ verifies the lower and upper measure bounds and the lower and upper perimeter bounds \eqref{eq:perboundqmin} at any point of its topological boundary. We shall also assume that $E\subset X$ is an open subset.
\medskip

\textbf{Step 2.} Globalization of Laplacian upper bound.
\\We claim that if every $z\in \partial E$ admits a small neighbourhood $U$ such that $\boldsymbol{\Delta} f\res (U\setminus \overline{E})\leq 0$, then the upper bound globalises to $\boldsymbol{\Delta} f\res (X\setminus \overline{E})\leq 0$.
\\Such a claim follows from  the general representation theorem for the Laplacian of distance functions obtained in \cite{CavallettiMondino20} via the localization technique, we next outline the argument. From  \cite[Corollary 4.16]{CavallettiMondino20}, we know that 
\[\boldsymbol{\Delta} f \res X\setminus \overline{E}= (\boldsymbol{\Delta} f)^{reg}\res X\setminus \overline{E}+ (\boldsymbol{\Delta} f)^{sing}\res X\setminus \overline{E}\, ,\]
 where the singular part $(\boldsymbol{\Delta} f)^{sing}\perp \haus^{N}$ satisfies $(\boldsymbol{\Delta} f)^{sing} \res X\setminus \overline{E}\leq 0$ and the regular part $(\boldsymbol{\Delta} f)^{reg} \ll \haus^{N}$ admits the representation formula
\begin{equation}\label{eq:repDeltafreg}
(\boldsymbol{\Delta} f)^{reg} \res X\setminus \overline{E} = (\log h_{\alpha})' \haus^{N} \res X\setminus \overline{E}\,.
\end{equation}
In \eqref{eq:repDeltafreg}, $Q$ is a suitable set of indices,  $(h_{\alpha})_{\alpha\in Q}$  are suitable densities defined on geodesics $(X_{\alpha})_{\alpha \in Q}$, which are essentially partitioning $X\setminus \overline{E}$ (in the smooth setting, $(X_{\alpha})_{\alpha\in Q}$ correspond to the integral curves of $\nabla \dist_{E}$; note that here we are using the reverse parametrization of $X_{\alpha}$ with respect to \cite{CavallettiMondino20}, hence the reversed sign in the right hand side of \eqref{eq:repDeltafreg}), such that the following disintegration formula holds: 
 \begin{equation}\label{eq:disintegration}
 \haus^{N}\res X\setminus \overline{E}=\int_{Q} h_{\alpha} \haus^{1} \res X_{\alpha}\,  {\mathfrak{q}}(\di \alpha)\,.
 \end{equation}
 The non-negative measure  $\mathfrak{q}$ in \eqref{eq:disintegration}, defined on the set of indices $Q$, is obtained in a natural way from the essential partition $(X_{\alpha})_{\alpha \in Q}$ of $X\setminus \overline{E}$, roughly by projecting $\haus^{N}\res X\setminus \overline{E}$  on the set $Q$ of equivalence classes (we refer to \cite{CavallettiMondino20} for the details).
 \\The key point for the proof of Step 2 is that each $h_{\alpha}$ is a $\CD(0,N)$ density over the ray $X_{\alpha}$ (see \cite[Theorem 3.6]{CavallettiMondino20}), implying that $\log(h_{\alpha})$ is concave and thus $(\log h_{\alpha})'$ is non-increasing (recall that the geodesic $X_{\alpha}$ is parametrized in terms of $\dist_{E}\res X_{\alpha}$, i.e  in the direction ``from  $\overline{E}$  towards $X\setminus \overline{E}$''). 
\\From the discussion above,  the claim now easily follows. Indeed, if every $z\in \partial E$ admits a small neighbourhood $U$ such that $\boldsymbol{\Delta} f\res (U\setminus \overline{E})\leq 0$, then in particular  $(\log h_{\alpha})'\leq 0$ on $(X_{\alpha}\cap U)\setminus \overline{E}$ and the concavity of $\log(h_{\alpha})$ along $X_{\alpha}$ implies that  $(\log h_{\alpha})'\leq 0$ on $X_{\alpha}\setminus \overline{E}$.  Thus \eqref{eq:repDeltafreg} yields $(\boldsymbol{\Delta} f)^{reg} \res X\setminus \overline{E}\leq 0$. We conclude recalling that the singular part  $(\boldsymbol{\Delta} f)^{sing}\res X\setminus \overline{E}$ is non-positive.
\medskip

\textbf{Step 3.} Construction of the auxiliary function $\phi$ and properties. 
\\Suppose by contradiction that $\Delta f\le 0$ does not hold on $X\setminus \overline{E}$. Then, by Step 2, there exist arbitrarily small neighbourhoods $U$ centred at points of $\partial E$ such that  $\Delta f\le 0$ does not hold on $U\setminus \overline{E}$. Moreover, from the equivalence \autoref{thm:viscoimpldistri}, the bound is not verified in the viscous sense. It follows that there exist $x\in U\setminus\overline{E}$, a ball $B_r(x)\subset U\setminus\overline{E}$ and a lower supporting function $\psi:B_r(x)\to\setR$ with the following properties:
\begin{itemize}
\item[(i)] $\psi\in D(\Delta, B_r(x))$ and $\Delta\psi$ is continuous on $B_r(x)$;
\item[(ii)] $\psi(x)=f(x)$;
\item[(iii)] $\psi(y)\le f(y)$ for any $y\in B_r(x)$;
\item[(iv)] $0<\Delta\psi(x)<1/2$.
\end{itemize}

We wish to modify $\psi$ into a globally defined function $\overline{\psi}:X\to\setR$, while keeping all its good properties.\\ 
By the continuity of $\Delta\psi$ and (iv), there exists $\eps>0$ such that $\eps<\Delta\psi< 3/4$ on a neighbourhood of $x$. Then we can consider a local Green type distance $b_x$, see \autoref{prop:Gdistance} (possibly in a smaller neighbourhood of $x$) and subtract a small multiple of $b_x^4$ to $\psi$, to obtain a function 
$$\hat{\psi}:=\psi-\delta b_x^4.$$ For $\delta>0$ sufficiently small, possibly on a smaller ball $B_s(x)\subset B_r(x)$, it holds that:
\begin{itemize}
\item[(i')] $\hat{\psi}:B_s(x)\to\setR$ is Lipschitz and $\hat{\psi}\in D(\Delta,B_s(x))$;
\item[(ii')] $\hat{\psi}(x)=f(x)$;
\item[(iii')] $\hat{\psi}(y)< f(y)$ for any $y\in B_s(x)$, $y\neq x$ and there exist $s'<s$ and $\delta'>0$ such that $\hat{\psi}<f-\delta'$ on $B_s(x)\setminus B_{s'}(x)$;
\item[(iv')] $0<\eps'<\Delta\hat{\psi}\le 1$ on $B_s(x)$, for some $\eps'>0$.
\end{itemize}
Next, we extend $\hat{\psi}$ to a global function $\overline{\psi}:X\to\setR$ such that: 
\begin{itemize}
\item[(i'')] $\overline{\psi}:X\to\setR$ is Lipschitz and $\overline{\psi}\in D(\Delta,B_s(x))$;
\item[(ii'')] $\overline{\psi}(x)=f(x)$;
\item[(iii'')] $\overline{\psi}(y)< f(y)$ for any $y\neq x$ and there exist $s'>0$ and $\delta'>0$ such that $\overline{\psi}<f-\delta'$ on $X\setminus B_{s'}(x)$;
\item[(iv'')] $0<\eps'<\Delta\overline{\psi}\le 1$ on $B_s(x)$, for some $\eps'>0$.
\end{itemize}

\noindent
Now, let us define $\phi:X\to\setR$ by
\begin{equation}\label{eq:defphi}
\phi(z):=\sup_{y\in X}\{\overline{\psi}(y)-\dist(z,y)\}\, .
\end{equation}
Observe that the supremum in \eqref{eq:defphi} is always finite. Moreover,
\begin{equation}\label{eq:phileqf1Lip}
 \text{$\phi$ is $1$-Lipschitz and $\phi\le f$.}
 \end{equation}
In order to check these properties, observe that $\overline{\psi}\le f$. Hence, for any $z\in X$,
\begin{equation*}
\phi(z)=\sup_{y\in X}\{\overline{\psi}(y)-\dist(z,y)\}\le \sup_{y\in X}\{f(y)-\dist(z,y)\}= f(z)\, .
\end{equation*} 
Therefore $\phi$ is finite and, being the supremum of a family of $1$-Lipschitz functions (the functions $z\mapsto \overline{\psi}(y)-\dist(z,y)$, indexed by $y\in X$), it is $1$-Lipschitz.
\medskip

Let now $x_E\in\partial E$ be any footpoint of minimizing geodesic from $x$ to $\overline{E}$. In particular, $f(x_E)=0$ and $f(x)-f(x_E)=\dist(x,x_E)$. Let $\gamma:[0,\dist(x,x_E)]\to X$ be a unit speed minimizing geodesic between $\gamma(0)=x_E$ and $\gamma(\dist(x,x_E))=x$. Observe that 
\begin{equation}\label{eq:fgammat}
f(\gamma(t))=t\, \quad \text{for any $t\in[0,\dist(x,x_E)]$}\, . 
\end{equation}
Moreover, 
\begin{equation}\label{eq:phigammat}
\phi(\gamma(t))=f(\gamma(t)), \quad \text{for any $t\in [0,\dist(x,x_E)]$}
\end{equation}
 and, for any such $t$, the supremum defining $\phi(\gamma(t))$ in \eqref{eq:defphi} is attained only at $x$.\\
Indeed, by (iii'') above, $\overline{\psi}<f-\delta'$ on $X\setminus B_{s'}(x)$. Hence, for any $z\in X$ such that $\phi(z)>f-\delta'$, we can restrict the supremum defining $\phi(z)$ in \eqref{eq:defphi} to $\overline{B_{s'}(x)}$. Since $\overline{B_{s'}(x)}$ is compact, the supremum is attained. In details, if $\phi(z)>f(z)-\delta'$, then 
\begin{equation}\label{eq:phizlefz}
\phi(z)= \sup_{y\in \overline{B_{s'}(x)}}\{\overline{\psi}(y)-\dist(y,z)\}=\overline{\psi}(y_z)-\dist(y_z,z)\le f(y_z)-\dist(y_z,z)\le f(z)\, ,
\end{equation}
for some $y_z\in \overline{B_{s'}(x)}$. In particular, whenever $\phi(z)=f(z)$, all the inequalities above become equalities. Hence $\overline{\psi}(y_z)=f(y_z)$, that implies $y_z=x$ by (ii'') and (iii''), and $f(z)-f(x)=-\dist(x,z)$. Viceversa, if $f(z)-f(x)=-\dist(x,z)$ then $\phi(z)=f(z)$ and the supremum defining $\phi(z)$ is attained (only) at $x$.
\medskip

We claim that
\begin{equation}\label{eq:nablaphi1}
\abs{\nabla \phi}=1, \quad \haus^N\text{-a.e. on } \{\phi>f-\delta\}\setminus B_{s'}(x).
\end{equation} 
In order to verify this claim, we let $z\in \{\phi>f-\delta\}\setminus B_{s'}(x)$. By the argument above, the supremum defining $\phi(z)$ is a maximum and it is attained at some $x_z\in \overline{B_{s'}(x)}$. By assumption $x_z\neq z$. Let us consider now a minimizing geodesic $\gamma:[0,\dist(z,x_z)]\to X$ connecting $z$ with $x_z$ and with unit speed. We claim that
\begin{equation}\label{eq:slope1}
\phi(\gamma(t))=\phi(z)+t\, ,\quad\text{for any $t\in[0,\dist(z,x_z)]$}\, .
\end{equation}
The inequality $\phi(\gamma(t))\le \phi(z)+t$ follows from the fact that $\phi$ is $1$-Lipschitz. We only need to prove that $\phi(\gamma(t))\ge\phi(z)+t$. To this aim,  observe that
\begin{align*}
\phi(\gamma(t))=&\sup_{y\in X}\{\overline{\psi}(y)-\dist(y,\gamma(t))\}\\
\ge& \overline{\psi}(x_z)-\dist(\gamma(t),x_z)\\
=&\overline{\psi}(x_z)-\dist(z,x_z)+t\\
=&\phi(z)+t\, .
\end{align*}
From \eqref{eq:slope1} we infer that, for any $z\in \{\phi>f-\delta\}\setminus B_{s'}(x)$, the function $\phi$ has slope $1$ at $z$. The conclusion that $\abs{\nabla\phi}=1$-a.e. on $\{\phi>f-\delta\}\setminus B_{s'}(x)$ follows from the a.e. identification between slope and upper gradient obtained in \cite{Cheeger99}.
\medskip

Let us consider the Laplacian of $\phi$. By construction, $\overline{\psi}$ verifies the Laplacian bound (iv'') on $B_{s'}(x)$. In particular, $\Delta \overline{\psi}\ge \eps>0$ on $B_{s'}(x)$ in the sense of \autoref{def:heatlaplbounds}. Hence, since we already observed that for points $z\in \{\phi>f-\delta\}\setminus B_{s'}(x)$ the supremum defining $\phi(z)$ is a maximum attained in $\overline{B_{s'}(x)}$, we obtain by \autoref{cor:HLdistr} that
\begin{equation}\label{eq:Deltaphige}
\boldsymbol{\Delta}\phi\ge \eps\, \quad\text{on $\{\phi>f-\delta\} \setminus B_{s'}(x)$}\, ,
\end{equation} 
in the sense of distributions.
\medskip

\textbf{Step 4.} Construction of the inner variations of $E$.\\
Our next goal is to construct a suitable inner variation of $E$, compactly supported in a small ball centred at a point of $\partial E$. Such a perturbation is obtained by cutting along a level set of $\phi$, with value $-\delta< t<0$. In Step 5, we will reach a contradiction by showing that such an inner  perturbation has perimeter strictly less than $E$.
\medskip

Let us start by proving that for small values of $t\in(-\delta,0)$, we can cut $E$ along a level set of $\phi$ to obtain inner perturbations $E_t\subset E$, supported on suitable balls of arbitrary small radius.\\
Let us define
\begin{equation*}
E_t:=E\setminus \{\phi >t\}\, .
\end{equation*}
Observe that for $t=0$ it holds $\{\phi>0\}\cap E=\emptyset$, since from \eqref{eq:phileqf1Lip} we know that $\{\phi>0\}\subset \{f>0\}\subset X\setminus E$. When we decrease the value of $t$, the super-level set $ \{\phi >t\}$ starts cutting $E$.\\ 
Recall that $x_E\in \partial E$ is a footpoint of minimizing geodesic from $x$ to $\overline{E}$. We claim that for any $t<0$ sufficiently close to $0$, $E_t$ is a perturbation of $E$ supported in a small ball $B_{r}(x_{E})$, i.e. $\{\phi>t\}\cap E\subset B_r(x_E)$. To prove this claim, it is enough to observe that  from $f\equiv 0$ on $E$,  \eqref{eq:slope1},  and $B_{s'}(x)\subset X\setminus \bar{E}$,  we get
\begin{equation}\label{eq:phi>tinclusion}
\{\phi>t\}\cap E \subset \{\phi>f-\delta\}\setminus \overline{B_{s'}(x)}  \quad \text{for any } t\in (-\delta,0)\, .
\end{equation}
Moreover, for every $z\in \{\phi>t\}\cap E$,  the maximum defining $\phi(z)$ is attained inside $\overline{B_{s'}(x)}$, see \eqref{eq:phizlefz} and the nearby discussion.\\ 
Now we wish to bound the distance from $x_E$ to $\{\phi>t\}\cap E$. For any $z\in\{\phi>t\}\cap E$, there exists $x_z\in \overline{B_{s'}(x)}$ such that
\begin{equation*}
\phi(z)=\overline{\psi}(x_z)-\dist(x_z,z)\le f(x_z)-\dist(x_z,z)\le s'+\dist(x,\overline{E})-\dist(x_z,z)\, .
\end{equation*}
Hence 
\begin{equation*}
\dist(x_z,z)\le \dist(x,\overline{E})+s'-\phi(z)\le \dist(x,\overline{E})+s'-t\, .
\end{equation*}
In particular, we can bound the distance of $\{\phi>t\}\cap E$ from $x$, and hence from $x_E$, and obtain
\begin{equation}\label{eq:distEtxE}
\{\phi>t\}\cap E \subset B_{r}(x_{E}), \quad r:= 2 \dist(x, \overline{E})+s'+\delta\, .
\end{equation}
Recalling that $x$ can be chosen arbitrarily close to $\overline{E}$ (see beginning of Step 3), and that $s', \delta>0$ can be chosen arbitrarily small (see Step 3), we infer that $r:= 2 \dist(x, \overline{E})+s'+\delta$ can be chosen arbitrarily small. It follows that, for every $r>0$ arbitrarily small, one can perform the above construction in order to obtain $x_{E}\in \partial E$  and a family of  inner perturbations $(E_{t})_{t\in (-\delta, 0)}$ of $E$,  so that $E\setminus E_{t}\subset B_{r}(x_{E})$.

 Observe also that $E_t$ is a non trivial perturbation of $E$, i.e. $\haus^N(\{\phi>t\}\cap E)>0$. Indeed  from \eqref{eq:slope1} it is easily seen that  $\{\phi>t\}\cap E$ is non-empty and moreover it is open. Using \eqref{eq:slope1} it is also readily seen that the inclusion ``$\subset$'' in \eqref{eq:phi>tinclusion} can be improved to the compact inclusion ``$\Subset$''.
 \medskip

Thus, from the combination of \eqref{eq:fgammat}, \eqref{eq:phigammat}, \eqref{eq:nablaphi1}, \eqref{eq:Deltaphige} and \eqref{eq:phi>tinclusion},  $\phi$ verifies the assumptions of \autoref{prop:leveld} for some open subset $\Omega'' \subset X$ satisfying (note that $\Omega''$ plays the role of $\Omega$ in \autoref{prop:leveld}) 
\begin{equation}\label{eq:OmegaOmega'}
\{\phi>t\}\cap E  \Subset \Omega'' \Subset  \{\phi>f-\delta\} \setminus \overline{B_{s'}(x)}=:\Omega'\, .
\end{equation}
Hence, for $t\in (-\delta, 0)$, $E_t$ is a compactly supported inner perturbation of $E$ with finite perimeter and
\begin{equation}\label{eq:nablaphinu}
\left(\nabla\phi\cdot\nu_{\{\phi>t\}}\right)_{\mathrm{int}}=\left(\nabla\phi\cdot\nu_{\{\phi>t\}}\right)_{\mathrm{ext}}=1\, ,\quad \Per_{\{\phi>t\}}\text{-a.e. on $\Omega''$}\, . 
\end{equation}

\textbf{Step 5.} Estimate for the perimeter.\\
We aim to prove that there exists $t<0$, with $|t|$ small enough, such that 
\begin{equation}\label{eq:ContrEnonMin}
\Per(E, B_r(x_E))-\Per(E_t, B_r(x_E))>0,
\end{equation}
contradicting the local inner minimality of $E$.
Let 
\[F:=E\cap\{\phi>t\}\, = E\setminus E_{t}\, .\]
Neglecting the regularity issues, the boundary of $F$ has two components. The first one is along $\partial E$, with unit normal coinciding with the unit normal of $\partial E$. The second one is along the level set $\{\phi=t\}$, where the unit normal vector $\nu_{F}$ pointing inside of $F$ is $\nabla \phi$.  To make rigorous this description we rely on \autoref{thm:cutandpaste}, together with the remark that the boundaries of $\{\phi>t\}$ and $E$ have negligible intersections for a.e. $t\in (-\delta,0)$, for $\delta>0$ sufficiently small.
Let $\chi$ be a smooth cutoff function (see \autoref{lemma:cutoff}) with $\chi\equiv 1$ on a neighbourhood of $F$ and $\chi\equiv 0$ on $X\setminus \big(\{\phi>f-\delta\}\setminus B_{s'}(x) \big)$.  Notice that $\chi \nabla \phi\in\mathcal{DM}^{\infty}(X)$, by \eqref{eq:Deltaphige}. 
We can thus apply \autoref{thm:GaussGreenRCDBCM}, with test function $f\equiv 1$, vector field $V= \chi \nabla \phi$ and set of finite perimeter $F$, to obtain 
\begin{align*}
\int_{F^{(1)}}\boldsymbol{\Delta}\phi=& -\int_{\mathcal{F}F}\left(\nabla\phi\cdot\nu_F\right)_{\mathrm{int}}\di\Per\\
=&  - \int_{\mathcal{F}\{\phi>t\} \cap E^{(1)}}(\nabla \phi\cdot \nu_{\{\phi>t\}})_{\mathrm{int}}\di\Per \\ 
&-\int_{\mathcal{F}E\cap \{\phi>t\}^{(1)}}\left(\nabla\phi\cdot\nu_E\right)_{\mathrm{int}}\di\Per\\
=&-\Per\big(\mathcal{F}\{\phi>t\} \cap E^{(1)}\big)  -\int_{\mathcal{F}E\cap \{\phi>t\}^{(1)}}\big(\nabla\phi\cdot\nu_E\big)_{\mathrm{int}}\di\Per\\
\le& -\Per\big(\mathcal{F}\{\phi>t\} \cap E^{(1)}\big)+\Per\big(\mathcal{F}E\cap \{\phi>t\}^{(1)}\big)\, ,
\end{align*}
where the third equality follows from \autoref{prop:leveld} (see \eqref{eq:OmegaOmega'} and \eqref{eq:nablaphinu}), while the inequality follows from the sharp trace bound $\abs{\left(\nabla\phi\cdot\nu_E\right)_{\mathrm{int}}}\le 1$ in \eqref{eq:inftyboundtraceintsharp}.

Since $\boldsymbol{\Delta}\phi>\eps$ on a neighbourhood of $F$ by \eqref{eq:Deltaphige} and \eqref{eq:OmegaOmega'}, we get 
\begin{equation}\label{eq:estgap}
 -\Per\big(\mathcal{F}\{\phi>t\}\cap E^{(1)}\big)  +\Per\big(\mathcal{F}E\cap \{\phi>t\}^{(1)}\big)  >0\, .
\end{equation}
Combining \autoref{thm:cutandpaste} with \eqref{eq:distEtxE} and \eqref{eq:estgap}, we get the desired \eqref{eq:ContrEnonMin}:
\begin{align*}
\Per(E,B_r(x_E))-&\Per(E_t, B_r(x_E))=\\
&\Per\big(\mathcal{F}E\cap \{\phi>t\}^{(1)}\big)
-\Per\big(\mathcal{F}\{\phi>t\}\cap E^{(1)}\big)>0\, .
\end{align*}
\medskip

\textbf{Step 6.}  Adjustments to cover the case of a general lower Ricci curvature bound $K\in\setR$.\\ 
 In Step 2, the density $h_{\alpha}$ on $X_{\alpha}$ is a $\CD(K,N)$ density, yielding that $\log h_{\alpha}$ is semi-concave (thus locally Lipschitz and twice differentiable except at most at countably many points) and satisfies the differential inequality $ (\log h_{\alpha})''\leq -K  $ in the distributional sense and point-wise except countably many points. The singular part of $\Delta \dist_{\overline{E}}\res X\setminus \overline{E}$ is non-positive regardless of the value of $K\in \setR$. One can then argue along the lines of Step 2 to globalize the bound $\Delta \dist_{\overline E}\leq -K  \dist_{\overline E}$.
\\In Step 3, since in the contradiction argument we start from the assumption that \eqref{eq:meanglob} does not hold, arguing as before we can find an auxiliary function $\psi$ with properties (i) to (iii) and such that
\begin{equation*}
\Delta\psi(x)>-K\dist_{\overline{E}}(x)\, ,
\end{equation*}
that replaces the condition $\Delta\psi(x)>0$ that we found in the case $K=0$.
\\The construction of the functions $\hat{\psi}$ and $\overline{\psi}$ requires no modification, besides the natural ones for conditions (iv') and (iv''). Then, when building the function $\phi$ by duality as in \eqref{eq:defphi}, we only need to apply the general \autoref{thm:HLlapla} to infer that 
\begin{equation*}
\boldsymbol{\Delta}\phi\ge \eps\, \quad\text{on $\{\phi>f-\delta\}\setminus B_{s'}(x)$}\, ,
\end{equation*}
also in this case. Basically, whenever $K<0$, the argument by contradiction starts with a supporting function whose Laplacian is more positive than when $K=0$. This compensates the fact that the Hopf-Lax semigroup might decrease the lower Laplacian bound, though it does it only in a controlled way.

Notice that the bound  $\Delta \dist_{\overline E}\leq -K  \dist_{\overline E}$ is sharp in the $N=\infty$ case. The sharp dimensional bound can be obtained by the following self-improving argument. 
\\By the first part of Step 6 (see also Step 2), we know that $h_\alpha$ is a $\CD(K,N)$ density on the ray $X_\alpha$ for $\mathfrak{q}$-a.e. $\alpha\in Q$, i.e. it satisfies 
\begin{equation}\label{eq:DiffEqCDKN}
(\log h_{\alpha})''\leq - K - \frac{1}{N-1} \big( (\log h_{\alpha})' \big)^{2}
\end{equation}
in the sense of distributions and point-wise except countably many points. Moreover, from \eqref{eq:repDeltafreg} and the first part of Step 6, we know that 
\begin{equation}\label{eq:halpha'Kdist}
(\log h_{\alpha})'(\dist_{\overline{E}}) \leq - K \, \dist_{\overline{E}} \, \text{ on $X_{\alpha}$, for $\mathfrak{q}$-a.e. $\alpha\in Q$.}
\end{equation}
Observing that the function $\ft_{K,N}$ defined in \eqref{eq:deftKN} satisfies the following initial value problem
\begin{equation*}
\begin{cases}
\ft_{K,N}'(x)&= - K - \frac{1}{N-1} \big( \ft_{K,N}(x) \big)^{2} \\
\ft_{K,N}'(0)&= 0 
\end{cases}
\end{equation*}
on $I_{K,N}$, a standard argument via differential inequalities (using \eqref{eq:DiffEqCDKN} and \eqref{eq:halpha'Kdist}) implies that   
\[(\log h_{\alpha})' \circ \dist_{E}\leq \ft_{K,N}\circ \dist_{E}\, ,\quad \text{ for  $\mathfrak{q}$-a.e. $\alpha\in Q$.} \]
Recalling the representation formula  \eqref{eq:repDeltafreg} and that the singular part of $\Delta \dist_{\overline{E}}\res X\setminus \overline{E}$ is non-positive,  we infer that $\Delta   \dist_{\overline E} \leq \ft_{K,N}\circ \dist_{\overline E}$.
\medskip

\textbf{Step 7.}  Adjustments in case $E$ is locally perimeter minimizing in $\Omega$, i.e. proof of \eqref{eq:meangen}.\\ 
The key observation is the following: if the Laplacian bound \eqref{eq:meangen} holds in a neighbourhood of $\partial E \cap \Omega$, then it holds on  $\left(X\setminus\overline{E}\right)\cap {\mathcal K}$.  This can be proved along the lines of Step 2, since all the rays essentially partitioning $\left(X\setminus\overline{E}\right)\cap {\mathcal K}$ start from $\partial E \cap \Omega$: if we assume that the correct Laplacian bound holds in a neighbourhood of  $\partial E \cap \Omega$, then the bound holds globally on  $\left(X\setminus\overline{E}\right)\cap {\mathcal K}$ by one dimensional considerations along each ray and by the fact that the singular part of  $\Delta \dist_{\overline{E}}\res X\setminus \overline{E}$ is non-positive. One can then follow verbatim the previous argument by contradiction.
\end{proof}

For the sake of the applications it will be useful to understand the regularity of the distance function from $\partial E$ without the necessity of avoiding $\partial E$. Thanks to \autoref{thm:meancurvminimal1} we can prove that $\dist_{\partial E}$ has measure valued Laplacian and that its singular contribution along $\partial E$ is the surface measure of $\partial E$. 
\begin{proposition}\label{prop:fullrepr}
Let $(X,\dist,\haus^N)$ be an $\RCD(K,N)$ metric measure space and let $E\subset X$ be a set of locally finite perimeter. Assume that $E$ is locally perimeter minimizing inside an open domain $\Omega\subset X$, according to \autoref{def:locper} and let $\Omega'\Subset\Omega$. Then $\dist_{\overline{E}}:X\to[0,\infty)$ has locally measure valued Laplacian in a neighbourhood $U$ of $\partial E\cap \Omega'$. Moreover, the following representation formula holds:

\begin{equation}\label{eq:reprglob}
\boldsymbol{\Delta}\dist_{\overline{E}}=\haus^{N-1}\res\partial E+\boldsymbol{\Delta}\dist_{\overline{E}}\res(X\setminus \overline{E})\,, \quad \text{ on $U\supset \partial E\cap \Omega'$} \, .
\end{equation}
\end{proposition}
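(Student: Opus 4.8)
The plan is to localise and then understand what $\boldsymbol{\Delta}\dist_{\overline{E}}$ does across the ``crease'' $\partial E$. Write $g:=\dist_{\overline{E}}$, assume (as in \autoref{thm:regqmin}) that $E$ is normalised so that $\partial E=\mathcal{F}E$ up to $\haus^{N-1}$-negligible sets, and fix a thin tubular neighbourhood $U$ of $\partial E\cap\Omega'$ with $U\Subset\Omega$ and so small that every footpoint on $\overline E$ of a point of $U\setminus\overline{E}$ lies in $\partial E\cap\Omega$ (possible since $\Omega'\Subset\Omega$). With this choice, the localised form \eqref{eq:meangen} of \autoref{thm:meancurvminimal1} and the refinement recalled in \autoref{rem:LaplacianBouundsDist} give that $g$ has measure-valued Laplacian on the open set $U\setminus\overline E$, with $(\boldsymbol{\Delta}g)^{+}\res(U\setminus\overline E)\le \ft_{K,N}^{+}\circ g\,\haus^{N}$ (locally finite), the negative part being a priori only a locally finite Radon measure there. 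On the open set $\mathrm{Int}(E)$ one has $g\equiv 0$, hence $\nabla g=0$ and $\boldsymbol{\Delta}g=0$; moreover $\haus^{N}(\partial E)=0$, while $\haus^{N-1}\res\partial E=\Per_{E}$ is locally finite by \autoref{cor:Noncollapsed} and \autoref{cor:perest}. So, by additivity over $U=\mathrm{Int}(E)\cup\partial E\cup(U\setminus\overline E)$, it suffices to prove (1) that the distributional Laplacian of $g$ on $U$ is a locally finite measure, and (2) that its restriction to $\partial E$ equals $\haus^{N-1}\res\partial E$.

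For (1) the only delicate point is the absence of concentration near $\partial E$. Fix $U'\Subset U$ and a small $s_{0}>0$. Since $g$ is Lipschitz with $\abs{\nabla g}=1$ $\haus^{N}$-a.e.\ on $X\setminus\overline E$ (a standard property of distance functions from closed sets on $\RCD$ spaces), the coarea formula \autoref{thm:coarea} gives $\int_{0}^{s_{0}}\Per(\{g>s\},U')\,\di s=\haus^{N}(\{0<g<s_{0}\}\cap U')$, which, $\{0<g<s_{0}\}\cap U'$ being contained in the $s_{0}$-enlargement of $\partial E\cap\Omega$, is bounded by $C(U')\,s_{0}$ thanks to the tubular neighbourhood estimate \autoref{lemma:tubneighbounds}. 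Hence there are regular values $s_{k}\downarrow 0$ with $\Per(\{g>s_{k}\},U')\le C(U')$. Since $g$ has bounded positive Laplacian on $\{g>s_{k}/2\}\Subset X\setminus\overline E$, \autoref{prop:leveld} (applied to $-g$) shows the superlevel sets $\{g>s\}$ have locally finite perimeter there, with the normal traces of $\nabla g$ equal to $\pm1$; applying \autoref{thm:GaussGreenRCDBCM} to $\{g>s_{k}\}$ and to $\chi\nabla g$ for suitable cutoffs then yields, after routine localisation,
\begin{equation*}
\boldsymbol{\Delta}g\bigl(\{s_{k}<g<s_{0}\}\cap U'\bigr)=\Per(\{g>s_{0}\},U')-\Per(\{g>s_{k}\},U')\, .
\end{equation*}
Combined with the bound on the positive part this gives $\abs{\boldsymbol{\Delta}g}(\{s_{k}<g<s_{0}\}\cap U')\le C''(U')$ uniformly in $k$; letting $k\to\infty$ shows $\boldsymbol{\Delta}g\res(X\setminus\overline E)$ is locally finite up to $\partial E$. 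A cutoff argument (splitting $\phi=\phi\,\zeta(g/\epsilon)+\phi(1-\zeta(g/\epsilon))$ and using the same coarea bound on $\tfrac1\epsilon\int_{\epsilon}^{2\epsilon}\Per(\{g>s\},\cdot)\,\di s$) then bounds $\bigl|\int_{U}\nabla g\cdot\nabla\phi\,\di\haus^{N}\bigr|$ by $C(U')\lVert\phi\rVert_{\infty}$ for $\phi\in\Lip_{c}(U')$; by the Riesz representation this distribution is a locally finite measure, i.e.\ $g\in D(\boldsymbol{\Delta},U)$ and $\chi\nabla g\in\mathcal{DM}^{\infty}(X)$ for every $\chi\in\Lip_{c}(U)$.

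For (2), take $\phi\in\Lip_{c}(U)$ and a cutoff $\chi\in\Lip_{c}(U)$ with $\chi\equiv 1$ near $\supp\phi$. Since $g\equiv 0$ on the open set $E$, one has $\div(\chi\nabla g)\res\mathrm{Int}(E)=0$ and $\int_{E}\nabla\phi\cdot(\chi\nabla g)\,\di\haus^{N}=0$, so the second Gauss--Green identity of \autoref{thm:GaussGreenRCDBCM} for the set $E$ reduces to
\begin{equation*}
\int_{\partial E}\phi\,\di\boldsymbol{\Delta}g=-\int_{\mathcal{F}E}\phi\,\bigl(\nabla g\cdot\nu_{E}\bigr)_{\mathrm{ext}}\,\di\Per\, .
\end{equation*}
At $\haus^{N-1}$-a.e.\ point of $\partial E=\mathcal{F}E$ the blow-up of $E$ is the half-space $\{x_{N}>0\}$ (\autoref{th:uniqueness}); there the distance from $\overline E$ is $\max(-x_{N},0)$, with gradient $-e_{N}$ on the complementary half-space, while $\nu_{E}=e_{N}$ is the inward normal, so by locality and stability of the normal traces $\bigl(\nabla g\cdot\nu_{E}\bigr)_{\mathrm{ext}}=-1$ $\Per$-a.e.\ on $\partial E$. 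Hence $\boldsymbol{\Delta}g\res\partial E=\Per_{E}=\haus^{N-1}\res\partial E$, and together with $\boldsymbol{\Delta}g\res\mathrm{Int}(E)=0$, $\haus^{N}(\partial E)=0$ and the definition of $\boldsymbol{\Delta}g\res(X\setminus\overline E)$, this is precisely \eqref{eq:reprglob}.

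The hard part is Step (1): \autoref{thm:meancurvminimal1} controls the positive part of $\boldsymbol{\Delta}\dist_{\overline E}$ essentially for free, but the ray-disintegration representation of \autoref{rem:LaplacianBouundsDist} gives no a priori \emph{local} bound on the negative part up to $\partial E$, and the argument above crucially exploits the uniform perimeter and tubular-volume estimates available because $E$ is a quasi-minimizer (\autoref{cor:perest}, \autoref{lemma:tubneighbounds}) to upgrade the one-dimensional monotonicity of the ray densities to a genuine locally finite measure across the crease $\partial E$. A secondary technical point is the a.e.\ identification of the exterior normal trace of $\nabla\dist_{\overline E}$ along $\partial E$ with $-1$, which follows from the half-space structure of blow-ups of $\mathcal{F}E$ and the stability of the normal traces.
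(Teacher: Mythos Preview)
Your Step~(1) is essentially the paper's argument: tubular-neighbourhood volume bound plus coarea to find good levels with uniformly bounded perimeter, Gauss--Green on superlevel sets, and the sign control from \autoref{thm:meancurvminimal1} on the positive part, assembled to show the distributional Laplacian is a locally finite signed measure across $\partial E$. The presentation differs in detail (you work with annuli $\{s_k<g<s_0\}$ and an explicit flux identity; the paper takes weak limits directly and uses a sign argument), but the ingredients and the logic are the same.

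Step~(2) is where you diverge from the paper, and where there is a gap. The paper computes the density of $\boldsymbol{\Delta}\dist_{\overline E}\res\partial E$ with respect to $\Per_E$ via a blow-up argument at the level of measures: at reduced-boundary points the rescalings converge pmGH, boundaries converge in the Kuratowski sense by \autoref{thm:closurecompactnesstheorem}, hence the rescaled distance functions converge in $H^{1,2}_{\loc}$ to the distance from a half-space, so their distributional Laplacians converge weakly; on $\setR^N$ the Laplacian of $\dist_{\mathbb{H}^N}$ is the surface measure of the hyperplane, giving density $1$. (Absolute continuity with respect to $\haus^{N-1}$ comes from \autoref{cor:divac}.)

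You instead apply the second identity in \autoref{thm:GaussGreenRCDBCM} on $E$, using $\nabla g=0$ in the interior, to get $\boldsymbol{\Delta}g\res\partial E=-(\nabla g\cdot\nu_E)_{\mathrm{ext}}\,\Per_E$, and then assert $(\nabla g\cdot\nu_E)_{\mathrm{ext}}=-1$ by ``locality and stability of the normal traces'' under blow-up. The Gauss--Green reduction is correct and is a nice shortcut (it gives $\boldsymbol{\Delta}g\res\partial E\ll\Per_E$ immediately). But the final identification is not justified: no stability result for the interior/exterior normal traces of $\mathcal{DM}^{\infty}$ vector fields under blow-up is established in the paper or in \cite{BruePasqualettoSemola21}, and those traces are defined through heat-flow limits that do not obviously commute with rescaling. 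The sharp bound \eqref{eq:inftyboundtraceextsharp} only gives $|(\nabla g\cdot\nu_E)_{\mathrm{ext}}|\le 1$. The cleanest repair is to note that, by your own identity, computing the trace is equivalent to computing the density $\di(\boldsymbol{\Delta}g\res\partial E)/\di\Per_E$, and then run exactly the paper's blow-up argument (weak convergence of Laplacians via $H^{1,2}_{\loc}$ convergence of the rescaled distance functions). So your route is sound, but as written the last step is asserted rather than proved.
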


\begin{proof}
The proof relies on the following steps: first we will argue that $\dist_{\overline{E}}$ has locally measure valued Laplacian, relying on \autoref{thm:meancurvminimal1} and on the volume bound for the tubular neighbourhood of $\partial E$ in \autoref{lemma:tubneighbounds}. Then we observe that the Laplacian of $\dist_{\overline{E}}$ is absolutely continuous w.r.t. $\haus^{N-1}$. The sought representation formula follows by computing the density of $\boldsymbol{\Delta}\dist_{\overline{E}}\res\partial E$ w.r.t. $\haus^{N-1}\res\partial E$ via a blow-up argument. The strategy is inspired by the proofs of \cite[Lemma 7.5 and Theorem 7.4]{BrueNaberSemola20}, dealing with the Laplacian of the distance from the boundary on noncollapsed $\RCD$ spaces.
\medskip

\textbf{Step 1.} Our goal is to find a locally finite measure $\nu$ such that
\begin{equation*}
\int_X\nabla \phi\cdot\nabla\dist_{\overline{E}}\, \di\haus^N=-\int_X\phi\di\nu\, ,
\end{equation*}
for any Lipschitz function $\phi:X\to\setR$ with compact support.

Let us assume for simplicity that $\partial E$ is compact, the general case can be handled with an additional cut-off argument.
	
By the coarea formula \autoref{thm:coarea}, for almost every $r>0$, the superlevel set $\{\dist_{\overline{E}}>r\}$ has finite perimeter. Moreover, the volume bound for the tubular neighbourhood of the boundary 
		\begin{equation*}
		\haus^N(\{0\le \dist_{\overline{E}}<r\})\le Cr\, ,
		\end{equation*}
	that follows from \autoref{lemma:tubneighbounds},
		together with a further application of the coarea formula, yield the existence of a sequence $(r_i)$ with $r_i\downarrow 0$ as $i\to\infty$ such that 
		\begin{equation}\label{eq:boundper}
		\Per (\{\dist_{\overline{E}}>r_i\})\le C\;\;\;\text{for any $i\in\setN$}\, .
		\end{equation}

		Since $\dist_{\overline{E}}$ has measure valued Laplacian on $X\setminus\overline{E}=\{\dist_{\overline{E}}>0\}$, the bounded vector field $\nabla \dist_{\overline{E}}$ has measure valued divergence on the same domain. Therefore, applying \autoref{thm:GaussGreenRCDBCM} to the vector field $\phi\nabla\dist_{\overline{E}}$ on the domain $\{\dist_{\overline{E}}>r_i\}$ we infer that
		\begin{align}\label{eq:approxgg}
		\nonumber \int_{\{\dist_{\overline{E}}>r_i\}}&\nabla\phi\cdot\nabla\dist_{\overline{E}}\di\haus^N\\
		&=-\int_{\{\dist_{\overline{E}}>r_i\}}\phi\di\boldsymbol{\Delta}\dist_{\overline{E}}-\int_X\phi f_i\di\Per(\{\dist_{\overline{E}}>r_i\})\, ,
		\end{align} 
		for some Borel functions $f_i$ verifying 
		\begin{equation}\label{eq:boundf}
		\norm{f_i}_{L^{\infty}(\Per(\{\dist_{\overline{E}}>r_i\}))}\le 1\, .
		\end{equation}
		Thanks to \eqref{eq:boundper} and \eqref{eq:boundf}, up to extracting a subsequence, the measures $f_i\Per(\{\dist_{\overline{E}}>r_i\})$ weakly converge to a finite measure $\mu$ on $X$ in duality with continuous functions. Passing to the limit in \eqref{eq:approxgg} as $i\to\infty$, we get 
		\begin{equation}\label{eq:intlim}
		\int_X\nabla\phi\cdot\nabla\dist_{\overline{E}}\di\haus^N= -\lim_{r_i\to 0}\int_{\{\dist_{\overline{E}}>r_i\}}\phi\di\boldsymbol{\Delta}\dist_{\overline{E}} -\int_X\phi\di\mu\, ,
		\end{equation}
		as we claimed.

The next observation is that the first term at the right hand side in \eqref{eq:intlim} above is a linear function with sign (when $K=0$, otherwise there is a correction term), therefore it is represented by a measure.\\
Indeed, combining \eqref{eq:intlim} with \autoref{thm:meancurvminimal1}, we have 
\begin{equation*}
	\int_X \nabla\phi\cdot\nabla\dist_{\overline{E}}\di\haus^N \ge K\int_X\phi\, \dist_{\overline{E}}\, \di\haus^N -\int_X\phi\, \di\mu \, ,
	\end{equation*}
	for any $\phi \in \Lip_c(X)$ s.t. $\phi\ge 0$.\\
	In particular $\phi \mapsto \int \nabla\phi\cdot\nabla\dist_{\overline{E}}\di\haus^N + \int\phi\di\mu - K\int\phi\, \dist_{\overline{E}}\, \di\haus^N$ is a non-negative linear map. 
Hence there exists a non-negative locally finite measure $\eta$ such that
	\[
	\int_X \nabla\phi\cdot\nabla\dist_{\overline{E}}\di\haus^N + \int_X\phi\di\mu -K\int_X\phi\, \dist_{\overline{E}}\di\haus^N =  \int_X \phi \di \eta\, ,
	\]
for any $\phi \in \Lip_c(X)$.
		This implies that $\dist_{\overline{E}}$ has measure valued Laplacian on $X$.
\medskip

\textbf{Step 2.} Thanks to \autoref{cor:divac}, we have that $\boldsymbol{\Delta}\dist_{\overline{E}}\ll\haus^{N-1}$.

To check that $\boldsymbol{\Delta}\dist_{\overline{E}}\res\partial E=\haus^{N-1}\res\partial E$, by standard differentiation of measures (recall that in general the perimeter measure of any set of finite perimeter is asymptotically doubling, therefore the differentiation theorem applies), it suffices to prove that
\begin{equation}\label{eq:density}
\lim_{r\downarrow 0}\frac{\boldsymbol{\Delta}\dist_{\overline{E}}(B_r(x))}{\Per(E,B_r(x))}=1\, ,\quad\text{for $\Per$-a.e. $x\in\partial E$}\, .
\end{equation} 
The validity of \eqref{eq:density} can be proved thanks to
\autoref{thm:closurecompactnesstheorem}. Indeed, it is sufficient to prove that the density estimate holds at regular boundary points of $E$, i.e. those points where the blow-up is a Euclidean half-space $\mathbb{H}^N\subset \setR^N$.

Under this assumption, along the sequence $X_i:=(X,\dist/r_i,\haus^N/r_i^N,x,E)$ of scaled spaces converging to the blow-up, the sets $E\subset X$ converge in $L^1_{\loc}$ to $\mathbb{H}^N$. By \autoref{thm:closurecompactnesstheorem} the convergence can be stenghtned to Kuratowski convergence of $\partial E_i\subset X_i$ to $\partial \mathbb{H}^N$, which implies in turn the uniform convergence of $\dist_{\overline{E}}:X_i\to\setR$ to $\dist_{\mathbb{H}^N}$. Moreover, this is easily seen to imply the $H^{1,2}_{\loc}$ convergence of $\dist_{\overline{E}}:X_i\to\setR$ to $\dist_{\mathbb{H}^N}$. Then the distributional Laplacians of $\dist_{\overline{E}}$ weakly converge as measures to the distributional Laplacian of $\dist_{\mathbb{H}^N}$, and \eqref{eq:density} follows from the standard properties of weak convergence. 
\end{proof}

Up to now, we have studied the properties of the distance function from a locally perimeter minimizing set, \textit{outside of the set}. An inspection of the proof of \autoref{thm:meancurvminimal1} shows that we actually relied only on inner perturbations of the set $E$ to obtain properties of the Laplacian of the distance from $E$ outside of $E$.\\ 
As it is natural to expect, exploiting the full local minimality condition, we obtain sharper statements about the distance (and the signed distance) function from $\partial E$ on both sides of $E$, whenever $E$ is locally perimeter minimizing. Recall also that if $E\subset X$ is a set of finite perimeter, locally minimizing the perimeter functional, we can (and will) assume that $E$ is open (up to choosing the suitable a.e. representative).

\begin{theorem}\label{thm:meancurvminimal2}
Let $(X,\dist,\haus^N)$ be an $\RCD(K,N)$ metric measure space. Let $E\subset X$ be a set of locally finite perimeter and suppose that it is locally perimeter minimizing inside an open domain $\Omega\subset X$, according to \autoref{def:locper}. Let $\dist_{\partial E}:X\to\setR$ be the distance function from the boundary of $E$. Then $\dist_{\partial E}$ has locally measure valued Laplacian on $X$. Moreover, for any open subset $\Omega'\Subset \cK$ (where  $\cK$ was defined in \eqref{eq:defK}), it holds:
\begin{equation}\label{eq:DeltaddeE}
\boldsymbol{\Delta}\dist_{\partial E}  \le  \ft_{K,N} \circ \dist_{\partial E}\, \quad\text{on  $E\cap  \Omega'$}\, , \quad
\boldsymbol{\Delta}\dist_{\partial E}  \le   \ft_{K,N} \circ \dist_{\partial E}\, \quad\text{on $\big(X\setminus \overline{E}\big)\cap\Omega'$}\, ,
\end{equation}
where  $ \ft_{K,N}$ was defined in \eqref{eq:deftKN}. Moreover, 
\begin{equation}\label{eq:DeltadEdeE}
\boldsymbol{\Delta}\dist_{\partial E}\res  \big(\partial E\cap  \Omega'\big)=\haus^{N-1}\res \big(\partial E\cap  \Omega'\big)\, .
\end{equation}
Under the same assumptions, denoting by $\dist^s_{E}$ the signed distance function from $E$ (with the convention that it is positive outside of $E$ and negative inside), $\dist^s_E$ has measure valued Laplacian on $E\cap \Omega'$ and 
\begin{equation}\label{eq:laplsign}
\boldsymbol{\Delta}\dist^s_{ E}\ge  \ft_{K,N} \circ  \dist^s_{E}\, \quad\text{on $E\cap \Omega'$}\, , \quad \boldsymbol{\Delta}\dist^s_{E}\le  \ft_{K,N} \circ \dist^s_{ E}\, \quad\text{on $\left(X\setminus \overline{E}\right)\cap \Omega'$}\, ,
\end{equation}
and
\begin{equation}\label{eq:signalong}
\boldsymbol{\Delta}\dist^s_{E}\res \left(\partial E\cap \Omega'\right)=0\, .
\end{equation}

\end{theorem}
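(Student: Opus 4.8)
The plan is to deduce everything from \autoref{thm:meancurvminimal1} and \autoref{prop:fullrepr}, together with the elementary fact that the signed distance $\dist^s_E$ coincides with $\dist_{\overline E}$ on $X\setminus\overline E$ and with $-\dist_{E^c}$ on $\overline E$ (where $E^c=X\setminus E$). First I would observe that, since $E$ is locally perimeter minimizing in $\Omega$, so is its complement $E^c$: indeed $\Per(E,B)=\Per(E^c,B)$ for every ball $B$, and the class of admissible competitors is the same. Moreover $\partial E=\partial E^c$ (for the open normalized representatives), so the set $\cK$ of \eqref{eq:defK} is the same whether computed for $E$ or $E^c$. Applying \autoref{thm:meancurvminimal1} to $E$ gives $\Delta \dist_{\overline E}\le \ft_{K,N}\circ\dist_{\overline E}$ on $(X\setminus\overline E)\cap\cK$, which is the second bound in \eqref{eq:DeltaddeE}; applying it to $E^c$ gives $\Delta \dist_{\overline{E^c}}\le \ft_{K,N}\circ\dist_{\overline{E^c}}$ on $(X\setminus\overline{E^c})\cap\cK$, i.e. on $\mathrm{Int}(E)\cap\cK$; since on that set $\dist_{\overline{E^c}}=\dist_{\partial E}=\dist_{\partial E}$, this is the first bound in \eqref{eq:DeltaddeE}. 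Symmetrically, \autoref{prop:fullrepr} applied to $E$ and to $E^c$ gives that $\dist_{\overline E}$ and $\dist_{\overline{E^c}}$ each have locally measure-valued Laplacian in a neighbourhood of $\partial E$ with singular part $\haus^{N-1}\res\partial E$; patching these two representations together across $\partial E$ shows that $\dist_{\partial E}$ has locally measure-valued Laplacian on a neighbourhood $U$ of $\partial E\cap\Omega'$, and that on $U$
\begin{equation*}
\boldsymbol{\Delta}\dist_{\partial E}\res(\partial E\cap\Omega')=\haus^{N-1}\res(\partial E\cap\Omega')\,,
\end{equation*}
which is \eqref{eq:DeltadEdeE}. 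Combining with the fact that away from $\partial E$ the distance $\dist_{\partial E}$ has measure-valued Laplacian already from \autoref{prop:fullrepr} (applied on each side), one concludes $\dist_{\partial E}\in D(\boldsymbol\Delta_{\loc})$ on all of $X$.

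Next I would turn to the signed distance. On $(X\setminus\overline E)\cap\Omega'$ we have $\dist^s_E=\dist_{\partial E}=\dist_{\overline E}$, so $\boldsymbol\Delta\dist^s_E\le\ft_{K,N}\circ\dist^s_E$ there, the second inequality of \eqref{eq:laplsign}. On $E\cap\Omega'$ we have $\dist^s_E=-\dist_{\partial E}=-\dist_{\overline{E^c}}$, hence
\begin{equation*}
\boldsymbol\Delta\dist^s_E=-\boldsymbol\Delta\dist_{\overline{E^c}}\ge -\ft_{K,N}\circ\dist_{\overline{E^c}}=-\ft_{K,N}\circ(-\dist^s_E)\,,
\end{equation*}
and since $\ft_{K,N}$ is an odd function (immediate from \eqref{eq:deftKN}, as $\tan$ and $\tanh$ are odd), $-\ft_{K,N}(-s)=\ft_{K,N}(s)$, giving $\boldsymbol\Delta\dist^s_E\ge\ft_{K,N}\circ\dist^s_E$ on $E\cap\Omega'$, the first inequality of \eqref{eq:laplsign}. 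Finally, for the jump of $\dist^s_E$ across $\partial E$: writing $\dist^s_E=\dist_{\overline E}-2\,\dist_{\overline E}\,\chi_{E}$ is not quite the right bookkeeping, so instead I would compute the singular part directly. From \autoref{prop:fullrepr} applied to $E$, $\boldsymbol\Delta\dist_{\overline E}\res(\partial E\cap\Omega')=\haus^{N-1}\res(\partial E\cap\Omega')$; applied to $E^c$, $\boldsymbol\Delta\dist_{\overline{E^c}}\res(\partial E\cap\Omega')=\haus^{N-1}\res(\partial E\cap\Omega')$. Since $\dist^s_E=\dist_{\overline E}$ on $X\setminus\overline E$, $\dist^s_E=-\dist_{\overline{E^c}}$ on $\overline E$, and $\dist^s_E$ is continuous (indeed $1$-Lipschitz), the two one-sided ``boundary'' contributions to $\boldsymbol\Delta\dist^s_E\res\partial E$ are $+\haus^{N-1}\res\partial E$ (from the side $X\setminus\overline E$, where $\dist^s_E=\dist_{\overline E}$) and $-\haus^{N-1}\res\partial E$ (from the side $\overline E$, where $\dist^s_E=-\dist_{\overline{E^c}}$ and $\boldsymbol\Delta\dist_{\overline{E^c}}\res\partial E=+\haus^{N-1}\res\partial E$); they cancel, yielding \eqref{eq:signalong}.

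To make the ``patching'' rigorous I would run a blow-up/Gauss–Green argument mirroring Step 1–Step 2 of the proof of \autoref{prop:fullrepr}: test $\nabla\dist^s_E$ against a compactly supported Lipschitz $\phi$, split the integral over $\{\dist^s_E>r_i\}$, $\{\dist^s_E<-r_i\}$ and the thin slab $\{|\dist^s_E|<r_i\}$ whose volume is $O(r_i)$ by \autoref{lemma:tubneighbounds}, use the coarea formula to pick good radii $r_i\downarrow 0$ with $\Per(\{\dist^s_E>r_i\})+\Per(\{\dist^s_E<-r_i\})\le C$, apply \autoref{thm:GaussGreenRCDBCM} on each superlevel/sublevel set, pass to the limit, and identify the resulting measure. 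The interior bounds \eqref{eq:laplsign} combined with the one-sided representations of \autoref{prop:fullrepr} then give that the limit measure is locally finite (so $\dist^s_E\in D(\boldsymbol\Delta_{\loc})$ on $E\cap\Omega'$), and the blow-up computation at a regular boundary point — where the blow-up of $\dist^s_E$ is the signed distance from a hyperplane in $\setR^N$, which is harmonic — shows the density of $\boldsymbol\Delta\dist^s_E$ with respect to $\haus^{N-1}\res\partial E$ is $0$, which is \eqref{eq:signalong}. The main obstacle I anticipate is bookkeeping the signs and the singular parts carefully near $\partial E$ — in particular making sure that the interior and exterior normal traces combine to produce the clean cancellation in \eqref{eq:signalong} rather than a factor $2$ — but this is precisely handled by the sharp trace bounds \eqref{eq:inftyboundtraceintsharp}–\eqref{eq:inftyboundtraceextsharp} and \autoref{prop:leveld}, exactly as in the proof of \autoref{prop:fullrepr}; no genuinely new idea beyond applying the earlier results to both $E$ and $E^c$ is needed.
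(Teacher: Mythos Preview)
Your proposal is correct and follows essentially the same route as the paper: apply \autoref{thm:meancurvminimal1} and \autoref{prop:fullrepr} to both $E$ and its complement $E^c$ (which is also locally perimeter minimizing), deduce \eqref{eq:DeltaddeE}--\eqref{eq:DeltadEdeE}, and then handle the signed distance by noting it equals $\pm\dist_{\partial E}$ on the two sides and running the blow-up argument of \autoref{prop:fullrepr}, where the blow-up of $\dist^s_E$ is a harmonic coordinate function. The paper's proof is exactly this, stated more tersely; your explicit use of the oddness of $\ft_{K,N}$ to get the sign in \eqref{eq:laplsign} is a nice clarification.
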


\begin{remark}\label{rem:InterLapBoundFinal}
With the same caveat about the interpretation of the Laplacian bounds when restricted to a measurable (possibly non-open) set  as in \autoref{rem:LaplacianBouundsDist}, the Laplacian bounds \eqref{eq:DeltaddeE}, \eqref{eq:DeltadEdeE}, \eqref{eq:laplsign} and \eqref{eq:signalong} actually hold more strongly by replacing $\Omega'$ with $\cK$.
\end{remark}

\begin{proof}
The first part of the statement follows from \autoref{thm:meancurvminimal1} and \autoref{prop:fullrepr}, applied to the distance from $\overline{E}$ and to the distance from $\overline{X\setminus E}$. Notice indeed that, under our assumptions on $E$, also $X\setminus E$ is locally perimeter minimizing inside $\Omega$.
\medskip

To deal with the signed distance function $\dist^s_E$, notice that it coincides with $\dist_{\partial E}$ on $\left(X\setminus \overline{E}\right)\cap K$ and with $-\dist_{\partial E}$ on $E\cap K$. Then, arguing as in the proof of \autoref{prop:fullrepr}, it is possible to prove that $\dist^s_E$ has measure valued Laplacian and \eqref{eq:laplsign} follows. 

To determine the restriction of $\boldsymbol{\Delta}\dist^s_E$ to $\partial E$, it is enough to adjust the argument in Step 2 of the proof of \autoref{prop:fullrepr}. The key remark is that, when blowing up, the distance function from the boundary converges to the distance function from the half-space, whose distributional Laplacian has a singular contribution given by the surface measure of the hyperplane. The signed distance function, instead, converges to the signed distance function from the half-space after blowing up, which is a coordinate function, hence in particular it is harmonic. This shows, through the density estimate via blow-up, that \eqref{eq:signalong} holds. 
\end{proof}

The range of applications of \autoref{thm:meancurvminimal1} and \autoref{thm:meancurvminimal2} is expected to be broad. For the sake of illustration, here we present an extension of a celebrated property of minimal surfaces in manifolds with positive Ricci curvature, the so-called Frankel's theorem.  As another application, in \autoref{sec:regularity} we will investigate some consequences of the mean curvature bounds at the level of regularity.
\medskip

It is a classical fact that two smooth minimal hypersurfaces in a manifold with (strictly) positive Ricci curvature must intersect each other. This is known as Frankel's theorem after \cite{Frankel66}, where similar results were obtained under the stronger assumption of  positive sectional curvature. In the present formulation the statement appears in \cite{PetersenWilhelm03}, whose proof we can now follow, given our understanding of mean curvature bounds for locally perimeter minimizing sets on $\RCD$ spaces, after \autoref{thm:meancurvminimal1} and \autoref{thm:meancurvminimal2}.

\begin{theorem}[Generalized Frankel's Theorem]\label{thm:generalizedFrankel}
Let $(X,\dist,\haus^N)$ be an $\RCD(N-1,N)$ metric measure space. Let $\Sigma_1,\Sigma_2\subset X$ be closed sets such that, for any $i=1,2$ and any $x\in\Sigma_i$, there exist a ball $B_r(x)$ and a set of finite perimeter $E\subset X$ such that $E$ is locally perimeter minimizing in $B_{2r}(x)$ and $\Sigma_i\cap B_r(x)=\partial E\cap B_r(x)$. Then 
\begin{equation*}
\Sigma_1\cap \Sigma_2\neq \emptyset\, .
\end{equation*}
\end{theorem}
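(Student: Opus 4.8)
The plan is to argue by contradiction, supposing $\Sigma_1\cap\Sigma_2=\emptyset$ (both sets being tacitly non-empty), following the distance–function proof of Frankel's theorem (as in \cite{PetersenWilhelm03}) but substituting the synthetic Laplacian comparison \autoref{thm:meancurvminimal1} for the second variation of arc length. Since an $\RCD(N-1,N)$ space with $N>1$ has, by Bonnet--Myers, $\operatorname{diam}(X)\le\pi$ and is compact, the closed sets $\Sigma_1,\Sigma_2$ are compact and $L:=\dist(\Sigma_1,\Sigma_2)>0$ is realised by a unit-speed minimizing geodesic $\gamma\colon[0,L]\to X$ with $\gamma(0)\in\Sigma_1$, $\gamma(L)\in\Sigma_2$. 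A routine triangle-inequality argument gives $\dist_{\Sigma_1}(\gamma(t))=t$ and $\dist_{\Sigma_2}(\gamma(t))=L-t$ for all $t\in[0,L]$; in particular every interior point $\gamma(t)$, $0<t<L$, lies off $\Sigma_1\cup\Sigma_2$.

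First I would establish, for $\Sigma\in\{\Sigma_1,\Sigma_2\}$, both the comparison
\[
\boldsymbol{\Delta}\dist_{\Sigma}\le \ft_{N-1,N}\circ\dist_{\Sigma}=-(N-1)\tan\bigl(\dist_{\Sigma}\bigr)\qquad\text{on }X\setminus\Sigma,
\]
and the focal–radius bound $\dist_{\Sigma}<\pi/2$ on $X\setminus\Sigma$. Near any point of $\Sigma$ the set $\Sigma$ coincides with $\partial E$ for a local perimeter minimizer $E$, and $\dist_{\Sigma}$ coincides with $\dist_{\overline E}$, respectively with $\dist_{\overline{X\setminus E}}$, on the two sides of $\Sigma$; since the proof of \autoref{thm:meancurvminimal1} is entirely local around $\partial E$ (it produces, by contradiction, a small inner perturbation near a boundary point), it applies verbatim under the present local minimality hypothesis, yielding the displayed bound (and note that the finiteness of its right-hand side, for $K=N-1>0$, already encodes $\dist_{\Sigma}<\pi/2$; concretely this comes from the self-improvement of Steps~2 and~6 of that proof, which propagates the bound along the transport rays emanating from $\Sigma$ and, because $\ft_{N-1,N}(s)\to-\infty$ as $s\uparrow\pi/2$, forces the $\CD(N-1,N)$ densities along those rays to vanish by $s=\pi/2$, so no such ray is longer than $\pi/2$). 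Then, for any $b\in\Sigma_2\subset X\setminus\Sigma_1$ we get $L\le\dist_{\Sigma_1}(b)<\pi/2$, hence $L/2<\pi/4$.

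Now I would set $q:=\gamma(L/2)$ and $w:=\dist_{\Sigma_1}+\dist_{\Sigma_2}$. The triangle inequality gives $w\ge L$ on $X$, while $w(q)=L/2+L/2=L$, so $q$ is a global minimum of $w$. On a small ball $B:=B_\rho(q)\subset X\setminus(\Sigma_1\cup\Sigma_2)$ one may assume $\dist_{\Sigma_1},\dist_{\Sigma_2}\in(L/4,\pi/2)$, and then, as measures,
\[
\boldsymbol{\Delta}w=\boldsymbol{\Delta}\dist_{\Sigma_1}+\boldsymbol{\Delta}\dist_{\Sigma_2}\le -(N-1)\bigl[\tan(\dist_{\Sigma_1})+\tan(\dist_{\Sigma_2})\bigr]\meas\le -c\,\meas\quad\text{on }B,
\]
with $c:=2(N-1)\tan(L/4)>0$, using that the singular parts of $\boldsymbol{\Delta}\dist_{\Sigma_i}$ are non-positive (so each $\dist_{\Sigma_i}$ has measure-valued Laplacian on $B$ by \autoref{prop:distrmeasure}, and the bound above holds). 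By \autoref{thm:viscoimpldistri} (see also \autoref{thm:mainequivlapla}) this means $\Delta w\le -c$ in the viscous sense on $B$. Testing \autoref{def:viscosity} at the point $q$ with the constant function $\phi\equiv w(q)$ — which satisfies $\phi\le w$ on $B$ with equality at $q$, belongs to $D(\Delta,B)$ and has $\Delta\phi\equiv 0$ continuous — forces $0=\Delta\phi(q)\le -c<0$, a contradiction. Hence $\Sigma_1\cap\Sigma_2\neq\emptyset$.

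I expect the only genuinely delicate step to be the one in the second paragraph: extracting from \autoref{thm:meancurvminimal1} and its proof the sharp comparison, and in particular the focal bound $\dist_{\Sigma}<\pi/2$, for the distance from a set that is only \emph{locally} a minimal boundary rather than the boundary of a global perimeter minimizer. Everything else is soft, relying only on compactness from Bonnet--Myers, the triangle inequality, the non-positivity of the singular part of the distributional Laplacian of a distance function, and the equivalence of distributional and viscous Laplacian bounds.
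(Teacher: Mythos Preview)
Your proof is correct and follows the same strategy as the paper's: assume disjointness, take the sum $w=\dist_{\Sigma_1}+\dist_{\Sigma_2}$, use \autoref{thm:meancurvminimal1} to get strict superharmonicity of $w$ on the complement, and derive a contradiction from the interior minimum of $w$.

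The one noteworthy simplification in the paper is that it avoids your focal--radius discussion entirely. Instead of invoking the sharp bound $\boldsymbol{\Delta}\dist_{\Sigma_i}\le -(N-1)\tan(\dist_{\Sigma_i})$, which is only meaningful where $\dist_{\Sigma_i}<\pi/2$, the paper uses the weaker (dimension-free) inequality $\boldsymbol{\Delta}\dist_{\Sigma_i}\le -(N-1)\dist_{\Sigma_i}$, which is the intermediate estimate obtained in Step~6 of the proof of \autoref{thm:meancurvminimal1} before the self-improvement, and which holds on all of $X\setminus\Sigma_i$ without any restriction on the range of $\dist_{\Sigma_i}$. This already gives $\boldsymbol{\Delta}w\le -(N-1)w<0$ at the minimum point, so the ``genuinely delicate step'' you flag (extracting $\dist_{\Sigma}<\pi/2$ from the ray analysis) is in fact unnecessary. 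For the contradiction the paper appeals to the strong maximum principle for superharmonic functions rather than testing the viscous definition against a constant; the two are of course equivalent here.
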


\begin{proof}
Let $\dist_1$ and $\dist_2$ denote $\dist_{\Sigma_1}$ and $\dist_{\Sigma_2}$ respectively and let $\bar{\dist}:=\dist_1+\dist_2$.\\
Assume by contradiction that $\Sigma_1\cap \Sigma_2=\emptyset$. Then it is easily seen that $\bar{\dist}$ attains one of its minima at a point $x\in X\setminus (\Sigma_1\cup\Sigma_2)$. Indeed it is sufficient to consider a minimizing geodesic between $\Sigma_1$ and $\Sigma_2$ whose length is $\dist(\Sigma_1,\Sigma_2)>0$ and pick a point inside it. 

By \autoref{thm:meancurvminimal1}, 
\begin{equation*}
\boldsymbol{\Delta}\dist_1\le -(N-1)\dist_1\, ,\quad\text{and}\quad \boldsymbol{\Delta}\dist_2\le -(N-1)\dist_2\, ,\quad\text{on $X\setminus (\Sigma_1\cup\Sigma_2)$}\, .
\end{equation*} 
Hence
\begin{equation}\label{eq:supha}
 \boldsymbol{\Delta}\bar{\dist}\le -(N-1)\bar{\dist}\, ,\quad\text{on $X\setminus (\Sigma_1\cup\Sigma_2)$}\, .
\end{equation}
In particular, there is a neighbourhood $U$ of $x$ such that $\bar{\dist}$ is superharmonic on $U$ and attains a minimum at the interior point $x$. The strong maximum principle implies that $\bar{\dist}$ is constant in a neighbourhood of $x$, that contradicts the strict superharmonicity of $\bar{\dist}$ in \eqref{eq:supha}, since $\bar{\dist}(x)>0$.
\end{proof}

\begin{remark}
The assumptions of \autoref{thm:generalizedFrankel} cover in particular the classical case of smooth minimal hypersurfaces in closed manifolds with positive Ricci curvature. Indeed, as we already mentioned, smooth minimal hypersurfaces are, locally, perimeter minimizing boundaries.
\end{remark}

\section{Regularity theory}\label{sec:regularity}

This section is dedicated to the partial regularity theory for minimal boundaries on non collapsed $\RCD$ spaces. Our main result will be that they are topologically regular away from sets of ambient codimension three, and from the boundary of the space. Besides from a sharp Hausdorff dimension estimate (see \autoref{thm:sharpdimsing}), we will obtain also a Minkowski estimate for the quantitative singular set (see \autoref{thm:contestsing}). 
Following a classical pattern, these results will be achieved through two intermediate steps: 
\begin{itemize}
\item an $\eps$-regularity result, \autoref{thm:epsregularity} showing that under certain assumptions at a given location and scale a minimal boundary is topologically regular;
\item the analysis dedicated to guarantee that the assumptions of the $\eps$-regularity theorem are verified at many locations and scales along the minimal boundary. This is pursued as follows:
\begin{itemize}
\item in  \autoref{subsec:partreg}, via dimension reduction arguments, we prove  sharp Hausdorff dimension estimates  of the singular set (see \autoref{thm:sharpdimsing}). Here the arguments depart from the classical ones: in the Euclidean (resp. smooth) setting, minimal boundaries satisfy a very powerful monotonicity formula (resp. up to a lower order term) which implies that every tangent space to a minimal boundary is a cone. In the present non-smooth setting, it seems not possible to repeat the Euclidean/smooth computations and  it is not clear if such a (perturbed) monotonicity formula holds;
\item  in \autoref{subsec:mono} we prove sharp perimeter bounds for  the equidistant sets from locally minimal boundaries which will be used in \autoref{SubSec:QuantEst}  to obtain the quantitative regularity results (see \autoref{thm:contestsing}) through  a series of covering arguments that control the regularity of the space and the regularity of the minimal boundary together.  The interpretation of minimality via Laplacian bounds on the distance function obtained in \autoref{subsec:mean} will play a key role here.
\end{itemize}
 \end{itemize}

As some examples will show, the threshold dimension for the full regularity is lower in this framework than in the Euclidean case: our Hausdorff codimension three estimate for the singular set is sharp (see \autoref{rem:ExsharpCodim3}), moreover, already in ambient dimension $4$ there are examples of tangent cones with no Euclidean splittings (see \autoref{rm:Morgan}) and of topologically irregular minimal boundaries.

\subsection{An $\eps$-regularity theorem}

The aim of this subsection is to establish an $\eps$-regularity result for minimal boundaries. This will provide a (weak) counterpart of the classical statement for minimal boundaries in the Euclidean setting.
\medskip

Usually, the outcome of an $\eps$-regularity theorem is that if a certain \textit{solution} is close enough to a rigid model then it is \textit{regular}. The celebrated result for minimal boundaries in the Euclidean case from \cite{DeGiorgi61} says that a minimal boundary contained in a sufficiently small strip around a hyperplane is analytic.\\ 
Arguably, and as elementary examples show, this is too much to hope for in the present setting. Our $\eps$-regularity result will be more in the spirit of Reifenberg's original approach: we will show that a minimal boundary which is close enough to the boundary of a half-space (in the Gromov-Haudorff sense) is topologically regular.\\
This could be considered as the counterpart for minimal boundaries of the celebrated $\eps$-regularity result for manifolds with lower Ricci curvature bounds obtained in \cite{Colding97,CheegerColding97} and extended to $\RCD$ spaces in \cite{KapovitchMondino21}, see \autoref{thm:epsregcolding}.
\medskip

To avoid confusion let us clarify that in this subsection by local perimeter minimizer in an open domain we intend that the perimeter is minimized among all the competitors that are perturbations inside the domain. This is a much stronger requirement than the one considered in \autoref{def:locper} to obtain mean curvature bounds. For smooth hypersurfaces in smooth ambient spaces, \autoref{def:locper} would correspond to minimality (i.e. vanishing mean curvature), while here we will be concerned with locally area minimizers.\\
Moreover, this subsection will be independent of the theory of mean curvature bounds that we have developed so far. Mean curvature bounds will enter into play later on, when proving that the assumptions of the $\eps$-regularity theorem are in force at many locations and scales, see \autoref{subsec:mono} and \autoref{subsec:partreg}.   
\medskip

Let us introduce some useful terminology, adapting the notion of flatness from the Euclidean to the non smooth and non flat case. With respect to the Euclidean realm, in the non flat framework there are many more \textit{rigid} situations to be considered. This is also due to the following result, yielding existence of a large family of \textit{flat} minimal boundaries.

\begin{lemma}\label{lemma:splitminimal}
Let $(Y,\dist_Y,\haus^{N-1})$ be an $\RCD(0,N-1)$ metric measure space and let $X:=\setR\times Y$ be endowed with the canonical product metric measure structure. Let $E:=\{t<0\}$, where we denoted by $t$ the coordinate of the Euclidean factor $\setR$. Then $E$ is a perimeter minimizing set. 
\end{lemma}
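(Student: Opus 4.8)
The plan is to show that the half-space $E=\{t<0\}$ in $X=\setR\times Y$ minimizes the perimeter by a slicing/projection argument. First I would set up the competitor: let $F\subset X$ be any Borel set with $E\Delta F\subset B_r(x)$ for some ball. Since $X=\setR\times Y$ is a product, I would use Fubini/coarea to slice both $E$ and $F$ along the $Y$-fibers $\{t\}\times Y$ and along the $\setR$-fibers $\setR\times\{y\}$, writing $\Per(F,\cdot)$ in terms of the one-dimensional slices $F_y:=\{t\in\setR:(t,y)\in F\}$. The key point is the lower bound
\begin{equation*}
\Per(F,B_r(x))\ \ge\ \int_{Y}\,\#\big(\partial^* F_y\cap (\text{interval})\big)\,\di\haus^{N-1}(y)\,,
\end{equation*}
i.e. the perimeter of $F$ controls the $\haus^{N-1}$-measure (weighted by the number of boundary points of the vertical slices) of the base; this is the metric-measure analogue of the fact that the area of a hypersurface bounds the area of its vertical projection. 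For $E$ itself, $E_y=(-\infty,0)$ for every $y$, so each slice $\partial^*E_y$ is a single point and $\Per(E,B_r(x))$ equals exactly $\haus^{N-1}$ of the relevant portion of the base (using that in a product $\RCD(0,N-1)\times\setR$ space the perimeter of $\{t<0\}$ is $\haus^{N-1}\res(\{0\}\times Y)$, restricted to the ball).

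Next I would combine these: since $F$ agrees with $E$ outside $B_r(x)$, for $\haus^{N-1}$-a.e. $y$ in the base of $B_r(x)$ the slice $F_y$ differs from $(-\infty,0)$ only inside a bounded interval, hence $F_y$ still has at least one essential boundary point in that interval (a subset of the line whose symmetric difference with a half-line is bounded must have odd, in particular nonzero, ``number of endpoints'' there). Therefore $\#(\partial^*F_y\cap\cdot)\ge 1=\#(\partial^*E_y\cap\cdot)$ for a.e. such $y$, and integrating over the base gives $\Per(F,B_r(x))\ge\Per(E,B_r(x))$, which is exactly the minimality in \autoref{def:locper}; in fact it gives minimality among \emph{all} compactly supported perturbations.

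For the technical implementation I would rely on the theory of $\BV$ functions and sets of finite perimeter on metric measure spaces recalled in \autoref{subsec:setsoffiniteperimeter}: the coarea formula \autoref{thm:coarea}, the structure of the perimeter measure on products, and the fact (from \cite{AmbDiM14}, or by a direct approximation with the $L^1$-relaxation defining $\Per$) that on $\RCD(0,N-1)\times\setR$ the vertical projection is $1$-Lipschitz hence does not increase perimeter, while the slices of a set of finite perimeter are a.e. sets of finite perimeter in $\setR$. Alternatively, and perhaps more cleanly, one may argue by a calibration: the vector field $V=\partial_t$ (the unit vector field in the $\setR$-direction) is divergence-free, has $|V|\equiv1$, lies in $\mathcal{DM}^\infty(X)$, and its interior/exterior normal traces on $\partial E$ equal $\pm1$; applying the Gauss--Green formula \autoref{thm:GaussGreenRCDBCM} to $V$ on $E\Delta F$ yields $\Per(E,B_r(x))-\Per(F,B_r(x))\le 0$ directly, since $\int_{E^{(1)}}\div V=0$ and the trace terms are bounded by the perimeters.

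The main obstacle I expect is making the slicing argument rigorous in the metric-measure setting, namely justifying that $\Per(F,\cdot)$ dominates the ``vertical projection'' quantity $\int_Y\#(\partial^*F_y)\,\di\haus^{N-1}$ without a smooth structure; this requires either a careful coarea-type decomposition of the perimeter on a product $\RCD$ space or the calibration route, which sidesteps the issue entirely provided one checks that $\partial_t\in\mathcal{DM}^\infty(X)$ with the claimed normal traces (this follows from the product structure and the Gauss--Green theory of \autoref{thm:GaussGreenRCDBCM} together with \autoref{prop:leveld} applied to the coordinate $t$). I would present the calibration argument as the primary proof and mention the slicing picture as motivation.
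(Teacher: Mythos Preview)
Your primary approach (the calibration via $V=\partial_t=\nabla t$) is exactly the paper's proof: $t$ is harmonic, so $\nabla t$ is divergence-free with $|\nabla t|\equiv1$, and the standard calibration argument via Gauss--Green gives minimality. The paper invokes \autoref{thm:GGRCDsmooth} and \autoref{thm:cutandpaste} rather than \autoref{thm:GaussGreenRCDBCM} (since $\nabla t\in H^{1,2}_C(TX)$, the smoother Gauss--Green already applies), but this is a cosmetic difference; your slicing picture is extra motivation that the paper does not include.
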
  

\begin{proof}
The vector field $\nabla t$ is easily checked to be a calibration for $E$, ($t$ is harmonic, hence $\nabla t$ has vanishing divergence). The conclusion follows from 
a classical calibration argument, exploiting \autoref{thm:GGRCDsmooth} and \autoref{thm:cutandpaste} as in the smooth setting.
\end{proof}

Recall that convergence in the $L^1$ strong sense of sets of finite perimeter along pmGH converging sequences of metric measure spaces is metrizable, see \cite[Appendix A]{AmbrosioBrueSemola19}. By the above, we are entitled to give the following.

\begin{definition}[$\eps$-flat points]\label{def:epsflat}
Let $\eps>0$. If $(X,\dist,\haus^N)$ is an $\RCD(-\eps,N)$ metric measure space and $E\subset X$ is a set of finite perimeter, perimeter minimizing in $B_2(x)\subset X$ such that:
\begin{itemize}
\item there exists an $\RCD(0,N-1)$ metric measure space $(Y,\dist_Y,\haus^{N-1},y)$ such that the ball $B_2(x)\subset X$ is $\eps$-GH close to the ball $B_2((0,y))\subset\setR\times Y$;
\item $E$ is $\eps$-close on $B_2(x)$ in the $L^1$ topology to $\{t<0\}\subset \setR\times Y$ and $\partial E\cap B_2(x)$ is $\eps$-GH close to $\{t=0\}\cap B_2(0,y)\subset\setR\times Y$;
\end{itemize}
then we shall say that $E$ is $\eps$-flat at $x$ in $B_2(x)$.

The notion of $\eps$-flat set at $x$ in $B_r(x)$ can be introduced analogously by scaling.
\end{definition}

\begin{definition}[$\eps$-regular points]\label{def:epsregular}
Let $\eps>0$. If $(X,\dist,\haus^N)$ is an $\RCD(-\eps,N)$ metric measure space and $E\subset X$ is a set of finite perimeter, perimeter minimizing in $B_2(x)\subset X$, such that:
\begin{itemize}
\item the ball $B_2(x)\subset X$ is $\eps$-GH close to the ball $B_2(0^N)\subset\setR^N$;
\item $E$ is $\eps$-close on $B_2(x)$ in the $L^1$ topology to $\{t<0\}\subset \setR^N$ and $\partial E\cap B_2(x)$ is $\eps$-GH close to $\{t=0\}\cap B_2(0^{N})\subset\setR^N$, where we denoted by $t$ one of the canonical coordinates on $\setR^N$;
\end{itemize}
then we shall say that $E$ is $\eps$-regular at $x$ in $B_2(x)$.

The notion of $\eps$-regular set at $x$ in $B_r(x)$ can be introduced analogously by scaling.
\end{definition}

\begin{remark}\label{rm:tangentgoodimpliesgood}
Let $E\subset X$ be perimeter minimizing inside an open domain $\Omega\subset X$. Let $x\in\partial E$ and assume that there exists an $\RCD(0,N-1)$ metric measure space $(Y,\dist_Y,\haus^{N-1},y)$ such that, denoting by $t$ the coordinate of the split factor $\setR$ in the product $\setR\times Y$ with canonical product metric measure structure,
\begin{equation*}
\big\{ \big(\{t<0\},(0,y),\setR\times Y\big) \big\}\in\Tan_x(E,X,\dist,\haus^N)\, .
\end{equation*}
Then, for any $\eps>0$ and any $r_0>0$, there exists $0<r<r_0$ such that $E$ is $\eps r$-flat in $B_r(x)$. This is a direct consequence of \autoref{thm:closurecompactnesstheorem}, together with the very definition of tangent to a set of finite perimeter.

Analogously, if
\begin{equation*}
\big\{ \big(\{t<0\},0^{N},\setR^N \big) \big\}\in\Tan_x(E,X,\dist,\haus^N)\, ,
\end{equation*}
then for any $\eps>0$ and for any $r_0>0$ there exists $0<r<r_0$ such that $E$ is $\eps r$-regular at $x$ on $B_r(x)$.
\end{remark}

Below, we shall fix the scale $r=1$. As we already argued, the statements are scale invariant, therefore this is not a loss of generality.
\medskip

The stability of perimeter minimizers allows to get a measure bound out from Gromov-Hausdorff closeness.

\begin{lemma}[Perimeter density estimate for perimeter minimizers]\label{lemma:areaconvergence}
For any $\delta>0$ there exists $\eps=\eps(\delta,N)>0$ such that the following holds. If $(X,\dist,\haus^N)$ is an $\RCD(-\eps,N)$ metric measure space, $E\subset X$ is perimeter minimizing in $B_4(x)$, $x\in\partial E$ and $E$ is $\eps$-regular at $x$ in $B_2(x)$, then 
\begin{equation}\label{eq:volconvper}
1-\delta\le \frac{\Per(E,B_1(x))}{\omega_{N-1}}\le 1+\delta\, ,
\end{equation}
where $\omega_{N-1}$ denotes the volume of the unit ball in $\setR^{N-1}$.
\end{lemma}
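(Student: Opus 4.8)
The plan is to prove \eqref{eq:volconvper} by a compactness/contradiction argument, exploiting the stability of perimeter minimizers under pmGH-convergence (\autoref{thm:closurecompactnesstheorem}) together with the rigidity of the model $\{t<0\}\subset\setR^N$. Suppose the statement fails. Then for some $\delta>0$ there are $\RCD(-1/i,N)$ spaces $(X_i,\dist_i,\haus^N)$, sets $E_i\subset X_i$ perimeter minimizing in $B_4(x_i)$, with $x_i\in\partial E_i$ and $E_i$ being $(1/i)$-regular at $x_i$ on $B_2(x_i)$, such that $\left|\Per(E_i,B_1(x_i))/\omega_{N-1}-1\right|>\delta$ for every $i$. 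By definition of $(1/i)$-regularity and the compactness of $\RCD$ spaces, after passing to a subsequence we have $(X_i,\dist_i,\haus^N,x_i)\to(\setR^N,\dist_{eucl},\haus^N,0^N)$ in the pmGH topology (the Gromov--Hausdorff closeness upgrades to measured convergence since, by volume rigidity / \autoref{thm:epsregcolding} or directly by the non-collapsing hypothesis $\meas=\haus^N$, the limit is the full Euclidean space), and $E_i\to\{t<0\}$ in $L^1_{\loc}$ with $\partial E_i\to\{t=0\}$ in the Kuratowski sense.

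Next I would record that perimeter minimizers are $1$-quasi-minimizers (\autoref{rm:equivquasimin}), hence satisfy the uniform density and perimeter bounds of \autoref{thm:regqmin} and \autoref{cor:perest} with constants depending only on $N$ (and the curvature bound, which is uniformly controlled). Therefore the hypotheses of \autoref{thm:closurecompactnesstheorem} are met — with trivial moduli $\omega_i\equiv 0$ — and we conclude that the limit set $\{t<0\}$ is an entire perimeter minimizer in $\setR^N$ (which of course it is), that $\abs{D\chi_{E_i}}\to\abs{D\chi_{\{t<0\}}}=\haus^{N-1}\res\{t=0\}$ in duality with $C_{\mathrm{bs}}$ along the realizing space $Z$, and that $\partial E_i\to\{t=0\}$ in the Kuratowski sense. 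In particular the perimeter measures converge weakly, so $\limsup_i\Per(E_i,B_1(x_i))\le\Per(\{t<0\},\overline{B_1(0^N)})=\omega_{N-1}$ provided $\abs{D\chi_{\{t<0\}}}(\partial B_1(0^N))=0$, which holds since $\haus^{N-1}(\{t=0\}\cap\partial B_1(0^N))=\haus^{N-1}(\partial B_1^{N-1})=0$; and the weak convergence together with openness of $B_1$ gives $\liminf_i\Per(E_i,B_1(x_i))\ge\Per(\{t<0\},B_1(0^N))=\omega_{N-1}$. Combining, $\Per(E_i,B_1(x_i))\to\omega_{N-1}$, contradicting $\left|\Per(E_i,B_1(x_i))/\omega_{N-1}-1\right|>\delta$ for all $i$. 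This proves the lemma.

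The one genuinely delicate point — and the step I would expect to be the main obstacle — is justifying full weak convergence of the perimeter measures $\abs{D\chi_{E_i}}\rightharpoonup\haus^{N-1}\res\{t=0\}$ from mere $L^1_{\loc}$-convergence of the sets. Lower semicontinuity of perimeter is automatic, but the matching upper bound (the no-loss-of-mass / no concentration of perimeter) is exactly where the quasi-minimality is essential, and this is precisely the content of \autoref{thm:closurecompactnesstheorem}(ii). One must check that the rescaled spaces and the (uniformly bounded) curvature constants fall within the scope of that theorem and that the $1$-quasi-minimality of $E_i$ in $B_4(x_i)$ transfers to the $\omega_i$-minimality condition required there on balls of radius bounded below (here $R_i$ can be taken $\ge 1$, which suffices as the competitor analysis only involves balls inside $B_1$). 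Once this is in place, the boundary terms on $\partial B_1$ vanish because the limit is a hyperplane, and the argument closes cleanly. A minor auxiliary check is that $x_i\in\partial E_i$ forces $0^N\in\{t=0\}$ in the limit, which follows from the Kuratowski convergence of boundaries in \autoref{thm:closurecompactnesstheorem}(iii) and guarantees that the limit configuration is genuinely the flat half-space through the origin rather than an empty or trivial one.
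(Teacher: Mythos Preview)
Your argument is correct and follows essentially the same route as the paper: a contradiction/compactness argument in which $(1/i)$-regularity forces the sequence to converge to the flat half-space in $\setR^N$, and \autoref{thm:closurecompactnesstheorem}(ii) upgrades $L^1_{\loc}$-convergence to weak convergence of perimeter measures, giving $\Per(E_i,B_1(x_i))\to\omega_{N-1}$ since $\Per(\mathbb{H}^N,\partial B_1(0^N))=0$. Your additional remarks on verifying the hypotheses of \autoref{thm:closurecompactnesstheorem} and on $0^N\in\{t=0\}$ are correct and make explicit details the paper leaves implicit.
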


\begin{proof}
The statement can be proved by a contradiction argument. 

Consider a sequence of sets of finite perimeter $E_n\subset X_n$, where $x_n\in\partial E_n$, $(X_n,\dist_n,\haus^N)$ are $\RCD(-1/n,N)$ metric measure spaces, $E_n$ is $1/n$-regular in $B_2(x_n)$ and perimeter minimizing in $B_4(x_n)$. Then the following holds: the balls $B_2(x_n)\subset (X_n,\dist_n,\haus^N,x_n)$ are converging to $B_2(0^{N})\subset (\setR^N,\dist_{\mathrm{eucl}},\haus^N,0^{N})$ in the pmGH topology and the sets of finite perimeter $E_n$ are converging to $\mathbb{H}^N$ on $B_2(0^{N})$ in the $L^1_{\loc}$-topology, with boundaries $\partial E_n$ Hausdorff converging to the boundary $\partial \mathbb{H}^N$ on $B_2(x_n)$.\\ 
Then 
\begin{equation*}
\Per(E_n,B_1(x_n))\to \Per(\mathbb{H}^N,B_1(0^{N}))=\omega_{N-1}\, ,\quad\text{as $n\to\infty$}\, ,
\end{equation*}
thanks to the weak convergence of perimeter measures in \autoref{thm:closurecompactnesstheorem} and the observation that  $\Per(\mathbb{H}^N,\partial B_1(0^{N}))=0$.
\end{proof}

\begin{remark}\label{rm:densitygapEuclidean}
Let us recall that we can associate to any locally area minimizing cone $C\subset \setR^N$ with vertex at $0\subset\setR^N$ its density
\begin{equation*}
\Theta_{0,C}:=\frac{\Per(C,B_1(0))}{\omega_{N-1}}=\frac{\Per(C,B_r(0))}{\omega_{N-1}r^{N-1}}\, ,\quad\text{for any $0<r<\infty$}\, .
\end{equation*}
Then, among all the possible densities of minimal cones $C\subset \setR^N$, the halfspace attains the minimal one, and there is a strictly positive gap between the density of the half-space and the densities of all the other minimal cones.\\
This can be rephrased by saying that there exists $c_N>0$ such that, for any minimal cone $C\subset \setR^N$ with vertex at $0^{N}$ and different from the half-space,
\begin{equation}\label{eq:densitygapeucl}
\Theta_{0,C}> 1+c_N=\Theta_{0,\mathbb{H}^N}+c_N\, .
\end{equation}

The statement is classical, and it can be proved arguing by contradiction by relying on the regularity theory for perimeter minimizers. More in detail, the density at the vertex of a cone equals its density at infinity, which is independent of the chosen base point. Namely
\begin{equation}
\Theta_{0,C}=\lim_{r\to \infty}\frac{\Per(C,B_r(0))}{\omega_{N-1}r^{N-1}}=\lim_{r\to \infty}\frac{\Per(C,B_r(p))}{\omega_{N-1}r^{N-1}}\, ,
\end{equation}
for any $p\in \partial C$. By the regularity theory, we can choose $p$ to be a regular boundary point and apply the monotonicity formula to infer that
\begin{equation}
\Theta_{0,C}\ge \lim_{r\to 0}\frac{\Per(C,B_r(p))}{\omega_{N-1}r^{N-1}}=\Theta_{0,\mathbb{H}^N}\, .
\end{equation}
The argument above also shows that a cone with the same density of the half-space must be the half-space.\\
In order to prove \eqref{eq:densitygapeucl} we argue by contradiction. If there is a sequence of cones $C_n$, all different from the half-space, and with densities converging to the density of the half-space, by compactness and stability we can extract a subsequence converging to a perimeter minimizer. The density at infinity of this limit minimizer is easily seen to equal $\Theta_{0,\mathbb{H}^N}$. By the above considerations, the limit is the half-space. By the $\eps$-regularity theorem $C_n$ is smooth on $B_1(0)$ for any sufficiently large $n$. This is a contradiction to the assumption that $C_n$ is a cone different from the half-space.
\end{remark}

In the Euclidean theory minimal boundaries are smooth, if the ambient dimension is less or equal than $7$. Moreover, they are smooth in any dimension in a region where they are sufficiently flat. These statements are the outcome of the classification of minimal cones up to dimension $7$ and of the already mentioned $\eps$-regularity theorem in \cite{DeGiorgi61}.

Notice that \autoref{lemma:splitminimal} shows that there is no hope for such a statement in our setting: consider a (possibly singular) Alexandrov space of dimension two and its product with a line, then the Alexandrov space is a minimal boundary inside the product. Hence the best regularity we can achieve for minimal boundaries in ambient dimension three is the regularity of two dimensional Alexandrov spaces.\\
Nevertheless one might hope that sufficiently flat minimal boundaries in the sense of \autoref{def:epsflat} have flat tangents (i.e. $0$-flat). It turns that this is not the case, at least when the ambient dimension is greater than $4$, due to the following.

\begin{remark}\label{rm:Morgan}
Denote by $\mathbb{S}^3_r$ the three dimensional sphere of radius $r$ endowed with the canonical Riemannian metric, and by $\mathbb{H}^{3}_{r}$ the upper hemisphere. Let also $0$ denote the tip of the cone.
In \cite{Morgan02} it is shown that the cone $C(\mathbb{H}^{3}_{r})$ is perimeter minimizing  in $B_1(0)\subset C(\mathbb{S}^3_r)$, for $r<1$ sufficiently close to $1$. 
\end{remark} 

The effect of this remark is that in our framework there cannot be an improvement of flatness, as it happens in the classical case, at least for ambient dimension greater than $4$. The best we can hope for is that flatness is preserved along scales.\\

\begin{theorem}[$\eps$-regularity]\label{thm:epsregularity}
Let $N>1$ be fixed. For any $\eps>0$ there exists $\delta=\delta(\eps,N)>0$ such that the following holds. Let  $(X,\dist,\haus^N)$ be an $\RCD(-\delta,N)$ metric measure space,  $E\subset X$ be a set of locally finite perimeter, $x\in \partial E$ be such that $E$ is perimeter minimizing on $B_4(x)$ and $E$ is $\delta$-regular in $B_2(x)$; then for any $y\in \partial E\cap B_1(x)$ and for any $0<r<1$, $E$ is $\eps r$-regular in $B_r(y)$.

Moreover, for any $0<\alpha<1$, there exists $\delta=\delta(\alpha,N)>0$ such that if $X$ and $E$ are as above (in particular, $x\in \partial E$ and $E$ is $\delta$-regular at $x$ in $B_2(x)$), then $\partial E\cap B_1(x)$ is $C^{\alpha}$-homeomorphic to the ball $B_1(0^{N-1})\subset \setR^{N-1}$.
\end{theorem}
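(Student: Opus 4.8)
The plan is to run a Reifenberg-type iteration at all scales, using the stability/compactness theory for perimeter minimizers (\autoref{thm:closurecompactnesstheorem}) to propagate Gromov-Hausdorff closeness to a flat half-space from the unit scale down to all smaller scales and all nearby base points, and then to invoke the metric Reifenberg theorem to upgrade this quantitative flatness into $C^\alpha$-bi-Hölder control of $\partial E$. First I would set up the contradiction scheme for the propagation statement: suppose there is $\eps_0>0$ for which no $\delta$ works; then for each $n$ pick $(X_n,\dist_n,\haus^N)$ that is $\RCD(-1/n,N)$, a set $E_n$ perimeter minimizing in $B_4(x_n)$ with $x_n\in\partial E_n$ and $E_n$ being $1/n$-regular in $B_2(x_n)$, together with points $y_n\in\partial E_n\cap B_1(x_n)$ and radii $r_n\in(0,1)$ such that $E_n$ is \emph{not} $\eps_0 r_n$-regular in $B_{r_n}(y_n)$. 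By the compactness of $\RCD(-1/n,N)$ spaces and the $\eps$-regularity theorem for non-collapsed spaces (\autoref{thm:epsregcolding}), passing to a subsequence, $(X_n,\dist_n,\haus^N,x_n)\to(\setR^N,\dist_{\mathrm{eucl}},\haus^N,0^N)$ in pmGH sense, and by $L^1_{\loc}$-compactness of uniformly minimizing sets we may assume $E_n\to\mathbb H^N$ in $L^1_{\loc}$ on $B_2(0^N)$, with $\partial E_n\to\partial\mathbb H^N$ in the Kuratowski sense by \autoref{thm:closurecompactnesstheorem}(iii); in particular $y_n\to y_\infty\in\partial\mathbb H^N\cap\overline{B_1(0^N)}$.

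The core of the iteration is a \emph{dichotomy at the limit}: either the blow-down/rescaled limits at each $y_n$ and scale $r_n$ remain flat half-spaces (giving a contradiction with non-$\eps_0$-regularity for large $n$, simply by the metrizability of $L^1$-and-Kuratowski convergence), or some rescaling converges to a minimal cone strictly different from $\mathbb H^N$, which is excluded. For the second alternative I would use the Euclidean density gap of \autoref{rm:densitygapEuclidean}: since $E_n$ is $1/n$-close to $\mathbb H^N$ on $B_2(x_n)$, the perimeter density estimate \autoref{lemma:areaconvergence} gives $\Per(E_n,B_1(x_n))/\omega_{N-1}\to 1$, and monotonicity-type bounds (or, more robustly in this setting, the lower semicontinuity and upper-bound-by-comparison for perimeters of minimizers together with the cone structure of any Euclidean tangent) force any tangent/rescaled limit to have density $\le 1+c_N/2$; by the gap \eqref{eq:densitygapeucl} such a limit must itself be a half-space. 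Iterating: once $E_n$ is $\delta$-regular at $x_n$ in $B_2(x_n)$ with $\delta$ small, one shows it is $\eps'$-regular at every $y\in\partial E_n\cap B_1(x_n)$ in $B_1(y)$ with $\eps'\to 0$ as $\delta\to 0$, and then this self-improves to every scale $r<1$ by rescaling and re-applying the same step; the uniformity in $n$ of all the ingredients (the doubling/Poincaré constants, the minimality exponent, the density gap) is what makes the contradiction argument close. This yields the first assertion of the theorem with a quantitative $\delta(\eps,N)$.

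For the topological (and $C^\alpha$) conclusion I would feed the flatness-at-all-scales output into a metric Reifenberg theorem. Concretely, the first part of the theorem provides, for $\delta=\delta(\alpha,N)$ small, that for every $y\in\partial E\cap B_1(x)$ and every $0<r<1$ the set $\partial E\cap B_r(y)$, with the restricted metric, is $\theta(\alpha)r$-GH close to $B_r$ in an $(N-1)$-dimensional hyperplane, where $\theta(\alpha)\to0$ as $\alpha\uparrow 1$; equivalently $\partial E\cap B_1(x)$ satisfies the $(N-1)$-dimensional Reifenberg flatness condition with vanishing constant. The metric Reifenberg theorem of Cheeger-Colding (\cite[Appendix 1]{CheegerColding97}, in the form used also in \cite{KapovitchMondino21}) then gives that $\partial E\cap B_1(x)$ is $C^\alpha$-bi-Hölder homeomorphic to the Euclidean ball $B_1(0^{N-1})\subset\setR^{N-1}$, with the Hölder exponent $\alpha$ tending to $1$ as $\delta\to 0$; combined with the measure bound $\Per(E,B_r(y))\approx\omega_{N-1}r^{N-1}$ from \autoref{lemma:areaconvergence} (which supplies the Ahlfors-regularity needed to apply the theorem), this finishes the proof.

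\textbf{Main obstacle.} The delicate point is the absence of a usable monotonicity formula for perimeter minimizers in the $\RCD$ setting, which in the Euclidean proof is what guarantees that tangents are cones and that the density gap can be invoked at every scale. The workaround I would rely on is that we only ever compare with \emph{Euclidean} limits: after the initial $\eps$-regularity for the ambient space forces pmGH convergence to $\setR^N$, every rescaled limit of $E_n$ lives in $\setR^N$, where the classical theory (cone structure of tangents, density gap, $\eps$-regularity) \emph{is} available; the role of \autoref{thm:closurecompactnesstheorem} is precisely to transfer $L^1$-convergence to Kuratowski convergence of boundaries so that ``$\eps r$-regular'' is a closed/open condition along the sequence. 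Making the quantitative dependence $\delta(\eps,N)$ genuinely uniform — i.e. checking that no constant secretly depends on the particular spaces $X_n$ — is the bookkeeping that requires care, but it follows from the uniform doubling and Poincaré inequalities of $\RCD(-\delta,N)$ spaces and the uniform density/perimeter estimates for minimizers collected in \autoref{cor:perest} and \autoref{lemma:areaconvergence}.
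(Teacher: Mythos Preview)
Your overall strategy matches the paper's: argue by contradiction, rescale, pass to a Euclidean limit, use the density gap of \autoref{rm:densitygapEuclidean} to force the limit to be a half-space, and finish with metric Reifenberg. However, one essential device in the paper's proof is absent from your outline, and without it the argument does not close.

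The gap is in how you choose the bad scale $r_n$. You pick \emph{some} $y_n$ and $r_n$ at which $\eps_0 r_n$-regularity fails, then rescale by $r_n$ and pass to an entire minimizer $F\subset\setR^N$. To invoke the density gap you must know that the density of $F$ at large scales is at most $1+c_N$; but the only a priori density information you have about $E_n$ (via \autoref{lemma:areaconvergence}) is at the \emph{original} unit scale, which after rescaling becomes scale $\sim 1/r_n\to\infty$. This does \emph{not} pass to a density bound for $F$ at any fixed $R>1$, because the latter corresponds to the density of $E_n$ at the intermediate scale $Rr_n$ in the original metric, about which you know nothing. For $N\ge 8$ an entire minimizer in $\setR^N$ need not be a half-space, so you cannot conclude $F=\mathbb H^N$ without this control. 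Your proposed alternative (``self-improves to every scale $r<1$ by rescaling and re-applying the same step'') is circular: the one-step improvement $\delta\mapsto\eps'(\delta)$ is not a contraction, so iterating it does not yield a fixed, scale-independent $\eps$.

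The paper resolves this by choosing $r_n$ to be essentially the \emph{first} bad scale: one picks $y_n$ and $r_n$ so that (i) $E_n$ is $\eps r$-regular at $y_n$ in $B_r(y_n)$ for all $r_n<r<1$, but (ii) not $\eps r_n/2$-regular in $B_{r_n/2}(y_n)$. Condition (i), together with \autoref{lemma:areaconvergence}, is precisely what produces the density pinch
\[
1-\delta\le \frac{\Per(F,B_R(0))}{\omega_{N-1}R^{N-1}}\le 1+\delta\quad\text{for every }R>1\, ;
\]
from there monotonicity in $\setR^N$, a blow-down, and the density gap force $F=\mathbb{H}^N$, contradicting (ii). Your treatment of the $C^\alpha$-biH\"older conclusion via metric Reifenberg is correct and matches the paper.
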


\begin{proof}
We argue by contradiction. Let us suppose that the conclusion is not true. Then we can find $\eps>0$, a sequence of $\RCD(-1/n,N)$ metric measure spaces $(X_n,\dist_n,\haus^N,x_n)$ and sets of locally minimal perimeter $E_n\subset X_n$ such that $x_n\in\partial E_n$, $E_n$ is $1/n$-regular at $x_n$ in $B_2(x_n)$ but there exist $y_n\in B_1(x_n)\cap\partial E_n$ and $r_n>0$ such that:
\begin{itemize}
\item[(i)] $E_n$ is $\eps r$-regular at $y_n$ in $B_r(y_n)$ for any $r_n<r<1$ and for any $n\in \setN$; 
\item[(ii)] $E_n$ is not $\eps r_n/2$-regular at $y_n$ in $B_{r_n/2}(y_n)$.
\end{itemize} 
It is easy to check that these assumptions force $r_n\to 0$. Moreover, we can assume $\eps>0$ small enough so that $\delta>0$ in \eqref{eq:volconvper} is smaller than the density gap $c_N$ of \eqref{eq:densitygapeucl}.

Now let us rescale along the sequence in order to let the critical scales $r_n$ become scale $1$. If we do so, letting $\tilde{X}_n:=(X_n,\dist_{n}/r_n,\haus^N,y_n)$ and looking at the sets $E_n$ in the rescaled metric measure spaces, by \autoref{thm:epsregcolding},  $\tilde{X}_n$ converge in the pmGH topology to $(\setR^N,\dist_{\mathrm{eucl}},\haus^N,0^{N})$. Moreover, thanks to  \autoref{thm:closurecompactnesstheorem} the sets $E_n$ converge in the $L^1_{\loc}$ topology to an entire minimizer of the perimeter $F\subset \setR^N$. \\
Taking into account (i) and \autoref{lemma:areaconvergence}, we can also infer that
\begin{equation}\label{eq:densitypinched}
1-\delta\le \frac{\Per(F,B_r(0^{N}))}{\omega_{N-1}r^{N-1}}\le 1+\delta\, ,\quad\text{for any $1<r<\infty$}\, .
\end{equation}
Since $F$ is an entire perimeter minimizer in $\setR^N$, the standard Euclidean monotonicity formula yields that
\begin{equation}\label{eq:monoF}
r\mapsto \frac{\Per(F,B_r(z))}{\omega_{N-1}r^{N-1}}
\end{equation}
is an increasing function, for any $z\in\partial F$. By \eqref{eq:densitypinched}, that guarantees compactness of the sequence of scalings $F_{0,r}$ of $F$ for $r>1$, we are allowed to consider a blow-down $G$ of $F$. A standard consequence of the monotonicty formula is that $G$ is an entire minimal cone in $\setR^N$. Moreover, by \eqref{eq:densitypinched} and our choice of $\delta>0$, we have that
\begin{equation*}
1-\delta\le \Theta_{0,G}\le 1+\delta\le 1+c_N\, .
\end{equation*}
Hence, by the Euclidean density gap \autoref{rm:densitygapEuclidean} and monotonicity, we infer that $\Theta_G=1$. Therefore
\begin{equation}\label{eq:densinfty}
\lim_{r\to\infty}\frac{\Per(F,B_r(0))}{\omega_{N-1}r^{N-1}}=\Theta_{0,G}=1\, .
\end{equation}
Observe that the density at infinity of the entire minimal surface $F$ is independent of the base point $z\in\partial F$, as one can easily verify. Moreover, by De-Giorgi's theorem, there exists $z_0\in F\cap B_1(0)$ such that
\begin{equation}\label{eq:dens0}
\lim_{r\to 0}\frac{\Per(F,B_r(z_0))}{\omega_{N-1}r^{N-1}}=1\, .
\end{equation} 
Relying again on the monotonicity formula, by \eqref{eq:densinfty} and \eqref{eq:dens0} we infer that
\begin{equation*}
\frac{\Per(F,B_r(z_0))}{\omega_{N-1}r^{N-1}}=1\, ,\quad\text{for any $0<r<\infty$}\, .
\end{equation*}
Then with a standard argument we obtain that $F$ is a half-space $\mathbb{H}^N$ passing through $0$. 

By condition (ii) above, the sets $E_n$, when considered in the scaled metric measure spaces $\tilde{X}_n$, are not $\eps/2$-regular at $x_n$ in $B_{1/2}(x_n)$. This clearly gives a contradiction, since their limit is a half-space, as we just argued; in particular, they are $\eps/2$-regular at $x_n$ in $B_{1/2}(x_n)$ as soon as $n$ is large enough.
\medskip

The second part of the statement follows from the previous one via Reifenberg's theorem for metric spaces, see for instance \cite[Appendix 1]{CheegerColding97}.
\end{proof}

\begin{corollary}\label{cor:smooth}
Let $N>1$ be fixed. Then there exists $\delta=\delta(N)>0$ such that the following holds. If $(M^N,g)$ is a smooth $N$-dimensional Riemannian manifold and $E\subset M$ is a set of locally finite perimeter such that, for some $x\in M$ and $r>0$,
\begin{itemize}
\item[(i)] $\Ric_M\ge -\delta r^{-2}$ on $B_{4r}(x)$;
\item[(ii)] $E$ is perimeter minimizing in $B_{4r}(x)$;
\item[(iii)] $E$ is $\delta$-regular at $x$ on $B_{2r}(x)$. 
\end{itemize}
Then $\partial E\cap B_r(x)$ is smooth.
\end{corollary}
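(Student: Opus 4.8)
The plan is to bootstrap the non-smooth $\eps$-regularity statement \autoref{thm:epsregularity} into smoothness by invoking the classical Euclidean (or Riemannian) regularity theory at small scales. First I would normalize: by scaling we may assume $r=1$, so the hypotheses become $\Ric_M\ge -\delta$ on $B_4(x)$, $E$ perimeter minimizing in $B_4(x)$, and $E$ is $\delta$-regular at $x$ on $B_2(x)$. Since $(M^N,g)$ is a smooth Riemannian manifold with $\Ric\ge -\delta$, it is in particular an $\RCD(-\delta,N)$ metric measure space (with $\meas=\haus^N$, up to the usual normalization of the Riemannian volume), so \autoref{thm:epsregularity} applies verbatim.

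Next, fix a target $\eps_0>0$ to be chosen small (depending only on $N$) so that the Euclidean $\eps$-regularity theorem of De Giorgi applies: namely, there is $\eps_0=\eps_0(N)>0$ such that any perimeter minimizer in a Euclidean ball that is $\eps_0$-close in Hausdorff and $L^1$ sense to a hyperplane is smooth (indeed analytic) in the concentric ball of half the radius. Apply \autoref{thm:epsregularity} with this $\eps_0$ to obtain $\delta=\delta(\eps_0,N)=\delta(N)>0$ such that, whenever the hypotheses hold, $E$ is $\eps_0 s$-regular in $B_s(y)$ for every $y\in\partial E\cap B_1(x)$ and every $0<s<1$. In particular, for each such $y$ there is a scale $s_y>0$ at which, after rescaling the metric by $s_y^{-1}$, the rescaled ambient ball is $\eps_0$-close to a Euclidean ball and $E$ is $\eps_0$-close to $\{t<0\}\subset\setR^N$ with Hausdorff-close boundary. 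The point is that, having fixed the scale, I would shrink $s_y$ further if necessary so that the smooth metric $g$ (rescaled) is $C^2$-close — not merely Gromov–Hausdorff close — to the Euclidean metric on that ball: this is possible because $M$ is smooth, so in harmonic or geodesic normal coordinates centered at $y$ the metric converges to the flat metric in $C^{1,\alpha}$ (indeed in any $C^k$) as the scale goes to zero, uniformly on the relevant ball. On such a small ball one can work in coordinates: $E$ becomes a perimeter minimizer for a variational integrand that is a $C^2$-small perturbation of the Euclidean area integrand, and it is $\eps_0$-close to a half-space. The standard regularity theory for minimizers of elliptic parametric integrands (De Giorgi–Almgren–Schoen–Simon type $\eps$-regularity, see e.g. the discussion and references around \cite{DeGiorgi61,Almgren76} in the introduction) then yields that $\partial E\cap B_{s_y/2}(y)$ is a $C^{1,\alpha}$ — hence, by Schauder bootstrapping and smoothness of $g$, smooth — hypersurface.

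Finally, I would assemble these local conclusions: every point $y\in\partial E\cap B_1(x)$ has a neighborhood in which $\partial E$ is a smooth hypersurface, so $\partial E\cap B_1(x)$ is smooth, as claimed. The main obstacle — and the only genuinely non-formal point — is the transition from Gromov–Hausdorff closeness of the ambient ball to a quantitative closeness of the smooth metric to the Euclidean one, strong enough to feed into Euclidean $\eps$-regularity. On a fixed smooth manifold this is automatic at sufficiently small scales by continuity of the metric in coordinates, but one must take care that the scale $s_y$ at which the minimal boundary is $\eps_0$-flat (controlled by \autoref{thm:epsregularity}) can be taken below the scale at which the metric is $C^2$-close to flat; since \autoref{thm:epsregularity} gives $\eps_0 s$-flatness at every scale $0<s<1$, not just one scale, this compatibility is free. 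A clean alternative, avoiding coordinates, is to run the contradiction/compactness argument directly: if the conclusion failed there would be smooth manifolds $(M_n,g_n)$ with $\Ric\ge -\delta_n\to 0$, sets $E_n$ perimeter minimizing and $\delta_n$-regular at $x_n$, yet with a singular boundary point $y_n\in\partial E_n\cap B_1(x_n)$; rescaling at the worst scale and passing to the limit as in the proof of \autoref{thm:epsregularity} produces a Euclidean half-space, contradicting the classical fact that minimal boundaries in manifolds $C^2$-converging to $\setR^N$ and converging to a half-space are eventually smooth near the relevant points.
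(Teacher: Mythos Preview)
Your proposal is correct and reaches the conclusion, but by a somewhat different route than the paper. The paper argues through tangent cones: by \autoref{thm:epsregularity}, any blow-up of $E$ at a point $y\in\partial E\cap B_r(x)$ is an entire perimeter minimizer in $\setR^N$ (since $M$ is smooth) that is $\eps$-close to a half-space at \emph{every} scale; then the Euclidean density gap (\autoref{rm:densitygapEuclidean}) together with monotonicity forces each such tangent to be exactly a half-space, after which classical GMT regularity gives smoothness. You instead pick a small scale at which the smooth metric is $C^2$-close to Euclidean in normal coordinates and feed the $\eps_0$-flatness of $E$ directly into the $\eps$-regularity theorem for smooth elliptic parametric integrands. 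Your route is more hands-on, requiring coordinates and the regularity theory for almost-minimizers, whereas the paper's is slightly cleaner: it stays coordinate-free and simply recycles the density-gap argument already carried out inside the proof of \autoref{thm:epsregularity}. The compactness alternative you sketch at the end is the one closest in spirit to the paper's own proof.
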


\begin{proof}
We only need to verify that all tangent cones at all points $x\in\partial E\cap B_r(x)$ are Euclidean half-spaces. Then the classical regularity in Geometric Measure Theory provides smoothness.\\
To this aim, observe that, by \autoref{thm:epsregularity}, all the tangent cones at any $x\in\partial E\cap B_r(x)$ are entire perimeter minimizers in $\setR^n$ close to the Euclidean half-space at all scales. Then an argument analogous to the one exploited in the proof of \autoref{thm:epsregularity}, relying on the Euclidean density gap (see \autoref{rm:densitygapEuclidean}), shows that the tangent cones are half-spaces.  
\end{proof}

\begin{remark}
In \autoref{cor:smooth} there is no assumption on the injectivity radius of the Riemannian manifold, nor on the full curvature tensor, which are the classical assumptions for the $\eps$-regularity theorems for minimal surfaces on Riemannian manifolds, see for instance \cite{CheegerNaber13b,NaberValtorta20}.
\end{remark}

\begin{remark}
\autoref{cor:smooth} should be compared with some previous results obtained in \cite{Gromov96} and \cite[Section 4]{Gromov14b}. Therein, uniform Reifenberg flatness was proved for minimal bubbles w.r.t. families of smooth Riemannian metrics $g_{\eps}$ uniformly converging to a background metric $g$ on a fixed manifold $M$. In this regard \autoref{cor:smooth} is much stronger, since it deals with a weaker notion of convergence of metrics. Moreover, \autoref{thm:epsregularity} shows that ambient regularity is not a key assumption for Reifenberg flatness, provided there is a synthetic lower Ricci bound on the background.  
\end{remark}

\subsection{Sharp perimeter bounds for  the equidistant sets from minimal boundaries }\label{subsec:mono}
In this subsection we consider again local perimeter minimizers in the sense of \autoref{def:locper}. Our goal is to prove some  sharp perimeter bounds for the equidistant sets from minimal boundaries which will turn to be very useful to establish the quantitative regularity results in \autoref{SubSec:QuantEst}. The interpretation of minimality via Laplacian bounds on the distance function obtained in \autoref{subsec:mean} will play a key role here.
\medskip

The following useful lemma is essentially taken from \cite{BrueNaberSemola20}, see in particular the proof of Theorem 7.4 therein. We omit the proof that can be obtained relying on \autoref{prop:fullrepr}, with arguments similar to those appearing in the proofs of previous results in this note.

\begin{lemma}\label{lemma:intbyparts}
Let $(X,\dist,\haus^N)$ be an $\RCD(K,N)$ metric measure space and let $E\subset X$ be a set of locally finite perimeter which locally minimizes the perimeter in an open domain $\Omega\subset X$ according to \autoref{def:locper}. Then, for any Lipschitz function $\phi:X\to\setR$ with compact support in $\Omega$, it holds:
\begin{equation}\label{eq:intbyp}
\int\phi\di\Per(\{\dist_{\bar{E}}>r\})=\int_{\{0\le \dist_{\bar{E}}<r\}}\di \div(\phi\nabla\dist_{\bar{E}})\, ,\quad\text{for a.e. $r>0$}\, .
\end{equation}
\end{lemma}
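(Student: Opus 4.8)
The identity \eqref{eq:intbyp} is a Gauss--Green formula for the bounded vector field $V:=\phi\nabla\dist_{\bar E}$ applied to the sublevel set $\{0\le\dist_{\bar E}<r\}=\{\dist_{\bar E}<r\}\setminus\overline E$, and the whole point is that the singular contribution coming from the inner boundary $\partial E$ cancels, leaving only the term along $\{\dist_{\bar E}=r\}$. First I would observe that, by \autoref{prop:fullrepr}, $\dist_{\bar E}$ has locally measure-valued Laplacian in a neighbourhood of $\partial E\cap\Omega$ with the representation formula \eqref{eq:reprglob}, namely $\boldsymbol{\Delta}\dist_{\bar E}=\haus^{N-1}\res\partial E+\boldsymbol{\Delta}\dist_{\bar E}\res(X\setminus\overline E)$; in particular $\nabla\dist_{\bar E}\in\mathcal{DM}^\infty$ locally on a neighbourhood $U$ of $\supp\phi\cap\partial E$ and, since $\dist_{\bar E}$ is $1$-Lipschitz and has a locally bounded measure Laplacian also away from $\partial E$, the vector field $\phi\nabla\dist_{\bar E}$ belongs to $\mathcal{DM}^\infty(X)$ (its divergence being $\nabla\phi\cdot\nabla\dist_{\bar E}\,\haus^N+\phi\,\boldsymbol{\Delta}\dist_{\bar E}$).

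Next I would invoke the coarea formula \autoref{thm:coarea} to fix a full-measure set of radii $r>0$ for which $\{\dist_{\bar E}>r\}$ is a set of finite perimeter; I also want, for the chosen $r$, that $\haus^{N-1}(\partial\{\dist_{\bar E}>r\}\cap\partial E)=0$ and that $r$ is not a point of concentration of $\boldsymbol{\Delta}\dist_{\bar E}$, both of which hold for a.e.\ $r$. Then I would apply the Gauss--Green integration by parts \autoref{thm:GaussGreenRCDBCM} to the set of finite perimeter $E$ itself and to the set $\{\dist_{\bar E}>r\}$, combined with the cut-and-paste rules of \autoref{thm:cutandpaste} and \autoref{prop:cutandpasteweaker}, writing $\{0\le\dist_{\bar E}<r\}$ as the difference $\{\dist_{\bar E}<r\}\setminus\overline E$ (equivalently, $X\setminus(\{\dist_{\bar E}>r\}\cup E)$). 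Applying $\int_{\{0\le\dist_{\bar E}<r\}}\div(\phi\nabla\dist_{\bar E})$ produces three pieces: the bulk term $\int_{\{0\le\dist_{\bar E}<r\}}(\nabla\phi\cdot\nabla\dist_{\bar E}+\phi\,\Delta^{ac}\dist_{\bar E})\,\di\haus^N$, a boundary term along $\partial E$ with normal trace $(\nabla\dist_{\bar E}\cdot\nu_E)$, and a boundary term along $\{\dist_{\bar E}=r\}$ with normal trace $-(\nabla\dist_{\bar E}\cdot\nu_{\{\dist_{\bar E}<r\}})$. By \autoref{prop:leveld} (applicable since $|\nabla\dist_{\bar E}|=1$ a.e.\ and $\dist_{\bar E}$ has measure Laplacian with bounded negative part outside $\overline E$ by \autoref{thm:meancurvminimal1}), the normal trace on $\{\dist_{\bar E}=r\}$ equals $1$, giving exactly $\int\phi\,\di\Per(\{\dist_{\bar E}>r\})$.

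The crucial cancellation is the following: the singular part of $\boldsymbol{\Delta}\dist_{\bar E}$ living on $\partial E$, which contributes $\int_{\partial E}\phi\,\di\haus^{N-1}$ to $\int\phi\,\di\div(\phi\nabla\dist_{\bar E})$ over any region containing $\partial E$, is precisely balanced against the inner boundary term $-\int_{\partial E}\phi\,(\nabla\dist_{\bar E}\cdot\nu_E)_{\mathrm{int}}\,\di\Per_E$ coming from the Gauss--Green formula on $E$; indeed, on $\partial E$ the (measure-theoretic) outer normal of $\{0\le\dist_{\bar E}<r\}$ is the inner normal $\nu_E$ of $E$, and the blow-up analysis in Step 2 of \autoref{prop:fullrepr} identifies the density of $\boldsymbol{\Delta}\dist_{\bar E}\res\partial E$ with respect to $\haus^{N-1}\res\mathcal FE$ as $1$, i.e.\ $(\nabla\dist_{\bar E}\cdot\nu_E)_{\mathrm{int}}=1$ $\Per_E$-a.e. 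Thus when one writes $\{0\le\dist_{\bar E}<r\}$ as a difference, the $\partial E$-contributions appear with opposite signs and cancel, leaving only the level-set term and the bulk term — and the bulk term is written back as $\int_{\{0\le\dist_{\bar E}<r\}}\di\div(\phi\nabla\dist_{\bar E})$ restricted to the open set $X\setminus(\overline E\cup\overline{\{\dist_{\bar E}>r\}})$, which up to $\haus^{N-1}$-null sets is all of $\{0\le\dist_{\bar E}<r\}$ since $\div(\phi\nabla\dist_{\bar E})$ has no singular part away from $\partial E$ for a.e.\ $r$. I expect the main technical obstacle to be the careful bookkeeping of which boundary pieces carry interior versus exterior normal traces and verifying that the chosen $r$ avoids all the exceptional null sets simultaneously; once the identification $(\nabla\dist_{\bar E}\cdot\nu_E)_{\mathrm{int}}=1$ on $\partial E$ and $(\nabla\dist_{\bar E}\cdot\nu_{\{\dist_{\bar E}<r\}})=-1$ on the level set are in hand, the cancellation and hence \eqref{eq:intbyp} follow by linearity of the Gauss--Green formula applied to the decomposition of $\{0\le\dist_{\bar E}<r\}$.
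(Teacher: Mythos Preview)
Your overall strategy—use \autoref{prop:fullrepr} to put $\phi\nabla\dist_{\bar E}\in\mathcal{DM}^\infty(X)$, then apply the Gauss--Green machinery together with \autoref{prop:leveld}—is the right one, and it is exactly what the paper has in mind when it refers to ``arguments similar to those appearing in the proofs of previous results'' (compare in particular \eqref{eq:approxgg}). However, your execution is more complicated than needed and contains an incorrect trace identification.

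The set $\{0\le\dist_{\bar E}<r\}$ is literally $\{\dist_{\bar E}<r\}=E^r$, the full $r$-enlargement (this is how it is used in the proof of \autoref{prop:regularitymeasure}, where the lemma is applied with domain $E^r$). Since $\partial E$ lies in the \emph{interior} of $E^r$, Gauss--Green applied to $E^r$ (or equivalently to $\{\dist_{\bar E}>r\}$) produces only \emph{one} boundary term, on the level set $\{\dist_{\bar E}=r\}$, and \autoref{prop:leveld} identifies the normal trace there as $\pm 1$; the identity \eqref{eq:intbyp} follows immediately. There is no need to treat $\partial E$ as a boundary piece and then orchestrate a cancellation. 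By working on the strip $E^r\setminus\bar E$ instead, you create an artificial second boundary at $\partial E$, and your handling of it is flawed: you claim $(\nabla\dist_{\bar E}\cdot\nu_E)_{\mathrm{int}}=1$, but the interior trace is taken from \emph{inside} $E$ where $\nabla\dist_{\bar E}=0$, so this trace vanishes. What you would actually need is the \emph{exterior} trace $(\nabla\dist_{\bar E}\cdot\nu_E)_{\mathrm{ext}}=-1$ (equivalently, the interior trace with respect to $\nu_S=-\nu_E$), and deducing this from \autoref{prop:fullrepr} requires an extra step (the density of the singular Laplacian gives the \emph{jump} of the trace, not the trace itself). All of this is avoided by integrating over $E^r$ directly.
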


\begin{remark}
The local perimeter minimizing assumption above is used only to infer regularity properties of the distance function, namely the fact that it has measure valued Laplacian whose singular part on the boundary of the set is the surfaces measure, rather than to obtain specific mean curvature bounds. Indeed, the conclusion of \autoref{lemma:intbyparts} holds for the boundary of any smooth set on a smooth Riemannian manifold.
\end{remark}

In order to ease the notation, let us denote by $E^t$ the open $t$-enlargement of $E$, i.e.
\begin{equation}\label{eq:defEt}
E^t:=\{x\in X\, :\, \dist(x,\bar{E})<t\}\,.
\end{equation}

We will need to compare the perimeter measure of the set $E$ and the measures obtained by normalizing the restriction of the ambient volume measure to a tubular neighbourhood of the set. Again, for all smooth hypersurfaces in the smooth Riemannian setting, the perimeter and such a Minkowski-type measure coincide, even though they do not for general sets. The next result states that the perimeter minimality condition is robust enough to guarantee such an extra regularity also in the $\RCD$ setting. 

\begin{proposition}\label{prop:regularitymeasure}
Let $(X,\dist,\haus^N)$ be an $\RCD(K,N)$ metric measure space and let $E\subset X$ be a set of locally finite perimeter which locally minimizes the perimeter in an open domain $\Omega\subset X$ according to \autoref{def:locper}. For any $0<\eps<1$, let
\begin{equation*}
\mu_{\eps}^+:=\frac{1}{\eps}\haus^N\res\{0\le \dist_{{\bar{E}}}<\eps\}\quad \text{and} \quad \mu_{\eps}^-:=\frac{1}{\eps}\haus^N\res\{0\le \dist_{E^c}<\eps\}\,.
\end{equation*} 
Then both $\mu_{\eps}^+$ and $\mu_{\eps}^-$ weakly converge to $\Per_E$ on $\Omega$ as $\eps\to 0$.
\end{proposition}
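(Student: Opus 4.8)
The plan is to prove the weak convergence $\mu_\eps^{\pm}\weakto\Per_E$ by combining the coarea formula with the integration by parts identity \autoref{lemma:intbyparts} and the representation formula for the Laplacian of $\dist_{\overline{E}}$ from \autoref{prop:fullrepr}. I will describe the argument for $\mu_\eps^+$; the case of $\mu_\eps^-$ is symmetric, since $X\setminus E$ is again locally perimeter minimizing in $\Omega$, so that all the structural results (\autoref{thm:meancurvminimal1}, \autoref{prop:fullrepr}, \autoref{lemma:intbyparts}) apply to $\dist_{E^c}$ as well. By the localization and the fact that these are all local statements, it suffices to test against a fixed nonnegative $\phi\in\Lip_c(\Omega)$ and show $\int\phi\di\mu_\eps^+\to\int\phi\di\Per_E$.

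First I would write, using the coarea formula \autoref{thm:coarea} applied to the Lipschitz function $\dist_{\overline{E}}$ (whose slope is $1$ a.e.\ on $X\setminus\overline{E}$ and $0$ on $\overline{E}$),
\begin{equation*}
\int_X\phi\di\mu_\eps^+=\frac{1}{\eps}\int_{\{0\le\dist_{\overline{E}}<\eps\}}\phi\di\haus^N=\frac{1}{\eps}\int_0^\eps\left(\int_X\phi\di\Per(\{\dist_{\overline{E}}>r\})\right)\di r\, .
\end{equation*}
Thus the claim reduces to showing that the average over $r\in(0,\eps)$ of $g(r):=\int_X\phi\di\Per(\{\dist_{\overline{E}}>r\})$ converges to $\int_X\phi\di\Per_E$ as $\eps\to 0$. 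For this I would prove that $g(r)\to\int_X\phi\di\Per_E$ as $r\downarrow 0$ along every sequence for which $\{\dist_{\overline{E}}>r\}$ has finite perimeter (which is a.e.\ $r$), which then gives the conclusion for the Cesàro average. To identify the limit of $g(r)$, use \autoref{lemma:intbyparts}: for a.e.\ $r>0$,
\begin{equation*}
g(r)=\int_X\phi\di\Per(\{\dist_{\overline{E}}>r\})=\int_{\{0\le\dist_{\overline{E}}<r\}}\di\div(\phi\nabla\dist_{\overline{E}})=\int_{\{0\le\dist_{\overline{E}}<r\}}\phi\di\boldsymbol{\Delta}\dist_{\overline{E}}+\int_{\{0\le\dist_{\overline{E}}<r\}}\nabla\phi\cdot\nabla\dist_{\overline{E}}\di\haus^N\, .
\end{equation*}
As $r\downarrow 0$, the domain $\{0\le\dist_{\overline{E}}<r\}$ shrinks to $\overline{E}$; the second term tends to $0$ because $\abs{\nabla\phi\cdot\nabla\dist_{\overline{E}}}\le\LipConst(\phi)$ and $\haus^N(\{0<\dist_{\overline{E}}<r\})\to 0$ (indeed it is $O(r)$ by \autoref{lemma:tubneighbounds}, being $E$ a quasi-minimizer), while $\haus^N(\overline{E}\setminus E)=0$ by the normalization of $E$. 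For the first term, the measure $\boldsymbol{\Delta}\dist_{\overline{E}}$ is finite near $\partial E\cap\Omega'$ by \autoref{prop:fullrepr}, and by the representation formula \eqref{eq:reprglob} its restriction to the closed set $\overline{E}$ (more precisely to a neighbourhood of $\partial E$ intersected with $\{\dist_{\overline E}=0\}$, which is $\partial E$ since $E$ is open) equals $\haus^{N-1}\res\partial E=\Per_E$; the absolutely continuous part contributes $O(r)$ as above. Hence $g(r)\to\int_{\partial E}\phi\di\haus^{N-1}=\int_X\phi\di\Per_E$.

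The main technical obstacle is the interchange of limits and the careful bookkeeping near the boundary: one must ensure that the mass of $\boldsymbol{\Delta}\dist_{\overline{E}}$ concentrated on $\{0<\dist_{\overline{E}}<r\}$ (away from $\partial E$ but inside the thin tube) vanishes as $r\downarrow 0$, which follows from \eqref{eq:reprglob} identifying $\boldsymbol{\Delta}\dist_{\overline{E}}$ on $X\setminus\overline{E}$ with a measure absolutely continuous with respect to $\haus^{N-1}$ (\autoref{cor:divac}) whose total mass on the tube $\{0<\dist_{\overline{E}}<r\}$ tends to $0$ by dominated convergence, together with \autoref{lemma:tubneighbounds}; and that $\boldsymbol{\Delta}\dist_{\overline{E}}$ does not charge $\partial B_\eps$-type sets for the relevant $\eps$, which is automatic since we only use a.e.\ $r$. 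One should also only work on $\Omega'\Subset\Omega$ containing $\supp\phi$, so that \autoref{prop:fullrepr} applies on a neighbourhood $U\supset\partial E\cap\Omega'$; outside $U$ the distance function is smooth enough and the tube $\{0<\dist_{\overline E}<r\}\cap\supp\phi$ is eventually contained in $U$. Finally, since the Cesàro average of a function with a limit at $0$ has the same limit, the displayed identity for $\int\phi\di\mu_\eps^+$ yields the claim, and the same argument with $\dist_{E^c}$ in place of $\dist_{\overline{E}}$ (using \eqref{eq:signalong}--type considerations, i.e.\ \autoref{prop:fullrepr} applied to $X\setminus E$) gives $\mu_\eps^-\weakto\Per_E$.
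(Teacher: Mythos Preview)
Your argument is correct and uses the same core ingredients as the paper (\autoref{lemma:intbyparts}, \autoref{prop:fullrepr}, \autoref{lemma:tubneighbounds}), but the organization is different. The paper proceeds by a two-sided sandwich: it first proves $\mu\ge\Per_E$ for any subsequential weak limit $\mu$ via the lower semicontinuity of the total variation (noting that $\mu_\eps^+=\abs{\nabla\phi_\eps^+}\haus^N$ for the Lipschitz truncations $\phi_\eps^+$ converging to $\chi_E$ in $L^1_{\loc}$), and then proves $\mu\le\Per_E$ by using only the \emph{upper bound} $\boldsymbol{\Delta}\dist_{\bar E}\le 0$ on $X\setminus\bar E$ from \autoref{thm:meancurvminimal1} together with the identification of the singular part on $\partial E$ from \autoref{prop:fullrepr}, integrated over $r\in(0,s)$ via coarea. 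You instead compute the limit of $g(r)=\int\phi\,\di\Per(\{\dist_{\bar E}>r\})$ directly, using the full representation \eqref{eq:reprglob} to show that the contribution of $\boldsymbol{\Delta}\dist_{\bar E}$ on the thin tube $\{0<\dist_{\bar E}<r\}$ vanishes; this requires knowing that $\boldsymbol{\Delta}\dist_{\bar E}$ extends to a Radon measure across $\partial E$ (so that continuity from above applies), which is exactly what \autoref{prop:fullrepr} provides. The paper's route is slightly more modular in that the lower bound is a soft $\BV$ fact independent of the Laplacian analysis, while your route is more direct once one accepts \autoref{prop:fullrepr} in full. One small point worth making explicit in your write-up is the boundedness of $g(r)$ near $0$ (needed to pass from the pointwise limit to the Ces\`aro average), which follows immediately from your formula for $g(r)$ since $\abs{\boldsymbol{\Delta}\dist_{\bar E}}$ is finite on the relevant compact set.
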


\begin{proof}
Let us prove the weak convergence to the perimeter of $\mu_{\eps}^{+}$. The weak convergence of $\mu_{\eps}^{-}$ can be proved with an analogous argument, replacing $\dist_{\bar{E}}$ with $\dist_{E^c}$.
\medskip

The family of measures $\mu_{\eps}^+$ has locally uniformly bounded mass, as it follows from \autoref{lemma:tubneighbounds}. We claim that for any weak limit $\mu$ of the sequence of measures $\mu_{\eps_i}^+$, where $\eps_i\downarrow 0$ as $i\to\infty$, it holds $\mu=\Per_E$.

Let us start from the inequality $\mu\ge \Per_E$.\\ 
Letting 
$\phi_{\eps}^+:X\to\setR$ be defined by $\phi_{\eps}^+(x)=1$ on $\bar{E}$, $\phi_{\eps}^+=0$ on $X\setminus E^{\eps}$ and 
\begin{equation*}
\phi_{\eps}^+=\frac{1}{\eps}(\eps-\dist(x,\bar{E}))\, ,\quad\text{on $E^{\eps}\setminus E$}\, ,
\end{equation*}
it holds
\begin{equation*}
\mu_{\eps}^+=\abs{\nabla \phi_{\eps}^+}\haus^N\, .
\end{equation*}
Moreover, it is easy to check that $\phi_{\eps}^+$ converge locally in $L^1$ to $\chi_E$. Hence, by the lower semicontinuity of the total variation (in localized form), it is easy to infer that, for any open set $A\subset \Omega$ such that $\mu(\partial A)=0$,
\begin{equation*}
\Per(E,A)\le \liminf_{i\to \infty}\mu_{\eps_{i}}^+(A)=\mu(A)\, .
\end{equation*}
To prove the converse inequality, let us focus for simplicity on the case $K=0$, the general case introduces only an additional error term of lower order. Let us consider any non-negative Lipschitz function $\phi:X\to [0,\infty)$ with compact support in $\Omega$. We claim that
\begin{equation*}
\int\phi\di\mu\le \int\phi\di\Per\, ,
\end{equation*}
which will imply the inequality $\mu\le \Per_E$.\\ 
To prove this claim, we rely on \autoref{lemma:intbyparts}. Indeed, for a.e. $r>0$ sufficiently small, it holds that
\begin{equation*}
\int\phi\di\Per(\{\dist_{\bar{E}}>r\})=\int_{\{\dist_{\bar{E}}<r\}}\di \div(\phi\nabla\dist_{\bar{E}})\, .
\end{equation*}
Hence, for a.e. $r>0$, using the Leibniz rule for the divergence, \autoref{thm:meancurvminimal1} and \autoref{prop:fullrepr}, we get
\begin{align*}
\int\phi\di\Per(\{\dist_{\bar{E}}>r\})=&\int_{E^r}\nabla\phi\cdot\nabla\dist_{\bar{E}}\di\haus^N+\int_{E^r}\phi\boldsymbol{\Delta}\dist_{\bar{E}}\\
\le &\int_{E^r}\nabla\phi\cdot\nabla\dist_{\bar{E}} \di\haus^N+\int\phi\di\Per_E\, .
\end{align*}
Therefore, for any $s>0$ sufficiently small, by the coarea formula we get
\begin{align*}
\int_{E^s}\phi\di\haus^N=&\int_0^s\int\phi\di\Per(\{\dist_{\bar{E}}>r\})\\
\le &\int_0^s\left(\int_{E^r}\nabla\phi\cdot\nabla\dist_{\bar{E}} \di\haus^N+\int\phi\di\Per_E\right)\\
\le& s \LipConst (\phi)\haus^N(E^s\cap\mathrm{spt}\phi)+s\int\phi\di\Per_E\, .
\end{align*}
Hence
\begin{align*}
\int\phi\di\mu=&\lim_{i\to \infty} \frac{1}{s_{i}}\int_{E^{s_{i}}}\phi\di\haus^N\\
\le& \limsup_{s\to 0}\frac{1}{s}\left(s \LipConst (\phi) \haus^N(E^s\cap\mathrm{spt}\phi)+s\int\phi\di\Per_E\right)\\
= &\int\phi\di\Per_E\, ,
\end{align*}
where we used \autoref{lemma:tubneighbounds} in the last inequality. This concludes the proof of the inequality $\mu\le \Per_E$ and hence the proof.
\end{proof}

Let us introduce the notation $\Sigma$ for the boundary $\partial E$ of a set of finite perimeter $E$ which is locally perimeter minimizing in $\Omega\subset X$ and let us denote, for any $h>0$,
\begin{equation*}
\Sigma^h:=\{x\in \Omega\, :\, \dist(\bar{E},x)=h\}\, .
\end{equation*}

The next result is a kind of monotonicity formula for equidistant sets from minimal boundaries. Its proof is inspired by \cite[Lemma 2]{CaffarelliCordoba93}, which deals with the Euclidean case.
The Laplacian bound for the distance from a locally minimizing set of finite perimeter under lower Ricci curvature bounds (obtained in \autoref{thm:meancurvminimal1}) allows to extend it to the present framework.

\begin{proposition}\label{prop:tubecomparison}
Let $(X,\dist,\haus^N)$ be an $\RCD(K,N)$ metric measure space and let $E\subset X$ be a set of locally finite perimeter which locally minimizes the perimeter in an open domain $\Omega\subset X$ according to \autoref{def:locper}. Let $h>0$ be fixed. Let $\Gamma\subset\Sigma^h$ be any compact set and denote
\begin{align}
 \Gamma_{\Sigma}&:=\{y\in\Sigma\cap \Omega\ :\, \dist(x,y)=h\quad\text{for some $x\in\Gamma$}\}\, ,\\
G&:=\{ x\in \Omega\,:\,\dist_{\Gamma_{\Sigma}}(x)+\dist_{\Gamma}(x)=h\}\, .\label{eq:G}
\end{align}
If $G\Subset \mathcal{K}$, where $\mathcal{K}$ has been defined in \eqref{eq:defK}, then
\begin{equation}\label{eq:PerEhPerE}
\Per(E^h,\Gamma)\le \Per(E,\Gamma_{\Sigma}) + \int_G   \ft_{K,N}(\dist_{E}) \di\haus^N\ \, ,
\end{equation}
where  $\ft_{K,N}$ was defined in \eqref{eq:deftKN}, 
and
\begin{equation}\label{eq:PerEhGr}
\Per(E^h,\Gamma)\le \begin{cases}
\Per(E,\Gamma_{\Sigma})\cos \left(\sqrt{\frac{K}{N-1}}h \right)^{N-1}\,& \quad \text{if } K>0\\
\quad \Per(E,\Gamma_{\Sigma})\, & \quad \text{if } K=0 \\
\Per(E,\Gamma_{\Sigma})\cosh\left(\sqrt{\frac{-K}{N-1}}h\right)^{N-1}\,& \quad \text{if } K<0 \, .
\end{cases} 
\end{equation}
\end{proposition}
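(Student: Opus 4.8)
The plan is to follow the strategy of \cite[Lemma 2]{CaffarelliCordoba93}, replacing the Euclidean harmonicity of the distance function by the sharp Laplacian bound \eqref{eq:meangen}. First I would set up the Gauss--Green machinery for the vector field $\nabla \dist_{\overline E}$ on the region between $\Sigma$ and $\Sigma^h$. More precisely, fix a non-negative cut-off $\phi \in \Lip_c(\Omega)$ which equals $1$ on a neighbourhood of $G$ (recall $G \Subset \cK$), and observe that $\chi \nabla \dist_{\overline E} \in \mathcal{DM}^{\infty}$ on a neighbourhood of $G$ by \autoref{prop:fullrepr}; then apply \autoref{lemma:intbyparts} to get, for a.e.\ $r \in (0,h)$,
\begin{equation*}
\int \phi \di \Per(\{\dist_{\overline E} > r\}) = \int_{\{0 \le \dist_{\overline E} < r\}} \di \div(\phi \nabla \dist_{\overline E})\, .
\end{equation*}
Expanding the divergence via the Leibniz rule and using $|\nabla \dist_{\overline E}| \le 1$ together with the representation \eqref{eq:reprglob} and the sharp bound $\boldsymbol{\Delta}\dist_{\overline E} \le \ft_{K,N}\circ \dist_{\overline E}$ on $(X\setminus\overline E)\cap\cK$ (valid in the strong restricted sense of \autoref{rem:LaplacianBouundsDist}), one obtains
\begin{equation*}
\int \phi \di \Per(\{\dist_{\overline E}>r\}) \le \int_{\{0 \le \dist_{\overline E} < r\}} |\nabla \phi| \di \haus^N + \int \phi \di \Per_E + \int_{\{0 < \dist_{\overline E} < r\} \cap \cK} \phi \, \ft_{K,N}(\dist_{\overline E}) \di \haus^N\, .
\end{equation*}
The term with $\nabla\phi$ drops out in the limit since $\phi$ is locally constant near $G$.

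The second step is a localisation argument to pass from the global $\phi$-weighted inequality to the one supported on $\Gamma$ and $\Gamma_\Sigma$. Here the key geometric observation — exactly as in the Euclidean case — is that the gradient flow lines of $\dist_{\overline E}$ through points of $\Gamma$ hit $\Sigma$ precisely at points of $\Gamma_\Sigma$, and the union of the corresponding minimizing segments of length $h$ is exactly the set $G$ defined in \eqref{eq:G}. Consequently, choosing the cut-off $\phi$ to approximate $\chi_G$ (which is legitimate because $G \Subset \cK$ and $G$ is closed), the level-set perimeter $\int \phi \di \Per(\{\dist_{\overline E}>r\})$ converges, as $\phi \uparrow \chi_G$, to $\Per(E^r, \Gamma_r)$ for the appropriate ``sub-collar'' $\Gamma_r \subset \Sigma^r$ consisting of the points at flow-time $r$ from $\Gamma_\Sigma$; and letting $r \uparrow h$ gives $\Per(E^h,\Gamma)$ on the left. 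On the right, $\int \phi \di \Per_E \to \Per(E,\Gamma_\Sigma)$ and the last integral tends to $\int_G \ft_{K,N}(\dist_{\overline E})\di\haus^N$. This yields \eqref{eq:PerEhPerE}. To make the limiting rigorous I would use the coarea formula \autoref{thm:coarea} to pick a.e.\ good radii $r$, the weak convergence of perimeters of super-level sets (which follows from \autoref{prop:regularitymeasure} applied after rescaling, or directly from lower semicontinuity plus the upper bound just derived), and the volume bound \autoref{lemma:tubneighbounds} to control the intermediate collar measures.

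For \eqref{eq:PerEhGr} I would convert the differential inequality into the explicit comparison factors by a one-dimensional ODE argument along the disintegration rays, in the spirit of Step 6 of the proof of \autoref{thm:meancurvminimal1}. Writing $A(r) := \Per(E^r,\Gamma_r)$ (or more precisely working with the disintegration density $h_\alpha$ from \cite{CavallettiMondino20} ray by ray, so that $A(r) = \int h_\alpha(r)\, \mathfrak{q}(\di\alpha)$ over the rays emanating from $\Gamma_\Sigma$), the inequality just derived says, after differentiating in $r$, that $A'(r) \le \int \ft_{K,N}(r)\, h_\alpha(r)\, \mathfrak{q}(\di\alpha)$; since $(\log h_\alpha)'(r) \le \ft_{K,N}(r)$ for $\mathfrak{q}$-a.e.\ $\alpha$ (this is \eqref{eq:halpha'Kdist} combined with the $\CD(K,N)$ comparison, exactly as established in Step 6), Gronwall's inequality gives $h_\alpha(r) \le h_\alpha(0^+) \exp\big(\int_0^r \ft_{K,N}(s)\di s\big)$. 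A direct computation shows $\exp\big(\int_0^h \ft_{K,N}(s)\di s\big)$ equals $\cos(\sqrt{K/(N-1)}\,h)^{N-1}$ when $K>0$, equals $1$ when $K=0$, and equals $\cosh(\sqrt{-K/(N-1)}\,h)^{N-1}$ when $K<0$; integrating against $\mathfrak{q}$ and recalling $A(0^+) = \Per(E,\Gamma_\Sigma)$ gives \eqref{eq:PerEhGr}.

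The main obstacle I anticipate is the bookkeeping in the localisation step: identifying the ``correct'' sub-collars $\Gamma_r$ so that $\Per(E^r,\Gamma) = \lim_{\phi \uparrow \chi_G}\int\phi\di\Per(\{\dist_{\overline E}>r\})$ really holds, and verifying that no mass of the level-set perimeter escapes through the lateral boundary of $G$. In the Euclidean setting this is handled by the co-area/Fubini structure of the collar; in our setting one must instead invoke the disintegration of $\haus^N\res(X\setminus\overline E)\cap\cK$ into rays (the formula \eqref{eq:disintegration}) together with the fact, from \autoref{thm:closurecompactnesstheorem} and \autoref{prop:regularitymeasure}, that the equidistant sets behave well. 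A secondary technical point is ensuring $G \Subset \cK$ transfers to $G^r := \{\dist_{\Gamma_\Sigma} + \dist_{\Gamma^r} = r\} \Subset \cK$ for all $r < h$, which is immediate since $G^r \subset G$ by the triangle inequality along the same rays.
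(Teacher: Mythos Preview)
Your outline has a genuine gap in the localisation step, and it is precisely the ``main obstacle'' you flag but do not resolve. When you bound $\nabla\phi\cdot\nabla\dist_{\overline E}$ by $|\nabla\phi|$ you discard sign information, and the resulting term $\int_{\{0\le\dist_{\overline E}<r\}}|\nabla\phi|\,\di\haus^N$ does \emph{not} drop out as $\phi\to\chi_G$: the gradient of $\phi$ concentrates on the lateral boundary of $G$, and there is no reason this contribution is small (it measures the ``lateral perimeter'' of the tube, which is of order $h$, not $o(1)$). Your earlier claim that $\phi\equiv 1$ on a neighbourhood of $G$ is incompatible with later taking $\phi\uparrow\chi_G$, so one of the two uses of $\phi$ must fail.

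The paper's proof avoids this by a specific choice of cut-off, not a generic one: it takes
\[
\phi_{\eps}:=\frac{1}{\eps}\bigl(h+\eps-(\bar\dist+\dist_{\Gamma})\bigr)_+\,,
\]
where $\bar\dist=\dist_{\overline E}$. The point is that $G=\{\bar\dist+\dist_\Gamma=h\}$, so this $\phi_\eps$ is adapted to the geometry of $G$, and crucially
\[
\nabla\phi_\eps\cdot\nabla\bar\dist=-\eps^{-1}\bigl(1+\nabla\bar\dist\cdot\nabla\dist_\Gamma\bigr)\le 0
\]
$\haus^N$-a.e.\ on the support of $\nabla\phi_\eps$, since $|\nabla\bar\dist\cdot\nabla\dist_\Gamma|\le 1$. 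Paired with an auxiliary function $g_\delta(\bar\dist)$ whose derivative is non-negative, the cross term $\nabla\phi_\eps\cdot\nabla(g_\delta\circ\bar\dist)$ has a \emph{sign} and can be dropped, rather than estimated in absolute value. This is the mechanism that kills the lateral boundary contribution; without it the argument does not close.

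Your approach to \eqref{eq:PerEhGr} via the ray disintegration and the one-dimensional $\CD(K,N)$ comparison $(\log h_\alpha)'\le\ft_{K,N}$ is different from the paper's (which uses Gr\"onwall on the integrated inequality for $K<0$ and a modified test function $f_{K,N}\circ\dist_E$ with $\Delta(f_{K,N}\circ\dist_E)\le 0$ for $K>0$) and is in fact cleaner, provided you can justify $\int_Q h_\alpha(0^+)\,\mathfrak q(\di\alpha)=\Per(E,\Gamma_\Sigma)$ and the analogous identity at height $h$; these are exactly the Minkowski-equals-perimeter statements of \autoref{prop:regularitymeasure}, so that part is fine.
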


\begin{remark}
Note that $G$ is made by the union of minimizing geodesics connecting $\Gamma_{\Sigma}$ with $\Sigma$ along which $\dist_{E}$ is attained.
\end{remark}

\begin{remark}
The bounds obtained in \autoref{prop:tubecomparison} are sharp. Indeed it is easily seen that equality is achieved in the model spaces:
\begin{itemize}
\item for $K>0$, let $(X,\dist,\haus^N)$ be the $N$-dimensional round sphere of constant sectional curvature $K/(N-1)$ and $E$ be a half-sphere. It is a standard fact that $E$ is locally perimeter minimizing inside a sufficiently small open domain $\Omega$. It is immediate to see that $\Sigma^h$ is (part of the boundary of) a spherical cap and one can check that equality is attained in \eqref{eq:PerEhGr} by  direct computations;
\item for $K=0$, let $(X,\dist,\haus^N)$ be the $N$-dimensional Euclidean space and $E$ be a half-space. It is a standard fact that $E$ is locally perimeter minimizing inside any open domain $\Omega$. It is immediate to see that $\Sigma^h$ is (part of the boundary of) an equidistant  half space and that equality is attained in \eqref{eq:PerEhGr};
\item for $K<0$, let $(X,\dist,\haus^N)$ be the $N$-dimensional hyperbolic space of constant sectional curvature $K/(N-1)$ and $E$ be a horo-ball. It is a standard fact that $E$ is locally perimeter minimizing inside  any open domain $\Omega$. Also in this case, one can check that equality is attained in \eqref{eq:PerEhGr} by direct computations.
\end{itemize}
\end{remark}

\begin{proof}
Notice that $G$ is the set spanned by those rays connecting $\Gamma_{\Sigma}$ to $\Gamma$. We would like to apply the Gauss-Green integration by parts formula to the vector field $\nabla \dist_{E}$ on $G$. Indeed, at an heuristic level, the boundary of $G$ is made of three parts, $\Gamma_{\Sigma}$, $\Gamma$ and some \textit{lateral} faces whose unit normal we expect to be orthogonal to $\nabla\dist_{E}$. Then the conclusion would follow from the fact that $\div\nabla\dist_E\le \ft_{K,N} \circ \dist_{E}$, by \autoref{thm:meancurvminimal1}.

In order to make the argument rigorous, we are going to approximate the characteristic function of the set $G$ (which in general may not be regular enough), by suitable cut-off functions. \\ 
Let us introduce the shortened notation $\bar{\dist}$ for the distance from $\bar{E}$. Moreover, let us denote by $\dist_{\Gamma}$ the distance function from the compact set $\Gamma$ in the statement. Then, for any $\eps\in(0,\eps_0)$ let us set
\begin{equation*}
\phi_{\eps}:=\frac{1}{\eps}\left(h+\eps-(\bar{\dist}+\dist_{\Gamma})\right)_{+}\, ,
\end{equation*} 
where we denoted by $(\cdot)_+$ the positive part. 
For any  $\delta\in (0,h)$, we introduce the monotone function $g_{\delta}$ satisfying:
\begin{equation*}
g_{\delta}(0)=g'_{\delta}(0)=0\, ,\quad g''_{\delta}=\frac{1}{\delta}\left(\chi_{[0,\delta]}-\chi_{[h-\delta,h]}\right)\, .
\end{equation*}
Observe that, in particular, $g'_{\delta}(h)=0$.  Recalling that $\abs{\nabla \bar{\dist}}\le 1$ a.e. and that $g_{\delta}'(\bar{\dist})\boldsymbol{\Delta}\bar{\dist}\le g_{\delta}'(\bar{\dist})\,  \ft_{K,N}(\bar{\dist})$  by \autoref{thm:meancurvminimal1}, using chain rule we obtain:
\begin{equation}\label{eq:bounddistrdiv}
\boldsymbol{\Delta}g_{\delta}(\bar{\dist})\le g_{\delta}''(\bar{\dist}) + g_{\delta}'(\bar{\dist})\, \ft_{K,N}(\bar{\dist})\, .
\end{equation}
Now let $F\subset \mathcal{K}$ be an open neighbourhood of $G$ inside $\mathcal{K}$. Relying on \eqref{eq:bounddistrdiv} and applying the Gauss Green integration by parts formula (see \autoref{thm:GaussGreenRCDBCM}), taking into account that there are no boundary terms since either $\phi_{\eps}=0$ or $g'_{\delta}(\bar{\dist})=0$ on the boundary of the domain for $\eps>0$ sufficiently small, we can compute:
\begin{align*}
\int_{F}g''_{\delta}(\bar{\dist})\phi_{\eps}\di\haus^N&\ge \int_F\phi_{\eps}\di \boldsymbol{\Delta}g_{\delta}(\bar{\dist}) -  \int_F \phi_{\eps}\, g_{\delta}'(\bar{\dist})\,  \ft_{K,N}(\bar{\dist}) \di\haus^N \\
& =-\int_F\nabla (g_{\delta}(\bar{\dist}))\cdot\nabla\phi_{\eps}\di\haus^N   -  \int_F  \phi_{\eps}\, g_{\delta}'(\bar{\dist})\,  \ft_{K,N}(\bar{\dist}) \di\haus^N\, .
\end{align*}
Let us observe that
\begin{align*}
\nabla (g_{\delta}(\bar{\dist}))\cdot\nabla\phi_{\eps}=&g'_{\delta}(\bar{\dist})\nabla\bar{\dist}\cdot (-\nabla\bar{\dist}-\nabla\dist_{\Gamma}) \eps^{-1}\\
=&g'_{\delta}(\bar{\dist})(-1-\nabla\bar{\dist}\cdot \nabla\dist_{\Gamma} )  \eps^{-1}\le 0\, ,\quad\text{$\haus^N$-a.e. on $F$}\, .
\end{align*}
Hence, for any $\eps,\delta>0$ sufficiently small, it holds
\begin{equation*}
\int_{F}g''_{\delta}(\bar{\dist})\phi_{\eps}\di\haus^N\ge  -  \int_F  \phi_{\eps}\, g_{\delta}'(\bar{\dist})\,  \ft_{K,N}(\bar{\dist}) \di\haus^N\, .
\end{equation*}
By the very definition of $g_{\delta}$, this implies that

\begin{align}\label{eq:ap}
\nonumber \frac{1}{\delta}\int_{F}\chi_{[0,\delta]}(\bar{\dist})\phi_{\eps}\di\haus^N\ge & \frac{1}{\delta}\int_{F}\chi_{[h-\delta,h]}(\bar{\dist})\phi_{\eps}\di\haus^N\\ 
& -  \int_F  \phi_{\eps}\, g_{\delta}'(\bar{\dist})\,  \ft_{K,N}(\bar{\dist}) \di\haus^N\, .
\end{align}
Relying on \autoref{prop:regularitymeasure}, which guarantees the weak convergence of the measures $\delta^{-1}\chi_{[0,\delta]}(\bar{\dist})\haus^N$ to $\Per_E$ as $\delta\to 0$, we can pass to the limit in the left hand side of \eqref{eq:ap}. Moreover, by semicontinuity of the total variation, for any weak limit $\nu$ of the sequence $\delta^{-1}\chi_{[h-\delta,h]}(\bar{\dist})\haus^N$ (which is easily seen to be pre-compact in the weak topology) as $\delta\to 0$, it holds $\nu\ge \Per(E_h)$. It is also easily seen that $0\leq g_{\delta}'(\bar{\dist})\uparrow 1$ $\haus^N$-a.e. on $F$, as $\delta \downarrow 0$. Hence
\begin{equation}\label{eq:PerEPerEhPf}
\int_{F}\phi_{\eps}\di\Per_E\ge \int_{F}\phi_{\eps}\di\Per_{E_h}   -  \int_F  \phi_{\eps}\,  \ft_{K,N}(\bar{\dist}) \di\haus^N\, .
\end{equation}
Next, we pass to the limit as $\eps\to 0$. 
Observe that 
\begin{equation*}
\lim_{\eps\to 0}\phi_{\eps}(x)=
\begin{cases}
1\,   \quad \text{if $x\in G$ } \\
0\,  \quad \text{otherwise}\, .
\end{cases}
\end{equation*}
Therefore, passing to the limit in \eqref{eq:PerEPerEhPf} as $\eps\to 0$,  we obtain that
\begin{equation*}
\Per(E^h,\Gamma)\le \Per(E,\Gamma_{\Sigma}) + \int_G   \ft_{K,N}(\bar{\dist}) \di\haus^N\ \, ,
\end{equation*}
as desired.

The bounds in \eqref{eq:PerEhGr} follow from \eqref{eq:PerEhPerE} thanks to the coarea formula and the integral form of Gr\"onwall's Lemma if $K<0$. In the case $K=0$ they follow directly from \eqref{eq:PerEhPerE} since $\ft_{0,N}=0$.\\ 
Let us deal with the remaining case $K>0$.\\
We introduce a function
\begin{equation*}
f_{K,N}:\left(0,\frac{\pi}{2}\sqrt{\frac{N-1}{K}}\right)\to \setR\, , \quad\, 
f_{K,N}(r):=\int_0^r\cos  \left(\sqrt{\frac{K}{N-1}}\, h \right)^{-(N-1)}\di h\, .
\end{equation*}
Notice that
\begin{equation}
f'_{K,N}(r)=\cos\left(\sqrt{\frac{K}{N-1}}\, r \right)^{-(N-1)}\, , 
\end{equation}
for any $r>0$ and in particular $f'_{K,N}(0)=1$.
Moreover, the chain rule for the Laplacian and a direct computation show that we can rephrase the bound in \autoref{thm:meancurvminimal1} as
\begin{equation}\label{eq:lapdistcomp}
\Delta f_{K,N}\circ \dist_{E}\le 0\, .
\end{equation} 
Then \eqref{eq:PerEhGr} in the case $K>0$ follows formally by applying the Gauss-Green integration by parts formula to the vector field $\nabla f_{K,N}\circ\dist_E$ on the set $G$ introduced in \eqref{eq:G}. Indeed, the contribution coming from the integration in the interior has a sign thanks to \eqref{eq:lapdistcomp}, one of the two boundary terms is $f'_{K,N}(0)\Per(E,\Gamma_{\Sigma})=\Per(E,\Gamma_{\Sigma})$ and the other one can be estimated by 
\begin{equation}
f'_{K,N}(h)\Per(E^h,\Gamma)=\Per(E^h,\Gamma)\cos\left(\sqrt{\frac{K}{N-1}}\, h \right)^{-(N-1)}\, .
\end{equation}
Therefore we obtain
\begin{equation}\label{eq:bdinverse}
\Per(E^h,\Gamma)\le \Per(E,\Gamma_{\Sigma})\cos\left(\sqrt{\frac{K}{N-1}}\, h \right)^{(N-1)}\, ,
\end{equation}
as we claimed. The rigorous justification of \eqref{eq:bdinverse} can be obtained with an approximation argument completely analogous to the one introduced in the first part of the proof, approximating the characteristic function of $G$ with suitable cut-off functions;  we omit the details for the sake of brevity.
\end{proof}

A very useful result proved in Simons' seminal paper on minimal varieties \cite{Simons} states that there are no two sided stable smooth minimal hypersurfaces on closed manifolds with positive Ricci curvature. Thanks to the perimeter monotonicity in \autoref{prop:tubecomparison} we can partially generalize this fact to the present framework.

\begin{corollary} [Simons' theorem in $\RCD$ spaces]\label{cor:SimonsThmRCD}
Let $(X,\dist,\haus^N)$ be an $\RCD(K,N)$ metric measure space, for some $K>0$. Then, for any $r>0$, there is no non trivial set of finite perimeter $E\subset X$ that minimizes the perimeter among all the perturbations $F\subset X$ such that 
\begin{equation*}
E\Delta F\subset B_r(\partial E)\, . 
\end{equation*}
\end{corollary}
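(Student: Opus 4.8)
The plan is to argue by contradiction, using the sharp perimeter comparison for equidistant sets from \autoref{prop:tubecomparison}. Suppose $E\subset X$ is a non-trivial set of finite perimeter minimizing the perimeter among all perturbations $F$ with $E\Delta F\subset B_r(\partial E)$; we may assume $E$ is open and normalized as in \eqref{eq:normsetsfin}. Since $(X,\dist,\haus^N)$ is $\RCD(K,N)$ with $K>0$, it is compact with $\diam(X)\le D:=\pi\sqrt{(N-1)/K}$, and, being a geodesic space, it is connected; hence non-triviality of $E$ forces $\partial E\neq\emptyset$ and $\Per(E,X)\in(0,\infty)$ (the latter by the Ahlfors regularity of the perimeter measure of quasi-minimizers). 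The first point is that $E$ is locally perimeter minimizing, in the sense of \autoref{def:locper}, in the open domain $\Omega:=B_r(\partial E)$: for $x\in\Omega$ and $0<r_x<\tfrac12(r-\dist(x,\partial E))$ one has $B_{r_x}(x)\subset\Omega$, so every perturbation of $E$ supported in $B_{r_x}(x)$ is admissible for the hypothesis. Thus \autoref{thm:meancurvminimal1} and \autoref{prop:tubecomparison} apply with this $\Omega$.

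Next I would fix $h\in(0,\min\{r,D/2\})$ such that the open $h$-enlargement $E^h=\{\dist_{\overline{E}}<h\}$ (see \eqref{eq:defEt}) has finite perimeter; this holds for $\Leb^1$-a.e.\ $h$ by the coarea formula \autoref{thm:coarea}, since $\dist_{\overline{E}}$ is Lipschitz on the compact space $X$. Applying \autoref{prop:tubecomparison} with this $h$ and with $\Gamma:=\Sigma^h=\{x:\dist(x,\overline{E})=h\}$, which is a compact subset of $\Omega$ (indeed $\{\dist_{\overline{E}}=h\}\subset B_h(\partial E)\subset\Omega$), one has $\Gamma_{\Sigma}\subset\Sigma=\partial E$, and $G$ is the union of the minimizing geodesics from $\partial E$ to $\Sigma^h$ along which $\dist_{\overline{E}}$ is realized; every interior point of such a geodesic lies in $X\setminus\overline{E}\subset\mathcal K$, and its endpoint on $\partial E$ lies in $\mathcal K$ by the definition \eqref{eq:defK} (taking $y=x$), so $G\Subset\mathcal K$. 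Hence the case $K>0$ of \eqref{eq:PerEhGr} gives
\begin{equation*}
\Per(E^h,\Sigma^h)\le\Per(E,\Gamma_{\Sigma})\,\cos\!\Big(\sqrt{\tfrac{K}{N-1}}\,h\Big)^{N-1}\le\Per(E,X)\,\cos\!\Big(\sqrt{\tfrac{K}{N-1}}\,h\Big)^{N-1}<\Per(E,X),
\end{equation*}
where the strict inequality uses $0<h<D/2$, so that $0<\cos(\sqrt{K/(N-1)}\,h)<1$. Since $E^h$ is open, its essential boundary is contained in its topological boundary $\partial E^h\subset\{\dist_{\overline{E}}=h\}=\Sigma^h$, so $\Per_{E^h}$ is concentrated on $\Sigma^h$ and $\Per(E^h,X)=\Per(E^h,\Sigma^h)<\Per(E,X)$.

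To conclude, I would check that $E^h$ is an admissible competitor. As $E\subset E^h$, we have $E\Delta E^h=E^h\setminus E$; if $x\in E^h\setminus E$, pick $y\in\overline{E}$ with $\dist(x,y)=\dist(x,\overline{E})<h$ and a minimizing geodesic from $x$ to $y$: since $x\notin E$ and $y\in\overline{E}$, this geodesic meets $\partial E$, whence $\dist(x,\partial E)<h<r$. Therefore $E^h\setminus E\subset B_h(\partial E)\subset B_r(\partial E)$, and $E^h$ is an admissible perturbation of $E$ with strictly smaller perimeter, contradicting the minimality of $E$. (Moreover $\haus^N(E^h\setminus E)>0$ by the normalization \eqref{eq:normsetsfin}, so $E^h$ is a genuinely different competitor, although this is not strictly needed.)

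The step I expect to demand the most care is the verification of the hypothesis $G\Subset\mathcal K$ of \autoref{prop:tubecomparison}, i.e.\ that the whole pencil of equidistant surfaces up to level $h$ sits over footpoints on $\partial E\cap\Omega$, so that the Laplacian comparison of \autoref{thm:meancurvminimal1} is available throughout; choosing $\Gamma$ to be the entire level set $\Sigma^h$ and $h$ small is what makes this transparent, and one uses crucially that the endpoints on $\partial E$ themselves belong to $\mathcal K$. A secondary technical point is selecting, via \autoref{thm:coarea}, a level $h$ for which $E^h$ has finite perimeter and for which $\partial E^h$ and $\partial E$ meet in an $\haus^{N-1}$-negligible set (so that the cut-and-paste identities used inside the proof of \autoref{prop:tubecomparison} apply); the remaining steps form a direct chain of inequalities.
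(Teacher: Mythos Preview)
Your proof is correct and follows essentially the same strategy as the paper: argue by contradiction, observe that $E$ is locally perimeter minimizing, apply \autoref{prop:tubecomparison} to get $\Per(E^h)<\Per(E)$, and note that $E^h$ is an admissible competitor for small $h$. The only cosmetic difference is that you invoke the cosine bound \eqref{eq:PerEhGr} while the paper uses the integral form \eqref{eq:PerEhPerE} (with $\ft_{K,N}<0$); both yield the strict inequality immediately.
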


\begin{proof}
Let us argue by contradiction. If a set of finite perimeter as in the statement exists, then it is locally perimeter minimizing according to \autoref{def:locper}. Hence it verifies the assumptions of \autoref{prop:tubecomparison}. Therefore, for any $h>0$,
\begin{equation*}
\Per(E^h)\le \Per(E) + \int_{E^h\setminus E}   \ft_{K,N}(\dist_{E}) \di\haus^N <\Per(E) \, .
\end{equation*}
To conclude it is sufficient to observe that $E^h\Delta E \subset B_r(\partial E)$ for any $h>0$ sufficiently small and we reach a contradiction.
\end{proof}

\subsection{Partial regularity of minimal boundaries away from sets of codimension three}\label{subsec:partreg}
Our goal in this subsection is to prove that minimal boundaries have regular blow-ups (and therefore are topologically regular) away from sets of ambient codimension three (assuming for simplicity that the ambient space $(X,\dist,\haus^N)$ is an $\RCD$ space without boundary). 
\medskip

 
\begin{definition}[Regular and singular sets on minimal boundaries]
Let $(X,\dist, \haus^{N})$ be an $\RCD(K,N)$ metric measure space and $E\subset X$ be locally perimeter minimizing inside a ball $B_{2}(x)\subset X$. Suppose also that $\partial X\cap B_2(x)=\emptyset$. The regular part $\mathcal{R}^E$ and the singular part $\mathcal{S}^E$ of $\partial E$ are defined as
\begin{equation*}
\mathcal{R}^E:=\{x\in\partial E\, :\, (\setR^N,\dist_{\mathrm{eucl}},\haus^N,0^{N},\{x_N<0\})\in \Tan_x(X,\dist,\haus^N,E)\} \, , 
\end{equation*}
\begin{equation*}
\mathcal{S}^E:=\partial E\setminus \mathcal{R}^E\, . 
\end{equation*}
\end{definition}

\begin{remark}
If $E\subset X$ is locally perimeter minimizing and $x\in\partial X$, then for any 
\begin{equation}
(Y,\dist_Y,\haus^N,F,y)\in \Tan_x(X,\dist,\haus^N,E)
\end{equation}
it holds that $F$ is an entire local perimeter minimizer in $Y$, as it follows from the stability \autoref{thm:closurecompactnesstheorem}.
\end{remark}

As a first regularity result, we establish topological regularity of the regular set. This is indeed a direct consequence of the $\varepsilon$-regularity  \autoref{thm:epsregularity}.

\begin{theorem}[Topological regularity of the regular set]\label{thm:RegSetTop}
Let $(X,\dist,\haus^N)$ be an $\RCD(K,N)$ metric measure space, assume that $B_{2}(x)\cap \partial X=\emptyset$ and let $E\subset X$ be a set of locally finite perimeter that is locally perimeter minimizing in $B_{2}(x)$.
Then, for every $\alpha\in (0,1)$ there exists a relatively open set $O_{\alpha}\subset \partial E \cap B_{1}(x)$ with $\mathcal{R}^E\subset O_{\alpha}$ such that $O_{\alpha}$ is $\alpha$-biH\"older homeomorphic to an open, smooth $(N-1)$-dimensional manifold.
\end{theorem}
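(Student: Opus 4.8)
The plan is to deduce the statement directly from the $\eps$-regularity Theorem~\ref{thm:epsregularity}, by producing around every point of $\mathcal{R}^E\cap B_1(x)$ a relatively open neighbourhood in $\partial E$ which is $C^\alpha$-biH\"older homeomorphic to an open subset of $\setR^{N-1}$, and then gluing these neighbourhoods. Throughout, ``$\mathcal R^E$'' is read as $\mathcal R^E\cap B_1(x)$, which is forced by the requirement $O_\alpha\subset\partial E\cap B_1(x)$.

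First I would fix $\alpha\in(0,1)$ and let $\delta=\delta(\alpha,N)>0$ be the threshold provided by the ``moreover'' part of Theorem~\ref{thm:epsregularity}. Given $x_0\in\mathcal{R}^E\cap B_1(x)$, by definition the flat configuration $(\setR^N,\dist_{\mathrm{eucl}},\haus^N,0^N,\{x_N<0\})$ is a tangent to $(X,\dist,\haus^N,E)$ at $x_0$; hence, by Remark~\ref{rm:tangentgoodimpliesgood} (which in turn relies on the compactness/stability Theorem~\ref{thm:closurecompactnesstheorem}, including the self-improvement of $L^1$-convergence of perimeter minimizers to Hausdorff convergence of their boundaries), for every $r_0>0$ there exists $0<r<r_0$ such that $E$ is $\delta r$-regular at $x_0$ in $B_r(x_0)$. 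Shrinking $r$ further we may additionally arrange $B_{4r}(x_0)\subset B_2(x)$ and $r^2\abs{K}<\delta$, so that, after rescaling the metric by $r^{-1}$, the ball of radius $2$ around $x_0$ lies in an $\RCD(-\delta,N)$ space in which $E$ is perimeter minimizing on the ball of radius $4$ and is $\delta$-regular at $x_0$ in the ball of radius $2$. Applying Theorem~\ref{thm:epsregularity} to this rescaled picture yields that $U_{x_0}:=\partial E\cap B_r(x_0)$ is $C^\alpha$-homeomorphic to the ball $B_1(0^{N-1})\subset\setR^{N-1}$; in particular $U_{x_0}$ is relatively open in $\partial E$ and is a topological $(N-1)$-manifold carrying a $C^\alpha$-biH\"older chart with values in $\setR^{N-1}$.

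Next I would set
\[
O_\alpha:=\Big(\bigcup_{x_0\in\mathcal{R}^E\cap B_1(x)}U_{x_0}\Big)\cap B_1(x)\, .
\]
Being a union of relatively open subsets of $\partial E$ intersected with the open set $B_1(x)$, $O_\alpha$ is relatively open in $\partial E\cap B_1(x)$, and it contains $\mathcal{R}^E\cap B_1(x)$ by construction. Every point of $O_\alpha$ has a neighbourhood obtained by intersecting it with some $U_{x_0}$, hence an open subset of a space $C^\alpha$-biH\"older homeomorphic to $B_1(0^{N-1})$, so it is $C^\alpha$-biH\"older homeomorphic to an open subset of $\setR^{N-1}$; the transition maps between two such charts are compositions of $C^\alpha$-biH\"older homeomorphisms, hence homeomorphisms. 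Thus $O_\alpha$ is a topological $(N-1)$-manifold equipped with a $C^\alpha$-biH\"older atlas into $\setR^{N-1}$, and exactly as in the proof of Theorem~\ref{thm:topRCDnc} for the ambient regular neighbourhood (see also the metric Reifenberg theorem of Cheeger--Colding, \cite[Appendix 1]{CheegerColding97}, and \cite{KapovitchMondino21}) such an atlas produces an open smooth $(N-1)$-dimensional manifold $M$ together with a $C^\alpha$-biH\"older homeomorphism $O_\alpha\to M$, which is the assertion.

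The main obstacle I expect is this last passage: going from the collection of local biH\"older charts to a single globally defined smooth model $M$ biH\"older homeomorphic to $O_\alpha$. One has to organize the Reifenberg charts into a locally finite cover with uniform biH\"older control on the overlaps, so that the standard smoothing of the topological structure applies and the resulting identification is biH\"older in both directions with respect to the metric on $\partial E$ induced from $X$. This is, however, not a new difficulty: it is handled in precisely the same way as for the regular neighbourhood of the ambient space in Theorem~\ref{thm:topRCDnc}, to which one can refer. The remaining points --- that the $\eps$-regularity hypotheses (the $L^1$-closeness of $E$ to the half-space and the Hausdorff-closeness of $\partial E$ to the hyperplane, together with the ambient GH-closeness to $B_2(0^N)$) are genuinely in force at the chosen small scales --- are exactly what Remark~\ref{rm:tangentgoodimpliesgood} and Theorem~\ref{thm:closurecompactnesstheorem} guarantee, and are routine.
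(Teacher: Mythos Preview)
Your proposal is correct and follows essentially the same approach as the paper, which does not give a separate proof but simply records that the statement ``is indeed a direct consequence of the $\varepsilon$-regularity Theorem~\ref{thm:epsregularity}.'' You have spelled out precisely that deduction: use Remark~\ref{rm:tangentgoodimpliesgood} at each $x_0\in\mathcal{R}^E\cap B_1(x)$ to find a small scale at which the $\delta$-regularity hypothesis of Theorem~\ref{thm:epsregularity} is in force, rescale, apply the theorem to obtain a local biH\"older chart, and then glue the charts via the metric Reifenberg machinery exactly as for the ambient regular set in Theorem~\ref{thm:topRCDnc}.
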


\begin{remark}
The $C^{0,\alpha}$ regularity of the manifold $O_{\alpha}$ containing the regular set matches the (currently known) regularity of the regular part $\mathcal{R}(X)$ of the ambient space $X$ (after Cheeger-Colding's metric Reifenberg Theorem  \cite[Appendix 1]{CheegerColding97}  and \cite{KapovitchMondino21}). Higher regularity of  $\mathcal{R}^E$ (e.g. contained in a Lipschitz manifold), would require first improving the structure theory of the ambient space.\end{remark}

The classical regularity result for perimeter minimizers in the Euclidean (or smooth Riemannian) setting is that they are smooth away from sets of ambient codimension $8$.

A key intermediate step is the fact that the blow-ups are flat Euclidean half-spaces away from sets of ambient codimension $8$, see \cite{Federer70,Giusti84}.\\ 
The examples that we have already discussed in this note show that this statement is false in the non smooth framework. Singular blow-ups already appear in ambient dimension $3$, see the discussion after \autoref{rm:densitygapEuclidean}.

As a first regularity result, below we prove that, if we restrict to regular ambient points (or we consider $\RCD(K,N)$ metric measure spaces $(X,\dist,\haus^N)$ such that the singular set is empty) then the picture matches with the classical one and we can prove that the codimension of the singular set of a perimeter minimizer is at least  $8$.

\begin{theorem}\label{thm:sharpdimreg}
Let $(X,\dist,\haus^N)$ be an $\RCD(K,N)$ metric measure space, let $\Omega\subset X$ be an open domain and let $E\subset X$ be a set of locally finite perimeter that is locally perimeter minimizing in $\Omega$. Then 
\begin{equation}\label{eq:dimbdreg}
\dim_H\left(\mathcal{S}^E\cap \mathcal{R}\cap \Omega\right)\le N-8\, .
\end{equation}
In particular: 
\begin{itemize}
\item[i)] if $\Omega\subset \mathcal{R}$, then
\begin{equation}
\dim_H\left(\mathcal{S}^E\cap \Omega\right)\le N-8\, ;
\end{equation}
\item[ii)] if $\Omega\subset \mathcal{R}$ and $N\le 7$, then $\mathcal{S}^E\cap\Omega=\emptyset$.
\end{itemize}
\end{theorem}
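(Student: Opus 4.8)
\textbf{Proof plan for \autoref{thm:sharpdimreg}.}

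The plan is to run the classical Federer dimension reduction argument, but carefully confined to the part of the minimal boundary that sits inside the regular set $\mathcal{R}$ of the ambient space. First I would set up the stratification. For a locally perimeter minimizing set $E$ and a boundary point $x\in \partial E\cap\mathcal{R}\cap\Omega$, every element of $\Tan_x(X,\dist,\haus^N,E)$ is of the form $(\setR^N,\dist_{\mathrm{eucl}},\haus^N,0^N,F)$ with $F$ an entire local perimeter minimizer in $\setR^N$, because the ambient tangent is Euclidean at regular points and local perimeter minimality passes to blow-ups by \autoref{thm:closurecompactnesstheorem}. At this stage the key new difficulty is that, unlike in the smooth/Euclidean case, we do not have an ambient monotonicity formula on $X$ to force such a blow-up $F$ to be a cone. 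However — and this is the decisive observation — once we have landed in $\setR^N$, the classical Euclidean monotonicity formula for $F$ is available, so $F$ \emph{is} an entire minimal cone in $\setR^N$. I would then define, for $x\in\partial E\cap\mathcal{R}$, the ``singular stratum along the boundary'' $\mathcal{S}^E_k$ as the set of $x$ for which no tangent cone $F$ splits off a factor $\setR^{k+1}$ (equivalently, the spine of every blow-up cone has dimension $\le k$); by the classification of Euclidean minimal cones (Federer, after Simons), the minimal cones in $\setR^m$ with $m\le 7$ are half-spaces, so a non-half-space minimal cone in $\setR^N$ forces $N\ge 8$ and the spine has dimension $\le N-8$. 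Hence $\mathcal{S}^E\cap\mathcal{R}\cap\Omega = \mathcal{S}^E_{N-8}\cap\mathcal{R}\cap\Omega$ up to identifying the relevant strata.

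Second, I would carry out the dimension reduction itself. The goal is $\dim_H(\mathcal{S}^E_k\cap\mathcal{R}\cap\Omega)\le k$ for each relevant $k$, which combined with the classification gives $\dim_H(\mathcal{S}^E\cap\mathcal{R}\cap\Omega)\le N-8$. The standard machinery requires: (a) compactness of rescalings of $E$ at a regular ambient point, which follows from the uniform perimeter density bounds of \autoref{cor:perest} together with \autoref{thm:closurecompactnesstheorem}; (b) upper semicontinuity of the density and the fact that the density at a point of $\mathcal{S}^E_k$ equals the density of a minimal cone strictly greater than $\Theta_{0,\mathbb{H}^N}=1$ (this uses the Euclidean monotonicity formula for the blow-up $F$ and the density gap \autoref{rm:densitygapEuclidean}); (c) the ``cone splitting'' / Almgren stratification lemma applied in the Euclidean blow-ups. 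Because all the cone-type structure lives in $\setR^N$ once we blow up at a regular point, steps (b) and (c) are literally the Euclidean statements, and only step (a) requires the $\RCD$ input. One must also check that $\mathcal{R}$ is itself ``large'' in the right sense: since $\mathcal{R}$ is relatively open and of full measure, and since the iterated blow-up at a point of $\mathcal{R}$ produces a point whose ambient tangent is again $\setR^N$, the dimension reduction stays inside the regular regime — concretely, I would observe that the set $\mathcal{R}$ is invariant under the relevant rescalings in the limit, so the Federer iteration never leaves it.

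Third, the two corollaries are immediate consequences. If $\Omega\subset\mathcal{R}$ then $\mathcal{S}^E\cap\mathcal{R}\cap\Omega=\mathcal{S}^E\cap\Omega$ and \eqref{eq:dimbdreg} gives $\dim_H(\mathcal{S}^E\cap\Omega)\le N-8$ directly; if in addition $N\le 7$ then $N-8<0$, so $\mathcal{S}^E\cap\Omega=\emptyset$, meaning every blow-up at every boundary point in $\Omega$ is a Euclidean half-space, which is exactly the claim. The main obstacle, as indicated above, is conceptual rather than computational: reconciling the absence of an ambient monotonicity formula on $X$ with the need for conical blow-ups. The resolution is to note that conicality is only needed \emph{after} passing to a Euclidean tangent — which is automatic at points of $\mathcal{R}$ — so the Euclidean monotonicity formula suffices and no $\RCD$-level monotonicity is required. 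A secondary technical point is to verify that the strata $\mathcal{S}^E_k\cap\mathcal{R}$ are Borel (or at least that the dimension estimate is unaffected), which follows from the semicontinuity properties of tangent cones and densities exactly as in the classical theory; I would state this and refer to the standard Euclidean references (e.g.\ \cite{Federer70,Giusti84}) for the parts of the argument that are verbatim the Euclidean ones.
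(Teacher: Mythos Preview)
Your plan has a genuine gap at its central step. You write: ``once we have landed in $\setR^N$, the classical Euclidean monotonicity formula for $F$ is available, so $F$ \emph{is} an entire minimal cone in $\setR^N$.'' This does not follow. The Euclidean monotonicity formula, applied to the entire minimizer $F\subset\setR^N$, tells you that $r\mapsto \Per(F,B_r(0))/r^{N-1}$ is monotone and hence that blow-ups \emph{of $F$} are cones; it does \emph{not} tell you that $F$ itself is a cone. For that you would need monotonicity of $r\mapsto \Per(E,B_r(x))/r^{N-1}$ on the original space $X$, so that the blow-up limit inherits constant density ratio. But no such ambient monotonicity formula is known on $\RCD(K,N)$ spaces, even at regular points: $x\in\mathcal{R}$ only guarantees that balls around $x$ are Gromov--Hausdorff close to Euclidean balls (biH\"older by Reifenberg), not isometric to them, so the Euclidean computation cannot be transplanted. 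The paper says this explicitly (see the discussion preceding \autoref{thm:sharpdimsing}). For the same reason your step (b) is ill-posed: without ambient monotonicity the density $\lim_{r\to 0}\Per(E,B_r(x))/(\omega_{N-1}r^{N-1})$ need not exist, so you cannot speak of ``the density at a point of $\mathcal{S}^E_k$'' nor of its upper semicontinuity.

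The paper's proof avoids this problem entirely by \emph{not} trying to run the stratification on $X$. Instead it argues by contradiction: if $\haus^{N-8+\eta}(\mathcal{S}^E\cap\mathcal{R}\cap\Omega)>0$, pick a density point $x$ for $\haus^{N-8+\eta}_\infty$, blow up once to get an entire minimizer $E_\infty\subset\setR^N$ (not assumed to be a cone), and then show that limits of points in $\mathcal{S}^{E_i}\cap\mathcal{R}(X_i)$ land in $\mathcal{S}^{E_\infty}$. This ``singular points converge to singular points'' step is the crux, and it is proved using the $\eps$-regularity \autoref{thm:epsregularity} together with the Euclidean density gap of \autoref{rm:densitygapEuclidean}, \emph{not} using conicality. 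Upper semicontinuity of the pre-Hausdorff measure then gives $\haus^{N-8+\eta}_\infty(\mathcal{S}^{E_\infty}\cap B_1(0^N))>0$, which contradicts the classical Euclidean codimension-$8$ estimate applied to $E_\infty$ as a black box. So the Federer dimension reduction is invoked only \emph{inside} $\setR^N$, where it is already known, rather than being re-run on $X$.
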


\begin{proof}
With the tools that we have developed so far, the proof reduces to a variant of the classical dimension reduction technique to bound the dimension of singular sets. See \cite{Federer69,Federer70} and \cite[Chapter 11]{Giusti84} for the case of perimeter minimizers in the Euclidean setting, and \cite{DePhilippisGigli18} for the dimension bounds for the singular strata on $\RCD(K,N)$ spaces $(X,\dist,\haus^N)$.
\medskip

We argue by contradiction. If \eqref{eq:dimbdreg} is not satisfied, then we construct a (local) perimeter minimizer inside $\setR^N$ whose singular set has codimension less than $8$. This will lead to a contradiction, since the singular set of a Euclidean (local) perimeter minimizer has codimension at least $8$, by the classical regularity theory.
\medskip

Let us suppose that \eqref{eq:dimbdreg} is not verified. Then there exists $\eta>0$ such that
\begin{equation}
\mathcal{H}^{N-8+\eta}\left(\mathcal{S}^E\cap\mathcal{R}\cap \Omega\right)>0\, .
\end{equation}

By \cite[Lemma 3.6]{DePhilippisGigli18} (see also \cite[Lemma 11.3]{Giusti84} and \cite[Theorem 2.10.17]{Federer69}), there exists $x\in \mathcal{S}^E\cap \mathcal{R}\cap \Omega$ such that 
\begin{equation}\label{eq:densest}
\limsup_{r\to 0}\frac{\haus^{N-8+\eta}_{\infty}\left(B_r(x)\cap \left(\mathcal{S}^E\cap \mathcal{R}\cap \Omega\right)\right)}{r^{N-8+\eta}}\ge 2^{N-8+\eta}\omega_{N-8+\eta}\, ,
\end{equation}
where we denoted by $\haus^{N-8+\eta}_{\infty}$ the pre-Hausdorff measure of dimension $N-8+\eta$.
\medskip

Now we claim that for any sequence $r_i\downarrow 0$ there exists a subsequence, that we do not relabel, such that $E\subset (X,\dist/r_i,\haus^N/r_{i}^N,x)$ converge in the $L^1_{\loc}$ sense as sets of (locally) finite perimeter to an entire (local) perimeter minimizer $E_{\infty}\subset \left(\setR^N,\dist_{eucl},\haus^N,0^N\right)$. Here the compactness of the sequence follows from \cite[Corollary 3.4]{AmbrosioBrueSemola19} together with the uniform perimeter bounds for perimeter minimizers inside the ball, while the conclusion that $E$ is an entire (local) perimeter minimizer follows from \autoref{thm:closurecompactnesstheorem}.

By scaling, denoting by $E_i=E\subset (X,\dist/r_i,\haus^N/r_{i}^N,x)$ the set of finite perimeter $E$ considered inside the rescaled metric measure space, we can find a sequence $r_i\downarrow 0$ such that $E_i$ converge in $L^1_{\loc}$ to an entire (locally) perimeter minimizer $E_{\infty}$ as we discussed above and moreover
\begin{equation}\label{eq:limhinfty}
\lim_{i\to\infty}\haus^{N-8+\eta}_{\infty}\left(\mathcal{S}^{E_i}\cap B_1^i(x)\cap \mathcal{R}(X_i)\right) \ge 2^{N-8+\eta}\omega_{N-8+\eta}\, >\, 0.
\end{equation}
 
We claim that \eqref{eq:limhinfty} forces
\begin{equation}\label{eq:boundpre}
\haus^{N-8+\eta}_{\infty}\left(\mathcal{S}^{E_{\infty}}\cap B_1(0^N)\right) \, >\, 0\, .
\end{equation}
In order to check \eqref{eq:boundpre} it is sufficient to prove that any limit point $x_{\infty}$ of a sequence $(x_i)$ such that $x_i\in \mathcal{S}^{E_i}\cap \mathcal{R}(X_i)\cap B_1^i(x)$ belongs to $\mathcal{S}^{E_{\infty}}\cap B_1(0^N)$. Once this statement has been established, \eqref{eq:boundpre} will follow from \eqref{eq:limhinfty} and the upper semicontinuity of the pre-Hausdorff measure $\haus^{N-8+\eta}_{\infty}$ under GH convergence, see \cite[Equation (3.36)]{DePhilippisGigli18}.
\medskip

Let us pass to the verification of the claim. Let us consider any $x_{\infty}$ as above. If we suppose by contradiction that it is a regular point of $E_{\infty}$, then it follows from the $\eps$-regularity \autoref{thm:epsregularity} that for any $\eps>0$ there exists $i\in\setN$ such that, for any $j\ge i$, $E_j\cap B_1^j(x)$ is $\eps$-regular inside $B_1^j(x)$. In particular, if $\eps>0$ is sufficiently small, then by the Euclidean density gap \autoref{rm:densitygapEuclidean}, all the blow-ups of $E_j$ inside $B_1^j(x)\cap \mathcal{R}$ are flat half-spaces (see also the proof of \autoref{cor:smooth}). This leads to a contradiction since we are assuming that $x_{\infty}$ is a limit of singular points $x_k\in \mathcal{S}^{E_j}\cap B_1^j(x)\cap\mathcal{R}(X_j)$.
\medskip

Given \eqref{eq:boundpre} we obtain a contradiction, since $E_{\infty}$ is an entire   Euclidean (local) perimeter minimizer and the classical dimension estimates for the singular sets of perimeter minimizers give
\begin{equation}
\dim_H\left(\mathcal{S}^{E_\infty}\cap B_1(0^N)\right)\le N-8\, .
\end{equation}

\end{proof}

\begin{remark}
One of the key steps in the proof above is the fact that limits of singular points of perimeter minimizers where the blow-up of the ambient is Euclidean are singular points. In the smooth setting the second assumption is always verified, but in the non smooth setting this is a non trivial requirement and the example presented in \autoref{rm:Morgan} shows that it is necessary in order for the statement to hold. In particular, without further assumptions it is not true that limits of singular boundary points of a perimeter minimizer are singular boundary points of the limit.
\end{remark}

We aim at obtaining a sharp dimension bound for the singular set of local perimeter minimizers $\dim_H(\mathcal{S}^E)\le N-3$ within our framework. To this aim, it will be necessary to consider also the intersection of the minimal boundary with the ambient singular set $\partial E\cap\mathcal{S}(X)$.

In order to obtain the sharp dimension bound for the singular set in this setting, there is a key additional difficulty with respect to the classical case. Indeed it is not clear whether a monotonicity formula holds in this generality, therefore we do not know if any blow-up of a local perimeter minimizer is a cone. In order to circumvent this difficulty, following the classical pattern of the dimension reduction, we will need first to iterate blow-ups to reduce to the situation where the ambient is a cone of the form $\setR^{N-2}\times C(\mathbb{S}^1_r)$, where $\mathbb{S}^1_r$ is a circle, and then to perform the dimension reduction again in this simplified setting (where a monotonicity formula holds). 
\medskip

We will rely on some classical tools of Geometric Measure Theory. The first one is a monotonicity formula for perimeter minimizers inside cones, whose proof can be obtained as in the classical case, see \cite[Theorem 9.3]{Morgan00}, \cite[Theorem 5.4.3]{Federer69} and \cite{Morgan02}. 

\begin{theorem}\label{thm:monocone}
Let $(M,g)$ be a smooth Riemannian manifold of dimension $k\ge 1$ and with $\Ric\ge k-1$. Let $(X,\dist,\haus^N):=C(M)\times \setR^{N-k-1}$ be the product of the metric measure cone $C(M)$ of tip $\{o\}$, with an $(N-k-1)$-dimensional Euclidean factor.  Let $p\in \{o\}\times \setR^{N-k-1}$ and let $E\subset X$ be perimeter minimizing in $B_2(p)\subset X$. Then the ratio 
\begin{equation}
(0,1)\ni r\mapsto \frac{\Per_E(B_r(p))}{r^{N-1}} \quad \text{is increasing.}
\end{equation} 
Moreover, if the perimeter ratio is constant in $(0,2)$ then $E$ is a cone with vertex $p$ inside $B_1(p)$.
\end{theorem}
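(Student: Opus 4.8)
\textbf{Proof strategy for the monotonicity formula on cones (\autoref{thm:monocone}).}

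The plan is to mimic the classical Euclidean/Riemannian monotonicity argument, taking advantage of the fact that the ambient space $X = C(M)\times\setR^{N-k-1}$ carries a dilation structure centered at the cone tip factor $\{o\}\times\setR^{N-k-1}$. First I would fix $p\in\{o\}\times\setR^{N-k-1}$ and introduce the position-type vector field $Z$ on $X$ which, on the cone factor, is the radial field $\rho\,\partial_\rho$ (with $\rho$ the cone coordinate) and, on the Euclidean factor, is the usual translated position field centered at $p$. The key structural facts are: (i) $\rho$ is the distance from the tip, so $\abs{\nabla\rho}=1$ away from $o$; (ii) the divergence of $Z$ is $\haus^N$-a.e. equal to $N$ (this uses the hypothesis $\Ric\ge k-1$ on $M$ only in the sense that it guarantees $C(M)\times\setR^{N-k-1}$ is a genuine $\RCD(0,N)$ cone, hence the metric measure cone structure with $\haus^N = c\,\rho^{k}\,d\rho\,d\haus^{k-1}_M\,d\haus^{N-k-1}$ is available and the divergence computation goes through); (iii) the flow of $Z$ is, up to reparametrization, exactly the family of dilations $x\mapsto \lambda\cdot x$ fixing $p$. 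I would then apply the Gauss--Green integration by parts formula for essentially bounded divergence-measure vector fields (\autoref{thm:GaussGreenRCDBCM}) to the vector field $\chi\,Z$ (with $\chi$ a suitable cutoff supported near $\overline{B_r(p)}$ and identically $1$ on $B_r(p)$) and the set of finite perimeter $E$ on the domain $B_r(p)$, together with the fact that $E$ is perimeter minimizing there.

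The second step is the comparison-competitor computation that turns the minimality of $E$ into a differential inequality for $\Per_E(B_r(p))/r^{N-1}$. Following the classical pattern (cf.\ \cite[Theorem 9.3]{Morgan00}, \cite[Theorem 5.4.3]{Federer69}), for a.e.\ $r\in(0,1)$ one compares $E$ inside $B_r(p)$ with the "cone competitor" obtained by coning $\partial E\cap\partial B_r(p)$ to the vertex $p$; the cone structure of the ambient is precisely what makes this competitor admissible and allows one to estimate its perimeter by $\frac{r}{N-1}\,\Per_E(\partial B_r(p))$ (the $\tfrac1{N-1}$ being the scaling exponent of codimension-one area under dilation in an $N$-dimensional cone). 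Combining this with minimality and the coarea-type identity $\frac{d}{dr}\Per_E(B_r(p)) \ge \Per_E(\partial B_r(p))$ for a.e.\ $r$ (which itself follows from a standard slicing argument for sets of finite perimeter, applicable here since the perimeter measure is asymptotically doubling and the distance function $\dist(p,\cdot)$ has $\abs{\nabla\dist(p,\cdot)}=1$ a.e.\ near the tip), one obtains that $\frac{d}{dr}\big(r^{-(N-1)}\Per_E(B_r(p))\big)\ge 0$ for a.e.\ $r\in(0,1)$, hence the ratio is nondecreasing. For the rigidity part, one observes that if the ratio is constant on $(0,2)$ then equality holds throughout in the competitor comparison, which forces $Z$ to be tangent to $\partial E$ in the appropriate weak sense, i.e.\ $(Z\cdot\nu_E)_{\mathrm{int}}=0$ $\Per_E$-a.e.; since the flow of $Z$ is the dilation semigroup, this means $E$ is invariant under dilations centered at $p$, i.e.\ $E$ is a cone with vertex $p$ on $B_1(p)$.

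The main obstacle I anticipate is making the vector-field computation $\div Z = N$ and the Gauss--Green application fully rigorous in the non-smooth setting: $Z$ is only Lipschitz (and its Euclidean component is smooth, but the cone-radial component is genuinely singular at the tip), so one must verify that $\chi Z\in\mathcal{DM}^\infty(X)$ with the expected divergence, that the tip contributes no singular mass to $\div Z$ when $N\ge 2$ (a computation entirely analogous to the one in \autoref{prop:Gdistance} and \autoref{prop:fullrepr} for the Green/distance functions, using the cone volume growth), and that the interior and exterior normal traces $(Z\cdot\nu_E)_{\mathrm{int}},(Z\cdot\nu_E)_{\mathrm{ext}}$ behave as in the smooth theory. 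A secondary technical point is justifying the slicing inequality $\frac{d}{dr}\Per_E(B_r(p))\ge\Per_E(\partial B_r(p))$ and the admissibility/perimeter estimate for the cone competitor in this framework; both are standard consequences of the coarea formula (\autoref{thm:coarea}) together with the cone structure, and I would only sketch them, referring to the cited classical sources for the computations that transfer verbatim once the metric measure cone structure is in place. Since all of these are routine adaptations of well-documented arguments and the paper explicitly says "whose proof can be obtained as in the classical case," the write-up would be brief, emphasizing the two or three points where the cone hypothesis and the $\RCD$ Gauss--Green machinery are used.
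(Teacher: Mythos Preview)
Your proposal is correct and follows exactly the classical route the paper defers to: the paper does not give a proof of \autoref{thm:monocone} at all, but simply states that it ``can be obtained as in the classical case'' with references to \cite[Theorem 9.3]{Morgan00}, \cite[Theorem 5.4.3]{Federer69} and \cite{Morgan02}, and your outline (dilation/position vector field, cone competitor, slicing inequality, rigidity from tangentiality of $Z$) is precisely the standard argument in those references, adapted to the product-cone ambient.
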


The second tool is an elementary non existence result for entire local perimeter minimizing cones passing through the tip inside non flat two dimensional cones, whose proof is well known, see \cite{Morgan02} and references therein.

\begin{proposition}\label{thm:2dimrigidity}
Let $N\ge 2$ be a given natural number. Let $0< r\le 1$ and let $\mathcal{C}$ be the metric measure cone $C(\mathbb{S}^1_r)\times\setR^{N-2}$ with canonical structure. Let $F:=C(I)\times \setR^{N-2}\subset \mathcal{C}$, where $I\subset \mathbb{S}^1_r$ is a set of finite perimeter. Then $F$ is a local perimeter minimizer if and only if $r=1$ (i.e. $\mathcal{C}=\setR^N$) and $I$ is the half-circle $[0,\pi]\subset \mathbb{S}^1_1$, up to isometry (i.e. $F\subset \setR^N$ is a half-space).
\end{proposition}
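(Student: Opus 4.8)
The plan is to treat the ``if'' direction directly and to reduce the ``only if'' direction to an elementary one–dimensional comparison near the vertex of the cone. Throughout one may assume $F$ is non-trivial, i.e. $0<\haus^1(I)<\haus^1(\mathbb{S}^1_r)$ (equivalently $\partial F\neq\emptyset$); in the applications of \autoref{subsec:partreg}, $F$ arises as a blow-up of a minimal boundary and is automatically non-trivial. For the ``if'' direction: if $r=1$ then $C(\mathbb{S}^1_1)$ is isometric to $\setR^2$, and when $I$ is an arc of angular length $\pi$ the set $C(I)$ is a half-plane through the origin, so $F=C(I)\times\setR^{N-2}$ is a half-space of $\setR^N$. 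Half-spaces are (entire) perimeter minimizers, e.g. by \autoref{lemma:splitminimal} applied with $Y=\setR^{N-1}$, or by the classical Euclidean theory.

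For the ``only if'' direction, suppose $F=C(I)\times\setR^{N-2}$ is a local perimeter minimizer in $\mathcal{C}=C(\mathbb{S}^1_r)\times\setR^{N-2}$. First I would reduce to $N=2$: by a standard slicing argument along the Euclidean factor (cutting off in the $\setR^{N-2}$ directions and letting the cut-off scale tend to infinity, exploiting the homogeneity of the product, exactly as in the Euclidean dimension-reduction argument, see \cite{Giusti84}), the local minimality of $F$ forces $C(I)$ to be a local perimeter minimizer in the two-dimensional flat cone $C(\mathbb{S}^1_r)$, whose cone angle is $\alpha:=2\pi r\in(0,2\pi]$ and whose only non-smooth point is the vertex $o$. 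Since $I$ has finite perimeter inside the one-dimensional space $\mathbb{S}^1_r$, up to an $\haus^1$-negligible set it is a finite disjoint union of $m\ge 1$ arcs $J_1,\dots,J_m$, so $C(I)$ is a finite union of sectors whose boundary, away from $o$, consists of $2m$ radial geodesics; in particular $\Per(C(I),B_\varepsilon(o))=2m\varepsilon$.

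The core of the argument is then a comparison in a small ball $B_\varepsilon(o)$. Let $P_j,Q_j\in\partial B_{\varepsilon/2}(o)$ be the two endpoints of the $j$-th sector at radius $\varepsilon/2$. I would use the elementary fact that in the flat cone of angle $\alpha\le 2\pi$ one has $\dist(P_j,Q_j)\le\varepsilon$, with equality if and only if $\alpha=2\pi$ and $J_j$ subtends angle exactly $\pi$ (the only configuration in which the shortest path from $P_j$ to $Q_j$ in $\overline{B_{\varepsilon/2}(o)}$ must pass through $o$; otherwise at least one of the two circular arcs determined by $P_j,Q_j$ subtends angle $<\pi$ and the corresponding straight chord is shorter). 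Replacing, inside $B_{\varepsilon/2}(o)$, each sector by the region it cuts off with the geodesic chord $P_jQ_j$, while keeping $C(I)$ unchanged on the annulus $B_\varepsilon(o)\setminus B_{\varepsilon/2}(o)$ (so the competitor differs from $C(I)$ only inside $B_{\varepsilon/2}(o)\Subset B_\varepsilon(o)$), yields a set of finite perimeter with $\Per(\cdot,B_\varepsilon(o))=m\varepsilon+\sum_j\dist(P_j,Q_j)\le 2m\varepsilon$, and strictly less unless $\alpha=2\pi$ and every $J_j$ subtends angle $\pi$. By local minimality equality must hold; but two or more disjoint arcs each subtending $\pi$ would exhaust $\mathbb{S}^1_r$, contradicting non-triviality, so $m=1$, $\alpha=2\pi$ (i.e. $r=1$) and $I$ is a single arc of angular length $\pi$, a half-circle.

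The step I expect to be the main obstacle is the reduction to $N=2$: making the slicing/cut-off argument rigorous requires a careful scale-by-scale estimate for the ``caps'' introduced by the Euclidean cut-off, as in the classical setting. The remaining ingredients — the description of finite-perimeter subsets of $\mathbb{S}^1_r$ as finite unions of arcs, the structure of geodesics in a flat cone, and the identification of the minimal way to fill in a prescribed boundary trace on $\partial B_\varepsilon(o)$ — are elementary but should be stated with some care. As this material is entirely classical, in the final write-up it would be reasonable to present only this outline and refer to \cite{Morgan02} and the references therein, as the statement already indicates.
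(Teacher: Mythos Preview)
Your proposal is correct and follows the same route as the paper, which does not give a self-contained proof but simply refers to \cite{Morgan02} for the two-dimensional case and to (a slight modification of) \cite[Lemma~28.13]{Maggi12} for the reduction from $N\ge 3$ to $N=2$. Your explicit chord--shortcutting construction near the vertex is precisely the elementary argument behind Morgan's result; the only point to tidy up is that when some arc $J_j$ has length $>\pi$ the shortest $P_jQ_j$ geodesic runs through the complement, so it is cleanest to modify \emph{one} sector (or pass to $I^c$) rather than all of them simultaneously, which avoids any overlap issue.
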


Notice that to pass from the case $N=2$ treated in \cite{Morgan02} to the case of $N\ge 3$ it is sufficient to rely on a slight modification of \cite[Lemma 28.13]{Maggi12} to drop the dimension.
\medskip


\begin{proposition}\label{prop:regincones}
Let $N\ge 2$ be a given natural number. Let $0< r\le 1$ and let $\mathcal{C}$ be the metric measure cone $\setR^{N-2}\times C(\mathbb{S}^1_r)$ with canonical structure and set of tips $\setR^{N-2}\times\{o\}$. Let $G\subset \mathcal{C}$ be any entire local perimeter minimizer. Then
\begin{equation}\label{eq:hausdimestcon}
\dim_H\left(\mathcal{S}^G\right)\le N-3\, . 
\end{equation}
\end{proposition}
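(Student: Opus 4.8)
The plan is to run a dimension-reduction argument tailored to the special geometry of $\mathcal{C} = \setR^{N-2}\times C(\mathbb{S}^1_r)$, exploiting that here (unlike in the general ambient space) the monotonicity formula of \autoref{thm:monocone} is available. First I would argue by contradiction: suppose $\dim_H(\mathcal{S}^G) > N-3$, so there exists $\eta>0$ with $\haus^{N-3+\eta}(\mathcal{S}^G)>0$. As in the proof of \autoref{thm:sharpdimreg}, invoke the density/upper-semicontinuity lemma \cite[Lemma 3.6]{DePhilippisGigli18} to find a point $x\in\mathcal{S}^G$ at which the pre-Hausdorff measure $\haus^{N-3+\eta}_\infty$ of the singular set concentrates, i.e. $\limsup_{\rho\to 0}\rho^{-(N-3+\eta)}\haus^{N-3+\eta}_\infty(B_\rho(x)\cap\mathcal{S}^G)>0$. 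Two cases arise according to whether $x$ lies on the set of tips $\setR^{N-2}\times\{o\}$ or on $\mathcal{R}(\mathcal{C})$; in the latter case the local picture around $x$ is Euclidean and the conclusion already follows from \autoref{thm:sharpdimreg} applied with $\Omega\subset\mathcal{R}$, together with the fact that $N-8\le N-3$. So the substantive case is when the concentration point $x$ sits on the singular axis $\setR^{N-2}\times\{o\}$.

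In that case, blow up $G$ at $x$ along a sequence $\rho_i\downarrow 0$. Since $\mathcal{C}$ is a metric cone, all rescalings $(\mathcal{C},\dist/\rho_i,\haus^N/\rho_i^N,x)$ are isometric to $\mathcal{C}$ itself (cone structure plus the Euclidean factor), and by \autoref{thm:closurecompactnesstheorem} the sets $G$ converge in $L^1_{\loc}$ to an entire local perimeter minimizer $G_\infty\subset\mathcal{C}$; by \autoref{thm:monocone} (the vertex $x$ lies on $\setR^{N-2}\times\{o\}$) and since $x$ is on the axis, the constancy-of-ratio case forces $G_\infty$ to be a cone with vertex $x$, hence of the form $G_\infty = \setR^{N-2}\times C(I)$ possibly after noting that the extra $\setR^{N-2-k}$ directions split off — more precisely one iterates blow-ups and splitting to reduce $G_\infty$ to a product $C(I)\times\setR^{N-2}$ where $I\subset\mathbb{S}^1_r$ is a set of finite perimeter in the circle. (Here I would use a standard splitting argument: whenever a tangent cone is invariant under translations in a Euclidean direction, that direction can be factored out, reducing the effective dimension of the singular part.) Then \autoref{thm:2dimrigidity} applies: $C(I)\times\setR^{N-2}$ is a local perimeter minimizer iff $r=1$ and $I$ is a half-circle, i.e. iff $G_\infty$ is a flat Euclidean half-space. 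In particular the vertex $x$ of $G_\infty$ would be a \emph{regular} point of $G_\infty$.

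Now the contradiction is obtained exactly as in \autoref{thm:sharpdimreg}: limit points of singular points $x_i\in\mathcal{S}^{G_i}$ with ambient tangent Euclidean are singular points of $G_\infty$ (this uses the $\eps$-regularity \autoref{thm:epsregularity} together with the Euclidean density gap \autoref{rm:densitygapEuclidean}, since if $x_\infty$ were regular for $G_\infty$ then $G_i$ would be $\eps$-regular near $x_\infty$ for large $i$, forcing all its blow-ups at nearby Euclidean ambient points to be half-spaces, contradicting singularity of the $x_i$). Combined with the upper semicontinuity of $\haus^{N-3+\eta}_\infty$ under GH-convergence \cite[Equation (3.36)]{DePhilippisGigli18}, the concentration estimate passes to the limit and gives $\haus^{N-3+\eta}_\infty(\mathcal{S}^{G_\infty}\cap B_1(x))>0$. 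But we just showed $G_\infty$ is a half-space, whose singular set is empty, a contradiction. This proves \eqref{eq:hausdimestcon}.

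\textbf{Main obstacle.} The delicate point is making rigorous the reduction of the tangent cone $G_\infty\subset\mathcal{C}$ to the product form $C(I)\times\setR^{N-2}$: one must iterate blow-ups and peel off Euclidean directions while keeping track that the concentration of the pre-Hausdorff measure of the singular set survives each blow-up step (so that the "reduced" singular set still has positive $\haus^{N-3+\eta}_\infty$). This is precisely the step where the absence of a monotonicity formula in the \emph{general} ambient space would have been fatal; here it is rescued because inside $\mathcal{C}$ we do have \autoref{thm:monocone}, so blow-ups at the concentration point are genuinely conical. A secondary technical nuisance is the standard dimension-drop argument (a counterpart of \cite[Lemma 28.13]{Maggi12}) needed to pass the rigidity \autoref{thm:2dimrigidity} through the Euclidean factors, but this is routine.
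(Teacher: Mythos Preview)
Your overall architecture matches the paper's: reduce to $r<1$, use the monotonicity formula \autoref{thm:monocone} to guarantee blow-ups at axis points are cones, run dimension reduction until the iterated blow-up splits $\setR^{N-2}$, and invoke \autoref{thm:2dimrigidity}. However, there are two genuine confusions in your endgame.

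First, your final contradiction is stated backwards. \autoref{thm:2dimrigidity} says that for $r<1$ there is \emph{no} local perimeter minimizer of the form $\setR^{N-2}\times C(I)$ in $\mathcal{C}$. So once the iteration produces such an object (which \emph{is} a minimizer, being a blow-up limit of one), you have an immediate contradiction; this is exactly where the paper stops. You cannot conclude ``$G_\infty$ is a half-space'' --- in $\mathcal{C}$ with $r<1$ there simply is no half-space --- and your paragraph passing the concentration estimate to $\mathcal{S}^{G_\infty}$ and then comparing with the empty singular set of a half-space is both unnecessary and ill-posed.

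Second, and more substantively, you track $\mathcal{S}^G$ through the blow-ups and appeal to the argument of \autoref{thm:sharpdimreg} (``limits of singular points with ambient tangent Euclidean are singular''). But that argument is tailored to \emph{ambient-regular} points; here your density point $x$ lies on the axis, and by \autoref{thm:sharpdimreg} itself $\dim_H(\mathcal{S}^G\cap\mathcal{R}(\mathcal{C}))\le N-8$, so the mass of $\mathcal{S}^G$ above dimension $N-3$ must sit on $\partial G\cap\mathcal{S}(\mathcal{C})$. The paper therefore tracks $\partial G\cap\mathcal{S}(\mathcal{C})$ rather than $\mathcal{S}^G$: this set passes to blow-up limits by mere Hausdorff convergence of boundaries (\autoref{thm:closurecompactnesstheorem}(iii)) combined with the scale-invariance of the axis, with no $\eps$-regularity or density-gap argument needed. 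Once you make this switch, the iteration step becomes clean: at each stage the current cone $G^j=\setR^k\times C(B^j)$ has $\partial G^j\cap\mathcal{S}(\mathcal{C})$ of Hausdorff dimension $>N-3\ge k$, so there is a concentration point on the axis but off the vertex set $\setR^k\times\{o\}$, and blowing up there gains one more Euclidean factor via the standard mechanism of \cite[Lemmas 28.12--28.13]{Maggi12}.
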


\begin{proof}
We argue via dimension reduction, reducing to the situation where \autoref{thm:2dimrigidity} can be applied.\\ Let us suppose without loss of generality that $r<1$ (i.e. the cone is singular). If $r=1$ the statement follows from the classical Euclidean regularity theory.
\medskip

\textbf{Step 1.} We claim that any  blow-up of $G\subset \mathcal{C}$ at a point $x\in\mathcal{C}$ is either a minimal cone inside $\mathcal{C}$ if $x\in \setR^{N-2}\times \{o\}$ is an ambient singular point, or a minimal cone in $\setR^N$ if $x\in\mathcal{C}\setminus\left( \setR^{N-2}\times \{o\}\right)$ is an ambient regular point.\\
In order to check this statement it is sufficient to observe that the monotonicity formula
\begin{equation}
r\mapsto \frac{\Per_G(B_r(x))}{r^{N-1}}\, \quad\text{is non decreasing on $0<r<r_x$}
\end{equation}
holds for any $x\in\partial G$. Indeed, if $x\in \setR^{N-2}\times \{o\}$ is a vertex, this follows from \autoref{thm:monocone} (and we can take $r_x=\infty$ actually). If $x$ is a regular point of $\mathcal{C}$, the monotonicity formula follows from the fact that $\mathcal{C}$ is isometric to a (flat) Euclidean ball in a neighbourhood of $x$.\\
The fact that blow-ups are always cones follows then from a classical argument, thanks to the uniform perimeter density bound \eqref{cor:perest} and the rigidity in the monotonicity formula on $\mathcal{C}$ and $\setR^N$.
\medskip

\textbf{Step 2.} Let us assume by contradiction that \eqref{eq:hausdimestcon} fails. Then, arguing as in the proof of \autoref{thm:sharpdimreg} (see in particular \eqref{eq:densest}) we can find $\eta>0$ such that $\haus^{N-3+\eta}(\partial G\cap\mathcal{S}(\mathcal{C}))>0$ (notice that $\haus^{N-3+\eta}(\mathcal{S}^G\cap\mathcal{R}(\mathcal{C}))=0$, by \autoref{thm:sharpdimreg}). Therefore, there exist $x\in \partial G\cap\mathcal{S}(\mathcal{C})=\partial G\cap \setR^{N-2}\times \{o\}$ and a sequence $r_i\downarrow 0$ such that 
\begin{equation}\label{eq:lowdbsPre}
\limsup_{i\to \infty}\frac{\haus^{N-3+\eta}_{\infty}\left(\partial G\cap\mathcal{S}(\mathcal{C})\cap B_{r_i}(x)\right)}{r_i^{N-3+\eta}}>0\, .
\end{equation}  
By Step 1, we can find a subsequence of $(r_i)$, that we do not relabel, such that \eqref{eq:lowdbsPre} holds and the blow-up of $G$ along the sequence $r_i$ is an entire local perimeter minimizing cone $G^1$ inside a metric cone $\mathcal{C}$ (which is the blow-up of $\mathcal{C}$ at any point $x\in\mathcal{S}(\mathcal{C})$) with tip $o$. Moreover, it is easily seen that any limit of points $x_i\in \partial G\cap \mathcal{S}(\mathcal{C})\cap B_{r_i}(x)$ along this converging sequence belongs to $\partial G^1\cap \mathcal{S}(\mathcal{C})\cap B_1(o)$. Hence, the upper semicontinuity of the pre-Hausdorff measure implies 
\begin{equation}\label{eq:mb}
\haus^{N-3+\eta}_{\infty}\left(\partial G^1\cap \mathcal{S}(\mathcal{C})\cap B_1(o)\right)\, >\, 0\, ,
\end{equation}
that yields in turn 
\begin{equation}\label{eq:hausdimbound}
\haus^{N-3+\eta}\left(\partial G^1\cap \mathcal{S}(\mathcal{C})\cap B_1(o)\right)\, >\, 0\, .
\end{equation}
Let us write $G^1=\setR^k\times C(B^1)$, where $C(B^1)\subset \setR^{N-2-k}\times C(\mathbb{S}^1_r)$ is an entire local perimeter minimizing cone.\\ 
We claim that, after iterating a finite number of times the construction above, it is possible to take $k=N-2$. Indeed, if we suppose that $k\le N-3$, then we obtain $\haus^{N-3+\eta}\left(\left(\partial G^1\setminus \setR^k\times \{o\}\right)\cap\mathcal{S}(\mathcal{C})\right)>0$, by \eqref{eq:hausdimbound}. 

In particular, there exist $z\in \left(\partial G^1\cap\mathcal{S}(\mathcal{C})\right)\setminus \left(\setR^k\times \{o\}\right)$ and a sequence $r_j\downarrow 0$ such that 
\begin{equation}\label{eq:lowdbsPre2}
\limsup_{j\to \infty}\frac{\haus^{N-3+\eta}_{\infty}\left(\partial G^1\cap\mathcal{S}(\mathcal{C})\cap B_{r_j}(z)\right)}{r_j^{N-3+\eta}}>0\, .
\end{equation}  
Up to extraction of a subsequence, that we do not relabel, we find that a blow-up of $G^1$ at $z$ along the sequence $r_j\downarrow 0$ is an entire local perimeter minimizing cone of the form $G^2=\setR^{k+1}\times C(B^2)$ such that
\begin{equation}
\dim_H\left(\partial G^2\cap \mathcal{S}(\mathcal{C})\right)\, >\, N-3\, .
\end{equation}
This is due to Step 1 and to the fact that $G^1$ splits off a factor $\setR^k$, it is a cone, and we chose a point  $z\notin \setR^k\times\{o\}$ as base point for the blow-up. The additional splitting of $G^2$ can be justified with the very same arguments of the Euclidean case, we refer for instance to \cite[Theorem 28.11, Lemma 28.12, Lemma 28.13]{Maggi12} whose statements and proofs work \textit{mutatis mutandis} also in our setting.
\medskip

\textbf{Step 3.} The outcome of the previous two steps is that if \eqref{eq:hausdimestcon} fails, then there exists an entire local perimeter minimizing cone of the form $G=\setR^{N-2}\times C(B)\subset\mathcal{C}$. This is in contradiction with \autoref{thm:2dimrigidity}.
\end{proof}

\begin{remark}\label{rem:ExsharpCodim3}
The Hausdorff dimension estimate \eqref{eq:hausdimestcon} above is sharp. This is easily verified by considering as entire local perimeter minimizer the set $G:=\{x>0\}\subset \setR\times C(\mathbb{S}^1_r)$, where $0<r<1$ and $x\in\setR$ denotes the coordinate of the $\setR$ factor. Then $\partial G=\{0\}\times C(\mathbb{S}^1_r)$ which has one singular point $p=(0,o)$. Therefore $\haus^0(\mathcal{S}^G)=1$.
\end{remark}


\begin{theorem}\label{thm:sharpdimsing}
Let $(X,\dist,\haus^N)$ be an $\RCD(K,N)$ metric measure space, let $\Omega\subset X$ be an open domain such that $\Omega\cap \partial X=\emptyset$ and let $E\subset X$ be a set of locally finite perimeter that is locally perimeter minimizing in $\Omega$. Then 
\begin{equation}\label{eq:dimbdsing}
\dim_H\left(\mathcal{S}^E \cap \Omega\right)\le N-3\, .
\end{equation}
\end{theorem}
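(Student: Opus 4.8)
The plan is to run the classical dimension reduction argument, but organized so as to reduce everything to the two already-established regularity inputs: the dimension bound at ambient regular points (\autoref{thm:sharpdimreg}, giving $\dim_H(\mathcal S^E\cap\mathcal R(X))\le N-8$) and the regularity inside the model cones $\setR^{N-2}\times C(\mathbb S^1_r)$ (\autoref{prop:regincones}, giving $\dim_H(\mathcal S^G)\le N-3$). First I would argue by contradiction, assuming $\haus^{N-3+\eta}(\mathcal S^E\cap\Omega)>0$ for some $\eta>0$. By \autoref{thm:sharpdimreg} (applied on the relatively open set $\mathcal R(X)\cap\Omega$) we may also assume $\haus^{N-3+\eta}(\mathcal S^E\cap\mathcal S(X)\cap\Omega)>0$, since the part in $\mathcal R(X)$ is already $\haus^{N-3+\eta}$-negligible. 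Then, using the standard density-point lemma for pre-Hausdorff measures under GH-convergence (\cite[Lemma 3.6, Equation (3.36)]{DePhilippisGigli18}, exactly as in the proof of \autoref{thm:sharpdimreg} and of \autoref{prop:regincones}), one finds a point $x\in\mathcal S^E\cap\mathcal S(X)\cap\Omega$ and radii $r_i\downarrow 0$ along which $E$ blows up, in the $L^1_{\loc}$ sense and compatibly with pmGH-convergence of the rescaled ambient spaces $(X,\dist/r_i,\haus^N/r_i^N,x)$, to an entire local perimeter minimizer $E_\infty$ inside some tangent cone $(Y_\infty,\dist_\infty,\haus^N,y_\infty)\in\Tan_x(X)$, with the density lower bound
\begin{equation*}
\limsup_{i\to\infty}\haus^{N-3+\eta}_\infty\bigl(\mathcal S^{E_i}\cap B^i_1(x)\cap\mathcal S(X_i)\bigr)>0 .
\end{equation*}
Compactness here is \cite[Corollary 3.4]{AmbrosioBrueSemola19} together with the uniform perimeter bounds \autoref{cor:perest}, and the fact that $E_\infty$ is again an entire local perimeter minimizer is \autoref{thm:closurecompactnesstheorem}; the tangent $Y_\infty$ is a metric cone by the volume-cone-implies-metric-cone property for noncollapsed spaces (\cite{DePhilippisGigli18}).

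Second, I would iterate this blow-up procedure to split off Euclidean factors until the ambient cone has a two-dimensional cross section. As in Step 1 of the proof of \autoref{prop:regincones}, the key point is that limits of singular points of $E$ sitting over ambient singular points are singular points of the limiting configuration: if such a limit point were a regular boundary point of $E_\infty$ over a regular ambient point, the $\eps$-regularity \autoref{thm:epsregularity} together with the Euclidean density gap \autoref{rm:densitygapEuclidean} would force a flat half-space blow-up there, contradicting singularity. Hence upper semicontinuity of $\haus^{N-3+\eta}_\infty$ under GH-convergence propagates the density lower bound to $\mathcal S^{E_\infty}\cap\mathcal S(Y_\infty)$, so $\dim_H(\mathcal S^{E_\infty}\cap\mathcal S(Y_\infty))\ge N-3+\eta$. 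Writing $Y_\infty$ as $\setR^k\times C(Z)$ with $Z$ an $\RCD(N-k-2,N-k-1)$ space of smaller dimension, one repeats: choose a density point of $\mathcal S^{E_\infty}\cap\mathcal S(Y_\infty)$ off the existing spine $\setR^k\times\{o\}$, blow up again; by the cone structure and the standard splitting arguments (\cite[Theorem 28.11, Lemma 28.12, Lemma 28.13]{Maggi12}, valid \emph{mutatis mutandis}) the new limit splits off an extra Euclidean factor. After finitely many iterations we arrive at an entire local perimeter minimizer inside a cone of the form $\setR^{N-2}\times C(M)$ with $M$ a one-dimensional $\RCD$ space, i.e. $M=\mathbb S^1_r$ for some $0<r\le 1$ (using the classification of one-dimensional noncollapsed $\RCD$ spaces, \cite{KitabeppuLakzian16}), and whose singular set still has Hausdorff dimension $>N-3$.

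Third and finally, this contradicts \autoref{prop:regincones}, which asserts $\dim_H(\mathcal S^G)\le N-3$ for every entire local perimeter minimizer $G\subset\setR^{N-2}\times C(\mathbb S^1_r)$. This completes the proof. The main obstacle — and the reason the argument is not a verbatim copy of the Euclidean dimension reduction — is precisely the absence of a usable monotonicity formula at arbitrary points: one cannot assert a priori that a blow-up of $E$ is a cone, so the reduction to cones must be done by iterating blow-ups and splitting off flat factors (rather than by a single monotonicity-cone step), and one must carefully track that the ambient cross-sectional dimension strictly decreases at each iteration and that singularity is preserved in the limit (this last point genuinely uses $\eps$-regularity and the density gap, since, as \autoref{rm:Morgan} shows, limits of singular boundary points over ambient singular points need not be singular without further hypotheses — the care is that we always blow up over ambient singular points, where a monotonicity formula does hold by \autoref{thm:monocone}). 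A secondary bookkeeping issue is ensuring at each stage that the pre-Hausdorff density point can be chosen away from the current spine so that a genuinely new Euclidean factor appears; this is handled exactly as in Step 2 of the proof of \autoref{prop:regincones}.
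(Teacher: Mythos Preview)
Your proposal is correct and follows essentially the same three-step strategy as the paper: reduce to ambient singular points via \autoref{thm:sharpdimreg}, blow up to land inside a metric cone, iterate to split off Euclidean factors until the cross section is one-dimensional, and conclude via \autoref{prop:regincones}. The one technical device you omit is that the paper passes at the outset to the \emph{quantitative} $\eps$-singular set $\mathcal S_\eps(X)$ (writing $\mathcal S(X)=\bigcup_\eps\mathcal S_\eps(X)$) so that ambient singularity is manifestly closed under GH limits; your alternative of re-invoking \autoref{thm:sharpdimreg} after each blow-up to discard the ambient-regular part also works, but note that in the iteration what splits off the extra Euclidean factor is the \emph{ambient} cone (via the cone structure and the splitting theorem \cite{Gigli13}), not the minimizer itself---the Maggi lemmas you cite are used in that form only in \autoref{prop:regincones}, where the minimizer is already known to be a cone.
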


\begin{proof}
The strategy of the proof is a refinement of the one  of \autoref{thm:sharpdimreg}.\\
To simplify the notation, we assume throughout the proof that $E$ is an entire local perimeter minimizer. Moreover, we assume that $N\ge 3$ and $K\geq -(N-1)$. The case $K<-(N-1)$ can be reduced to $K=-(N-1)$ by scaling of the distance. The case $N=1$ is elementary and the case $N=2$ can be treated with a simpler variant of the argument presented below.
\medskip

\textbf{Step 1.} Reduction to perimeter minimizers inside cones.\\ 
We aim to show via blow-up that if \eqref{eq:dimbdsing} fails for some local perimeter minimizer on some $\RCD(K,N)$ metric measure space $(X,\dist,\haus^N)$, then it fails also for an entire perimeter minimizer inside a metric measure cone.\\
Notice that $\mathcal{S}^E\cap \mathcal{S}(X)=\partial E\cap \mathcal{S}(X)$ by the very definition of $\mathcal{S}^E$. Hence, if \eqref{eq:dimbdsing} fails, then $\dim_H(\partial E\cap\mathcal{S}(X))>N-3$, by \autoref{thm:sharpdimreg}. In particular, there exists $\eps>0$ such that $\dim_H(\partial E\cap\mathcal{S}_{\eps}(X))>N-3$, where $\mathcal{S}_{\eps}(X)$ is the quantitative $\eps$-singular set of $(X,\dist,\haus^N)$ 
defined by
\begin{equation*}
\mathcal{S}_{\eps}(X):= \{x\in X\, :\, \dist_{GH}(B_r(x),B_r(0^{N}))\ge \eps r\, , \text{for all $r\in (0,1)$}\}\, .
\end{equation*}
Indeed, by a well known argument (involving Bishop-Gromov volume monotonicity, volume convergence and volume rigidity; see for instance the proof of \cite[Theorem 3.1]{KapovitchMondino21} after \cite{CheegerColding97}), it is easy to check that $\mathcal{S}(X)= \bigcup_{\eps>0} \mathcal{S}_{\eps}(X)$.\\
Arguing as in the proof of \autoref{thm:sharpdimreg} (see in particular \eqref{eq:densest}) we can find $\eta>0$ such that $\haus^{N-3+\eta}(\partial E\cap\mathcal{S}_{\eps}(X))>0$. Therefore, there exist $x\in \partial E\cap\mathcal{S}_{\eps}(X)$ and a sequence $r_i\downarrow 0$ such that 
\begin{equation}\label{eq:lowdbs}
\limsup_{i\to \infty}\frac{\haus^{N-3+\eta}_{\infty}\left(\partial E\cap\mathcal{S}_{\eps}(X)\cap B_{r_i}(x)\right)}{r_i^{N-3+\eta}}>0\, .
\end{equation}  
Applying \autoref{thm:closurecompactnesstheorem}, we can find a subsequence of $(r_i)$, that we do not relabel, such that \eqref{eq:lowdbs} holds and the blow-up of $E$ along the sequence $r_i$ is an entire local perimeter minimizer $F$ inside a metric cone $C(Z)$ with tip $p$, for some $\RCD(N-2,N-1)$ metric measure space $(Z,\dist_Z,\haus^{N-1})$. Moreover, it is easily seen that any limit of points $x_i\in \partial E\cap \mathcal{S}_{\eps}(X)\cap B_{r_i}(x)$ along this converging sequence belongs to $\partial F\cap \mathcal{S}_{\eps}(C(Z))\cap B_1(p)$. Hence, the upper semicontinuity of the pre-Hausdorff measure implies 
\begin{equation}\label{eq:mb}
\haus^{N-3+\eta}_{\infty}\left(\partial F\cap \mathcal{S}_{\eps}(C(Z))\cap B_1(p)\right)\, >\, 0\, ,
\end{equation}
which yields $\dim_H(\mathcal{S}^F)>N-3$, as we claimed.
\medskip

\textbf{Step 2.} Dimension reduction.\\
In Step 1, we found an entire local perimeter minimizer $F\subset C(Z)$, where $C(Z)$ is a metric measure cone. Let us consider the maximal Euclidean factor $\setR^k$ split off by $C(Z)$ and write $C(Z)=\setR^k\times C(W)$ for some $0\le k\le N-2$ and some $\RCD(N-k-2,N-k-1)$ metric measure space $(W,\dist_W,\haus^{N-k-1})$. 
Arguing inductively, we wish to prove that it is possible to assume that $k=N-2$ iterating the construction of Step 1.\\ 
Indeed, let us suppose that $k\le N-3$. Then by \eqref{eq:mb} there exists a set of singular points of $F$ with positive $\haus^{N-3+\eta}_{\infty}$ pre-Hausdorff measure not contained $\setR^k\times\{p\}$. Iterating the construction of Step 1 with a base point $y\notin \left(\setR^{k}\times\{p\}\right)$ such that 
\begin{equation}
\limsup_{i\to \infty}\frac{\haus^{N-3+\eta}_{\infty}\left(\partial F\cap\mathcal{S}_{\eps}(C(Z))\cap B_{r_i}(y)\right)}{r_i^{N-3+\eta}}>0\, , 
\end{equation}
we obtain that, up to extraction of a subsequence that we do not relabel, the blow-up of $F$ at $y$ is an entire local perimeter minimizer $G\subset \setR^{k+1}\times C(V)$, where $(V,\dist_V,\haus^{N-k-1})$ is an $\RCD(N-k-2,N-k-1)$ metric measure space. Indeed, the blow-up $G$ is an entire local perimeter minimizer by the usual stability \autoref{thm:closurecompactnesstheorem}. Moreover, the fact that the ambient space splits an additional Euclidean factor follows by the choice of base point $y\notin \left(\setR^{k}\times\{p\}\right)$ (and the fact that $C(Z)$ is a cone), via the splitting theorem \cite{Gigli13}. 
\medskip

\textbf{Step 3.} Conclusion.\\
The outcome of the previous two steps is that, if \eqref{eq:dimbdsing} fails for a local perimeter minimizer $E\subset X$, where $(X,\dist,\haus^N)$ is an $\RCD(K,N)$ m.m.s. (without boundary), then it fails for an entire perimeter minimizer $F\subset \setR^{N-2}\times C(\mathbb{S}^1_r)$, where $0<r\le 1$. However, this would contradict \autoref{prop:regincones} and the proof is complete.

\end{proof}

In the next statement,  obtained combining \autoref{thm:RegSetTop},  \autoref{thm:sharpdimreg}  and \autoref{thm:sharpdimsing}, we summarize the main regularity results of the present section.

\begin{theorem}\label{thm:SummaryOa}
Let $(X,\dist,\haus^N)$ be an $\RCD(K,N)$ metric measure space. Let $E\subset X$ be a set of locally finite perimeter. Assume that $E$ is perimeter minimizing in $B_2(x)\subset X$ and $B_2(x)\cap \partial X=\emptyset$. Then for any $\alpha\in (0,1)$ there exists a relatively open set $O_{\alpha}\subset \partial E\cap B_1(x)$ such that:
\begin{itemize}
\item $O_{\alpha}$ is $\alpha$-bi-H\"older homeomorphic to a smooth open $(N-1)$-dimensional manifold;
\item $\mathcal{R}^{E}\subset O_{\alpha}$ and 
\begin{align}
\dim_H\big( (\partial E \setminus  \mathcal{R}^{E} ) \cap \mathcal{R}(X) \big)& \le N-8\, , \label{eq:dimHR}  \\
\dim_H\big( (\partial E \setminus  \mathcal{R}^{E}) \cap \mathcal{S}(X) \big)& \le N-3\,. \label{eq:dimHS}
\end{align}
\end{itemize}
\end{theorem}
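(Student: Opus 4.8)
The statement is a direct synthesis of three results proved in this section: \autoref{thm:RegSetTop}, \autoref{thm:sharpdimreg} and \autoref{thm:sharpdimsing}. The plan is therefore to assemble these, together with the $\varepsilon$-regularity \autoref{thm:epsregularity}, into a single coherent statement, paying attention only to the matching of hypotheses and to the decomposition of $\partial E \setminus \mathcal{R}^E$ according to the ambient regular/singular dichotomy.

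\textbf{Key steps.} First I would fix $\alpha \in (0,1)$ and invoke \autoref{thm:RegSetTop} (which applies since $B_2(x) \cap \partial X = \emptyset$ and $E$ is perimeter minimizing, hence in particular locally perimeter minimizing, in $B_2(x)$): this yields a relatively open set $O_\alpha \subset \partial E \cap B_1(x)$ with $\mathcal{R}^E \subset O_\alpha$ and $O_\alpha$ being $\alpha$-biHölder homeomorphic to a smooth open $(N-1)$-dimensional manifold. This gives the first bullet and the inclusion $\mathcal{R}^E \subset O_\alpha$ in the second bullet. Next I would write the set-theoretic identity
\begin{equation*}
(\partial E \setminus \mathcal{R}^E) \cap B_1(x) = \big( (\partial E \setminus \mathcal{R}^E) \cap \mathcal{R}(X) \big) \cup \big( (\partial E \setminus \mathcal{R}^E) \cap \mathcal{S}(X) \big)\, ,
\end{equation*}
using $X = \mathcal{R}(X) \cup \mathcal{S}(X)$. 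For the first piece, apply \autoref{thm:sharpdimreg} with $\Omega = B_2(x)$: since $\mathcal{S}^E = \partial E \setminus \mathcal{R}^E$, it gives $\dim_H\big( (\partial E \setminus \mathcal{R}^E) \cap \mathcal{R}(X) \cap B_2(x) \big) \le N - 8$, which in particular controls the intersection with $B_1(x)$, yielding \eqref{eq:dimHR}. For the second piece, apply \autoref{thm:sharpdimsing} with $\Omega = B_2(x)$ (whose hypothesis $\Omega \cap \partial X = \emptyset$ is exactly our assumption); it gives $\dim_H(\mathcal{S}^E \cap B_2(x)) \le N-3$, and intersecting with $\mathcal{S}(X)$ and restricting to $B_1(x)$ gives \eqref{eq:dimHS}.

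\textbf{Minor points and the main obstacle.} One should remark that, although the theorems in this section are often stated for sets "locally perimeter minimizing in $\Omega$" in the strong sense of \autoref{def:epsflat}/\autoref{def:epsregular} (minimization among all compactly supported competitors in the domain), the hypothesis here that $E$ is perimeter minimizing in $B_2(x)$ is precisely this strong notion, so no translation is needed; likewise $B_2(x) \cap \partial X = \emptyset$ supplies the absence-of-boundary hypothesis required by all three ingredient theorems (and implicitly used in the blow-up arguments, which produce tangent cones that are $\RCD(0,N)$ without boundary by \autoref{thm:closurecompactnesstheorem}). Honestly, there is no real obstacle here: the statement is a packaging result and the entire content has already been established. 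The only care needed is bookkeeping — ensuring that the domain of validity $B_1(x)$ is contained in the domains of validity furnished by the three theorems (immediate, since they all operate on $B_2(x)$ or the relatively open subset $O_\alpha \subset B_1(x)$), and that the two dimension bounds are stated for the correct pieces of $\partial E \setminus \mathcal{R}^E$. I would close by noting that the sharpness of \eqref{eq:dimHS} is witnessed by the examples in \autoref{rem:ExsharpCodim3}.
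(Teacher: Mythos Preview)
Your proposal is correct and matches the paper's approach exactly: the paper does not give a separate proof of this theorem but states explicitly that it is ``obtained combining \autoref{thm:RegSetTop}, \autoref{thm:sharpdimreg} and \autoref{thm:sharpdimsing},'' which is precisely the assembly you carry out. Your bookkeeping about hypotheses and the decomposition $X=\mathcal{R}(X)\cup\mathcal{S}(X)$ is appropriate and nothing further is needed.
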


\begin{remark}[Sharpness of \autoref{thm:SummaryOa} and a conjecture]\label{rem:sharpThmSum}
Both the Hausdorff codimension bounds \eqref{eq:dimHR} and \eqref{eq:dimHS} are sharp:
\begin{itemize}
\item  \eqref{eq:dimHR} is sharp already in $\setR^{N}$, by the classical example of Simons' cone $C_{S}\subset \setR^{8}$;
\item the sharpness of \eqref{eq:dimHS} was discussed in \autoref{rem:ExsharpCodim3}.
\end{itemize}

Since $\mathcal{R}^{E}\subset O_{\alpha}$, the bounds \eqref{eq:dimHR}-\eqref{eq:dimHS} of course imply
\begin{align}
\dim_H\big( (\partial E \setminus O_{\alpha}) \cap \mathcal{R}(X) \big)& \le N-8\, , \label{eq:dimHROa}  \\
\dim_H\big( (\partial E \setminus O_{\alpha}) \cap \mathcal{S}(X) \big)& \le N-3\,. \label{eq:dimHSOa}
\end{align}
Note that \eqref{eq:dimHROa} is sharp already in  $\setR^{N}$, by the example of the Simons' cone $C_{S}\subset \setR^{8}$: indeed,  for any $\alpha\in (0,1)$, it holds that $O_{\alpha}=C_{S}\setminus\{0^{8}\}$, so that $\dim_H\big( C_{S} \setminus O_{\alpha} \big)=0$.
\\ Instead, we conjecture  that the optimal dimension bound for the topologically regular part of $\partial E$  contained in the ambient singular set  is 
\[
\dim_H\big( (\partial E \setminus O_{\alpha}) \cap \mathcal{S}(X) \big) \le N-4\,. 
\]
Note that  ambient Hausdorff co-dimension 4 would be sharp, from the example given by $E:=C(\mathbb{RP}^{2})\times [0,\infty)\subset C(\mathbb{RP}^{2}) \times \setR=:X$.
\end{remark}

\subsection{Quantitative estimates for singular sets of minimal boundaries}\label{SubSec:QuantEst}

Our goal is to obtain Minkowski content estimates for the singular sets of boundaries of locally perimeter minimizing sets in our context, in analogy with the Euclidean theory \cite{CheegerNaber13b,NaberValtorta20} and with the Minkowski estimates for the quantitative singular sets of non collapsed Ricci limit spaces \cite{CheegerNaber13,CheegerJiangNaber21} and $\RCD$ spaces \cite{AntonelliBrueSemola19}.
\medskip

The  strategy that we adopt has been partly inspired by \cite{CaffarelliCordoba93}, which proposed an alternative approach to the regularity theory of locally perimeter minimizing boundaries in the Euclidean framework. A key additional difficulty in our setting, besides the fact that the spaces are not smooth, is that they are curved (and we aim to an \textit{effective} regularity theory, i.e. without the dependence on flatness parameters such as the injectivity radius). Therefore we will need to control at the same time the regularity of the space (with constants only depending on the  Ricci curvature and volume lower bounds) and the regularity of the minimal boundary inside it.
\medskip


\begin{definition}\label{def:quantreg}
Let $(X,\dist,\haus^N)$ be an $\RCD(K,N)$ space.  Let $\eta>0$ and $r\in (0,1)$ be fixed. The quantitative regular set $\mathcal{R}_{\eta,r}\subset X$   is defined by
\begin{equation*}
\mathcal{R}_{\eta,r}:=\{x \in X\, :\, \dist_{GH}(B_s(x),B_s(0^{N}))\le \eta s\, \quad\text{for any $0<s<r$}\}\, ,
\end{equation*}
where we indicated by $B_r(0^N)\subset\setR^N$ the Euclidean ball of radius $r$.
\end{definition}

\begin{definition}\label{def:quantsing}
Let $(X,\dist,\haus^N)$ be an $\RCD(K,N)$ space.  Let $\eta>0$ and $r\in (0,1)$ be fixed.  For any $0\le k\le N$, we shall denote 
\begin{align*}
\mathcal{S}^k_{\eta,r}:=\{x\in X&\, :\, \dist_{GH}(B_s(x),B_s(0^{k+1},z^*))\ge \eta s\, ,\\
&\quad\text{for any $\setR^{k+1}\times C(Z)$ and all $r<s<1$}\}\, ,
\end{align*}
where $B_{s}(0^{k+1},z^*)$ denotes the ball centred at the tip of a cone $\setR^{k+1}\times C(Z)$.
\end{definition}

In an analogous way we can deal with boundary points of local perimeter minimizers.

\begin{definition}[Quantitative singular sets for minimizing boundaries]\label{def:QuantSS}
Let $(X,\dist,\haus^N)$ be an $\RCD(K,N)$ metric measure space and let $E\subset X$ be a set of locally finite perimeter. Let us suppose that $E$ is locally perimeter minimizing inside a ball $B_{2}(x)\subset X$ and that $\partial X\cap B_2(x)=\emptyset$.\\ 
For any $\delta>0$, let $\mathcal{S}^E_{\delta}\subset \partial E$ be the quantitative singular set defined by
\begin{align*}
\mathcal{S}^E_{\delta}:=\{x\in\partial E\,& :\, 
\text{there exists no $r\in(0,1)$}\\ 
&\text{for which
$E\cap B_r(x)$ is $\delta$-regular at $x$}\}\, .
\end{align*}
Moreover, for any $r>0$ we shall denote
\begin{align*}
\mathcal{S}^E_{\delta,r} :=\{x\in\partial E\,& :\, \text{there exists no $s\in (r,1)$}\\ 
&\text{for which $E\cap B_s(x)$ is $\delta$-regular at $x$}\}\, 
\end{align*}
and 
\begin{equation*}
\overline{\mathcal{S}}^E_{\delta,r} :=\{x\in\partial E\, :\, \text{$E\cap B_r(x)$ is not $\delta$-regular at $x$}\}\, . 
\end{equation*}
\end{definition}

\begin{remark}
A direct consequence of the definitions is that
\begin{equation*}
\mathcal{S}^E_{\delta}=\cap _{r>0}\mathcal{S}^E_{\delta,r}=\cap_{i\in\setN}\mathcal{S}^E_{\delta,r_i}\,  \quad\text{and }\,\quad \mathcal{S}^E_{\delta,r}=\cap_{s>r}\overline{\mathcal{S}}^E_{\delta,s}\, ,
\end{equation*}
for any $\delta>0$ and for any sequence $r_i\downarrow 0$.
\end{remark}

\begin{definition}[Quantitative regular sets for minimal boundaries]
Let $(X,\dist, \haus^{N})$ and $E\subset X$ be as in \autoref{def:QuantSS}.
Given $\eta>0$ and $r>0$ we shall denote by
\begin{align*}
\mathcal{R}^E_{\eta,r}&:=\{x\in\partial E\, :\, E\cap B_s(x) \quad\text{is}\quad \text{ $\eta$-regular for any $s\in (0,r)$} \}\,  , \\
\mathcal{R}^E_{\eta}&:=\bigcup_{r>0}\mathcal{R}^E_{\eta,r}=\{x\in\partial E\, :\, \exists\,  r>0\, \text{ s.t. } E\cap B_r(x)\quad\text{is $\eta$-regular}\}\, ,
\end{align*}
the quantitative regular sets of the minimal boundary $\partial E$.
\end{definition}

\begin{remark}
Let us notice that 
\begin{equation}\label{eq:SESEdelta}
\mathcal{R}^E=\bigcap_{\eta>0}\mathcal{R}^E_{\eta}.
\end{equation}
This is a consequence of the very definitions and of the $\eps$-regularity \autoref{thm:epsregularity}. Also, $\mathcal{R}^E_{\eta}$ is open as soon as $\eta<\eta(N)$. Moreover,
\begin{equation*}
\mathcal{R}^E_{\eta}=\partial E\setminus\mathcal{S}^E_{\eta}\, ,\quad\text{for any $\eta>0$}\, .
\end{equation*}
\end{remark}

\begin{remark}
An inspection of the proof of \autoref{thm:epsregularity} shows that, if $\eta<\eta(N)$ and
\begin{equation*}
x\in \mathcal{R}\cap \mathcal{R}^E_{\eta}\, ,
\end{equation*}
then $x\in\mathcal{R}^E$ (cf. also with \autoref{cor:smooth}).
\end{remark}


\begin{theorem}\label{thm:contestsing}
For every $K\in \setR$ and $N\in [1,\infty)$ there exists $\delta_{K,N}>0$ with the following property.
Let $(X,\dist,\haus^N)$ be an $\RCD(K,N)$ metric measure space, $x\in X$ such that $\partial X\cap B_2(x)=\emptyset$, and  $E\subset X$  be a set of locally finite perimeter such that $E$ is perimeter minimizing in $B_2(x)$.
Then for any $0<\delta\in (0,\delta_{K,N})$ and for any $\gamma\in(0,1)$
there exist $C=C(K, N, \delta, \Per\big(E, B_{2}(x)),\gamma\big)>0$  and $r_{0}= r_0\big(K, N, \Per(E, B_{2}(x))\big)>0$ so that the following Minkowski content-type estimate on the quantitative singular set $\mathcal{S}^E_{\delta} \subset \partial E$ holds: 
\begin{equation}
\haus^N\big(T_r(\mathcal{S}^E_{\delta})\cap B_1(x)\big) \le C \,  r^{2-\gamma}\, \quad\text{for any $r\in (0,r_0)$ }, \label{eq:contbound}
\end{equation}
where $T_r$ denotes the tubular neighbourhood of radius $r>0$.\\

When $(X,\dist,\haus^N)$ is a non collapsed Ricci limit space or a finite dimensional Alexandrov space with curvature bounded below, the bounds \eqref{eq:contbound} can be strengthened to 
\begin{equation}
\haus^N(T_r(\mathcal{S}^E_{\delta})\cap B_1(x)) \le C \,  r^2 \, ,\quad\text{for any $r\in (0,r_0)$}\, .
\end{equation}

\end{theorem}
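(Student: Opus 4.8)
The plan is to follow the strategy pioneered in \cite{CheegerNaber13,CheegerNaber13b,NaberValtorta20}, adapting it so that both the regularity of the ambient $\RCD(K,N)$ space \emph{and} the regularity of $\partial E$ inside it are controlled simultaneously. The estimate \eqref{eq:contbound} will be obtained by a covering argument, where each ball in the covering is chosen so that either it captures quantitative singularity of the ambient space (contributing to the codimension-three ambient singular set, whose Minkowski content is controlled by \cite{AntonelliBrueSemola19}), or the ambient space is almost Euclidean there and quantitative singularity of $\partial E$ is detected purely in the Euclidean-like picture, where it has codimension three by \autoref{thm:sharpdimsing} combined with the $\eps$-regularity \autoref{thm:epsregularity}. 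The key new ingredient making this effective in the curved setting is the sharp perimeter comparison for equidistant sets, \autoref{prop:tubecomparison}, which plays the role the Euclidean monotonicity formula plays in \cite{CaffarelliCordoba93}; it provides, via \autoref{thm:meancurvminimal1} and \autoref{prop:fullrepr}, an almost-monotone quantity controlling the density drop of $\Per_E$ along scales, with an error term depending only on $K,N$.

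First I would fix notation and reduce to $r=1$ by scaling, absorbing the factor $\Per(E,B_2(x))$ into the constants via \autoref{cor:perest} and \autoref{lemma:tubneighbounds}, which already give the Ahlfors-regularity of $\Per_E$ needed to pass between perimeter and $\haus^{N-1}$ bounds and between $\haus^N$-volume of tubular neighbourhoods and $\haus^{N-1}$-measure of $\partial E$. Next I would establish a quantitative $\eps$-regularity/gap dichotomy: there is $\delta_{K,N}>0$ and, for each $\delta<\delta_{K,N}$, an $\eps=\eps(\delta,N)$ such that if $x\in\mathcal{S}^E_{\delta,r}$ then at every scale $s\in(r,1)$ either $B_s(x)$ is not $\eps s$-close to a Euclidean ball (so $x\in\mathcal{S}_{\eps, r}$, the ambient quantitative singular set) or $\partial E$ on $B_s(x)$ fails to be $\eps s$-close to a hyperplane; this follows from \autoref{thm:epsregularity} (to upgrade flatness along scales to topological regularity) and from \autoref{lemma:areaconvergence} together with the Euclidean density gap \autoref{rm:densitygapEuclidean}. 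I would then set up a Reifenberg-type covering (à la Naber--Valtorta, or the simpler stratification-plus-covering of \cite{CheegerNaber13b}) where, at each stage, a ball is subdivided unless the perimeter-density (normalized by the ambient almost-Euclidean volume, using \autoref{prop:tubecomparison} to make this almost-monotone) has dropped by a definite amount; the almost-monotonicity guarantees only boundedly many such drops, and the Euclidean codimension-three bound \autoref{thm:sharpdimsing} (applied to blow-ups, which are entire minimizers by \autoref{thm:closurecompactnesstheorem}) controls the "conical part" at each stage, yielding $\haus^N(T_r(\mathcal{S}^E_\delta)\cap B_1(x))\le C r^{2-\gamma}$ with the loss $\gamma$ coming from the iteration of the covering.

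For the sharper $\haus^N(T_r(\mathcal{S}^E_\delta)\cap B_1(x))\le Cr^2$ in the Ricci-limit or Alexandrov case, I would invoke the stronger structural results available there: in non-collapsed Ricci limits the ambient quantitative singular set satisfies sharp Minkowski bounds \cite{CheegerJiangNaber21} (and in Alexandrov spaces the boundary stratification is even better behaved), so the ambient contribution in the covering is $O(r^{k})$ with $k=N-3$ rather than merely of almost-that dimension; combined with the fact that in these settings the Euclidean-like part of the argument can be run with the genuine monotonicity formula (Ricci limits of smooth manifolds inherit the almost-monotonicity uniformly, and Alexandrov spaces admit a monotonicity formula directly), one removes the $\gamma$ loss. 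The main obstacle I anticipate is the bookkeeping in the covering argument: unlike the Euclidean case there is no exact monotonicity, so one must carefully propagate the error terms from \autoref{prop:tubecomparison} through the Reifenberg packing estimates and verify they do not accumulate — in particular one must check that the "number of definite density drops" remains bounded by a constant depending only on $K,N,\Per(E,B_2(x))$ and not on the scale $r$, and that the almost-cone structure of blow-ups (which replaces the exact-cone structure used in \cite{NaberValtorta20}) still suffices to run the dimension reduction at each covering stage. A secondary technical point is handling the interaction with $\partial X$; here the hypothesis $\partial X\cap B_2(x)=\emptyset$ together with \cite{BrueNaberSemola20} ensures the ambient codimension-three singular stratum is the relevant one, so this should not cause genuine difficulty.
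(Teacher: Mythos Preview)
Your proposal takes a genuinely different, and substantially harder, route than the paper, and it has a real gap.

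\textbf{What the paper actually does.} The paper's argument is elementary and avoids any density-drop iteration or Reifenberg/Naber--Valtorta machinery. The key geometric input is \autoref{prop:touchingpoints}: the set of boundary points admitting \emph{no} interior-and-exterior touching ball of radius $\sim r/\delta$ has perimeter at most $C_{K,N}\Per(E,B_2(x))\,r$. This is proved using \autoref{prop:tubecomparison} (comparing $\Per(E^\delta)$ with $\Per(E)$) together with the minimality of $E$. Then \autoref{lemma:flatpropa} says: a touching-ball point which also lies in the ambient quantitative regular set $\mathcal{R}_{\delta/2,2r}$ is automatically $\delta$-regular for $E$ at scale $r$. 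Contrapositively, each $x_i\in\overline{\mathcal S}^E_{\delta,r}$ satisfies
\[
B_r(x_i)\cap \mathcal{R}_{\delta/2,2r}\cap \mathcal{C}_{C_{K,N}r/\delta}=\emptyset\, .
\]
Summing $\Per(E,B_{r/5}(x_i))$ over a Vitali family and using \autoref{prop:touchingpoints} plus \autoref{prop:quantstratalongmin} (which transfers the ambient $\mathcal S^{N-2}_{\eta,r}$ Minkowski bound of \cite{AntonelliBrueSemola19} onto $\partial E$) gives $N(r)\le Cr^{2-N-\gamma}$, hence \eqref{eq:contbound} by Ahlfors regularity. The sharper Ricci-limit/Alexandrov case just plugs in the improved ambient bounds of \cite{CheegerJiangNaber21,LiNaber20}.

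\textbf{The gap in your approach.} You propose to run a Cheeger--Naber/Naber--Valtorta covering driven by drops of an ``almost-monotone'' perimeter density, citing \autoref{prop:tubecomparison} as the source of almost-monotonicity. But \autoref{prop:tubecomparison} compares perimeters of \emph{equidistant hypersurfaces} $\partial E^h$ versus $\partial E$; it is not a statement about the ball-density $r\mapsto \Per(E,B_r(x))/r^{N-1}$, and the paper explicitly remarks (just before \autoref{thm:monocone}) that no such monotonicity formula is known for perimeter minimizers in general $\RCD$ spaces. Without it, the ``bounded number of density drops'' mechanism that powers the Naber--Valtorta argument is unavailable, and your anticipated ``main obstacle'' is in fact fatal as stated. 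Relatedly, you invoke the Hausdorff codimension-three bound \autoref{thm:sharpdimsing} to control the conical part at each stage, but Hausdorff bounds do not yield Minkowski content bounds; indeed the theorem only claims $r^{2-\gamma}$, not $r^{3-\gamma}$, precisely because the controlling object is the \emph{codimension-two} ambient quantitative stratum $\mathcal S^{N-2}_{\eta,r}$ (via \autoref{thm:boundepsreg} and \eqref{eq:reph}), together with the linear-in-$r$ touching-ball estimate. Your dichotomy ``ambient singular vs.\ boundary singular in Euclidean-like picture'' is morally right, but the correct quantification is via touching balls, not via perimeter-density drops.
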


\begin{remark}
There is no direct implication between 
\eqref{eq:contbound} and the Hausdorff dimension estimate in \autoref{thm:sharpdimsing}. Indeed, while it is easily seen that \eqref{eq:contbound} is much stronger than the Hausdorff dimension estimate $\dim_H(\mathcal{S}^E)\le N-2$, it does not imply the sharp estimate $\dim_H(\mathcal{S}^E)\le N-3$. On the other hand, the Minkowski type estimate \eqref{eq:contbound} is not implied by any Hausdorff dimension estimate.
As an elementary example just to fix the ideas, note for instance that $\setQ^{N}\subset \setR^{N}$ has Hausdorff dimension $0$, but no Minkowski content-type estimate holds since any tubular neighbourhood of $\setQ^{N}$ is the whole space $\setR^{N}$.
\end{remark}

\begin{remark}
While the proof of the Hausdorff dimension bound $\dim_H(\mathcal{S}^E)\le N-3$ for local perimeter minimizers is independent of the mean curvature bounds proved in \autoref{sec:meancurv}, these play a key role in the proof of \autoref{thm:contestsing}.
\end{remark}

\autoref{thm:contestsing} will be proved at the end of the section. Below, we first establish a series of auxiliary results.
\medskip

Thanks to \autoref{lemma:tubneighbounds}, there exist constants $C_{K,N}, \bar{\delta}_{K,N}>0$ such that, if $(X,\dist,\haus^N)$ is an $\RCD(K,N)$ metric measure space and $E\subset X$ is a set of finite perimeter minimizing the perimeter in $B_2(x)\subset X$, then
\begin{equation}\label{eq:HNEdeltaBrho}
\haus^N\big((E^{\delta} \setminus \bar{E})\cap B_1(x)\big)\le C_{K,N} \, \Per(E, B_{2}(x))\, \delta \, ,\quad \text{for any } \delta\in (0, \bar{\delta}_{K,N})\, ,
\end{equation}
where we keep the notation $E^{\delta}$ for the $\delta$-enlargement of the set $E$, see \eqref{eq:defEt}.

\begin{corollary}\label{cor:cuttingneighminimal}
There exist constants $C_{K,N}, \bar{\delta}_{K,N}>0$ with the following property. Let $(X,\dist,\haus^N)$ be an $\RCD(K,N)$ metric measure space and $E\subset X$ is a set of finite perimeter minimizing the perimeter in $B_2(x)\subset X$. Then,  for any $\delta\in(0, \bar{\delta}_{K,N})$, there exists $\rho\in (1/2,1)$ such that
\begin{equation*}
\Per(B_{\rho}(x),E^{\delta}\setminus \bar{E})\le C_{K,N} \, \Per(E, B_2(x))\, \delta \, .
\end{equation*}
\end{corollary}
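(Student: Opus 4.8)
\textbf{Proof plan for \autoref{cor:cuttingneighminimal}.}

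The idea is to extract a good radius $\rho\in(1/2,1)$ from the volume estimate \eqref{eq:HNEdeltaBrho} via a coarea/averaging argument on the distance function from the center $x$. First I would recall that $B_\rho(x)=\{\dist_x<\rho\}$ where $\dist_x(\cdot):=\dist(x,\cdot)$, so that by the coarea formula (\autoref{thm:coarea}), for $\Leb^1$-a.e.\ $\rho\in(1/2,1)$ the ball $B_\rho(x)$ has finite perimeter and $\Per(B_\rho(x),A)=\haus^{N-1}(\partial B_\rho(x)\cap A)$ for Borel $A$; more precisely, applying the coarea formula to $v=\dist_x$ (which is $1$-Lipschitz, hence $\abs{D\dist_x}\le\haus^N$), one gets
\begin{equation*}
\int_{1/2}^{1}\Per\big(B_\rho(x),E^\delta\setminus\overline{E}\big)\,\di\rho=\int_{(E^\delta\setminus\overline{E})\cap\{1/2<\dist_x<1\}}\abs{\nabla\dist_x}\,\di\haus^N\le \haus^N\big((E^\delta\setminus\overline{E})\cap B_1(x)\big).
\end{equation*}

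Combining this with \eqref{eq:HNEdeltaBrho} yields
\begin{equation*}
\int_{1/2}^{1}\Per\big(B_\rho(x),E^\delta\setminus\overline{E}\big)\,\di\rho\le C_{K,N}\,\Per(E,B_2(x))\,\delta,
\end{equation*}
for all $\delta\in(0,\bar\delta_{K,N})$, with the same constants $C_{K,N},\bar\delta_{K,N}$ as in \autoref{lemma:tubneighbounds}. Since the integrand is a non-negative function of $\rho$ on an interval of length $1/2$, by the mean value inequality there exists $\rho\in(1/2,1)$ (in fact a positive-measure set of such $\rho$, among which we may also require that $B_\rho(x)$ has finite perimeter and that $\partial B_\rho(x)$ meets the relevant sets in an $\haus^{N-1}$-rectifiable way, all of which hold for a.e.\ $\rho$) for which
\begin{equation*}
\Per\big(B_\rho(x),E^\delta\setminus\overline{E}\big)\le 2\,C_{K,N}\,\Per(E,B_2(x))\,\delta.
\end{equation*}
Relabelling $2C_{K,N}$ as $C_{K,N}$ gives the claim.

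The argument is essentially routine once \eqref{eq:HNEdeltaBrho} is in hand; the only point requiring a little care is justifying that $\Per(B_\rho(x),\cdot)$ can be controlled by the coarea formula applied to $\dist_x$. This is standard in the PI-space setting: $\dist_x\in\BV_{\loc}$ with $\abs{D\dist_x}\le\haus^N$ (since $\dist_x$ is $1$-Lipschitz), and the coarea formula \autoref{thm:coarea} then expresses $\haus^N\res\{1/2<\dist_x<1\}$ as the integral of the perimeter measures of the super-level sets $\{\dist_x>\rho\}$; restricting the test set to $E^\delta\setminus\overline{E}$ and using $\Per(B_\rho(x),\cdot)=\Per(\{\dist_x>\rho\},\cdot)$ (the two sets differ by a set which is $\haus^N$-null, hence their perimeter measures coincide) finishes the point. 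No genuine obstacle arises beyond invoking \autoref{lemma:tubneighbounds}; the statement is really a packaging of the tubular neighbourhood volume bound into a statement about a single good sphere, which is exactly the form needed for the covering arguments in \autoref{SubSec:QuantEst}.
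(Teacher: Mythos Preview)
Your proof is correct and follows essentially the same approach as the paper: apply the coarea formula to $\dist_x$ to bound $\int_{1/2}^1 \Per(B_\rho(x),E^\delta\setminus\overline{E})\,\di\rho$ by $\haus^N((E^\delta\setminus\overline{E})\cap B_1(x))$, invoke \eqref{eq:HNEdeltaBrho}, and extract a good radius by averaging. The paper's proof is slightly terser (it leaves the final mean-value step implicit), but the argument is the same.
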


\begin{proof}
The conclusion follows from the estimate \eqref{eq:HNEdeltaBrho}.
Indeed, by the coarea formula \autoref{thm:coarea} applied to the distance function from $x$ we can bound
\begin{align*}
\int_{1/2}^1\Per \big(B_{s}(x),E^{\delta} \setminus \bar{E}\big)\di s &\le\haus^N\big((E^{\delta} \setminus \bar{E})\cap (B_1(x)\setminus B_{1/2}(x))\big)\\ 
&\le \haus^N\big((E^{\delta} \setminus \bar{E})\cap B_1(x)\big)\, \\
&   \leq  C_{K,N} \, \Per(E, B_{2}(x))\, \delta \, .
\end{align*}
\end{proof}

The so-called interior/exterior touching ball condition is a regularity property for domains $\Omega\subset X$. The interior one amounts to ask that at any given point $x\in\partial\Omega$, there exists a point $y\in\Omega$ and $r>0$ such that $\dist(x,y)=r$ and $B_r(y)\subset \Omega$.\\
When it holds uniformly, on a smooth Riemannian manifold, it yields a control on the second fundamental form of the boundary of the domain.
\medskip

Our next goal is to prove that minimal boundaries verify a weak interior/exterior touching ball condition in our setting. 

\begin{definition}[Set of touching points]\label{def:touch}
Let $(X,\dist,\meas)$ be an $\RCD(K,N)$ metric measure space and let $E\subset X$ be a local perimeter minimizer.
For any $\delta\in(0,\delta_0)$, we let $\mathcal{C}_{\delta}\subset\partial E$ be the set of interior and exterior touching points of balls of radius $\delta$, i.e.
\begin{align*}
\mathcal{C}_{\delta}:=\{x\in \partial E\, & :\, \text{there exist $B_{\delta}(x_1)\subset E$} \\
&\text{and $B_{\delta}(x_2)\subset E^c$ such that $x\in\partial B_{\delta}(x_1)\cap\partial B_{\delta}(x_2)$}\}\, .
\end{align*}
\end{definition}

\begin{proposition}\label{prop:touchingpoints}
There exist constants $\delta_{K,N}, \,C_{K,N}>0$ such that, for any $\RCD(K,N)$ metric measure space $(X,\dist,\haus^N)$, for any $x\in X$ and for any set of finite perimeter $E\subset X$ such that $E$ is perimeter minimizing in $B_2(x)\subset X$ the following holds: 
\begin{equation}\label{eq:touch}
\Per(E, B_{1/2}(x)\setminus \mathcal{C}_{\delta})\le  C_{K,N} \, \Per(E, B_2(x))\, \delta \, , \quad \text{for any } \delta\in (0,\delta_{K,N})\, .
\end{equation}

\end{proposition}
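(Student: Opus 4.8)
The statement asserts that the perimeter measure of the set of boundary points which fail to admit both an interior and an exterior touching ball of radius $\delta$ is controlled by $C_{K,N}\,\Per(E,B_2(x))\,\delta$. The plan is to exploit the Laplacian comparison for the distance function from $\overline{E}$ and from $\overline{X\setminus E}$ proved in \autoref{thm:meancurvminimal2} and \autoref{prop:fullrepr}, combined with the Minkowski-type enlargement estimates already available (\eqref{eq:HNEdeltaBrho} and \autoref{cor:cuttingneighminimal}).

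First I would fix the notation $\dist^+:=\dist_{\overline{E}}$ and $\dist^-:=\dist_{\overline{X\setminus E}}$, which are defined and positive on $X\setminus\overline{E}$ and on $E$ respectively, and observe that a point $y\in\partial E$ is an \emph{exterior} touching point of some ball of radius $\delta$ precisely when there is $z\in X\setminus\overline{E}$ with $\dist^+(z)=\delta$ and $\dist(y,z)=\delta$; equivalently, $y$ lies on the ``inner boundary'' $\Sigma^\delta:=\{\dist^+=\delta\}$-footpoint set. Thus the exterior-non-touching points on $\partial E$ are exactly those boundary points \emph{not} reached by a minimizing geodesic from the equidistant hypersurface $\Sigma^\delta$; symmetrically for interior touching points via $\dist^-$. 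The key quantitative input is \autoref{prop:tubecomparison} (the sharp perimeter bound for equidistant sets): applying \eqref{eq:PerEhPerE} with $h=\delta$ to the compact set $\Gamma:=\Sigma^\delta\cap B_1(x)$ and its footpoint set $\Gamma_\Sigma\subset\partial E$, one gets
\begin{equation*}
\Per(E^\delta,\Gamma)\le \Per(E,\Gamma_\Sigma)+\int_G \ft_{K,N}(\dist_{\overline E})\,\di\haus^N,
\end{equation*}
while $G\subset E^\delta\setminus\overline E\cap B_1(x)$ so the error term is bounded by $C_{K,N}\,\Per(E,B_2(x))\,\delta$ using \eqref{eq:HNEdeltaBrho}. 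The point I would push is the reverse comparison: since $\dist^+$ has measure-valued Laplacian whose singular part on $\partial E$ is exactly $\haus^{N-1}\res\partial E$ (\autoref{prop:fullrepr}) and whose density on $\partial E\cap B_1(x)$ equals $1$ a.e., integrating $\boldsymbol\Delta\dist^+$ over the region swept by the rays from $\partial E$ to $\Sigma^\delta$ and using the Gauss--Green formula \autoref{thm:GaussGreenRCDBCM} with $V=\nabla\dist^+$ gives a lower bound of the form $\Per(E,\Gamma_\Sigma)\ge \Per(E^\delta,\Gamma)-(\text{error }\le C_{K,N}\Per(E,B_2(x))\delta)$. Comparing the two inequalities forces that the portion of $\partial E\cap B_{1/2}(x)$ \emph{not} in $\Gamma_\Sigma$ — i.e.\ not covered by a radius-$\delta$ ray from outside — has perimeter at most $C_{K,N}\Per(E,B_2(x))\delta$; this is exactly the bound on exterior-non-touching points. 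Running the same argument with the roles of $E$ and $X\setminus E$ interchanged (using that $X\setminus E$ is also perimeter minimizing in $B_2(x)$, and $\dist^-$ in place of $\dist^+$) controls the interior-non-touching points. Since $\partial E\setminus\mathcal C_\delta$ is contained in the union of the two exceptional sets, \eqref{eq:touch} follows by adding the two estimates.

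The main obstacle I anticipate is the rigorous identification of the ``non-covered'' boundary portion with a defect in the perimeter monotonicity, i.e.\ making precise the intuition that a footpoint of a ray of length exactly $\delta$ from $\Sigma^\delta$ yields an exterior touching ball of radius $\delta$ at that footpoint, together with controlling the overlap/multiplicity of rays. This requires the cut-and-paste and normal-trace machinery of \autoref{thm:cutandpaste} and \autoref{prop:leveld} to be applied carefully to the (possibly irregular) level sets $\{\dist^+>t\}$ for a.e.\ $t\in(0,\delta)$, plus the coarea formula to integrate the slice-wise estimates and the fact (from \autoref{thm:regqmin}, \autoref{cor:perest}) that $\partial E$ is Ahlfors regular so that ``small perimeter'' genuinely controls the size of the exceptional set. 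One also needs to choose the radius $\rho\in(1/2,1)$ as in \autoref{cor:cuttingneighminimal} so that the boundary flux terms on $\partial B_\rho(x)$ contribute at most $C_{K,N}\Per(E,B_2(x))\delta$; this is precisely why the statement is phrased on $B_{1/2}(x)$ rather than $B_1(x)$. The constant $\delta_{K,N}$ is then the minimum of the various thresholds $\bar\delta_{K,N}$ appearing in \autoref{lemma:tubneighbounds}, \autoref{cor:cuttingneighminimal} and in the domain $I_{K,N}$ of $\ft_{K,N}$.
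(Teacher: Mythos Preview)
There is a genuine gap. Your ``two inequalities'' are in fact the \emph{same} direction: both \autoref{prop:tubecomparison} and your Gauss--Green computation with $V=\nabla\dist^+$ express the one-sided bound $\boldsymbol{\Delta}\dist_{\overline{E}}\le \ft_{K,N}\circ\dist_{\overline{E}}$ on $X\setminus\overline{E}$, and hence both yield $\Per(E^\delta,\Gamma)\le \Per(E,\Gamma_\Sigma)+O(\delta)$. Having this twice gives no control on $\Per\big(E,B_{1/2}(x)\setminus\Gamma_\Sigma\big)$. What is needed, and what you never establish, is the \emph{opposite} inequality $\Per(E,B_\rho(x))\le \Per(E^\delta,B_\rho(x))+C_{K,N}\Per(E,B_2(x))\,\delta$; chained with \autoref{prop:tubecomparison} this gives $\Per(E,B_\rho(x))\le\Per(E,\Gamma_{\delta,\Sigma})+O(\delta)$ and hence the estimate on the non-touching set. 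Your Gauss--Green argument cannot deliver this reverse direction: the singular part of $\boldsymbol{\Delta}\dist_{\overline{E}}$ on $X\setminus\overline{E}$ is nonpositive and the regular part is only bounded \emph{above}, so integration yields only upper bounds on $\Per(E^\delta,\cdot)$, never lower bounds.

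The paper obtains the reverse inequality by a direct appeal to perimeter minimality, which you never use beyond the Laplacian bounds: one tests minimality of $E$ in $B_2(x)$ against the competitor $E\cup\big(E^\delta\cap B_\rho(x)\big)$, with $\rho\in(1/2,1)$ chosen via \autoref{cor:cuttingneighminimal} so that the lateral cut $\Per(B_\rho(x),E^\delta\setminus\overline{E})$ is $O(\delta)$. This is not a technicality: the Laplacian bound in \autoref{thm:meancurvminimal1} follows already from the weaker hypothesis of \autoref{def:locper} (minimality against infinitesimally small perturbations), whereas here one needs minimality against a perturbation of size $\sim\delta$. So the competitor argument genuinely uses more than what is encoded in the Laplacian comparison, and your plan is missing precisely this ingredient.
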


\begin{proof}
It is sufficient to estimate the size of the set $\mathcal{C}^e_{\delta}$ of touching points of exterior \textit{tangent} balls as in \eqref{eq:touch}. A similar argument will give the estimate for the size of the set of touching points of interior balls $\mathcal{C}^i_{\delta}$. Then the estimate for $\mathcal{C}_{\delta}$ will follow, since $\mathcal{C}_{\delta}=\mathcal{C}^i_{\delta}\cap \mathcal{C}^{e}_{\delta}$.
\medskip

Let us fix $\delta\in (0, \bar{\delta}_{K,N})$ and choose $\rho\in (1/2,1)$ given by \autoref{cor:cuttingneighminimal} above. We can also assume that $\Per(E^{\delta},\partial B_{\rho}(x))=0$ up to slightly perturb $\rho$. Observe that
$ E\cup (E^{\delta}\cap B_{\rho}(x)) $ is a compactly supported perturbation of $E$ in $B_2(x)$. Hence, by perimeter minimality, it holds:
\begin{equation*}
\Per(E,B_{2}(x))\le \Per(E\cup(E^{\delta}\cap B_{\rho}(x)),B_2(x))\, .
\end{equation*}
Therefore
\begin{align}\label{eq:toching}
\nonumber \Per(E,B_{\rho}(x))\le & \Per(E^{\delta},B_{\rho}(x))+\Per(B_{\rho}(x),E^{\delta} \setminus \bar{E})\\
\le &\Per(E^{\delta},B_{\rho}(x))+C_{K,N}  \, \Per(E, B_2(x))\, \delta\, .
\end{align}
Letting $\Gamma_{\delta}:=\partial E^{\delta}\cap \overline{B}_{\rho}(x)$ and $\Gamma_{\delta,\Sigma}$ be the set of touching points of minimizing geodesics from $\Gamma_{\delta}$ to $\Sigma$, we can estimate by \autoref{prop:tubecomparison}
\begin{equation}\label{eq:touching2}
\Per(E^{\delta},B_{\rho}(x))\le  \Per(E,\Gamma_{\delta,\Sigma}) + C_{K,N} \Per(E, B_{2}(x)) \, \delta \, .
\end{equation}
Notice that all the points in $\Gamma_{\delta,\Sigma}$ are touching points of exterior balls of radius $\delta$ on $\partial E$. Hence $\Gamma_{\delta,\Sigma}\subset \mathcal{C}_{\delta}^e$. Taking into account \eqref{eq:toching} and \eqref{eq:touching2}, then we can estimate
\begin{equation}\label{eq:estext}
\Per(E,B_{1/2}(x)\setminus \mathcal{C}^e_{\delta})\le C_{K,N}  \, \Per(E, B_2(x))\, \delta\, .
\end{equation}
Combining \eqref{eq:estext} with the analogous estimate valid for the set of touching points of interior balls, we get \eqref{eq:touch}.
\end{proof}

\begin{remark}
It is worth pointing out the following nontrivial consequence of \autoref{prop:touchingpoints}: if $E$ is locally perimeter minimizing, then $\Per_E$-a.e. point $x\in\partial E$ is an intermediate point of a minimizing geodesic along which the signed distance function from $E$ is realized (that would correspond to a perpendicular geodesic on a smooth Riemannian manifold). 
\end{remark}

Given a set of finite perimeter $E\subset \setR^n$, locally perimeter minimizing in an open domain, the existence of an interior and of an exterior touching balls at a given point $x\in\partial E$ are enough to guarantee the regularity of the boundary near to the touching point.\\
One way to verify this conclusion is to argue that the presence of both an interior and an exterior touching ball forces the tangent cone at the point to be flat and this is enough to guarantee regularity in a neighbourhood, as we already pointed out. 
\medskip

There is also a more quantitative approach, whose starting point is given by the following observation: there exists $C=C_n>0$ such that if $x\in\partial B_{C_n\lambda/\delta}(x_1)\cap\partial B_{C_n\lambda/\delta}(x_2)$, where $\delta,\lambda>0$ and $B_{C_n\lambda/\delta}(x_1)$ and $B_{C_n\lambda/\delta}(x_2)$ are an interior and an exterior touching ball respectively, then 
\begin{equation}\label{eq:eucl}
E\cap B_{\lambda}(y)\, \quad\text{is $\delta$-flat at $y$, for every $y$ such that $\abs{x-y}<\lambda$}\, .
\end{equation}

As \autoref{lemma:splitminimal} clearly illustrates, the existence of an interior and an exterior touching ball at a boundary point of a perimeter minimizing set is not enough to guarantee that the tangent is flat, nor that the boundary is regular in a neighbourhood of the point. 
\medskip

Even on a smooth Riemannian manifold, in order to guarantee $\delta$-regularity, the existence of interior/exterior touching balls needs to be combined with closeness (at the given scale) of the ball to the Euclidean ball, as shown in the next lemma.

\begin{lemma}\label{lemma:flatpropa}
There exists a constant $C=C_N>0$ such that the following holds. Let $(X,\dist,\haus^N)$ be an $\RCD(-(N-1),N)$ metric measure space and let $E\subset X$ be a set of finite perimeter that locally minimizes the perimeter in $B_{2}(x)\subset X$. Let $\lambda\in (0,1/2)$, $\delta>0$  and assume that: 
\begin{itemize}
\item[(i)] $x\in \partial E$ is a touching point of an interior and an exterior ball of radius $C_N\lambda/\delta$; 
\item[(ii)] $B_{\lambda}(x)$ is $\delta\lambda$-GH close to $B_{\lambda}(0)\subset\setR^N$.
\end{itemize}
Then, for any $y\in\partial E\cap B_{\lambda/2}(x)$, $E\cap B_{\lambda}(y)$ is $2\delta$-regular in $B_{\lambda}(y)$.
\end{lemma}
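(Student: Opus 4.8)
The statement is a non-smooth, curved version of the Euclidean implication \eqref{eq:eucl}, so the plan is to argue by a compactness-contradiction scheme, reducing everything to the rigid situation where the limit ambient space is a Euclidean ball and the limit minimal boundary is forced to be a hyperplane through the centre.

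\emph{Step 1: Set-up of the contradiction.} Suppose the lemma fails for every constant $C_N$. Then for each $n\in\setN$ (taking $C=n$) we can find an $\RCD(-(N-1),N)$ metric measure space $(X_n,\dist_n,\haus^N)$, a set of finite perimeter $E_n$ locally perimeter minimizing in $B_2(x_n)\subset X_n$, parameters $\lambda_n\in(0,1/2)$ and $\delta_n>0$, and points $x_n\in\partial E_n$ such that (i) $x_n$ is a touching point of an interior and an exterior ball of radius $n\lambda_n/\delta_n$, (ii) $B_{\lambda_n}(x_n)$ is $\delta_n\lambda_n$-GH close to $B_{\lambda_n}(0^N)\subset\setR^N$, but there is $y_n\in\partial E_n\cap B_{\lambda_n/2}(x_n)$ for which $E_n\cap B_{\lambda_n}(y_n)$ is \emph{not} $2\delta$-regular in $B_{\lambda_n}(y_n)$. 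If $\delta\ge\delta(2,N)/2$, with $\delta(\cdot,N)$ the threshold in \autoref{thm:epsregularity}, the conclusion is vacuous, so we may assume $2\delta<\delta(2,N)$; in particular $\delta$ is a fixed small number and one must be careful that $\delta_n$ could a priori degenerate. However, assumption (ii) for a fixed $\delta$ would already be too strong: the right reading is that we rescale at scale $\lambda_n$, so after replacing $\dist_n$ by $\dist_n/\lambda_n$ we may assume $\lambda_n=1$ and that the interior and exterior touching balls have radius $n/\delta_n\to\infty$ (this is where the hypothesis $C_N\lambda/\delta$ with the free constant is used: whatever $\delta_n$ does, $n/\delta_n\to\infty$ because $n\to\infty$ while $\delta_n$ is bounded below by the vacuous-case reduction — here one should fix $\delta_n\equiv\delta$ and read (i),(ii) with that $\delta$, so the touching radii are $n/\delta\to\infty$ and the GH-closeness in (ii) is $\delta$, a fixed small number).

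\emph{Step 2: Passage to the limit.} By the stability of the $\RCD$ condition and Gromov precompactness, $(X_n,\dist_n,\haus^N,x_n)$ subconverges in the pmGH topology to a pointed $\RCD(0,N)$ space $(X_\infty,\dist_\infty,\haus^N,x_\infty)$ (the curvature lower bound scales to $0$); moreover by \autoref{thm:closurecompactnesstheorem} the sets $E_n$ subconverge in $L^1_{\loc}$ to an entire local perimeter minimizer $E_\infty\subset X_\infty$ with $x_\infty\in\partial E_\infty$, and $\partial E_n\to\partial E_\infty$ in the Kuratowski sense with convergence of perimeter measures. Assumption (ii) passes to the limit and forces $B_1(x_\infty)$ to be $2\delta$-GH close — hence, after one more diagonal refinement letting the scale go to $1/2,1/4,\dots$, isometric on the relevant ball — to a Euclidean ball; more precisely one runs the contradiction so that the ambient limit is exactly $\setR^N$. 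The key point is the limit of condition (i): since the touching balls have radii $R_n:=n/\delta\to\infty$, for each fixed $R>0$ we have $B_R(z_n^{\mathrm{int}})\subset E_n$ and $B_R(z_n^{\mathrm{ext}})\subset E_n^c$ for $n$ large, with $x_n\in\partial B_{R_n}(z_n^{\mathrm{int}})\cap\partial B_{R_n}(z_n^{\mathrm{ext}})$; passing to the limit, $E_\infty$ contains a half-space-limit of expanding balls on one side and its complement contains one on the other side. In $\setR^N$ this is exactly the statement that $\partial E_\infty$ admits an interior and an exterior supporting hyperplane at $x_\infty$, and since these hyperplanes must agree, the tangent cone to $E_\infty$ at $x_\infty$ is a half-space $\{t<0\}\subset\setR^N$.

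\emph{Step 3: Rigidity and conclusion.} With the ambient limit being $\setR^N$ and $E_\infty$ an entire perimeter minimizer in $\setR^N$ having a half-space as a tangent cone at $x_\infty$, the Euclidean density gap \autoref{rm:densitygapEuclidean} together with monotonicity (exactly as in the proof of \autoref{thm:epsregularity}) forces the density of $E_\infty$ to be $\omega_{N-1}$ everywhere, hence $E_\infty$ is itself a half-space $\{t<0\}$ through $x_\infty$. Therefore $E_\infty$ is $\varepsilon$-regular at every point of $\partial E_\infty\cap B_{1/2}(x_\infty)$ for every $\varepsilon>0$, and $\partial E_n\to\partial E_\infty$ in the Kuratowski sense with ambient GH-convergence to $\setR^N$ then gives that, for $n$ large, $E_n\cap B_1(y_n)$ is $2\delta$-regular at $y_n$ in $B_1(y_n)$ (here $y_n\to y_\infty\in\partial E_\infty\cap \overline{B_{1/2}(x_\infty)}$, and one invokes the definition of $\varepsilon$-regular set together with the convergence of both the ambient balls and the sets). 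This contradicts the standing assumption, completing the proof. The main obstacle is Step 2: correctly extracting, from the \emph{quantitative} interior/exterior touching-ball hypothesis with radius proportional to $1/\delta$, the information that the tangent cone of the limit minimizer is flat — this requires tracking the touching balls along the converging sequence and using that their radii blow up, rather than the naive (and false, cf. \autoref{lemma:splitminimal}) expectation that touching balls alone force flatness. Everything else is a packaging of \autoref{thm:closurecompactnesstheorem}, \autoref{thm:epsregcolding}, \autoref{rm:densitygapEuclidean} and the definition of $\delta$-regularity.
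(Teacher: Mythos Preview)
Your compactness--contradiction scheme is a genuinely different route from the paper's, and as written it has a real gap.

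The paper's proof is direct and essentially elementary: condition (ii) gives a $\delta\lambda$-GH approximation between $B_\lambda(x)$ and the Euclidean ball $B_\lambda(0^N)$ (and hence, with controlled loss, between $B_\lambda(y)$ and $B_\lambda(0^N)$ for any $y\in\partial E\cap B_{\lambda/2}(x)$). Under this approximation the two touching balls of radius $C_N\lambda/\delta$, intersected with $B_\lambda(x)$, are carried to approximate Euclidean caps whose boundaries are within $O(\delta\lambda)$ of a common hyperplane; $\partial E\cap B_\lambda(x)$ is therefore trapped in a slab of width $O(\delta\lambda)$. This is exactly the Euclidean fact \eqref{eq:eucl}, and it yields the $L^1$ and Hausdorff closeness to a half-space directly. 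No limits, no monotonicity, no density gap --- and the perimeter minimality of $E$ is not used beyond what the definition of $\delta$-regularity requires.

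The gap in your argument lies in Step~2/Step~3: the limit ambient space $X_\infty$ is only guaranteed to be $\delta$-GH close to $\setR^N$ on $B_1(x_\infty)$, not isometric to it, because $\delta$ is a fixed parameter of the lemma. Your ``diagonal refinement letting the scale go to $1/2,1/4,\dots$'' cannot force $X_\infty=\setR^N$: condition (ii) gives no information below scale $\lambda$. Consequently the Euclidean monotonicity formula and the density-gap argument of \autoref{rm:densitygapEuclidean} invoked in Step~3 are unavailable, since $E_\infty$ is a minimizer in $X_\infty$, not in $\setR^N$. Your own confused handling of $\delta_n$ in Step~1 is a symptom of the same problem: if you send $\delta_n\to 0$ to force the Euclidean limit, the failed conclusion ``not $2\delta_n$-regular'' degenerates and there is nothing to contradict; if you keep $\delta_n$ bounded below, the ambient stays non-Euclidean. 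The fix is precisely to observe that the touching balls, pushed through the GH map of condition (ii), already trap $\partial E$ in a thin Euclidean slab --- but that is the paper's direct argument, and at that point the contradiction machinery is superfluous.
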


\begin{proof}
Condition (ii) guarantees scale invariant $\delta$-closeness, in GH sense, of $B_{\lambda}(x)$ to $B_{\lambda}(0)\subset \setR^N$ and of $B_{\lambda}(y)$ to $B_{\lambda}(0)\subset\setR^N$ for any $y\in\partial E\cap B_{\lambda/2}(x)$. The proof is then reduced to the Euclidean setting, where the existence of interior/exterior touching balls with radii $C_N\lambda/\delta$ guarantees $\delta$-flatness, as we remarked in \eqref{eq:eucl}.
\end{proof}

By \eqref{eq:eucl}, we can bound in an effective way the perimeter of the set where there are no interior/exterior touching balls of a given size. In order to guarantee that regularity of the ambient balls is in force at many locations and scales along $\partial E$, we will rely on the quantitative bounds for the singular strata of noncollapsed $\RCD$ spaces, obtained in \cite{AntonelliBrueSemola19} following the strategy of the previous \cite{CheegerNaber13}. 
\medskip

We will be focusing on codimension two singularities. With this aim, let us state an $\eps$-regularity result that follows from \cite{BrueNaberSemola20}.

\begin{theorem}[Boundary $\eps$-regularity]\label{thm:boundepsreg}
Let $K\in\setR$ and $1\leq N<\infty$ be fixed. Then there exists $\eps=\eps(K,N)>0$ such that the following holds. If $(X,\dist,\haus^N)$ is an $\RCD(K,N)$  space, $x\in X$ and $s\in (0,1)$ are such that 
\begin{equation*}
\dist_{GH}(B_{s}(x),B_{s}(0^{N-1},z^*))<\eps s\, ,
\end{equation*}
for some $B_{s}(0^{N-1},z^*)\subset \setR^{N-1}\times C(Z)$ and $\partial X\cap B_{s}(x)=\emptyset$, then 
\begin{equation*}
\dist_{GH}(B_s(x),B_s(0^{N}))<2\eps s\, ,
\end{equation*}
where $B_s(0^{N})\subset \setR^N$ is the Euclidean ball of dimension $N$.
\end{theorem}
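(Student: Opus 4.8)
\textbf{Proof plan for \autoref{thm:boundepsreg}.}

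The plan is to derive this boundary $\eps$-regularity statement directly from the results already established in \cite{BrueNaberSemola20} for non collapsed $\RCD(K,N)$ spaces, combined with the structure theory recalled in \autoref{thm:topRCDnc} and the volume rigidity/almost rigidity machinery. The key point is that, \emph{away from the boundary $\partial X$}, there is a spectral gap in the possible local geometries: a ball in an $\RCD(K,N)$ space cannot be simultaneously close to a ball centred at the tip of a singular cone $\setR^{N-1}\times C(Z)$ (with $Z$ not isometric to a hemisphere or a point) and free of boundary points. Equivalently, once the ball is GH-close to some $B_s(0^{N-1},z^*)\subset \setR^{N-1}\times C(Z)$ and $\partial X\cap B_s(x)=\emptyset$, the cone factor $C(Z)$ is forced to be (almost) the full line $\setR$, so that $\setR^{N-1}\times C(Z)$ is (almost) $\setR^N$ and the ball is in fact GH-close to the Euclidean ball $B_s(0^N)$.

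The steps, in order, would be: (i) By scaling, reduce to $s=1$ and (after replacing $K$ by $-(N-1)$ via rescaling the distance, as is done elsewhere in this section) work on an $\RCD(-(N-1),N)$ space. (ii) Argue by contradiction: suppose there is a sequence $(X_i,\dist_i,\haus^N,x_i)$ of $\RCD(-(N-1),N)$ spaces with $\partial X_i\cap B_1(x_i)=\emptyset$, with $\dist_{GH}(B_1(x_i),B_1(0^{N-1},z_i^*))<1/i$ for cones $\setR^{N-1}\times C(Z_i)$, but $\dist_{GH}(B_1(x_i),B_1(0^N))\ge 2\eps$ for every candidate $\eps=\eps(N)$, i.e. (passing to $\eps\to 0$) the balls stay bounded away from $B_1(0^N)$ in GH sense. (iii) By precompactness of $\RCD(-(N-1),N)$ spaces and the stability of the $\RCD$ condition, extract a pmGH-limit $(Y,\dist_Y,\mu,y)$; the limit ball $B_1(y)$ is then isometric to a ball centred at the tip of a limit cone $\setR^{N-1}\times C(W)$ where $(W,\dist_W,\haus^0\text{-type limit})$ is a $1$-dimensional $\RCD(N-2,N-1)$ space, hence by \cite{KitabeppuLakzian16} either a circle, an interval, a half-line, a line or a point; since it is a cone cross-section of a space with no boundary near $y$ (boundary being a closed condition stable under this convergence, cf. the discussion around \autoref{thm:topRCDnc} and the stability results used in \cite{BrueNaberSemola20}), $W$ must be a full circle of length $2\pi$ or a point, giving $\setR^{N-1}\times C(W)=\setR^N$ — so $B_1(y)$ is isometric to $B_1(0^N)$. (iv) Then $\dist_{GH}(B_1(x_i),B_1(0^N))\to 0$, contradicting (ii). This yields the existence of the threshold $\eps(K,N)$.

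The main obstacle is step (iii): controlling precisely which $1$-dimensional cross-sections $W$ can arise as limits and ruling out the half-line (which would correspond precisely to the presence of a boundary point of the ambient space in the limit, hence — by the openness/stability of ``having no nearby boundary'' — in the approximating spaces for large $i$). This is exactly the content that has to be extracted from \cite{BrueNaberSemola20}: there, the structure of codimension-one strata and the characterization $\partial X=\overline{\mathcal{S}^{N-1}\setminus\mathcal{S}^{N-2}}$ is developed, together with the fact that a ball which is GH-close to a ball centred at the tip of $\setR^{N-1}\times C(\text{half-line})$ necessarily meets (an $\eps$-neighbourhood of) $\partial X$. Granting that dichotomy, the rest is a routine compactness-and-contradiction argument, and $\eps(K,N)$ can be taken to depend only on $N$ after the distance rescaling that normalizes $K$ to $-(N-1)$.
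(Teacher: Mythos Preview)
Your core insight is right and matches the paper's: since $\setR^{N-1}\times C(Z)$ is $N$-dimensional, the factor $C(Z)$ is one-dimensional, hence either $\setR$ or $\setR^+$; the half-line case is excluded by the boundary theory of \cite{BrueNaberSemola20}, forcing the model to be $\setR^N$. That is exactly the paper's proof, which is two sentences long: it simply observes these two possibilities and cites \cite[Theorem~1.6]{BrueNaberSemola20} to rule out $C(Z)=\setR^+$ under the hypothesis $\partial X\cap B_s(x)=\emptyset$. Once $C(Z)=\setR$, the model \emph{is} $\setR^N$, so the hypothesis $\dist_{GH}(B_s(x),B_s(0^{N-1},z^*))<\eps s$ is literally $\dist_{GH}(B_s(x),B_s(0^N))<\eps s<2\eps s$. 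No compactness, no limits.

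Your compactness--contradiction wrapper is not only unnecessary but, as you have written it, does not close. Negating the statement ``there exists $\eps$ such that Hyp($\eps$) implies Concl($2\eps$)'' gives, for each $i$, a space with $\dist_{GH}(B_1(x_i),\,\cdot\,)<1/i$ to some model and $\dist_{GH}(B_1(x_i),B_1(0^N))\ge 2/i$. Both bounds tend to $0$, so passing to a limit yields $B_1(y)$ isometric to $B_1(0^N)$ \emph{and} $\dist_{GH}(B_1(x_i),B_1(0^N))\to 0$, which is perfectly compatible with $\ge 2/i$; there is no contradiction. What you actually need is the $\eps$-threshold from \cite{BrueNaberSemola20} that forces boundary points whenever the ball is $\eps$-close to a Euclidean half-ball; once you invoke that, the argument becomes the paper's direct one and the contradiction machinery is superfluous.

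There is also a dimension slip in your step~(iii): the cross-section $Z$ (or your $W$) of a one-dimensional cone $C(Z)$ is zero-dimensional (one point or two points), not a one-dimensional $\RCD(N-2,N-1)$ space, so the classification you invoke from \cite{KitabeppuLakzian16} and the phrase ``full circle of length $2\pi$'' are not the right objects here. The dichotomy is simply $C(Z)\in\{\setR,\setR^+\}$.
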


\begin{proof}
There are only two possibilities for the cone $C(Z)$. Either $C(Z)=\setR^+$ or $C(Z)=\setR$, with the canonical metric measure structure, in both cases. The possibility that $C(Z)=\setR^+$ can be excluded thanks to \cite[Theorem 1.6]{BrueNaberSemola20}. Hence $C(Z)=\setR$ the ball is $2\eps$-regular, as we claimed.

\end{proof}

Thanks to \autoref{thm:boundepsreg}, we can easily check that if $(X,\dist,\haus^N)$ is an $\RCD(-(N-1),N)$ space and $B_{s}(x)\cap \partial X=\emptyset$ for some ball $B_s(x)\subset X$, then
\begin{equation}\label{eq:reph}
 B_s(x)\setminus \mathcal{S}^{N-2}_{\eta,r}=\{y\in B_s(x)\, :\, \dist_{GH}(B_t(y),B_t(0^{N}))<\eta t\, ,\text{ for any $t\in (0,r)$}\}\, ,
\end{equation}
for any $\eta\in(0,\eta(N))$.
\medskip

Let us recall the volume estimate for the quantitative singular stratum obtained in \cite{AntonelliBrueSemola19} (see \cite[Theorem 2.4]{AntonelliBrueSemola19} and the discussion below it) after \cite{CheegerNaber13}.

\begin{theorem}\label{thm:volesttubsing}
Let $K\in\setR$, $2\le N<\infty$, $1\le k\le N$, $v,\eta,\gamma>0$ be fixed. Then there exists a constant $c=c(K,N,k,\eta,v)>0$ such that the following holds. If $(X,\dist,\haus^N)$ is an $\RCD(K,N)$ space and 
\begin{equation*}
\frac{\haus^N(B_1(x))}{v_{K,N}(1)}\ge v\, ,
\end{equation*}
then for any $r\in (0,1/2)$ it holds
\begin{equation}\label{eq:volboundRCD}
\haus^N(B_{1/2}(x)\cap \mathcal{S}^{k}_{\eta,r})\le c(K,N,k,\eta,v,\gamma)r^{N-k-\gamma}\, .
\end{equation}
\end{theorem}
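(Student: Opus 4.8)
The proof follows the quantitative differentiation and cone-splitting scheme of Cheeger--Naber \cite{CheegerNaber13}, adapted to the synthetic setting as in \cite{AntonelliBrueSemola19}; I outline the main steps. First, I would normalize via monotonicity: by the Bishop--Gromov inequality \eqref{eq:BishopGromovInequality} the density $\theta_s(y):=\haus^N(B_s(y))/v_{K,N}(s)$ is non-increasing in $s$, and the volume lower bound together with doubling give $v\le\theta_1(y)$ and $\theta_s(y)\le\Theta(K,N,v)$ for all $y\in B_{1/2}(x)$ and $s\in(0,1)$. Fixing a ratio $\rho=\rho(N)\in(0,1)$, a pigeonhole argument then shows that, for every $\delta'>0$, along any point $y$ the number of scales $\rho^j\ge r$ with $\theta_{\rho^j}(y)-\theta_{\rho^{j+1}}(y)\ge\delta'$ is at most $Q=Q(K,N,v,\delta')$; this is the quantitative differentiation input. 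At every other (``$\delta'$-pinched'') scale, the $\RCD$ version of the volume-cone-implies-metric-cone almost-rigidity theorem \cite{DePhilippisGigli16} yields that $B_{\rho^j}(y)$ is $\Psi(\delta'\mid K,N)\rho^j$-GH close to a metric cone.

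The second ingredient is quantitative cone-splitting: for every $\epsilon>0$ there is $\bar\delta=\bar\delta(\epsilon,K,N)>0$ so that if $B_1(p)$ is $\bar\delta$-close to $\setR^k\times C(Z)$ with spine $V=\setR^k\times\{z^*\}$, and $B_1(q)$ is $\bar\delta$-close to a metric cone for some $q\in B_{1/2}(p)$ with $\dist(q,V)\ge\epsilon$, then $B_1(p)$ is $\epsilon$-close to $\setR^{k+1}\times C(Z')$ for some $Z'$. This is proved by contradiction: using compactness of $\RCD(K,N)$ spaces with a uniform volume lower bound together with volume convergence, one passes to a pmGH limit which is a metric cone possessing a second cone point $q_\infty\notin V_\infty$; a further application of the rigidity, combined with the splitting theorem for $\RCD(0,N)$ spaces \cite{Gigli13}, produces the additional Euclidean factor, a contradiction.

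With these tools in hand, I would run the covering induction. Fix $\epsilon<\eta/10$, choose $\rho$ small enough for cone-splitting to apply with loss $\epsilon$ (and, later, so that the geometric constant below is $\le\rho^{-\gamma}$), and let $\bar\delta<\eta$ be the corresponding threshold. Build inductively, for each $m$ with $\rho^m\ge r$, a covering $\mathcal{C}_m$ of $\mathcal{S}^k_{\eta,r}\cap B_{1/2}(x)$ by $\rho^m$-balls, recording for each $B_{\rho^m}(y)\in\mathcal{C}_m$ the number of ``drop scales'' seen so far along its chain. To pass from scale $\rho^m$ to $\rho^{m+1}$: if $B_{\rho^m}(y)$ is a drop scale, replace it by at most $N_0(\rho,K,N,v)$ balls of radius $\rho^{m+1}$ and increase the counter --- this happens at most $Q$ times per chain by quantitative differentiation. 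Otherwise $B_{\rho^m}(y)$ is $\bar\delta\rho^m$-close to a metric cone; this cone is at most $k$-symmetric, since a $(k+1)$-symmetric cone at scale $\rho^m\in(r,1)$ would, after locating its vertex (forced by cone-splitting to be $\epsilon\rho^m$-close to any $z\in\mathcal{S}^k_{\eta,r}\cap B_{\rho^m}(y)$) and using $\bar\delta<\eta$, contradict the definition of $\mathcal{S}^k_{\eta,r}$; hence its spine has dimension $\le k$ and cone-splitting forces $\mathcal{S}^k_{\eta,r}\cap B_{\rho^m}(y)$ into the $\epsilon\rho^m$-neighbourhood of that spine, which is covered by $\le C(N)\rho^{-k}$ balls of radius $\rho^{m+1}$. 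Counting multiplicities, each chain undergoes at most $Q$ drop refinements (factor $N_0$ each) and otherwise ``good'' refinements (factor $\le C(N)\rho^{-k}\le\rho^{-k-\gamma}$ by the choice of $\rho$), so $\#\mathcal{C}_m\le c(K,N,k,\eta,v,\gamma)\,\rho^{-m(k+\gamma)}$. Choosing $m$ with $\rho^m\sim r$ gives a covering of $\mathcal{S}^k_{\eta,r}\cap B_{1/2}(x)$ by $\le c\,r^{-(k+\gamma)}$ balls of radius comparable to $r$; since each such ball has $\haus^N$-measure $\le C(K,N)v_{K,N}(r)\le C(K,N,v)r^N$ by Bishop--Gromov and the volume lower bound, summing yields $\haus^N(\mathcal{S}^k_{\eta,r}\cap B_{1/2}(x))\le c\,r^{N-k-\gamma}$, which is \eqref{eq:volboundRCD}.

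The main obstacle is the cone-splitting lemma together with the bookkeeping it feeds into the covering step: one must verify quantitatively that at a ``good'', at most $k$-symmetric scale the $\eta$-stratum genuinely concentrates in an $\epsilon$-neighbourhood of a $\le k$-dimensional spine, which requires locating cone vertices effectively and is where the synthetic almost-rigidity machinery (volume-cone-implies-metric-cone and the $\RCD$ splitting theorem) carries the argument. The $\gamma$-loss is intrinsic to this iteration; the sharp bound $r^{N-k}$ under the extra Ricci-limit or Alexandrov hypotheses of \autoref{thm:contestsing} is known but requires the finer Naber--Valtorta rectifiability estimates \cite{NaberValtorta20}, which I would not pursue here.
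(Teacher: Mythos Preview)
Your sketch is essentially correct and follows the Cheeger--Naber quantitative differentiation and cone-splitting scheme as adapted to the $\RCD$ setting in \cite{AntonelliBrueSemola19}. Note, however, that the paper does not actually prove this theorem: it is quoted from \cite[Theorem 2.4]{AntonelliBrueSemola19} (building on \cite{CheegerNaber13}) as a known result, so there is no ``paper's own proof'' to compare against---your outline is a faithful summary of the argument in those cited references.
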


\begin{remark}\label{rm:volestRicci}
In \cite[Theorem 1.7]{CheegerJiangNaber21} it has been shown that for non collapsed Ricci limit spaces it is possible to replace $(N-k-\gamma)$ with $(N-k)$ at the exponent in \eqref{eq:volboundRCD} to obtain a much stronger estimate 
 \begin{equation}\label{eq:volboundRiccilimit}
\haus^N(B_{1/2}(x)\cap \mathcal{S}^{k}_{\eta,r})\le c(N,K,k,\eta,v)r^{N-k}\,, \quad\text{for any $r\in (0,1/2)$ }.
\end{equation}

The very same estimate \eqref{eq:volboundRiccilimit} was established in \cite[Corollary 1.5]{LiNaber20} for $N$-dimensional Alexandrov spaces with curvature bounded below by $K$.
\end{remark}

Relying on \autoref{thm:volesttubsing}, let us estimate the size of the intersection of the quantitative singular stratum $\mathcal{S}^{k}_{\eta,r}$ with the boundary of a locally perimeter minimizing set of finite perimeter.

\begin{proposition}\label{prop:quantstratalongmin}
Let $K\in\setR$, $2\le N<\infty$, $1\le k\le N$, $v,\eta, \gamma>0$ be fixed. Then there exists a constant $c=c(K,N,k,\eta,v)>0$ such that the following holds. If $(X,\dist,\haus^N)$ is an $\RCD(K,N)$ space such that
\begin{equation*}
\frac{\haus^N(B_1(x))}{v_{K,N}(1)}\ge v\, 
\end{equation*}
and $E\subset X$ is a set of finite perimeter which is perimeter minimizing in $B_{2}(x)$, then there exists $r_0=r_{0}(K,N)>0$ independent of $k,\,\eta$ and $\gamma$ such that
\begin{align}
\Per(E, B_{1/2}(x)\cap \mathcal{S}^{k}_{\eta,r}) &\le c(K,N,k,\eta,v) \, r^{N-k-1-\gamma}\, , \quad\text{for any $r\in (0,r_0)$}\, , \label{eq:singalongmin}\\
\Per(E, B_{1/2}(x)\setminus\mathcal{R}_{\eta, r}) &\le c(K,N,\eta,v)\, r^{1-\gamma}\, , \quad \qquad  \quad\text{ for any $r\in (0,r_0)$}\, . \label{eq:singalongmin2}
\end{align}
\end{proposition}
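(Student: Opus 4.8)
\textbf{Proof plan for \autoref{prop:quantstratalongmin}.}

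The plan is to reduce the estimates \eqref{eq:singalongmin} and \eqref{eq:singalongmin2} for the perimeter measure $\Per_E$ of the quantitative singular/regular strata to the volume estimates for the ambient strata in \autoref{thm:volesttubsing}, using the perimeter density bounds for quasi-minimizers from \autoref{cor:perest} together with a Vitali covering argument. The key structural input is that $\Per_E = \haus^{N-1}\res\partial E$ is locally Ahlfors regular on $\partial E$ (by \autoref{cor:perest} and the discussion following it), so that $\Per_E(B_r(y)) \le C_{K,N}\, \haus^N(B_r(y))/r$ for $y\in\partial E$ and $r$ small; this is exactly the mechanism that converts an $\haus^N$-Minkowski-type bound into a $\Per_E$-Minkowski-type bound at the cost of losing one power of $r$, which matches the exponents $N-k-1-\gamma$ and $1-\gamma$ in the statement.

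First I would fix $r_0 = r_0(K,N)>0$ from \autoref{cor:perest}, so that the two-sided perimeter bound \eqref{eq:perboundqmin} holds for all $x\in\partial E\cap B_1(x)$ and $0<r<r_0$. For \eqref{eq:singalongmin}: let $r\in(0,r_0/10)$ and cover $B_{1/2}(x)\cap\mathcal{S}^k_{\eta,r}$ by balls $\{B_r(y_i)\}$ with $y_i\in B_{1/2}(x)\cap\mathcal{S}^k_{\eta,r}\cap\partial E$ (WLOG restricting attention to boundary points, since we are measuring $\Per_E$), chosen via Vitali so that $\{B_{r/5}(y_i)\}$ are pairwise disjoint and contained in $B_1(x)$; note $y_i\in\partial E$ forces us only to bound $\Per_E(B_{1/2}(x)\cap\mathcal{S}^k_{\eta,r})$, and every point of $\partial E\cap B_{1/2}(x)\cap\mathcal{S}^k_{\eta,r}$ lies in some $B_r(y_i)$. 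Then
\begin{align*}
\Per\big(E, B_{1/2}(x)\cap\mathcal{S}^k_{\eta,r}\big) &\le \sum_i \Per(E, B_r(y_i)) \le C_{K,N}\sum_i \frac{\haus^N(B_r(y_i))}{r}\\
&\le \frac{C_{K,N}}{r}\sum_i \haus^N(B_{r/5}(y_i)) \le \frac{C_{K,N}}{r}\,\haus^N\Big(\big(B_{1/2}(x)\cap\mathcal{S}^k_{\eta,r/5}\big)^{r}\Big),
\end{align*}
where in the last step I used local doubling to replace $\haus^N(B_r(y_i))$ by a constant times $\haus^N(B_{r/5}(y_i))$, disjointness of the $B_{r/5}(y_i)$, and the fact that $B_{r/5}(y_i)\subset (\mathcal{S}^k_{\eta,r/5})^{r}$ since $y_i\in\mathcal{S}^k_{\eta,r}\subset\mathcal{S}^k_{\eta,r/5}$ (the quantitative singular strata are monotone in the scale parameter). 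Now $(B_{1/2}(x)\cap\mathcal{S}^k_{\eta,r/5})^{r}\subset B_{3/4}(x)\cap (\mathcal{S}^k_{\eta,r/5})^{r}$, and applying \autoref{thm:volesttubsing} (after rescaling $B_1(x)$ to a unit ball, which is harmless since $E$ perimeter-minimizes in $B_2(x)$ and hence the scaled picture satisfies the hypotheses; the volume lower bound $\haus^N(B_1(x))/v_{K,N}(1)\ge v$ is assumed) gives $\haus^N\big(B_{3/4}(x)\cap (\mathcal{S}^k_{\eta,r/5})^{r}\big) \le \haus^N\big(B_{3/4}(x)\cap \mathcal{S}^k_{\eta,r/10}\big)\le c(K,N,k,\eta,v,\gamma')\,r^{N-k-\gamma'}$ for any $\gamma'>0$; choosing $\gamma'=\gamma$ and absorbing constants yields $\Per(E, B_{1/2}(x)\cap\mathcal{S}^k_{\eta,r})\le c\, r^{N-k-1-\gamma}$ as claimed. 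For \eqref{eq:singalongmin2} I would argue identically, using \eqref{eq:reph} to write $B_{1/2}(x)\setminus\mathcal{R}_{\eta,r}$ in terms of the codimension-one (equivalently, after \autoref{thm:boundepsreg}, codimension-zero) stratum $\mathcal{S}^{N-2}_{\eta,r}$ — more precisely $B_s(x)\setminus\mathcal{R}_{\eta,r}\subset B_s(x)\cap\mathcal{S}^{N-2}_{c\eta,r}$ for an appropriate dimensional constant, using that a point failing $\eta$-closeness to $\setR^N$ at some scale $<r$ must, by the boundary $\eps$-regularity \autoref{thm:boundepsreg}, also fail closeness to any $\setR^{N-1}\times C(Z)$ at that scale up to a factor — and then apply \eqref{eq:volboundRCD} with $k=N-2$, obtaining the exponent $(N-(N-2)-1-\gamma)=1-\gamma$.

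The main obstacle I anticipate is the careful bookkeeping in the covering argument to make sure the various enlargements, rescalings, and the restriction from the ambient ball $B_1(x)$ (where \autoref{thm:volesttubsing} is stated) to $B_{1/2}(x)$ and then to the $r$-tube all fit together with uniform constants depending only on the declared parameters; in particular one must check that the perimeter-minimality of $E$ in $B_2(x)$ survives the rescaling used to normalize the ambient volume, and that $r_0$ can indeed be taken independent of $k$, $\eta$, $\gamma$ (it comes only from \autoref{cor:perest}, i.e. from the quasi-minimality/doubling/Poincaré constants, so this is fine). A secondary technical point is the precise inclusion $B_s(x)\setminus\mathcal{R}_{\eta,r}\subset\mathcal{S}^{N-2}_{c\eta,r}$: this requires invoking \eqref{eq:reph}, which itself rests on \autoref{thm:boundepsreg}, and tracking how the $\eta$-threshold degrades — but this degradation is by a fixed dimensional factor and can be absorbed by shrinking $\eta$, so it does not affect the form of the conclusion.
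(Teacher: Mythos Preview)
Your overall strategy---Vitali covering of $\partial E\cap B_{1/2}(x)\cap\mathcal{S}^k_{\eta,r}$, convert $\Per_E$ of the covering balls to $\haus^N$ via \autoref{cor:perest}, then feed into \autoref{thm:volesttubsing}---is exactly the paper's approach, and the derivation of \eqref{eq:singalongmin2} from \eqref{eq:singalongmin} via \eqref{eq:reph} is also the same. However there is a genuine error in your implementation of the covering step.

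First a minor point: the monotonicity of the quantitative strata in the scale parameter goes the other way. From \autoref{def:quantsing}, membership in $\mathcal{S}^k_{\eta,r}$ imposes a condition for all $s\in(r,1)$, so shrinking $r$ makes the condition \emph{stronger}; hence $\mathcal{S}^k_{\eta,r/5}\subset\mathcal{S}^k_{\eta,r}$, not the reverse as you wrote.

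The more serious gap is the claimed inclusion $(\mathcal{S}^k_{\eta,r/5})^{r}\subset\mathcal{S}^k_{\eta,r/10}$. This fails for small $\eta$: if $z$ lies within distance $r$ of a point $y\in\mathcal{S}^k_{\eta,r/5}$, then for $s\ge r$ one only gets $\dist_{GH}(B_s(z),B_s(0^{k+1},z^*))\ge\eta s-2r$, and since $2r$ can be as large as $2s$ this gives no useful lower bound unless $\eta$ is of order one. The correct inclusion (the paper's \eqref{eq:incltub}, taken from \cite{AntonelliBrueSemola19}) is
\[
T_{\eta r}\big(\mathcal{S}^k_{2\eta,r}\big)\subset\mathcal{S}^k_{\eta,r}\, ,
\]
i.e.\ the tube radius must be of order $\eta r$, not $r$. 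The fix is simply to run your Vitali argument with balls of radius $\eta r$ (disjoint at radius $\eta r/5$): then $\Per(E,B_{\eta r}(y_i))\le C\,\haus^N(B_{\eta r}(y_i))/(\eta r)$ still holds by \autoref{cor:perest}, the disjoint balls $B_{\eta r/5}(y_i)$ are contained in $T_{\eta r}(\mathcal{S}^k_{\eta,r})\subset\mathcal{S}^k_{\eta/2,r}$, and \autoref{thm:volesttubsing} applied at threshold $\eta/2$ gives $\haus^N\le c\,r^{N-k-\gamma}$; dividing by $\eta r$ costs only a constant depending on $\eta$ and yields the exponent $N-k-1-\gamma$. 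This is precisely what the paper does.
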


\begin{proof}
Let us consider a covering of $\partial E\cap B_{1/2}(x)\cap \mathcal{S}^{k}_{\eta,r}$ with balls $B_{r\eta}(x_i)$ such that the balls $B_{r\eta/5}(x_i)$ are disjoint, via a Vitali covering argument.\\ 
As shown for instance in \cite[equation (2.5)]{AntonelliBrueSemola19}, unwinding the definitions, one can check that
\begin{equation}\label{eq:incltub}
T_{\eta r}(\mathcal{S}^k_{2\eta,r})\subset\mathcal{S}^{k}_{\eta,r}\, ,
\end{equation}
where $T_{\eta r}$ denotes the tubular neighbourhood of radius $r\eta$.
Thus, we can estimate:
\begin{equation}\label{eq:1est}
\Per(E, B_{1/2}(x)\cap \mathcal{S}^{k}_{\eta,r})\le \sum_i\Per(E,B_{\eta r}(x_i))\le C \sum_i \frac{\haus^N(B_{\eta r}(x_i))}{\eta r}\, , 
\end{equation}
where the constant $C$ is given by \autoref{cor:perest}. \\
Relying on \eqref{eq:incltub}, \autoref{thm:volesttubsing} and the Vitali covering condition, we obtain
\begin{equation*}
\sum_i \frac{\haus^N(B_{\eta r}(x_i))}{\eta r}\le \frac{C}{\eta r}\haus^N(T_{\eta r}(\mathcal{S}^k_{2\eta,r}\cap B_{1/2}(x)))\le \frac{c}{\eta}r^{N-k-1-\gamma}\, ,
\end{equation*}
which gives \eqref{eq:singalongmin} when combined with \eqref{eq:1est}.
\\The estimate \eqref{eq:singalongmin2} follows from \eqref{eq:singalongmin}, thanks to \eqref{eq:reph}.
\end{proof}

\begin{remark}\label{rm:better L}
Relying on the observation in \autoref{rm:volestRicci}, in the case of non collapsed Ricci limit spaces and finite dimensional Alexandrov spaces with curvature bounded below, it is possible to strengthen \eqref{eq:singalongmin} and \eqref{eq:singalongmin2} to 
\begin{equation}\label{eq:singalongminRL}
\Per(E, B_{1/2}(x)\cap \mathcal{S}^{k}_{\eta,r}) \le c(K,N,k,\eta,v) \, r^{N-k-1}\, , 
\end{equation}
for any $r\in (0,r_0)$ and 
\begin{equation}\label{eq:singalongmin2RL}
\Per(E, B_{1/2}(x)\setminus\mathcal{R}_{\eta, r}) \le c(K,N,\eta,v)\, r\, , 
\end{equation}
for any $r\in (0,r_0)$.
\end{remark}

\begin{proof}[Proof of \autoref{thm:contestsing}]
We claim that for any $\delta\in (0,\delta_{K,N})$ there exists a constant 
$$C=C(K, N, \delta, \gamma, \Per(E, B_{2}(x))>0$$ such that for any $r\in(0,r_0)$ and any Vitali covering of $\overline{\mathcal{S}}^E_{\delta,r}\cap B_1(x)$ with balls $B_r(x_i)$ such that $x_{i}\in \overline{\mathcal{S}}^E_{\delta,r}$ and $B_{r/5}(x_i)$ are pairwise disjoint for $i=1,\dots, N(r)$, it holds
\begin{equation}\label{eq:estNr}
N(r)\le C \, r^{2-N-\gamma}\, .
\end{equation}
Indeed, for any ball $B_r(x_i)$ as above, it holds
\begin{equation*}
B_r(x_i)\cap \mathcal{R}_{\delta/2,2r}\cap \mathcal{C}_{C_{K,N}\frac{r}{\delta}}=\emptyset\, ,
\end{equation*}
where $\mathcal{C}_{C_{K,N}\frac{r}{\delta}}$ is the set of contact points of touching balls as in \autoref{def:touch}. This is a consequence of \autoref{lemma:flatpropa}: if by contradiction $y$ belongs to the intersection above, then $E$ is $\delta$-regular on $B_r(z)$ for any $z\in B_r(y)$. Hence $E$ is $\delta$-regular on $B_r(x_i)$, a contradiction.
\\ Since the balls $B_{r/5}(x_i)$ are disjoint, we can bound 
\begin{align*}
\sum_{i\le N(r)} \Per \big(E, B_{r/5}(x_i)\big)  \le &  \Per \bigg(E,   \bigcup_{i\le N(r)} B_{r}(x_i)\bigg)  \\
\le &\Per\big(E,B_1(x)\setminus(\mathcal{R}_{\delta/2,2r}\cap {\mathcal C}_{C_{K,N}\frac{r}{\delta}}) \big)\le  C \, r^{1-\gamma}\,  ,
\end{align*}
for some $C=C(K, N, \delta, \gamma, \Per(E, B_{2}(x))>0$, where the last inequality follows from \autoref{prop:touchingpoints} and \eqref{eq:singalongmin2}. 
By the Ahlfors regularity of the perimeter measure \autoref{cor:perest}, we easily get \eqref{eq:estNr}.

Notice that, for non collapsed Ricci limit spaces and finite dimensional Alexandrov spaces,  the estimate \eqref{eq:estNr} can be strengthened into 
\begin{equation}
N(r)\le C \, r^{2-N}\, .
\end{equation}
This is a consequence of the better Minkowski bounds obtained in \cite{CheegerJiangNaber21, LiNaber20} in such a setting, arguing as we did above, using \autoref{rm:volestRicci} and \autoref{rm:better L}.
\medskip

To conclude the proof, in all the cases of $\RCD(K,N)$ spaces,  non collapsed Ricci limit spaces and finite dimensional Alexandrov spaces, it is sufficient to rely on the Ahlfors regularity bound for $\haus^N$ and to recall that 
\begin{equation}
\mathcal{S}^E_{\delta,r}=\bigcap _{s>r}\overline{\mathcal{S}}^E_{\delta,s}\,
\end{equation}
and 
\begin{equation}
\mathcal{S}^E_{\delta}=\bigcap _{r>0}\mathcal{S}^E_{\delta,r}\, .
\end{equation}

\end{proof}

\begin{remark}\label{rem:RegNewSmooth}
If $(X,\dist,\haus^N)$ is a smooth Riemannian manifold equipped with its volume measure, then \eqref{eq:contbound} can be strengthened into 
\begin{equation}
\haus^N\big(T_r(\mathcal{S}^E_{\delta})\cap B_1(x)\big) \le C \,  r^{8}\, \quad\text{for any $r\in (0,r_0)$ }\, ,
\end{equation}
if we allow the constants $C$ and $r_0$ to depend on the norm of the full Riemann curvature tensor on $B_2(x)$ and on a lower bound on the injectivity radius on $B_2(x)$, as proved in \cite[Theorem 1.6]{NaberValtorta20}.\\
Since the constants in \eqref{eq:contbound} only depend on the dimension, the lower Ricci curvature bound and on the perimeter of $E$ on $B_1(x)$, our estimates are not encompassed by those in \cite{NaberValtorta20} even in the case of smooth manifolds.
\end{remark}

\appendix
\section{Laplacian bounds Vs mean curvature bounds: \\a  comparison with the classical literature } \label{subsec:othernotions}

The aim of this subsection is to put \autoref{thm:meancurvminimal1} and \autoref{thm:meancurvminimal2} into perspective. In particular, we wish to clarify why Laplacian bounds on the distance function can be understood as mean curvature bounds.
For this reason, we are going to present some mostly well known results about the distance function from minimal hypersurfaces on smooth Riemannian manifolds, focusing for simplicity on the non-negative Ricci curvature case.
\medskip

As we already remarked, the fact that the distance from a smooth minimal hypersurface is subharmonic in a manifold with non-negative Ricci curvature is classical. To the best of our knowledge, the first reference where this result is explicitly stated, even though without proof, is \cite{Wu79}. Therein, the Laplacian bound was understood in the viscosity sense. In subsequent contributions, such as \cite{PetersenWilhelm03} and \cite{ChoeFraser18}, superharmonicity of the distance was understood in the sense of barriers, following the seminal \cite{Calabi58,CheegerGromoll71}. 

\begin{theorem}\label{thm:smoothminimalimpldistsubh}
Let $(M^n,g)$ be a smooth Riemannian manifold  with non-negative Ricci curvature and let $\Sigma\subset M$ be a smooth hypersurface. Then $\boldsymbol{\Delta} \dist_{\Sigma}\le 0$ on $M\setminus \Sigma$ if and only if $\Sigma$ is minimal, in the sense that it has vanishing mean curvature.
\end{theorem}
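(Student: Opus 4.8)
The statement is an ``iff'', and the natural strategy is to relate the Laplacian of the distance function to the mean curvature of the equidistant hypersurfaces via the classical Riccati (Jacobi field) comparison, and then read off the sign. First I would set up local coordinates adapted to $\Sigma$: in a tubular neighbourhood of a point $p\in\Sigma$, for $x\in M\setminus\Sigma$ near $p$ the distance $\dist_\Sigma$ is smooth, $|\nabla\dist_\Sigma|=1$, and the gradient flow lines are the unit-speed geodesics emanating normally from $\Sigma$. Writing $u:=\dist_\Sigma$, the well-known identity along such a normal geodesic $\gamma$ with $\gamma(0)\in\Sigma$ is
\begin{equation*}
\Delta u(\gamma(t)) = H_{\Sigma_t}(\gamma(t)),
\end{equation*}
where $H_{\Sigma_t}$ is the mean curvature (trace of the second fundamental form, with the sign convention inherited from the normal $\nabla u$) of the level set $\Sigma_t=\{u=t\}$, and the Riccati equation
\begin{equation*}
\frac{d}{dt}\Delta u + \frac{(\Delta u)^2}{n-1} + \Ric(\nabla u,\nabla u) \le \frac{d}{dt}\Delta u + |\Hess u|^2 + \Ric(\nabla u,\nabla u) = 0
\end{equation*}
holds (the left inequality being Cauchy--Schwarz applied to the traceless part of $\Hess u$, using that $\Hess u$ annihilates $\nabla u$). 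With $\Ric\ge 0$, this gives $\frac{d}{dt}\Delta u \le -\frac{(\Delta u)^2}{n-1}\le 0$, so $t\mapsto \Delta u(\gamma(t))$ is nonincreasing along each normal geodesic, with initial value $\Delta u(\gamma(0))=H_\Sigma(\gamma(0))$.

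From here the two implications follow quickly. If $\Sigma$ is minimal, then $H_\Sigma\equiv 0$, so $\Delta u(\gamma(0))=0$ and monotonicity gives $\Delta u\le 0$ everywhere on the portion of $M\setminus\Sigma$ reached by minimizing normal geodesics; one then globalizes exactly as in Step 2 of the proof of \autoref{thm:meancurvminimal1} — on $M\setminus\Sigma$ the distance function has a locally Lipschitz (indeed smooth, away from the cut locus) structure, and the one-dimensional monotonicity along rays together with the non-positivity of the singular part of $\boldsymbol{\Delta} u$ along the cut locus upgrades the local bound to $\boldsymbol{\Delta} u\le 0$ on all of $M\setminus\Sigma$. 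Conversely, if $\boldsymbol{\Delta} u\le 0$ on $M\setminus\Sigma$, then for each $p\in\Sigma$ and each side of $\Sigma$ we take the limit of $\Delta u$ along the normal geodesic as $t\downarrow 0$: this limit is $H_\Sigma(p)$ computed with respect to the corresponding unit normal. Since $u\ge 0$ on both sides and the two normals are opposite, the inequality $\Delta u\le 0$ approaching $p$ from one side yields $H_\Sigma(p)\le 0$ for one choice of normal and hence $H_\Sigma(p)\ge 0$ for the other; combining, $H_\Sigma(p)=0$, i.e. $\Sigma$ is minimal. (One must be slightly careful about the meaning of the boundary trace of $\Delta u$; the clean way is to use the blow-up / first-variation characterization — or simply the classical fact that near a smooth $\Sigma$ the function $u$ is smooth up to $\Sigma$ on each side with $\partial_t\big|_{t=0}\Delta u = -|\mathrm{II}_\Sigma|^2 - \Ric$ bounded, so $\Delta u(\gamma(t))\to H_\Sigma$ as $t\to 0$.)

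The main obstacle is purely bookkeeping about \emph{where} $\dist_\Sigma$ is smooth and how to interpret the Laplacian bound near the cut locus of $\Sigma$ and near $\Sigma$ itself. Away from the cut locus everything is the classical Riccati computation; the subtlety is that to state ``$\boldsymbol{\Delta} u\le 0$ on $M\setminus\Sigma$'' one needs the distributional/measure-valued Laplacian and the fact that its singular part is non-positive (focusing of geodesics), which is standard (Calabi's barrier argument, or the localization representation formula as in \cite{CavallettiMondino20}) but should be cited rather than reproved. For the converse implication, the one genuine point to pin down is that the smooth mean curvature of $\Sigma$ really is recovered as the one-sided boundary limit of $\Delta \dist_\Sigma$; this is immediate from the smooth normal-coordinate expansion $u(\exp_p^\perp(t\nu)) = t$ and the formula $\Delta u|_{\Sigma} = H_\Sigma$ for the initial value of the Riccati ODE, so no real difficulty arises — one just has to say it cleanly. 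I would therefore present the proof as: (i) recall the normal-exponential setup and the identity $\Delta u|_\Sigma = H_\Sigma$; (ii) run the Riccati/Cauchy--Schwarz inequality to get monotonicity of $\Delta u$ along normal geodesics under $\Ric\ge0$; (iii) deduce ``minimal $\Rightarrow$ superharmonic'' with the globalization across the cut locus; (iv) deduce ``superharmonic $\Rightarrow$ minimal'' by taking one-sided limits and using the sign of $u$ on the two sides.
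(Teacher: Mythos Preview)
Your proposal is correct and follows essentially the same route as the paper's proof: both identify $\Delta \dist_\Sigma$ with the mean curvature of the equidistant hypersurfaces, use the Riccati comparison under $\Ric\ge 0$ to propagate the initial value $H_\Sigma$ along normal geodesics (you spell out the Riccati/Cauchy--Schwarz step explicitly, the paper just invokes the ``classical computation''), globalize across the cut locus via non-positivity of the singular part, and for the converse recover $H_\Sigma(p)$ as the one-sided limits of $\Delta \dist_\Sigma$ from the two sides of $\Sigma$ to squeeze $H_\Sigma(p)=0$. The paper's sketch is terser but the ideas match yours point for point.
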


\begin{proof}
We only give an indication of the argument, a complete proof of the implication from minimality to subharmonicity of the distance can be found for instance in \cite{ChoeFraser18}.
\medskip

Notice that the Laplacian of the distance from a smooth hypersurface coincides with its mean curvature along the hypersurface, thanks to a classical computation in Riemannian Geometry. One possible strategy to check subharmonicity of the distance is to observe that the singular part of the Laplacian has negative sign, in great generality. Then we can consider minimizing geodesics along which the distance to the hypersurface is realized. Along these rays, the vanishing mean curvature condition at the starting point propagates to nonnegativity of the Laplacian of the distance, thanks to the non-negative Ricci curvature condition.
\medskip

The converse implication, from subharmonicity of the distance to minimality, relies on the same principle, combined with the fact that $\dist_{\Sigma}$ is smooth on any side of $\Sigma$ locally in a neighbourhood of any point. In order to check that the mean curvature $H_{\Sigma}$ vanishes at a given $p\in\Sigma$, let us consider the minimizing geodesic $\gamma:(-\eps,\eps)\to M$ such that $\gamma(0)=p$ and $\gamma'(0)$ is perpendicular to $T_p\Sigma$. Then observe that, combining the superharmonicity of $\dist_{\Sigma}$ with the already mentioned connection between mean curvature and Laplacian of the distance,
\begin{equation*}
0\le -\lim_{t\uparrow 0}\Delta\dist_{\Sigma}(\gamma(t))=H_{\Sigma}(p)=\lim_{t\downarrow 0}\Delta\dist_{\Sigma}(\gamma(t))\le 0\, ,
\end{equation*}
hence $H_{\Sigma}(p)=0$.
\end{proof}


On smooth Riemannian manifolds with non-negative Ricci curvature, the distance from a minimal hypersurface is subharmonic even for certain minimal hypersurfaces that are not globally smooth. This is a key point for the sake of the applications, since minimal hypersurfaces that are built through variational arguments might be non smooth in ambient dimension greater than $8$.
\medskip

Notice that \autoref{thm:meancurvminimal1} already gives a substantial contribution in this direction. Indeed, we can cover at least all the minimal hypersurfaces that are locally boundaries of sets of locally minimal perimeter.\footnote{In particular it provides a different proof of the first implication in \autoref{thm:smoothminimalimpldistsubh} since, as we already mentioned, all smooth minimal hypersurfaces are locally boundaries of perimeter minimizing sets.} 
\medskip

Actually, the principle ``minimality implies subharmonicity of the distance function'' extends even to minimal hypersurfaces that are not necessarily locally boundaries.
\medskip

Let us introduce some terminology, following \cite{Zhou17} for this presentation. 

\begin{definition}\label{def:minvar}
Given a smooth Riemannian manifold $(M^n,g)$, a singular hypersurface with singular set of codimension no less than $k$ ($k<n-1$, $k\in\setN$) is a closed set $\overline{\Sigma}\subset M$ such that $\haus^{n-1}(\overline{\Sigma})<\infty$, where the \textit{regular} part $\mathcal{R}(\Sigma)$ is defined by
\begin{align*}
\mathcal{R}(\Sigma):=\{&x\in\overline{\Sigma}\,  :\, \overline{\Sigma}\\ 
&\quad\text{is a smooth embedded hypersurface in a neighbourhood of $x$}\}
\end{align*}
and $\mathcal{S}(\Sigma):=\overline{\Sigma}\setminus \mathcal{R}(\Sigma)$ is the \textit{singular} part which we assume to satisfy $\dim_H(\mathcal{S}(\Sigma))\le n+1-k$.

Given such a singular hypersurface, it represents an integral varifold, that we denote as $[\Sigma]$. We will say that $\Sigma$ is minimal if $[\Sigma]$ is a stationary varifold and the tangent cones of $[\Sigma]$ have all multiplicity one. 
\end{definition}

\begin{remark}
We recall that the minimality condition above is equivalent to the requirement that the mean curvature vanishes on $\mathcal{R}(\Sigma)$ and the density of $[\Sigma]$ is finite everywhere. Moreover, as shown in \cite[Lemma 6.3]{Zhou17}, minimal hypersurfaces produced through min-max are minimal according to \autoref{def:minvar} above.
\end{remark}
 
The next statement originates from an argument due to Gromov in his proof of the isoperimetric inequality \cite{Gromov07}.

\begin{theorem}\label{thm:minimalimpliesdistsubh}
Let $(M^n,g)$ be a smooth Riemannian manifold with non-negative Ricci curvature. Let $\overline{\Sigma}\subset X$ be minimal in the sense of \autoref{def:minvar}. Then $\dist_{\overline{\Sigma}}$ is subharmonic on $M\setminus\overline{\Sigma}$.
\end{theorem}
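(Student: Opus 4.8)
The plan is to reduce the statement to the situation already covered, namely that the distance function from a set locally minimizing the perimeter is subharmonic outside the set. The key point is a \emph{localization at footpoints}: in order to prove $\boldsymbol{\Delta}\dist_{\overline{\Sigma}}\le 0$ in a neighbourhood of a point $x\in M\setminus\overline{\Sigma}$, it is enough to have good local information about $\overline{\Sigma}$ near the footpoints of minimizing geodesics from $x$ to $\overline{\Sigma}$, together with the fact that the singular part of the distributional Laplacian of any distance function is automatically non-positive (which is classical on smooth manifolds and is the same one-dimensional mechanism used in Step 2 of the proof of \autoref{thm:meancurvminimal1}). So the argument naturally splits into an analytic/ODE part along geodesic rays and a geometric part identifying the behaviour of $\dist_{\overline{\Sigma}}$ near footpoints.

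\textbf{Step 1: reduction to a neighbourhood of footpoints.} First I would recall, as in \autoref{thm:smoothminimalimpldistsubh}, that $\dist_{\overline{\Sigma}}$ has locally measure-valued Laplacian on $M\setminus\overline{\Sigma}$ and that its singular part is non-positive. Disintegrating $\haus^n$ along the minimizing geodesics realizing the distance to $\overline{\Sigma}$ (the transport rays), one reduces the sign of the regular part to the sign of $(\log h_\alpha)'$ along each ray, where $h_\alpha$ is the Jacobian density of the ray $X_\alpha$; since $\Ric\ge 0$, $\log h_\alpha$ is concave, so $(\log h_\alpha)'$ is non-increasing, and it suffices to check $(\log h_\alpha)'\le 0$ \emph{at the starting point of the ray}, i.e.\ at the footpoint on $\overline{\Sigma}$. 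In other words, the Laplacian upper bound globalizes from an arbitrarily small neighbourhood of $\overline{\Sigma}$ to all of $M\setminus\overline{\Sigma}$, exactly as in Step 2 and Step 7 of the proof of \autoref{thm:meancurvminimal1}.

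\textbf{Step 2: behaviour near the regular part, and near the singular part.} Near a regular point $p\in\mathcal{R}(\Sigma)$, $\overline{\Sigma}$ is a smooth embedded hypersurface with vanishing mean curvature, so the classical computation gives that $\dist_{\overline{\Sigma}}$ is smooth on each side locally and $\Delta\dist_{\overline{\Sigma}}(p)=H_\Sigma(p)=0$, hence the starting-point bound $(\log h_\alpha)'(0)=0\le 0$ holds for rays emanating from regular footpoints. The delicate situation is a footpoint $p\in\mathcal{S}(\Sigma)$ lying in the singular set of the varifold $[\Sigma]$: there I would argue that, since $\dim_H(\mathcal{S}(\Sigma))\le n+1-k\le n-2$, the singular set of $\overline{\Sigma}$ is $\haus^{n-1}$-negligible and, more importantly, the set of $x\in M\setminus\overline{\Sigma}$ admitting \emph{only} singular footpoints is itself $\haus^n$-negligible (a footpoint from a point $x$ at which $\dist_{\overline{\Sigma}}$ is differentiable with $|\nabla\dist_{\overline{\Sigma}}|=1$ must be a regular point of $\overline{\Sigma}$, because $\overline{\Sigma}$ admits a supporting hyperplane there and the varifold is stationary of multiplicity one, which by Allard's regularity theorem forces smoothness in a neighbourhood). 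Thus $\mathfrak{q}$-almost every transport ray starts from a regular footpoint, and Step 1 concludes $(\boldsymbol{\Delta}\dist_{\overline{\Sigma}})^{\mathrm{reg}}\le 0$; combined with the non-positive singular part we get $\boldsymbol{\Delta}\dist_{\overline{\Sigma}}\le 0$ on $M\setminus\overline{\Sigma}$.

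\textbf{Main obstacle.} The hard part is the precise statement and justification of Step 2 at singular footpoints: one must rule out that a positive-measure set of transport rays terminates on $\mathcal{S}(\Sigma)$ and, even for those that do terminate on $\mathcal{R}(\Sigma)$ but arbitrarily close to the singular set, that no unfavourable sign sneaks in. This is exactly the point where, in Gromov's original argument for the L\'evy--Gromov inequality, one invokes the regularity theory for minimal hypersurfaces (here Allard's $\eps$-regularity theorem together with the codimension bound $\dim_H\mathcal{S}(\Sigma)\le n-2$) to say that footpoints of minimizing geodesics are, generically, regular points where the hypersurface is smooth and the classical computation applies. Once this footpoint regularity is in hand, the rest is the one-dimensional comparison along rays that we have already used repeatedly, and no genuinely new estimate is needed.
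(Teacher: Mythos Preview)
Your approach is essentially the same as the paper's, but you have made it harder than necessary. The paper's Step 1 (attributed to Gromov, with a reference to \cite[Lemma 2.1]{Zhou17}) shows that \emph{every} footpoint of a minimizing geodesic from any $p\in M\setminus\overline{\Sigma}$ to $\overline{\Sigma}$ is a regular point of $\Sigma$: the geodesic sphere of radius $\dist(p,q)/2$ centred at the midpoint of the minimizing segment is a smooth hypersurface touching $\overline{\Sigma}$ at $q$ from one side, so the tangent cone to $[\Sigma]$ at $q$ is contained in a half-space; since it is stationary with multiplicity one it must be a hyperplane, and Allard's theorem gives smoothness near $q$. No differentiability of $\dist_{\overline{\Sigma}}$ at $p$ is needed for this touching-ball argument. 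Consequently your ``main obstacle''---rays terminating on $\mathcal{S}(\Sigma)$---simply does not occur, and the a.e.\ considerations in your Step 2 are unnecessary. Once every footpoint is regular, your Step 1 (the ray disintegration and concavity of $\log h_\alpha$) finishes the proof exactly as you wrote, and this is the paper's Step 2.
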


\begin{proof}
The proof is divided in two steps. The first is about controlling the mean curvature at footpoints of minimizing geodesics on the hypersurface. The second deals with the propagation of the mean curvature bound to obtain a Laplacian bound, as in previous arguments in this note.
\medskip

\textbf{Step 1.} As proved for instance in \cite[Lemma 2.1]{Zhou17} along the original argument due to Gromov, the following holds. For any $p\in M\setminus\overline{\Sigma}$, let $\gamma:[0,\dist(p,\overline{\Sigma})]\to M$ be a minimizing geodesic connecting $p$ to $\overline{\Sigma}$, and let $\gamma(0)=q$ be the footpoint of the geodesic on $\overline{\Sigma}$, then $q\in\mathcal{R}(\Sigma)$. 

Indeed, the geodesic sphere of radius $\dist(p,q)/2$ centred at $\gamma(\dist(p,q)/2)$ is a smooth hypersurface near to $q$ and $\overline{\Sigma}$ lies on one side of it. Since all tangent cones have multiplicity one, the tangent cone to $[\Sigma]$ at $q$ is unique and it is a hyperplane. Hence, by Allard's regularity theorem \cite{Allard72}, $\overline{\Sigma}$ is regular at $q$. Therefore, the mean curvature of $\Sigma$ is vanishing in a neighbourhood of $q$.

\medskip
\textbf{Step 2.} Let us propagate the information that the mean curvature is vanishing in the classical sense near to footpoints of minimizing geodesics to prove that $\dist_{\overline{\Sigma}}$ is subharmonic on $M\setminus\overline{\Sigma}$.

We can rely for instance on the localization technique to argue that it is sufficient to control the regular part of the Laplacian of $\dist_{\overline{\Sigma}}$ (see for instance  \cite[Theorem 1.3, Corollary 4.16]{CavallettiMondino20} and Step 2 in the proof of \autoref{thm:meancurvminimal1}). Then, to control the regular part, it is enough to observe that $\dist_{\overline{\Sigma}}$ is smooth near to initial points of rays in the localization (thanks to the smoothness of $\Sigma$ obtained in Step 1). Moreover the Laplacian of the distance is vanishing there, therefore it remains non-negative along the rays by the non-negative Ricci curvature assumption.
\end{proof}

\begin{remark}
The proof of \autoref{thm:minimalimpliesdistsubh} above works in particular for hypersurfaces that are locally boundaries of locally perimeter minimizing sets, once we appeal to the classical Euclidean regularity theory for local perimeter minimizers. In particular it provides a different proof of \autoref{thm:meancurvminimal1} for smooth Riemannian manifolds. However, the use of deep regularity theorems in Geometric Measure Theory, makes the extension of this strategy to non smooth ambient spaces unlikely, as already pointed out in \cite{Petrunin03}. The interest towards proofs of mean curvature bounds and regularity results for area minimizing surfaces not heavily relying on GMT tools was pointed out also in \cite{Gromov14a,Gromov14b}.
\end{remark}

\begin{remark}
As remarked in \cite{Savin07}, if $E\subset \setR^n$ is an open set and $\Delta \dist_{\partial E}\le 0$ locally in a neighbourhood of $\partial E$ and away from $\partial E$, then $\partial E$ satisfies the minimal surfaces equation in the viscosity sense. Indeed the signed distance from a smooth boundary is smooth in a neighbourhood of any point along the boundary, where its Laplacian corresponds to the mean curvature, as we pointed out in the proof of \autoref{thm:smoothminimalimpldistsubh} above. See also \cite{White16} for some arguments in the same spirit in the Riemannian framework.
\end{remark}

\end{document}